\documentclass[english]{amsart}

\usepackage{multirow}
\usepackage{enumerate}\usepackage{tikz}
\usetikzlibrary{cd,arrows,positioning} 
\tikzset{>=stealth}
\tikzcdset{arrow style=tikz}
\tikzset{link/.style={column sep=1.8cm,row sep=0.16cm}}
\tikzset{map/.style={row sep=0em, column sep=0em}}
\usepackage{amssymb}
\usepackage{array}
\usepackage{mathrsfs}
\usepackage{xspace}
\usepackage{MnSymbol}
\usepackage{mathtools}
\usepackage{color}
 \usepackage[normalem]{ulem} 
\usepackage[curve,all]{xy}
\xyoption{matrix}
 \xyoption{curve}
 \xyoption{color}
 \xyoption{line}
\xyoption{arc}

\usepackage{soul}

\usepackage[shortlabels]{enumitem}
\setlist{wide}
\setlist[enumerate]{label=\rm{(\arabic*)}}
\setlist[enumerate,2]{label=\rm({\it\roman*})}
\setlist[itemize]{label=\raisebox{0.25ex}{\tiny$\bullet$}}

\usepackage[backref, colorlinks, linktocpage, citecolor = blue, linkcolor = blue]{hyperref}

\newcommand{\ps}{ \, \tikz[baseline=-.6ex] \draw[->,dotted,line width=.6] (0,0) -- +(.5,0); \,}
\setcounter{tocdepth}{1}

\theoremstyle{plain}
\newtheorem{theoremA}{Theorem}
\newtheorem{propositionA}[theoremA]{Proposition}
\newtheorem{corollaryA}[theoremA]{Corollary}

\newtheorem{theorem}{Theorem}[subsection]
\newtheorem*{theoremaux}{Theorem \theoremauxnum}
\gdef\theoremauxnum{1}

\newtheorem{proposition}[theorem]{Proposition}
\newtheorem*{propositionaux}{Proposition \propositionauxnum}
\gdef\propositionauxnum{1}

\newtheorem{lemma}[theorem]{Lemma}
\newtheorem*{lemmaaux}{Lemma \lemmaauxnum}
\gdef\lemmaauxnum{1}

\newtheorem*{theorem*}{Theorem}
\newtheorem*{definition*}{Definition}
\newtheorem*{remark*}{Remark}

\newtheorem{corollary}[theorem]{Corollary}
\newtheorem{corollary*}{Corollary}

\theoremstyle{definition}
\newtheorem{definition}[theorem]{Definition}
\newtheorem{notation}[theorem]{Notation}
\newtheorem{example}[theorem]{Example}

\theoremstyle{remark}
\newtheorem{remark}[theorem]{Remark}


\newcommand{\Rb}{1.40cm}

\newcommand\iso{\stackrel{\simeq}{\longrightarrow}}

\newcommand{\incl}[1][r]{\ar@<-0.2pc>@{^(-}[#1] \ar@<+0.2pc>@{-}[#1]}
\newcommand{\tr}[1]{\vphantom{#1}^{t}\!#1}

\newcommand{\I}{\ensuremath{\mathrm{I}}\xspace}
\newcommand{\II}{\ensuremath{\mathrm{II}}\xspace}
\newcommand{\III}{\ensuremath{\mathrm{III}}\xspace}
\newcommand{\IV}{\ensuremath{\mathrm{IV}}\xspace}

\renewcommand{\P}{\mathbb{P}}
\newcommand{\PP}{\mathcal{P}}
\newcommand{\FF}{\mathcal{F}}

\renewcommand{\div}{\mathrm{div}}
\newcommand{\flip}{\mathrm{flip}}
\newcommand{\flop}{\mathrm{flop}}
\newcommand{\antiflip}{\mathrm{antiflip}}

\newcommand{\Spec}{\mathrm{Spec}}
\renewcommand{\Im}{\mathrm{Im}}
\newcommand{\V}{\mathcal{V}}
\renewcommand{\SS}{\mathcal{S}}
\newcommand{\TT}{\hat{\mathcal{S}}}
\newcommand{\p}{\mathbb{P}}

\newcommand{\pr}{\mathrm{pr}}
\newcommand{\Q}{\mathbb{Q}}
\newcommand{\QQ}{\mathcal{Q}}
\newcommand{\RR}{\mathcal{R}}

\renewcommand{\k}{\mathrm{k}}

\newcommand{\A}{\mathbb{A}}

\newcommand{\C}{\mathbb{C}}
\newcommand{\F}{\mathbb{F}}
\newcommand{\N}{\mathbb{N}}

\newcommand{\U}{\mathcal{U}}
\newcommand{\W}{\mathcal{W}}
\newcommand{\Z}{\mathbb{Z}}

\newcommand{\G}{\mathbb{G}}
\newcommand{\Gm}{\mathbb{G}_m}

\renewcommand{\O}{\mathcal{O}}

\newcommand{\OP}{\mathcal{O}_{\mathbb{P}^1}}

\newcommand{\car}{\mathrm{char}}

\newcommand{\OFa}{\mathcal{O}_{\mathbb{F}_a}}
\newcommand{\s}[1]{s_{#1}}

\DeclareMathOperator{\SL}{SL}
\DeclareMathOperator{\Aut}{Aut}

\DeclareMathOperator{\Autz}{Aut^{\circ}}
\DeclareMathOperator{\PGL}{PGL}

\DeclareMathOperator{\PSO}{PSO}
\DeclareMathOperator{\rk}{rk}

\DeclareMathOperator{\BPic}{\bf Pic}
\DeclareMathOperator{\Pic}{Pic}
\DeclareMathOperator{\NE}{NE}
\DeclareMathOperator{\BNS}{\bf NS}
\DeclareMathOperator{\NS}{NS}
\DeclareMathOperator{\Gal}{Gal}
\DeclareMathOperator{\GL}{GL}
\DeclareMathOperator{\Bir}{Bir}

\DeclareMathOperator{\Ker}{Ker}

\title[Connected algebraic groups acting on 3-dimensional Mori fibrations]{Connected algebraic groups acting on three-dimensional Mori fibrations}
\date{\today}

\author{J\'er\'emy Blanc}
\address{Universit{\"a}t Basel,
Departement Mathematik und Informatik,
Spiegelgasse 1,
CH-4051, Basel,
Switzerland}
\email{Jeremy.Blanc@unibas.ch}

\author{Andrea Fanelli}
\address{Institut de Math\'ematiques de Bordeaux, UMR 5251 CNRS, Universit\'e de Bordeaux, 33405 Talence cedex, France}
\email{andrea.fanelli.1@u-bordeaux.fr}

\author{Ronan Terpereau}
\address{Institut de Math\'{e}matiques de Bourgogne, UMR 5584 CNRS, Universit\'{e} Bourgogne Franche-Comt\'{e}, F-21000 Dijon, France}
\email{ronan.terpereau@u-bourgogne.fr}

\hyphenation{e-qui-va-rian-tly}
\hyphenation{e-qui-va-riant}
\hyphenation{ma-xi-mal}

\subjclass[2010]
{ 14E07,    
   14E30  	  
   14L30,    
   14M20,   
   14M17,   
   14M25    
   14J30. }  

\begin{document}

\begin{abstract}
We study the connected algebraic groups acting on Mori fibrations $X \to Y$ with $X$ a rational threefold and $\mathrm{dim}(Y) \geq 1$. More precisely, for these fibre spaces we consider  the neutral component of their automorphism groups  and study their equivariant birational geometry.
This is done using, inter alia, minimal model program and Sarkisov program and allows us to determine the maximal connected algebraic subgroups of $\mathrm{Bir}(\mathbb{P}^3)$, recovering most of the classification results of Hiroshi Umemura in the complex case.
\end{abstract}

\maketitle
\tableofcontents

\section{Introduction}

\subsection{Aim and scope}
In this article we work over a fixed algebraically closed field $\k$. Our main goal is to study the connected algebraic subgroups of the Cremona group $\Bir(\p^3)$ (see Definition~\ref{Defi:AlgSubgroupsBir} for a precise definition), up to conjugation, via a new geometric approach involving explicit birational geometry of rational threefolds.

When $\k$ is the field of complex numbers $\C$, a classification of the maximal connected algebraic subgroups of $\Bir(\P^3)$  has been stated by Enriques and Fano \cite{EF98} and achieved by Umemura in a series of four papers \cite{Ume80,Ume82a,Ume82b,Ume85}. In more than $150$ pages, detailed arguments are given and a finite list of families is precisely established. The proof of Umemura uses a result of Lie that gives a classification of analytic actions on complex threefolds (see \cite[Theorem~1.12]{Ume80}) to derive a finite list of algebraic groups acting rationally on $\p^3$. 

Umemura, together with Mukai, studied in \cite{MU83,Ume88} the \emph{minimal} smooth rational projective threefolds (a smooth projective variety $X$ is called \emph{minimal} if any birational morphism $X \to X'$ with $X'$ smooth is an isomorphism). For each subgroup $G\subseteq \Bir(\p^3)$ of the list of maximal connected algebraic subgroups of $\Bir(\p^3)$, they determine the minimal smooth rational projective threefolds $X$ such that $\varphi^{-1} G\varphi=\Autz(X)$ for some birational map $\varphi\colon X\dashrightarrow \p^3$; this gives a detailed story of $95$ pages additional to Umemura's classification.

\smallskip

In this paper we will not use the long work of Umemura or any analytic method. We will rather use another strategy to recover both the maximal connected algebraic subgroups of $\Bir(\P^3)$  and the minimal rational projective threefolds on which they act, based on the \emph{minimal model program} (MMP), as we now explain. 

If $G$ is a connected algebraic subgroup of $\Bir(\P^3)$, then the regularisation theorem of Weil (recalled in \upshape\S~\ref{SubSec:Reg}) gives the existence of a birational map $\varphi\colon X \dashrightarrow \P^3$ such that $G \subseteq \varphi\Autz(X) \varphi^{-1}$. Also, one can always compactify $X$ equivariantly and assume that it is projective. Supposing moreover that the base field $\k$ is of characteristic zero, we may even assume that $X$ is smooth. And finally we can run an MMP (which is always $\Autz(X)$-equivariant; see Remark~\ref{rk:MMP G eq}) to reduce to the case where $X$ is a Mori fibre space (see Theorem~\ref{th alg subg of Cr3 are aut of Mori fib}).

This observation justifies our strategy: we start from a rational projective threefold, take an equivariant resolution of singularities if $\car(\k)=0$ or assume it is smooth otherwise, run an MMP (which is valid for any smooth projective threefold provided that $\car(\k) \notin \{2,3\}$), and then study which of the possible outcomes $X \to Y$ (with  $0 \leq \dim(Y) < \dim(X)=3$) provide maximal algebraic subgroups in $\Bir(\P^3)$. We distinguish between three cases:
\begin{enumerate}
\item if $\dim(Y)=2$, then $X \to Y$ is a \emph{Mori conic bundle} over a rational projective surface with canonical singularities. This case is studied in \upshape\S \ref{Sec:ConicBundles};
\item if $\dim(Y)=1$, then $X \to Y=\P^1$ is a \emph{Mori del Pezzo fibration} over $\P^1$. This case is studied in \upshape\S \ref{Sec:dP}; and
\item if $\dim(Y)=0$, then $X$ is a $\Q$-factorial Fano threefold of Picard rank $1$ with $($at worse$)$ terminal singularities. 
\end{enumerate}

When $\car(\k)=0$, our results provide a full description of all the possible maximal connected algebraic groups acting on rational three-dimensional Mori fibre spaces (and not just the smooth models), except when the basis of the Mori fibration is trivial (i.e.~$\dim(Y)=0$); see Theorem~\ref{th:Ea} for a precise statement. As consequence of the classification we also prove that each connected algebraic subgroup of $\Bir(\p^3)$ is contained in a maximal one (Corollary~\ref{corF}). This fact  seems difficult to prove without the classification, is unknown for $\Bir(\p^n)$ when $n \geq 4$ and is false for $\Bir(\p^1\times C)$ with $C$ is a non-rational curve (see \cite{Fong19}). The hypothesis on the characteristic is needed in several steps to obtain the classification (for instance, we use our previous work \cite{BFT}).

It turns out that most of such connected algebraic groups are conjugated to algebraic subgroups of automorphism groups of certain $\P^1$-bundles over smooth rational surfaces; these were studied thoroughly in \cite{BFT}. Therefore, in the present article we mostly focus on the neutral component of the automorphism groups of conic bundles over rational surfaces that are not $\P^1$-bundles and of del Pezzo fibrations over $\P^1$. It is striking to see that, even though there are many rational three-dimensional Mori fibre spaces, in the end, only very few of them give rise to maximal connected algebraic subgroups in the Cremona group $\Bir(\P^3)$; see Corollary~\ref{corF}.  This is for instance completely different from the dimension $2$ case where each rational Mori fibration $X\to Y$, with $X$ a minimal surface, gives a maximal connected algebraic subgroup of $\Bir(\p^2)$.

Several results obtained in \cite{BFT} are used in this article, but nevertheless we tried to make it mostly self-contained. For specialists of the MMP and the Sarkisov program, our results can be seen as a natural geometric application of the ideas and techniques from these theories, together with some explicit birational geometry of threefolds, to determine and understand the maximal connected algebraic subgroups of the Cremona group. 

For non-specialists interested in the birational geometry of threefolds, this paper could represent a good source of natural examples of Mori fibrations. In addition, the geometric restriction given by the connected algebraic group actions allows us to describe explicitly all the equivariant links between those.

The birational geometry of rational threefolds is incredibly rich and our results also enlighten certain geometric features for the different classes of Mori fibrations such as conic bundles and del Pezzo fibrations. For instance, we prove that conic bundles over rational surfaces whose generic fibre is not $\P^1$ and del Pezzo fibrations over $\P^1$ whose generic fibre is of degree $\leq 6$ have quite few automorphisms; see Theorems~\ref{Thm:conic bundles}-\ref{Thm:MainQuadric} for precise results.

\subsection{Statement of the main results}
In all the article the base field $\k$ is supposed to be algebraically closed. Our first main result is the following:

\begin{theoremA} \label{th:A} \emph{(see \upshape\S~\ref{proof of thA} for the proof)} 
Assume that $\car(\k)\notin\{2,3,5\}$ and let $\hat X$ be a smooth rational projective threefold. Then there is an $\Autz(\hat X)$-equivariant birational map $ \hat X \dashrightarrow X$, where $X$ is a Mori fibre space that satisfies one of the following conditions:  
\begin{enumerate}
\item\label{thAP1} $X$ is a $\p^1$-bundle over $\p^2$, $\p^1\times\p^1$ or a Hirzebruch surface $\F_a$ with $a\ge 2$; or
\item\label{thAQuadricP2} $X$ is either a $\p^2$-bundle over $\P^1$ or a smooth Umemura quadric fibration $\QQ_g$ over $\P^1$ $($see Definition~$\ref{def:QQg})$ with $g \in \k[u_0,u_1]$ a square-free homogeneous polynomial of degree $2n \geq 2$; or 
\item\label{thAFano} $X$ is a rational $\Q$-factorial Fano threefold of Picard rank $1$ with terminal singularities.
\end{enumerate}  
\end{theoremA}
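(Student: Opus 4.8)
The plan is to reduce $\hat X$ to a Mori fibre space by a $G$-equivariant minimal model program and then invoke the case-by-case classifications carried out in the body of the paper. Write $G = \Autz(\hat X)$. Since $G$ is connected, it acts trivially on the N\'eron--Severi lattice of any smooth projective $G$-variety, so each contraction of an extremal ray and each flip is automatically $G$-equivariant; as $\hat X$ is moreover smooth, rational and projective and the hypothesis $\car(\k)\notin\{2,3,5\}$ makes the three-dimensional MMP available, Theorem~\ref{th alg subg of Cr3 are aut of Mori fib} furnishes a $G$-equivariant birational map $\hat X\dashrightarrow X$ onto a Mori fibre space $\pi\colon X\to Y$ with $0\le\dim(Y)<3$. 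The threefold $X$ is again rational and inherits a $G$-action making $\pi$ equivariant, and it remains to bring $X$ into one of the three prescribed forms according to $\dim(Y)$.

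If $\dim(Y)=0$, then $X$ is a $\Q$-factorial Fano threefold of Picard rank $1$ with terminal singularities and we are already in case~\ref{thAFano}. If $\dim(Y)=2$, then $\pi$ is a Mori conic bundle over a rational projective surface with canonical singularities, and I would feed it into the classification of \S\ref{Sec:ConicBundles}. The key point established there (Theorem~\ref{Thm:conic bundles}) is that a conic bundle whose generic fibre is not $\p^1$ carries only a small connected automorphism group and can be straightened, by $G$-equivariant links, into a genuine $\p^1$-bundle over a smooth rational surface; the equivariant birational theory of such $\p^1$-bundles then lets one simplify the base down to a minimal rational surface. Since the minimal rational surfaces are $\p^2$, $\p^1\times\p^1$ and the Hirzebruch surfaces $\F_a$, and since $\F_0\cong\p^1\times\p^1$ while $\F_1$ is not minimal, this lands exactly in case~\ref{thAP1}.

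If $\dim(Y)=1$, then $\pi\colon X\to\p^1$ is a Mori del Pezzo fibration, and I would appeal to \S\ref{Sec:dP}, in particular to Theorem~\ref{Thm:MainQuadric}. Del Pezzo fibrations whose generic fibre has degree at most $6$ are shown there to have only small connected automorphism groups, which, together with the Sarkisov program, allows them to be disposed of in favour of the higher-degree cases; the surviving possibilities are degree $9$, giving a $\p^2$-bundle over $\p^1$, and degree $8$, giving a smooth Umemura quadric fibration $\QQ_g$ over $\p^1$ as in Definition~\ref{def:QQg}, with $g\in\k[u_0,u_1]$ homogeneous of even degree $2n\ge 2$. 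The square-free hypothesis on $g$ is precisely what guarantees that $\QQ_g$ is smooth, and this is case~\ref{thAQuadricP2}.

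The real content lies in the two classification inputs, and the principal obstacle is the conic bundle case of \S\ref{Sec:ConicBundles}. There one must show that a Mori conic bundle failing to be a $\p^1$-bundle cannot carry a large connected group and can consequently be straightened to a $\p^1$-bundle, and then keep this bundle structure under control while simplifying the base; the fact that $Y$ carries only canonical (rather than smooth) singularities, combined with the demand for equivariance at every step, is what makes this delicate. The del Pezzo case is more explicit but still requires isolating the smooth Umemura quadric fibrations among all degree-$8$ fibrations and checking that the hypotheses $\car(\k)\notin\{2,3,5\}$ and the square-freeness of $g$ are exactly what is needed for smoothness and for the MMP and Sarkisov steps to be valid.
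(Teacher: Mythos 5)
Your overall skeleton -- run a $G$-equivariant MMP (equivariant because $G$ is connected), then split by $\dim(Y)$ and feed the conic-bundle and del Pezzo cases into Theorems~\ref{Thm:conic bundles} and~\ref{Thm:MainQuadric} -- is exactly the paper's route (\S\ref{proof of thA}). But there are two concrete misstatements that leave a genuine gap. First, you have the conic-bundle dichotomy backwards: Theorem~\ref{Thm:conic bundles} straightens a conic bundle into a $\p^1$-bundle over a minimal rational surface precisely when the generic fibre \emph{is} $\p^1_{\k(S)}$ (Lemma~\ref{2_intersections}: a standard conic bundle with a rational section has empty discriminant, hence is a $\p^1$-bundle). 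When the generic fibre is \emph{not} $\p^1_{\k(S)}$, no such straightening exists compatibly with the fibration; what the theorem gives instead is that $\Autz(X)$ is a torus of dimension at most $2$. Your sentence asserts that the non-$\p^1$ case is the one that gets straightened, which is false and would collapse the whole case distinction.

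Second, and relatedly, you never say what to do when the connected automorphism group is a small torus -- which is the outcome both for conic bundles with non-rational generic fibre and for del Pezzo fibrations of degree $\le 6$ (Theorem~\ref{Thm:MainQuadric}\ref{PropMainQuadric_le5}). These cases are \emph{not} ``disposed of in favour of the higher-degree cases'' by Sarkisov links: a degree-$\le 6$ del Pezzo fibration does not become a degree-$8$ or $9$ one. The mechanism the paper uses is Corollary~\ref{tori conjugate}: every torus of dimension $\le 3$ in $\Bir(\p^3)$ is conjugate to a diagonal subtorus of $\Aut(\p^3)$ (via Proposition~\ref{cylinder} and Proposition~\ref{Prop:RatRat}, not via the Sarkisov program), so in the torus case one takes an $\Autz(\hat X)$-equivariant birational map to $\p^3$ and lands in case~\ref{thAFano}. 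Without this step your argument leaves all the torus cases unresolved. (A minor further point: Theorem~\ref{th alg subg of Cr3 are aut of Mori fib} is stated only for $\car(\k)=0$ because of equivariant resolution; since $\hat X$ is already smooth and projective you should just apply the equivariant MMP directly, which is what the hypothesis $\car(\k)\notin\{2,3,5\}$ is for.)
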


Let us comment on the possible cases of Theorem~\ref{th:A}:
\begin{enumerate}
\item
As explained before, very few Mori fibrations appear in Theorem~\ref{th:A}. In particular, there is no conic bundle that is not a $\p^1$-bundle and no del Pezzo fibration of degree $d\le 7$ (see Theorems~\ref{Thm:conic bundles} and \ref{Thm:MainQuadric} for more details).
\item
In this text a $\p^n$-bundle is always a Zariski locally trivial $\p^n$-bundle.  The $\p^2$-bundles over $\p^1$ are simply given by $\RR_{m,n}= \P(\O_{\p^1}(-m) \oplus \O_{\p^1}(-n) \oplus \O_{\p^1})$ for some $m,n\ge 0$ (see {\upshape\S~\ref{sec:first classification}\ref{DefiRmn}}). Their automorphism groups are easy to describe (see Lemma~\ref{lem:aut P2 bundle}).
 \item
 There are many distinct families of $\p^1$-bundles over $\p^2$ or over a Hirzebruch surface $\F_a$. In \cite{BFT}, we focused on these and gave a classification of maximal ones in the case where $\car(\k)=0$.
\item
The Umemura quadric fibrations $\QQ_g\to \p^1$ are studied in Section~\ref{quadric fibrations}. They are parametrised by classes of hyperelliptic curves, and then form an infinite dimensional family. Their automorphism group is however simply $\PGL_2$ (or an extension by $\G_m$ in a very special case, but in this case $\Autz(\QQ_g)$ is conjugated to a strict subgroup of $\Autz(Q)=\PSO_5$ with $Q \subseteq \P^4$ a smooth quadric hypersurface).
 \item
The case of Fano threefolds is less understood. There is for the moment no complete classification of their automorphism groups, except in the smooth case and over an algebraically closed field of characteristic zero \cite[Theorem 1.1.2]{KPS}.
 \end{enumerate}

Along our way to prove Theorem~\ref{th:A}, Theorem~\ref{th:Ea}, and Theorem~\ref{th:Eb} below, we prove the following three results (Proposition~\ref{propB} and Theorems~\ref{Thm:conic bundles}-\ref{Thm:MainQuadric}), which we believe are interesting on their own.

The next result, whose proof is elementary, is certainly well-known from specialists but we could not find a suitable reference. Therefore we chose to recall it and write a complete proof.

\begin{propositionA} \label{propB} \emph{(Lemma~\ref{lemma:linear} and Corollary~\ref{cor Auts cubic threefold})}
Let $X$ be a rationally connected variety $($i.e.~two general points of $X$ are connected by a rational curve$)$. Then every algebraic subgroup $G \subseteq   \Bir(X)$ is a linear algebraic group.

Suppose moreover that $\car(\k)=0$, $\dim(X)=3$, and $X$ is not rational $($for instance $X$ is a smooth projective cubic threefold$)$. Then every connected algebraic subgroup of $\Bir(X)$ is trivial. In particular, $\Autz(X)$ is trivial.
\end{propositionA}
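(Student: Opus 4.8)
The plan is to treat the two assertions separately, in both cases using Weil's regularisation theorem to replace the birational action by a regular one and then invoking structural facts about automorphism groups of projective varieties.

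For the first assertion, let $G\subseteq\Bir(X)$ be an algebraic subgroup. By regularisation there is a variety $Y$ birational to $X$ on which $G$ acts by regular automorphisms; after an equivariant completion and normalisation I may assume $Y$ is normal and projective, so that $G\subseteq\Aut(Y)$. Since rational connectedness is a birational invariant, $Y$ is again rationally connected; in particular every morphism from $Y$ to an abelian variety is constant (a rational curve is contracted, and two general points are joined by a chain of rational curves), so $\mathrm{Alb}(Y)=0$. By Chevalley's structure theorem $\Aut^\circ(Y)$ is an extension of an abelian variety $A$ by a linear group, and the theorem of Nishi and Matsumura shows that $A$ embeds, up to isogeny, into $\mathrm{Alb}(Y)=0$; hence $\Aut^\circ(Y)$ is affine, i.e. linear. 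Consequently $G^\circ=G\cap\Aut^\circ(Y)$ is linear, and since $G^\circ$ has finite index in $G$ the whole group $G$ is affine, hence linear.

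For the second assertion, assume $\car(\k)=0$, $\dim(X)=3$ and $X$ not rational (so $X$ remains rationally connected), and let $G\subseteq\Bir(X)$ be a nontrivial connected algebraic subgroup, aiming for a contradiction. By the first part $G$ is a connected linear algebraic group of positive dimension, so it contains a one-dimensional connected subgroup $H$ isomorphic to $\Ga$ or to $\Gm$; it thus suffices to rule out a faithful birational action of such an $H$ on $X$. Regularising the action of $H$, I obtain a rationally connected threefold $Y$ birational to $X$ carrying a faithful regular $H$-action. By Rosenlicht's theorem a dense $H$-invariant open subset of $Y$ admits a geometric quotient $W$ whose fibres are the $H$-orbits; the generic orbit is one-dimensional (otherwise $H$ would fix a dense open pointwise and act trivially), so $W$ is a surface, and being dominated by the rationally connected $Y$ it is rationally connected, hence rational by Castelnuovo's criterion. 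The generic fibre of $Y\dashrightarrow W$ is a single orbit, i.e. a torsor over $\k(W)$ under $\Ga$ (when $H=\Ga$, the action being generically free as $\Ga$ has no nontrivial finite subgroup in characteristic zero) or under $\Gm\cong\Gm/\mu_n$ (when $H=\Gm$, after dividing out the finite generic stabiliser). Such torsors are trivial by the vanishing of $H^1(\k(W),\Ga)$ and of $H^1(\k(W),\Gm)$ (Hilbert's Theorem 90), so $\k(Y)=\k(W)(t)$ and $Y$ is birational to $W\times\P^1$. As $W$ is rational, $Y$, and hence $X$, is rational, a contradiction. Therefore $G$ is trivial; applying this to $G=\Autz(X)\subseteq\Bir(X)$ yields the final claim.

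I expect the two genuine inputs to be the appeal to the Nishi--Matsumura theorem in the first part (which is why one must first produce a normal projective model of the $G$-action) and, in the second part, the identification of the generic orbit with a trivial torsor over $\k(W)$; once the rational quotient $W$ is known to be a rational surface, the reduction of everything to the purely transcendental extension $\k(Y)=\k(W)(t)$ is the decisive step, the non-rationality of $X$ (e.g. of the cubic threefold) being supplied as a hypothesis rather than proved here.
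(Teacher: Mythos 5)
Your proposal is correct and follows essentially the same route as the paper: part one is Weil regularisation plus the Nishi--Matsumura theorem and Chevalley's structure theorem applied to a rationally connected model (exactly Lemma~\ref{lemma:linear}), and part two reduces to a one-dimensional subgroup $\G_a$ or $\G_m$, forms the Rosenlicht quotient, and concludes that $X$ would be birational to $(\text{rational surface})\times\p^1$ (exactly Corollary~\ref{cor Auts cubic threefold} via Propositions~\ref{cylinder} and~\ref{Prop tori conjugate General}). The only cosmetic difference is that you trivialise the generic orbit by the vanishing of $H^1(\k(W),\G_a)$ and Hilbert~90, where the paper uses Rosenlicht's rational-section theorem (and, for tori, Sumihiro's theorem) to produce the cylinder $H\times U$ --- these are interchangeable.
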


The next two results concern automorphism groups of certain conic bundles over surfaces and del Pezzo fibrations over $\P^1$; these are key-ingredients in the proof of Theorem~\ref{th:A}.

\begin{theoremA} \label{Thm:conic bundles}  \emph{(see \upshape\S~\ref{sec:proof of Prop B} for the proof)}
Assume that $\car(\k)\neq 2$,  let $X$ be a normal rationally connected threefold, and let $\pi\colon X\rightarrow S$ be a conic bundle.
\begin{enumerate}
\item\label{Thm:conic bundlesCaseP1b}
If the generic fibre of $\pi$ is isomorphic to $\p^1_{\k(S)}$, then there is an $\Autz(X)$-equivariant commutative diagram 
\[\xymatrix@R=4mm@C=2cm{
    \hat X \ar@{-->}[r]^{\psi} \ar[d]_{\hat\pi}  & X \ar[d]^\pi \\
    \hat S\ar@{-->}[r]^{\eta} & S
  }\]  
where $\psi$ and $\eta$ are a birational maps, $\hat S$ is a smooth projective surface with no $(-1)$-curve, and the morphism $\hat\pi\colon \hat X\to \hat S$ is a $\p^1$-bundle.
\item\label{Thm:conic bundlesCaseTorus}
If the generic fibre of $\pi$ is not isomorphic to $\p^1_{\k(S)}$, the action of $\Autz(X)$ on $S$ gives an exact sequence $($see {\upshape\S~\ref{Mori fibrations}} for the notation$)$
\[1 \to \Autz(X)_S \to \Autz(X) \to H \to 1,\]
where $H \subseteq \Autz(S)$ and $\Autz(X)_S$ is a finite group, isomorphic to $(\Z/2\Z)^r$ for some $r\in \{0,1,2\}$. Moreover, the following hold:
\begin{enumerate}
\item\label{Thm:conic bundlesCaseTorusSrat}
 If $S$ is rational, which is always true if $\car(\k)=0$, then both $H$ and $\Autz(X)$ are tori of dimension at most two.
 \item\label{Thm:conic bundlesCaseTorusXrat}
 If $X$ is rational, then $S$ is rational and there is an $\Autz(X)$-equivariant birational map $\varphi\colon X \dashrightarrow \P^3$ such that $\varphi \Autz(X) \varphi^{-1} \subsetneq \Aut(\p^3)=\PGL_4$.
 \end{enumerate}
\end{enumerate}
\end{theoremA}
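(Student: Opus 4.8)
The plan is to split along the dichotomy in the statement according to the isomorphism class of the generic fibre $C$ of $\pi$ over $F=\k(S)$.

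For the case in which $C\cong\P^1_F$, the generic fibre carries an $F$-rational point, hence $\pi$ admits a rational section and $X$ is birational over $S$ to a $\P^1$-bundle. It then remains to reach, in an $\Autz(X)$-equivariant way, a model over a surface without $(-1)$-curves. Here I would run an MMP on the base: the finitely many $(-1)$-curves to be contracted form an $\Autz(X)$-invariant set, and since $\Autz(X)$ is connected it fixes each of them, so every contraction is $\Autz(X)$-equivariant; propagating the modifications to the total space by elementary transformations of the conic bundle, I iterate until the base $S'$ carries no $(-1)$-curve. Over such a relatively minimal rational surface one checks that the (birationally modified) bundle has no degenerate fibre, so the resulting morphism is a genuine $\P^1$-bundle, and the square with the birational maps $\psi,\eta$ is equivariant by construction.

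Assume now $C\not\cong\P^1_F$. Restriction to the generic fibre gives an injection $N:=\Autz(X)_S\hookrightarrow\Aut_F(C)$, where $\Aut_F(C)$ is an $F$-form of $\PGL_2$. I would first prove that $N$ is finite: a positive-dimensional connected subgroup of $N$ would restrict to a positive-dimensional connected $\k$-subgroup of $\Aut_F(C)$, hence contain a $\Gm$ or a $\Ga$; but any such subgroup has a fixed point on $C$, and as it is defined over $\k\subseteq F$ this produces an $F$-rational point of $C$, contradicting $C\not\cong\P^1_F$. Being finite and normal in the connected group $\Autz(X)$, the group $N$ is central, hence consists of pairwise commuting elements of an $F$-form of $\PGL_2$; preserving the quadratic form these are commuting involutions, so $N\cong(\Z/2\Z)^r$ with $r\le 2$ (the rank of a maximal elementary abelian $2$-subgroup of $\PGL_2$). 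Setting $H$ to be the image of $\Autz(X)$ in $\Aut(S)$ yields the asserted exact sequence, and $H\subseteq\Autz(S)$ because $\Autz(X)$ is connected.

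For the assertion when $S$ is rational (automatic in characteristic $0$, since $X$ and hence $S$ is then rationally connected), the heart of the matter is that $\Autz(X)$ is a torus; this I expect to be the main obstacle. As $N$ is finite, $\Autz(X)\to H$ is an isogeny, so it suffices to exclude unipotent and non-toral reductive parts of $G:=\Autz(X)$. The governing principle is that the nontrivial class $\alpha\in\Br(F)$ determined by $C$ cannot be trivialised: passing to a smooth model of $S$, purity together with the vanishing of the Brauer group of a smooth projective rational surface forces the discriminant $\Delta\subseteq S$ to be nonempty and $H$-invariant. If $G$ contained a copy of $\PGL_2$, its image would act on the rational surface $S$; analysing its orbits (a dense orbit makes the restricted bundle homogeneous, hence split, while otherwise $S$ is ruled by homogeneous $\P^1$-orbits and one splits the bundle fibre by fibre) produces a rational section of $\pi$, whence $\alpha=0$, a contradiction. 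A unipotent subgroup is ruled out by the same mechanism, using that along its orbits the equivariant conic bundle becomes constant; the delicate point, and the real crux, is to control this over the projective surface $S$ near the fixed loci and the orbit closures. Thus the connected linear algebraic group $G$ (Proposition~\ref{propB}) is reductive with trivial derived subgroup, i.e.\ a torus, and so is $H$; a faithful torus action on the surface $S$ has finite generic stabiliser, giving $\dim\Autz(X)=\dim H\le\dim S=2$.

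Finally, when $X$ is rational the surface $S$, being dominated by $X$, is unirational and hence rational, so the previous paragraph applies and $\Autz(X)$ is a torus of dimension at most $2$. A faithful action of such a torus on the rational threefold $X$ is linearisable (any torus acting regularly on $\P^3$ lies in a maximal torus of $\PGL_4$, of dimension $3$): matching the action with the $3$-dimensional diagonal torus of $\PGL_4$ by equivariant birational geometry, I would obtain a birational map $\varphi\colon X\dashrightarrow\P^3$ with $\varphi\,\Autz(X)\,\varphi^{-1}\subseteq\PGL_4$. As this image is a torus while $\PGL_4$ is not, the inclusion is automatically strict, giving $\varphi\,\Autz(X)\,\varphi^{-1}\subsetneq\Aut(\P^3)=\PGL_4$.
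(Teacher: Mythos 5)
Your outline matches the paper's broad strategy (dichotomy on the generic fibre, finiteness of the vertical part $\Autz(X)_S$, then a torus conclusion via the non-trivial Brauer class), but the crucial step is left open. In part \ref{Thm:conic bundlesCaseTorus}\ref{Thm:conic bundlesCaseTorusSrat} you acknowledge that excluding unipotent subgroups of $H$ ``near the fixed loci and the orbit closures'' is ``the real crux'' and you do not resolve it; this is precisely where the paper's proof does its work. The paper first passes to an equivariant \emph{standard} conic bundle (Theorem~\ref{th sarkisov_standard_conic}), and then exploits the structure of the discriminant $\Delta$ forced by $\rho(X/S)=1$: by Lemma~\ref{2_intersections}, $\Delta$ is non-empty, an irreducible $\Delta$ has genus $\ge 1$ (so $H\hookrightarrow\Aut(\Delta)$ is trivial), and every rational component of a reducible $\Delta$ meets the rest of $\Delta$ in at least two points (because otherwise the discriminant double cover would be reducible, contradicting relative minimality). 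Proposition~\ref{conic_horizontal} then shows that a connected group preserving such an SNC divisor on a rational surface is a torus of dimension $\le 2$; this is what kills $\G_a$ and bounds the dimension, and your Brauer-theoretic orbit analysis does not substitute for it. Two further points: your assertion that the finite central group $N=\Autz(X)_S$ consists of ``commuting involutions'' is not justified --- a finite abelian subgroup of a form of $\PGL_2$ need not a priori be $2$-torsion; the paper's reason (Proposition~\ref{vertical_conic} via Corollary~\ref{Coro:FormAutoConic}) is that a finite-order element corresponds to a root of unity in $\k(S)[\sqrt{\mu}]^*$, hence lies in $\k^*$ since $\k$ is algebraically closed, and the norm condition $a^2-c^2\mu=1$ then forces $a=\pm1$, $c=0$. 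Finally, in part \ref{Thm:conic bundlesCaseTorus}\ref{Thm:conic bundlesCaseTorusXrat} the step ``$S$ is unirational hence rational'' fails in positive characteristic (Shioda's surfaces); the theorem only assumes $\car(\k)\neq 2$, and the paper's Proposition~\ref{Prop:RatRat}\ref{BaseMfsRatisRat} supplies the missing separability argument using smoothness of the general fibre of the conic bundle.

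On the positive side, your derivation of the finiteness of $\Autz(X)_S$ by observing that a copy of $\G_m$ or $\G_a$ inside it would base-change to an isotropic subgroup of the anisotropic group $\Aut_{\k(S)}(X_{\k(S)})^{\circ}\simeq\mathrm{PGL}_1(D)$ is a legitimate and arguably cleaner alternative to the paper's degree-growth argument (Lemma~\ref{Lemm:AlgebraickS}), provided you carry out the fixed-point analysis carefully for split tori acting on a pointless conic.
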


\begin{theoremA}\label{Thm:MainQuadric}
Assume that $\car(\k)\notin\{2,3,5\}$. Let $\pi_X\colon X\to \p^1$ be a Mori del Pezzo fibration of degree $d$. Then, $d\in \{1,2,3,4,5,6,8,9\}$ and the following hold:
\begin{enumerate}
\item\label{PropMainQuadric_le5}
If $d\le 5$ $($resp.~$d=6)$, then $\Autz(X)$ is a torus of dimension $\le 1$ $($resp.~$\le 3)$. 
\item\label{PropMainQuadric_mainpoint}
If $\Autz(X)$ is not isomorphic to a torus, there is an $\Autz(X)$-equivariant commutative diagram
\[\xymatrix@R=3mm@C=1cm{
    X \ar@{-->}[rr]^{\psi} \ar[rd]_{\pi_X}  && Y \ar[ld]^{\pi_Y} \\
    & \p^1
  }\]
  such that $\psi$ is a birational map, $\Autz(X)$ acts regularly on $Y$, and either
\begin{enumerate}
\item \label{case1propB} $\pi_{Y}\colon Y\to  \p^1$ is a $\p^2$-bundle; or 
\item \label{case2propB} there is a square-free homogeneous polynomial $g\in \k[u_0,u_1]$ of degree $2n$ $($with $n \geq 1)$ such that $(Y,\pi_{Y})=(\QQ_{g},\pi_{g})$.
\end{enumerate}  
Moreover, in Case~\ref{case2propB}, the group $\psi\Autz(X)\psi^{-1}\subseteq \Autz(\QQ_g)$ is either equal to $\PGL_2$ $($see Lemma~$\ref{Lem:ActiononQQg1}\ref{ActionPGL2})$ if $n \geq 2$ or to $\PGL_2\times\G_m$ $($see Example~$\ref{Example:Qgu0u1}$, with $g=u_0u_1)$ if $n=1$.
\end{enumerate}
\end{theoremA}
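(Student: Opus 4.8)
The plan is to run the whole argument through the generic fibre $S_\eta := X_\eta$, which is a del Pezzo surface of degree $d$ over the function field $K := \k(\p^1)$, and to exploit that the Mori condition forces $\rho(S_\eta)=1$. First I would record the exact sequence coming from the action of $\Autz(X)$ on the base,
\[1 \to \Autz(X)_{\p^1} \to \Autz(X) \to H \to 1,\]
with $H \subseteq \Autz(\p^1)=\PGL_2$ and $\Autz(X)_{\p^1}$ the subgroup acting fibrewise. To pin down the admissible degrees I would exclude $d=7$: over $\overline{K}$ the surface $\overline{S_\eta}$ is $\p^2$ blown up at two points, whose three $(-1)$-curves are the two exceptional divisors and the strict transform $L$ of the line through the two centres; since $L$ is the unique $(-1)$-curve meeting the other two, it is $\Gal(\overline K/K)$-invariant, so $[L]$ yields a class in $\Pic(S_\eta)=\Pic(\overline{S_\eta})^{\Gal}$ independent of $K_{S_\eta}$, forcing $\rho(S_\eta)\ge 2$, a contradiction. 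Hence $d\in\{1,\dots,6,8,9\}$.

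For part~\ref{PropMainQuadric_le5} I would bound the two pieces separately using the classical description of automorphism groups of del Pezzo surfaces. The fibrewise part $\Autz(X)_{\p^1}$ acts faithfully and $K$-linearly on $S_\eta$, hence embeds, after base change to $\overline{K}$, into $\Aut(\overline{S_\eta})$; being connected it lands in $\Autz(\overline{S_\eta})$. For $d\le 5$ this group is finite, so $\Autz(X)_{\p^1}$ is trivial, while for $d=6$ it is $\Gm^2$, so $\Autz(X)_{\p^1}$ is a torus of dimension $\le 2$. To control $H$ I would rule out unipotents: if $\Ga\subseteq H$, then its preimage in $\Autz(X)$ is a connected solvable group whose unipotent radical maps isomorphically onto $\Ga$, giving a one-parameter unipotent subgroup $\Ga\subseteq\Autz(X)$ covering the translation action on an affine chart $\A^1\subset\p^1$; this trivialises $X|_{\A^1}\cong \A^1\times F$, so $S_\eta\cong F_K$ is the base change of a del Pezzo surface $F$ over $\k$ and $\rho(S_\eta)=\rk\Pic(F)=10-d$, contradicting $\rho(S_\eta)=1$ unless $d=9$. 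Thus for $d\le 6$ the group $H$ contains no $\Ga$, hence is a torus of dimension $\le 1$. Since an extension of a torus by a torus has no unipotent part and is therefore itself a torus, $\Autz(X)$ is a torus of dimension $\le 1$ if $d\le 5$ and $\le 3$ if $d=6$, proving~\ref{PropMainQuadric_le5}.

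For part~\ref{PropMainQuadric_mainpoint} I would observe that, by the previous paragraph, if $\Autz(X)$ is not a torus then $d\in\{8,9\}$. When $d=9$, $S_\eta$ is a Severi--Brauer surface over $K$; since $\Br(K)=\Br(\k(t))=0$ by Tsen's theorem, $S_\eta\cong\p^2_K$, and I would produce an $\Autz(X)$-equivariant birational model over $\p^1$ which is a $\p^2$-bundle $\RR_{m,n}=\P(\O(-m)\oplus\O(-n)\oplus\O)$, landing in case~\ref{case1propB}. When $d=8$, $S_\eta$ is a form of a quadric surface with $\rho(S_\eta)=1$; the split quadric $\p^1\times\p^1$ has $\rho=2$, so the two rulings of $\overline{S_\eta}$ must be exchanged by $\Gal$, i.e.~$S_\eta$ is the smooth quadric attached to a quadratic extension $K(\sqrt{g})$. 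Writing $g\in\k[u_0,u_1]$ square-free homogeneous of degree $2n$, this is exactly the generic fibre of the Umemura quadric fibration $\QQ_g$ of Definition~\ref{def:QQg}, and I would construct an $\Autz(X)$-equivariant birational map $\psi\colon X\dashrightarrow \QQ_g$ over $\p^1$, landing in case~\ref{case2propB}; the identifications $\psi\Autz(X)\psi^{-1}=\PGL_2$ for $n\ge 2$ and $=\PGL_2\times\Gm$ for $n=1$ would then follow from the explicit analysis of $\Autz(\QQ_g)$ in Lemma~\ref{Lem:ActiononQQg} and Example~\ref{Example:Qgu0u1}.

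The main obstacle is the construction in part~\ref{PropMainQuadric_mainpoint} of the equivariant model realising $X$ as a $\p^2$-bundle or as $\QQ_g$: knowing the generic fibre abstractly does not by itself give a model on which all of $\Autz(X)$ acts \emph{regularly}, so one must run a relative, $\Autz(X)$-equivariant MMP (a Sarkisov-type argument over $\p^1$) and then extend the bundle across the finitely many degenerate fibres --- using Grothendieck's splitting of vector bundles on $\p^1$ in the $d=9$ case, and a careful analysis of the quadric degenerations in the $d=8$ case --- all while keeping track of the group action. Matching the degree-$8$ data with a square-free $g$ of the correct degree parity, and thereby distinguishing the two possibilities $\PGL_2$ and $\PGL_2\times\Gm$ for the automorphism group, is the most delicate bookkeeping.
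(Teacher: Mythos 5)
Your exclusion of $d=7$ and your treatment of the fibrewise group coincide in spirit with the paper's (Lemma~\ref{lem:MoriQuad8}, Proposition~\ref{cor Aut dP are tori}), but your bound on the image $H\subseteq\PGL_2$ is a genuinely different route: the paper shows that $H$ must preserve the non-empty branch locus of the cover of $\p^1$ attached to the splitting field of the generic fibre and concludes by Hurwitz (Lemma~\ref{two_sing_fib}), whereas you rule out $\Ga\subseteq H$ by a cylinder argument. Your argument works in characteristic zero, but the theorem is stated for $\car(\k)\notin\{2,3,5\}$, and two of your steps break in characteristic $p\ge 7$: lifting $\Ga\subseteq H$ to a one-parameter unipotent subgroup of $\Autz(X)$ uses that a surjection of unipotent groups onto $\Ga$ splits, which fails in positive characteristic (Witt vectors); and for $d\in\{1,2\}$ in characteristic $7$ the geometric generic fibre may be a singular del Pezzo surface with an $A_6$ point, so the ``classical description'' of automorphism groups does not apply and the paper has to compute $\Aut$ of the branch quartic explicitly. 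For $d=6$ you also pass too quickly from ``$\Autz(X)_{\p^1}$ embeds in $\Gm^2(\overline{K})\rtimes D_6$'' to ``it is a torus over $\k$ of dimension $\le 2$'': an abstract subgroup of $\Gm^2(\overline{K})$ need not come from a $\k$-torus, and the paper needs the descent to $S\times C$ together with the algebraicity criterion of Lemma~\ref{Lemm:AlgebraickS} to conclude.

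The serious gap is in part~\ref{PropMainQuadric_mainpoint}, where you defer precisely the content of the statement. First, your dichotomy ``$d=9\Rightarrow$~\ref{case1propB}, $d=8\Rightarrow$~\ref{case2propB}'' is incorrect: for $d=8$ the neutral component of the fibrewise group, viewed inside $\PGL_2(L)$ via Lemma~\ref{Lem:ActiononQQg}\ref{DiagramQglift}, may be conjugate to a subgroup of the triangular group $L\rtimes\k^*$ (Lemma~\ref{Lemm:PGL2KL}); it then fixes finitely many rational sections, projection from an invariant section yields a $\p^2$-fibration, and one lands in case~\ref{case1propB}. This trichotomy (triangular $\Rightarrow$ $\p^2$-bundle; $\PGL_2(\k)$ $\Rightarrow$ $\QQ_g$ with $\PGL_2$; an extra $\Gm$ on the base only when $n=1$) is the skeleton of the proof and is absent from your sketch, and without it your final claim about $\psi\Autz(X)\psi^{-1}$ is false as stated for $d=8$. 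Second, the existence of a model on which $\Autz(X)$ acts \emph{regularly} is not obtained by ``a relative equivariant MMP plus Grothendieck splitting'': Sarkisov links over $\p^1$ would only move you among Mori del Pezzo fibrations with no reason to terminate at a $\p^2$-bundle, and the paper instead proves Proposition~\ref{prop: main result P2 fibrations} by an explicit normal-form computation on the transition matrices $\mathcal{M}\in\PGL_3(\k[G](t))$, decreasing the order of vanishing of the determinant step by step; likewise, the equivariance of $\psi\colon X\dasharrow\QQ_g$ for the \emph{regular} $\PGL_2$-action requires showing that every algebraic element of the fibrewise group is conjugate in $\PGL_2(L)$ to an element of $\PGL_2(\k)$ (again via Lemma~\ref{Lemm:AlgebraickS}) before Lemma~\ref{Lemm:PGL2KL} can be applied. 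None of this is supplied, so the proposal does not establish part~\ref{PropMainQuadric_mainpoint}.
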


\begin{remark} 
The main reason for the restriction on the characteristic of $\k$ in Theorem~\ref{Thm:MainQuadric} comes from the fact that the generic fibre of a del Pezzo fibration $X \to \P^1$ can be non-smooth in small characteristic (see Lemma~\ref{Lem:bound_char_dP}).
\end{remark}

Once we have proven Theorem~\ref{th:A}, we use results obtained in \cite{BFT} together with the Sarkisov program (recalled in \upshape\S~\ref{subsec:Sarkisov}) to prove the following results.

\begin{theoremA}  \label{th:Ea}
Assume that $\car(\k)=0$ and let $\hat X$ be a rational projective threefold. Then there is an $\Autz(\hat X)$-equivariant birational map $ \hat X \dashrightarrow X$, where $X$ is one of the following Mori fibre spaces $($see {\upshape\S~\ref{sec:first classification}} for the notation$)$:  
\begin{center}
\scalebox{0.9}{
\begin{tabular}{lllrclll}
$\hypertarget{th:D_a}{(a)}$& A decomposable &$\p^1$-bundle & $\FF_a^{b,c}$&\hspace{-0.3cm}$\longrightarrow$& \hspace{-0.2cm}$\F_a$& with $a,b\ge 0$, $a\not=1$, $c\in \Z$, and \\
&&&&&& $(a,b,c)=(0,1,-1)$; or\\
&&&&&& $a=0$, $c \neq 1$, $b \geq 2$, $b\ge \lvert c\rvert $; or\\
&&&&&& $-a<c<a(b-1)$; or\\
&&&&&& $b=c=0$.\\
$\hypertarget{th:D_b}{(b)}$& A decomposable &$\p^1$-bundle &$\PP_b$&\hspace{-0.3cm}$\longrightarrow$& \hspace{-0.2cm}$\p^2$& for some $b \geq 2$.\\
$\hypertarget{th:D_c}{(c)}$& An Umemura &$\p^1$-bundle &$\U_a^{b,c}$&\hspace{-0.3cm}$\longrightarrow$& \hspace{-0.2cm}$\F_a$& for some $a,b\ge 1, c\ge 2$ with\\
&&&&&& $c<b$ if $a=1$; and\\
&&&&&& $c-2<ab$  and $c-2 \neq a(b-1)$ if $a \geq 2$.\\
$\hypertarget{th:D_d}{(d)}$ &A Schwarzenberger\!\! &$\p^1$-bundle &$\SS_b$&\hspace{-0.3cm}$\longrightarrow$& \hspace{-0.2cm}$\p^2$& for some $b=1$ or $b\ge 3$.\\
$\hypertarget{th:D_e}{(e)}$ &A &$\p^1$-bundle &$\V_{b}$&\hspace{-0.3cm}$\longrightarrow$& \hspace{-0.2cm}$\p^2$& for some $b\ge 3$.\\
$\hypertarget{th:D_W}{(f)}$& A singular &$\p^1$-fibration &$\W_b$&\hspace{-0.3cm}$\longrightarrow$& \hspace{-0.2cm}$\P(1,1,2)$& for some $b \geq 2$.\\
$\hypertarget{th:D_RR}{(g)}$ &A decomposable &$\p^2$-bundle &$\RR_{m,n}$&\hspace{-0.3cm}$\longrightarrow$& \hspace{-0.2cm}$\p^1$& for some $m \geq n\ge 0$, \\
&&&&&& with $(m,n) \neq (1,0)$ and\\ 
&&&&&&  $m=n$ or $m>2n$.\\
$\hypertarget{th:D_QQg}{(h)}$ &An Umemura &quadric fibration  &$\QQ_g$&\hspace{-0.3cm}$\longrightarrow$& \hspace{-0.2cm}$\p^1$& for some homogeneous\\ 
&&&&&&    polynomial $g \in \k[u_0,u_1]$ of\\
&&&&&&    even degree with at least \\ 
&&&&&& four roots of odd multiplicity.\\
$\hypertarget{th:D_bP3}{(i)}$& The &projective space & \multicolumn{3}{c}{$\P^3$.}  &\\
$\hypertarget{th:D_bQ3}{(j)}$& The smooth & quadric & \multicolumn{3}{c}{$Q_3\subset \P^4$.}  &\\
$\hypertarget{th:D_bP1112}{(k)}$& The weighted &projective space & \multicolumn{3}{c}{$\P(1,1,1,2)$.}  &\\
$\hypertarget{th:D_bP1123}{(l)}$& The weighted &projective space & \multicolumn{3}{c}{$\P(1,1,2,3)$.} &\\
$\hypertarget{th:D_Fano}{(m)}$& \multicolumn{6}{l}{A  rational $\Q$-factorial Fano threefold of Picard rank $1$ with terminal singularities,}\\
&\multicolumn{6}{l}{{not isomorphic to any of the cases \hyperlink{th:D_bP3}{$(i)$}-\hyperlink{th:D_bQ3}{$(j)$}-\hyperlink{th:D_bP1112}{$(k)$}-\hyperlink{th:D_bP1123}{$(l)$}.}}
\end{tabular} } \vspace{-2.7mm}
\end{center}
\end{theoremA}

\begin{remark*}
In Theorem~\ref{th:Ea}, Families \hyperlink{th:D_a}{$(a)$}, \hyperlink{th:D_b}{$(b)$}, \hyperlink{th:D_c}{$(c)$}, \hyperlink{th:D_d}{$(d)$}, \hyperlink{th:D_e}{$(e)$}, \hyperlink{th:D_RR}{$(g)$}, \hyperlink{th:D_bP3}{$(i)$}, \hyperlink{th:D_bQ3}{$(j)$}  correspond to smooth varieties. 
A variety $\QQ_g$ from Family \hyperlink{th:D_QQg}{$(h)$} is smooth if and only if the polynomial $g$ is square-free.
Families \hyperlink{th:D_W}{$(f)$}, \hyperlink{th:D_bP1112}{$(k)$}, and \hyperlink{th:D_bP1123}{$(l)$} correspond to singular varieties. 
{Family \hyperlink{th:D_Fano}{$(m)$} contains smooth varieties and singular varieties.}
\end{remark*}

\begin{remark*}
A description of the automorphism groups of the Mori fibre spaces listed in Theorem~\ref{th:Ea} can be found in \cite[\S~3.1, \S~3.6, \S~4.1, and \S~4.2]{BFT} for the $\P^1$-bundles, in \upshape\S~$\ref{quadric fibrations}$ for the Umemura quadric fibrations, in \upshape\S~\ref{sec:non-maximality} for  the $\P^2$-bundles over $\P^1$, in Proposition~$\ref{prop:Fabc maximality}$  for the $\P^1$-fibrations $\W_b \to \P(1,1,2)$, and in \cite[\S~8]{AlAm} for the weighted projective spaces. $($See also \cite[\S~4]{Ume85} for an alternative description of these automorphism groups in the smooth cases.$)$
\end{remark*}

\begin{theoremA}  \label{th:Eb}
Assume that $\car(\k)=0$. Let $X_1$ and $X_2$ be two Mori fibre spaces such that $X_1$ belongs to one of the Families \hyperlink{th:D_a}{$(a)$}--\hyperlink{th:D_bP1123}{$(l)$} of Theorem~\ref{th:Ea}. If there exists an $\Autz(X_1)$-equivariant birational map $\varphi\colon X_1 \dashrightarrow X_2$, then  $X_2$ also belongs to one of the Families \hyperlink{th:D_a}{$(a)$}--\hyperlink{th:D_bP1123}{$(l)$}, and $\varphi \Autz(X_1) \varphi^{-1}=\Autz(X_2)$. Moreover, $\varphi$ is a composition of isomorphisms of Mori fibrations and of the following equivariant Sarkisov links $($or their inverses$)$:
\begin{enumerate}[$(S1)$]
\item\label{SA1}
$\p^1\times\p^1\times\p^1/(\p^1\times \p^1)\iso \p^1\times\p^1\times\p^1 /(\p^1 \times \p^1)$ $($exchange of factors$)$;
\item\label{SA2}
$\p^1\times\p^2/\p^1\iso \p^1\times\p^2 /\p^2$ ;
\item\label{SA3}
$\SS_1/\p^2 \iso\SS_1/\p^2 $ $($automorphism of order two exchanging fibrations, Prop.~$\ref{Prop:HomSpaces})$;
\item\label{SA4}
$\FF_0^{b,0}\iso \F_b\times\p^1  \iso \FF_b^{0,0}$ for $b\ge 2$ $($isomorphism, Lemma~$\ref{Lemma:F0bcListLinks})$;
\item\label{SA5}
$\SS_b\dasharrow \SS_b$ for $b\ge 3$ $($birational involution, Prop.~$\ref{prop:Schwarzenberg involution})$;
\item\label{SA6}
$\PP_2\to \P(1,1,1,2)$ $($reduced blow-up of the singular point of $\P(1,1,1,2))$;
\item\label{SA7}
$\FF_{m-n}^{1,-n} \to \RR_{m,n}$ for $m=n\ge 1$ or $m>2n\ge 2$ $($blow-up of a section, Prop.~$\ref{prop:Rmn list of links})$;
\item\label{SA8}
$\RR_{1,1}\dasharrow \RR_{1,1}$ $($birational involution which is a flop, Prop.~$\ref{prop:Rmn list of links})$;
\item\label{SA9}
$\P(1,1,2,3)\dasharrow \RR_{3,1}$ $($reduced blow-up of $[0:0:1:0]$ followed by a flip, Lemma.~$\ref{LemP1123})$;
\item\label{SA10}
$\P(1,1,2,3)\dasharrow \W_2$ $($weighted blow-up of $[0:0:0:1]$, Lemma~$\ref{LemP1123})$;
\item\label{SA11}
$\FF_a^{b,c}\dasharrow \FF_a^{b+1,c+a}$ for all $a,b,c\in \Z$, $a,b\ge 0$, $a(c+a)>0$ and either $ab>0$ or $ac<0$ $($Lemma~$\ref{LinksIIbetweenFU})$;
\item\label{SA12}
$\U_a^{b,c}\dasharrow \U_a^{b+1,c+a}$ for each Umemura bundle $\U_a^{b,c}$ $($Lemma~$\ref{LinksIIbetweenFU})$;
\item\label{SA13}
$\U_1^{b,2}\to \V_b$ for each $b\ge 3$ $($blow-up of a point, Lemma~$\ref{lem:sequence of links for Umemura bundles})$;
\item\label{SA14}
$\W_b\dasharrow \FF_2^{b-1,-1}$ for each $b\ge 2$ $($blow-up of a singular point followed by a flip, Example~$\ref{WbFF2bm1})$;
\item\label{SA15}
$\W_b\dasharrow \FF_2^{b,1}$ for each $b\ge 2$ $($blow-up of a singular point followed by a flip, Example~$\ref{WbFF2b})$; and
\item\label{SA16}
$\QQ_g\dasharrow \QQ_{gh^2}$ for each $g,h\in \k[u_0,u_1]$ homogeneous polynomials of degree $2n \geq 4$ and $1$ respectively and such that $g$ has at least three roots (blow-up of a singular point followed by a divisorial contraction, Lemma~$\ref{Lem:LinksQQgh})$.
\end{enumerate}
\end{theoremA}

\begin{remark*}
We conjecture that, for any singular rational $\Q$-factorial Fano threefold $X$ of Picard rank $1$ with terminal singularities $($other than $\P(1,1,1,2)$ and $\P(1,1,2,3))$, there is always an $\Autz(X)$-equivariant birational map $X \dashrightarrow X'$ with $X'$ either being a smooth rational Fano threefold of Picard rank $1$ or belonging to one of the Families \hyperlink{th:D_a}{$(a)$}--\hyperlink{th:D_bP1123}{$(l)$} of Theorem~\ref{th:Ea}. 

{When $\k=\mathbb{C}$, Umemura obtained the full classification of the maximal connected algebraic subgroups in $\Bir(\P^3)$ in \cite{Ume80,Ume82a,Ume82b,Ume85} using analytic methods $($see \upshape\S~\ref{sec:Umemura classification} for a comparison with our classification$)$. He then described, together with Mukai, the possible regular actions of these groups on smooth projective algebraic varieties in \cite{MU83,Ume88}. Combined with our results, this proves the conjecture above in the case $\k=\mathbb{C}$. However, an independent geometric proof of this conjecture would lead, together with our results and \cite{KPS}, to an alternative proof of Umemura's classification via methods from modern birational geometry.}
\end{remark*}

Finally, the next result follows readily from Theorems~\ref{th:Ea} and~\ref{th:Eb}, and from a simple instance of the BAB conjecture (the boundedness of terminal $\Q$-factorial Fano threefolds). A proof of this corollary is given at the end of the article.

\begin{corollaryA} \label{corF}
Assume that $\car(k)=0$. Let $G$ be a connected algebraic subgroup of $\Bir(\P^3)$. Then there exists a birational map $\varphi\colon X \dashedrightarrow \P^3$ such that $\varphi^{-1} G\varphi \subseteq  \Autz(X)$, where $X$ is one of Mori fibre spaces listed in Theorem~\ref{th:Ea} and such that the connected algebraic subgroup $\varphi \Autz(X) \varphi^{-1} \subseteq \Bir(\P^3)$ is maximal for the inclusion. 

Moreover, for each variety $Y$  that belongs to one of the Families  \hyperlink{th:D_a}{$(a)$}-\hyperlink{th:D_bP1123}{$(l)$}, and for each birational map $\psi\colon Y\dasharrow \p^3$, the connected algebraic subgroup $\psi \Autz(Y) \psi^{-1} \subseteq \Bir(\P^3)$ is maximal for the inclusion.
\end{corollaryA}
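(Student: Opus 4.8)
The plan is to derive both assertions from Theorem~\ref{th:E}, using Weil's regularisation theorem to reduce an arbitrary connected algebraic subgroup to an action on a Mori fibre space. The basic reduction I will use repeatedly is the following: given a connected algebraic subgroup $H\subseteq\Bir(\P^3)$, Weil's theorem (see \S\ref{SubSec:Reg}) together with the first part of Theorem~\ref{th:E} produce a birational map $\varphi\colon X\dashrightarrow\P^3$ with $X$ one of the Mori fibre spaces \hyperlink{th:D_a}{$(a)$}--\hyperlink{th:D_Fano}{$(k)$} and $H\subseteq\varphi\Autz(X)\varphi^{-1}$ (one regularises $H$ on a smooth rational $\hat X$, then composes with the $\Autz(\hat X)$-equivariant map $\hat X\dashrightarrow X$ of Theorem~\ref{th:E}). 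I will also use that a strict inclusion $H_1\subsetneq H_2$ of connected algebraic subgroups forces $\dim H_1<\dim H_2$, since a closed algebraic subgroup of full dimension in a connected group is the whole group.

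I would first prove the \emph{moreover} part, namely the maximality of $\psi\Autz(Y)\psi^{-1}$ for $Y$ in the families \hyperlink{th:D_a}{$(a)$}--\hyperlink{th:D_bP1123}{$(j)$}. Suppose $\psi\Autz(Y)\psi^{-1}\subseteq G'$ for some connected algebraic subgroup $G'$. Applying the reduction to $G'$ gives $\varphi'\colon X'\dashrightarrow\P^3$ with $X'$ in the list and $G'\subseteq\varphi'\Autz(X')\varphi'^{-1}$. The composite $\chi:=\varphi'^{-1}\psi\colon Y\dashrightarrow X'$ then satisfies $\chi\Autz(Y)\chi^{-1}\subseteq\varphi'^{-1}G'\varphi'\subseteq\Autz(X')$, so $\chi$ is an $\Autz(Y)$-equivariant birational map from $Y$ to the Mori fibre space $X'$. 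Since $Y$ lies in \hyperlink{th:D_a}{$(a)$}--\hyperlink{th:D_bP1123}{$(j)$}, the second part of Theorem~\ref{th:E} applies and yields $\chi\Autz(Y)\chi^{-1}=\Autz(X')$, equivalently $\psi\Autz(Y)\psi^{-1}=\varphi'\Autz(X')\varphi'^{-1}\supseteq G'$. Combined with $\psi\Autz(Y)\psi^{-1}\subseteq G'$ this gives equality, proving maximality.

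For the first assertion, start from $G$ and apply the reduction to obtain $X_0$ in the list with $G\subseteq\varphi_0\Autz(X_0)\varphi_0^{-1}=:H_0$. If $X_0$ lies in \hyperlink{th:D_a}{$(a)$}--\hyperlink{th:D_bP1123}{$(j)$}, then $H_0$ is maximal by the previous paragraph and we are done. Otherwise $X_0$ is a Fano threefold of family \hyperlink{th:D_Fano}{$(k)$}; if $H_0$ is maximal we are again done, and if not we choose $H_0\subsetneq H_0'$ connected algebraic and apply the reduction to $H_0'$ to enlarge it to some realised $H_1=\varphi_1\Autz(X_1)\varphi_1^{-1}$. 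As soon as a realising variety lands in \hyperlink{th:D_a}{$(a)$}--\hyperlink{th:D_bP1123}{$(j)$}, the corresponding group is maximal and contains $G$, so the process stops; otherwise all realising varieties are terminal $\Q$-factorial Fano threefolds of Picard rank $1$ and we obtain a strictly increasing chain $H_0\subsetneq H_1\subsetneq\cdots$ of connected algebraic subgroups, each contained in the automorphism group of such a Fano threefold. By the boundedness of Fano threefolds with terminal singularities (Kawamata), the dimension of $\Autz$ over this class is bounded by a fixed integer $N$; since each strict inclusion raises the dimension and $\dim H_i\le N$, the chain must be finite. At termination either we have reached a family \hyperlink{th:D_a}{$(a)$}--\hyperlink{th:D_bP1123}{$(j)$} or the last group is a maximal Fano-realised one, and in both cases $G$ is contained in a maximal connected algebraic subgroup $\varphi\Autz(X)\varphi^{-1}$ with $X$ in the list, which gives $\varphi^{-1}G\varphi\subseteq\Autz(X)$ as required.

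The genuinely nontrivial point is concentrated in the Fano family \hyperlink{th:D_Fano}{$(k)$}: Theorem~\ref{th:E} provides no equivariant-rigidity statement there (its second part is restricted to \hyperlink{th:D_a}{$(a)$}--\hyperlink{th:D_bP1123}{$(j)$}), so one cannot conclude maximality directly and must instead force termination of the ascending chain. I therefore expect the main obstacle to be exactly this: ensuring the enlargement process cannot run forever inside family \hyperlink{th:D_Fano}{$(k)$}, for which the essential input is the uniform bound on $\dim\Autz(X)$ coming from boundedness of terminal Fano threefolds. For the families \hyperlink{th:D_a}{$(a)$}--\hyperlink{th:D_bP1123}{$(j)$} the equivariant-rigidity clause of Theorem~\ref{th:E} already does the heavy lifting, so there the argument is purely formal.
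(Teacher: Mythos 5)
Your proposal is correct and follows essentially the same route as the paper's own proof: establish the assertion that every connected algebraic subgroup lands inside some $\varphi\Autz(X)\varphi^{-1}$ with $X$ in the list (Weil regularisation plus the first part of Theorem~\ref{th:E}), deduce maximality for the families \hyperlink{th:D_a}{$(a)$}--\hyperlink{th:D_bP1123}{$(j)$} from the equivariant-rigidity clause of Theorem~\ref{th:E}, and handle family \hyperlink{th:D_Fano}{$(k)$} by terminating the ascending chain via boundedness of terminal $\Q$-factorial Fano threefolds, which uniformly bounds $\dim\Autz$. The only (cosmetic) difference is that you make explicit the observation that a strict inclusion of connected algebraic subgroups increases dimension, which the paper leaves implicit.
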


\subsection{Comparison with Umemura's classification}\label{sec:Umemura classification}
In this section we recall the classification of the connected algebraic subgroups of $\Bir(\p_\C^3)$ obtained by Umemura  \cite{Ume80,Ume82a,Ume82b,Ume85} and compare it with our results. The corresponding smooth relatively minimal models were determined by Mukai and Umemura in \cite{MU83,Ume88}. Their main result can be expressed as follows.

\begin{theorem*} \emph{\cite{Ume80,Ume82a,Ume82b,MU83,Ume85,Ume88}}\\
Let $G$ be a connected algebraic subgroup of $\Bir(\p_\C^3)$. Then $G$ is conjugated to an algebraic subgroup of $\Autz(X)$, where $X$ is a smooth rational Mori fibre space of dimension $3$. 
Moreover, the conjugacy classes of algebraic subrgoups of $\Bir(\p_\C^3)$ parametrized by the families $[P1], [P2],\ldots, [J6;m,n],\ldots, [J12]$ in the table below are the maximal conjugacy classes of algebraic subgroups of $\Bir(\p_\C^3)$.
\end{theorem*}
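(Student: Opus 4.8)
The plan is to deduce this statement entirely from our own results, bypassing Umemura's analytic input. Specialising Corollary~\ref{corF} to $\k=\C$, every connected algebraic subgroup $G\subseteq\Bir(\p_\C^3)$ is conjugate via some $\varphi\colon X\dashrightarrow\p^3$ into $\Autz(X)$ for one of the Mori fibre spaces $X$ listed in Theorem~\ref{th:E}, and the maximal conjugacy classes are exactly those realised by the families \hyperlink{th:D_a}{$(a)$}--\hyperlink{th:D_Fano}{$(k)$}. The proof then splits into two tasks: matching the automorphism groups of the explicit families \hyperlink{th:D_a}{$(a)$}--\hyperlink{th:D_bP1123}{$(j)$} with Umemura's de Jonquières-type (fibred) families $[J\dots]$, and accounting for the Fano family \hyperlink{th:D_Fano}{$(k)$}, which should correspond to his primitive families $[P\dots]$ carrying the large actions $\PGL_4$, $\PSO_5$ and the exotic $\SL_2$-, $\mathrm{SO}$- and $\mathrm{G}_2$-type ones.

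For the first task I would compute $\Autz(X)$ for each of \hyperlink{th:D_a}{$(a)$}--\hyperlink{th:D_bP1123}{$(j)$}; these varieties are completely explicit, so the groups can be read off from the descriptions in \S\ref{sec:first classification} together with the $\p^1$-bundle computations of \cite{BFT}. The matching is then a finite bookkeeping: compare the group structures (tori, $\PGL_2$, their extensions and semidirect products) and the discrete invariants $a,b,c,m,n$ against Umemura's parameters, family by family. Two points make this clean. First, although \hyperlink{th:D_W}{$(f)$}, \hyperlink{th:D_bP1112}{$(i)$}, \hyperlink{th:D_bP1123}{$(j)$} and the non-square-free members of \hyperlink{th:D_QQg}{$(h)$} are singular, the equivariant Sarkisov links $(S1)$--$(S16)$ connect each of them to a smooth Mori fibre space, which recovers the assertion of Umemura's theorem that a smooth model always exists. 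Second, the last part of Theorem~\ref{th:E} shows that any $\Autz$-equivariant birational map between two of these models conjugates one automorphism group onto the other, so each maximal conjugacy class is counted exactly once and no spurious duplicates arise.

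The genuine obstacle is the Fano family \hyperlink{th:D_Fano}{$(k)$}, consisting of the rational $\Q$-factorial Fano threefolds of Picard rank $1$ with terminal singularities. No intrinsic classification of such possibly singular varieties, nor of their automorphism groups, is available, so I cannot match \hyperlink{th:D_Fano}{$(k)$} to the primitive families directly. The route, spelled out in the Remark following Theorem~\ref{th:E}, is to reduce to the smooth case: one would prove that every such threefold other than $\P(1,1,1,2)$ and $\P(1,1,2,3)$ admits an $\Autz(X)$-equivariant birational map either to a smooth rational Fano threefold of Picard rank $1$ or to a model already on the list \hyperlink{th:D_a}{$(a)$}--\hyperlink{th:D_bP1123}{$(j)$}. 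Granting this, the classification of automorphism groups of smooth Fano threefolds of Picard rank $1$ over $\C$ of Kuznetsov--Prokhorov--Shramov \cite[Theorem~1.1.2]{KPS} determines the remaining groups and identifies them with Umemura's primitive families, while their maximality is already furnished by Corollary~\ref{corF} together with \cite{KPS}. The hard part is precisely this equivariant reduction of singular terminal Fano threefolds of Picard rank $1$ to smooth models: it is the only missing input, which is why the present method recovers \emph{most}, but not yet all, of Umemura's classification.
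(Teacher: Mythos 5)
You should first be clear that the paper does not prove this statement at all: it is Umemura's theorem, quoted with citations to \cite{Ume80,Ume82a,Ume82b,MU83,Ume85,Ume88}, where it is established by entirely different means (Lie's classification of analytic actions on complex threefolds, classical algebraic group theory and invariant theory). The paper's own position, stated in the abstract and in the Remark following Theorem~\ref{th:E}, is that its MMP/Sarkisov machinery recovers \emph{most}, but not all, of this classification. What you have written is essentially a faithful reconstruction of the paper's discussion in \S\ref{sec:Umemura classification}: Corollary~\ref{corF} plus a family-by-family matching accounts for the fibred families $[J1]$--$[J12]$, your observation that the singular models $(f)$, $(i)$, $(j)$ and the non-square-free members of $(h)$ are linked to smooth ones is correct, and you correctly identify the Fano family $(k)$ as the obstruction.

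As a proof of the stated theorem, however, the proposal has a genuine gap, and not only the one you admit. Beyond the missing equivariant reduction of singular terminal Fano threefolds of Picard rank $1$ to smooth models, your claim that the maximality of the primitive families is ``already furnished by Corollary~\ref{corF} together with \cite{KPS}'' is incorrect. The maximality assertion of Corollary~\ref{corF} is explicitly restricted to the families $(a)$--$(j)$; for a variety $X$ in family $(k)$ the corollary only guarantees that $\varphi\Autz(X)\varphi^{-1}$ is \emph{contained in} some maximal subgroup, not that it is itself maximal. Even granting \cite{KPS} to compute $\Autz(X_5)\simeq\Autz(X_{22}^{MU})\simeq\PGL_2$, one must still show that these copies of $\PGL_2$ are not conjugate into any strictly larger connected algebraic subgroup of $\Bir(\p_\C^3)$ --- i.e., control all equivariant Sarkisov links out of $X_5$ and $X_{22}^{MU}$ --- and rule out that other smooth rational Fano threefolds of Picard rank $1$ contribute further maximal classes (most have trivial or toral $\Autz$, and tori are absorbed into $\PGL_4$ by Corollary~\ref{tori conjugate}, but this needs to be argued). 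None of this is done in the paper; it is precisely the content of Umemura's $[P1]$, $[P2]$, $[E1]$, $[E2]$ analysis. So the statement, as a whole, still rests on Umemura's original proof, and your strategy recovers only the non-primitive part.
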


\scalebox{0.87}{
\begin{tabular}{|l|l|l|}
\hline
$[P1]$ & & the projective space $\P^3$ \\[0pt]
\hline
$[P2]$ & & the smooth quadric $Q \subseteq \P^4$\\[0pt]
\hline
$[E1]$ & &the Fano threefold $X_5$\\[0pt]   
\hline  
$[E2]$ && the Mukai-Umemura Fano threefold $X_{22}^{MU}$\\[0pt] 
\hline
$[J1]$ && $\p^2\times\p^1$\\[0pt]
\hline
$[J2]$ & &$\p^1\times\p^1\times\p^1$\\[0pt]
\hline
$[J3;m]$& $m \geq 2$ &$\p^1\times\F_m$ \\[0pt]
\hline
$[J4]$ & &the projectivisation of the tangent bundle of $\p^2$ \\[0pt]
\hline
$[J5;m]$ & $m \geq 3$& the Schwarzenberger $\p^1$-bundle $S_m \to \p^2$ \\[0pt]
\hline
$[J6;m,n]$& $m \geq 2, -2 \geq n$& $\P(\O_{\p^1\times\p^1} \oplus \O_{\p^1\times\p^1}(-m,-n))$ \\[0pt]
\hline
$[J7;m]$ & $m \geq 2$& $\P(\O_{\p^2} \oplus \O_{\p^2}(-m))$\\[0pt]
\hline
$[J8;m,n]$ & $m \geq n \geq 2$ &$\P(\O_{\p^1\times\p^1} \oplus \O_{\p^1\times\p^1}(-m,-n))$ \\[0pt]
         & $m \geq 1$& $\P(\O_{\p^1\times\p^1} \oplus \O_{\p^1\times\p^1}(-m,-1))$ \\[0pt]
                         &  & or $\P(\O_{\p^1}(-m)\oplus \O_{\p^1}(-m) \oplus \O_{\p^1})$ \\[0pt]
\hline
$[J9;m,n]$ & $m\wedge n \geq 2, m \geq 2n \geq 4$& $\P(\O_{\F_n} \oplus \O_{\F_n}(-mf-ks_{-n}))$ with $k\ge \lfloor \frac{m}{n}\rfloor$  \\[0pt]
         & $m \wedge n  \geq 2, 2n>m >n $ & $\P(\O_{\F_n} \oplus \O_{\F_n}(-mf-ks_{-n}))$ with $k\ge \lfloor \frac{m}{n}\rfloor$  \\[0pt]
            &&  or $\P(\O_{\p^1}(-m) \oplus \O_{\p^1}(-(m-n)) \oplus \O_{\p^1})$\\
         & $m \wedge n=1, m \geq 2n \geq 4$& $\P(\O_{\F_n} \oplus \O_{\F_n}(-mf-ks_{-n}))$ with $k\ge \lfloor \frac{m}{n}\rfloor$ \\[0pt]
             &&  or the so-called \emph{Euclidean model} $U_{m,n}$\\  \relax
         & $m \wedge n=1, 2n>m >n $& $\P(\O_{\F_n} \oplus \O_{\F_n}(-mf-ks_{-n}))$ with $k\ge \lfloor \frac{m}{n}\rfloor$   \\[0pt]
            &&  or $\P(\O_{\p^1}(-m) \oplus \O_{\p^1}(-(m-n)) \oplus \O_{\p^1})$\\[0pt]
              &&  or the so-called \emph{Euclidean model} $U_{m,n}$\\[0pt]
\hline
$[J10;m]$ &   $m \geq 2$ & $\P(\O_{\p^1}(-m) \oplus \O_{\p^1} \oplus \O_{\p^1})$\\[0pt]
\hline
$[J11;m,l]$ & $m \geq 2, l \geq 2$&  the Umemura $\p^1$-bundle $\U_{m}^{l,j} \to \p^2$ with $j \geq l$ \\[0pt]
          & $m=1, l \geq 3$ &  the Umemura $\p^1$-bundle $\U_{m}^{l,j} \to \p^2$ with $j \geq l$ \\[0pt]
          \hline
$[J12;g(t)]$ &$g \in \k[t]$ of even degree & the smooth quadric fibration $\QQ_g \to \p^1$ of \upshape\S~\ref{quadric fibrations} \\[0pt]
\hline\end{tabular}}

\ \\

The conjugacy classes [P1], [P2], [E1], and [E2] are determined in \cite{Ume80,Ume82a} and correspond to the case of Fano threefolds with Picard rank $1$. This first part of the classification of the maximal connected algebraic subgroup of $\Bir(\p_\C^3)$ is based on the classification of {law chunks of analytic actions (see \cite[\S~1]{Ume80} for the definition, these are the analytic counterpart of the rational actions introduced in Definition~\ref{Defi:AlgSubgroupsBir})} due to Lie combined with the classical theory of algebraic groups and invariant theory.

The other conjugacy classes [J1],\ldots,[J12] correspond to non-trivial Mori fibrations (i.e.~conic bundles over a surface and del Pezzo fibrations over $\p^1$). They are determined in \cite{Ume82b,Ume85} by studying the properties of linear algebraic groups of small rank, their linear representations, and their homogeneous  spaces.  

Let us note that Umemura first obtained an explicit representative for each conjugacy classes via group-theoretic arguments, and only then Mukai and Umemura determined the relatively minimal models $X$ realising each maximal conjugacy class of $\Bir(\p_\C^3)$ in \cite{MU83,Ume88} via the Mori theory. 

The Euclidean models that appear in Umemura's paper \cite{Ume88} do \emph{not} appear in our classification, since they are not Mori fibre spaces; those smooth models can however be recovered from our list, as explained in Remark~\ref{Rem:EuclideanModels}. 

\smallskip

We now give the correspondence between Umemura's classification and the one obtained in Theorem~\ref{th:Ea}. 
\begin{itemize}
\item 
Family \hyperlink{th:D_a}{$(a)$} corresponds to Umemura's Families [J2], [J3], [J6], and [J8] when $a=0$ and  to Umemura's families [J3] and [J9]  when $a\geq 2$. 

\item 
Family \hyperlink{th:D_b}{$(b)$} corresponds to Umemura's Family [J7].

\item 
Family \hyperlink{th:D_c}{$(c)$}  corresponds to Umemura's Family [J11].

\item 
Family \hyperlink{th:D_d}{$(d)$}  corresponds to Umemura's Families [J4] and [J5].

\item
Family \hyperlink{th:D_e}{$(e)$}  was overlooked in the work of Umemura. But this family corresponds to conjugacy classes of algebraic subgroups of $\Bir(\P^3_\C)$ contained in [J11].

\item 
Family \hyperlink{th:D_RR}{$(g)$} corresponds to Umemura's Families [J1], [J8], [J9], and [J10].

\item 
Family \hyperlink{th:D_QQg}{$(h)$} corresponds to Umemura's Family [J12].

\item
{Elements [P1], [P2], [E1], and [E2]  are smooth Fano threefolds with Picard rank $1$ and they belong to Families \hyperlink{th:D_bP3}{$(i)$}, \hyperlink{th:D_bQ3}{$(j)$} and \hyperlink{th:D_Fano}{$(m)$} . }
\end{itemize}

\subsection{Content of the sections} Let us specify the content of each section.

Section~\ref{sec:preliminaries} is dedicated to preliminaries. In \upshape\S~\ref{Mori fibrations} we recall the notion of \emph{Mori fibration} and consider in particular the case of threefolds. In \upshape\S~\ref{subsec:Sarkisov} we recall how equivariant birational maps between Mori fibrations can be factorised in equivariant \emph{Sarkisov links}, with a particular focus on the three-dimensional case. Then in \upshape\S~\ref{SubSec:AlgSubgroups} we recall the definition and characterize the \emph{algebraic subgroups} of the group of birational transformations $\Bir(X)$ for a variety $X$. 
In \upshape\S~\ref{SubSec:Reg} we prove the first part of Proposition~\ref{propB} (Lemma~\ref{lemma:linear}), then we recall the famous \emph{regularization theorem} of Andr\'e Weil and apply it to prove Theorem~\ref{th alg subg of Cr3 are aut of Mori fib} (which is the starting point in the proofs of Theorems~\ref{th:A} and~\ref{th:Ea}). In \upshape\S~\ref{tori_cremona} we prove the second part of Proposition~\ref{propB} (see Corollary~\ref{cor Auts cubic threefold}).
 
Section~\ref{Sec:ConicBundles} is dedicated to the proof of Theorem~\ref{Thm:conic bundles}. 
In \upshape\S~\ref{first reduction} we explain how to reduce the case of \emph{standard conic bundles}, which are Mori conic bundles with nice geometric features. In \upshape\S~\ref{conic bundles} we study standard conic bundles whose generic fibre is not $\P^1$, and finally in \upshape\S~\ref{sec:proof of Prop B} we prove Theorem~\ref{Thm:conic bundles}.

Section~\ref{Sec:dP} is dedicated to the proof of Theorem~\ref{Thm:MainQuadric}. In \upshape\S~\ref{subsec generalities} we recall some general results on del Pezzo surfaces and del Pezzo fibrations. In \upshape\S~\ref{red_delPezzo} we study del Pezzo fibrations of small degree over $\P^1$ and prove the first sentence in Theorem~\ref{Thm:MainQuadric}. In \upshape\S~\S~\ref{subsec:P2 fibrations}-\ref{quadric fibrations} we consider the case of $\P^2$-fibrations and quadric fibrations over $\P^1$ respectively. The proof of Theorem~\ref{Thm:MainQuadric} is given in \upshape\S~\ref{proof of thA}. Once we have proven Theorems~\ref{Thm:conic bundles} and~\ref{Thm:MainQuadric}, we easily prove Theorem~\ref{th:A}, also in \upshape\S~\ref{proof of thA}.

Section~\ref{subsec:first refinement} is an intermediate step towards the proof of Theorem~\ref{th:Ea}. In \upshape\S~\ref{sec:first classification} we introduce some families of $\P^1$-bundles over $\P^2$ and Hirzebruch surfaces $\F_a$ (with $a \geq 0$), and we recall the main result proven in \cite{BFT} (which is necessary to prove Theorem~\ref{th:Ea}). Then in \upshape\S~\ref{sec:non-maximality} we prove a series of lemmas that are also useful to prove Theorem~\ref{th:Ea}.

Section~\ref{sec:maximality} is dedicated to the proofs of Theorem~\ref{th:Ea}, Theorem~\ref{th:Eb} and Corollary~\ref{corF}. We first describe in \upshape\S~\S~\ref{subsec:homog case}-\ref{subsec Umemura quadric fib} all the equivariant Sarkisov links starting from the non-trivial Mori fibrations listed in Theorem~\ref{th:first classification maximality}, and then we easily deduce from this the proofs of Theorem~\ref{th:Ea}, Theorem~\ref{th:Eb}, and Corollary~\ref{corF} in \upshape\S~\ref{subsec:proof of th D}.

\section{Preliminaries} \label{sec:preliminaries}

\noindent \textbf{Notation.}
In this article we work over a fixed algebraically closed field $\k$. To the extent possible, we make no assumption on the characteristic of $\k$. Each time a restriction on the characteristic of $\k$ is required we write it down explicitly.
A \emph{variety} is an integral separated scheme of finite type over a field; in particular, varieties are always irreducible.   
An \emph{algebraic group} is a group scheme over a field that is smooth, or equivalently, geometrically reduced. By an algebraic subgroup, we always mean a closed and reduced subgroup scheme. The neutral component of an algebraic group $G$ is the connected component containing the identity element, denoted as $G^\circ$; this is a normal subgroup scheme of
$G$, and the quotient $G/G^\circ$ is a finite group scheme. When the base field of our varieties, rational maps, and algebraic groups is not specified, we work over the fixed algebraically closed field $\k$.
In this article, a $\P^n$-bundle is always assumed to be locally trivial for the Zariski topology; in particular, it is the projectivisation of a rank $n+1$ vector bundle when working over a regular Noetherian scheme.

\subsection{Mori fibrations}  \label{Mori fibrations}
In this subsection we recall some notions from the Mori theory / MMP; see \cite{KM98,Mat02,K13} for more details.
\begin{definition}
A normal projective Gorenstein variety $Z$ defined over an arbitrary field is called \emph{Fano} if the anticanonical bundle $\omega_Z^{\vee}$ of $Z$ is ample. A \emph{del Pezzo surface} is a surface that is a Fano variety.
\end{definition}

\begin{definition}Let $X$ be a proper scheme over {an arbitrary field $K$}. Then one can associate to $X$ the \emph{Picard scheme} $\BPic_{X/K}$ and its neutral component $\BPic_{X/K}^0$, which is a connected group scheme of finite type parametrising the algebraically trivial line bundles on $X$ (see \cite[n.~232, Sect.~6]{FGA}, \cite{M64}, and \cite{O62}).
\\The \emph{N\'eron-Severi scheme} $\BNS_{X/K}$ is defined via the following exact sequence of abelian group schemes:
\begin{equation*}  
0 \to \BPic_{X/K}^0 \to \BPic_{X/K} \to \BNS_{X/K} \to 0.
\end{equation*} 
The abelian group of $K$-points $\BNS_{X/K}(K)$ is denoted by $\NS(X)$ and the \emph{Picard rank} of $X$ is defined as $\rho(X):=\dim_\Q  \NS(X)_\Q.$
\end{definition}

\begin{definition}\label{Df:MoriFibration}
Let $\pi\colon X \to Y$ be a dominant projective morphism of normal projective varieties. Then $\pi$ is called a \emph{Mori fibration}, and the variety $X$ a \emph{Mori fibre space},  if the following conditions are satisfied:
\begin{enumerate}[$a)$]
\item\label{MoriFibrationDefa} $\pi_*(\O_X)=\O_Y$ and $\dim(Y) < \dim(X)$;
\item\label{MoriFibrationDefb} $X$ is $\Q$-factorial with terminal singularities; and
\item\label{MoriFibrationDefc} $\omega_X^{\vee}$ is $\pi$-ample and the relative Picard number $\rho(X/Y)$ of $\pi$, that is, the rank of $\NS(X/Y)=\NS(X)/\pi^* \NS(Y)$, is one.
\end{enumerate}
\end{definition}

In the rest of this article, we will consider only the case where $X$ is a rational threefold. The MMP for {smooth projective threefolds} has been established over a field of characteristic zero in \cite{Mor82} and recently over a field of characteristic $\geq 5$ (see for instance { \cite{HX15,CTX15, Bir16, BW,HW19}). Consequently, if $X$ is a smooth rational threefold and $\car(\k) =0$ or $\geq 5$}, then we can {run} an MMP  to produce a Mori fibration.

If $X$ is a rational threefold and $X \to Y$ is a Mori fibration, then we distinguish between three cases according to the dimension of the basis $Y$.
\begin{itemize}
\item \emph{$\dim(Y) =2$}. The Mori fibration $\pi$ is a \emph{conic bundle}, that is, a general fibre of $\pi$ is isomorphic to $\p^1$ (hence the generic fibre is a geometrically irreducible conic). Also, the surface $Y$ is rational with only canonical singularities.  
\item \emph{$\dim(Y)=1$}. The Mori fibration $\pi$ is a \emph{del Pezzo fibration}, that is, a Mori fibration whose general fibre is a del Pezzo surface (which is smooth if $\car(\k)=0$, but can be singular in low characteristic, see \upshape\S~\ref{Sec:dP}). Also, the curve $Y$ is isomorphic to $\p^1$.
\item \emph{$\dim(Y)=0$}. The Mori fibration is trivial and $X$ is a rational Fano threefold with Picard rank $1$ and terminal singularities.
\end{itemize}  

Let us note that a conic bundle (resp.~a del Pezzo fibration) is not necessarily a Mori fibration (because of the Picard rank condition).

\begin{definition}
A \emph{Mori conic bundle} (resp.~a \emph{Mori del Pezzo fibration}) is a conic bundle (resp.~a del Pezzo fibration) which is also a Mori fibration.
\end{definition}

We recall a result due to Blanchard \cite{Bla56} in the setting of complex geometry, whose proof has been adapted to the setting of algebraic geometry.

\begin{proposition} \emph{\cite[Proposition~4.2.1]{BSU13}} \label{blanchard}
Let $f\colon X \to Y$ be a proper morphism between varieties such that $f_*(\O_X)=\O_Y$. If a connected algebraic group $G$ acts regularly on $X$ {$($i.e.~$G$ acts on $X$ through a morphism of algebraic groups $G \mapsto \Autz(X))$}, then there exists a unique regular action of $G$ on $Y$ such that $f$ is $G$-equivariant.
\end{proposition}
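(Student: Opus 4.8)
The plan is to descend the action morphism $a\colon G\times X\to X$ along $f$ by a rigidity argument, with the connectedness of $G$ playing the role of the ``base'' and the connectedness of the fibres of $f$ (guaranteed by $f_*\O_X=\O_Y$) the role of the ``fibre.'' First I would record the consequences of the hypothesis. Since $Y$ is integral and $f$ is proper, the equality $f_*\O_X=\O_Y$ forces $f$ to be surjective (its closed image carries the support of $f_*\O_X=\O_Y$, hence is all of $Y$) and forces its fibres to be connected, by triviality of the Stein factorisation. As $\k$ is algebraically closed, each fibre $X_y=f^{-1}(y)$ is thus proper and connected, so its reduction satisfies $H^0((X_y)_{\mathrm{red}},\O)=\k$. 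Finally, the property $f_*\O_X=\O_Y$ is stable under the flat base change $G\times Y\to Y$, so that $q:=\mathrm{id}_G\times f\colon G\times X\to G\times Y$ is again proper with $q_*\O_{G\times X}=\O_{G\times Y}$.

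The heart of the argument is to show that the composite $\psi:=f\circ a\colon G\times X\to Y$ is constant on each fibre of $q$; the fibre of $q$ over $(g,y)$ being $\{g\}\times X_y$, this amounts to proving that $x\mapsto f(a(g,x))$ is constant on $X_y$ for every $g\in G$ and $y\in Y$. For this I would fix $y$ and apply the rigidity lemma to the morphism $G\times X_y\to Y$, $(g,x)\mapsto f(a(g,x))$: at $g=e$ it sends $\{e\}\times X_y$ to the single point $y$, since $f(a(e,x))=f(x)=y$. Because $G$ is connected and $X_y$ is proper and connected with $H^0((X_y)_{\mathrm{red}},\O)=\k$, the standard rigidity lemma forces the image of $\{g\}\times X_y$ to be a single point for every $g$. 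Concretely, one chooses an affine neighbourhood of the image point, uses properness of $X_y$ to produce an open neighbourhood of $e$ over which the map lands in that affine, pushes forward functions via $H^0((X_y)_{\mathrm{red}},\O)=\k$ to obtain the contraction near $e$, and then observes that the locus where $\{g\}\times X_y$ is contracted is both open and closed in the connected $G$.

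Once $\psi$ is known to contract the fibres of $q$, I would invoke the universal property of a proper morphism $q$ satisfying $q_*\O=\O$: a morphism out of the source that is constant on the fibres of $q$ factors uniquely through it (the factorisation is a genuine morphism of schemes because pulled-back functions descend along $q_*\O=\O$). This yields a unique morphism $\bar a\colon G\times Y\to Y$ with $\bar a\circ q=\psi$, that is $f(a(g,x))=\bar a(g,f(x))$, which is exactly the $G$-equivariance of $f$ for the candidate action $\bar a$. It remains to check that $\bar a$ is a group action: the identity and associativity axioms for $\bar a$ follow from those for $a$ by applying $f$ and using uniqueness of the factorisation through the surjective morphisms $\mathrm{id}_G\times f$ and $\mathrm{id}_G\times\mathrm{id}_G\times f$; uniqueness of $\bar a$ itself follows from surjectivity of $\mathrm{id}_G\times f$.

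The main obstacle I anticipate is the rigidity step, that is, genuinely exploiting $f_*\O_X=\O_Y$ rather than mere set-theoretic connectedness of fibres: the descent of $\psi$ requires the equality of structure sheaves both to push forward functions and to guarantee that the factoring map is a morphism of schemes, and one must verify this equality survives base change to $G\times Y$. A secondary point of care, in positive characteristic, is that fibres need not be reduced, so the identity $H^0=\k$ should be applied to $(X_y)_{\mathrm{red}}$; since the image of $\{g\}\times X_y$ coincides set-theoretically with that of its reduction, this suffices for the rigidity argument.
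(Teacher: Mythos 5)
Your argument is correct and coincides with the proof of \cite[Proposition~4.2.1]{BSU13}, which the paper cites in place of giving its own proof: one shows that $f\circ a$ contracts the fibres of $\mathrm{id}_G\times f$ by the rigidity lemma, then descends using $(\mathrm{id}_G\times f)_*\O_{G\times X}=\O_{G\times Y}$ obtained by flat base change, and finally checks the action axioms through the scheme-theoretic epimorphism $\mathrm{id}_G\times f$. Your care with possibly non-reduced fibres and the uniqueness argument are likewise as in the reference, so there is nothing to add.
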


Let us note that, if $X$ is a projective variety, then $\Autz(X)$ is a connected algebraic group; see \cite{MO67} for the algebraic group structure of the neutral component of the automorphism group of a proper scheme. We now consider two particular situations of high interest for us where Proposition~\ref{blanchard} applies. 
\begin{itemize}
\item Let $f\colon X \to Y$ be a divisorial contraction between projective varieties. By Proposition \ref{blanchard}, the algebraic group $\Autz(X)$ acts on $Y$ and $f$ is $\Autz(X)$-equivariant. This means that there is an inclusion of algebraic groups 
\[ f \Autz(X) f^{-1} \subseteq \Autz(Y),\] 
where we write $f^{-1}$ to denote the birational map which is the inverse of the birational morphism $f$. This observation will be useful in \upshape\S~\S~\ref{sec:non-maximality}, \ref{sec:P2bundles}, and \ref{decompo P1 over Fa}.
\item Let now $\pi\colon X \to Y$ be a Mori fibration. By Proposition \ref{blanchard}, the algebraic group $G:=\Autz(X)$ acts on $Y$ and $\pi$ is $G$-equivariant. 
To study $G=\Autz(X)$, we will often consider the exact sequence
\begin{equation}  \label{exact sequence}
1 \to \Autz(X)_Y \to \Autz(X) \to H \to 1,
\end{equation} 
where $H$ is the image of the natural homomorphism $G \to \Autz(Y)$, and $\Autz(X)_Y$ is the (possibly disconnected) subgroup scheme of $\Autz(X)$ which preserves every fibre of the Mori fibration $\pi$.
\end{itemize}

\begin{remark} \label{rk: inclusion into aut group of generic fibre}
Let $K=\k(Y)$, and consider the cartesian square 
\[\xymatrix@R=4mm@C=2cm{
    X_K \ar[r]^{p_1} \ar[d]_{p_2}  & X \ar[d]^{f} \\
    \Spec(K) \ar[r]_-{q} & Y
  }\]
defining the generic fibre of the Mori fibration $\pi\colon X \to Y$. Since, for any $\phi \in \Autz(X)_Y$, we have $q \circ p_2=f \circ p_1=f \circ \phi \circ p_1$, the universal property of Cartesian squares yields the existence of $\tilde \phi \in \Aut(X_K)$ such that $p_1 \circ \tilde \phi =\phi \circ p_1$ and $p_2 \circ \tilde \phi=p_2$. The map $\phi \mapsto \tilde \phi$ is an injective group homomorphism $\Autz(X)_Y \hookrightarrow \Aut(X_K)$.
\end{remark}

\begin{remark}\label{rk:MMP G eq}
Let $G$ be a connected algebraic group.
It follows from Proposition~\ref{blanchard} that an MMP applied to a smooth projective $G$-variety is automatically $G$-equivariant. Indeed, any contraction morphism $\phi\colon X \to Y$ associated with an extremal ray of $\overline{\NE}(X)_{K_X<0}$ satisfies the assumptions of Proposition~\ref{blanchard}, hence is $G$-equivariant. 
Moreover, the finite type $\O_Y$-algebra $\mathcal{A}:=\bigoplus_{m \geq 0}\phi_* \O_X(mK_X)$ is canonically a \emph{$G$-equivariant sheaf} (see \cite[Tag 03LE]{stacks-project} for the definition), hence the variety $X^+:=\mathrm{Proj}(\mathcal{A})$ is endowed with a $G$-action and the birational map $X^+ \dashrightarrow X$ is $G$-equivariant.
\end{remark}

\subsection{The equivariant Sarkisov program for threefolds} \label{subsec:Sarkisov}
In this subsection we recall some classical facts about the \emph{Sarkisov program}. 
This is used to factorise birational maps between Mori fibrations in easy links. 
We discuss here the three-dimensional case, following the approach by Corti \cite{C95}.

\smallskip

The following notion of isomorphism is often used implicitly in the literature. For instance, in \cite{C95,HM}, the authors consider linear systems instead of rational maps and implicitly study Mori fibrations up to such isomorphisms.
\begin{definition}\label{IsoMfs}
Let $\pi\colon X \to Y$ and $\pi'\colon X' \to Y'$ be two Mori fibrations. An isomorphism $\varphi\colon X\to X'$ is called \emph{isomorphism of Mori fibrations} if there is a commutative diagram
\[
\xymatrix@R=5mm@C=2cm{
     X  \ar[r]^-{\varphi} \ar[d]_-{\pi} & X' \ar[d]^-{\pi'} \\
     Y  \ar[r]^-{\tau}& Y'
  },
\]
where $\tau\colon Y \to Y'$ is an isomorphism.
\end{definition}
\begin{definition}\label{sarkisov-links}
A birational map 
$$
\xymatrix@R=5mm@C=2cm{
     X  \ar@{-->}[r]^-{\varphi} \ar[d]_-{\pi} & X' \ar[d]^-{\pi'} \\
     Y  & Y'
  },
$$
where $\pi\colon X \to Y$ and $\pi'\colon X' \to Y'$ are two Mori fibrations, is a \emph{Sarkisov link} if it has one of the following four forms:
$$ \text{\scriptsize (type \I)}
  \xymatrix@R=5mm@C=1cm{
     & W  \ar@{..>}[r] \ar[dl]_-{\div} & X' \ar[d]^-{\pi'} \\
      X \ar[d]_-{\pi} \ar@{-->}[rru]^-{\varphi}  & & Y' \ar[lld] & \\
      Y &  &
    } 
    \text{\scriptsize (type \III)}\ \ 
\xymatrix@R=5mm@C=1cm{
     X \ar@{-->}[rrd]^-{\varphi} \ar@{..>}[r] \ar[d]_-{\pi} & W' \ar[dr]^{\div} \\
      Y \ar[drr]  & &  \ar[d]^-{\pi'} X' \\
    &  & Y' 
    }  
$$
$$
    \text{\scriptsize (type \II)}
\xymatrix@R=5mm@C=1cm{
    & W  \ar@{..>}[r] \ar[dl]_-{\div} & W' \ar[dr]^-{\div} & \\
     X \ar@{-->}[rrr]^-{\varphi} \ar[d]_-{\pi}  & & & X' \ar[d]^-{\pi'} & \\
     Y\ar@{<->}[rrr]^-{\simeq} & & & Y' 
  }
\text{\scriptsize (type \IV)}\ \ \
\xymatrix@R=5mm@C=.3cm{
    X  \ar@{..>}[rr]^{\varphi} \ar[d]_-{\pi}  & & X' \ar[d]^-{\pi'} \\
     Y \ar[dr]  & & Y' \ar[ld]\\
   & Z & 
    }  
$$
where:
\begin{itemize}
\item all varieties are normal;
\item all arrows that are not horizontal are elementary contractions, that is, contractions of one extremal ray, of relative Picard rank one;
\item the morphisms marked with \emph{div} are Mori divisorial contractions;
\item all the dotted arrows are \emph{small} maps, that is, compositions of Mori flips, flops and Mori anti-flips; and
\item the birational map $\varphi\colon X\dasharrow X'$ is not an isomorphism of Mori fibrations.
\end{itemize}
\end{definition}
\begin{remark}
{In a Sarkisov link, the  birational map $\varphi\colon X\dasharrow X'$ is not an isomorphism in Cases \I, \II, \III. In Case \IV, it is a pseudo-isomorphism, which can be an isomorphism or not.}
\end{remark}
\begin{remark}\label{Rem:UpToIso}
The composition of a Sarkisov link with an isomorphism of Mori fibrations is again a Sarkisov link. In the sequel we will identify two such links, and thus often say that there is a unique link, or finitely many links, which means ``up to composition at the target by an isomorphism of Mori fibrations''.
\end{remark}

\begin{remark}\label{remark:UnicitySarkisov}
In a Sarkisov link $\varphi$ as above, the morphism $W\to Y$ (type \I,\II), $W'\to Y'$ (type \III) or $X\to Z$ (type \IV) is of relative Picard rank $2$. The Sarkisov link (up to inverse and up to isomorphisms of Mori fibrations as above) is determined by this morphism, by applying a relative MMP to the two extremal rays (see \cite[Lemma 3.7]{BLZ}).
\end{remark}

Over an algebraically closed field of characteristic zero, the fact that every birational map between Mori fibrations  is a composition of elementary links as above (and of isomorphisms of Mori fibrations) was proven by Corti in  \cite[Theorem~3.7]{C95}, and generalised by \cite[Theorem~1.1]{HM} to any dimension. We need an equivariant version of this result for the action of a connected algebraic group. In dimension $3$, this follows actually from the proof of \cite[Theorem~3.7]{C95} as every step turns to be equivariant. We refer to \cite[Theorem~1.3]{Flo18} for a complete proof of the validity of the equivariant Sarkisov program in dimension $\geq 3$.

\begin{theorem} \label{th:Sarkisov decompo}\cite[Theorem~3.7]{C95}, \cite[Theorem~1.1]{HM}, \cite[Theorem~1.3]{Flo18}.\\
Assume that $\car(\k)=0$.
Let $X\to Y$ and $X'\to Y'$ be two terminal Mori fibrations and let $G=\Autz(X)$. Every $G$-equivariant birational map $\varphi\colon X\dasharrow X'$ factorises into a product of $G$-equivariant Sarkisov links and isomorphisms of Mori fibrations.\end{theorem}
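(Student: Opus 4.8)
The plan is to upgrade the non-equivariant Sarkisov program of Corti \cite{C95} (and of Hacon--McKernan \cite{HM} in higher dimension) to the $G$-equivariant setting, rather than to reprove it from scratch; this is essentially the content of \cite{Flo18}. The whole argument rests on one elementary principle: since $G$ is \emph{connected}, it acts trivially on every discrete invariant that the Sarkisov algorithm uses to make its choices. Indeed, for any variety $Z$ carrying a $G$-action, the induced morphism $G \to \GL(N^1(Z))$ (resp.\ $G \to \GL(N_1(Z))$) lands in a discrete group, so the image of the connected $G$ is trivial; hence $G$ fixes the N\'eron--Severi lattice, it fixes the cone of curves $\overline{\NE}(Z)$ together with each of its extremal rays, and it fixes pointwise any finite set of divisorial valuations intrinsically attached to a $G$-invariant pair. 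Consequently every extremal contraction, every flip or flop, and every divisorial extraction produced by the algorithm is automatically $G$-equivariant, because each is governed by exactly such discrete data.

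First I would set up the invariants equivariantly. Since $\omega_{X'}$ is canonically $G$-linearized (the group acting by automorphisms) and $\omega_{X'}^{\vee}$ is $\pi'$-ample, for $m\gg 0$ the relative linear system $\mathcal{H}'$ attached to $-mK_{X'}$ is $G$-invariant and relatively very ample; its strict transform $\mathcal{H}=\varphi^{-1}_*\mathcal{H}'$ on $X$ is then $G$-invariant because $\varphi$ is $G$-equivariant (for the induced, a priori birational, $G$-action on $X'$). The Sarkisov degree of $\varphi$ is read off from the Noether--Fano invariants of the pair $(X,\tfrac{1}{\mu}\mathcal{H})$, where $\mu$ is determined by $K_X+\tfrac1\mu\mathcal{H}\equiv_Y 0$. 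By the Noether--Fano criterion, either these invariants certify that $\varphi$ is already an isomorphism of Mori fibrations, in which case that isomorphism is $G$-equivariant because it is induced by $\varphi$, or the pair fails to be canonical, or $K_X+\tfrac1\mu\mathcal{H}$ fails to be relatively nef; each of the latter two possibilities singles out a \emph{maximal singularity} (a divisorial valuation $E$) or an offending extremal ray.

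Next I would extract a single Sarkisov link and verify its equivariance. The maximal valuation $E$, resp.\ the offending extremal ray, is intrinsic to the $G$-invariant pair $(X,\tfrac1\mu\mathcal{H})$, so its center on $X$ is a $G$-invariant subvariety and the corresponding $G$-equivariant divisorial extraction $W\to X$ exists (or one simply takes $X$ itself). Using Remark~\ref{remark:UnicitySarkisov}, the link is then produced canonically by running the relative two-ray MMP on the relative Picard-rank-$2$ morphism $W\to Y$ (resp.\ $X\to Z$). Both extremal rays of this two-ray game are $G$-fixed, so the two elementary contractions are $G$-equivariant, and the intervening small modifications are $G$-equivariant as well, since a flip or flop is unique once its small contraction is fixed. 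This yields one $G$-equivariant Sarkisov link that strictly decreases the Sarkisov degree.

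Finally, termination is inherited verbatim from the non-equivariant program: the equivariance constraints do not affect the bookkeeping of the Sarkisov degree, so the algorithm halts after finitely many steps by \cite{C95} (resp.\ \cite{HM}), at which point the Noether--Fano criterion forces the residual map to be a $G$-equivariant isomorphism of Mori fibrations; composing the links recovers a factorisation of $\varphi$ into $G$-equivariant Sarkisov links. The main obstacle is precisely the step just described: guaranteeing that the extraction of the maximal singularity and, more generally, the resolution of the indeterminacy of $\varphi$ can be carried out $G$-equivariantly. This is exactly where $\car(\k)=0$ enters, through equivariant resolution of singularities and the existence of the required MMP, in combination with the connectedness of $G$, which makes every extremal ray $G$-fixed and hence renders every MMP step $G$-equivariant.
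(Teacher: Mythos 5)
Your proposal is correct and follows essentially the same route as the paper, which does not reprove the statement but cites \cite{C95}, \cite{HM}, and \cite{Flo18} with precisely the observation you elaborate: for a \emph{connected} group every step of Corti's algorithm (choice of maximal singularity, extremal contractions, flips/flops, and the termination bookkeeping) is automatically equivariant because $G$ acts trivially on the relevant discrete data. Your write-up is a reasonable expansion of that remark, matching the argument carried out in detail in \cite{Flo18}.
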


\begin{notation}
From now on when we write \emph{equivariant link} we always mean \emph{non-trivial $G$-equivariant Sarkisov link}, where the connected linear algebraic group $G$ acting is clear from the context. 
\end{notation}

We now give some results that provide restrictions about the possible links that can occur in our setting (Lemmas~\ref{lem:ReducedblowUpcurve} and \ref{Lemm:AntiFlip} below).


The following result is essentially \cite[Lemma 2.13]{BLZ} {(see also \cite[Theorem 2]{ando} in the case where $W$ and $X$ are smooth threefolds and $\car(\k)=0$)}. We reproduce here the simple argument, which works over any algebraically closed field. 
\begin{lemma} \label{lem:ReducedblowUpcurve}
Let $\eta\colon W\to X$ be a divisorial contraction between $\Q$-factorial terminal projective threefolds that contracts a divisor $E$ onto a curve $C\subseteq X$.
Let $U\subset X$ be a dense open subset intersecting $C$ such that  $U\cap C$, $U$, and $\eta^{-1}(U)$ are contained in the smooth loci of $C$, $X$ and $W$ respectively. Then, $\eta\colon \eta^{-1}(U)\to U$ is the blow-up of $U\cap C$.
\end{lemma}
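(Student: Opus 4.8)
The plan is first to reduce to the case where $X$, $W$ and $C$ are all smooth. Both the assertion and the formation of the blow-up are local on the target, so we may replace $X$ by $U$, the morphism by $\eta\colon \eta^{-1}(U)\to U$, and $C$ by $U\cap C$. After this reduction $X$, $W$ and $C$ are smooth, $\eta$ is a projective birational morphism between smooth threefolds which is an isomorphism over $X\smallsetminus C$, and $E=\eta^{-1}(C)$ holds set-theoretically (the locus where $\eta$ fails to be an isomorphism maps exactly onto $C$). Write $\mathcal{I}_C\subseteq \O_X$ for the radical ideal sheaf of the reduced curve $C$ and let $b\colon \hat X=\mathrm{Bl}_C X\to X$ be the blow-up. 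Since $C$ is smooth of codimension two in the smooth variety $X$, the threefold $\hat X$ is again smooth, $b$ is an isomorphism over $X\smallsetminus C$, and the exceptional divisor $\hat E=b^{-1}(C)$ is a $\p^1$-bundle over $C$; moreover $\mathcal{I}_C\cdot\O_{\hat X}=\O_{\hat X}(-\hat E)$ by construction of the blow-up.

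The key step is to show that the inverse-image ideal sheaf $\mathcal{I}_C\cdot\O_W$ is invertible. Granting this, the universal property of blowing up yields a unique morphism $g\colon W\to \hat X$ over $X$ with $\eta=b\circ g$, and comparing $\mathcal{I}_C\cdot\O_W=\O_W(-g^*\hat E)$ with $\mathcal{I}_C\cdot\O_{\hat X}=\O_{\hat X}(-\hat E)$ shows that $g^*\hat E$ is an effective Cartier divisor supported on $E$, say $g^*\hat E=mE$ with $m\ge 1$. To establish invertibility I would use that $\eta$ is a divisorial contraction of relative Picard rank one, so that $-E$ is $\eta$-ample and $W\cong \mathrm{Proj}_X\bigoplus_{k\ge 0}\eta_*\O_W(-kE)$; here $\eta_*\O_W(-kE)$ is the valuation ideal $\{f\in\O_X : v_E(f)\ge k\}$ of the divisorial valuation attached to $E$ (whose centre on $X$ is $C$), and the smoothness of $W$ forces this ideal to be locally principal along $E$. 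This invertibility is the main obstacle: it is precisely the point where one must exclude the a priori possibility that $\eta$ is a more complicated (e.g. weighted) extraction over $C$, and it is where the smoothness of the total space $W$ together with the extremality of the contraction are genuinely used.

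Finally I would check that $g$ is an isomorphism. It is a proper birational morphism between the smooth threefolds $W$ and $\hat X$, and it is an isomorphism over $\hat X\smallsetminus \hat E$, so its exceptional locus is contained in $g^{-1}(\hat E)=E$; by the standard purity theorem this exceptional locus, if non-empty, is of pure codimension one, hence equals $E$. But if $g$ contracted $E$, then a general fibre $\ell$ of $E\to g(E)$ would be a curve contracted by $\eta$, whence $E\cdot\ell<0$, while $g^*\hat E\cdot\ell=\hat E\cdot g_*\ell=0$ gives $m\,(E\cdot\ell)=0$ and thus $E\cdot\ell=0$, a contradiction. Therefore $g$ contracts no divisor, its exceptional locus is empty, and $g$ is a finite birational morphism onto the normal variety $\hat X$, hence an isomorphism by Zariski's main theorem (this also forces $m=1$). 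Composing, $\eta=b\circ g$ exhibits $\eta\colon \eta^{-1}(U)\to U$ as the blow-up of $U\cap C$ with its reduced structure, as claimed.
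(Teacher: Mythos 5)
Your overall strategy (reduce to the smooth local situation, show the inverse-image ideal $\mathcal{I}_C\cdot\O_W$ is invertible, invoke the universal property of blowing up to get $g\colon W\to \hat X$, and then show $g$ is an isomorphism via a negativity argument and Zariski's main theorem) is genuinely different from the paper's, and the last step of your argument is fine. But the proof has a real gap exactly where you flag "the main obstacle": you never actually prove that $\mathcal{I}_C\cdot\O_W$ is invertible, i.e.\ that the divisorial valuation $v_E$ is the order of vanishing along $C$ rather than some weighted valuation. The sentence offered in its place --- that $\eta_*\O_W(-kE)$ is the valuation ideal and "the smoothness of $W$ forces this ideal to be locally principal along $E$" --- is not an argument. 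As stated it cannot be literally correct ($\eta_*\O_W(-kE)$ is an ideal on $X$ cosupported on the codimension-two curve $C$, so it is never locally principal), and if what is meant is the pulled-back ideal $\mathcal{I}_C\cdot\O_W$, then its local principality is precisely the assertion of the lemma in disguise: the zero locus of $\mathcal{I}_C\cdot\O_W$ being the divisor $E$ does not imply principality (compare $(x^2,xy)$ in $\A^2$), and smoothness of $W$ alone does not obviously rule out more exotic behaviour. Some genuine input from $\rho(W/X)=1$ must be used here, and you have not used it at this step.

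For comparison, the paper closes exactly this gap by a two-dimensional reduction: take a point $p\in C\cap U$ and a general smooth surface $S\subseteq X$ through $p$; its strict transform $\tilde S$ is smooth and $\tilde S\to S$ factors as a composition of $m$ point blow-ups with exceptional curves $C_1,\dots,C_m$. Since $\rho(W/X)=1$, all the $C_i$ are numerically proportional (indeed equal, as each satisfies $E\cdot C_i=(C_i^2)_{\tilde S}$), so all the self-intersections $(C_i^2)_{\tilde S}$ coincide; since at least one equals $-1$ and the exceptional locus of $\tilde S\to S$ is connected, $m=1$, whence $\eta^{-1}(U)\to U$ is the reduced blow-up. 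If you want to keep your universal-property architecture, you need to supply an argument of comparable substance for the invertibility of $\mathcal{I}_C\cdot\O_W$ (for instance by this same surface-section computation, which shows that $\mathcal{I}_C\cdot\O_W$ and $\O_W(-E)$ agree after restriction to a general transverse surface, or by invoking Mori's classification of divisorial extremal contractions from smooth threefolds, which requires first checking $K_W$-negativity); as written, the central step is asserted rather than proved.
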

\begin{proof}
Let $p\in C \cap U$ be a point. {We may take  a smooth closed irreducible surface $S \subseteq X$ containing~$p$ such that the strict transform $\tilde{S}\subseteq W$ of $S$ is again a smooth surface} {(see \cite[Theorem~1.7.1]{BS95})}. {Up to shrinking $U$, we may assume that $p$ is the only intersection point of $S$ and $\Gamma$. We will prove that $\tilde{S}\to S$ is the blow-up of $p$.}

Let $C_1, \dots, C_m$ be the irreducible curves contracted by the birational morphism $\tilde S \to S$, which is the composition of $m$ blow-ups.
We now show  that $m = 1$. As $\eta$ is a divisorial contraction, we have $\rho(W/X)=1$, so all $C_i$ are numerically equivalent in $X$. Hence, for each $i,j$ we have
\[(C_i^2)_{\tilde S} = C_i \cdot E = C_j \cdot E = (C_j^2)_{\tilde S}.\]
Since at least one of the self-intersections $(C_i^2)_{\tilde S}$ must be equal to $-1$, and the exceptional locus of $\tilde S \to S$ is connected, we conclude that $m = 1$.
So $\tilde{S}\to S$ is the blow-up of $p$. 

{Repeating the argument for each point $p\in C\cap U$, we obtain that} $\pi^{-1}(U) \to U$ is the blow-up of $U\cap C$.
\end{proof}
\begin{lemma}\label{Lemm:AntiFlip}
Let $X$ be a smooth threefold endowed with a non-trivial $\PGL_2$-action.
Then there is no $\PGL_2$-equivariant anti-flip $X\dasharrow \tilde X$ to a threefold $\tilde{X}$ with terminal singularities.
\end{lemma}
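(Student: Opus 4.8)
The plan is to reduce the anti-flip to the geometry of its contracted fibre and then invoke the classification of contractible smooth rational curves on smooth threefolds. Recall that an anti-flip $X\dashrightarrow \tilde X$ is, by definition, the inverse of a flip, so it fits into a diagram $X\xrightarrow{\phi}Z\xleftarrow{\tilde\phi}\tilde X$ in which $\phi,\tilde\phi$ are small birational contractions of relative Picard rank one and $K_X$ is $\phi$-ample. First I would make the whole picture $\PGL_2$-equivariant: by Proposition~\ref{blanchard} the $\PGL_2$-action descends to $Z$ and $\phi$ is equivariant, so $\mathrm{Exc}(\phi)$ is $\PGL_2$-invariant, it is contracted to finitely many points, and (by connectedness of $\PGL_2$) each such point is $\PGL_2$-fixed. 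Since $\phi$ is small, $\mathrm{Exc}(\phi)$ is a curve; I fix one connected component $F=\phi^{-1}(p)$, which is a connected, proper, $\PGL_2$-invariant curve, and on which $K_X\cdot F>0$ because $K_X$ is $\phi$-ample and all contracted curves span a single ray.

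The first key step is to show that $\PGL_2$ acts non-trivially on $F$. Suppose not, so $F$ is fixed pointwise, and choose a smooth point $x$ of $F$. Then $T_xF\subseteq T_xX$ is a one-dimensional $\PGL_2$-subrepresentation on which $\PGL_2$ acts trivially, while the quotient $T_xX/T_xF$ is a two-dimensional $\PGL_2$-representation; as $\PGL_2$ admits no non-trivial representation of dimension $\le 2$, the representation $T_xX$ is trivial. Since $X$ is smooth and $\PGL_2$ is reductive, the action linearises at the fixed point $x$, so a trivial tangent action forces the action to be trivial near $x$, hence on all of $X$, contradicting the hypothesis. Therefore $\PGL_2$ acts non-trivially on $F$, so $F$ contains a one-dimensional orbit; such an orbit is of the form $\PGL_2/B\cong\P^1$, which is complete and hence closed in $F$. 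A short invariance argument (any further component would meet this $\P^1$ in a $\PGL_2$-invariant, hence empty or full, finite set) combined with the connectedness of $F$ then shows that $F$ is exactly this homogeneous $\P^1$, i.e.\ a single smooth rational curve.

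Finally I would invoke the classification of contractible smooth rational curves on smooth threefolds (Laufer--Mori): if a smooth rational curve $C\cong\P^1$ in a smooth threefold is contracted to a point by a small morphism, then its normal bundle is one of $\O(-1)\oplus\O(-1)$, $\O\oplus\O(-2)$ or $\O(1)\oplus\O(-3)$, all of degree $-2$. Applying this to $F$ and using adjunction, $\deg\omega_{\P^1}=K_X\cdot F+\deg N_{F/X}$, we obtain $K_X\cdot F=-2-(-2)=0$, contradicting $K_X\cdot F>0$. This contradiction shows that no $\PGL_2$-equivariant anti-flip $X\dashrightarrow\tilde X$ can exist.

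The step I expect to be the main obstacle is the passage from ``$\PGL_2$ acts non-trivially on the contracted fibre'' to ``the fibre is a single \emph{smooth} rational curve'', since a priori $\mathrm{Exc}(\phi)$ could be reducible or have singular points; the homogeneity forced by the transitive $\PGL_2$-action is exactly what rules this out and makes the Laufer--Mori classification applicable. A secondary point requiring care is the linearisation used in the pointwise-fixed case, which is transparent in characteristic zero (the setting in which the equivariant Sarkisov program of Theorem~\ref{th:Sarkisov decompo} produces such anti-flips) but must be handled with the reductivity of $\PGL_2$ and smoothness of $X$ at the fixed point if one wants the statement in arbitrary characteristic.
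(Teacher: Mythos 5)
Your proof is correct, and it takes a genuinely different route from the paper's. The paper works on the flipped side: since a smooth projective threefold admits no small $K$-negative extremal contraction, the target $\tilde X$ must be singular; its isolated singular point lying on the flipped curve is $\PGL_2$-fixed, which forces the action on that whole curve to be trivial, and the contradiction is that $\PGL_2$ would then embed into the parabolic subgroup of $\GL(T_q\tilde X)\simeq\GL_3$ stabilising the tangent line of the curve at a smooth point $q$ of $\tilde X$. You stay entirely on $X$: the same tangent-space representation theory of $\PGL_2$ (no non-trivial representation of dimension $\le 2$, linearisation at a smooth fixed point) is used in the contrapositive to show the contracted fibre cannot be pointwise fixed, hence is a single homogeneous smooth $\P^1$, and the contradiction comes from the Laufer--Mori normal-bundle classification, which forces $K_X\cdot F=0$ against the $\phi$-ampleness of $K_X$. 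Your argument proves slightly more --- a smooth threefold with a non-trivial $\PGL_2$-action admits no equivariant $K$-positive small contraction at all, irrespective of whether $\tilde X$ exists or is terminal --- at the price of invoking the heavier Laufer classification, usually stated analytically over $\C$, whose transport to an arbitrary algebraically closed field of characteristic zero deserves a sentence. The characteristic caveats you flag at the end apply equally to the paper's own proof, which is silent about them.
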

\begin{proof}
Let $G=\PGL_2$ be the group acting non-trivially, and thus faithfully as $\PGL_2$ is simple, on $X$.
Assume by contradiction that there is a $G$-equivariant anti-flip $X\dasharrow \tilde{X}$, where $\tilde  X$ is a $G$-threefold with terminal singularities. This induces an isomorphism between $X\setminus \gamma$ and $\tilde X\setminus \tilde\gamma$, where 
${\gamma}\subseteq  X$ and $\tilde{\gamma}\subseteq \tilde X$ are unions of rational curves. As {$\tilde{X}\ps X$ is a flip}, $\tilde X$ must be singular (with terminal singularities). Hence,  $\tilde \gamma$ contains an isolated singular point $p$ of $\tilde X$.
The point $p$ is fixed for the $G$-action. As $G$ is simple, this implies that the $G$-action on every irreducible component $C$ of $\tilde \gamma$ containing $p$ is trivial. 

Let now $q$ be a smooth point of $\tilde X$ contained in such an irreducible component $C$ of $\tilde \gamma$. 
Let $K$ be a one-dimensional torus contained in $G$. By  Sumihiro's theorem \cite[Corollary~2]{Sum74}, there exists a $K$-invariant affine  open subset $U \subseteq \tilde X$ which is smooth and contains $q$. Then, by \cite[\S~I\!I\!I.1, Lemme]{Lun73}, there exists a $K$-equivariant morphism $U \to T_q \tilde X$, mapping $q$ to $0$ and \'etale at $q$.
Hence, if $K$ acts trivially on $T_q \tilde X$, then it acts trivially on $X$, and this contradicts the fact that $G$ acts faithfully on $X$.

Therefore, $G$ acts non-trivially on the tangent space $T_q \tilde X$, and so we obtain an injective homomorphism $G \hookrightarrow \GL(T_q \tilde X)=\GL_3$, which preserves the line $\ell\subseteq T_q$ corresponding to the tangent direction of $\tilde \gamma$. This would give an injective homomorphism into the parabolic subgroup $P\subseteq\GL_3$ that preserves the corresponding line, but this is impossible as $G\simeq \PGL_2$ does not embed in $P$. Therefore there is no $G$-equivariant anti-flip $X\dasharrow \tilde{X}$. 
\end{proof}

\subsection{Algebraic subgroups of \texorpdfstring {$\Bir(X)$}{Bir(X)}}\label{SubSec:AlgSubgroups}
The group of birational transformations $\Bir(X)$ of a variety $X$ has  no structure of algebraic group in general, but one can define a topology on it. In this subsection we recall this topology (Definition~\ref{defi: Zariski topology}) and characterise the algebraic subgroups of $\Bir(X)$.

\begin{definition}\label{Defi:AlgSubgroupsBir}
Let $X$ be a variety and let $A$ be a scheme.
\begin{enumerate}
\item
An \emph{$A$-family of birational transformations of $X$} is a birational transformation $\varphi\colon A\times X\dasharrow A\times X$ such that there is a commutative diagram
\[  \xymatrix{
    A\times X \ar@{-->}[rr]^{\varphi} \ar[dr]_{p_1}  && A\times X \ar[ld]^{p_1}  \\
   &A
  }\]
where $p_1\colon A\times X\to A$ is the first projection, and which induces an isomorphism $U\iso V$, where $U,V\subseteq   A\times X$ are two dense open subsets such that $p_1(U)=p_1(V)=A$.
\item
Every $A$-family of birational transformations of $X$ induces a map from $A$ (or more precisely from the $\k$-points of $A$) to $\Bir(X)$; this map $\rho\colon A\to \Bir(X)$ is called a \emph{morphism from $A$ to $\Bir(X)$.}
\item \label{item alg subgroup}
If $A$ is moreover an algebraic group and if $\rho$ is a group homomorphism, the rational map $A\times X\dasharrow X$ obtained by $p_2\circ \varphi$ (where $p_2\colon A\times X\to X$ is the second projection) is called a \emph{rational action of $A$ on $X$}, the morphism $\rho\colon A\to \Bir(X)$ is called a \emph{algebraic group homomorphism}, and the image of $A$ by the morphism is called \emph{an algebraic subgroup of $\Bir(X)$}. 

If, in addition, the map $\varphi$ is an automorphism, we say that the rational action of $A$ on $X$ is a \emph{regular action}, that the morphism $\rho\colon A\to \Aut(X)$ is an \emph{algebraic group homomorphism} and that the image of $A$ by the morphism $\rho$ is \emph{an algebraic subgroup of $\Aut(X)$}.
\end{enumerate}
\end{definition}
\begin{remark}
Let $\psi\colon X\dasharrow Y$ be a birational map between two varieties. For each scheme $A$, the map $\psi$ induces a bijection between $A$-families of birational transformations of $X$ and $A$-families of birational transformations of $Y$. In particular, morphisms $A\to \Bir(X)$ correspond, via $\psi$, to morphisms $A\to \Bir(Y)$.
\end{remark}
\begin{example}\label{Example:UnipotentAnyDim} Let $n\ge 2$, $d\ge 1$ two integers and let $X=\A^n$, $A=\A^d$. The next isomorphism corresponds to an $A$-family of birational transformations of $X$:
\[\begin{array}{ccc}
A\times X & \iso & A\times X\\
((t_1,\dots,t_d),(x_1,\dots,x_n)) & \mapsto &((t_1,\dots,t_d),(x_1,\dots,x_{n-1},x_n+\sum_{i=1}^d t_ix_1^i))
\end{array}\]
Since $(A,+)$ is an algebraic group and because the corresponding morphism $A\to \Aut(X)$ is a group homomorphism, there is a regular action of $A$ on $X$.
\end{example}

There is a natural contravariant functor, say $\mathfrak{Bir}_X$, from the category of schemes to the category of groups; it is defined at the level of objects by 
\[\mathfrak{Bir}_X(A)=\{ \text{morphisms from }A \text{ to } \Bir(X) \},\]
{where the group law on this set is given by pointwise multiplication.}
In the case where $X$ is rational, of dimension $\ge 2$, this functor is not representable by an algebraic group; this is not surprising and essentially follows from Example~\ref{Example:UnipotentAnyDim}, as the dimension of the corresponding algebraic group would be unbounded. In fact, this functor is not even representable by an ind-variety (inductive limit of varieties) by \cite[Theorem~1]{BF}, and the same holds when replacing ind-varieties by ind-stacks. However, the natural contravariant subfunctor, say $\mathfrak{Aut}_X$, from the category of schemes to the category of groups, defined at the level of objects by 
\[
\mathfrak{Aut}_X(A)= \Aut_A(X \times A),
\]
is representable by a group scheme when $X$ is proper \cite{MO67}.

In any case, even if $\Bir(X)$ has no structure of algebraic group or ind-algebraic group associated with the above families / morphisms, we can define a topology on $\Bir(X)$. This was done implicitly in \cite{Dem70} and explicitly in \cite{Ser00}.
\begin{definition}  \label{defi: Zariski topology}
Let $X$ be a variety. A subset~$F\subseteq   \Bir(X)$ is \emph{closed in the Zariski topology}
if for any variety~$A$ $($or more generally any $\k$-scheme locally of finite type $A)$ and any morphism~$A\to \Bir(X)$ the preimage of~$F$ is closed.
\end{definition}

We can then characterise algebraic subgroups of $\Bir(X)$, when $X$ is rational. 
\begin{definition}\label{Defi:BoundedDeg1}
Each element $f\in \Bir(\p^n)$ can be written as 
\[[x_0:\cdots:x_n]\mapsto [f_0(x_0,\dots,x_n):\cdots :f_n(x_0,\dots,x_n)]\]
where the $f_0,\dots,f_n\in \k[x_0,\dots,x_n]$ are homogeneous of the same degree $d$. Taking the polynomials without a common factor, the \emph{degree of $f$} is equal to $d$. 

This degree leads to the notion of subgroups of $\Bir(\p^n)$ of \emph{bounded degree}, which is invariant under conjugation {by an element of $\Bir(\p^n)$}. 

Similarly, when $X$ is a rational variety of dimension $n$, a subgroup $H\subseteq   \Bir(X)$ is of bounded degree if $\varphi H\varphi^{-1}\subseteq \Bir(\p^n)$ is of bounded degree for some (equivalently for each) birational map $\varphi\colon X\dasharrow \p^n$.
\end{definition}

This, together with the Zariski topology of $\Bir(X)$, allows to give the following characterisation. In particular it gives a unique structure of algebraic group to each algebraic subgroup of $\Bir(X)$.
\begin{proposition}\label{Prop:CharactAlgSubgroupsBirPn}\cite[Corollaire~2.18, Lemme~2.19, and Remarque~2.20]{BF}.\\
Let $X$ be a rational variety. The following hold.
\begin{enumerate}
\item
Every algebraic subgroup $G\subseteq   \Bir(X)$ is closed and of bounded degree.
\item
For each subgroup $H\subseteq   \Bir(X)$ which is closed and of bounded degree, there is an algebraic group $G$ and an algebraic group homomorphism $\rho\colon G\to \Bir(X)$ whose image is $H$ and such that, for each irreducible variety $A$, morphisms $A\to \Bir(X)$ whose image is contained in $H$ correspond, via $\rho$, to morphisms of varieties $A\to G$. In particular, $\rho$ induces an homeomorphism $G\to H$.
\end{enumerate}
\end{proposition}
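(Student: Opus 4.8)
The plan is to reduce everything to $X=\p^n$ and then analyse the two assertions through the parameter spaces of bounded-degree transformations. By the Remark following Definition~\ref{Defi:AlgSubgroupsBir}, fixing a birational map $X\dashrightarrow \p^n$ (with $n=\dim X$) identifies $A$-families, hence morphisms $A\to \Bir(X)$, with morphisms $A\to \Bir(\p^n)$, compatibly with the group law, with the Zariski topology of Definition~\ref{defi: Zariski topology}, and---by the very definition of bounded degree---with the property of being of bounded degree; so I may assume $X=\p^n$. For each $d$ I would introduce the variety $\W_d$ of $(n+1)$-tuples of homogeneous forms of degree $d$ (taken up to a common scalar) that define a birational self-map, together with the natural surjection $\beta_d\colon \W_d\to \Bir(\p^n)_{\le d}$ onto the set of transformations of degree $\le d$. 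The key input I would record first is that $\beta_d$ is itself a $\W_d$-family, and that any $A$-family whose image has degree $\le d$ is, Zariski-locally on $A$, of the form $\beta_d\circ\sigma$ for a morphism of varieties $\sigma\colon A'\to \W_d$; from this local liftability one gets that a subset of $\Bir(\p^n)_{\le d}$ is closed in the induced topology if and only if its preimage under $\beta_d$ is closed, i.e.~$\Bir(\p^n)_{\le d}$ carries the quotient topology of $\beta_d$. I would also note that $\Bir(\p^n)_{\le d}$ is closed in $\Bir(\p^n)$, since degree is upper semicontinuous along any family.

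For assertion~(1), write $H=\rho(A)$ with $\rho\colon A\to \Bir(\p^n)$ an algebraic group homomorphism. Boundedness is immediate: the associated $A$-family is, generically on $A$, given by forms of some fixed degree $d$, and specialisation can only introduce common factors, so $\deg\rho(t)\le d$ for all $t\in A$; thus $H\subseteq \Bir(\p^n)_{\le d}$. For closedness I would argue that $H$ is a constructible subgroup and invoke the standard closure argument. Concretely, covering $A$ by opens on which $\rho$ lifts to morphisms $\sigma_i\colon A_i\to \W_d$, the preimage $\beta_d^{-1}(H)$ is the $\beta_d$-saturation of $\bigcup_i\sigma_i(A_i)$, which is constructible by Chevalley; hence $H$ is constructible in $\Bir(\p^n)_{\le d}$. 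Its closure $\overline H$ lies in the closed set $\Bir(\p^n)_{\le d}$, is again a subgroup (continuity of composition and inversion, which are morphisms in the bounded-degree strata), and contains a dense open subset $U$ of $\overline H$ because $H$ is constructible. Translating $U$ by any $h\in\overline H$ gives $\overline H\subseteq U\cdot U^{-1}\subseteq H$, whence $H=\overline H$ is closed.

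For assertion~(2), let $H$ be closed of bounded degree $\le d$ and set $\Gamma:=\beta_d^{-1}(H)\subseteq \W_d$, a closed subvariety since $H$ is closed. I would construct $G$ as the quotient of $\Gamma$ by the equivalence relation $R\subseteq \Gamma\times\Gamma$ identifying two tuples with the same image in $\Bir(\p^n)$ (i.e.~differing by a common polynomial factor): the functor $A\mapsto\{\text{morphisms }A\to\Bir(\p^n)\text{ with image in }H\}$ equals, by local liftability, the sheaf quotient $\Hom(-,\Gamma)/R$, so if $\Gamma/R$ exists as a finite-type scheme $G$ it automatically represents this functor, the universal element giving $\rho\colon G\to\Bir(\p^n)$ of image $H$. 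The group structure of $H\subseteq\Bir(\p^n)$ descends to $G$ because multiplication and inversion are morphisms between bounded-degree strata, and $G$ is reduced---hence smooth, hence an algebraic group---since it is a group variety over the algebraically closed field $\k$. Finally $\rho$ is continuous and bijective onto $H$ by construction, and a homeomorphism because $G$ carries the quotient topology of $\beta_d|_\Gamma$, which coincides with the subspace topology that $H$ inherits from $\Bir(\p^n)_{\le d}$.

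The main obstacle is the existence and good behaviour of the quotient $G=\Gamma/R$ in~(2): the fibres of $\beta_d$ have varying dimension along the degree strata (the space of admissible common factors jumps), so $R$ is not the orbit relation of a fixed group action and the quotient must be built stratum by stratum and glued, checking simultaneously that it is reduced and of finite type, that the group operations descend to morphisms, and that $G\to H$ is a homeomorphism rather than merely a continuous bijection. An alternative that bypasses the non-injectivity of $\beta_d$ is to replace $\W_d$ by a locally closed subscheme of a Hilbert scheme parametrising the graphs in $\p^n\times\p^n$ of degree-$\le d$ birational maps, for which the analogue of $\beta_d$ is injective; the price is verifying that the universal graph defines a genuine family of birational maps and that this parametrisation induces the same Zariski topology.
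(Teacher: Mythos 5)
The first thing to say is that the paper contains no proof of this proposition: it is quoted verbatim from \cite[Corollaire~2.18, Lemme~2.19, Remarque~2.20]{BF}, so the only argument to compare yours against is the one in that reference. Your outline reconstructs precisely the strategy of \cite{BF}: reduce to $\p^n$, parametrise maps of degree $\le d$ by the locally closed set of $(n+1)$-tuples of degree-$d$ forms defining birational maps, equip $\Bir(\p^n)_{\le d}$ with the resulting quotient topology, prove (1) by the ``closure of a constructible subgroup'' argument, and obtain $G$ in (2) as the quotient of $\beta_d^{-1}(H)$ by the common-factor relation. At the level of strategy you are exactly on the reference's track, and the boundedness part of (1) is complete as written.

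As a proof, however, the proposal has genuine gaps, the main one of which you flag yourself. The two load-bearing technical facts are recorded rather than established. (i) Local liftability of any degree-$\le d$ family through $\beta_d$: a family is a priori presented by forms of some degree $d'\ge d$ whose specialisations acquire common factors, and one must show that the degree-$(d'-d)$ common factor by which one divides varies algebraically with the parameter; this is what makes the quotient topology independent of the presentation, and it is used both for the constructibility step in (1) and for the functorial description in (2). (ii) The existence of $G=\Gamma/R$ as a reduced scheme of finite type on which the group law is a morphism and for which $\rho$ is a homeomorphism onto $H$, not merely a continuous bijection, is left conditional (``if $\Gamma/R$ exists\ldots''); this is exactly the content of \cite[Lemme~2.19]{BF}, i.e.\ the heart of the statement, and your Hilbert-scheme alternative is equally unexecuted. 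A smaller point: you deduce that $G$ is reduced ``since it is a group variety over $\k$'', which is circular --- a finite-type group scheme over an algebraically closed field of positive characteristic need not be reduced --- so reducedness must come out of the construction itself (e.g.\ by building $G$ stratum by stratum from the locally closed loci of tuples without common factor, on which $\beta_e$ is injective), after which smoothness does follow. In short: correct skeleton, matching the cited source, but the two steps that actually carry the proposition are not proved.
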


We can in fact generalise the notion of bounded degree subgroups to any variety. This will be used to show that some elements of $\Bir(X)$ do not belong to algebraic subgroups. (More precisely, Lemma~\ref{Lemm:AlgebraickS} will be used in the proofs of Proposition~\ref{vertical_conic} and Lemma~\ref{Lem:ActiononQQg2}.)
\begin{definition}\label{Defi:BoundedDeg2}
{Let $X$ be a projective variety and let $H$ be an ample divisor on $X$}. To every birational map $\varphi\in \Bir(X)$, we associate its degree, with respect to $H$, given by
\[\deg_H(\varphi)=\varphi^*(H)\cdot H^{\dim(X)-1}=(\pi_1)^*(H^{\dim(X)-1})\cdot (\pi_2)^*(H),\]
where $\pi_1,\pi_2\colon \Gamma\to X$ are the two projections from the graph $\Gamma\subseteq   X\times X$ of $\varphi$.

We say that a subset $G\subseteq   \Bir(X)$ has \emph{bounded degree} if the subset $\{\deg_H(g)\mid g\in G\} \subseteq   \N$ admits an upper bound, for each ample divisor $H$ on $X$.
\end{definition}
\begin{remark}In order to verify the boundedness of the degree, it is sufficient to check it with respect to one ample divisor $H$ on $X$ (see \cite[Theorem~2]{1701.07760}). Moreover, the boundedness of the degree is invariant under conjugation by a birational map $X\dasharrow Y$.

 In particular, the notions of bounded degrees introduced in Definition~\ref{Defi:BoundedDeg1} and~\ref{Defi:BoundedDeg2} are the same, when $X=\p^n$, since we can choose $H$ to be a hyperplane and obtain the classical degree $\deg_H$. More generally, these two notions coincide if $X$ is rational.\end{remark}

\begin{definition}
Let $X$ be a variety.
We say that an element $\varphi\in\Bir(X)$ is \emph{algebraic} if it is contained in an algebraic subgroup $G\subseteq   \Bir(X)$.
\end{definition}

\begin{lemma}\label{Lem:AlgBoundedDeg}
Let $X$ be a {projective variety}.  Then the following hold.
\begin{enumerate}
\item\label{AlgGroupBoundedDeg}
Every algebraic subgroup $G\subseteq   \Bir(X)$ is of bounded degree.
\item\label{BoundedDegAlgGroup}
If $X$ is rational and $G\subseteq   \Bir(X)$ is a subgroup of bounded degree, then $G$ is contained in an algebraic subgroup of $\Bir(X)$.
\item\label{ElementAlgebraic}
If $X$ is rational, then an element $\varphi\in \Bir(X)$ is algebraic if and only if the subgroup generated by $\varphi$ is of bounded degree.
\end{enumerate}
\end{lemma}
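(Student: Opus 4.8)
The plan is to establish \ref{AlgGroupBoundedDeg} directly from the family structure that defines an algebraic subgroup, to deduce \ref{BoundedDegAlgGroup} from the characterisation in Proposition~\ref{Prop:CharactAlgSubgroupsBirPn} by a closure argument, and finally to obtain \ref{ElementAlgebraic} as a purely formal consequence of the first two parts.

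For \ref{AlgGroupBoundedDeg}, I would write $G=\rho(A)$ for an algebraic group homomorphism $\rho\colon A\to\Bir(X)$, with $A$ an algebraic group (hence a smooth scheme of finite type over $\k$), and let $\varphi\colon A\times X\dashrightarrow A\times X$ be the associated $A$-family over $A$. Fix an ample divisor $H$ on $X$; by the remark following Definition~\ref{Defi:BoundedDeg2} it suffices to bound $\deg_H$ on $G$. Let $\Gamma\subseteq A\times X\times X$ be the graph of $\varphi$, with projections $p_1,p_2\colon\Gamma\to X$ forgetting the factor $A$, so that for each $a\in A(\k)$ the fibre $\Gamma_a$ is the graph of $\rho(a)$ and $\deg_H(\rho(a))=p_1^*(H^{\dim(X)-1})\cdot p_2^*(H)$ computed on $\Gamma_a$. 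The function $a\mapsto\deg_H(\rho(a))$ is then constructible: after restricting to each of the finitely many irreducible components of $A$ and passing to a flattening stratification for $\Gamma\to A$, the morphism becomes flat over each locally closed stratum, so the relevant intersection numbers on the fibres are constant there. As $A$ is Noetherian of finite type, there are finitely many strata, whence $\deg_H$ takes finitely many values on $G$ and is bounded.

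For \ref{BoundedDegAlgGroup}, suppose $X$ is rational and $G\subseteq\Bir(X)$ is of bounded degree. Since boundedness of the degree is invariant under conjugation and, for rational $X$, agrees with the degree of Definition~\ref{Defi:BoundedDeg1} (remark after Definition~\ref{Defi:BoundedDeg2}), I may fix a birational map $X\dashrightarrow\p^n$ and assume $X=\p^n$ with $G$ of degree at most some integer $d$. Let $H=\overline{G}$ be the closure of $G$ in the Zariski topology of $\Bir(\p^n)$. The locus $V_d$ of birational maps of degree $\le d$ is closed, and $G\subseteq V_d$ forces $H\subseteq V_d$, so $H$ is still of bounded degree. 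Moreover $H$ is a subgroup: on the closed bounded-degree pieces of $\Bir(\p^n)$ composition and inversion restrict to morphisms of varieties, which makes these operations continuous and yields $\overline{G}\cdot\overline{G}\subseteq\overline{G}$ and $\overline{G}^{-1}\subseteq\overline{G}$ in the standard way. Thus $H$ is a closed subgroup of bounded degree, and Proposition~\ref{Prop:CharactAlgSubgroupsBirPn}\,(2) endows $H$ with the structure of an algebraic subgroup of $\Bir(\p^n)$ containing $G$.

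Finally, \ref{ElementAlgebraic} follows at once: if $\varphi$ is algebraic then it lies in some algebraic subgroup $G\subseteq\Bir(X)$, which is of bounded degree by \ref{AlgGroupBoundedDeg}, hence so is $\langle\varphi\rangle\subseteq G$; conversely, if $\langle\varphi\rangle$ has bounded degree then by \ref{BoundedDegAlgGroup} it is contained in an algebraic subgroup, so $\varphi$ is algebraic. I expect the main obstacle to be the constructibility-and-invariance step in \ref{AlgGroupBoundedDeg}: one must argue that the intersection numbers computed on the fibres $\Gamma_a$ stay bounded, which I would handle through flatness in the stratification rather than by any explicit computation. A secondary subtlety in \ref{BoundedDegAlgGroup} is that $\Bir(\p^n)$ is not a topological group, so the assertion that the closure of a subgroup is a subgroup must be justified using that the group operations are algebraic on each bounded-degree stratum, appealing to \cite{BF}.
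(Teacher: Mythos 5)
Your proposal is correct, and parts \ref{BoundedDegAlgGroup} and \ref{ElementAlgebraic} follow the paper essentially verbatim: reduce to $\p^n$, take the Zariski closure $\overline{G}$ (the paper cites \cite[Corollary~2.8]{BF} for $\overline{G}$ being of bounded degree and the Humphreys closure-of-a-subgroup argument for $\overline{G}$ being a subgroup), then invoke Proposition~\ref{Prop:CharactAlgSubgroupsBirPn}; and \ref{ElementAlgebraic} is the same formal deduction. Where you genuinely diverge is in \ref{AlgGroupBoundedDeg}. The paper replaces $H$ by a very ample multiple, embeds $X\subseteq \p^n$ so that $H$ is a hyperplane section, extends the rational action $G\times X\dasharrow X$ to $G\times\p^n\dasharrow\p^n$, and observes that the latter is given by polynomials of fixed degree, which bounds $\deg_H$ at once. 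You instead work with the total graph $\Gamma\subseteq A\times X\times X$ and use a flattening stratification to see that the fibrewise intersection numbers are constant on each of finitely many strata. Your route is more intrinsic (no need to extend the map off $X$ or to choose defining polynomials uniformly over $G$), at the cost of heavier machinery; the paper's is shorter but leaves the uniformity of the defining polynomials implicit. One small point you should make explicit in your version: the fibre $\Gamma_a$ of the closed graph over $a\in A(\k)$ need not \emph{equal} the graph of $\rho(a)$ --- it only contains it as an irreducible component --- so the intersection number computed on $\Gamma_a$ is an upper bound for $\deg_H(\rho(a))$ (using that $H$ is ample, hence the contributions of the extra components are non-negative) rather than equal to it. This does not affect the conclusion, since an upper bound is all you need.
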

\begin{proof}
\ref{AlgGroupBoundedDeg}: Let $G$ be an algebraic group, let $G\to \Bir(X)$ be an algebraic group homomorphism, and let $H$ be an ample divisor on $X$. We want to show that $\{\deg_H(g)\mid g\in G\}$ admits an upper bound. Replacing $H$ with a multiple of it, we may assume that $H$ is very ample, and thus that $X$ is a closed subvariety of $\p^n$, such that $H$ is a hyperplane section. The action of $G$ on $X$ yields a rational map  $G\times X\dasharrow X$, which extends to a rational map $G\times\p^n\dasharrow \p^n$. This latter is defined by polynomials of fixed degree, so $\{\deg_H(g)\mid g\in G\}$ is bounded. 

\ref{BoundedDegAlgGroup}: We may assume that $X=\p^n$. Then, the closure $\overline{G}\subseteq  \Bir(\p^n)$ of $G$ is of bounded degree \cite[Corollary~2.8]{BF}. Let us moreover observe that $\overline{G}$ is a subgroup of $\Bir(\p^n)$: the proof follows the same arguments as in an algebraic group, see \cite[\upshape\S 7.4, Proposition~A]{Hum75}. Hence, $\overline{G}$ is an algebraic subgroup of $\Bir(\p^n)$ by Proposition~\ref{Prop:CharactAlgSubgroupsBirPn}.

\ref{ElementAlgebraic}: Follows from \ref{AlgGroupBoundedDeg} and \ref{BoundedDegAlgGroup}.
\end{proof}

\begin{lemma}\label{Lemm:AlgebraickS}
Let $X$ be a variety, let $n\ge 1$, and let $f_1,\ldots,f_n\in \k(X)^*$. Then, the following conditions are equivalent:
\begin{enumerate}
\item\label{VarphiFAlg}
The birational map $\varphi_f\in \Bir(X\times (\p^1)^n)$ given by \[(x,[u_1:v_1],\cdots,[u_n:v_n])\mapsto (x,[u_1:f_1(x)v_1],\cdots,[u_n:f_n(x)v_n])\] is algebraic.
\item\label{VarphiFAlgConst}
Each rational function $f_i\in \k(X)^*$ $(\text{with }i=1,\ldots,n)$ is a constant, i.e.~is an element of $\k^*$.
\end{enumerate}
\end{lemma}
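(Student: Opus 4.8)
The plan is to prove the equivalence by establishing the easy direction first and then reducing the hard direction to the degree-boundedness criterion of Lemma~\ref{Lem:AlgBoundedDeg}\ref{ElementAlgebraic}. First I would handle \ref{VarphiFAlgConst}$\Rightarrow$\ref{VarphiFAlg}: if every $f_i\in\k^*$, then $\varphi_f$ is fibrewise a torus automorphism with constant coefficients, hence it is an element of the algebraic group $(\Gm)^n$ acting on $X\times(\p^1)^n$ by $(t_1,\dots,t_n)\cdot(x,[u_1:v_1],\dots)=(x,[u_1:t_1v_1],\dots)$; being contained in this algebraic subgroup, it is algebraic by definition. The content is therefore in the converse.

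For \ref{VarphiFAlg}$\Rightarrow$\ref{VarphiFAlgConst}, I would argue by contraposition and compute degrees of powers. Note that $X\times(\p^1)^n$ is a normal projective variety (after choosing a projective model of $X$), so Lemma~\ref{Lem:AlgBoundedDeg}\ref{ElementAlgebraic} applies: $\varphi_f$ is algebraic if and only if $\langle\varphi_f\rangle$ has bounded degree. The key observation is that the iterates are computed coordinatewise: $\varphi_f^m(x,[u_1:v_1],\dots,[u_n:v_n])=(x,[u_1:f_1(x)^mv_1],\dots,[u_n:f_n(x)^mv_n])$, since the base point $x\in X$ is preserved and the action on each $\p^1$-factor is the multiplicative one-parameter group generated by multiplication by $f_i(x)$. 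Now suppose some $f_i$ is nonconstant; say $f_1\notin\k^*$. Then on the first $\p^1$-factor the map $[u_1:v_1]\mapsto[u_1:f_1(x)^mv_1]$ has, as a family over $X$, a degree growing with $m$: writing $f_1=p/q$ in lowest terms with $p,q\in\k[X]$ not both constant, the entries $[u_1q^m:v_1p^m]$ after clearing denominators involve polynomials of degree growing linearly in $m$ in the $X$-variables. Choosing a very ample $H$ on a projective model and reading off $\deg_H(\varphi_f^m)$ (Definition~\ref{Defi:BoundedDeg2}), I expect $\deg_H(\varphi_f^m)\to\infty$ as $m\to\infty$, contradicting boundedness.

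The main obstacle will be making the degree growth rigorous in the generality of an arbitrary normal $X$, rather than on $\p^1$ directly. The cleanest route is to reduce to a curve: restrict to the generic point, or better, choose a general complete curve $C\subseteq X$ (or pass to the function field $\k(X)$ and then to a suitable one-dimensional valuation) on which $f_1$ is nonconstant, so that $f_1|_C$ genuinely varies. Then the restriction of $\varphi_f^m$ to $C\times(\p^1)^n$ has degree governed by the pole and zero orders of $f_1^m$ along $C$, which grow linearly in $m$; invariance of degree-boundedness under restriction to invariant subvarieties (or equivalently, the fact that boundedness over $X$ forces boundedness over $C$) then yields the contradiction. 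Alternatively, I would invoke Lemma~\ref{Lem:AlgBoundedDeg}\ref{AlgGroupBoundedDeg} to deduce that an algebraic $\varphi_f$ generates a bounded-degree group, and then exhibit the unbounded degree directly from the explicit formula for $\varphi_f^m$; the delicate point is simply to verify that the $f_i^m$ cannot conspire, via common factors across different $m$, to keep the degree bounded, which cannot happen once a single $f_i$ is nonconstant since a nonconstant rational function has a genuine pole or zero whose multiplicity scales with $m$.
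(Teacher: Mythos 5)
Your proposal follows essentially the same route as the paper: the easy direction via the $\G_m^n$-action is identical, and the converse is proved by the same contrapositive degree computation, using Lemma~\ref{Lem:AlgBoundedDeg} and the explicit formula $\varphi_f^d(x,\ldots)=(x,[u_i:f_i(x)^dv_i],\ldots)$. The obstacle you flag (making the degree growth rigorous on an arbitrary normal $X$) is resolved in the paper without restricting to a curve: one writes $\div(f_i)=Z_i-P_i$ with $Z_i,P_i$ effective of disjoint support, takes $H=(\pr_X)^*(H_X)+\sum_i(\pr_i)^*(H_{\p^1})$, and computes directly that $(\varphi_f^d)^*(H)=H+d\sum_i(\pr_X)^*(P_i)$, whose intersection with $H^{\dim Y-1}$ grows linearly in $d$ as soon as some $P_i\neq 0$ — this also disposes of your worry about cancellation across iterates.
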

\begin{proof}
$\ref{VarphiFAlgConst}\Rightarrow \ref{VarphiFAlg}$ follows from the fact that the group $\G_m^n$ acts on $X\times (\p^1)^n$, via $(x,[u_1:v_1],\cdots,[u_n:v_n])\mapsto (x,[u_1:t_1v_1],\cdots,[u_n:t_nv_n])$, $(t_1,\ldots,t_n)\in \G_m^n$.

$\ref{VarphiFAlg}\Rightarrow \ref{VarphiFAlgConst}$: We suppose that at least one of the $f_i$, is not constant, and show that $\varphi_f$ is not algebraic, by showing that the group generated by $\varphi_f$ is not of bounded degree. Let $H_X$ be the class of an ample divisor on $X$, up to linear equivalence. For $i=1,\ldots,n$, we denote by $Z_i,P_i$  the divisors given by the zeros and poles of $f_i$: if $f_i\in \k^*$ then $Z_i=P_i=0$ and otherwise $Z_i,P_i$ are effective divisors on $X$ with no common support, such that $\div(f_i)=Z_i-P_i$.

We write $Y= X\times (\p^1)^n$, denote by $\pr_X\colon Y\to X$ the first projections and $\pr_i\colon Y\to \p^1$ $(i=1,\dots,n)$ the other projections. {We denote by $H_{\p^1}$ the equivalence class of a divisor of $\p^1$ of degree $1$.} The {class} $H=(\pr_X)^*(H_X)+\sum_{i=1}^n (\pr_i)^*(H_{\P^1})$ is then {ample on $Y$}.

We fix an integer $d\ge 1$ and write $\psi=(\varphi_f)^d\in \Bir(Y)$. As $\psi$ acts trivially on $X$, we obtain $(\psi)^*((\pr_X)^*(H_X))=(\pr_X)^*(H_X)$. For $i=1,\ldots,n$, {an element of} the {class} $(\pr_i)^*(H_{\p^1})$ {is given by the} equation $\mu u_i+\lambda v_i=0$ for some $[\mu:\lambda]\in \p^1$. Its pullback is then given by $\mu u_if_i(x)^d+\lambda_i v=0$, and thus {it belongs to the class} $(\pr_i)^*(H_{\p^1})+d(\pr_i)^*(Z_i)=(\pr_{i})^*(H_{\p^1})+d(\pr_i)^*(P_i)$. 

This yields $\psi^*(H)=H+d\sum_{i=1}^n (\pr_i)^*(P_i)$ and thus 
\[\deg_H(\psi)=(H+d\sum_{i=1}^n (\pr_i)^*(P_i))\cdot (H^{\dim(Y)-1}),\]
which is then not bounded when $d$ goes to infinity, if at least one of the $f_i$ is not constant.
\end{proof}

\subsection{Regularisation and reduction to automorphisms of Mori fibre spaces}\label{SubSec:Reg}
We now put together the notion of algebraic subgroups of $\Bir(X)$ introduced in {\upshape\S~\ref{SubSec:AlgSubgroups}} together with the results on Mori fibrations of~{\upshape\S~\ref{Mori fibrations}}.

\begin{theorem}\emph{\cite[Theorem]{Wei55} (see also \cite{Zai95,Kra} for a modern proof)}  \label{th Wei55}
Let $G$ be an algebraic group acting rationally on a variety~$V$. Then there exists a variety $W$ birational to $V$ such that the rational action of $G$ on $W$ obtained by conjugation is regular.
\end{theorem}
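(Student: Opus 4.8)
The plan is to regularise the action by Weil's method of \emph{translation charts}: one builds $W$ by gluing copies of a fixed dense open subset of $V$ along the transition maps furnished by the group law. Write the rational action as $a\colon G\times V\dashrightarrow V$ and set $g\cdot x:=a(g,x)$. First I would pass to the associated birational self-map $\Psi\colon G\times V\dashrightarrow G\times V$, $(g,x)\mapsto (g,g\cdot x)$, which is birational over $G$, with inverse induced by $(g,y)\mapsto(g,g^{-1}\cdot y)$. Applying generic flatness and shrinking, I obtain a dense open $\Omega\subseteq G$ with $e\in\Omega$ and a dense open $U\subseteq V$ such that $a$ restricts to a morphism $\Omega\times U\to V$ with $e\cdot x=x$, and such that for every $g\in\Omega$ the translation $t_g:=a(g,-)\colon U\to V$ is an open immersion onto an open subset $g\cdot U\subseteq V$.

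Next I would set up the charts and the cocycle. For $g,h\in G$, associativity of the rational action gives, as birational maps $U\dashrightarrow U$, the identity $t_h^{-1}\circ t_g=t_{h^{-1}g}$; writing $\phi_{h,g}:=t_{h^{-1}g}$, these transition maps satisfy $\phi_{k,h}\circ\phi_{h,g}=\phi_{k,g}$ wherever all three are defined, again by associativity. I would then glue finitely many copies of $U$, indexed by $g_1=e,g_2,\dots,g_n$ chosen so that the translates $g_1\Omega,\dots,g_n\Omega$ cover $G$ (possible since $G$ is Noetherian, hence quasi-compact), identifying the copy indexed by $g_i$ with the one indexed by $g_j$ along $\phi_{g_j,g_i}$. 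On the resulting space $W$ the group acts by $j\cdot[g_i,x]=[jg_i,x]$: since the $g_k\Omega$ cover $G$, for each $j$ the element $jg_i$ lies in some $g_k\Omega$, and there $j\cdot[g_i,x]=[g_k,t_{g_k^{-1}jg_i}(x)]$, an honest morphism into chart $k$; these local expressions glue to a regular action $G\times W\to W$. The chart indexed by $g_1=e$ realises $U$ as an open subset of both $W$ and $V$, so $W$ is birational to $V$ and the action on $W$ restricts to the given rational action on $V$.

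The main obstacle is \emph{separatedness} of $W$: the cocycle condition only guarantees that $W$ is a finite-type prescheme, and to conclude that it is a variety I must show that for all $i,j$ the graph of the transition $\phi_{g_j,g_i}\colon U\dashrightarrow U$ is closed in $U\times U$, not merely in the open locus where it is an isomorphism. This is the heart of Weil's theorem, and it is here that the group structure is genuinely used: I would verify it via the valuative criterion, showing that a discrete valuation producing two distinct limit points in charts $i$ and $j$ would, after translating by a general element of $G$ into the domain where the action is a morphism, yield two distinct limits of $g\cdot x$ inside the separated variety $V$, a contradiction. Once closedness of these graphs is established, $W$ is separated and of finite type, hence a variety, carrying a regular $G$-action birational to the original one. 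No hypothesis on $\car(\k)$ is needed, as only generic flatness and the group law enter.
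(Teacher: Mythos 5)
The paper does not prove this statement at all: it is quoted verbatim from Weil's 1955 paper, with Kraft's note cited for a modern write-up, and is used as a black box. So there is no in-paper proof to compare against; what you have written is a sketch of the classical translation-chart argument from the cited sources, and the overall architecture (charts indexed by group elements, cocycle from associativity, separatedness as the crux, settled by pushing two would-be limits into the separated variety $V$ via a generic translation) is the right one.

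There is, however, a genuine gap at the step ``these local expressions glue to a regular action $G\times W\to W$''. Your formula $j\cdot[g_i,x]=[g_k,t_{g_k^{-1}jg_i}(x)]$ with $g_k^{-1}jg_i=\omega\in\Omega$ requires $t_\omega(x)\in U$; but $t_\omega$ is only an open immersion $U\to V$, and its image need not meet $U$ at $x$. The set of $x$ where this works is merely a dense open subset of the chart (depending on $j$ and $i$), and trying other representatives of the class $[g_i,x]$ or other cosets containing $jg_i$ does not save you in general. As written you have therefore only produced a \emph{rational} action on $W$, which is no improvement on $V$. The standard repair is to define $W$ as the quotient of $G\times U$ by the relation $(g,x)\sim(g',x')$ iff $t_{hg}(x)=t_{hg'}(x')$ for generic $h$ (note also that the relation should be phrased this way, via a common translation into $V$, rather than through the raw birational map $t_{g'^{-1}g}$, both for transitivity and for your separatedness argument to close); on this quotient the left-multiplication action $j\cdot[g,x]=[jg,x]$ is tautologically well defined and regular, and the finiteness of the atlas is then a separate statement, obtained from quasi-compactness of the image of $G\times U$ rather than from covering $G$ by the translates $g_i\Omega$. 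With that reorganisation, and with the valuative-criterion argument for separatedness actually carried out (choosing the generic translation so that it is defined at the two limit points, not just at the generic point), the proof is complete.
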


Therefore, for every algebraic subgroup $G \subseteq   \Bir(\p^n)$, there exists a birational map $\P^n \dashrightarrow X$, where $X$ is a (smooth, otherwise remove the singular locus) rational variety, which conjugates $G$ to a subgroup of $\Aut(X)$ (and of $\Autz(X)$ if moreover $G$ is connected). The following fact is well-known by the specialists but worth being mentioned.  

\begin{lemma}\label{lemma:linear}
Let $X$ be a rationally connected variety $($two general points of $X$ are connected by a rational curve$)$. Then every algebraic subgroup $G \subseteq   \Bir(X)$ is a linear algebraic group.
\end{lemma}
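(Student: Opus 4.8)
The plan is to show that every algebraic subgroup $G \subseteq \Bir(X)$ of the group of birational transformations of a rationally connected variety is linear. The key structural input is a classical theorem of Chevalley: every connected algebraic group $G$ fits into a short exact sequence $1 \to L \to G \to A \to 1$, where $L$ is a connected linear algebraic group and $A$ is an abelian variety. Since linearity of $G$ is equivalent to the vanishing of the abelian part $A$, and since linearity can be checked on the neutral component $G^0$ (a finite extension of a linear group is linear), I would reduce to proving that the abelian variety $A$ attached to $G^0$ is trivial.

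The idea for killing the abelian part is to use the rational action of $G$ on $X$ together with the fact that abelian varieties admit no non-constant rational maps from (or to) rationally connected varieties. Concretely, I would first apply the regularisation theorem of Weil (Theorem~\ref{th Wei55}) to replace $X$ by a birational model $W$ on which $G$ acts \emph{regularly}; this is harmless since $\Bir(W) \cong \Bir(X)$ and rational connectedness is a birational invariant, and an algebraic subgroup is carried to an algebraic subgroup under this identification. So I may assume $G$ acts regularly on a rationally connected variety $W$. Now pick a general point $w \in W$ and consider the orbit map $G^0 \to W$, $g \mapsto g \cdot w$. Composing with the quotient morphism $G^0 \to A$ does not immediately help, so instead I would argue via the following standard fact: a rationally connected variety carries no non-trivial morphism to an abelian variety, and more relevantly, any regular action of an abelian variety on a (rationally connected, or more simply any variety covered by rational curves) variety has a special structure forcing triviality of the abelian factor.

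The cleanest route is to invoke the structure theorem for actions of connected algebraic groups: by a theorem of Brion (building on Nishi--Matsumura), for a connected algebraic group $G$ acting faithfully on a normal variety $W$, the abelian variety $A = G^0/L$ admits an isogeny onto a quotient of the Albanese variety of $W$, up to the relevant Albanese torsor structure. Since $W$ is rationally connected, its Albanese variety is trivial (rationally connected varieties have trivial Albanese, as any rational curve maps to a point in an abelian variety, and such curves cover $W$). Hence $A$ must be trivial, so $G^0 = L$ is linear, and therefore $G$ is linear. I would phrase this using the Nishi--Matsumura theorem (in the form recorded by Brion) that relates the abelian part of $G^0$ to $\mathrm{Alb}(W)$, which is the most direct black box.

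The main obstacle I anticipate is making the link between the abelian part of $G$ and the Albanese of $W$ fully rigorous in the possibly singular, possibly positive-characteristic setting the paper works in: one must ensure that the relevant structure theorem for algebraic group actions applies to normal (not necessarily smooth) rationally connected varieties over an arbitrary algebraically closed field, and that ``rationally connected'' genuinely forces the Albanese to be a point. The latter is standard — a rationally connected variety is covered by rational curves, and every morphism from $\P^1$ to an abelian variety is constant, so the Albanese morphism is constant — but care is needed to handle the singular case, where one may prefer to pass to a resolution or an equivariant smooth model and use birational invariance of the Albanese for rationally connected varieties. Once the Albanese vanishing is secured, the conclusion that $A$ is trivial and hence $G$ is linear is immediate.
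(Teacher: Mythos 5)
Your proposal is correct and follows essentially the same route as the paper: reduce to the neutral component, regularise the action via Weil's theorem on a birational (rationally connected, smooth) model, invoke the Nishi--Matsumura theorem to map the abelian part of $G$ to the Albanese variety, observe that the Albanese of a rationally connected variety is trivial, and conclude by Chevalley's structure theorem. The paper's proof is exactly this argument.
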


\begin{proof}
Since an algebraic group is linear if and only if its neutral component is linear, we can replace $G$ by $G^\circ$ and assume that $G$ is connected.
By Theorem \ref{th Wei55}, there is a variety $Y$ birational to $X$ such that $G$ identifies with a subgroup of $\Autz(Y)$. As $X$ is rationally connected, so is $Y$. As before, we may assume that $Y$ is smooth.
Let $\alpha_Y: Y \to A(Y)$ be the Albanese morphism, that is, the universal morphism to an abelian variety \cite{Ser58}. 
Then $G$ acts on $A(Y)$ by translations, compatibly with its action on $Y$, and the Nishi-Matsumura theorem (see \cite{Mat63,Bri10}) asserts that the induced homomorphism $G \to A(Y)$ factors through a homomorphism $A(G) \to A(Y)$ with finite kernel. 
However, since $Y$ is rationally connected, $A(Y)$, and then $A(G)$ are trivial.
Hence, $G$ is linear by the Chevalley's structure theorem; see for instance \cite[Theorem~1.1.1]{BSU13}.
\end{proof}

\begin{remark}
The previous result in the case $X=\P^n$, with almost the same proof, already appeared in \cite[Remark~2.21]{BF}.
\end{remark}

Under the extra assumption that $\car(\k)=0$, which ensures the existence of an equivariant resolution of singularities (see \cite[Proposition~3.9.1]{Kol07}), we have the following more precise result for algebraic subgroups of $\Bir(\P^n)$.

\begin{theorem}  \label{th alg subg of Cr3 are aut of Mori fib}
Assume that $\car(\k)=0$.
Every connected algebraic subgroup $G \subseteq  {\Bir(\p^n)}$ is conjugated to an algebraic subgroup of $\Autz(X)$, where $X$ is an {$n$-dimensional} rational Mori fibre space. 
\end{theorem}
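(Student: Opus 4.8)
The plan is to promote the rational action of $G$ to a genuine regular action, pass to a smooth projective rational model, and then run a $G$-equivariant MMP that terminates on a Mori fibre space. Throughout, $\car(\k)=0$ is used in two places: to have equivariant resolution of singularities, and to run the MMP in all dimensions.

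First I would apply Weil's regularisation theorem (Theorem~\ref{th Wei55}) to the rational action of $G$ on $\p^n$. This produces a variety $V$ birational to $\p^n$ on which $G$ acts regularly, so that, after conjugating by the birational map $\p^n\dashrightarrow V$, the group $G$ is identified with a connected algebraic subgroup of $\Autz(V)$. Since $V$ is birational to $\p^n$ it is rational, hence rationally connected, and Lemma~\ref{lemma:linear} shows that $G$ is a \emph{linear} algebraic group. This linearity is precisely what powers the next step: as $G$ is connected and linear, Sumihiro's theory of equivariant (quasi-)projective embeddings lets me replace $V$ by a $G$-stable quasi-projective open subset (still birational to $\p^n$) and embed it $G$-equivariantly into a projective space, yielding a projective variety carrying a regular $G$-action. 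Equivariant (functorial) resolution of singularities in characteristic zero then gives a \emph{smooth} projective rational variety $X'$ with $G\subseteq\Autz(X')$.

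Next I would run an MMP on $X'$. As $X'$ is rational it is uniruled, so $K_{X'}$ is not pseudo-effective, and in characteristic zero the MMP terminates with a Mori fibre space $X\to Y$ with $\dim(Y)<\dim(X)=n$. The crucial point is equivariance: the connected group $\Autz(X')$ acts on the finite-dimensional space $N_1(X')$ preserving the cone $\overline{\NE}(X')$ and fixing the canonical class, so it permutes the $K$-negative extremal rays; being connected, it must fix each of them individually. Hence every divisorial contraction occurring in the MMP is $\Autz(X')$-equivariant, and each flip is equivariant by its uniqueness. Inductively, at each stage the ambient connected automorphism group, and in particular the subgroup $G$, continues to act regularly, so the process ends with $G\subseteq\Autz(X)$ where $X$ is a rational Mori fibre space of dimension $n$, as desired.

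I expect the main obstacle to be bookkeeping in the first step: one must carry out regularisation, equivariant (quasi-)projective embedding, and equivariant resolution while maintaining a \emph{single} regular $G$-action throughout, which is exactly where linearity from Lemma~\ref{lemma:linear} (to invoke Sumihiro) and characteristic-zero functoriality of resolution are indispensable. By comparison the MMP step is comparatively soft: the only thing requiring an argument is that connectedness of $G$ forces each extremal contraction to be equivariant, and this is the standard reason an MMP is automatically $\Autz$-equivariant.
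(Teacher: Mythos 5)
Your proposal is correct and follows essentially the same route as the paper: linearity via Lemma~\ref{lemma:linear}, Weil regularisation, Sumihiro's equivariant compactification, equivariant resolution in characteristic zero, and then an MMP that is automatically equivariant because the connected group fixes each extremal ray. The only difference is that you spell out the extremal-ray argument that the paper leaves as a one-line remark.
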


\begin{proof}
By Lemma \ref{lemma:linear} the group $G$ is linear, and by Theorem \ref{th Wei55}, the group $G$ is conjugated to a subgroup of $\Autz(X')$, where $X'$ is a smooth rational variety. 
By \cite[Lemma~8]{Sum74} the variety $X'$ has an open covering that consists of $G$-invariant quasi-projective open subsets of $X'$. Replacing $X'$ by one of these $G$-invariant quasi-projective open subsets, we can assume that $X'$ is quasi-projective.
Taking a $G$-equivariant compactification \cite[Theorem~1]{Sum74} and then a $G$-equivariant resolution of singularities \cite[Proposition~3.9.1]{Kol07}, we may assume that the rational variety $X'$ is smooth and projective. Then we can {run} an MMP to $X'$ {(see \cite[Corollary~1.3.2]{BCHM10})} and check that this one is $G$-equivariant as $G$ is connected (see Remark~\ref{rk:MMP G eq}). 
We obtain a Mori fibre space $X$ birational to $\P^n$ on which $G$ acts faithfully.
\end{proof}

\subsection{Tori and additive groups in the Cremona groups}\label{tori_cremona}
In this subsection we prove that two tori of the same dimension resp. two additive groups are conjugate in $\Bir(\p^3)$.
These results are well-known from the specialists but we chose to recall the proof as it is quite elementary and needed in an essential way later in this article.

\begin{proposition} \label{cylinder}
Let $G$ be a torus or the additive group $\G_a$,
and let $X$ be a variety with a faithful action of $G$.
Then there exists a $G$-invariant affine dense open subset $X' \subseteq   X$ which is a $G$-\emph{cylinder}, that is, a  $G$-variety $G$-isomorphic to $G\times U$, where $G$ acts on itself by multiplication and $U$ is a smooth affine variety on which $G$ acts trivially. 
\end{proposition}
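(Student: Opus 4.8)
The plan is to produce a $G$-equivariant birational map $X \dashrightarrow G \times U_0$, where $G$ acts by left translation on itself and trivially on $U_0$, and then to restrict it to a suitable $G$-stable affine open subset. The heart of the matter is the construction of an equivariant rational \emph{slice map} $\sigma\colon X \dashrightarrow G$, i.e.\ a dominant rational map satisfying $\sigma(g\cdot x)=g\cdot \sigma(x)$ for the translation action of $G$ on itself. Once $\sigma$ is available everything else is formal.

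First I would reduce to a convenient model and build $\sigma$. For $G$ a torus, Sumihiro's theorem provides a $G$-stable affine open subset, so I may assume $X=\Spec(A)$ with $A=\bigoplus_{m} A_m$ graded by the character lattice $M$ of $G$; equivalently I work inside $K:=\k(X)$ with its induced $G$-action. Let $M_0\subseteq M$ be the subgroup of weights carried by nonzero semi-invariants of $K^*$. The kernel of $G\to \Aut(X)$ equals $\bigcap_{\chi\in M_0}\ker(\chi)$, which is trivial precisely when $M_0=M$; hence faithfulness forces $M_0=M$. I can therefore choose semi-invariants $\chi_1,\dots,\chi_n\in K^*$ whose weights form a basis of $M$, and then $\sigma=(\chi_1,\dots,\chi_n)\colon X\dashrightarrow G=\Gm^{n}$ is the desired equivariant slice map. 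For $G=\Ga$, the action corresponds to a locally nilpotent derivation $D$ of $K$ (in characteristic $0$; to a nontrivial iterative higher derivation in characteristic $p$), which is nonzero by faithfulness. Picking $a\in K$ with $b:=D(a)\neq 0$ and $D(b)=0$, the element $s:=a/b$ satisfies $D(s)=1$, i.e.\ $s(t\cdot x)=s(x)+t$; thus $\sigma:=s\colon X\dashrightarrow \Ga$ is an equivariant slice map.

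In both cases I would set $U_0:=\sigma^{-1}(e)$, the fibre over the neutral element, a locally closed subvariety of $X$. The action morphism $a\colon G\times U_0\to X$, $(g,u)\mapsto g\cdot u$, is then birational, with inverse $x\mapsto (\sigma(x),\,\sigma(x)^{-1}\cdot x)$: indeed $\sigma(x)^{-1}\cdot x$ lies in $\sigma^{-1}(e)$, and the two composites reduce to the identity using $\sigma(u)=e$ on $U_0$ together with the commutativity of $G$. By equivariance, the open locus where $a$ is an isomorphism is of the form $G\times U'$ for some open $U'\subseteq U_0$. Shrinking $U'$ to a smooth affine dense open subset $U$ (generic smoothness, then an affine chart), the map $a$ restricts to an isomorphism of the affine variety $G\times U$ onto a $G$-stable dense open subset $X'\subseteq X$. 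Since $G$ (being $\Gm^{n}$ or $\A^{1}$) and $U$ are affine, $X'\cong G\times U$ is affine, which is exactly the asserted $G$-cylinder.

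The main obstacle is the construction of $\sigma$. For tori it amounts to converting faithfulness into the algebraic statement that semi-invariant weights generate the \emph{full} character lattice $M$ (not merely a finite-index subgroup), which is precisely what triviality of the kernel gives. For $\Ga$ it rests on the existence of a rational local slice for the associated derivation; this is the step where positive characteristic is delicate, since one must replace a single locally nilpotent derivation by the iterative higher-derivation formalism attached to the $\Ga$-action and produce the slice from it. Everything after $\sigma$ — the birational trivialisation and the passage to an affine, $G$-stable, dense open — is routine.
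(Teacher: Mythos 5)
Your torus argument is correct and is, in substance, a proof of the result that the paper simply quotes: the paper reduces to the affine case via Sumihiro exactly as you do and then cites \cite[Part II, \S I.5, Proposition~9]{SB00}, whose content is precisely the semi-invariant/weight-lattice computation you spell out. One caveat: for the kernel computation to force $M_0=M$ (rather than a finite-index sublattice), ``faithful'' must be read scheme-theoretically; with the weaker reading on $\k$-points the statement itself fails in characteristic $p$ (e.g.\ $t\cdot x=t^{p}x$ on $\A^1$). With that reading your argument is fine, and the rest (birationality of the action map $G\times U_0\to X$, shrinking to a smooth affine $G$-stable open) is routine, as you say.

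For $G=\G_a$ your route is genuinely different from the paper's. The paper never constructs a slice: it applies Rosenlicht's theorem to obtain a geometric quotient $q\colon V\to W$ on a $G$-stable dense open subset, then Rosenlicht's rational-section theorem to produce a $K$-point of the generic fibre $Y$ over $K=\k(W)$, and argues that $Y\simeq \G_{a,K}$, which then spreads out to a cylinder. Your local-slice construction (choose $a$ with $D(a)\neq 0$ and $D(D(a))=0$, set $s=a/D(a)$) is a more explicit and elementary substitute, and in characteristic zero it is complete and correct.

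In positive characteristic, however, the step you flag as ``delicate'' is a genuine gap that cannot be closed: the equivariant rational map $\sigma\colon X\dashrightarrow \G_a$ with $\sigma(t\cdot x)=\sigma(x)+t$ need not exist. Take $\car(\k)=p>0$ and the action of $\G_a$ on $\A^2$ given by $t\cdot(x,y)=(x,\,y+tx+t^{p})$. This is a faithful action (the scheme-theoretic kernel is cut out by the ideal $(t,t^{p})=(t)$), yet the stabiliser of a general point is the nontrivial \'etale subgroup scheme $\{t\mid tx+t^{p}=0\}\simeq \Z/p\Z$; any $\sigma$ as above would force these stabilisers to be trivial. The same example shows that the generic orbit is isomorphic to $\A^1$ only as a variety, not as a $\G_a$-variety with the translation action, so no dense open subset is a $G$-cylinder in the stated sense; this is exactly the point at which the paper's own identification of the generic fibre with $\G_{a,K}$ must be examined. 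In short: your proof is complete for tori and for $\G_a$ in characteristic zero, but the characteristic-$p$ additive case is not a routine extension of your slice method --- it requires either an extra hypothesis (generic freeness, automatic in characteristic zero) or a different formulation of the conclusion.
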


\begin{proof}
Removing the singular locus, we may assume that $X$ is smooth.
We first assume that $G$ is a torus. 
By \cite[Corollary~2]{Sum74}, the variety $X$ is covered by $G$-invariant affine open subsets, and thus we may assume that $X$ is affine. Then, the result follows from \cite[Part II, \upshape\S I.5, Proposition~9]{SB00}. 

We now assume that $G=\G_{a,k}$. By a theorem of Rosenlicht \cite{Ros63}, there exists a $G$-invariant dense open subset $V \subseteq  X$ that admits a geometric quotient $q: V \to W=V/\G_{a,k}$. Then, another theorem of Rosenlicht \cite[Theorem~10]{Ros56} gives the existence of a rational section $\sigma: W \dashrightarrow V$ of the geometric  quotient map $q$. If we consider the cartesian square
\[\xymatrix@R=4mm@C=2cm{
    Y \ar[r] \ar[d]  & V \ar[d]_{q} \\
    \Spec(K) \ar[r] & W \ar@{.>}@/_2pc/[u] _{\sigma}
  }\]  
where $K=\k(W)$ and $\Spec(K) \to W$ is the generic point of $W$, then it means that the $K$-variety $Y$ has a $K$-rational point. Moreover, $\G_{a,k}$ acts faithfully on $Y$  (because $Y$ is a $\G_{a,k}$-invariant dense open subset of $V$)  and since $Y \to \Spec(K)$ is a geometric quotient, this action is actually defined over $\Spec(K)$, and so $\G_{a,K}=\G_{a,k}\times_{\Spec(\k)} \Spec(K)$ acts on $Y$. Hence, $Y$ is an irreducible curve (over $\Spec(K)$) that contains a closed $\G_{a,K}$-orbit isomorphic to $\G_{a,K}$, and so $Y \simeq \G_{a,K}$. As $Y$ is a dense open subset of $V$  and $\G_{a,K}$ is an affine $\G_{a,\k}$-cylinder, we obtain the existence of an affine $\G_{a,\k}$-cylinder inside $X$. 
\end{proof}

\smallskip

\begin{remark} \item
\begin{itemize}
\item Assume that $\car(\k)=0$, and let $X$ be a variety with a $\G_a$-action. By Rosenlicht's theorem there exists a $\G_a$-invariant dense open subset $V \subseteq  X$ that admits a geometric quotient $q:V \to V/\G_{a}$. Since $\G_a$ has no non-trivial subgroup in characteristic zero, $q$ is in fact a $\G_a$-torsor. Then the existence of an affine $\G_a$-cylinder inside $X$ follows from the fact that  $\G_a$ is a \emph{special group} \cite[\upshape\S 3]{Gro58}. 
\item {Proposition~\ref{cylinder} does not hold for $G=\G_a^{r+1}$ with $r \geq 1$. For instance, $\G_a^{r+1}$ acts faithfully on the Hirzebruch surface $\F_r$ (with the notation of \S~\ref{sec:first classification}~\ref{first_family}) through 
\[
\begin{array}{ccc}
\G_a^{r+1} \times \F_{r} &\to &\F_{r}  \\ 
\left((a_0,\ldots,a_{r}),[y_0:y_1;z_0:z1]\right) &\mapsto &[y_0:y_1+y_0 \sum_{i=0}^{r} a_i z_0^iz_1^{r-i};z_0:z1]
\end{array}
\]
but $\F_{r}$ does not contain a $\G_a^{r+1}$-cylinder when $r \geq 1$ (for $r\geq 2$ this is clear for dimensional reasons, and for $r=1$ this follows from the fact that the $\G_a^{r+1}$-orbits are at most one-dimensional).
 }
\item Proposition~\ref{cylinder} can also be deduced from \cite[Theorem~3]{Pop16}.
\end{itemize}
\end{remark}

\begin{proposition}\label{Prop tori conjugate General}
Let $X$ be a rationally connected variety of dimension $n$ and let $G \subseteq  \Bir(X)$ be an algebraic subgroup of dimension $d$. We suppose that $G$ is a torus or the additive group.
Then, there is a $G$-equivariant birational map $X\dasharrow G\times Y$ where $Y$ is a rationally connected variety of dimension $n-d$, and the group $G$ acts on itself by left multiplication and trivially on $Y$. 
\end{proposition}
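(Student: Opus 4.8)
The plan is to reduce to the case of a \emph{regular} $G$-action and then invoke the cylinder decomposition of Proposition~\ref{cylinder}. First I would regularise the birational action. Since $G\subseteq\Bir(X)$ is an algebraic subgroup, it acts faithfully and rationally on $X$, so Theorem~\ref{th Wei55} provides a variety $X''$ birational to $X$ together with a $G$-equivariant birational map $\sigma\colon X\dasharrow X''$ conjugating the rational action on $X$ into a regular (and still faithful) action of $G$ on $X''$.

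Next I would apply Proposition~\ref{cylinder} to the regular faithful action of $G$ (a torus or $\G_a$) on $X''$. This yields a $G$-stable affine dense open subset $X'\subseteq X''$ together with a $G$-equivariant isomorphism $X'\iso G\times U$, where $U$ is a smooth affine variety on which $G$ acts trivially while $G$ acts on itself by left multiplication. Composing with $\sigma$ gives a $G$-equivariant birational map
\[X\dasharrow X''\supseteq X'\iso G\times U,\]
so that setting $Y:=U$ produces the desired $G$-equivariant birational map $X\dasharrow G\times Y$, with $G$ acting by left translation on the first factor and trivially on the second.

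It then remains to verify the two properties of $Y$. Comparing dimensions along the birational map $X\dasharrow G\times Y$ gives $\dim(Y)=\dim(X)-\dim(G)=n-d$. For rational connectedness, observe that $G\times Y$ is birational to $X$ and hence rationally connected; since the second projection $G\times Y\to Y$ is a dominant morphism and rational connectedness passes to targets of dominant rational maps, $Y$ is rationally connected.

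The essential geometric content is already carried by Proposition~\ref{cylinder}, so I do not expect a serious obstacle. The only points requiring a little care are the bookkeeping that the regularising map of Theorem~\ref{th Wei55} is genuinely $G$-equivariant (so that the composition $\sigma$ followed by the cylinder isomorphism remains equivariant), and the descent of rational connectedness from the product $G\times Y$ to the factor $Y$.
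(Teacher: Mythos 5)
Your proof is correct and follows exactly the paper's argument: regularise the action via Weil's theorem (Theorem~\ref{th Wei55}), apply the cylinder decomposition of Proposition~\ref{cylinder}, and observe that rational connectedness descends from $X$ to $Y$. The extra details you supply (the dimension count and the use of the dominant projection $G\times Y\to Y$ to transfer rational connectedness) are correct and merely make explicit what the paper leaves implicit.
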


\begin{proof}
By Theorem \ref{th Wei55}, we may assume that $G$ acts faithfully and regularly on $X$.  By Proposition \ref{cylinder}, we may assume that $X=G\times Y$, for some variety $Y$, where $G$ acts on itself by multiplication and acts trivially on $Y$. Since $X$ is rationally connected, so is $Y$.
\end{proof}

\begin{corollary}\label{Coro:ConjclassToriAdd}
Let $X$ be a rationally connected variety of dimension $n$ and let $G$ be an algebraic group of dimension $d$, which is a torus or the additive group.\\
We have a bijection 
\[
\begin{array}{ccc}
\left\{\begin{array}{c}
\text{ birational classes of }\\
\text{ varieties $Y$ such that }\\
\text{  $Y\times\p^d$ is birational to }X
\end{array}\right\}
&\to & 
\left\{\begin{array}{c}
\text{ conjugacy classes of algebraic }\\
\text{subgroups of }\Bir(X)\\
\text{isomorphic to }G
\end{array}\right\}
\end{array}\]
that sends $Y$ onto the subgroup of $\Bir(X)$ obtained by conjugating the action of $G$ on  $G\times Y$ $($by left multiplication on $G$ and trivially on $Y)$ via a birational map $G\times Y\dasharrow X$.
\end{corollary}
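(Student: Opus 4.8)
The plan is to build the stated map from Proposition~\ref{Prop tori conjugate General} and then to establish bijectivity through the field of $G$-invariant rational functions. Throughout, write $G\times Y$ for the $G$-variety on which $G$ acts by left multiplication on the first factor and trivially on the second. The key computation, valid because $G$ is connected, is that the only left-invariant rational functions on $G$ are the constants, so that
\[\k(G\times Y)^G=\k(Y).\]
Moreover, since $G$ is a torus or $\G_a$ it is a rational variety, so $G\times Y$ is birational to $\p^d\times Y$; this is exactly what forces $Y\times\p^d$ to be birational to $X$ whenever the associated subgroup lands in $\Bir(X)$, and conversely.

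First I would check that the map is well defined. If $\psi_1,\psi_2\colon G\times Y\dashrightarrow X$ are two birational maps, the images $\psi_1 G\psi_1^{-1}$ and $\psi_2 G\psi_2^{-1}$ are conjugate in $\Bir(X)$ via $\psi_2\psi_1^{-1}$, so the resulting conjugacy class is independent of the choice of $\psi$. If $Y$ and $Y'$ are birational, then the product of a birational map $Y\dashrightarrow Y'$ with the identity on $G$ is a strictly $G$-equivariant birational map $G\times Y\dashrightarrow G\times Y'$ commuting with the $G$-translations; conjugating, it identifies the two subgroups inside $\Bir(X)$, so their conjugacy classes agree and the map descends to birational classes of $Y$.

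Surjectivity is essentially Proposition~\ref{Prop tori conjugate General}: given an algebraic subgroup $G'\subseteq\Bir(X)$ isomorphic to $G$, identify $G'$ with $G$ and apply the proposition to get a $G$-equivariant birational map $X\dashrightarrow G\times Y$ with $Y$ rationally connected of dimension $n-d$. Its inverse conjugates the standard action on $G\times Y$ onto $G'$, so $G'$ lies in the image, and $Y\times\p^d$ is birational to $X$ by the rationality of $G$, so $Y$ lies in the source. The main point is injectivity. Suppose $Y$ and $Y'$ yield the same conjugacy class, so there are birational maps $\psi\colon G\times Y\dashrightarrow X$, $\psi'\colon G\times Y'\dashrightarrow X$ and an element $\theta\in\Bir(X)$ with $\theta(\psi G\psi^{-1})\theta^{-1}=\psi'G(\psi')^{-1}$. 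Then $\Phi:=(\psi')^{-1}\theta\psi\colon G\times Y\dashrightarrow G\times Y'$ normalises the image of $G$: writing $a_g$ (resp.\ $a'_g$) for the translation by $g$ on $G\times Y$ (resp.\ $G\times Y'$), there is a group automorphism $\alpha$ of $G$ with $\Phi\circ a_g=a'_{\alpha(g)}\circ\Phi$ for all $g$. Given $f\in\k(G\times Y')^G$, we obtain $a_g^*(\Phi^*f)=\Phi^*((a'_{\alpha(g)})^*f)=\Phi^*f$ for every $g$, so $\Phi^*f$ is $G$-invariant; thus $\Phi^*$ maps $\k(G\times Y')^G$ into $\k(G\times Y)^G$, and symmetrically it is surjective. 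By the displayed identity this is an isomorphism $\k(Y')\iso\k(Y)$, whence $Y$ and $Y'$ are birational.

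The step requiring the most care is this passage to invariants: conjugacy of the two subgroups only yields that $\Phi$ normalises the $G$-action, so $\Phi$ is equivariant up to the automorphism $\alpha$ rather than strictly $G$-equivariant. This turns out to be harmless because $\alpha(G)=G$, so invariance under $G$ and under $\alpha(G)$ coincide; the points to verify precisely are that the conjugation genuinely induces an automorphism $\alpha$ of the \emph{algebraic} group $G$ (not merely an abstract one) and that $\Phi^*$ restricts to the invariant subfields. The identity $\k(G\times Y)^G=\k(Y)$, which rests only on the connectedness of $G$, is what makes the whole argument go through.
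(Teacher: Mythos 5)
Your proof is correct and follows essentially the same route as the paper's: well-definedness and surjectivity come from Proposition~\ref{Prop tori conjugate General}, and injectivity from the fact that $Y$ is recovered as the quotient of $G\times Y$ by $G$ --- you phrase this via the invariant subfield $\k(G\times Y)^G=\k(Y)$ where the paper instead uses that the fibres of the projection $G\times Y\to Y$ are exactly the $G$-orbits. Your extra care about the conjugating map being equivariant only up to an automorphism $\alpha$ of $G$ addresses a point the paper's proof silently elides, and you correctly observe that it is harmless since $\alpha(G)=G$ leaves the invariants unchanged.
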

\begin{proof}
For each variety $Y$ such that $Y\times\p^d$ is birational to $X$, the variety $Y\times G$ is birational to $X$, as $G$ is birational to $\p^d$. We then obtain an algebraic subgroup of $\Bir(X)$ isomorphic to $G$, unique up to birational conjugation. Proposition~\ref{Prop tori conjugate General} shows that every algebraic  subgroup of $\Bir(X)$ isomorphic to $G$ is obtained in this way. 

Let us then take another variety $Y'$ such that $Y'\times\p^d$ is birational to $X$. If $Y$ is birational to $Y'$, the actions of $G$ on $Y\times G$ and $Y'\times G$ are conjugate by a birational map, so we obtain the same conjugacy class in $\Bir(X)$. Conversely, if the actions are conjugate, there is a $G$-equivariant birational $\varphi\colon Y\times G\dasharrow Y'\times G$. As the fibres of the projections $\pi_Y\colon Y\times G\to Y$ and $\pi_{Y'}\colon Y'\times G\to Y'$ onto $Y$ or $Y'$ are the orbits of $G$, we obtain a birational map $\psi\colon Y\dasharrow Y'$ that makes the following diagram commutative:
\[\xymatrix@R=4mm@C=2cm{
    Y\times G \ar@{-->}[r]^{\varphi} \ar[d]_{\pi_{Y}}  & Y'\times G \ar[d]^{\pi_{Y'}} \\
    Y \ar@{-->}[r]^{\psi} & Y'.
  }\] 
\end{proof}

Corollary~\ref{Coro:ConjclassToriAdd} implies that studying conjugacy classes of tori in the Cremona groups is the same as studying birational maps between stably rational varieties. In particular, one gets the following:
\begin{corollary} \label{counter-example tori}
For each $d\ge 3$, there exist two $d$-dimensional tori in $\Bir(\p_\C^{d+3})$ which are not conjugate.
\end{corollary}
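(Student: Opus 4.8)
The plan is to deduce Corollary~\ref{counter-example tori} directly from Corollary~\ref{Coro:ConjclassToriAdd}, which reduces the construction of non-conjugate $d$-dimensional tori in $\Bir(\p_\C^{d+3})$ to the production of two \emph{birationally inequivalent} varieties $Y, Y'$ of dimension $3$ that are nonetheless \emph{stably rational} at the same level, i.e.~such that $Y\times\p^d$ and $Y'\times\p^d$ are both birational to $\p_\C^{d+3}$. Indeed, by Corollary~\ref{Coro:ConjclassToriAdd} applied with $X=\p_\C^{d+3}$ and $G=\Gm^d$, the conjugacy classes of $d$-dimensional tori in $\Bir(\p_\C^{d+3})$ are in bijection with birational classes of varieties $Y$ with $Y\times\p^d$ birational to $\p_\C^{d+3}$. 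So it suffices to exhibit two non-birational such $Y$.

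First I would take $Y'=\p_\C^3$, which trivially satisfies $Y'\times\p^d\simeq \p_\C^{d+3}$ birationally. Then I would take $Y$ to be a smooth stably rational but non-rational threefold; the standard examples here are the Beauville--Colliot-Th\'el\`ene--Sansuc--Swinnerton-Dyer threefolds \cite{BCSS85}, which are non-rational (detected by a nontrivial birational invariant such as the intermediate Jacobian or, more robustly, the torsion in $H^3(Y,\Z)$ / the unramified Brauer group) yet satisfy $Y\times\p^d$ rational for $d$ large enough. One must check the precise stabilisation level: these examples give $Y\times\p^3$ rational, hence $Y\times\p^d$ is rational for all $d\ge 3$, and since $Y\times\p^d$ rational means birational to $\p_\C^{d+3}$, the hypothesis $Y\times\p^d\sim_{\mathrm{bir}}\p_\C^{d+3}$ of Corollary~\ref{Coro:ConjclassToriAdd} is met precisely for $d\ge 3$, which is exactly the stated range. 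Because $Y$ is not rational and $Y'=\p_\C^3$ is, $Y$ and $Y'$ are not birational, so the corresponding tori are not conjugate.

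The hard part is \emph{not} the abstract reduction — that is immediate from Corollary~\ref{Coro:ConjclassToriAdd} — but rather invoking the existence of a stably rational non-rational threefold with the correct stabilisation degree. The key input is the Beauville--Colliot-Th\'el\`ene--Sansuc--Swinnerton-Dyer construction: a conic bundle over a rational surface (or a suitable intersection/quotient example) whose product with $\p^3$ is rational while $Y$ itself is not rational, the non-rationality being certified by a stable birational invariant that survives multiplication by projective space. I would therefore structure the proof as: (1) cite Corollary~\ref{Coro:ConjclassToriAdd} to reduce to finding non-birational $Y,Y'$ with $Y\times\p^d,Y'\times\p^d$ birational to $\p_\C^{d+3}$; (2) set $Y'=\p^3$; (3) quote \cite{BCSS85} for a stably rational non-rational threefold $Y$ with $Y\times\p^3$ rational, hence $Y\times\p^d\sim_{\mathrm{bir}}\p^{d+3}$ for all $d\ge 3$; and (4) conclude that the two resulting tori are non-conjugate, since conjugacy would force $Y$ and $\p^3$ to be birational, contradicting the non-rationality of $Y$.

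Since the whole statement is essentially a translation of Corollary~\ref{Coro:ConjclassToriAdd} into the language of stable rationality plus a citation, the only genuine obstacle is locating a reference that simultaneously guarantees non-rationality and rationality-after-$\times\p^3$; once that example is in hand the argument is a one-line bijection chase. If one wanted to avoid relying on the precise stabilisation level $d=3$, one could alternatively take any stably rational non-rational variety and work in a possibly higher-dimensional ambient projective space, but the value $d\ge 3$ in the statement reflects exactly the known threefold examples of \cite{BCSS85}.
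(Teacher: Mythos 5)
Your proposal is correct and follows essentially the same route as the paper: the paper's proof likewise combines Corollary~\ref{Coro:ConjclassToriAdd} with the Beauville--Colliot-Th\'el\`ene--Sansuc--Swinnerton-Dyer example of a non-rational complex threefold $Y$ with $Y\times\p^3$ rational, comparing it with $\p^3$. Your additional remarks on the stabilisation level $d\ge 3$ and the bijection chase are exactly the (implicit) content of the paper's two-line argument.
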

\begin{proof}
In \cite{BCSS} an example is given of a complex variety $Y$ of dimension $3$ which is not rational but such that $Y\times\p^3$ is rational.
The result then follows from Corollary~\ref{Coro:ConjclassToriAdd}{, applied to $Y$ and $\p^3$}.
\end{proof}

Recall that unirational surfaces over algebraically closed field of characteristic zero are rational, but this is not true in positive characteristic (e.g.~Shioda's surfaces in \cite{Shi74}). However, we have the following characteristic-free classical result; it will be used in the proofs of Theorems~\ref{th:A} and~\ref{Thm:conic bundles}.
\begin{proposition}\label{Prop:RatRat}
Let $Y$ be an irreducible variety of dimension $\le 2$ and suppose that one of the following holds:
\begin{enumerate}
\item\label{StablyRatisRat}
The variety $Y$ is stably rational $($i.e.~such that $Y\times\p^{m}$ is rational for some $m\ge 1)$; or
\item\label{BaseMfsRatisRat}
$\car(\k)\not=2$ and there is a Mori fibration $X\to Y$ with $X$ rational of dimension $\le 3$.
\end{enumerate}
Then $Y$ is rational.
\end{proposition}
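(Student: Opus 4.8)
The plan is to prove the two cases separately, since they require different input, and to reduce both to the statement that a stably rational surface (over an algebraically closed field, in any characteristic) is rational.

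\emph{Reduction of case \ref{BaseMfsRatisRat} to case \ref{StablyRatisRat}.} First I would dispose of the trivial dimensional situations: if $\dim(Y)\le 1$ then $Y$ is a curve dominated by a rational variety, hence a rational curve, so $Y\simeq \p^1$ and we are done. Thus I may assume $\dim(Y)=2$, and correspondingly $\dim(X)=3$ (the case $\dim(X)=2$ forces $\dim(Y)\le 1$ by the Mori fibration condition $\dim(Y)<\dim(X)$). Given a Mori fibration $X\to Y$ with $X$ rational of dimension $3$, the fibres are one-dimensional, and by the characterisation recalled in \S\ref{Mori fibrations} the fibration $\pi\colon X\to Y$ is a conic bundle whose general fibre is isomorphic to $\p^1$. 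The idea is that a conic bundle becomes birationally trivial after base change, or more concretely that we can construct a rational section-like structure forcing $Y\times\p^1$ (or a $\p^1$-bundle over $Y$) to be rational. Since $X$ is rational of dimension $3$ and birational to a $\p^1$-bundle over $Y$ over the generic point, we get that $Y$ is stably rational: indeed $X$ is birational to $Y\times\p^1$ when the generic fibre, a conic over $\k(Y)$ with a $\k(Y)$-point (which exists here because $\car(\k)\neq 2$ guarantees a smooth conic with a rational point splits, once one checks the conic has a rational point — this is where the hypothesis $\car(\k)\neq 2$ enters and must be handled carefully). Hence $Y\times\p^1$ is rational, and we have reduced to case \ref{StablyRatisRat}.

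\emph{Proof of case \ref{StablyRatisRat}.} Here the input is the classification of surfaces. Suppose $Y\times\p^m$ is rational for some $m\ge 1$; I want to conclude $Y$ is rational. After removing the singular locus and taking a smooth projective model, I may assume $Y$ is a smooth projective surface (rationality is a birational invariant). A stably rational variety is unirational and, more to the point, rationally connected, so $Y$ is a rationally connected smooth projective surface. Over an algebraically closed field, a smooth projective rationally connected (equivalently separably rationally connected, after care in positive characteristic) surface has Kodaira dimension $-\infty$ and, running the minimal model program / using the Enriques--Kodaira-type classification of surfaces, is birational to $\p^2$ or to a ruled surface over a curve; rational connectedness forces the base curve to be $\p^1$, giving rationality. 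The key arithmetic fact I would cite is that stable rationality of a surface over an algebraically closed field implies rationality; for instance one invokes the vanishing of the relevant birational invariants (plurigenera and the irregularity), which are stable-birational invariants and vanish for $\p^{n}$, together with Castelnuovo's rationality criterion $P_2=q=0$ valid in all characteristics.

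\emph{Main obstacle.} The delicate point is case \ref{BaseMfsRatisRat} in positive characteristic: one must verify that the generic fibre of the conic bundle, a conic over $K=\k(Y)$, actually possesses a $K$-rational point so that it is $K$-isomorphic to $\p^1_K$ and $X$ is birational to $Y\times\p^1$. The hypothesis $\car(\k)\neq 2$ is exactly what keeps the conic smooth and lets one apply the standard fact that a conic over a field is either a smooth conic or degenerate, and that $X$ rational forces the generic fibre to be rational over $K$, hence to have a $K$-point by Tsen-type/rationality arguments; establishing this splitting cleanly, rather than just over a field extension, is the crux. Once the generic fibre is shown to be $\p^1_K$, stable rationality of $Y$ follows immediately and the two cases merge. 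I would therefore spend most of the effort making the conic-splitting step rigorous, and treat the surface classification step as an appeal to Castelnuovo's criterion together with the birational invariance of $P_2$ and $q$.
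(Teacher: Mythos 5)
Your case \hyperlink{th:D_a}{(2)} reduction rests on a false premise. You want to produce a $\k(Y)$-point on the generic fibre of the conic bundle $X\to Y$ so that $X$ becomes birational to $Y\times\p^1$, and you suggest this follows from rationality of $X$ by ``Tsen-type'' arguments. Tsen's theorem applies to function fields of \emph{curves}, not surfaces; over $K=\k(Y)$ with $\dim Y=2$ the Brauer group is nontrivial and a conic need not split. Concretely, rationality of $X$ does not force the generic fibre to have a $K$-point: by Lemma~\ref{2_intersections} this would be equivalent to the discriminant being empty, and there are plenty of rational standard conic bundles over rational surfaces with nonempty discriminant. You have also misread the role of $\car(\k)\neq 2$: it is not there to split the conic, but to guarantee that a \emph{regular} conic is \emph{smooth}, so that the general fibre of $\pi$ is smooth (cf.\ \cite{MS03} for characteristic-$2$ counterexamples) and hence $\pi$ is generically smooth. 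The paper never splits the conic at all; it only needs generic smoothness of $\pi$ so that $\pi\circ\psi\colon\p^3\dasharrow Y$ has surjective differential at a general point.

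Your case \hyperlink{th:D_b}{(1)} argument has a separability gap that is precisely the content of the paper's proof. The claim that $P_2$ and $q$ are ``stable-birational invariants'' whose vanishing for $\p^{m+2}$ forces their vanishing for $Y$ does not hold as stated: by K\"unneth, $H^0(Y\times\p^m,\omega^{\otimes 2})=H^0(Y,\omega_Y^{\otimes 2})\otimes H^0(\p^m,\omega_{\p^m}^{\otimes 2})=0$ for \emph{every} surface $Y$, so the vanishing of $P_2$ on the product carries no information about $P_2(Y)$. Likewise ``stably rational $\Rightarrow$ rationally connected $\Rightarrow$ Kodaira dimension $-\infty$'' fails in positive characteristic without separability: Shioda's surfaces \cite{Shi74} are unirational of general type. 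What is actually needed, and what the paper supplies, is a \emph{separable} dominant rational map $\p^2\dasharrow Y$: since $\psi\colon\p^{m+2}\dasharrow Y\times\p^m$ is birational, its differential is invertible at a general point, the projection to $Y$ is smooth, and restricting to a general plane $P\subseteq\p^{m+2}$ yields a dominant map $P\dasharrow Y$ with surjective differential, hence a separable field extension $\k(Y)\hookrightarrow\k(\p^2)$; Zariski's characteristic-free form of Castelnuovo's criterion \cite{Zar58} then gives rationality. The same plane-section argument handles case (2) directly, which is why the two cases can be treated uniformly without ever producing a section of the conic bundle.
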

\begin{proof}
If $Y$ is a curve, this is a consequence of the fact that $Y$ is unirational, by L\"uroth's theorem, so we may assume that $Y$ is a surface.

In case~\ref{StablyRatisRat}, we denote by $\psi$ a birational map $\psi\colon \p^{n}\dasharrow Y\times\p^m$, with $n=m+2$ and write $\pi\colon Y\times \p^m\to Y$ the first projection. As $\psi$ is birational, the differential map of $\pi\circ \psi\colon \p^n\dasharrow Y$ is surjective at a general point of $\p^n$.

In case~\ref{BaseMfsRatisRat}, we denote by $\psi\colon \p^n\dasharrow X$ a birational map and by $\pi\colon X\to Y$ the Mori fibration, which is a conic bundle since $Y$ is a surface. 
There is a dense open subset $Y'$ of $Y$ over which $\pi$ is flat (by generic flatness) and its fibres are geometrically regular (see \cite[Proposition 9.3.16]{Liu02} and \cite{MS03} for examples in characteristic $2$ of conic bundles whose a general fibre is non-reduced). Hence $\pi$ is smooth  over $Y'$ (\cite[Chapter~I\!I, Theorem~10.2]{Har77}), and so the differential map of $\pi\circ \psi\colon \p^n\dasharrow Y$ is surjective at a general point of $\p^n$.

In both cases, taking a general plane $P\subseteq  \p^n$, the restriction of $\psi$ yields a dominant rational map $f\colon P\dasharrow Y$ with a surjective differential map for a general $s \in P$. Therefore, \cite[\upshape\S 5.5, Theorem]{Hum75} implies that $f$ is a separable morphism, that is, the induced field extension $\k(Y)  \hookrightarrow \k(P)\simeq \k(\p^2)$ is separable, and thus  $Y$ is rational by Castelnuovo's theorem \cite[\upshape\S 1]{Zar58}. 
\end{proof}

\begin{corollary}\label{tori conjugate} \item
\begin{enumerate}
\item \label{item:tori conj} For each $n \geq 1$ and each $d\in \{n,n-1,n-2\}$, two tori of dimension $d$ in the Cremona group $\Bir(\p^n)$ are conjugate.
\item \label{item: Ga cong}For each $n\in \{1,2,3\}$, two additive groups are conjugate in the Cremona group $\Bir(\p^n)$.
\end{enumerate}
\end{corollary}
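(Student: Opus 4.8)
The plan is to deduce Corollary~\ref{tori conjugate} directly from the bijection established in Corollary~\ref{Coro:ConjclassToriAdd} together with the rationality results of Proposition~\ref{Prop:RatRat}. Both statements concern $X = \p^n$, which is certainly rationally connected, so Corollary~\ref{Coro:ConjclassToriAdd} applies with $G$ a $d$-dimensional torus (for part~\ref{item:tori conj}) or the additive group $\G_a$ (for part~\ref{item: Ga cong}). In each case, conjugacy classes of algebraic subgroups of $\Bir(\p^n)$ isomorphic to $G$ are in bijection with birational classes of varieties $Y$ of dimension $n-d$ such that $Y \times \p^d$ is birational to $\p^n$, i.e.~such that $Y \times \p^d$ is rational, i.e.~such that $Y$ is stably rational. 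Thus, to prove that all such subgroups are conjugate, it suffices to show that the only such birational class is that of $\p^{n-d}$ itself; equivalently, that every stably rational variety $Y$ of dimension $n-d$ is rational.

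For part~\ref{item:tori conj}, the dimension of $Y$ is $n-d \in \{0,1,2\}$ since $d \in \{n, n-1, n-2\}$. In dimension $0$ there is nothing to prove. In dimensions $1$ and $2$, Proposition~\ref{Prop:RatRat}\ref{StablyRatisRat} asserts precisely that a stably rational variety of dimension $\le 2$ is rational. Hence the birational class of $Y$ is unique, and by the bijection of Corollary~\ref{Coro:ConjclassToriAdd} all $d$-dimensional tori in $\Bir(\p^n)$ are conjugate. For part~\ref{item: Ga cong}, the argument is identical: with $n \in \{1,2,3\}$ and $d = 1$ (since $\G_a$ is one-dimensional), the quotient variety $Y$ has dimension $n - 1 \in \{0,1,2\}$, so the same rationality input forces $Y$ to be rational, and all additive groups in $\Bir(\p^n)$ are conjugate.

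The only subtle point is to confirm that $\G_a$ falls within the scope of Corollary~\ref{Coro:ConjclassToriAdd}, which is stated for a group $G$ that is a torus or the additive group; reading the hypotheses, this is indeed covered, as the underlying Proposition~\ref{cylinder} and Proposition~\ref{Prop tori conjugate General} both treat $\G_a$ explicitly, with $\G_a$ being birational to $\p^1$ so that the case $d=1$ matches. I do not expect any real obstacle here: once the two preceding results are in hand, the corollary is a formal consequence, the entire content being the reduction to the statement ``stably rational of dimension $\le 2$ implies rational,'' which is exactly Proposition~\ref{Prop:RatRat}\ref{StablyRatisRat}. The bound $d \ge n-2$ in part~\ref{item:tori conj} (and $n \le 3$ in part~\ref{item: Ga cong}) is precisely what keeps $\dim Y \le 2$, the threshold below which stable rationality implies rationality; beyond it Corollary~\ref{counter-example tori} shows the statement genuinely fails, which is a useful consistency check on the dimension constraints.
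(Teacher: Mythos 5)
Your argument is correct and is exactly the paper's proof: the corollary is deduced from the bijection of Corollary~\ref{Coro:ConjclassToriAdd} combined with Proposition~\ref{Prop:RatRat}\ref{StablyRatisRat}, with the dimension constraints ensuring $\dim Y\le 2$. The paper states this in one line; you have merely spelled out the same reduction in detail.
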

\begin{proof}
Follows from Corollary~\ref{Coro:ConjclassToriAdd} and Proposition~\ref{Prop:RatRat}\ref{StablyRatisRat}. 
\end{proof}

\begin{remark}
Corollary~\ref{tori conjugate} is a classical result when $\car(\k)=0$; see \cite[Theorem~2]{Pop13} for~\ref{item:tori conj} and \cite[Corollary~5]{Pop17} for~\ref{item: Ga cong}. 
\end{remark}

Note that we have the following result, well-known to specialists (see \cite[Proposition 4.1]{BL} for a similar argument), but that we did not see written explicitly in the following form.
\begin{corollary} \label{cor Auts cubic threefold}
Suppose that $\car(\k)=0$ and let $X$ be a rationally connected variety of dimension $3$, which is not rational $($for instance a smooth projective cubic threefold, or more generally every non-rational Fano threefold$)$. Then every connected algebraic subgroup of $\Bir(X)$ is trivial. In particular, $\Autz(X)$ is trivial.
\end{corollary}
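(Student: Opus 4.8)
The plan is to reduce the statement to the non-existence of one-parameter subgroups in $\Bir(X)$. So let $G\subseteq\Bir(X)$ be a non-trivial connected algebraic subgroup; we seek a contradiction. Since $X$ is rationally connected, Lemma~\ref{lemma:linear} shows that $G$ is a linear algebraic group. A non-trivial connected linear algebraic group over an algebraically closed field always contains a one-dimensional connected closed subgroup: taking a Borel subgroup $B\subseteq G$ (which is positive-dimensional since $G$ is), either its maximal torus is non-trivial, giving a copy of $\G_m$, or $B$ is unipotent and hence contains a copy of $\G_a$. Hence it suffices to show that $\Bir(X)$ contains no subgroup isomorphic to $\G_m$ and none isomorphic to $\G_a$.

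Suppose then that $G\subseteq\Bir(X)$ is isomorphic to $\G_m$ or to $\G_a$, so that $\dim G=1$. By Proposition~\ref{Prop tori conjugate General} there is a $G$-equivariant birational map $X\dasharrow G\times Y$, where $Y$ is a rationally connected variety of dimension $\dim(X)-1=2$. Since $\car(\k)=0$, a rationally connected surface is rational by Castelnuovo's criterion (as invoked in the proof of Proposition~\ref{Prop:RatRat}), so $Y$ is rational. As both $\G_m$ and $\G_a$ are dense open subsets of $\p^1$, the product $G\times Y$ is birational to $\p^1\times Y$, which is rational because $Y$ is. Therefore $X$ would be rational, contradicting the hypothesis. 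This rules out both $\G_m$ and $\G_a$, and by the previous paragraph every connected algebraic subgroup of $\Bir(X)$ is trivial.

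For the final assertion, note that $\Autz(X)$ is a connected algebraic subgroup of $\Aut(X)\subseteq\Bir(X)$, so it is trivial by what precedes.

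The argument is essentially a bookkeeping of the tools already assembled; the only two substantial inputs are the structure-theoretic reduction to $\G_m$ or $\G_a$ and the classical fact that rationally connected surfaces are rational in characteristic zero. The main point to be careful about is that Proposition~\ref{Prop tori conjugate General} delivers a genuinely \emph{rationally connected} factor $Y$ of dimension $2$, so that surface rationality applies and forces the rationality of $X$; this is precisely where the hypothesis that $X$ is not rational is used to close the contradiction.
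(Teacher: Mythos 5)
Your proof is correct and follows essentially the same route as the paper: reduce via Lemma~\ref{lemma:linear} and the structure theory of linear algebraic groups to the case of a torus or an additive group, then apply Proposition~\ref{Prop tori conjugate General} and the rationality of rationally connected surfaces in characteristic zero to contradict the non-rationality of $X$. The only cosmetic difference is that you extract a one-dimensional subgroup from a Borel (so $Y$ has dimension exactly $2$ and only Castelnuovo is needed), whereas the paper allows a torus of any dimension and also invokes L\"uroth for the case $\dim(Y)=1$.
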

\begin{proof}
By  Lemma~\ref{lemma:linear} every algebraic subgroup of $\Bir(X)$ is linear. The Jordan-Chevalley decomposition implies that any connected linear algebraic group is generated by tori and unipotent subgroups.
Let $G$ be  a connected linear algebraic subgroup of $\Bir(X)$. To prove the statement it suffices then to show that $G$ contains no non-trivial tori and no additive groups. Assume that $H$ is a subgroup of $G$ that is a non-trivial torus or an additive group. Then by Proposition \ref{Prop tori conjugate General}, the variety $X$ is birational to $H\times Y$, where $Y$ is a rationally connected variety of dimension at most $2$. If $\dim(Y)=1$, Lur\"oth theorem implies that $Y$ is rational.
If $\dim(Y)=2$, since $\car(\k)=0$, Castelnuovo's theorem implies that $Y$ is rational (see \cite[Theorem~13.27]{Bad01}). So $X$ must be rational which is false by assumption. Therefore, $G$ does not contain a non-trivial torus or an additive group, and so $G$ must be trivial.
\end{proof}

\section{Mori conic bundles over surfaces}\label{Sec:ConicBundles}
The main goal of this section is to prove Theorem~\ref{Thm:conic bundles} (in \upshape\S~\ref{sec:proof of Prop B}). We will first reduce to the case where $X \to S$ is a standard conic bundle (in \upshape\S \ref{first reduction}). If the generic fibre of this conic bundle is $\p^1$, we will obtain a $\p^1$-bundle  (Lemma~\ref{2_intersections}). Otherwise, we will see that $\Autz(X)$ is a torus of dimension at most two (in \upshape\S \ref{conic bundles}).

\subsection{Standard conic bundles}  \label{first reduction}
To study the automorphism group of a conic bundle over a surface we can reduce to the case of a standard conic bundle, which is a Mori conic bundle with nice geometric features. In this subsection we explain this reduction and some consequences.

\begin{definition} \label{def standard conic bundle}
A morphism $\pi\colon X \to S$ is called \emph{standard conic bundle} if
  \begin{enumerate}
  \item The varieties $X$ and $S$ are smooth projective, and $\dim(X)=1+\dim(S)$.
  \item\label{StdConicBundle1}
   The morphism $\pi$ is induced by the inclusion of $X$ (given by an equation of degree $2$) in a $\p^2$-bundle over $S$. The \emph{discriminant divisor} $\Delta\subseteq   S$ is reduced, and all its components are smooth and intersect in normal crossings $($i.e.~$\Delta$ is an SNC divisor$)$. For each $p\in S$, the rank of the $3\times 3$-matrix corresponding the quadric equation is $3$, $2$, $1$ respectively when $p\notin \Delta$, $p\in \Delta\setminus \mathrm{sing}(\Delta)$, $p\in \mathrm{sing}(\Delta)$.
  \item\label{StdConicBundle2}
  The relative Picard rank is $\rho(X/S)=1$.
  \end{enumerate}
\end{definition}
\begin{remark}
We may observe that the $\p^2$-bundle over $S$ that appears in the definition of a standard conic bundle is unique; indeed, it is given by $\mathbb{P}_S(\pi_*\omega_X^{-1})$ (see \cite[Proposition~1.2]{Bea77} in the case $S=\p^2$).
\end{remark}
\begin{remark}

A standard conic bundle is always a Mori fibration. Indeed, the only non-trivial condition to check is that $\pi_* \O_X=\O_S$, but this follows from \cite[Tag 0AY8]{stacks-project}, since the generic fibre of $\pi$ is assumed to be geometrically reduced.
\end{remark}

The following result, without the connected algebraic group action, is due to Sarkisov  \cite{sarkisov}. It was generalised in the equivariant setting for finite group actions by Avilov \cite{avilov}. {The assumption $\car(\k)\neq 2$ is assumed in \cite{sarkisov} and needed at different steps of the proof, as it deals with conics and symmetric matrices.}

\begin{theorem} \label{th sarkisov_standard_conic}
Assume $\car(\k)\neq 2$. Let  $S$ be a surface, let $X$ be a normal variety, let $\pi\colon X\rightarrow S$ be a conic bundle, and let $G=\Autz(X)$. 
Then there is a $G$-equivariant commutative diagram
\[\xymatrix@R=4mm@C=2cm{
    \hat X \ar@{-->}[r]^{\psi} \ar[d]_{\hat\pi}  & X \ar[d]^\pi \\
    \hat{S} \ar[r]_{\eta} & S
  }\]  
where $\psi$ is a birational map, $\eta$ is a birational morphism, and the morphism $\hat\pi\colon \hat X\to \hat S$ is a standard conic bundle.  
\end{theorem}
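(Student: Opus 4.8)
The starting point is Blanchard's lemma (Proposition~\ref{blanchard}): since $\pi_*\O_X=\O_S$, the connected group $G=\Autz(X)$ acts regularly on $S$ and $\pi$ is $G$-equivariant. The plan is then to run Sarkisov's standardization algorithm \cite{sarkisov} and to observe that, because $G$ is connected, each of its steps can be carried out $G$-equivariantly; this is the analogue of Avilov's argument \cite{avilov} for finite groups, with the crucial simplification that connectedness makes the equivariance automatic rather than a matter of careful choices. The mechanism, used repeatedly, is the following elementary fact: a connected algebraic group acting on a variety $V$ preserves every irreducible component of any closed subset canonically attached to $V$ (such as the singular locus of $V$, or of a $G$-invariant divisor), acts trivially on $\NS(V)$, and fixes each extremal ray of $\overline{\NE}$. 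Consequently every blow-up center, every contracted divisor, and every extremal ray that occurs in the algorithm is automatically $G$-invariant, and the associated birational modification lifts to a $G$-equivariant one.

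First I would make $X$ and $S$ smooth. The singular locus of $X$ is $G$-invariant, so a $G$-equivariant resolution $X_1\to X$ exists (its existence for threefolds being available in the characteristic range considered), and applying Blanchard again together with a resolution of $S$ we obtain a $G$-equivariant conic bundle $\pi_1\colon X_1\to S_1$ with $X_1,S_1$ smooth projective. The composite $X_1\to X$ is a $G$-equivariant birational morphism, and $S_1\to S$ a $G$-equivariant birational morphism, as required for $\eta$.

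Next I would arrange the SNC and rank conditions of Definition~\ref{def standard conic bundle}\ref{StdConicBundle1}. The discriminant divisor $\Delta\subseteq S_1$ is intrinsic to $\pi_1$, hence $G$-invariant, as is the locus where $\Delta$ fails to be a SNC divisor; blowing up $S_1$ along these $G$-invariant centers makes $\Delta$ a SNC divisor. Pulling back the conic bundle and performing Sarkisov's fibrewise elementary transformations along the (again $G$-invariant) degenerate fibres, one reaches the prescribed ranks of the $3\times 3$ quadric matrix exactly as in the non-equivariant case, each transformation being $G$-equivariant since its center is $G$-invariant. Finally, if $\rho(\hat X/\hat S)>1$, I would run a relative MMP over $\hat S$: since $G$ fixes every extremal ray of $\overline{\NE}(\hat X/\hat S)$, each relative contraction is $G$-equivariant, and the process terminates with $\rho(\hat X/\hat S)=1$, giving condition~\ref{StdConicBundle2}. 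Composing all these $G$-equivariant maps yields the diagram with $\psi$ birational, $\eta$ a birational morphism, and $\hat\pi$ a standard conic bundle.

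The main obstacle is the SNC-and-rank step: connectedness of $G$ guarantees that all centers are $G$-invariant, but one must still reproduce Sarkisov's local analysis of the degenerate fibres over $\Delta\setminus\mathrm{sing}(\Delta)$ and over $\mathrm{sing}(\Delta)$ to verify that the elementary transformations genuinely produce a standard conic bundle and do not reintroduce non-standard fibres. Everywhere else, the reduction to the known non-equivariant statement is immediate once $G$-invariance of the relevant loci has been recorded.
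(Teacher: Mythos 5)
Your overall strategy---run Sarkisov's standardisation and observe that connectedness of $G$ makes every step automatically equivariant---is exactly the paper's, and your general mechanism (a connected group preserves every canonically attached closed subset, acts trivially on $\NS$, and fixes extremal rays) is the right one. However, your opening move contains a genuine gap: you resolve the singularities of the threefold $X$ by a $G$-equivariant resolution, asserting its existence ``in the characteristic range considered''. The theorem only assumes $\car(\k)\neq 2$, and equivariant resolution of threefold singularities is not available in positive characteristic (the paper is explicit elsewhere that $\car(\k)=0$ is assumed precisely where such resolutions are needed). The paper's proof deliberately avoids resolving $X$: after normalising $S$, it takes the maximal open $U\subseteq S$ over which $\pi$ is smooth, embeds $X_U$ into $\mathrm{Proj}(R^0\pi_*(\omega_{X_U}^{\vee}))$, and replaces $X$ by the closure, which is a degree-$2$ hypersurface in a $\p^2$-bundle. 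Only the \emph{surface} $S$ is then resolved (by alternating normalisation with blow-ups of singular points, which is characteristic-free and $G$-equivariant because $G$ is connected), and the smoothness of the total space is obtained from Sarkisov's local analysis of the discriminant rather than from any resolution theorem for threefolds.

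This same omission causes a second problem: Definition~\ref{def standard conic bundle}\ref{StdConicBundle1} requires $\hat X$ to be cut out by a degree-$2$ equation in a $\p^2$-bundle over $\hat S$, and your argument never constructs this embedding---you pass directly to ``Sarkisov's fibrewise elementary transformations'' as if the conic-in-a-$\p^2$-bundle presentation were already in hand. The relative anticanonical construction above is precisely what produces it. Finally, a smaller point: for the last step ($\rho(\hat X/\hat S)>1$) you invoke a relative MMP, which is again machinery not available in all characteristics $\neq 2$; the paper instead notes that excess relative Picard rank forces the preimage of some component of $\Delta$ to split into two surfaces, both $G$-invariant by connectedness, and contracts one of them directly. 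With these three repairs your argument would match the paper's proof.
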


\begin{proof}
We follow the proof of \cite[Theorem~1.13]{sarkisov}, and simply check that all steps are $G$-equivariant.

Firstly, we may assume that $S$ is normal, by replacing $S$ with its normalisation $\tilde{S}$ and $X$ by $X\times_S \tilde{S}$. We then denote by $U\subseteq   S$  the maximal open subset of smooth points of $S$ over which $\pi$ is smooth, which is $G$-invariant (as a general fibre being $\p^1$, the set $U$ is dense in $S$). We embed $X_U=X\times_S U$ into $\mathrm{Proj}({\pi_*}(\omega_{X_U}^{\vee}))$ (see \cite[\upshape\S 1.5]{sarkisov}); the closure is another conic bundle $(X',S,\pi')$ embedded in a $\p^2$-bundle.
These steps of \cite[p.362]{sarkisov} are naturally $G$-equivariant.

For each point $p\in S$ we can then find a neighbourhood in which the fibre of $\pi'$ is a conic in $\p^2$. The discriminant of the conic yields a local equation, which produces a divisor $\tilde\Delta$ on $S$. We now observe that this one is reduced. This can be shown only on the open subset of $S$ consisting of regular points since its complement only consists of points, and follows then from \cite[Corollary~1.9]{sarkisov}.

As $G$ is connected, we can resolve the singularities of $S$ in a $G$-equivariant way: it suffices to repeatedly alternate normalising the surface with blowing-up singular points (see \cite{Art84}).
Let $\hat{S}\to S$ be a $G$-equivariant resolution of singularities of $S$, and consider $\hat\pi\colon \hat{X}=X\times_S \hat{S}\to \hat{S}$. By blowing-up the points of the new discriminant curve where this one does not have simple normal crossings and replacing it by its strict transform, one may assume, after finitely steps, that the discriminant curve $\Delta$ of $\hat\pi$ satisfies Assumption \ref{StdConicBundle1} of Definition~\ref{def standard conic bundle} \cite[Proposition~1.16 and 1.8]{sarkisov}. Again, this process is $G$-equivariant.

It remains to observe that if $\rho(\hat{X}/\hat{S})>1$ then the preimage of some components of $\Delta$ split into two surfaces, and one can contract one of these. As our group $G$ is connected, both components are invariant, so this step is again $G$-equivariant. This achieves the proof.
\end{proof}

The following result is well-known by experts (see for instance \cite[Proposition~1.2]{Bea77}, \cite{sarkisov} or \cite[Lemma~1]{Isk87} for similar results), but we chose to recall the guidelines of the proof for the sake of completeness.
\begin{lemma}\label{2_intersections}
Assume $\car(\k)\neq 2$. Let $S$ be a smooth projective  rational surface, let $\pi\colon X\to S$ be a \emph{standard conic bundle} $($as in Definition ~$\ref{def standard conic bundle})$, let $\Delta\subseteq   S$ be the discriminant curve, and let $K:=\k(S)$. Then the following are equivalent:
\begin{enumerate}
\item \label{XP1B} $X$ is a $\p^1$-bundle over $S$;
\item \label{GenericFibP1K} the generic fibre $X_K$ is isomorphic to $\p^1_K$;
\item \label{PiRatSection} $\pi$ has a rational section; and
\item \label{NoDelta}$\Delta = \emptyset$.
\end{enumerate}
Moreover, if $\Delta$ is non-empty and reducible, then each rational irreducible component $C$ of $\Delta$ intersects the complement $\overline{\Delta\setminus C}$ into at least two distinct points. If $\Delta$ is non-empty and irreducible, then $g(\Delta)\ge 1$.
\end{lemma}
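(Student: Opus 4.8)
The four conditions will be proved equivalent through a cycle of implications, the heart of which is the equivalence between the triviality of the generic conic $X_K$ and the emptiness of $\Delta$. The plan is to read this off from the \emph{discriminant double cover} $\tilde\Delta\to\Delta$ attached to the conic bundle, together with the vanishing of the Brauer group of a rational surface. First the formal implications: since by convention a $\p^1$-bundle is Zariski locally trivial, (1) gives $X_K\cong\p^1_K$, so (1)$\Rightarrow$(2); a rational point $[0:1]\in\p^1_K(K)$ yields a rational section, so (2)$\Rightarrow$(3); and conversely, as the generic point of $S$ avoids the divisor $\Delta$, the generic fibre $X_K$ is a \emph{smooth} conic over $K$, so a rational section (a $K$-point) forces $X_K\cong\p^1_K$, giving (3)$\Rightarrow$(2). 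Thus (2)$\Leftrightarrow$(3), and it remains to tie these to (4).

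For (4)$\Rightarrow$(1): if $\Delta=\emptyset$ then $\pi$ is a smooth conic bundle, i.e. a Severi--Brauer scheme of relative dimension one, which determines a class $\alpha\in\Br(S)[2]$. As $S$ is a smooth projective rational surface over an algebraically closed field, $\Br(S)=0$ by birational invariance of the Brauer group together with $\Br(\p^2)=0$; hence $\alpha=0$ and $X=\p(E)$ for a rank-two bundle $E$, so $X$ is a $\p^1$-bundle. For (2)$\Rightarrow$(4) I would argue by contraposition: if $\Delta\neq\emptyset$, pick an irreducible component $C$. The residue of the quaternion class $[X_K]\in\Br(K)$ along the valuation $v_C$ is the class in $H^1(\k(C),\Z/2)$ of the double cover $\tilde C\to C$ parametrising the two lines of the degenerate fibres over $C$. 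Standardness ($\rho(X/S)=1$) guarantees---exactly as recalled in the proof of Theorem~\ref{th sarkisov_standard_conic}, where $\rho>1$ is what lets a component of $\Delta$ split into two surfaces---that $\pi^{-1}(C)$ is irreducible, so $\tilde C\to C$ is connected and this residue is nonzero. A nonzero residue forces $[X_K]\neq0$, hence $X_K\not\cong\p^1_K$. Combining, (1)$\Rightarrow$(2)$\Rightarrow$(4)$\Rightarrow$(1) closes the cycle and (2)$\Leftrightarrow$(3), so all four conditions are equivalent.

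For the last two assertions I would reuse the same double cover. Over $\Delta\setminus\mathrm{sing}(\Delta)$ the fibres have rank $2$ and $\tilde\Delta\to\Delta$ is étale of degree two, whereas over $\mathrm{sing}(\Delta)$ the fibres have rank $1$ and the cover ramifies; since $\Delta$ is SNC, $\mathrm{sing}(\Delta)\cap C=C\cap\overline{\Delta\setminus C}$ for each component $C$. If $C$ is rational, then $C\cong\p^1$ and $\tilde C\to C$ is a connected degree-two cover ramified exactly over $C\cap\overline{\Delta\setminus C}$; by Riemann--Hurwitz (valid as $\car(\k)\neq2$) a connected double cover of $\p^1$ has at least two branch points, so $C$ meets $\overline{\Delta\setminus C}$ in at least two points. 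If $\Delta$ is irreducible it is smooth, $\mathrm{sing}(\Delta)=\emptyset$, and $\tilde\Delta\to\Delta$ is a connected étale double cover; since $\p^1$ is simply connected this is impossible unless $g(\Delta)\geq1$.

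The main obstacle is the direction (2)$\Rightarrow$(4): the assertion that a \emph{nontrivial} residue occurs along every component of $\Delta$ rests on the non-splitting of the discriminant double cover, which is not formal and is precisely where the standardness hypothesis $\rho(X/S)=1$ must be invoked (via the irreducibility of $\pi^{-1}(C)$). The remaining ingredients---the vanishing $\Br(S)=0$ and the classical fact that a smooth conic with a rational point is $\p^1$---are standard, and the two geometric statements then follow formally from Riemann--Hurwitz applied to the connected double cover.
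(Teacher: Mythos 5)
Your proposal is correct, and the logical skeleton closes properly: $(1)\Rightarrow(2)\Leftrightarrow(3)$, $(2)\Rightarrow(4)\Rightarrow(1)$. The equivalence $(2)\Leftrightarrow(3)$ (a smooth conic with a rational point is $\p^1_K$) and the treatment of the last two assertions via the discriminant double cover $\tilde C\to C$ and Riemann--Hurwitz are essentially identical to the paper's. Where you genuinely diverge is in the two nontrivial implications. For $(4)\Rightarrow(1)$ the paper invokes Sarkisov's correspondence between standard conic bundles and maximal orders in quaternion algebras over $K$ with SNC branch curve, combined with $\Br(S)=0$; your direct argument (a smooth conic bundle is a Severi--Brauer scheme, its class in $\Br(S)[2]$ vanishes, hence $X=\P(E)$) is more self-contained and only needs the birational invariance of the Brauer group rather than the full classification. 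For the implication that a rational section forces $\Delta=\emptyset$, the paper gives a short intersection-theoretic argument: relative minimality identifies $\NS(X/S)$ with $\Z[\sigma]$, and a split fibre $F_1\cup F_2$ over a point of $\Delta$ would give $[\sigma]\cdot F_1=\tfrac12\notin\Z$. You instead compute the residue of the quaternion class $[X_K]\in\Br(K)$ along each component $C$ of $\Delta$ and use the connectedness of $\tilde C\to C$ to see it is nonzero. Both routes ultimately rest on the same geometric input --- $\rho(X/S)=1$ forces $\pi^{-1}(C)$ to be irreducible for every component $C$ of $\Delta$ (the fact implicit in the proof of Theorem~\ref{th sarkisov_standard_conic} and reused in the paper's proof of the last assertion) --- so neither is circular; the paper's version is more elementary, while yours makes the Brauer-theoretic mechanism explicit and packages the equivalence $(2)\Leftrightarrow(4)$ and the two geometric assertions about $\Delta$ into a single object, the discriminant cover.
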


\begin{proof} The implication $\ref{XP1B}\Rightarrow \ref{GenericFibP1K}$ is direct. The equivalence $\ref{GenericFibP1K}\Leftrightarrow \ref{PiRatSection}$ is classical:
The map $\pi$ has a rational section if and only if the generic fibre $X_K$ has a $K$-point. By hypothesis $X_K$ is a conic and it is isomorphic to $\p^1_K$ if and only if it has a $K$-point (just consider the projection from the point). 

The proof of $\ref{XP1B}\Leftrightarrow \ref{NoDelta}$ is more subtle; it is a consequence of the Artin-Mumford exact sequence and can be found in \cite[Corollary~5.4]{sarkisov}.

The implication $\ref{PiRatSection}\Rightarrow \ref{NoDelta}$ is a consequence of $\rho(X/S)=1$ (Condition~\ref{StdConicBundle2} of Definition~\ref{def standard conic bundle}). Indeed, assume by contradiction that $\Delta \neq \emptyset$ and let $p\in \Delta$ be a general  point  of $\Delta$ such that $\pi^{-1}(p)\simeq F_1 \cup F_2 = \p^1 \cup \p^1$. Since $\pi$ has a rational section $\sigma$, we know that $\NS(X/S)=\Z[\sigma]$ is generated by it. As $\sigma\cdot (F_1\cup F_2)=1$, we get $\sigma\cdot F_i=0$, for some $i$, which gives $D\cdot F_i=0$ for each divisor $D$ on $X$, a contradiction.

For the second part of the statement, assume $(C\cdot \overline{\Delta\ \backslash \ C})\le1$. Then the curve $\tilde C$ that parametrises the irreducible components of $\pi^{-1}(C)\rightarrow C$ gives a double cover of $C$; see for instance \cite[Proposition~1.5]{Bea77} for the construction of $\tilde C$. The condition on the intersection numbers together with the Hurwitz formula \cite[Corollary~IV.2.4]{Har77} implies that $\tilde C$ is reducible. This contradicts the minimality condition $\rho(X/S)=1$. Similarly, Hurwitz formula implies that, if $\Delta$ is irreducible, $g(\Delta)$ must be positive (the double cover of $\Delta$ is \'etale in this case).
\end{proof}

\begin{remark}
Theorem~\ref{th sarkisov_standard_conic} reduces the study of automorphisms of conic bundles to the case of standard ones. Lemma~\ref{2_intersections} then implies that a standard conic bundle with a generic fibre isomorphic to $\p^1$ is a $\p^1$-bundle, a case studied in \cite{BFT}. 
\end{remark}

\subsection{Conic bundles whose generic fibre is not \texorpdfstring{$\p^1$}{P1}}  \label{conic bundles}
Let $\pi\colon X \to S$ be a standard conic bundle (see Definition \ref{def standard conic bundle}). In this subsection we study the case where  the generic fibre of $\pi$ is not rational and prove that $G=\Autz(X)$ is a torus of dimension $\leq 2$.

\begin{proposition}\label{Prop:ConicAutoFieldK}
Let $K$ be a field, with $\car(K) \not=2$  and algebraic closure $\overline{K}$. 
Let $\Gamma\subseteq   \p^2$ be a geometrically irreducible conic, defined over $K$ and let $g\in \Aut(\Gamma)$ be a non-trivial $K$-automorphism. Then, $g$ extends to a unique element of $\Aut(\p^2)=\PGL_3(K)$ and one of the following holds:
\begin{enumerate}
\item
There is exactly one point of $\p^2(\overline{K})$ fixed by $g$. This point is defined over $K$ and belongs to $\Gamma$.
\item
There is one point $p\in \p^2(K)$ fixed by $g$ and a line $\ell\subseteq   \p^2$ invariant by $g$ such that $p\not\in \ell\cup \Gamma$ and such that $\ell\cap \Gamma$ consists of two $\overline{K}$-points fixed by~$g$.
\end{enumerate}
\end{proposition}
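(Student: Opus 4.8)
The plan is to exploit the fact that a geometrically irreducible conic is smooth, so that $\Gamma_{\overline{K}}\cong\p^1$, and to transport the (completely understood) structure of $\PGL_2$-automorphisms of $\p^1$ to $\p^2$ through the second Veronese embedding, i.e.\ through the $\mathrm{Sym}^2$-representation. First I would dispose of the extension and uniqueness claim. Since $\Gamma$ is a smooth conic, the restriction map $H^0(\p^2,\O(1))\to H^0(\Gamma,\O(1)|_\Gamma)$ is an isomorphism of $3$-dimensional $K$-vector spaces (it is injective because $\Gamma$ lies in no line, and both spaces have dimension $3$). As $\Pic(\Gamma_{\overline{K}})=\Pic(\p^1)=\Z$, the automorphism $g$ preserves $\O(1)|_\Gamma$ up to isomorphism, hence acts projectively on $H^0(\Gamma,\O(1)|_\Gamma)$; transporting this action through the isomorphism above produces an element $\tilde g\in\PGL_3(K)$ restricting to $g$ on $\Gamma$. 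Uniqueness is immediate, since two extensions differ by an element of $\PGL_3$ fixing $\Gamma$ pointwise, and an element fixing the projective frame contained in a smooth conic must be the identity.

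Next I would run the case analysis on $g$ viewed in $\Aut(\Gamma_{\overline{K}})=\PGL_2(\overline{K})$. A nontrivial element is either diagonalizable, with two distinct fixed points on $\Gamma$, or unipotent (a single Jordan block), with one fixed point. Choosing coordinates $[x:y:z]=[s^2:st:t^2]$ so that $\Gamma=\{y^2=xz\}$, the $\mathrm{Sym}^2$-representation sends $\mathrm{diag}(\lambda,\mu)$ to $\mathrm{diag}(\lambda^2,\lambda\mu,\mu^2)$ and sends a unipotent element to a single $3\times 3$ unipotent Jordan block. In the unipotent case this yields a unique eigenline, hence exactly one fixed point $[1:0:0]$ in $\p^2$, which lies on $\Gamma$: this is conclusion (1). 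In the diagonalizable case the eigendirections $[1:0:0]$ and $[0:0:1]$ are the two fixed points lying on $\Gamma$, while $p:=[0:1:0]$ is fixed and does not lie on $\Gamma$; taking $\ell:=\{y=0\}$, the secant line through the two $\Gamma$-fixed points, gives a $g$-invariant line meeting $\Gamma$ in exactly those two fixed points and with $p\notin\ell\cup\Gamma$, which is conclusion (2). I would note that the involution subcase $\lambda=-\mu$ falls here as well: then the whole of $\ell$ together with $p$ is fixed, but this still realises (2), which only asserts the existence of such $p$ and $\ell$.

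Finally I would address rationality over $K$. In case (1) the fixed point is the unique fixed $\overline{K}$-point of the $K$-morphism $\tilde g$, hence is $\Gal(\overline{K}/K)$-stable and so defined over $K$. In case (2) the two fixed points on $\Gamma$ form a $\Gal(\overline{K}/K)$-stable set (being the fixed locus of the $K$-automorphism $g$ of $\Gamma$), so their secant $\ell$ is defined over $K$; and $p$ is the pole of $\ell$ with respect to $\Gamma$ — equivalently the intersection of the two tangents to $\Gamma$ at the points of $\ell\cap\Gamma$, equivalently the unique fixed point off $\Gamma$ — so $p$ is defined over $K$ as well. Since $\ell$ is a secant and not a tangent, its pole does not lie on $\ell$, confirming $p\notin\ell$, and $p=[0:1:0]\notin\Gamma$ directly.

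The main obstacle, I expect, is twofold and concentrated in the two non-formal inputs rather than in the linear algebra: establishing that $\car(K)\neq 2$ forces $\mathrm{Sym}^2$ of a unipotent element to remain a single Jordan block (in characteristic $2$ it splits, producing two fixed points and breaking the dichotomy), and carrying out the descent that exhibits $p$ and $\ell$ as genuinely $K$-rational. The latter is what makes the intrinsic characterisation of $p$ (as the pole of $\ell$, or as the unique fixed point not on $\Gamma$) the crucial device; the bookkeeping for the degenerate involution subcase is a minor wrinkle absorbed by the fact that (2) requires only existence.
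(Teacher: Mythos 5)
Your overall route is essentially the paper's: extend $g$ to $\PGL_3(K)$ via the hyperplane linear system on $\Gamma$, pass to $\overline{K}$ where $\Gamma\simeq\p^1$, and read off the fixed-point configuration from the $\mathrm{Sym}^2$-representation of a unipotent resp.\ diagonalizable element of $\PGL_2(\overline{K})$ (the paper's explicit homomorphism $\hat\rho\colon\GL_2\to\GL_3$ is exactly this representation, and it likewise isolates $\car(K)\neq 2$ as the condition for the unipotent case to give a single Jordan block). Your treatment of the extension step and of case (2) is fine: in case (2) the fixed scheme of $g$ on $\Gamma$ is geometrically reduced, hence \'etale over $K$, so the two fixed points are separable, Galois descent applies to the secant $\ell$, and $p$ is recovered over $K$ as the pole of $\ell$.

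There is, however, a gap in your descent for case (1). You argue that the unique fixed point is $\Gal(\overline{K}/K)$-stable and ``so defined over $K$''; but Galois stability only yields $K$-rationality for points of $\p^2(K^{\mathrm{sep}})$, and in case (1) the fixed scheme of $g$ on $\Gamma$ is the non-reduced divisor $2P$, so you have no a priori separability of $P$. This matters here: the proposition is stated for arbitrary $K$ with $\car(K)\neq 2$ and is applied with $K=\k(S)$, which is imperfect whenever $\car(\k)=p>0$ (e.g.\ $p=3$ is allowed). Concretely, if you lift $\tilde g$ to $M\in\GL_3(K)$, the unique eigenvalue $\lambda$ satisfies $\lambda^3=\det M\in K$ and $3\lambda=\operatorname{tr} M\in K$, which pins down $\lambda\in K$ only when $\car(K)\neq 3$; in characteristic $3$ your argument as written does not exclude $\lambda$ (hence $P$) living in a purely inseparable extension. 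The paper closes this by exploiting the invariance of the quadratic form: from $\tr{M}Q'M=\mu^2 Q'$ with $Q'\in\GL_3(K)$ one gets $\mu^2\in K$, and together with $\mu^3=\det M\in K$ this forces $\mu\in K$, after which the fixed point is a $K$-rational eigenvector. Alternatively, you can repair your own argument intrinsically: the fixed divisor $2P$ on $\Gamma$ is an effective degree-$2$ divisor defined over $K$, i.e.\ the trace of a $K$-line tangent to $\Gamma$ at $P$, and since $\car(K)\neq 2$ one extracts $P$ over $K$ by completing the square in the resulting binary quadratic form. Either patch should be stated explicitly.
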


\begin{proof}
 Since the anti-canonical divisor of $\Gamma$ is given by an element in the linear system of hyperplane sections, there is a surjective group scheme homomorphism
 \[\nu\colon \Aut(\p^2,\Gamma)\to \Aut(\Gamma),\]
 where $\Aut(\p^2,\Gamma)$ is the algebraic subgroup of $\Aut(\p^2)$ consisting of elements preserving $\Gamma$. The group homomorphism $\nu$ is moreover an isomorphism, since no {non-trivial} element of $\Aut(\p^2)$ fixes pointwise a conic.
 
After applying an element of $\PGL_3(\overline{K})$, the curve  $\Gamma$ is given by $x^2-yz=0$. We consider the group homomorphism
\begin{equation*} 
\begin{array}{rccc}
\hat\rho\colon &\GL_2(\overline{K})&\rightarrow &\GL_3(\overline{K})\\
& \begin{bmatrix}
a & b\\
c & d\end{bmatrix} &\mapsto& \frac{1}{ad-bc}
\begin{bmatrix}
ad+bc&ac&bd\\
2ab&a^2&b^2\\
2cd& c^2& d^2\end{bmatrix} \end{array},
\end{equation*}
which induces an injective group homomorphism $\rho\colon \PGL_2(\overline{K})\hookrightarrow \PGL_3(\overline{K})$. Writing $Q=
\begin{bmatrix}
-2&0&0\\
0&0&1\\
0&1&0\end{bmatrix} \in \GL_3(K)$ the matrix corresponding to the conic $\Gamma$ (defined up to multiple), we observe that
\begin{equation}\label{Qform}
\tr{(\hat{\rho}(h))}Q\hat\rho(h)=Q\end{equation}
for each $h\in \GL_2(\overline{K})$. This shows that the action of $\PGL_2(\overline{K})$ on $\p^2$ induced by $\hat\rho$ preserves $\Gamma$. Moreover, the isomorphism $\p^1\to \Gamma$ given by $[u:v]\mapsto [uv:u^2:v^2]$ is $\PGL_2(\overline{K})$-equivariant. Hence, $\rho$ {factorises through} the group isomorphism $\nu^{-1}\colon \PGL_2(\overline{K})=\Aut(\Gamma)(\overline{K}) \to  \Aut(\p^2,\Gamma)(\overline{K}).$

The element $g$ is conjugate, in $\PGL_2(\overline{K})$, to 
$s=\begin{bmatrix}
1 & 0\\
1 & 1\end{bmatrix}$ or $t_\lambda=\begin{bmatrix}
\lambda & 0\\
0 & 1\end{bmatrix}$, for some $\lambda \in \overline{K}\setminus \{0,1\}$. Hence $g$ {maps in $\PGL_3(\overline{K})$} to the class of
\begin{equation}\label{Twomatrices}\hat{\rho}(s)=\begin{bmatrix}
1&1&0\\
0&1&0\\
2& 1& 1\end{bmatrix}\mbox{ or }
\hat{\rho}(t_\lambda)=\begin{bmatrix}
1&0&0\\
0&\lambda&0\\
0& 0& \lambda^{-1}\end{bmatrix}.\end{equation}
In the first case, the only fixed point of $\p^2(\overline{K})$ is $p_1=[0:0:1]$, which belongs to $\Gamma$. In the second case, the points $p=[1:0:0]$, $q_1=[0:1:0]$, $q_2=[0:0:1]$ are fixed (and are the only ones except if $\lambda=-1$). The last two belong to $\Gamma$ but the first does not belong to it.

It remains to show that $p_1$, $p$ and the line $\ell$ through $q_1$ and $q_2$ are defined over $K$ (before the change of coordinates). 
To do this, we associate with $g\in \Aut(\Gamma)$ its extension $F\in \PGL_3(K)=\Aut(\p^2)$, and take a matrix $M\in \GL_3(K)$ that represents the class $F$. There exists thus $R\in \SL_3(\overline{K})$ and $\mu\in \overline{K}^*$ such that \[RMR^{-1}=\mu \hat\rho(h),\] where $h\in \GL_2(\overline{K})$ is such that $h=s$ or $h=t_\lambda$ for some $\lambda\in \overline{K}$. We obtain in particular $\det(M)=\mu^3\in K$. Moreover, Equation~(\ref{Qform}) yields $\tr{M} \tilde{Q} M=\mu^2 \tilde{Q}$, where $\tilde{Q}=\tr{R}QR$ is a symmetric matrix that defines the conic $\Gamma$ before the change of coordinates. Since $\Gamma$ is defined over $K$, there is $\xi\in \overline{K}$ such that $Q'=\xi \tilde{Q}\in \GL_3(K)$. As $\tr{M} Q' M=\mu^2 Q'$ and $M,Q\in \GL_3(K)$, one finds that $\mu^2\in K$, whence $\mu\in K$ (because $\mu^3\in K$). Replacing $M$ with $\mu M$, one can thus assume that $\mu=1$. The points $p_1,p$ correspond then, before the change of coordinates, to eigenvectors of eigenvalue $1$ and are thus defined over $K$. The same holds for the line $\ell$, which also corresponds to an eigenvector of eigenvalue $1$, for the dual action of $M$.
\end{proof}

\begin{corollary}\label{Coro:FormAutoConic}
 Let $K$ be a infinite field of characteristic $\not=2$ such that $-1\in K$ is a square. Let $\Gamma\subseteq   \p^2$ be a geometrically irreducible conic defined over $K$, with no $K$-rational point, and let $g\in \Aut(\Gamma)$ be a non-trivial $K$-automorphism of $\Gamma$. Then, up to a $K$-automorphism of $\p^2$, the equation of $\Gamma$ is given by
 \[\lambda x^2+y^2-\mu z^2\]
 for some $\lambda,\mu\in K^*$ that are not squares, and $g$ is given by 
\[[x:y:z]\mapsto [x:a y+c\mu z:c y+a z]\text{ or }[x:y:z]\mapsto [x:a y-c\mu z:c y-az],\]
for some $a,c\in K$ satisfying $1=a^2-c^2\mu$.
\end{corollary}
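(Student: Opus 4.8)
The plan is to derive Corollary~\ref{Coro:FormAutoConic} from Proposition~\ref{Prop:ConicAutoFieldK}. Since $\Gamma$ has no $K$-rational point, case~(1) of the proposition is excluded (the unique fixed point there lies on $\Gamma$ and is defined over $K$). Hence we are in case~(2): there is a $g$-fixed point $p\in \p^2(K)$ and a $g$-invariant line $\ell$ defined over $K$, with $p\notin \ell\cup\Gamma$ and $\ell\cap\Gamma$ consisting of two $\overline{K}$-points fixed by $g$.

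The first step is to normalise the coordinates over $K$. I would take $p=[1:0:0]$ and $\ell=\{x=0\}$, both of which can be achieved by an element of $\PGL_3(K)$ since $p$ and $\ell$ are defined over $K$. As $p\notin\Gamma$, the coefficient of $x^2$ in the equation of $\Gamma$ is nonzero, and since $p$ is a fixed point of the (linear, by the proposition) map $g$ whose eigenline is $\langle p\rangle$ and which preserves $\ell$, the matrix of $g$ in these coordinates is block-diagonal: it fixes the $x$-direction and acts on the $(y,z)$-plane. Completing the square in $x$ (possible since $\car(K)\neq 2$) and diagonalising the restriction of the conic's symmetric matrix to $\ell$, I can bring $\Gamma$ to the form $\lambda x^2 + y^2 - \mu z^2=0$ after rescaling, with $\lambda,\mu\in K^*$. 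The requirement that $\Gamma$ have no $K$-point forces $\lambda$ and $\mu$ to be non-squares: if $\mu$ were a square the point $[0:\sqrt\mu:1]$ would be rational, and if $\lambda$ were a square one checks a rational point appears as well (this is where I would use that $-1$ is a square in $K$, to control which of $\lambda,\mu$ can be squares simultaneously).

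The second step is to read off $g$ explicitly. The two fixed points $\ell\cap\Gamma$ satisfy $y^2=\mu z^2$, i.e.\ $y=\pm\sqrt\mu\, z$, so over $\overline K$ they are $[0:\sqrt\mu:\pm 1]$ (rescaled). The $2\times 2$ block of $g$ acting on $(y,z)$ must fix these two directions and preserve the quadratic form $y^2-\mu z^2$; such matrices form the orthogonal group of that binary form, which over $K$ is generated by the rotations $\begin{bmatrix} a & c\mu\\ c & a\end{bmatrix}$ with $a^2-c^2\mu=1$ together with the reflection. Transcribing these two families back into $\PGL_3$ fixing $x$ gives exactly the two stated forms $[x:y:z]\mapsto[x:ay+c\mu z:cy+az]$ and $[x:y:z]\mapsto[x:ay-c\mu z:cy-az]$ with $a^2-c^2\mu=1$.

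The main obstacle, and the step requiring the most care, is the normalisation over $K$ rather than over $\overline K$: I must ensure every coordinate change and every diagonalisation is realised by an element of $\PGL_3(K)$, not merely $\PGL_3(\overline K)$. This is exactly what Proposition~\ref{Prop:ConicAutoFieldK} buys us, since it guarantees $p$, $\ell$ are $K$-rational; the delicate points are verifying that $\lambda,\mu$ must both be non-squares (using the no-$K$-point hypothesis and $-1\in (K^*)^2$) and that the orthogonal group of $y^2-\mu z^2$ over $K$ is described by the two one-parameter families above with no further components surviving in $\PGL_2$. I would handle the latter by a direct computation solving for the $2\times 2$ matrices fixing the two eigendirections over $\overline K$ while having entries in $K$, which pins down the determinant-one rotation family and its reflection coset.
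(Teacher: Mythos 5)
Your proposal is correct and follows essentially the same route as the paper: invoke Proposition~\ref{Prop:ConicAutoFieldK} to get a $K$-rational fixed point $p$ and invariant line $\ell$ (case (1) being excluded by the absence of $K$-points), normalise over $K$, complete squares to reach $\lambda x^2+y^2-\mu z^2$, use the no-$K$-point hypothesis together with $-1\in (K^*)^2$ to see $\lambda,\mu$ are non-squares, and then pin down the $2\times 2$ block of $g$. The only (cosmetic) difference is the last step, where the paper solves the system $1=a^2-c^2\mu$, $ab=cd\mu$, $1=d^2-b^2\mu^{-1}$ directly rather than identifying the solutions as the orthogonal group of the binary form $y^2-\mu z^2$.
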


 \begin{proof}
 Proposition~\ref{Prop:ConicAutoFieldK} provides the existence of a point $p\in \p^2\setminus \Gamma$ and a line $\ell\subseteq   \p^2$, both defined over $K$ and invariant by $g$, such that $p\not\in \ell$ and $\ell\cap \Gamma$ consists of two $\overline{K}$-points $q_1,q_2$ fixed by $g$.

 We may assume, applying an element of $\PGL_3(K)$, that $p=[1:0:0]$ and that $\ell$ is the line $x=0$.  The automorphism $g$ is then of the form $[x:y:z]\mapsto [x:ay+bz:cy+dz]$, for some $a,b,c,d\in K$, $ad-bc\not=0$. Writing the equation of the conic $\Gamma$ as 
$$\lambda_1x^2+x(\lambda_2y+\lambda_3z)+\lambda_4y^2+\lambda_5yz+\lambda_6z^2=0,$$
where $\lambda_1,\dots,\lambda_6\in K$, we find that $\lambda_2y+\lambda_3z$ is fixed by the action given by $(y,z)\mapsto (ay+bz,cy+dz)$. Conjugating by $[x:y:z]\mapsto [x-(\lambda_2y+\lambda_3z):y:z]$ we may assume that $\lambda_2=\lambda_3=0$.

The points $q_1,q_2$ correspond to the roots of the polynomial 
$\lambda_4y^2+\lambda_5yz+\lambda_6z^2$, which is therefore irreducible in $K[y,z]$. In particular, $\lambda_4\lambda_6\not=0$. Dividing the equation by $\lambda_4$ and completing the square, we may assume that $\lambda_4=1$ and $\lambda_5=0$. We then write $\mu=-\lambda_6$, $\lambda=\lambda_1$.  The equation is then 
$\lambda x^2+y^2-\mu z^2,$ and $\lambda,\mu\in K^*$. Moreover, none of the $\pm \lambda,\pm\mu$ is a square in $K$, as the curve does not contain any $K$-point.
The equation being preserved, we find \[\begin{array}{ll}
 \lambda x^2+y^2-\mu z^2=\lambda x^2+(ay+bz)^2-\mu(cy+dz)^2,\\
 1=a^2-c^2\mu ,\ ab=cd\mu ,\ 1 =d^2-b^2\mu^{-1}.\end{array}\]
It remains to show that $d=a, b=c\mu $ or that $d=-a, b=-c\mu$.

Since $-\mu$ is not a square in $K$, we have $d\not=0$. If $c=0$, we find $b=0$ and then $a^2=1$, $d^2=1$, so $d=\pm a$ as we wanted. We can then assume that $abcd\not=0$ and write $\mu=\frac{ab}{cd}$, which yields
$0=a^2-c^2\mu-d^2+b^2\mu^{-1}=\frac{(a^2-d^2)\cdot (ad-bc)}{ad}$. Hence, $d=\pm a$, which yields $b=\pm c\mu$ as we wanted.
 \end{proof}

\begin{proposition}\label{vertical_conic}
Assume that $\car(\k)\not=2$. Let $\pi\colon X\to S$ be a morphism of algebraic varieties whose generic fibre is a smooth conic in $\p^2_{\k(S)}$, not isomorphic to $\p^1_{\k(S)}$. If $G$ is an algebraic subgroup of $\Autz(X)_S$, then $G$ is a finite group isomorphic to $(\Z/2\Z)^r$ for some $r\in \{0,1,2\}$.
\end{proposition}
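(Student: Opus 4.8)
The plan is to move the whole question to the generic fibre, where the hypothesis says we are looking at an anisotropic conic. Set $K=\k(S)$ and let $\Gamma=X_K\subseteq \p^2_K$ be the generic fibre; it is a smooth (hence geometrically irreducible) conic, and it has no $K$-point, since a smooth conic with a rational point is isomorphic to $\p^1_K$. By Remark~\ref{rk: inclusion into aut group of generic fibre} the restriction to the generic fibre gives an injection $\Autz(X)_S\hookrightarrow \Aut(\Gamma)$; as $G$ acts regularly on $X$ fixing every fibre, base-changing its action along $\Spec K\to S$ upgrades this to a closed immersion of $K$-group schemes $G_K=G\times_\k K\hookrightarrow \Aut_{\Gamma/K}$, where $\Aut_{\Gamma/K}$ is affine (a form of $\PGL_2$). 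Now Proposition~\ref{Prop:ConicAutoFieldK} classifies the non-trivial elements of $\Aut(\Gamma)(K)$ into type~(1), unipotent with a unique fixed point which is $K$-rational and lies on $\Gamma$, and type~(2), semisimple fixing two points of $\Gamma$ together with a point off $\Gamma$. Since $\Gamma$ has no $K$-point, type~(1) is impossible, so every non-trivial $K$-automorphism of $\Gamma$ is semisimple.

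First I would show that $G$ is finite. Writing the Chevalley decomposition $1\to L\to G^0\to A\to 1$ and base-changing to $K$, the abelian variety $A_K$ is a quotient of $G^0_K$, which is affine as a closed subgroup of $\Aut_{\Gamma/K}$; hence $A_K$, being at once affine, proper and connected, is trivial, and $G^0$ is linear. A maximal torus of $G^0$ is split over the algebraically closed field $\k$, so a non-trivial one would produce a split $\G_{m,K}\subseteq \Aut_{\Gamma/K}$; but a split torus acts on $\Gamma$ with two $K$-rational fixed points, contradicting the anisotropy of $\Gamma$. Likewise $G$ contains no $\G_a$, for a non-trivial element of $\G_{a,K}(K)$ would be a unipotent, i.e.~type~(1), automorphism, which we have excluded. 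A connected linear group with trivial maximal torus is unipotent, hence trivial once it contains no $\G_a$; therefore $G^0=1$ and $G$ is a finite constant group.

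Next I would show that every non-trivial element has order $2$. Fix a non-trivial $g\in G(\k)\hookrightarrow \Aut(\Gamma)(K)$. It is of type~(2), and its two fixed points $q_1,q_2\in\Gamma$ cannot be $K$-rational, so they are defined over a quadratic extension $L/K$ and exchanged by the generator $\sigma$ of $\Gal(L/K)$. As an automorphism of $\Gamma$, the element $g$ has some multiplier $\omega\in L^\times$ at $q_1$ and multiplier $\omega^{-1}$ at $q_2$, and since $g$ is defined over $K$ one gets $\sigma(\omega)=\omega^{-1}$; moreover the order of $g$ is exactly the order of $\omega$. Because $G$ is finite, $\omega$ is a root of unity in $L$; but $L$ is a function field whose field of constants is the algebraically closed field $\k$, so $\omega\in\k$ and hence $\sigma(\omega)=\omega$. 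Combined with $\sigma(\omega)=\omega^{-1}$ this gives $\omega^2=1$: either $\omega=1$ and $g$ is trivial, or $\omega=-1$ and $g$ has order $2$. Thus every non-trivial element of $G$ is an involution, and $G\cong(\Z/2\Z)^r$ for some $r$.

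Finally, $r\le 2$ because the image of $G$ in $\PGL_2(\overline{K})$ is an elementary abelian $2$-group, and these have rank at most $2$ inside $\PGL_2$: the centralizer of a non-trivial involution is the normalizer $N(T)=T\rtimes\langle w\rangle$ of a maximal torus $T$, all involutions of $N(T)$ are the unique involution of $T$ together with the elements of the non-identity coset $Tw$, and any two elements of $Tw$ differ by an element of $T$ which must itself be an involution, so together they generate only a Klein four-group. I expect the finiteness step to be the main obstacle: it is where one must pass carefully between the $\k$-group $G$ and the $K$-group $G_K$, using both that tori over $\k$ are split and that the anisotropy of $\Gamma$ forbids split tori and unipotents in $\Aut_{\Gamma/K}$. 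Once finiteness is established, the root-of-unity computation and the rank bound are short and formal.
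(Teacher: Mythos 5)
Your proof is correct, and it reaches the conclusion by a genuinely different route from the paper's. Both arguments start the same way (pass to the generic fibre, which is an anisotropic conic $\Gamma$ over $K=\k(S)$, and embed $G$ into $\Aut(\Gamma)$), but they diverge on the two key points. To rule out positive-dimensional subgroups, the paper puts $g$ in the normal form $h_{a,c}$ of Corollary~\ref{Coro:FormAutoConic}, lifts to the double cover $\hat S\to S$, and invokes the degree-growth Lemma~\ref{Lemm:AlgebraickS} to force $a+c\sqrt{\mu}\in\k$; you instead argue structurally that $G^0$ is linear and can contain neither a $\G_m$ (a split torus inside the anisotropic form $\Aut_{\Gamma/K}$ of $\PGL_2$ would produce a $K$-point of $\Gamma$) nor a $\G_a$ (a non-trivial unipotent is of type~(1) in Proposition~\ref{Prop:ConicAutoFieldK}, whose unique fixed point is a $K$-point of $\Gamma$). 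For the exponent-$2$ statement, the paper identifies the relevant subgroup with the norm-one elements of $(K[\sqrt{\mu}])^*$ and observes that its torsion lies in $\k^*$; you extract the same fact from the multiplier $\omega$ of $g$ at its two Galois-conjugate fixed points, using $\sigma(\omega)=\omega^{-1}$ together with $\omega\in\k$ (roots of unity in a finitely generated extension of the algebraically closed field $\k$ lie in $\k$). Your version entirely bypasses Lemma~\ref{Lemm:AlgebraickS} and the explicit double-cover computation, which makes the proof of this proposition shorter and makes it transparent that the only input is the anisotropy of the generic fibre; the price is that the finiteness step leans on standard but unproved facts from the structure theory of forms of $\PGL_2$. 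The one place I would ask you to add a line is the claim that a split $\G_{m,K}$ acting non-trivially on $\Gamma$ has two $K$-rational fixed points: this is true (Galois preserves the weights of a split torus, and the two fixed points carry opposite non-zero weights, so they cannot be exchanged; equivalently, an isotropic adjoint form of $A_1$ is already $\PGL_2$), and in positive characteristic one should also note that a closed fixed point of odd degree would still force the index of $\Gamma$ to be $1$ and hence produce a $K$-point. You also spell out the final bound $r\le 2$ for elementary abelian $2$-subgroups of $\PGL_2(\overline K)$, which the paper leaves implicit.
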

\begin{proof}
The generic fibre $X_{\k(S)}$ is a smooth conic in $\p^2_{\k(S)}$ not isomorphic to $\p^1_{\k(S)}$ and thus does not have any $\k(S)$-rational point. The action of $G$ yields an injective group homomorphism $G\hookrightarrow \Aut(X_{\k(S)})$ by Remark~\ref{rk: inclusion into aut group of generic fibre}. As $\Aut(X_{\k(S)})$ is isomorphic to $\PGL_2$ over the algebraic closure $\overline{\k(S)}$ of $\k(S)$, it suffices to show that every non-trivial element  $g\in G$ has order $2$.

By Corollary~\ref{Coro:FormAutoConic}, there is a $S$-birational map $X \dashrightarrow Y$, where $Y\subseteq   \p^2\times S$ is given by $\lambda x^2+y^2-\mu z^2$  for some $\lambda,\mu\in \k(S)^*$ which are not squares, and the action of $g$ on $Y$ is given by  
$$h_{a,c}\colon [x:y:z]\mapsto [x:a y+c\mu z:c y+a z]\text{  or  }h_{a,c}':[x:y:z]\mapsto [x:a y-c\mu z:c y-az],$$
for some $a,c\in \k(S)$ satisfying $1=a^2-c^2\mu$. As $(h_{a,c}')^2$ is the identity for all $a,c$ as above, one may assume that $g$ belongs to the set 
\[H=\{h_{a,c}\mid  a,c\in \k(S), 1=a^2-c^2\mu\}\subseteq   \Bir(Y/S).\] Note that $H$ is a subgroup of $\Bir(Y/S)$, isomorphic to a subgroup of the multiplicative group $(\k(S)[\sqrt{\mu}])^*$ via $h_{a,c}\mapsto a+c\sqrt{\mu}$. Hence, $H$ is an abelian group and its elements of finite order are contained in $\k^*\subseteq   (\k(S)[\sqrt{\mu}])^*$, and thus contained in the finite group $\k\cap H=\k(S)\cap H=\{h_{1,0},h_{-1,0}\}$ of order $2$. 

It remains then to see that $g$ cannot be of infinite order. We denote by $G_0\subseteq   G$ the smallest algebraic group containing $g$, which is given by $G_0=\overline{\langle g\rangle}$, and is an abelian algebraic subgroup of $G$.

 The degree $2$ field extension  $\k(S)[\sqrt{\mu}]/\k(S)$ corresponds to a double covering $\hat{S}\to S$. Since $g$ acts trivially on $S$, we can then lift the {rational} action of $g$, and of $G_0$, to the fibred product $Z=Y\times_S \hat{S}$, by acting trivially on $\hat{S}$. The {automorphism $g$ yields therefore an algebraic element $\hat{g}\in \Bir(Z)$}. To write this one explicitly, we identify $\k(S)$ with a subfield of $\k(\hat{S})$, denote by $\kappa\in \k(\hat{S})$ the element corresponding to $\sqrt{\mu}$, and view locally  $Z$ as
 \[\{([x:y:z],s)\in \p^2\times\hat{S} \mid \lambda(s)x^2+y^2-\kappa(s)^2z^2=0\}.\] The variety $Z$ is birational to $\p^1\times\hat{S}$, via $([x:y:z],s)\mapsto ([x:y+\kappa(s)z],s)$, and the action of $\hat{g}$ is now given by $([u:v],s)\mapsto ([u:(a(s)+c(s)\kappa(s))v],s)$. By Lemma~\ref{Lemm:AlgebraickS}, the element $a+c\kappa\in \k(\hat{S})$ belongs to $\k\subseteq   \k(S)$. Hence, $c=0$ and $a\in \k$. This implies that $a=\pm 1$, since $a^2-c^2\mu=1$.
\end{proof}

\begin{remark}
The case of Proposition~\ref{vertical_conic} when $\car(\k)=0$ and $G$ is finite was proven in \cite[Corollary~4.12]{BZ17} and then used to show Jordan properties of birational maps of $X$. \end{remark}

\begin{proposition}\label{conic_horizontal}
Let $S$ be a smooth projective rational surface and let $\Delta$ be an effective reduced divisor on $S$ having at least two components and such that all its components are smooth and intersecting with normal crossings $($i.e.~$\Delta$ is an SNC divisor$)$. If each rational irreducible component $C$ of $\Delta$ intersects $\overline{\Delta\setminus C}$ in at least two points, then every connected algebraic subgroup $G\subseteq  \Autz(S, \Delta)=\{g\in \Autz(S)\mid g(\Delta)=\Delta\}$ is a torus of dimension $\le 2$.
\end{proposition}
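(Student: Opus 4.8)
The plan is to show that $G$ contains no copy of $\Ga$; granting this, $G$ will be a torus of dimension at most two by a short tangent-space argument, so the whole difficulty is concentrated in excluding additive subgroups.

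First the reduction. Since $G$ is connected it fixes each irreducible component of $\Delta$ and each of the finitely many singular points of $\Delta$. If $\Delta$ has a singular point $p=C\cap C'$, then $G$ preserves the two tangent lines $T_pC,T_pC'$, so the representation $G\to \GL(T_pS)$ has image in the maximal torus $\cong\Gm^2$ fixing these two lines; its kernel $K$ consists of automorphisms fixing $p$ and acting trivially on $T_pS$, hence is unipotent. In characteristic zero a reductive group is linearisable at a fixed point, so any reductive subgroup of $K$ acts trivially near $p$ and therefore trivially on $S$ (which is connected) — impossible by faithfulness unless it is trivial. Thus $K=R_u(G)$ and $G/K\hookrightarrow\Gm^2$, whence $G$ is a torus of dimension $\le 2$ as soon as $K=1$, i.e.\ as soon as $G$ contains no $\Ga$. (If $\Delta$ had no singular point, all its components would be non-rational and disjoint; as $G$ is linear it acts trivially on each such component, and the same argument run at one of its points, using the normal-bundle character, again reduces everything to excluding $\Ga$.)

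So assume for contradiction that $\Ga\subseteq G$ acts non-trivially. On a rational component $C\cong\P^1$ the group $\Ga$ fixes the $\ge 2$ points where $C$ meets $\overline{\Delta\setminus C}$, and a non-trivial $\Ga$-action on $\P^1$ has a unique fixed point; on a non-rational component the neutral component of the automorphism group contains no $\Ga$. Hence $\Ga$ fixes $\Delta$ pointwise. The closures of the general $\Ga$-orbits sweep out a ruling, i.e.\ a dominant $\Ga$-invariant rational map $\rho\colon S\dashrightarrow\P^1$ whose general fibre is an orbit closure $\cong\P^1$ meeting any fixed curve only in its unique $\Ga$-fixed point; consequently every $\Ga$-fixed curve is either a section of $\rho$ or lies in a fibre, hence is rational. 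In particular all components of $\Delta$ are rational, and the hypothesis that each of them meets the rest in at least two points says exactly that the dual graph of $\Delta$ has minimal valency $\ge 2$, so it contains a cycle. Thus $\Delta$ contains a connected cycle of rational curves $\Delta_0$ with $p_a(\Delta_0)\ge 1$, all of it fixed by $\Ga$.

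To finish I would resolve $\rho$ and relatively minimalise, $\Ga$-equivariantly, obtaining a smooth surface $\widehat S$ with two birational morphisms $\alpha\colon\widehat S\to S$ and $\beta\colon\widehat S\to\F_n$ to a Hirzebruch surface ruled over $\P^1$, the $\Ga$-action being non-trivial and ruling-preserving on $\F_n$. On $\F_n$ the fixed locus is a single section together with finitely many disjoint fibres (those over the points where the fibrewise action degenerates), so its dual graph is a tree. Since $\beta$ is a composition of blow-ups at $\Ga$-fixed points, and blowing up a fixed point merely subdivides an edge, attaches a pendant curve, or adds an isolated component of the (still nodal) fixed locus, the $1$-dimensional fixed locus $\Phi\subseteq\widehat S$ is again a tree. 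Now the strict transform $\widehat{\Delta}_0\subseteq\Phi$ is a subtree, so $p_a(\widehat{\Delta}_0)=0<p_a(\Delta_0)$; the genus can only increase under $\alpha$ because some connected contracted curve $Z=\alpha^{-1}(q)$ meets $\widehat{\Delta}_0$ in at least two points $a,b$, which are $\Ga$-fixed (they lie on $\widehat{\Delta}_0$). The key point is that along the path from $a$ to $b$ inside the tree $Z$ every component carries at least two $\Ga$-fixed points — either two of its nodes in $Z$, or one such node together with $a$ or $b$ — and a non-trivial $\Ga$-action on $\P^1$ has a unique fixed point; hence that whole path is $\Ga$-fixed and lies in $\Phi$. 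Together with a path joining $a$ to $b$ inside the connected curve $\widehat{\Delta}_0$ it produces a cycle inside $\Phi$, contradicting that $\Phi$ is a tree. This contradiction shows $\Ga\not\subseteq G$ and completes the proof.

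The main obstacle is exactly the last paragraph: controlling the $\Ga$-fixed locus. Conceptually the fixed locus of a non-trivial $\Ga$-action on a ruled rational surface is a tree because there is at most one horizontal fixed curve (the section), while the vertical fixed curves lie in pairwise disjoint fibres; the work is to transport this through the birational comparison with $S$, where a priori the contraction $\alpha$ could create a cycle among the fixed curves. The clean resolution is the observation that any curve contracted by $\alpha$ and bridging two fixed curves necessarily acquires two fixed points and is therefore itself fixed, so such a cycle would already be visible upstairs on $\widehat S$, where the fixed locus is a tree.
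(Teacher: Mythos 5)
Your proof takes a genuinely different route from the paper's. The paper argues directly: it restricts $G$ to the components of $\Delta$ (if some restriction $G\to\Aut(C)$ is injective, the two fixed points on a rational $C$, or the non-rationality of $C$, immediately force $G$ to be a torus of dimension $\le 1$), and otherwise descends $G$-equivariantly to $\p^2$ or to a Hirzebruch surface $\F_n$ and uses the ruling $\F_n\to\p^1$, together with the fixed points supplied by the hypothesis on $\Delta$, to exhibit $G$ as an extension of two at most one-dimensional tori. Your strategy --- reduce to excluding $\Ga$, then show that a non-trivial $\Ga$-action would make $\Delta$ a configuration of pointwise-fixed rational curves whose dual graph contains a cycle, while the one-dimensional $\Ga$-fixed locus on an equivariant ruled model is a forest --- is an interesting alternative, and most of the individual steps do check out ($\Ga$ fixes $\Delta$ pointwise since each rational component carries two fixed points; every pointwise-fixed curve is a section or a fibre component of the induced ruling, hence rational; the fixed locus on $\F_n$ is one section plus disjoint fibres; blow-ups at fixed points preserve the forest property). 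It is, however, far longer and more delicate than the paper's argument.

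Two points need repair. The substantive one is the final cycle-closing step: you join $a$ to $b$ by ``a path inside the connected curve $\widehat{\Delta}_0$'', but the strict transform of a cycle of curves under point blow-ups need not be connected. If $\alpha$ blows up two or more nodes of $\Delta_0$ (for instance when $\Delta_0$ consists of two components meeting in two points and both nodes are blown up), $\widehat{\Delta}_0$ falls apart and no such path exists, so the argument as written fails. It can be saved, but only by traversing the \emph{entire} cycle: for every node $q_i$ of $\Delta_0$ that is blown up, the contracted tree $Z_i=\alpha^{-1}(q_i)$ meets the two adjacent strict transforms, your two-fixed-points argument shows that the path joining them inside $Z_i$ lies in $\Phi$, and concatenating these bridges with the surviving nodes around the whole cycle yields a closed walk in the dual graph of $\Phi$ with no repeated edge, hence a cycle. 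One must also check that the configuration of fixed curves stays SNC after each blow-up, so that the consecutive distinguished points on each component of a bridge are genuinely distinct; this holds but is part of the bookkeeping you elide. The second point is that your reduction invokes linearisability of reductive group actions at fixed points ``in characteristic zero'', whereas the proposition carries no characteristic hypothesis and is applied in the paper in positive characteristic (in the proof of Theorem~\ref{Thm:conic bundles}\ref{Thm:conic bundlesCaseTorus}\ref{Thm:conic bundlesCaseTorusSrat}, which only assumes $\car(\k)\neq 2$). This is easily avoided: a connected linear algebraic group containing no copy of $\Ga$ has trivial unipotent radical and trivial semisimple rank, hence is a torus, and a faithful torus action on a surface has dimension $\le 2$ by Proposition~\ref{cylinder}; no tangent-space argument is needed at all.
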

\begin{proof}Since $\Autz(S)$ is a linear algebraic group (Lemma~\ref{lemma:linear}), so is $\Autz(S, \Delta)$.
For each irreducible component $C\subseteq   \Delta$, one gets a homomorphism of algebraic groups $\rho_C\colon G\to \Aut(C)$. Suppose that there exists a component $C$ such that $\rho_C$ is injective. If $C$ is rational, then $\rho(G)$ is a subgroup of $\Aut(C)=\Aut(\p^1)$ that fixes at least two points by hypothesis, and thus is a torus of dimension $\le 1$. If $g(C)\ge 1$, then $G$ is trivial, as the identity is the only connected linear algebraic subgroup of $\Aut(C)$.

One can thus assume that each irreducible component $C$ of $\Delta$ is pointwise fixed by a non-trivial element of $G$. If $S=\p^2$, this implies that all components of $\Delta$ are lines {(the fixed locus of any non-trivial element of $\Aut(\p^2)\simeq \PGL_3$ is a union of points and lines)}, and the hypothesis implies that $\Delta$ contains the union of three lines not having one common point. This implies that $G$ is a torus of dimension $\le 2$. {If $S\not=\p^2$, there exists a birational morphism $\eta\colon S\to \F_n$ (contract the $(-1)$-curves until reaching a minimal surface, and stop at $\F_1$ if you end up at $\p^2$).} This morphism is $\Autz(S)$-equivariant and thus $G$-equivariant.

The group $G$ acts then faithfully on $\F_n$, preserving $\eta(\Delta)$. Denoting by $\pi\colon \F_n\to \p^1$ a $\p^1$-bundle {structure}, the action of $G$ yields an exact sequence $1\to G'\to G \to G''\to 1$ where $G''\subseteq   \Aut(\p^1)$ and $G'\subseteq   \Autz(\F_n)_{\p^1}$. It remains to show that {both $G'$ and $G''$} are tori of dimension $\le 1$. Denote by $C\subseteq   \Delta$ an irreducible component such that $\eta(C)$ is not contained in a fibre (which always exists because of the hypothesis on $\Delta$). The action of $G$ on $C$ fixes at least two points by hypothesis. If these are sent onto two different points of $\p^1$, then $G''$ fixes at least two points of $\p^1$ and thus is a torus. Otherwise, $\eta(C)$ is not a section and  $G''$ fixes the points of $\p^1$ corresponding to the ramification of $\pi\circ \eta\colon C\to \p^1$, so $G''$ is a torus in both cases. It remains to see that $G'$ is a torus. As $\eta(C)\subseteq  \F_n$ is not contained in a fibre and $G'$ preserves each fibre, $G'$ pointwise fixes $\eta(C)$. Hence, the only possibility to study is when $\eta(C)$ is a section. But then the hypothesis on $\Delta$ implies that we get another irreducible component $D\subseteq   \Delta$ such that $\eta(D)$ is not contained in a fibre. This implies that $G'$ fixes at least two points on a general fibre of $\F_n\to \p^1$, which implies that $G'$ is a torus of dimension $\le 1$. {Indeed, if $n=0$ then $G'\subset  \Autz(\F_n)_{\p^1}\simeq\PGL_2$ and if $n\ge 1$ then $G'\subset  \Autz(\F_n)_{\p^1}\simeq \G_m\rtimes \k[x_0,x_1]_n$.}
\end{proof}

\begin{remark}
In Proposition~\ref{conic_horizontal} the two-dimensional tori do appear by taking for $S$ any smooth projective toric surface and choosing for $\Delta$ the complement of the two-dimensional torus.
\end{remark}

\subsection{Proof of Theorem~\ref{Thm:conic bundles}} \label{sec:proof of Prop B}
First, using Theorem \ref{th sarkisov_standard_conic}, we may assume that $\pi\colon X \to S$ is a standard conic bundle. In particular, the discriminant divisor $\Delta$ is an SNC divisor. We then consider two cases, depending on the generic fibre of $\pi$:

\ref{Thm:conic bundlesCaseP1b}: If the generic fibre is isomorphic to $\p^1_{\k(S)}$, the morphism $\pi\colon X\to S$ is a $\p^1$-bundle (Lemma~\ref{2_intersections}). We reduce to the case where $S$ is smooth by applying \cite[Lemma~2.2.1]{BFT}, and  then we contract all $(-1)$-curves on $S$ to obtain a surface with no $(-1)$-curve (follows from the ``Descent lemma", see \cite[Lemma~2.3.2]{BFT}).

\ref{Thm:conic bundlesCaseTorus}: If the generic fibre is not isomorphic to $\p_{\k(S)}^{1}$, we keep the same notation as in \upshape\S \ref{Mori fibrations} and consider the short exact sequence (\ref{exact sequence}).  Proposition \ref{vertical_conic} yields that the group $\Autz(X)_S$ is finite, isomorphic to $(\Z/2\Z)^r$ for some $r\in \{0,1,2\}$. 

\ref{Thm:conic bundlesCaseTorus}\ref{Thm:conic bundlesCaseTorusSrat}: We now suppose that $S$ is rational (which is always true if $\car(\k)=0$, as $X$ is rationally connected, so $S$ is rationally connected, hence rational  \cite[Theorem~13.27]{Bad01}). By Lemma~\ref{2_intersections}, the discriminant curve $\Delta$ is not empty. If $\Delta$ is irreducible, then $g(\Delta)\ge 1$ (Lemma~\ref{2_intersections}), so the action of $H$ gives an injective homomorphism $H\hookrightarrow \Aut(\Delta)$, as the set of fixed points of every non-trivial element of $\Autz(S)$ is the union of finitely many points and rational curves. This implies that $H$ (and thus $\Autz(X)$) is trivial, as $H\subseteq \Autz(S)$ is a linear algebraic group. If $\Delta$ is not irreducible, then each rational irreducible component $C$ of $\Delta$ intersects the complement $\overline{\Delta\setminus C}$ into at least two distinct points (Lemma~\ref{2_intersections}). Proposition \ref{conic_horizontal} implies that $H$ is a torus of dimension $\le 2$. By \cite[IV.11.14, Corollary~1]{Bor91}, the group $G=\Autz(X)$ contains a subtorus of dimension $\dim(H)=\dim(G)$, hence $G$ is a torus of dimension $\le 2$. This achieves the proof of \ref{Thm:conic bundlesCaseTorus}\ref{Thm:conic bundlesCaseTorusSrat}.

\ref{Thm:conic bundlesCaseTorus}\ref{Thm:conic bundlesCaseTorusXrat}: If $X$ is rational, then $S$ is rational too (Proposition~\ref{Prop:RatRat}\ref{BaseMfsRatisRat}), so $G$ is a torus of dimension $\le 2$ by \ref{Thm:conic bundlesCaseTorus}\ref{Thm:conic bundlesCaseTorusSrat}. By Corollary \ref{tori conjugate} the group $G$ is conjugated to a subtorus of $\Aut(\p^3)$, that is, there is a $G$-equivariant birational map $\varphi\colon X \dashrightarrow \P^3$ such that $\varphi G \varphi^{-1} \subsetneq \Aut(\p^3)=\PGL_4$.
\qed

\section{Mori del Pezzo fibrations over \texorpdfstring{$\p^1$}{P1}} \label{Sec:dP}
Our main goal in this section is to prove Theorem~\ref{Thm:MainQuadric} and then to deduce Theorem~\ref{th:A}. This will be done in \upshape\S~\ref{proof of thA}.

\subsection{Some generalities} \label{subsec generalities}
{As before we fix once and for all an algebraically closed field $\k$.}
We recall that a smooth del Pezzo surface defined {over $\k$} is isomorphic to $\p^2$ (degree $9$), $\p^1\times\p^1$ (degree $8$) or to the blow-up of a set of $r$ points in $\p^2$, with $1 \leq r \leq 8$, in general position (degree $9-r$); see for instance \cite[Corollary~8.1.14]{Dol12}.
We can associate a degree $d\in \{1,\dots,9\}$ with any del Pezzo fibration, defined as the degree of the (geometric) generic fibre and this degree coincides with the degree of a general fibre.

We also recall that a \emph{Mori del Pezzo fibration} over $\P^1$ is a Mori fibration $\pi\colon X \to \p^1$ whose general fibres are del Pezzo surfaces. We remark that the generic fibre of $\pi$ is a \emph{regular} del Pezzo surface over $\k(\p^1)$ \cite[Lemma~15.1]{FS19}, which is not necessarily smooth if $\car(\k)>0$ by \cite[Theorem~14.8]{FS19}.

\begin{lemma}\label{Lem:WeakDP}
Let $S$ be a normal Gorenstein del Pezzo surface of degree $d$ {defined over $\k$, and let $\tau\colon \hat{S} \to S$ be a minimal resolution. Then, either $\hat{S}$ is  isomorphic to a Hirzebruch surface $\F_0$ or $\F_2$, or there is a birational morphism $\hat{S} \to \P^2$ which is the composition of $0\le 9-d\le 8$ smooth blow-ups. 
Moreover, every curve of $\hat{S}$ contracted by $\tau$ is a $(-2)$-curve.}
\end{lemma}
\begin{proof}
Follows from \cite[Proposition~8.1.10 and Theorem~8.1.15]{Dol12}.
\end{proof}
\begin{lemma}\label{Lem:bound_char_dP}
Let $\pi\colon X \to \p^1$ be a Mori del Pezzo fibration {over $\k$}, let $K=\k(\p^1)$, let $X_K$ be the generic fibre and let $\overline{K}$ be an algebraic closure of $K$. Then, $X_K$ has a rational point, it is regular, it satisfies $\rho(X_K)=\rk \NS(X_K)=1$, and one of the following conditions holds:
\begin{enumerate}
\item
$X_{\overline{K}}$ is regular $($which is equivalent to say that $X_{K}$ is smooth$)$;
\item
$\car(\k)=7$, the surface $X_{\overline{K}}$ is a del Pezzo surface of degree $d\in \{1,2\}$ with exactly one isolated singularity, of type $A_6$, and $\rho(X_{\overline{K}})=4-d>1$; or
\item\label{carsmaller7}
$\car(\k)\in \{2,3,5\}$.
\end{enumerate}
\end{lemma}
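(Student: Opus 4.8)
The plan is to deduce the three unconditional assertions from the structure of the Mori fibration, and then to import the characteristic-dependent trichotomy from the classification of regular del Pezzo surfaces in positive characteristic.

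First I would record the properties of the generic fibre $X_K$. Since $X$ has terminal singularities and $\dim X=3$, its singular locus is a finite set of closed points, lying over finitely many closed points of $\p^1$; hence $X_K$ is contained in the smooth locus of $X$ and is in particular a regular scheme. That $X_K$ is moreover a regular del Pezzo surface over $K$ carrying a $K$-rational point is precisely \cite[Lemme~15.1]{FS19} (the existence of the rational point being ultimately a reflection of the fact that $K=\k(\p^1)$ is a $C_1$-field). For the Picard rank, I would invoke the Mori fibration hypothesis: condition~\ref{MoriFibrationDefc} of Definition~\ref{Df:MoriFibration} gives $\rho(X/\p^1)=1$, and since the restriction $\NS(X)_\Q\to \NS(X_K)_\Q$ is surjective with kernel the vertical classes, which here are generated by a single fibre, one obtains the identification $\NS(X_K)_\Q\cong N^1(X/\p^1)$ and hence $\rho(X_K)=\rk\NS(X_K)=1$.

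Next I would treat the geometric generic fibre $X_{\overline K}$, using that $X_K$ is smooth over $K$ if and only if $X_{\overline K}$ is regular. By \cite[Theorem~14.8]{FS19}, a regular del Pezzo surface of Picard rank $1$ over an imperfect field is geometrically smooth whenever the characteristic lies outside $\{2,3,5,7\}$; this yields case~(1) when $\car(\k)=0$ or $\car(\k)>7$, and it is harmless to place $\car(\k)\in\{2,3,5\}$ in case~(3). The only remaining point is to show that, when $\car(\k)=7$ and $X_{\overline K}$ fails to be regular, we land in case~(2). Here \cite[Theorem~14.8]{FS19} provides that $X_{\overline K}$ is a normal Gorenstein del Pezzo surface of degree $d\in\{1,2\}$ with a single singular point, of type $A_6$, and it remains to compute $\rho(X_{\overline K})$. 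I would pass to the minimal resolution $\tau\colon\widehat{X_{\overline K}}\to X_{\overline K}$: since $d\le 2$, Lemma~\ref{Lem:WeakDP} identifies $\widehat{X_{\overline K}}$ with the blow-up of $\p^2$ at $9-d$ (possibly infinitely near) points, so $\rho(\widehat{X_{\overline K}})=10-d$, while the $A_6$ singularity is resolved by a chain of six $(-2)$-curves contracted by $\tau$. Subtracting gives $\rho(X_{\overline K})=(10-d)-6=4-d\in\{2,3\}$, which is in particular $>1$, as claimed.

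The genuine content of the lemma is concentrated entirely in the positive-characteristic classification \cite[Theorem~14.8]{FS19}; the main obstacle, were one to prove the statement from scratch, would be establishing precisely which regular del Pezzo surfaces over imperfect fields fail to be geometrically smooth, and that in characteristic $7$ the only such surfaces of Picard rank $1$ are the $A_6$-singular ones of degree $\le 2$. Granting that deep input, the remaining work is the Picard-rank bookkeeping in the characteristic-$7$ case, which reduces via Lemma~\ref{Lem:WeakDP} to counting the six $(-2)$-curves resolving the $A_6$ singularity against the Picard rank of the ambient weak del Pezzo surface.
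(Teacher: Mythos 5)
Your handling of the three unconditional assertions is fine, and in places a bit more self-contained than the paper's: the observation that terminal threefold singularities are isolated closed points, hence miss the generic fibre, gives regularity of $X_K$ directly (the paper instead cites \cite[Lemme~15.1]{FS19}), and your Picard-rank bookkeeping $\rho(X_{\overline K})=(10-d)-6=4-d$ in the characteristic-$7$ case agrees with the paper. (For the rational point the paper uses \cite{GHS03} and \cite{dJS03} rather than a $C_1$-field argument; your attribution of the rational point to \cite[Lemme~15.1]{FS19} is not what that reference is cited for, but this is a minor point.)

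The genuine gap is in the trichotomy, which you outsource entirely to \cite[Theorem~14.8]{FS19}. That theorem, as used in this paper, is an \emph{existence} result (it produces examples of Mori del Pezzo fibrations whose generic fibre is regular but not smooth); it is not the classification you attribute to it, namely that any geometrically non-regular $X_{\overline K}$ forces $\car(\k)\in\{2,3,5,7\}$ and, in characteristic $7$, yields a single $A_6$ singularity in degree $d\le 2$. The paper has to assemble exactly this statement from several independent ingredients: geometric normality of $X_K$ (\cite[Theorem~15.2]{FS19} together with \cite[Corollary~1.4]{PW_2017}); the Gorenstein property, which passes to $X_{\overline K}$ by base change; the fact that normal projective Gorenstein surface singularities are rational double points or simple elliptic \cite[Theorem~2.2]{HW81}; and the restriction (from \cite{H04} or \cite[Theorem~6.1, Remark~2.8]{S08}) that in characteristic $\ge 7$ the only rational double point that can appear on such a geometric generic fibre is $A_6$ in characteristic exactly $7$. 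None of this appears in your argument. A concrete symptom of the gap: the chain of six $(-2)$-curves resolving an $A_6$ point only forces $9-d\ge 6$, i.e.\ $d\le 3$, so with the ingredients you actually have in hand the case $d=3$ survives; the paper must rule it out separately by observing that no cubic surface carries an $A_6$ singularity \cite[Lemma 9.2.4]{Dol12}, a step your proof omits because you have assumed the cited theorem already delivers $d\in\{1,2\}$.
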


\begin{proof}
The existence of a rational point for $X_K$ is a consequence of \cite[Theorem~1.2]{GHS03} in characteristic zero and of \cite[Theorem]{dJS03} in positive characteristic.
As explained before, the regularity of $X_K$ follows from \cite[Lemme~15.1]{FS19}. By definition of a Mori fibration, the relative Picard rank is $1$, so $\rho(X_K)=1$. If $X_{\overline{K}}$ is again regular (or equivalently $X_{K}$ is smooth, since smoothness is equivalent to geometric regularity in our setting; see \cite[Tag~038X]{stacks-project}), we are done. We can then assume that  $X_{\overline{K}}$ is a singular del Pezzo surface, which implies that $K$ is not a perfect field. We can moreover assume that $\car(\k)\ge 7$, otherwise we are in case~\ref{carsmaller7}.

The generic fibre $X_K$ is geometrically normal, see \cite[Theorem~15.2]{FS19} and \cite[Corollary~1.4]{PW_2017}. Moreover, $X_K$ is also Gorenstein, since this property is invariant by base-change \cite[Tag 0C07]{stacks-project} and the only singularities of normal projective Gorenstein surfaces are rational double points or simple elliptic singularities \cite[Theorem~2.2]{HW81}. As $X_{\overline{K}}$ is singular and $\car(\k)\ge 7$, the only possibility is that $\car(\k)=7$ and all singularities are of type $A_6$ (follows for instance from \cite{H04} or from \cite[Theorem~6.1, Remark~ 2.8]{S08}). Each of the singular points gives rise to a chain of six $(-2)$-curves in the minimal resolution $W$ of $X_{\overline{K}}$. Hence, we can only have one such isolated singularity and  $W$ is the blow-up of $9-d\in \{6,7,8\}$ points of $\p^2_{\overline{K}}$, where $d$ is the degree of the del Pezzo fibration by Lemma~\ref{Lem:WeakDP}. Moreover $\rho(X_{\overline{K}})=4-d$, so we only need to show that $d=3$ is impossible. {In this case, $X_{\overline{K}}$ would be a singular del Pezzo surface of degree $3$, hence a cubic surface in $\p^3$, with an $A_6$-singularity.} This does not exist by \cite[Lemma~9.2.4]{Dol12}.
\end{proof}

\begin{definition}
Let $K$ be a field and let $\overline{K}$ be an algebraic closure of $K$.  The \emph{perfect closure} of $K$ is the field $L\subseteq  \overline{K}$, which is equal to $K$ if $\car(\k)=0$ and equal to the field $K^{p^{-\infty}}=\{a\in \overline{K} \mid a^{p^n}\in K\text{ for some }n\ge 1\}$ if $\car(\k)=p>0$. The field $L$ is then a perfect field and the extension $\overline{K}/L$ is separable.
\end{definition}

\begin{lemma}\label{Lemm:PerfectClosure}
{Let $K\subseteq  L$ be a field extension and let $X$ be a proper scheme over $K$. Then the following hold.}
\begin{enumerate}
\item\label{PC1} If $K\subseteq  L$ is purely inseparable $($in particular when $L$ is a perfect closure of~$K)$, then $\rho(X)=\rho(X_L)$.
\item\label{PC2} If $K\subseteq  L$ is {Galois} with Galois group $\G=\Gal(L/K)$, then $\rho(X)=\rk \NS(X_L)^\G$. 
\end{enumerate}
\end{lemma}
\begin{proof}
\ref{PC1} is in \cite[Proposition~2.4(2)]{Tanaka_2015}, while \ref{PC2} is in  \cite[Chapter I\!I, Proposition~4.3]{kol96}.
\end{proof}

{
\begin{remark}
In the following, we will write $\rk \NS(X_L)^\G=\rho(X_L)^\G$ (with the notation of Lemma~\ref{Lemm:PerfectClosure}) to shorten the notation, even if this does not refer to the fixed part of $\rho(X_L)$, as this latter is a number.
\end{remark}
}

\subsection{Mori del Pezzo fibrations of small degree}\label{red_delPezzo}
In this subsection we prove that if $X \to \P^1$ is a Mori del Pezzo fibration of degree $\leq 6$, then $\Autz(X)$ is a torus of dimension $\leq 3$ (Proposition~\ref{cor Aut dP are tori}).

The next result is classical in Mori theory (see e.g.~\cite[Theorem 3.5]{Mor82} for the smooth case and  \cite{CFST16,CFST18} for the case of terminal singularities, when $\k=\mathbb{C}$). We recall the proof due to a lack of a precise reference.

\begin{lemma}\label{lem:MoriQuad8}
Assume that $\car(\k)\notin \{2,3,5\}$ and let $\pi\colon X\to \p^1$ be a Mori del Pezzo fibration of degree $d$. Then either $d\in \{1,2,3,4,5,6,9\}$ or $d=8$ and $\pi$ is a Mori quadric fibration $($i.e.~a Mori del Pezzo fibration whose generic fibre is a smooth quadric$)$.
\end{lemma}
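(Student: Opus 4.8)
The plan is to read off the degree restriction from the relative Picard rank condition $\rho(X/\p^1)=1$, which on the generic fibre $X_K$ (with $K=\k(\p^1)$) becomes $\rho(X_K)=1$, combined with a Galois analysis of the geometric Picard lattice. First I would apply Lemma~\ref{Lem:bound_char_dP}. Since $\car(\k)\notin\{2,3,5\}$, its conclusion leaves only two possibilities: either $X_K$ is smooth, or $\car(\k)=7$ and $d\in\{1,2\}$. In the second case the degree already lies in the allowed set and there is nothing to prove, so I may assume $X_K$ is smooth; then $X_{\overline{K}}$ is a smooth del Pezzo surface of degree $d$ with $1\le d\le 9$ and geometric Picard rank $\rho(X_{\overline{K}})=10-d$.

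Next I would convert $\rho(X_K)=1$ into a statement about Galois invariants. Writing $L\subseteq\overline{K}$ for the perfect closure of $K$ and $\G=\Gal(\overline{K}/L)$, Lemma~\ref{Lemm:PerfectClosure} gives $\rho(X_K)=\rho(X_L)=\rk\NS(X_{\overline{K}})^{\G}$. The canonical class $K_{X_{\overline{K}}}$ is defined over $K$ and satisfies $(K_{X_{\overline{K}}})^2=d>0$, so it spans a $\G$-invariant rank-one sublattice; hence $\rho(X_K)=1$ forces $\NS(X_{\overline{K}})^{\G}\otimes\Q=\Q\cdot K_{X_{\overline{K}}}$. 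The mechanism I would exploit is therefore: any $\G$-invariant class not proportional to $K_{X_{\overline{K}}}$ produces a contradiction. Since $\G$ permutes the finitely many $(-1)$-curves of $X_{\overline{K}}$ while preserving their intersection numbers, it acts through the automorphism group of the weighted dual graph of $(-1)$-curves, and it suffices to locate, in the critical degrees, a $(-1)$-curve singled out combinatorially.

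For $d=7$ the surface $X_{\overline{K}}$ is $\p^2$ blown up at two points and has exactly three $(-1)$-curves $E_1,L,E_2$, with dual graph the path $E_1-L-E_2$; here $L$ is the strict transform of the line through the two centres and is the unique curve meeting the other two. Thus $\G$ must fix $[L]=h-e_1-e_2$, which is independent of $K_{X_{\overline{K}}}=-3h+e_1+e_2$, giving $\rho(X_K)\ge 2$, a contradiction; hence $d\neq 7$. For $d=8$ there are exactly two smooth del Pezzo surfaces, $\F_0=\p^1\times\p^1$ and $\F_1$; on $\F_1$ the exceptional section is the unique $(-1)$-curve, so its class is $\G$-invariant and independent of $K_{X_{\overline{K}}}$, again forcing $\rho(X_K)\ge 2$. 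This excludes $\F_1$ and leaves $X_{\overline{K}}\cong\p^1\times\p^1$, i.e. $\pi$ is a Mori quadric fibration. The degrees $d\in\{1,\dots,6,9\}$ require no exclusion, so $d\in\{1,\dots,6,8,9\}$ with $d=8$ forcing a quadric fibration, as claimed.

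I expect the Galois step to be the only delicate point: one must argue that the intersection form alone distinguishes the relevant $(-1)$-curve as a curve (so that its class, not merely its Weyl orbit, is fixed), and that fixing it is genuinely incompatible with invariant Picard rank one. For $d=7$ and $d=8$ this is transparent because of the very small number of $(-1)$-curves, which is exactly why these are the only degrees needing attention.
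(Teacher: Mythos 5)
Your proposal is correct and follows essentially the same route as the paper: reduce to the smooth geometric fibre via Lemma~\ref{Lem:bound_char_dP}, pass to the perfect closure and use Lemma~\ref{Lemm:PerfectClosure} to translate $\rho(X_K)=1$ into a statement about Galois invariants of $\NS(X_{\overline K})$, and then exclude $d=7$ and the $\F_1$ case of $d=8$ by exhibiting a combinatorially distinguished $(-1)$-curve whose class, together with the canonical class, forces invariant rank $\ge 2$. The "delicate point" you flag is handled exactly as you suggest in the paper (the Galois group permutes the finitely many $(-1)$-curves preserving intersections, and in degrees $7$ and $8$ one of them is singled out by the dual graph), so no gap remains.
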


\begin{proof}
Let $K=\k(\p^1)$, let $X_K \to \Spec(K)$ be the generic fibre of $\pi$, let $\overline{K}$ be an algebraic closure of $K$, let $L\subseteq  \overline{K}$ be the perfect closure of $K$, and let $\G=\Gal(\overline{K}/L)$ be the associated Galois group. We have $\rho(X_K)=1$ (Lemma~\ref{Lem:bound_char_dP}), which implies that $\rho(X_L)^\G=1$ by Lemma~\ref{Lemm:PerfectClosure}. If $d\not\in \{7,8\}$, the result holds, so we may assume that $d\in \{7,8\}$, which implies that $X_{\overline K}$ is smooth (Lemma~\ref{Lem:bound_char_dP}).

If $d=8$, then either  $X_{\overline K}$ is isomorphic to $\p^1_{\overline K}\times\p^1_{\overline K}$ or to the blow-up of a point in $\p^2_{\overline K}$. Let us observe that this latter case is impossible. Indeed, the group $\NS(X_{\overline K})=\Z^2$ is generated by the unique $(-1)$--curve  $E$, obtained by blowing-up a point in $\p^2$, and the pull-back of any line $\ell$ not intersecting $E$. As $\G$ preserves $E$ and the canonical class $-3\ell+E$, we have $\rk \NS(X_{\overline K})^\G=2$.

If $d=7$, then $X_{\overline K}$ is isomorphic to the blow-up of two distinct points $p_1,p_2\in \p^2_{\overline K}$ and so $\NS(X_{\overline K})=\Z^3$ is generated by the two $(-1)$--curves $E_1$ and $E_2$ contracted on  $p_1$ and $p_2\in\p^2$, together with the $(-1)$--curve $\ell$ defined as the strict transform of the line through $p_1$ and $p_2$. These are the only three $(-1)$--curves, and as $\ell$ is the only $(-1)$--curve intersecting the two others, it must be $\G$-invariant, and so is the set $\{E_1,E_2\}$. This implies that $\rho(X_L)^\G \geq 2$, which is not possible.
\end{proof}

\begin{lemma} \label{two_sing_fib}
Suppose that $\car(\k)\notin\{2,3,5\}$.
Let $\pi\colon X\to \p^1$ be a del Pezzo fibration of degree $d\le8$ and let $H\subseteq   \Aut(\p^1)$ be the image of the natural homomorphism $\Autz(X)\to \Aut(\p^1)$. Then $H$ is a torus of dimension at most $1$.
\end{lemma}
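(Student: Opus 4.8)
The plan is to play off two facts: on one hand the generic fibre $X_{\k(\p^1)}$ of a Mori del Pezzo fibration has Picard rank $1$ (Lemma~\ref{Lem:bound_char_dP}), while on the other hand, since $d\le 8$ and $\car(\k)\notin\{2,3,5\}$, the geometric generic fibre $X_{\overline{\k(\p^1)}}$ has Picard rank $\rho(X_{\overline{\k(\p^1)}})\ge 2$ (again Lemma~\ref{Lem:bound_char_dP}; it equals $10-d$ in the smooth case and $4-d$ in the exceptional characteristic $7$ case). By Proposition~\ref{blanchard}, $\Autz(X)$ acts on $\p^1$ and $H\subseteq\Aut(\p^1)=\PGL_2$ is the connected image. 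Up to conjugacy the connected algebraic subgroups of $\PGL_2$ are $\{1\}$, $\G_m$, $\G_a$, the Borel subgroup $B=\G_a\rtimes\G_m$, and $\PGL_2$; the first two are exactly the tori of dimension $\le 1$. Hence I would argue by contradiction, assuming $H\in\{\G_a,B,\PGL_2\}$, i.e.\ that $H$ contains a copy of $\G_a$.

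In each of these cases $H$ acts on $\p^1$ with a dense orbit $O$ on which a one-parameter subgroup $U\subseteq H$ acts simply transitively: one takes $U=\G_m$ and $O\cong\G_m\subseteq\p^1$ (the complement of the two fixed points) when $\G_m\subseteq H$, that is when $H\in\{B,\PGL_2\}$, and $U=\G_a$ with $O\cong\A^1$ when $H=\G_a$. The crux is to lift $U$ to a subgroup $\tilde U\cong U$ of $\Autz(X)$ mapping isomorphically onto $U$. Granting this, the regular action of $\tilde U$ on $X$ covers the simply transitive $U$-action on $O$, so for a point $q\in O$ the orbit map $\tilde U\times F_q\to\pi^{-1}(O)$, $(g,x)\mapsto g\cdot x$, with $F_q=\pi^{-1}(q)$, is a $\tilde U$-equivariant isomorphism. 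Thus $\pi^{-1}(O)\cong O\times F_q$ and the generic fibre is the constant surface $X_{\k(\p^1)}\cong F_q\times_\k\k(\p^1)$. As $F_q$ is defined over the algebraically closed field $\k$, its Picard rank is unchanged by the base change to $\k(\p^1)$ and to $\overline{\k(\p^1)}$, so $\rho(X_{\k(\p^1)})=\rho(F_q)=\rho(X_{\overline{\k(\p^1)}})\ge 2$, contradicting $\rho(X_{\k(\p^1)})=1$. This would prove that $H$ is a torus of dimension at most $1$.

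It remains to produce the lift $\tilde U$, using that $\Autz(X)$ is a linear algebraic group because $X$ is rationally connected (Lemma~\ref{lemma:linear}). For $U=\G_m$ this is formal: the identity component $P$ of the preimage of $\G_m$ surjects onto $\G_m$, a maximal torus $T\subseteq P$ maps onto $\G_m$, and the resulting surjection of cocharacter lattices onto $\Z$ splits, giving a cocharacter $\G_m\to T\subseteq\Autz(X)$ that is a section. For $U=\G_a$ in characteristic zero one lifts an element over a nontrivial unipotent of $H$, replaces it by its unipotent Jordan component, and takes the Zariski closure to obtain $\tilde U\cong\G_a$.

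The main obstacle is the case $U=\G_a$ in positive characteristic, where unipotent elements have finite order and Witt-vector phenomena can prevent an isomorphic $\G_a$-lift. Here I would instead use the unipotent radical $V$ of the identity component of the preimage of $\G_a$: it surjects onto $\G_a$ (the reductive quotient cannot, being reductive while $\G_a$ is not), so $V$ acts regularly on $X$ and transitively on $O\cong\A^1$ with stabiliser $V_0=\ker(V\to\G_a)$, whence $\pi^{-1}(O)\cong V\times_{V_0}F_q$ and $X_{\k(\p^1)}$ is a $V_0$-twist of the constant fibre $F_q$. Since $\k$ is perfect, the smooth part of $V_0$ is split unipotent and carries no nontrivial torsor over the field $\k(\p^1)$, while any infinitesimal part of $V_0$ can only twist $X_{\k(\p^1)}$ into a purely inseparable form of $F_q\times_\k\k(\p^1)$. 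As the Picard rank is invariant under purely inseparable field extensions (Lemma~\ref{Lemm:PerfectClosure}\ref{PC1}), one still concludes $\rho(X_{\k(\p^1)})\ge 2$, the desired contradiction.
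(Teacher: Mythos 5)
Your overall strategy (trivialising the fibration over a dense orbit of a one-parameter subgroup and comparing $\rho(X_{\k(\p^1)})=1$ with $\rho(X_{\overline{\k(\p^1)}})\ge 2$) is genuinely different from the paper's argument, which instead attaches to the fibration the splitting field of $\NS$ of the geometric generic fibre over the perfect closure, obtains a non-trivial Galois cover $C\to\p^1$ canonically associated to $\pi$, and observes that $H$ must preserve its branch locus, hence fix at least two points of $\p^1$. Unfortunately, your treatment of the cases $H\in\{B,\PGL_2\}$ contains a fatal error. The lift of $U=\G_m\subseteq H$ to a copy of $\G_m$ in $\Autz(X)$ mapping \emph{isomorphically} onto $U$ does not exist in general: for a surjection of tori $T\to\G_m$ the induced map $X_*(T)\to X_*(\G_m)=\Z$ need not be surjective (its image is $n\Z$ where $n$ is detectable from the component group of the kernel), so there is no cocharacter section. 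A concrete counterexample inside the scope of the lemma is the degree-$8$ Umemura quadric fibration $\QQ_{u_0u_1}\to\p^1$ of Example~\ref{Example:Qgu0u1}: there $\Autz(\QQ_{u_0u_1})=\PGL_2\times\G_m$, the $\PGL_2$ factor acts trivially on the base, and the $\G_m$ factor acts on $\p^1$ through $t\mapsto t^2$, so the surjection $\Autz(X)\to H=\G_m$ admits no $\G_m$-section. The deeper problem is structural: your argument for $U=\G_m$ never uses that $H$ strictly contains $\G_m$, so if it were valid it would prove that $H$ cannot equal $\G_m$ either --- contradicting this very example, where $H=\G_m$ and the generic fibre has $\rho=1$ while the geometric generic fibre has $\rho=2$. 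In that example $\pi^{-1}(\G_m)$ is only $\G_m\times^{\mu_2}F_q$, and the $\mu_2$-twist (swapping the two rulings of $F_q\simeq\p^1\times\p^1$) is exactly what cuts the Picard rank down to $1$; your isogeny-insensitive trivialisation cannot see this.

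The cases $H=\G_a$ (characteristic zero) and, modulo the sketched unipotent-torsor analysis, $H=\G_a$ in positive characteristic are sound, and they point to the correct repair: in all three non-torus cases $H\in\{\G_a,B,\PGL_2\}$ one should take $U$ to be a copy of $\G_a\subseteq H$ acting simply transitively on $\A^1\subseteq\p^1$, since for $\G_a$ the obstructions that kill the $\G_m$ argument are absent (in characteristic zero an isogeny onto $\G_a$ is an isomorphism, and torsors under smooth connected split unipotent groups over a field are trivial, so no Picard-rank-destroying twist can occur). With that modification the argument would become a legitimate alternative proof in characteristic zero; in positive characteristic the unipotent radical and infinitesimal-kernel discussion in your last paragraph would still need to be made precise before it could be accepted. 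As written, however, the proposal does not prove the lemma.
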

\begin{proof}
Let $K=\k(\p^1)$ and let $X_K$ be the generic fibre, which is a regular del Pezzo surface with $\rho(X_K)=1$  (Lemma~\ref{Lem:bound_char_dP}). Denote by $\overline{K}$ an algebraic closure of $K$ and by $L\subseteq  \overline{K}$ the perfect closure of $K$. We have $\rho(X_L)=1$ by Lemma~\ref{Lemm:PerfectClosure}. We now observe that $\rho(X_{\overline{K}})>1$. If $X_{\overline{K}}$ is smooth, this is because $\rho(X_{\overline{K}})=10-d$. If $X_{\overline{K}}$ is singular, then $\car(\k)=7$ and $\rho(X_{\overline{K}})\in \{2,3\}$ by Lemma~\ref{Lem:bound_char_dP}.

We then denote by $\hat{L}\subseteq  \overline{K}$ the splitting field of $L$, that is, the intersection of all subfields containing $L$ and such that all extremal rays of $\NE(X_{\overline{K}})$ are defined over $\hat{L}$ (or equivalently the minimal field $L\subseteq  \hat{L}\subseteq  \overline{K}$ such that $\rho(X_{\hat{L}})=\rho(X_{\overline{K}}))$. Replacing $\hat{L}$ with its normal closure, we may assume that $\hat{L}/L$ is a Galois extension. We then denote by $\G$ the Galois group $\Gal(\hat{L}/L)$, which is non-trivial, since $\rho(X_{\hat{L}})=\rho(X_{\overline{K}})>1$, $\rho(X_L)=1$ and $\rho(X_L)=\rho(X_{\hat{L}})^{\G}$ by Lemma~\ref{Lemm:PerfectClosure}. 

This implies that the (unique) cover $\tau\colon C \to \p^1$ associated with the field extension $\hat{L}/L$ is non-trivial. Moreover the branch locus of $\tau$ is preserved by $H$. As a consequence of Hurwitz's formula \cite[Corollary IV.2.4]{Har77}, we deduce that $H$ preserves at least two points of $\p^1$. This finishes the proof.
\end{proof}

\begin{proposition}  \label{cor Aut dP are tori}
Assume that $\car(\k)\notin \{2,3,5\}$. If $\pi\colon X\to \p^1$ is a Mori del Pezzo fibration of degree $d\leq 5$ $($resp.~$d=6)$, then $G=\Autz(X)$ is a torus of dimension $\le 1$ $($resp.~$\le 3)$. 
\end{proposition}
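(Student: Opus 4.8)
The plan is to analyze $G := \Autz(X)$ through its action on the base $\p^1$. Since the generic fibre of $\pi$ is a rationally connected del Pezzo surface and $\p^1$ is rational, $X$ is rationally connected, so $G$ is a connected linear algebraic group by Lemma~\ref{lemma:linear}. I would start from the exact sequence
\[1 \to \Autz(X)_{\p^1} \to G \to H \to 1\]
of~(\ref{exact sequence}), where $H \subseteq \Autz(\p^1)$ is the image. By Lemma~\ref{two_sing_fib} the group $H$ is a torus of dimension at most $1$, so it remains to control $N := \Autz(X)_{\p^1}$. The goal is to show that its neutral component $N^0$ is trivial when $d \le 5$ and a torus of dimension at most $2$ when $d = 6$, and then to assemble the pieces.

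To bound $N^0$, recall from Remark~\ref{rk: inclusion into aut group of generic fibre} that restriction to the generic fibre gives an injective homomorphism $N \hookrightarrow \Aut(X_K)$, with $K = \k(\p^1)$ and $X_K$ the generic fibre. The connected $\k$-group $N^0$ acts faithfully on $X_K$, so its base change $(N^0)_K$ embeds into the neutral component of the automorphism group scheme $\underline{\Aut}_{X_K/K}$. Since both the automorphism scheme and its identity component commute with the field extension $K \subseteq \overline{K}$, this yields $\dim N^0 = \dim (N^0)_K \le \dim \Autz(X_{\overline{K}})$, where $X_{\overline{K}}$ is the geometric generic fibre, and moreover $(N^0)_{\overline{K}}$ is a connected subgroup of $\Autz(X_{\overline{K}})$.

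Next I would compute $\Autz(X_{\overline{K}})$. By Lemma~\ref{Lem:bound_char_dP} (here is where $\car(\k)\notin\{2,3,5\}$ enters), $X_{\overline{K}}$ is a del Pezzo surface of degree $d$, smooth except possibly when $\car(\k)=7$ and $d\in\{1,2\}$, where it carries a single $A_6$ rational double point. For degree $\le 5$ the automorphism group of such a del Pezzo surface is finite, so $\Autz(X_{\overline{K}})$ is trivial; for degree $6$ one has $\Autz(X_{\overline{K}}) \simeq \Gm^2$. Hence $N^0$ is trivial when $d\le 5$, while when $d=6$ the group $(N^0)_{\overline{K}}$ is a connected subgroup of $\Gm^2$, thus a torus of dimension $\le 2$, and therefore $N^0$ is itself a torus of dimension $\le 2$ (being a torus is insensitive to the extension $\k \subseteq \overline{K}$).

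Finally I would assemble. The subgroup $N^0$ is normal in $G$, and the connected quotient $G/N^0$ fits into $1 \to N/N^0 \to G/N^0 \to H \to 1$ with $N/N^0$ finite; a connected linear group that is a finite extension of the torus $H$ is itself a torus, since its unipotent radical maps trivially to $H$, hence is finite and connected, hence trivial. Thus $G$ is an extension of the torus $G/N^0$ by the torus $N^0$, and the same unipotent-radical argument shows $G$ is a torus, of dimension $\dim N^0 + \dim H$, which is $\le 1$ for $d\le 5$ and $\le 3$ for $d = 6$. I expect the main obstacle to be the careful handling of the possibly singular geometric generic fibre in characteristic $7$ together with the compatibility of the automorphism group scheme and its neutral component with the field extensions $\k\subseteq K\subseteq\overline{K}$; once these technical points are settled, the classical bounds on automorphism groups of del Pezzo surfaces of degree $\le 6$ finish the argument.
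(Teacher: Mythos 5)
Your proposal follows essentially the same route as the paper's proof: the exact sequence \eqref{exact sequence}, the bound $\dim H\le 1$ from Lemma~\ref{two_sing_fib}, the embedding of the fibre-preserving part into the automorphisms of the (geometric) generic fibre via Remark~\ref{rk: inclusion into aut group of generic fibre} and Lemma~\ref{Lem:bound_char_dP}, and a final extension-of-tori argument. The assembly step is fine (the unipotent-radical argument kills $R_u$, and the dimension count $\le 1$, or the triviality of the connected derived subgroup mapping into the finite kernel, finishes the claim that the quotient is a torus). For $d=6$ the paper avoids the group-scheme formalism by passing to a finite cover $C\to\p^1$ over which the six $(-1)$-curves of the generic fibre are defined, so that $X\times_{\p^1}C$ becomes birational to $S\times C$ with $S$ a fixed degree-$6$ del Pezzo surface and $\Aut(S)=\Gm^2\rtimes D_6$; your direct use of the compatibility of the automorphism group scheme and its identity component with the extensions $\k\subseteq K\subseteq\overline{K}$ achieves the same thing and is legitimate here, since all the groups you manipulate are smooth and $\overline{K}$ is perfect.

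The one place where you assert something that is not classical is the finiteness of $\Aut(X_{\overline{K}})$ when the geometric generic fibre is singular, i.e.\ when $\car(\k)=7$, $d\le 2$ and $X_{\overline{K}}$ carries an $A_6$ point (Lemma~\ref{Lem:bound_char_dP}). For \emph{smooth} del Pezzo surfaces of degree $\le 5$ finiteness of the automorphism group is indeed standard (no three of the $9-d\ge 4$ blown-up points are collinear). But for the singular case the paper has to work: it realises $X_{\overline{K}}$ as a double cover of $\p^2$ branched over a quartic with a single $A_6$ singularity, normalises that quartic to $(x^2+yz)^2+xy^3=0$ by successive coordinate changes, and computes by hand that its stabiliser in $\PGL_3$ is the finite group $\{[x:y:z]\mapsto[\omega x:\omega^2 y:z]\mid \omega^3=1\}$. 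You correctly flag the singular geometric generic fibre as the main remaining obstacle, but this sub-step is a genuine computation rather than a citation to the literature, so as written your argument has a hole precisely there; everything else goes through.
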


\begin{proof}
Let $K=k(\p^1)$, and let  $\overline{K}$ be an algebraic closure of $K$.  By Remark \ref{rk: inclusion into aut group of generic fibre}, there is an injective group homomorphism $G_0:=\Autz(X)_{\p^1} \hookrightarrow \Aut(X_K)$, where $X_K$ is the generic fibre of $\pi\colon X \to \p^1$. Also, there is an injective group homomorphism $\Aut(X_K) \hookrightarrow \Aut(X_{\overline{K}})$.

If $d \leq 5$, we will show that $\Aut(X_{\overline{K}})$ is a finite group. This will imply that $G_0$ is finite and thus that $\Autz(X)$ is an extension of a finite group with a torus of dimension $\le 1$ (Lemma~\ref{two_sing_fib}), and so that $\Autz(X)$ is a torus of dimension $\le 1$. {We now prove that $\Aut(X_{\overline{K}})$ is finite if $d\le 5$. To do so, we distinguish between two cases, depending on whether $X_{\overline{K}}$ is smooth or not.} 

If $X_{\overline{K}}$ is smooth, then it is isomorphic to the blow-up of $9-d\ge 4$ points of $\p^{2}_{\overline{K}}$ (Lemma~\ref{Lem:WeakDP}).
{The group $\Aut(X_{\overline{K}})$ acts on the finite set of $(-1)$-curves of $X_{\overline{K}}$. The kernel $H\subseteq \Aut(X_{\overline{K}})$ of this action is the lift of the group of automorphisms of $\p^{2}_{\overline{K}}$ that fix the $9-d\ge 4$ points blown-up. As $X_{\overline{K}}$ is a del Pezzo surface, no three of the points are collinear, so $H$ is trivial and $\Aut(X_{\overline{K}})$ is finite.}

If $X_{\overline{K}}$ is not smooth, then $d=2$ and
$X_{\overline{K}}$ has a unique singular point, which is of type $A_6$ (Lemma~\ref{Lem:bound_char_dP}). Note that $X_{\overline{K}}$ is a double cover of $\p^2_{\overline{K}}$ ramified over a quartic curve $\Gamma$ (see \cite[\S8.7]{Dol12}). The group $\Aut(X_{\overline{K}})$ then acts regularly on $\p^2$ and preserves $\Gamma$. Moreover, $\Gamma$ has a unique singular point, which is of type $A_6$, and thus an ``oscular rhamphoid cusp'', which means that the minimal embedded resolution of the singularity $Y\to \p^2_{\overline{K}}$ is given by blowing-up three infinitely near points $p_1,p_2,p_3$, and that the strict transform $\tilde{\Gamma}$ is tangent to the divisor $E_3$ associated with $p_3$ with multiplicity $2$, and does not intersect {$E_1$ or $E_2$}. Choosing coordinates such that $p_1=[0:0:1]$, and that $p_2$ and $p_3$ lie on the conic $yz+x^2$, the equation of $\Gamma$ is given by the homogeneous polynomial $F=(x^2+yz)^2+\lambda y^2(x^2+yz)+\mu xy^3+ \nu y^4\in \overline{K}[x,y,z]$ for some $\lambda,\mu,\nu\in \overline{K}$. We moreover have $\mu\not=0$, as otherwise $F\in \overline{K}[x^2+yz,y^2]$ {cannot be irreducible}, and thus the singularity cannot be of type $A_6$. Replacing $z$ with $z-\frac{\lambda}{2}y$, we may assume that $\lambda=0$. We then replace $x$ and $z$ with $x+\xi y$ and $z-\xi^2y-2\xi x$ respectively, where $\xi=-\frac{\nu}{\mu}$, and may assume that $\nu=0$. Finally, replacing $y$ and $z$ with $y/\kappa$ and $z\kappa$, where $\kappa^3=\mu$, we may assume that the equation of $\Gamma$ is given by $(x^2+yz)^2+xy^3$. The group of automorphisms of $\p^2$ that preserves the quartic is then the finite group given by
$\{[x:y:z]\mapsto [\omega x:\omega^2y:z] \mid \omega^3=1\}$. {This implies that $\Aut(X_{\overline{K}})$ is finite.}

If $d=6$, then $X_{\overline{K}}$ is smooth (by Lemma~\ref{Lem:bound_char_dP}) and is thus isomorphic to the blow-up of three non-collinear points of $\p^2$.  We take a finite field extension $K\subseteq  L$ such that the six $(-1)$-curves of $X_{\overline{K}}$ are defined over $L$. We then denote by $C\to \p^1$ the finite morphism corresponding to the field extension $L/K$. Then, the $G_0$-action on $X$ lifts to a $G_0$-action on  $Y:=X\times_{\p^1} C$ by letting $G_0$ act trivially on $C$. The generic fibre of $Y\to C$ is now a del Pezzo surface of degree $6$ with all six $(-1)$-curves defined over $\k(C)$. Hence, $Y$ is birational to $S\times C$, where $S$ is the blow-up of three general points of $\p^2$ and $G_0$ identifies with an algebraic subgroup of $\Aut(S) \simeq \Aut(S \times C)_C$. As $\Aut(S)=\Gm^2 \rtimes D_6$ \cite[Theorem 8.4.2]{Dol12}, we conclude that the neutral component of $G_0$ is a torus $T$ of dimension $\le 2$.

Let us recall a classical fact: if there is a non-constant morphism from a one-dimensional connected linear algebraic group $J$ to $\Gm$, then $J \simeq \Gm$. Indeed, the other one-dimensional connected linear algebraic group is the additive group $\G_a$, and there is no non-trivial morphism $\G_a \to \G_m$. 

 If we mod out $G_0$ and $G$ by $T$ in the exact sequence \eqref{exact sequence} (in \upshape\S~\ref{Mori fibrations}), we see that $G/T$ is an extension of a finite group with a torus of dimension $\le 1$. By the previous paragraph, $G/T$ is $\Gm$ or the trivial group. But there are no non-trivial extensions between algebraic tori (consequence of \cite[\upshape\S 11.14, Corollary 1]{Bor91}), and so $G$ must be a torus of dimension $\le 3$. 
\end{proof}

\subsection{\texorpdfstring{$\p^2$}{P2}-fibrations over \texorpdfstring{$\p^1$}{P1}} \label{subsec:P2 fibrations}
Let $\pi\colon X \to \p^1$ be a Mori del Pezzo fibration of degree~9, that is, a \emph{$\p^2$-fibration} with terminal singularities, and let $G=\Autz(X)$. In this subsection we prove that there is a $\p^2$-bundle $\tau:Y \to \p^1$ and a $G$-equivariant commutative diagram
\[\xymatrix@R=3mm@C=2cm{
    X \ar@{-->}[rr]^{\varphi} \ar[rd]_{\pi}  & &Y \ar[ld]^{\tau} \\
    & \p^1&
  }\]
where $\varphi$ is a $G$-equivariant birational map (Proposition \ref{prop: main result P2 fibrations}).

\begin{lemma}\label{Lemm:Tsen}
Assume that $\car(\k)\notin \{2,3,5\}$ and let $\pi\colon X\to \p^1$ be a Mori del Pezzo fibration of degree $9$. Then the generic fibre $X_K$ is isomorphic to $\p^2_K$, where $K=\k(\p^1)$.
\end{lemma}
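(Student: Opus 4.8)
The plan is to identify the generic fibre $X_K$ as a Severi--Brauer surface over $K$ and then to split it.

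First I would invoke Lemma~\ref{Lem:bound_char_dP}. Since $\car(\k)\notin\{2,3,5\}$ and the degree is $d=9\notin\{1,2\}$, neither the low-characteristic case nor the exceptional characteristic-$7$ case of that lemma can occur; we therefore land in its first case, so that $X_K$ is smooth, carries a $K$-rational point, and satisfies $\rho(X_K)=1$. Base-changing to an algebraic closure $\overline{K}$, the surface $X_{\overline{K}}$ is a smooth del Pezzo surface of degree $9$, hence isomorphic to $\p^2_{\overline{K}}$ (see \S\ref{subsec generalities} and \cite[Corollary~8.1.14]{Dol12}). Consequently $X_K$ is a $K$-form of $\p^2$, i.e.~a Severi--Brauer surface.

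It then remains to prove that this Severi--Brauer surface is split. The most direct route uses the $K$-rational point furnished by Lemma~\ref{Lem:bound_char_dP}: a Severi--Brauer variety with a rational point is automatically isomorphic to projective space over its ground field, whence $X_K\cong\p^2_K$. In the spirit of the statement, one can alternatively argue via Tsen's theorem: as $K=\k(\p^1)$ is the function field of a curve over the algebraically closed field $\k$, it is a $C_1$-field and therefore $\Br(K)=0$; since $K$-forms of $\p^2$ are classified by (the $3$-torsion of) $\Br(K)$, the vanishing of the Brauer group forces $X_K\cong\p^2_K$.

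I do not expect a genuine obstacle. The only step requiring attention is the verification that the degree-$9$ assumption, together with $\car(\k)\notin\{2,3,5\}$, rules out every singular and exceptional possibility in Lemma~\ref{Lem:bound_char_dP}, so that the geometric generic fibre is honestly $\p^2$ and the Severi--Brauer/Brauer-group machinery applies.
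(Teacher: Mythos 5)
Your proof is correct and follows essentially the same route as the paper: Lemma~\ref{Lem:bound_char_dP} gives smoothness of the geometric generic fibre, which is then $\p^2_{\overline{K}}$, and the resulting Severi--Brauer surface $X_K$ is split because $\Br(K)=0$ by Tsen's theorem. Your alternative splitting via the $K$-rational point (Ch\^atelet's theorem) is equally valid but not the argument the paper uses.
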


\begin{proof}
Let $K=\k(\p^1)$, let $X_K$ be the generic fibre, and let $\overline{K}$ be an algebraic closure. The surface $X_{\overline{K}}$ is smooth (Lemma~\ref{Lem:bound_char_dP}). Hence, by Lemma~\ref{Lem:WeakDP}, the surface $X_{\overline{K}}$ is isomorphic to $\p^2_{\overline{K}}$. To show that $X_K$ is isomorphic to $\p^2_K$, it suffices to show that the Brauer group of $K$ is trivial (see \cite[Theorem~5.2.1]{GS06}). This follows from Tsen's theorem  as $K=\k(\p^1)$ (see \cite[Tag 03RF]{stacks-project}).
\end{proof}

With Lemma~\ref{Lemm:Tsen}, it is then natural to ask whether a Mori del Pezzo fibration $X \to \P^1$ whose generic fibre is isomorphic to $\P^2$ is necessary a $\P^2$-bundle. The answer is unfortunately negative as the next example shows. But we will see with Proposition~\ref{prop: main result P2 fibrations} that we can always reduce to the case of $\P^2$-bundles.

\begin{example}Assume that $\car(\k)\not=2$.
Let $\sigma\in \Aut(\p^2)$ be an involution and let $C$ be a smooth projective curve with a $\mu_2$-action such that $C/\mu_2=\p^1$. Let $X=(\p^2\times C)/\mu_2$, where $\mu_2=\{\pm 1\}$ acts on $\p^2$ via the involution $\sigma$. Then the induced morphism $\pi\colon X \to \p^1$ is a Mori del Pezzo fibration of degree $9$ (the only singularities of $X$ are double points). But $\pi$ is not a $\p^2$-bundle as $\mu_2$ acts on $C$ with at least two fixed points (by Hurwitz's formula) and over a fixed point a fibre of $\pi$ is generically non-reduced.
\end{example}

The next elementary lemma will be useful in the proof of Proposition~\ref{prop: main result P2 fibrations}.

\begin{lemma}\label{Lem:TrivialToZn}
Let $G$ be a connected linear algebraic group. For each integer $n\ge 2$ that is not a multiple of the characteristic of the ground field $\k$, every $($abstract$)$ group homomorphism $\nu\colon G\to \Z/n\Z$ is trivial.
\end{lemma}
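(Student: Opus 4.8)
The plan is to reduce everything to the Jordan decomposition of a single element and to exploit the hypothesis $\car(\k)\nmid n$ in order to make both the semisimple and the unipotent parts invisible to $\nu$. Since $\Z/n\Z$ is abelian and $\nu$ is a group homomorphism, it suffices to prove that $\nu(g)=0$ for every $g\in G$. For this I would write $g=g_sg_u$ with $g_s$ semisimple, $g_u$ unipotent and $g_sg_u=g_ug_s$ (the Jordan decomposition, valid because $\k$ is algebraically closed, hence perfect), so that $\nu(g)=\nu(g_s)+\nu(g_u)$, and then show that each of the two summands vanishes.

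First I would treat the semisimple part. As $G$ is connected and $\k$ is algebraically closed, $g_s$ is contained in a maximal torus $T\simeq\G_m^r$. The $n$-th power map on $\k^*$ is surjective since $\k$ is algebraically closed, hence so is the $n$-th power map on $T(\k)=(\k^*)^r$; thus $g_s=t^n$ for some $t\in T$ and $\nu(g_s)=n\,\nu(t)=0$ in $\Z/n\Z$. Note that this step does not even require the coprimality hypothesis.

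Next I would treat the unipotent part, which is where the assumption $\car(\k)\nmid n$ is essential. If $\car(\k)=0$, then (after fixing a faithful representation $G\hookrightarrow \GL_m$) one has $g_u=\exp(\log g_u)$ with $\log g_u$ a nilpotent element of the Lie algebra $\mathfrak g$, so $v:=\exp(\tfrac1n\log g_u)\in G$ is a unipotent $n$-th root of $g_u$; hence $\nu(g_u)=n\,\nu(v)=0$. If $\car(\k)=p>0$, then every unipotent element has order a power of $p$, say $g_u^{p^k}=1$; since $\gcd(n,p^k)=1$ we may choose $m$ with $mn\equiv 1\pmod{p^k}$, so that $(g_u^m)^n=g_u$ and again $\nu(g_u)=n\,\nu(g_u^m)=0$.

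Combining the two cases gives $\nu(g)=\nu(g_s)+\nu(g_u)=0$ for every $g\in G$, so $\nu$ is trivial. The only delicate point — and the expected main obstacle — is the unipotent case in positive characteristic: this is precisely where coprimality of $n$ and $\car(\k)$ enters, and the statement genuinely fails without it (already for $G=\G_a$ and $n=p$, since $(\k,+)$ is then an $\F_p$-vector space admitting many surjections onto $\Z/p\Z$).
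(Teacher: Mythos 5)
Your proof is correct. It rests on the same two computations as the paper's: every element of a torus over an algebraically closed field is an $n$-th power, and every unipotent element is $n$-divisible when $\car(\k)\nmid n$ (via $\exp(\tfrac1n\log g_u)$ in characteristic $0$, via B\'ezout inside the cyclic $p$-group $\langle g_u\rangle$ in characteristic $p$). The only real difference is the reduction step: the paper invokes the fact that a connected linear algebraic group is generated by its subtori and unipotent subgroups and then checks the statement for $\G_m$ and $\G_a$, whereas you decompose each individual element as $g=g_sg_u$ by the Jordan decomposition and kill the two factors separately. Your element-wise route is arguably a little cleaner, since it only needs that a semisimple element of a connected group lies in a maximal torus (which the paper itself uses elsewhere, citing \cite[\S19.3 and \S22.2]{Hum75}) and avoids having to say anything about how an arbitrary unipotent subgroup is built out of copies of $\G_a$. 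Your closing remark that the statement fails for $G=\G_a$ and $n=p$ matches the remark the paper places immediately after the lemma.
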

\begin{proof}
As $G$ is generated by its subtori and unipotent subgroups, we only need to show the result in the two cases where $G$ is isomorphic to $\G_m$ or $\G_a$. 

Let $G \simeq \G_m$. As the base field is algebraically closed, for any $t \in G$ we can find $t' \in G$ such that ${t'}^n=t$. Then $\nu(t)=\nu(t'^n)=n\nu(t')=0$, and so $\nu$ is trivial.
 
Let $G \simeq \G_a$. As $\car(\k)$ does not divide $n$, for any $u \in G$ we can find $u' \in G$ such that $nu'=u$. Then $\nu(u)=\nu(nu')=n\nu(u')=0$, and so $\nu$ is again trivial.
\end{proof}

\begin{remark}
If $\car(\k)=p$, then Zorn's lemma provides a basis $\mathcal{B}$ of $\k$ as a $\F_p$-vector space. This  implies that $\G_a \simeq \bigoplus_{i \in \mathcal{B}} \Z/p\Z$ as an abstract group, and so there are non-trivial homomorphisms $\G_a \to \Z/p\Z$. {Hence, Lemma~\ref{Lem:TrivialToZn} is false if $p=n$.}
\end{remark}

\begin{proposition}  \label{prop: main result P2 fibrations}
Assume that $\car(\k)\not=3$. Let $\pi \colon X \to \p^1$ be a morphism whose generic fibre is isomorphic to $\p^2$ $($this is for instance the case when $\car(\k) \notin \{2,3,5\}$ and $\pi$ is Mori del Pezzo fibration of degree $9$ by Lemma~$\ref{Lemm:Tsen})$. There is a regular action of $\Autz(X)$ on a $\p^2$-bundle $\tau\colon Y\to \p^1$, and an $\Autz(X)$-equivariant birational map $\varphi\colon X\dasharrow Y$ such that $\tau\circ \varphi=\pi$.
\end{proposition}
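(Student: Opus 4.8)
The plan is to realise the $\p^2$-fibration inside a \emph{linearised} projective bundle over $\p^1$ and then descend to a genuine $\p^2$-bundle. Write $G=\Autz(X)$. By Proposition~\ref{blanchard} the group $G$ acts on $\p^1$ so that $\pi$ is $G$-equivariant, and $G$ is linear by Lemma~\ref{lemma:linear}, since $X$, having generic fibre $\p^2_K$ over $K=\k(\p^1)$, is $\k$-rational. Because $X$ is rational, its Picard group is finitely generated and the connected group $G$ acts trivially on it; in particular the relative anticanonical sheaf $\mathcal M=\omega_{X/\p^1}^{\vee}$ (a line bundle near the generic fibre) carries its \emph{canonical} $G$-linearisation and restricts to $\mathcal O_{\p^2_K}(3)$ on the generic fibre $X_K\cong\p^2_K$.

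First I would push $\mathcal M$ forward. The sheaf $\pi_*\mathcal M$ is $G$-linearised, and over the dense open $U\subseteq\p^1$ where cohomology and base change applies it is locally free of rank $h^0(\p^2,\mathcal O(3))=10$; extending over the finitely many remaining points gives a $G$-linearised bundle whose projectivisation $P:=\p(\pi_*\mathcal M)\to\p^1$ is a $\p^9$-bundle with a \emph{regular} $G$-action. The relative linear system $|\mathcal M|$ then defines a $G$-equivariant rational map $X\dashrightarrow P$ over $\p^1$ which on $X_K$ is the $3$-uple Veronese embedding $\p^2_K\hookrightarrow\p^9_K$ (an embedding in every characteristic). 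Let $Y'\subseteq P$ be the closure of the image: it is a $G$-stable closed subscheme, a $\p^2$-fibration over $\p^1$ that is $G$-birational to $X$ over $\p^1$, and whose generic fibre is $\p^2_K$ by Tsen's theorem, i.e. $\Br(K)=0$.

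The hard part is to convert $Y'$ into a Zariski-locally trivial $\p^2$-bundle $Y=\p(\mathcal E)$ while keeping the $G$-action regular. Abstractly $Y'\to\p^1$ is a Severi--Brauer scheme of relative dimension $2$, classified by a $3$-torsion Brauer class; since $\Br(\p^1_\k)=0$ this class vanishes, so $Y'$ admits a line bundle restricting to $\mathcal O(1)$ on its fibres, i.e. a relative cube root of the tautological $\mathcal O_{Y'}(1)$, exhibiting $Y'\cong\p(\mathcal E)$ for a rank-$3$ bundle $\mathcal E$ on $\p^1$ (which splits by Grothendieck, so $\tau\colon Y'\to\p^1$ is indeed a $\p^2$-bundle). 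The remaining, and genuinely delicate, point is equivariance: the collection of such cube roots compatible with the given $G$-action is a torsor under a $\mu_3$- (equivalently $\mathbb Z/3\mathbb Z$-) valued datum, and I expect the obstruction to a $G$-invariant choice to be a group homomorphism $G\to\mathbb Z/3\mathbb Z$. This is exactly where the hypothesis $\car(\k)\neq3$ is used: by Lemma~\ref{Lem:TrivialToZn} every homomorphism from the connected group $G$ to $\mathbb Z/3\mathbb Z$ is trivial, so a $G$-equivariant reduction exists and the transported $G$-action on $Y=\p(\mathcal E)$ is regular. Taking $Y=Y'$ and letting $\varphi\colon X\dashrightarrow Y$ be the induced map (the identity on $X_K$, hence birational, $G$-equivariant, with $\tau\varphi=\pi$) then finishes the argument.

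I expect step three to be the main obstacle, and it is worth stressing \emph{why} one cannot simply linearise a relative $\mathcal O(1)$ on $X$ and push it forward: in general $\mathcal O(1)$ is \emph{not} $G$-linearisable (already on $\p^2\times\p^1$, where the obstruction is the generator of $\Pic(\PGL_3)=\mathbb Z/3\mathbb Z$), which forces the detour through the canonically linearised anticanonical $\mathcal M=\omega_{X/\p^1}^{\vee}$. Consequently only the $3$-torsion descent from the Veronese sub-bundle is at issue, and that torsion is precisely what $\car(\k)\neq3$ together with Lemma~\ref{Lem:TrivialToZn} kills.
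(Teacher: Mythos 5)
Your construction of the $G$-linearised $\p^9$-bundle $P=\p(\pi_*\mathcal M)$ and of the equivariant rational map $X\dasharrow P$ given fibrewise by the $3$-uple Veronese is fine, but the argument breaks at the assertion that the closure $Y'\subseteq P$ of the image is a Severi--Brauer scheme over $\p^1$. Since $\p^1$ is a smooth curve, $Y'\to\p^1$ is indeed flat and proper with generic fibre $\p^2_K$, but flatness controls nothing about the special fibres: over the finitely many points where $\pi$ fails to be a $\p^2$-bundle, the fibre of $Y'$ is an arbitrary flat degeneration of the Veronese surface (possibly a cone, non-reduced, or reducible), not a form of $\p^2$. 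A Severi--Brauer scheme must be \'etale-locally a projective bundle over \emph{every} point of the base, and nothing in your construction produces that. Consequently the Brauer-theoretic step --- ``the class is $3$-torsion, $\Br(\p^1_\k)=0$, hence a relative cube root of $\O_{Y'}(1)$ exists and $Y'\cong\p(\E)$'' --- has no object to apply to. Tsen's theorem only controls the generic fibre, which is precisely the input Lemma~\ref{Lemm:Tsen} already provides; the entire content of the proposition is to repair the finitely many bad fibres $G$-equivariantly, and that step is missing. (The ``equivariant cube root'' discussion is also only asserted: you do not justify that the obstruction to a $G$-invariant choice is a homomorphism $G\to\Z/3\Z$ rather than a class in some less tractable cohomology set.)

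For comparison, the paper's proof attacks the bad fibres directly: over the good locus $\pi$ is a $\p^2$-bundle, and near a bad point $t=0$ (necessarily fixed by the connected group $G$) the action is recorded by matrices $M_g\in\mathrm{Mat}_{3}(\k[t])$ with $M_g(0)\neq 0$, well defined up to scalars. Setting $n_g$ equal to the order of vanishing of $\det M_g$ at $t=0$, one shows by analysing the leading coefficients $R_g=M_g(0)$ and the next-order terms that conjugation by an explicit element of $\PGL_3(\k(t))$ strictly decreases $\max_g n_g$, and iterating extends the bundle structure over $t=0$. The hypothesis $\car(\k)\neq 3$ enters exactly where you guessed --- via Lemma~\ref{Lem:TrivialToZn}, to rule out the case where $g\mapsto n_g\bmod 3$ would be a non-trivial homomorphism $G\to\Z/3\Z$ --- so your intuition about the role of the hypothesis is correct, but the reduction that makes this the only remaining obstruction is the part your proposal does not supply.
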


\begin{proof}There is an open subset $V\subseteq   \p^1$ over which $\pi$ is a $\p^2$-bundle. Let us write $G=\Autz(X)$. If $V=\p^1$, we are done. Otherwise, we take a point $p\in \p^1\setminus V$, necessarily fixed by $G$ (as $G$ is connected and $\p^1\setminus V$ is finite), and take affine coordinates in a neighbourhood of $p$ such that $p$ has for equation $t=0$ in $\A^1$. We use a local trivialisation on a open subset of $\p^1$, and obtain a rational action   of $G$ onto $\A^1\times\p^2$ (a group homomorphism $G\to \Bir(\A^1\times \p^2$) such that the corresponding map $G\times \A^1\times \p^2\dasharrow \A^1\times \p^2$ is rational), given by
\[\begin{array}{ccc}
G\times\A^1\times\p^2 &\dasharrow &\A^1\times\p^2\\
(g, t,[x:y:z])&\mapsto& \left(\frac{a(g)t}{1+b(g)t},\mathcal{M}(g,t)([x:y:z]),\right),
\end{array}\]
where $a\in \k[G]^*$, $b\in \k[G]$ and $\mathcal{M}\in \PGL_{3}(\k[G](t))$.

For each $g\in G$, we then obtain an element $\mathcal{M}_g\in \PGL_{3}(\k(t))$, represented by a matrix $M_g\in \mathrm{Mat}_{3}(\k(t))$, that we may assume to be in $\mathrm{Mat}_{3}(\k[t])$ and such that $M_g(0)=M_g|_{t=0}\not=0$. We then denote by $n_g\in \N$ the biggest integer such that $t^{n_g}$ divides $\det(M_g)$. If $n_g=0$ for each $g\in G$, then the rational action of $G$ on $\A^1\times\p^2$ is regular on a neighbourhood of $ \{0\}\times\p^2$, so we can replace $X$ with another projective variety $X'$ for which the open set $V$ is replaced with $V\cup \{p\}$. We will show that we can reduce to this case by conjugating the action with an element of $\PGL_3(\k(t))$ coming from a matrix in $\mathrm{Mat}_3(\k[t])$ with determinant having zeroes only at $t=0$. After  finitely many such steps, we reach the case $V=\p^1$ which achieves the proof.

We now study the case where $n_g>0$ for some $g\in G$. 
For each $g\in G$, we define two matrices $R_g,N_g\in \mathrm{Mat}_{3}(\k)$ such that 
\[M_g \equiv R_g+tN_g\pmod{t^2},\]
and obtain $R_g=M_g(0)\not=0$.

 Since the action of $G$ on $\A^1\times\p^2$ has to satisfy the axioms of a group action, there is, for all $g,h\in G$, an element $\nu_{g,h}\in \k(t)$ such that the following equality holds in $\mathrm{Mat}_{3}(\k(t)):$
 \vspace{-3mm}
\begin{equation}\label{EqProd}
\nu_{g,h}\cdot M_{gh}(t)=M_g(\frac{a(h)t}{1+b(h)t})\cdot M_h(t). \end{equation}
In particular, taking determinants, the element 
\begin{equation}\label{EqProdVal}
t^{n_{gh}-n_g-n_h}(\nu_{g,h})^3\in \k(t)\text{ is invertible at }t=0.\end{equation}

We define $n_{\max}=\max\{n_g\mid g\in G\}\ge 1$ (which is finite because $G\times \A^1\times \p^2\dasharrow \A^1\times \p^2$ is rational), and define subsets of $G$ as follows: 
\[G_{\max}=\{g\in G\mid n_g= n_{\max}\}\subseteq   G_{+}=\{g\in G\mid n_g>\frac{1}{2} n_{\max}\}\subseteq   G_{>0}=\{g\in G\mid n_g>0\}\subseteq   G.\] 
We then observe that
\begin{equation}\label{RgRh}
R_gR_h=0\text{ for all }g,h\in G \text{ such that }n_g+n_h>n_{\max} \text{ (in particular for all }g,h\in G_{+}).\end{equation}
Indeed, the inequality $n_g+n_h>n_{\max}$ implies that $n_g+n_h> n_{gh}$, which implies that $\nu_{g,h}\in \k(t)$ vanishes at $t=0$ \eqref{EqProdVal}, and thus that $M_g(0)M_h(0)=R_g R_h=0$ \eqref{EqProd}.

We now fix an element $s\in G_{\max}$. By \eqref{RgRh} we have $R_s^2=0$. After conjugating all $R_g,g\in G$, by a common element of $\GL_3(\k)$ (this corresponds to conjugate the action by an element of $\PGL_3(\k)$), we can then assume that 
\begin{equation}\label{Rs}R_s=\begin{bmatrix}
0 & 0 & 1\\
0&0&0\\
0&0&0\end{bmatrix}.\end{equation}

We now prove that 
\begin{equation}\label{Rgbigger0}
R_g=\begin{bmatrix} 0 & \star & \star\\
0 & 0& \star\\
0&0&0\end{bmatrix}\text{ for each }g\in G_{>0}.\end{equation} 
(This means that the non-zero coefficients of the matrix $R_g=M_g(0)$ are above the diagonal). To prove this, we take $g\in G_{>0}$ and observe that $n_g+n_s>n_s=n_{\max}$, which implies that  $R_sR_g=R_gR_s=0$. The matrix $R_g$ has thus a first column and a last row equal to zero. It remains to see that the middle coefficient of $R_g$ is equal to zero. 

Suppose by contradiction that some $g\in G_{>0}$ satisfies $R_g=\begin{bmatrix} 0 & \star & \star\\
0 & \mu& \star\\
0&0&0\end{bmatrix}$ for some $\mu\in \k^*$, and choose $g$ with this condition such that $n_g$ is maximal. As $(R_g)^2=\begin{bmatrix} 0 & \star & \star\\
0 & \mu^2& \star\\
0&0&0\end{bmatrix}\not=0$, we get $(R_g)^2=\lambda R_{g^2}$ for some $\lambda\in \k^*$ and $n_{g^2}=2n_g>n_g$, contradicting the maximality assumption. This achieves the proof of \eqref{Rgbigger0}.

We now prove that, {after conjugating all $R_g,g\in G$ with a fixed element of $\GL_3(\k)$}, one of the following two situations holds.
\begin{align}
\text{Either}\ 
  R_g=\begin{bmatrix} 0 & \star & \star\\
0 & 0& 0\\
0&0&0\end{bmatrix}, \forall g\in G_{+},&\text{ and }R_h=\begin{bmatrix} \star & \star & \star\\
0 & \star& \star\\
0&\star&\star\end{bmatrix},\forall h\in G;\text{ or} \label{CaseA0} \tag{$A_0$}\\
    R_g=\begin{bmatrix} 0 & 0 & \star\\
0 & 0& \star\\
0&0&0\end{bmatrix},\forall g\in G_{+},&\text{ and }R_h=\begin{bmatrix} \star & \star & \star\\
\star & \star& \star\\
0&0&\star\end{bmatrix},\forall h\in G. \label{CaseB0} \tag{$B_0$}
\end{align}
To do this, we first recall that $R_gR_h=0$ for all $g,h\in G_{+}$ \eqref{RgRh}, which means that $\Im(R_g)\subseteq   \Ker(R_h)$. This being true for all $g,h\in G_{+}$, we get 
\[\langle \begin{bmatrix} 1 \\ 0 \\ 0 \end{bmatrix}\rangle\subseteq\Im(R_s)=\bigcup_{g\in G_{+}}\Im(R_g)\subseteq  \bigcap_{g\in G_{+}}\Ker(R_g)\subseteq   \Ker(R_s)=\langle \begin{bmatrix} 1 \\ 0 \\ 0 \end{bmatrix},\begin{bmatrix} 0 \\ 1 \\ 0 \end{bmatrix}\rangle.\] We then denote by $V\subseteq   \k^3$ the linear subspace $V=\bigcap_{g\in G_{+}}\Ker(R_g)$, and obtain two possibilities for $V$.

Either $V=\langle \begin{bmatrix} 1 \\ 0 \\ 0 \end{bmatrix}\rangle$, then $R_g=\begin{bmatrix} 0 & \star & \star\\
0 & 0& 0\\
0&0&0\end{bmatrix}$ for all $g\in G_{+}$; or $V=\langle \begin{bmatrix} 1 \\ 0 \\ 0 \end{bmatrix},\begin{bmatrix} 0 \\ 1 \\ 0 \end{bmatrix}\rangle$, then $R_g=\begin{bmatrix} 0 & 0 & \star\\
0 & 0& \star\\
0&0&0\end{bmatrix}$ for all $g\in G_{+}$. This gives the first part of \eqref{CaseA0} or of \eqref{CaseB0}.

To obtain the second part, we take $h\in G$, and prove that it has the desired form. If $n_h>0$, this follows from \eqref{Rgbigger0}. We then assume that $n_h=0$, which means that $\det(M_h)$ is not divisible by $t$, and gives $R_h=M_h(0)=M_h|_{t=0}\in \GL_3(\k)$. This implies that $(M_s(\frac{a(h)t}{1+b(h)t})\cdot M_h(t))|_{t=0}=R_sR_h$ and $(M_h(\frac{a(s)t}{1+b(s)t})\cdot M_s(t))|_{t=0}=R_hR_s$ are not equal to zero, so $t$ does not divide $\nu_{s,h}$ nor $\nu_{h,s}$. In particular, $n_{sh}=n_{hs}=n_s+n_h=n_{\max}$, and so $sh$ and $hs$ are in $G_+$. If the first part of \eqref{CaseA0} holds, the second and third rows of  $R_hR_s=\lambda R_{hs}$ ($\lambda \in \k^*$) are equal to zero, which yields $R_h=\begin{bmatrix} \star & \star & \star\\
0 & \star& \star\\
0&\star&\star\end{bmatrix}$, and gives \eqref{CaseA0}. If the first part of \eqref{CaseB0} holds, the first and second columns of   $R_sR_h=\lambda R_{sh}$  ($\lambda \in \k^*$) are equal to zero, which yields $R_h=\begin{bmatrix} \star & \star & \star\\
\star & \star& \star\\
0&0&\star\end{bmatrix}$, and gives \eqref{CaseB0}.

We now prove that moreover one of the following two cases holds:
\begin{align}
  R_g=\begin{bmatrix} 0 & \star & \star\\
0 & 0& 0\\
0&0&0\end{bmatrix}\forall g\in G_{+}, R_h=\begin{bmatrix} \star & \star & \star\\
0 & \star& \star\\
0&\star&\star\end{bmatrix}\forall h\in G&\text{ and } N_g=\begin{bmatrix} \star & \star & \star \\
0 &\star & \star \\
0 &\star & \star \end{bmatrix}\forall g\in G_{\max};\text{ or} \label{CaseA} \tag{$A$}\\
    R_g=\begin{bmatrix} 0 & 0 & \star\\
0 & 0& \star\\
0&0&0\end{bmatrix}\forall g\in G_{+}, R_h=\begin{bmatrix} \star & \star & \star\\
\star & \star& \star\\
0&0&\star\end{bmatrix}\forall h\in G&\text{ and } N_g=\begin{bmatrix} \star & \star& \star\\
\star & \star& \star\\
0&0&\star \end{bmatrix}\forall g\in G_{\max} \label{CaseB} \tag{$B$}
\end{align}
{These correspond to \eqref{CaseA0} and \eqref{CaseB0}, with additional properties on $N_g$ for $g\in G_{\max}$. To prove these}, we first calculate, for all $g,h\in G_{\max}$ (recall that $R_gR_h=0$), 
\[\begin{array}{rcl}
{\nu_{g,h}\cdot M_{gh}(t)\stackrel{\eqref{EqProd}}{=}}M_g(\frac{a(h)t}{1+b(h)t})\cdot M_h(t)&\equiv &(R_g+\frac{a(h)t}{1+b(h)t} N_g)(R_h+tN_h)\\
&\equiv& (R_g+ ta(h)N_g)(R_h+tN_h) \\ &\equiv &t(a(h)N_gR_h+R_gN_h)\pmod{t^2}\end{array}\]
{and use it to prove that \begin{equation}\label{ProdNGHtriang}
a(h)N_gR_h+R_gN_h=\begin{bmatrix} 0 & \star & \star\\
0 & 0& \star\\
0&0&0\end{bmatrix}\text{ for all }g,h\in G_{\max}.\end{equation}
To prove \eqref{ProdNGHtriang} we may assume that $a(h)N_gR_h+R_gN_h\not=0$.} In this case, $t^{-1}\nu_{g,h}$ is invertible at $t=0$ and $R_{gh}=\lambda(a(h)N_gR_h+R_gN_h)$ for some $\lambda\in \k^*$ \eqref{EqProd}, which implies that $n_{gh}=n_g+n_h-3$ \eqref{EqProdVal}. As $n_g=n_h=n_{\max}$, we have $n_{gh}=2n_{\max}-3\ge 0$. Hence $n_{\max}\ge 2$ and $2n_{\max}-3>0$, so $n_{gh}>0$. {This, together with ~\eqref{Rgbigger0}, gives~\eqref{ProdNGHtriang}.}

The first column of $R_h$ being zero \eqref{Rgbigger0}, the first column of $a(h)N_gR_h$ is zero, so \eqref{ProdNGHtriang} shows that the first column of $R_gN_h$ is zero. Choosing $g=s$, we obtain that the lower-left coefficient of $N_h$ is zero for each $h\in G_{\max}$ (using the explicit form of $R_s$ given in \eqref{Rs}).

We now write \eqref{ProdNGHtriang} with explicit coefficients. We can write\\ $R_g=\begin{bmatrix} 0 & \alpha(g) & \beta(g)\\
0 & 0& \gamma(g)\\
0&0&0\end{bmatrix}$, $R_h=\begin{bmatrix} 0 & \alpha(h) & \beta(h)\\
0 & 0& \gamma(h)\\
0&0&0\end{bmatrix}$ by \eqref{Rgbigger0}, and also\\  $N_g=\begin{bmatrix} \star & \star & \star\\
\lambda(g) & \star& \star\\
0&\theta(g)&\star\end{bmatrix}$, $N_h=\begin{bmatrix} \star & \star & \star\\
\lambda(h) & \star& \star\\
0&\theta(h)&\star\end{bmatrix}$. Then, \eqref{ProdNGHtriang} gives:

\begin{align}\label{ProdNGHtriang2}
a(h)N_gR_h\!+\!R_gN_h &=\begin{bmatrix} \alpha(g)\lambda(h) & \star & \star\\
0 & a(h) \alpha(h)\lambda(g)\!+\!\gamma(g)\theta(h)& \star\\
0&0&a(h)\gamma(h)\theta(g)\end{bmatrix}\\ 
&=\begin{bmatrix} 0 & \star & \star\\ 0 & 0 & \star\\  0 & 0& 0 \end{bmatrix},\forall g,h\in G_{\max}.\end{align}

If there is $g\in G_{\max}$ such that $\alpha(g)\not=0$, then \eqref{ProdNGHtriang2} yields $\lambda(h)=0$ for all $h\in G_{\max}$. As \eqref{CaseB0} is impossible, we are in case \eqref{CaseA0}, and thus in case \eqref{CaseA}.

If there is $h\in G_{\max}$ such that $\gamma(h)\not=0$, then \eqref{ProdNGHtriang2} yields $\theta(g)=0$ for all $g\in G_{\max}$ (since $a(h)\not=0$). As \eqref{CaseA0} is impossible, we are in case \eqref{CaseB0}, and thus in case \eqref{CaseB}.

The only remaining case is when $\alpha(h)=\gamma(h)=0$ for each $h\in G_{\max}$. This implies that $\beta(h)\not=0$ for each $h\in G_{\max}$ (as $R_g\not=0$). Moreover,
\[a(h)N_gR_h+R_gN_h=\begin{bmatrix} 0& \beta(g)\theta(h) & \star\\
0& 0 & a(h) \beta(h)\lambda(g)\\
0&0&0\end{bmatrix}\text{ for all }g,h\in G_{\max}.\]
If $a(h)N_gR_h+R_gN_h=0$, then $\lambda(g)=\theta(h)=0$. If $a(h)N_gR_h+R_gN_h\not=0$, then as we observed before we have $R_{gh}=\lambda(a(h)N_gR_h+R_gN_h)$ for some $\lambda\in \k^*$, and $0\le n_{gh}=2n_{\max}-3\le n_{\max}$. There are then only two possibilities for $n_{\max}$, namely $2$ or $3$. If $n_{\max}=3$, then $gh\in G_{\max}\subseteq   G_{+}$. We then obtain $\lambda(g)=0$ if \eqref{CaseA0} holds, and $\theta(h)=0$ if \eqref{CaseB0} holds. This shows that if $n_{\max}=3$, then \eqref{CaseA} follows from \eqref{CaseA0} and \eqref{CaseB} follows from \eqref{CaseB0}. 

It remains, to prove that either \eqref{CaseA} or \eqref{CaseB} holds, to exclude the case where $n_{\max}=2$. Suppose $n_{\max}=2$ and derive a contradiction. We define a map $\nu\colon G\to \Z/3\Z$ sending $g$ onto the class of $n_{\max}$. We then observe that $\nu$ is a non-trivial group homomorphism (follows from~\eqref{EqProdVal}). This contradicts Lemma~\ref{Lem:TrivialToZn} as $\car(\k)\ne 3$.

To achieve the proof, we conjugate the group by $(t,[x:y:z])\mapsto (t,[tx:y:z])$ in case \eqref{CaseA} and by $(t,[x:y:z])\mapsto (t,[tx:ty:z])$ in case \eqref{CaseB}.

In Case \eqref{CaseA}, this replaces $M_g=\begin{bmatrix} a_{11} & a_{12} & a_{13} \\ a_{21} & a_{22} & a_{23}\\ a_{31} & a_{32} & a_{33}\end{bmatrix}$ with 
$$\begin{bmatrix} t & 0 & 0 \\ 0 & 1 & 0 \\ 0 & 0 &1\end{bmatrix}\begin{bmatrix} a_{11} & a_{12} & a_{13} \\ a_{21} & a_{22} & a_{23}\\ a_{31} & a_{32} & a_{33}\end{bmatrix}\begin{bmatrix} t & 0 & 0 \\ 0 & 1 & 0 \\ 0 & 0 &1\end{bmatrix}^{-1}=\begin{bmatrix} a_{11} & ta_{12} & ta_{13} \\ \frac{1}{t}a_{21} & a_{22} & a_{23}\\ \frac{1}{t}a_{31} & a_{32} & a_{33}\end{bmatrix}$$ 
for each $g\in G$. The new matrix has the same determinant, and has still coefficients in $\k[t]$. In particular, $n_g$ stays fixed, unless the new matrix is divisible by $t$, in which case it decreases. As this latter always hold for every $g\in G_{\max}$ (because of \eqref{CaseA}), the integer $n_{\max}$ is decreased by this process.

Case \eqref{CaseB} is similar. We replace $M_g=\begin{bmatrix} a_{11} & a_{12} & a_{13} \\ a_{21} & a_{22} & a_{23}\\ a_{31} & a_{32} & a_{33}\end{bmatrix}$ with 
$$\begin{bmatrix} 1 & 0 & 0 \\ 0 & 1 & 0 \\ 0 & 0 &\frac{1}{t}\end{bmatrix}\begin{bmatrix} a_{11} & a_{12} & a_{13} \\ a_{21} & a_{22} & a_{23}\\ a_{31} & a_{32} & a_{33}\end{bmatrix}\begin{bmatrix} 1 & 0 & 0 \\ 0 & 1 & 0 \\ 0 & 0 &\frac{1}{t}\end{bmatrix}^{-1}=\begin{bmatrix} a_{11} & a_{12} & ta_{13} \\ a_{21} & a_{22} & ta_{23}\\ \frac{1}{t}a_{31} & \frac{1}{t}a_{32} & a_{33}\end{bmatrix}$$ for each $g\in G$.\end{proof}

\subsection{Quadric fibrations over \texorpdfstring{$\p^1$}{P1}}  \label{quadric fibrations}

In this subsection we study Mori quadric fibrations over $\P^1$. We introduce the \emph{Umemura quadric fibrations} (Definition~\ref{def:QQg} and Lemma~\ref{Qgproperties}). Then we prove a series of lemmas that will be useful to prove Theorem~\ref{Thm:MainQuadric} in the next subsection.

We first recall a  fact about quadric surfaces, probably well-known, which will be useful later on, in order to study the generic fibre of a quadric fibration.

\begin{lemma}\label{quadric1}
Let $K$ be an arbitrary field with $\car (K)\neq 2$, and let $Q\subseteq   \P_K^3$ be a smooth quadric surface. Assume that $Q(K)\neq\emptyset$. 
{
Then there is $\mu \in K^*$ and a linear projective change of coordinates such that}
the quadric surface $Q$ is defined in $\P^3_K$ by the equation
\[x_0^2-x_1x_2-\mu x^2_3=0.\]
Moreover, the following hold:
\begin{enumerate}
\item\label{Qembedding}
If there exists $r\in K$ such that $\mu=r^2$, then $Q$ is isomorphic to $\p^1_K\times\p^1_K$ via
\[
\begin{array}{rccc}
\kappa_r\colon & \P_K^1\times\P_K^1 & \iso & Q\\
 &([u_0:u_1],[v_0:v_1]) &\mapsto & [r (u_0v_1+u_1v_0):2 r u_0v_0:2 r u_1v_1:u_0v_1-u_1v_0]\end{array}\]
\item\label{MuNotsquare}
If $\mu$ is not a square in $K$, then $Q$ is not isomorphic to $\p^1_K\times\p^1_K$. Let $L/K$ be the unique field extension of degree $2$, with $L=K[r]$ for some $r\in L$ with $\mu=r^2$, and let $\iota$ be the generator of $\Gal(L/K) \simeq \Z/2\Z$. 
Let
\[\sigma\colon (x,y)\mapsto (y,x),\ \text{and let}\]
\[H_0= \{(x,y) \mapsto (A(x),\iota(A)(y))\mid A\in \PGL_2(L)\}.\]
Then, the isomorphism $Q_L\iso \p^1_L\times\p^1_L$ given by \ref{Qembedding} conjugates $\Aut_K(Q)$ to the subgroup $H=H_0\rtimes \langle \sigma\rangle\subseteq  \Aut_L(\p^1_L\times\p^1_L)$.
In particular, $\Aut_K(Q)$ is isomorphic to $\PGL_2(L)\rtimes \Z/2\Z$, where the action of $\Z/2\Z$ on $\PGL_2(L)$ is the one induced by $\iota$.
\end{enumerate}
\end{lemma}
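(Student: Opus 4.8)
The plan is to treat first the normal form, then the two cases, with the bulk of the work being the Galois bookkeeping in case~\ref{MuNotsquare}.

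\textbf{Normal form.} Since $Q(K)\neq\emptyset$ and $Q$ is smooth, I would choose a $K$-point and put it at $p=[1:0:0:0]$. The defining form then has no $x_0^2$-term, so it reads $x_0 L(x_1,x_2,x_3)+Q_1(x_1,x_2,x_3)$ with $L$ linear and $Q_1$ quadratic. Smoothness at $p$ forces $L\neq 0$, so after a linear change in $x_1,x_2,x_3$ (an element of $\PGL_4(K)$, legitimate since $\car(K)\neq 2$) we may take $L=x_1$. Collecting the $x_1$-terms and absorbing them into a new variable $x_0'=x_0+(\text{linear in }x_1,x_2,x_3)$ puts the form into $x_0'x_1+Q_2(x_2,x_3)$ with $Q_2$ binary. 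Nondegeneracy of the full rank-$4$ form forces $Q_2$ to be nondegenerate, so by diagonalisation $Q_2=\alpha x_2^2+\beta x_3^2$ with $\alpha,\beta\in K^*$. Relabelling the variables so that the hyperbolic pair is $(x_1,x_2)$ and the binary pair is $(x_0,x_3)$, then dividing the whole form by $\alpha$ and rescaling $x_1\mapsto-\alpha x_1$, yields exactly $x_0^2-x_1x_2-\mu x_3^2$ with $\mu=-\beta/\alpha\in K^*$. (Alternatively one may cite the Witt decomposition of an isotropic nondegenerate quaternary form as $\mathbb H\perp q'$.)

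\textbf{Case \ref{Qembedding}.} Here I would simply verify that $\kappa_r$ is a linear reparametrisation of the Segre embedding, hence an isomorphism onto $Q$. Writing $w_{ij}=u_iv_j$, the formula for $\kappa_r$ is the composition of the Segre map $([u],[v])\mapsto[w_{00}:w_{01}:w_{10}:w_{11}]$ with the linear map $(w_{00},w_{01},w_{10},w_{11})\mapsto(r(w_{01}+w_{10}),2rw_{00},2rw_{11},w_{01}-w_{10})$, which lies in $\GL_4(K)$ because $r\neq 0$ (one recovers $w_{00},w_{11}$ and $w_{01}\pm w_{10}$ linearly). A one-line substitution shows $\kappa_r([u],[v])$ satisfies $x_0^2-x_1x_2-\mu x_3^2=0$ with $\mu=r^2$; since the Segre image is an irreducible quadric and $[M]\in\PGL_4(K)$, the image is the irreducible quadric $Q$, so $\kappa_r$ is an isomorphism over $K$.

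\textbf{Case \ref{MuNotsquare}.} This is the substantive part. Over $L=K[r]$ the scalar $\mu=r^2$ becomes a square, so case~\ref{Qembedding} gives $\kappa_r\colon\p^1_L\times\p^1_L\iso Q_L$. By Galois descent, $\Aut_K(Q)$ is the subgroup of $\Aut_L(Q_L)$ commuting with the natural semilinear $\Gal(L/K)$-action $\iota_Q$ on $Q_L$; transporting through $\kappa_r$ it suffices to compute the induced action on $\p^1_L\times\p^1_L$ and its centraliser in $\Aut_L(\p^1_L\times\p^1_L)=(\PGL_2(L)\times\PGL_2(L))\rtimes\langle\sigma\rangle$. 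The key identity is that applying $\iota$ to the coefficients of $\kappa_r$ (only $r$ occurs, and $\iota(r)=-r$) gives $\kappa_{-r}$, and a direct comparison of the four coordinates shows $\kappa_{-r}=\kappa_r\circ\sigma$ in $\PGL_4$ (the two maps agree after multiplying the first three coordinates by $-1$). Writing $\iota_0$ for the coordinatewise action on $\p^1_L\times\p^1_L$, semilinearity gives $\iota_Q\circ\kappa_r=\iota(\kappa_r)\circ\iota_0=\kappa_r\circ\sigma\circ\iota_0$, hence the transported action is
\[\tau:=\kappa_r^{-1}\iota_Q\kappa_r=\sigma\circ\iota_0,\qquad \tau(x,y)=(\iota_0(y),\iota_0(x)).\]
Now I would compute the centraliser of $\tau$: for $\psi=(A,B)$ the relation $\psi\tau=\tau\psi$ (using $\iota_0(A(x))=\iota(A)(\iota_0(x))$) forces $B=\iota(A)$, giving exactly $H_0$; for $\psi=(A,B)\sigma$ it again forces $B=\iota(A)$, giving $H_0\sigma$. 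Thus the centraliser is $H=H_0\rtimes\langle\sigma\rangle$, and conjugation by $\sigma$ sends $(A,\iota(A))$ to $(\iota(A),A)$, i.e. acts on $H_0\cong\PGL_2(L)$ as $A\mapsto\iota(A)$, yielding $\Aut_K(Q)\cong\PGL_2(L)\rtimes\Z/2\Z$ with the $\iota$-action. Finally, $Q\not\cong\p^1_K\times\p^1_K$: since $\tau=\sigma\circ\iota_0$ swaps the two factors, a line $\p^1\times\{b\}$ of the first ruling is carried to $\{\iota(b)\}\times\p^1$ of the second, so the two rulings are conjugate and neither is defined over $K$, whereas $\p^1_K\times\p^1_K$ has both rulings $K$-rational.

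The main obstacle is the semilinear bookkeeping in case~\ref{MuNotsquare}: one must keep straight that $\kappa_r$ has $L$-coefficients, so that $\iota_Q$ transported across $\kappa_r$ acquires the extra factor $\sigma$ from the identity $\iota(\kappa_r)=\kappa_r\circ\sigma$, and that $\iota_0$ conjugates a Möbius map $A$ to $\iota(A)$. Getting these two twists correct is exactly what produces the ``diagonal'' group $H_0$ rather than the full $\PGL_2(L)\times\PGL_2(L)$, and it is where a sign or composition-order slip would give the wrong answer.
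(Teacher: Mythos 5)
Your proof is correct and follows essentially the same route as the paper's: the normal form is obtained from a rational point, its tangent plane and completion of the square; $\kappa_r$ is checked to be an isomorphism (the paper exhibits an explicit inverse, you factor it as the Segre embedding followed by an element of $\GL_4(K)$); and in case~\ref{MuNotsquare} both arguments transport the Galois involution through $\kappa_r$ to get $\sigma\circ\iota_0$ on $\p^1_L\times\p^1_L$ and then compute its centraliser in $\Aut_L(\p^1_L\times\p^1_L)$. You merely make explicit two steps the paper leaves implicit, namely the identity $\iota(\kappa_r)=\kappa_{-r}=\kappa_r\circ\sigma$ and the centraliser calculation, and you replace the paper's Picard-rank argument for $Q\not\simeq\p^1_K\times\p^1_K$ by the equivalent observation that the Galois action swaps the two rulings.
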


\begin{proof}
By hypothesis, $Q$ has a $K$-point, which can be assumed to be $P:=[0:1:0:0]$. Moreover, since $Q$ is smooth, we may assume that the tangent space of $Q$ at $P$ is given by the equation $x_2=0$. 
So $Q$ has the form $x_1x_2+h=0$, for some  homogeneous polynomial $h\in K[x_0,x_2,x_3]$ of degree $2$. Replacing $x_1$ with $x_1+q$ where $q\in K[x_0,x_2,x_3]$ is a homogeneous polynomial of degree $1$, we may assume that $h\in K[x_0,x_3]$.
By completing the square and applying diagonal automorphisms we may assume that $Q$ has the required form. 

\ref{Qembedding} If there exists $r\in K$ such that $\mu=r^2$, we check that $\kappa_r$ is an isomorphism, with inverse given by $x\mapsto (s_1(x),s_2(x))$, where $s_1,s_2\colon Q\to \p^1$ are given by 
\[
\begin{array}{rccc}
s_1\colon & Q& \to &\p^1\\
& [x_0:\cdots:x_3] &\mapsto &\left\{\begin{array}{lll}
\ [x_1:x_0-rx_3] & \text{ if } & (x_1,x_0-rx_3)\not=(0,0)\\
\ [x_0+rx_3:x_2] & \text{ if } & (x_0+rx_3,x_2)\not=(0,0)\end{array}\right.\\
s_2\colon & Q& \mapsto &\p^1\\
& [x_0:\cdots:x_3] &\to &\left\{\begin{array}{lll}
\ [x_1:x_0+rx_3] & \text{ if } & (x_1,x_0+rx_3)\not=(0,0)\\
\ [x_0-rx_3:x_2] & \text{ if } & (x_0-rx_3,x_2)\not=(0,0)\\
\end{array}\right. \end{array}\]

\ref{MuNotsquare} If $\mu$ is not a square, and $L$ is the degree $2$ extension of $K$ such that $L=K[r]$ and $r\in L$ is such that $r^2=\mu$, then $\Gal(L/K)$ is of order $2$, generated by  $\iota\colon L\to L$ sending $\mu$ onto $-\mu$.  The isomorphism $\kappa_r^{-1}\colon  Q_L\iso \p^1_L\times\p^1_L$ of \ref{Qembedding} conjugates the action of $\Gal(L/K)$ on $Q_L$ to the involution
\[ ([u_0:u_1],[v_0:v_1]) \mapsto ([\iota(v_0):\iota(v_1)],[\iota(u_0):\iota(u_1)]).\]
This implies that $\rho(Q)=\rho( \NS(Q_L))^{\Gal(L/K)}=1$ (Lemma~\ref{Lemm:PerfectClosure}). In particular $Q$ is not isomorphic to $\p^1_K\times\p^1_K$. Computing the elements of $\Aut_L(\p^1_L\times\p^1_L)$ that commute with the above involution, we obtain the group $H=H_0\rtimes \langle \sigma\rangle$ described in the statement of the lemma.
\end{proof}

\begin{definition} \label{def:QQg}
Let $n \geq 0$ and let $g\in \k[u_0,u_1]$ be a homogeneous polynomial of degree $2n$. We denote by $\QQ_g$ the projective threefold given by 
\[\{[x_0:x_1:x_2:x_3;u_0:u_1]\in \P(\O_{\p^1}^{\oplus 3}\oplus \O_{\p^1}(n))\mid x_0^2-x_1x_2-g(u_0,u_1)x_3^2=0\}.\]
and we denote by $\pi_g\colon \QQ_g\to \p^1$ the morphism $ [x_0:x_1:x_2:x_3;u_0:u_1]\mapsto  [u_0:u_1].$ \\
Note that $X=\P(\O_{\p^1}^{\oplus 3}\oplus \O_{\p^1}(n))$ is the quotient of $(\A^4\setminus \{0\} )\times (\A^2\setminus \{0\})$ by the action of $\G_m^2$ given by\[\begin{array}{rcc}
{\G_m^2\times (\A^4\setminus \{0\} )\times (\A^2\setminus \{0\})} & \rightarrow & {(\A^4\setminus \{0\} )\times (\A^2\setminus \{0\})}\\
((\lambda,\mu),(x_0,x_1,x_2,x_3,u_0,u_1))&\mapsto& (\mu x_0,\mu x_1,\mu x_2,\rho^{-n}\mu x_3,\rho u_0,\rho u_1).\end{array}\]
\end{definition}

The next lemma gives some basic properties of the variety $\QQ_g$. In particular, if $g$ is not a square, then Lemma~\ref{Qgproperties}\ref{QQgMfs} yields a structure of Mori quadric fibration $\pi_g\colon \QQ_g\to \p^1$; we will call such a fibration an \emph{Umemura quadric fibration}. It is a \emph{quadric bundle} in the sense of \cite[Definition 1.2]{Bea77} (except that our base is $\p^1$ and not $\p^2$ and that our varieties are not necessarily smooth). Note that we do not work with analytic coordinates, since we work in any characteristic and we can give a precise description of the singularities in Zariski local coordinates.

\begin{lemma}\label{Qgproperties}
Let $g\in \k[u_0,u_1]$ be a non-zero homogeneous polynomial of degree $2n$, for some $n\ge 0$. Denote by $H,F\subseteq \QQ_g$ the hypersurfaces given respectively by $x_3=0$ and $u_1=0$.
\begin{enumerate}
\item\label{QQgterminalrat}
The variety $\QQ_g$ is an irreducible normal rational projective threefold with  terminal singularities. Every singularity is Zariski locally given by the $cA_1$-singularity $\{(x,y,z,t)\in \A_k^4 \mid x^2-yz-t^mp(t)=0\}$ for some $m\ge 2$ and some polynomial $p(t)$ with $p(0)\neq 0$. Moreover $\QQ_g$ is $\Q$-factorial if and only if $g$ is not a square or $g\in \k^*$ and it is smooth if and only if $g$ is square-free.
\item\label{PicQQg}
If $g$ is not a square, then $\Pic(\QQ_g)=\Z H\oplus \Z F$. The cone of curves is moreover generated by the curves  $f=H\cap F$ and $h\subseteq H$, where $h$ is  given by $x_0=x_1=x_3=0$.
\item\label{canQQg}
The canonical divisor of $\QQ_g$ is given by $-2H-(n+2)F$ and satisfies $h\cdot K_{\QQ_g}=n-2$.
\item\label{QQgMfs}
The morphism
\[\begin{array}{rccc}
\pi_g\colon& \QQ_g&\to& \p^1\\
& [x_0:x_1:x_2:x_3;u_0:u_1]&\mapsto & [u_0:u_1]\end{array}\]
is a Mori quadric fibration $($i.e.~a Mori fibration whose generic fibre is a smooth quadric$)$ if and only if $g \in \k[u_0,u_1]$ is not a square.
\end{enumerate}
\end{lemma}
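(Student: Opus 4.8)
The plan is to work throughout in the toric presentation of the ambient space provided in Definition~\ref{def:QQg}: the $\G_m^2$-quotient realises $P:=\P(\O_{\p^1}^{\oplus 3}\oplus\O_{\p^1}(n))$ as a smooth projective toric variety with $\mathrm{Cl}(P)=\Z^2$, graded by the two weights $(\mu,\rho)$. In these terms the divisors $\{x_0=0\},\{x_1=0\},\{x_2=0\}$ have class $(1,0)$, the divisor $H=\{x_3=0\}$ has class $(1,-n)$, and $F=\{u_1=0\}$ has class $(0,1)$; moreover $K_P=-\sum_{\mathrm{rays}}D=(-4,\,n-2)$, while the defining equation has bidegree $(2,0)$, so $[X]=(2,0)$ with $X=\QQ_g$. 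I would record these classes once and reuse them for parts \ref{PicQQg} and \ref{canQQg}. For part~\ref{QQgterminalrat}, I would first note that $X$ is irreducible because the quadratic form $x_0^2-x_1x_2-gx_3^2$ has rank $4$ over $K:=\k(\p^1)$ (as $g\neq0$), hence is irreducible, so the generic fibre $X_K$ is a geometrically integral quadric. A Jacobian computation in the chart $x_3\neq0$, where the equation reads $x_0^2-x_1x_2-\tilde g(t)=0$ with $\tilde g(t)=g(1,t)$, shows that the singular locus is exactly $\{x_0=x_1=x_2=0,\ \tilde g(t)=\tilde g'(t)=0\}$; by the symmetry $u_0\leftrightarrow u_1$ it is therefore the finite set of points $[0:0:0:1]$ lying over the multiple roots of $g$, and near such a point the local equation is the asserted $cA_1$ form $x^2-yz-t^mp(t)$ with $m\geq2$, $p(0)\neq0$.

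These singularities are isolated compound Du Val points (a general hyperplane section through the point is an $A_1$ Du Val singularity), hence terminal; since $X$ is a hypersurface it is Cohen--Macaulay, and it is regular in codimension one because its singular locus has codimension three, so it is normal by Serre's criterion. In particular $X$ is smooth if and only if $g$ has no multiple root, that is, if and only if $g$ is square-free. Rationality follows because the constant section $[1:1:1:0]$ is a $K$-point of $X_K$, so $X_K$ is a $K$-rational quadric and $X$ is birational to a $\p^2$-bundle over $\p^1$.

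The core of part~\ref{QQgterminalrat} is the $\Q$-factoriality dichotomy, which I would settle by comparing $\mathrm{Cl}(X)$ with $\mathrm{Pic}(X)$. Every fibre of $\pi_g$ is irreducible (a smooth quadric or an irreducible quadric cone), so the only vertical prime divisors are the fibres, all linearly equivalent to $F$; the standard exact sequence relating $X$ to its generic fibre then reads $0\to\Z F\to\mathrm{Cl}(X)\to\mathrm{Cl}(X_K)\to0$. If $g$ is not a square then it is not a square in $K$ either (a square in $K=\k(t)$ would force $g$ to be a square polynomial), so by Lemma~\ref{quadric1}\ref{MuNotsquare} the fibre $X_K$ is not isomorphic to $\p^1_K\times\p^1_K$ and $\mathrm{Cl}(X_K)=\Z$; hence $\mathrm{Cl}(X)$ has rank $2$, and as it already contains the Cartier classes $H,F$, every Weil divisor is $\Q$-Cartier and $X$ is $\Q$-factorial. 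If $g\in\k^*$ then $X$ is smooth, hence $\Q$-factorial. If instead $g=h^2$ with $\deg h\geq1$, then every multiple root of $g$ has even multiplicity, so each singular point is analytically an ordinary double point with local class group $\Z$; the ruling $\{x_1=0,\ x_0=hx_3\}$ of the now-split generic fibre extends to a Weil divisor whose class generates this local class group, so it is not $\Q$-Cartier and $X$ is not $\Q$-factorial. I expect this last point to be the main subtlety: one must exhibit a genuinely non-$\Q$-Cartier divisor, and recognise that in the non-square case no such ``ruling'' survives as a separate divisor — exactly the phenomenon that an isolated $cA_1$ point is not by itself a local obstruction to global $\Q$-factoriality.

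For part~\ref{PicQQg}, the same exact sequence for Picard groups gives $0\to\Z F\to\mathrm{Pic}(X)\to\mathrm{Pic}(X_K)\to0$, and since $g$ is not a square we have $\mathrm{Pic}(X_K)=\Z\cdot(H|_{X_K})$, whence $\mathrm{Pic}(X)=\Z H\oplus\Z F$. As $\rho(X)=2$, the cone $\overline{\NE}(X)$ has exactly two extremal rays: one is contracted by $\pi_g$ and represented by the fibre conic $f=H\cap F$, and I would identify the other with the section $h=\{x_0=x_1=x_3=0\}$ through the intersection table $f\cdot H=2$, $f\cdot F=0$, $h\cdot H=-n$, $h\cdot F=1$ (immediate from the restriction degrees of $\O(1,0)$ and $\O(0,1)$ to $f$ and $h$), which shows $f,h$ are independent and span $\overline{\NE}(X)$. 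Part~\ref{canQQg} is then bookkeeping: adjunction gives $K_X=(K_P+X)|_X=(-4,n-2)+(2,0)=(-2,n-2)$, which in the basis $H=(1,-n)$, $F=(0,1)$ equals $-2H-(n+2)F$, and $h\cdot K_X=-2(-n)-(n+2)=n-2$. Finally, for part~\ref{QQgMfs}, the implication ``$g$ not a square $\Rightarrow$ Mori quadric fibration'' assembles the facts already established ($X$ is $\Q$-factorial and terminal, $-K_X$ restricts to $2H$ and is thus $\pi_g$-ample on fibres, $\rho(X/\p^1)=\rho(X_K)=1$, and $X_K$ is a smooth quadric), while the converse is immediate since a Mori fibration forces $\rho(X/\p^1)=1$, hence $X_K\not\cong\p^1_K\times\p^1_K$ by Lemma~\ref{quadric1}\ref{MuNotsquare}, hence $g$ is not a square in $K$ and therefore not a square as a polynomial.
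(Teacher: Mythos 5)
Your proposal is correct in substance but reaches several of the conclusions by a genuinely different route. For $\Pic$ and $\Q$-factoriality you use the localisation sequence $0\to\Z F\to\mathrm{Cl}(\QQ_g)\to\mathrm{Cl}((\QQ_g)_K)\to0$ (valid because every fibre is an irreducible quadric) together with Lemma~\ref{quadric1}\ref{MuNotsquare}; the paper instead exhibits an explicit chart $\QQ_g\setminus(H_2\cup F)\simeq\A^3$ to generate $\Pic$, and settles factoriality purely locally via the criterion of \cite{JK13} that $x^2-yz-t^mp(t)$ is factorial iff $t^mp(t)$ is not a square. Your global argument is arguably cleaner and gives honest factoriality in the non-square case in one stroke; the paper's local argument does not need the fibration structure. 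For $K_{\QQ_g}$ you use toric divisor classes and adjunction where the paper writes down an invariant differential form; these are equivalent. (Incidentally, your value $f\cdot H=2$ is the correct one --- $f$ is a conic in a quadric fibre on which $H$ restricts to the hyperplane class --- whereas the paper's proof asserts $f\cdot H=f\cdot H_0=1$, a harmless slip that does not affect its conclusions.)

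Three points need repair or completion. Most substantively, when $g=h^2$ your claim that the singular points are ``analytically ordinary double points'' fails whenever $h$ has a multiple root: for $g=u_0^4u_1^{2n-4}$ the point over $[0:1]$ has local equation $x^2-yz-t^4p(t)$, whose quadratic part has rank $3$, so it is not an ODP. The conclusion you want --- local class group $\Z$ generated by the closure of a ruling, non-torsion, hence not $\Q$-Cartier --- is still true for these germs $x^2-yz=e(t)^2$, but it must be justified by the criterion of \cite{JK13} (exactly what the paper cites) rather than by the ODP model. Second, in part~\ref{PicQQg} linear independence of $f$ and $h$ does not by itself show they span $\NE(\QQ_g)$; you need the standard case analysis (for $c\subseteq H\simeq\p^1\times\p^1$ use $\NE(H)=\langle f,h\rangle$; for $c\not\subseteq H$ use $c\cdot H\geq 0$ and $c\cdot F\geq 0$ to force non-negative coefficients), which your intersection table makes immediate but which should be stated. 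Third, in part~\ref{QQgMfs} you omit condition~\ref{MoriFibrationDefa} of Definition~\ref{Df:MoriFibration}, namely $(\pi_g)_*\O_{\QQ_g}=\O_{\p^1}$; this follows from normality and connectedness of the fibres via Stein factorisation, as the paper notes.
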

\begin{proof}
For $i=0,\dots,3$, we denote by $H_i=\QQ_g\cap \{x_i=0\}\subseteq \QQ_g$ the hypersurface given by $x_i=0$ and by $F_i$ the fibre given by $\QQ_g\cap \{u_i=0\}$, so that $F=F_1$ and $H=H_3$.
We then observe that $F_0\sim F_1=F$ and that $H_0\sim H_1\sim H_2 \sim H_3+nF_0=H+nF$.

We observe that $F_0$ and $F_1$ are irreducible quadric surfaces, as well as each fibre of $\pi_g\colon \QQ_g\to \p^1$. Moreover, $H_2$ is a surface, which is irreducible if and only if $g$ is not a square. As $\QQ_g\setminus (H_2\cup F)$ is isomorphic to $\A^3$, via $(x,y,z)\mapsto [x:x^2-g(1,z)y^2:1:y;z:1]$, the Picard group of $\QQ_g$ is generated by the irreducible components of $H_2$ and $F$. The same holds with $H_1$ instead of $H_2$ (by exchanging $x_1$ with $x_2$), so $\QQ_g$ is irreducible and rational. Moreover, if $g$ is not a square, the Picard group of $\QQ_g$ is  generated by $H_2$ and $F$, and thus by $H$ and $F$.

The singular locus is the finite (possibly empty) set given by 
\[\left\{x_0=x_1=x_2=0 \text{ and } g(u_0,u_1)=\frac{\partial g}{\partial u_0}(u_0,u_1) = \frac{\partial g}{\partial u_1}(u_0,u_1)=0\right\}. \]
Hence, $\QQ_g$ is smooth if and only if $g$ is square-free. Around a singular point $q=[0:0:0:1;u_0:u_1]$, the variety $\QQ_g$ is (Zariski) locally defined by  
\[\{(x,y,z,t)\in \A_k^4 \mid x^2-yz-t^mp(t)=0\},\]
where $m \geq 2$ is the multiplicity of $[u_0:u_1]$ as a root of $g$ and $p\in \k[t]$ is a polynomial that does not vanish at the origin. 
This is the equation of a {(normal)} $cA_1$-singularity, which is terminal; see \cite[$\S$1.42]{K13}. The singularity is moreover factorial if and only if $x^2-t^mp(t)$ is irreducible  \cite[(13.2)]{JK13}. This corresponds to ask that $t^mp(t)$ is not a square.

Note that $H=H_3$ is isomorphic to $\p^1\times\p^1$, with two rulings given by $f=H\cap F$ and $h=H\cap H_0\cap H_1$. 
As $h\cdot F=1$ and $h\cdot H_2=0$, we obtain $h\cdot H_3=h\cdot (H_2-nF)=-n$. We moreover have $f\cdot F=f\cdot F_0=0$ and $f\cdot H=f\cdot H_0=1$.  This implies that $\Pic(\QQ_g)=\Z H\oplus \Z F$ if $g$ is not a square, and that each irreducible curve $c\subseteq \QQ_g$,  is numerically equivalent to $a h+bf$ for some $a,b\in \Q$, with $c\cdot F= a$ and $c\cdot H= b-an$. To achieve the proof of \ref{PicQQg}, we prove that $a,b\ge 0$. If $c\subseteq H$, this is because $h,f\subseteq H$ generate the cone of curves of $H\simeq \p^1\times\p^1$. If $c\not\subseteq H$, then $0\le c\cdot H=b-an$. As $a=h\cdot F=h\cdot F_1\ge 0$, we get  \ref{PicQQg}.

The differential form $\sum\limits_{i=0}^3 \frac{dx_i}{x_i}+\sum\limits_{j=0}^1 \frac{du_j}{u_j}$ on $(\A^4\setminus \{0\} )\times (\A^2\setminus \{0\})$ being $\G_m^2$-invariant, it corresponds to a differential form on $X$, which has poles at $-K_{X}=\sum_{i=0}^3 \hat{H}_i+\hat{F}_0+\hat{F}_1\sim 4\hat{H}_3+(3n+2)\hat{F}_0$, where $\hat{H}_i,\hat{F}_j\subseteq X$ are given by $x_i=0$ and $u_j=0$ respectively. As $\QQ_g\sim 2\hat{H}_0\sim 2\hat{H}_3+2n\hat{F}_0$, we get by adjunction that $K_{\QQ_g}=-2H-(n+2)F,$ since $H=\hat{H}_3|_{\QQ_g}$ and $F=\hat{F}_0|_{\QQ_g}$. Computing moreover $K_{\QQ_g}\cdot h=-2H\cdot h-(n+2)F\cdot h=-2(-n)-(n+2)=n-2$ gives \ref{canQQg}.

It remains to prove~\ref{QQgMfs}. The map $\pi_g$ is a dominant projective morphism of normal projective varieties, and it remains to check if the conditions \ref{MoriFibrationDefa}, \ref{MoriFibrationDefb}, \ref{MoriFibrationDefc} in Definition \ref{Df:MoriFibration} are fulfilled.

\ref{MoriFibrationDefa}: For a projective morphism $f: X \to Y$ with $Y$ normal, the condition $f_* \O_X=\O_Y$  is equivalent to the connectedness of the fibres of $f$ (this can be seen for instance as a consequence of Stein factorisation \cite[Corollary~III.11.5]{Har77}). As $\pi_g$ is a projective morphism with connected fibres, this condition holds.

\ref{MoriFibrationDefb}: was proven in  \ref{QQgterminalrat}.

\ref{MoriFibrationDefc}: If $g$ is a square, then Lemma~\ref{quadric1}\ref{Qembedding} yields that 
$\rho((\QQ_{g})_{\k(\p^1)})=\rho(\p^1\times\p^1)=2$, and so $\pi_g$ is not a Mori fibration. If $g$ is not a square, \ref{PicQQg} gives $\rho(\QQ_g)=2$, which proves~\ref{MoriFibrationDefc}.
\end{proof}

\begin{lemma}\label{Lem:ActiononQQg1}
Assume that $\car(\k)\not=2$ and let $g\in \k[u_0,u_1]$ be a homogeneous polynomial of degree $2n$ that is not a square, for some $n\ge 1$. 
\begin{enumerate}
\item\label{UniqueDoubleCover}
There is a unique double cover $\tau\colon C \to \p^1$ such that the generic fibre of $\QQ_g\times_{\p^1} C\to C$ is isomorphic to $\p^1_{L}\times\p^1_{L}$, with $L=\k(C)$.
\item\label{ActionPGL2} 
 The group $\PGL_2(\k)$ acts regularly on $\QQ_g$
via 
\[\begin{array}{ccl}
\PGL_2(\k)\times\QQ_g &\to & \QQ_g\\
\left(\begin{bmatrix} a & b \\ c & d \end{bmatrix},[x_0:x_1:x_2:x_3;u_0:u_1]\right)&\mapsto& [(ad+bc)x_0+acx_1+bdx_2:\\
&&2abx_0+a^2x_1+b^2x_2:2cdx_0+c^2x_1+d^2x_2:\\
&&(ad-bc)x_3;u_0:u_1].\end{array}\]
\end{enumerate}
\end{lemma}

\begin{proof} 
As $g$ is not a square, the generic fibre $(\QQ_g)_{K}$, with $K=\k(\p^1)$, is not isomorphic to $\p^1\times\p^1$ but is isomorphic to it after a base change via a unique field extension $L/K$ of degree $2$ (Lemma~$\ref{quadric1}$). The ramified double cover $\tau\colon C \to \p^1$ associated with the field extension $L/K$ corresponds to the morphism of \ref{UniqueDoubleCover}.

The action of $\PGL_2(\k)$ given in \ref{ActionPGL2} 
{is obtained by restriction of the $\PGL_2(\k)$-action on $\P(\O_{\P^1}^{\oplus 3} \oplus \O_{\P^1}(n))$, given by the group embedding
\[
\PGL_2(\k) \hookrightarrow \PGL_4(\k),\ \ \begin{bmatrix}
a & b \\ c & d
\end{bmatrix} \mapsto \frac{1}{ad-bc}\begin{bmatrix}
ad+bc &ac & bd &0\\
2ab &a^2 & b^2 & 0\\
2cd& c^2& d^2 & 0 \\
0 & 0 & 0& 1
\end{bmatrix},\]
as the hypersurface $x_0^2-x_1x_2-g(u_0,u_1)x_3^2=0$ is invariant for this $\PGL_2(\k)$-action.} Therefore, we have a regular action on $\QQ_g$ giving an inclusion of algebraic groups
$\PGL_2(\k)\subseteq \Autz(\QQ_g)_{\p^1}.$ This achieves the proof of \ref{ActionPGL2}.
\end{proof}

\begin{lemma}\label{Lem:ActiononQQg2}Let us take the notation of Lemma~$\ref{Lem:ActiononQQg1}\ref{ActionPGL2}$.
\begin{enumerate}
\item\label{DiagramQglift}
There is a $\PGL_2(\k)$-equivariant birational map
\[
   \varphi\colon  \QQ_g\times_{\p^1} C \dasharrow \p^{1}\times\p^{1}\times C
 \]
 that induces an isomorphism between the generic fibres of the natural projections \[\QQ_g\times_{\p^1} C\to C\text{ and }\p^{1}\times\p^{1}\times C\to C,\]
where the action of $\PGL_2(\k)$ on $C$ is trivial, the action on $\p^{1}\times\p^{1}$ is the diagonal action,  and the action on $\QQ_g$ is the one given in Lemma~$\ref{Lem:ActiononQQg1}\ref{ActionPGL2}$.

\item\label{ActionGeneric}
Denote by 
\begin{itemize}
\item $H\subseteq \Bir_{\p^{1}}(\QQ_g)$ the subgroup of elements corresponding to automorphisms of the generic fibre;
\item $\hat{H}\subseteq \Bir(\QQ_g\times_{\p^1} C)$ the lift of $H$ $($obtained by acting trivially on $C)$;
\item $H_0'= \{(x,y,c) \mapsto (A(x),\iota(A)(y),c)\mid A\in \PGL_2(\k(C))\}$;
\item $\sigma\colon (x,y,c)\mapsto (y,x,c)$; and
\item $\iota\in \Aut(\k(C))$ is the involution induced by the $(2\colon 1)$-cover $C\to \p^{1}$.
\end{itemize}
Then the group $\varphi \hat{H} \varphi^{-1}\subseteq \Bir_{C}(\p^1\times\p^{1}\times C)$ is equal to $H'=H_0'\rtimes \langle \sigma\rangle$.
Also, every element of $H_{0}'$ that is algebraic and not of order $2$ is conjugate, in $H_{0}'$, to an element of $\PGL_2(\k)$. Furthermore, $\sigma$ acts on $\QQ_{g}$ as the biregular involution  $[x_0:x_1:x_2:x_3;u_0:u_1]\mapsto [x_0:x_1:x_2:-x_3;u_0:u_1]$.
\end{enumerate}

\end{lemma}
\begin{proof}
We can see $C$ as the curve given by $g(u_0,u_1)=u_2^2$ in the weighted projective space $\p(1,1,n)$, quotient of $\A^3\setminus \{0\}$ by $\G_m$ via $(u_0,u_1,u_2)\mapsto (\lambda u_0,\lambda u_1,\lambda^n u_2)$. Then, $\QQ_g\times_{\p^1} C$ is given by $x_0^2-x_1x_2-u_2^2x_3^2=0$ in $\P(\O_C^{\oplus 3} \oplus \O_C(n))$.
We then have a birational map  $\varphi\colon \QQ_g\times_{\p^1} C\dasharrow\p^1\times\p^1\times C$ given by
\[
\begin{array}{ccc}
 \QQ_g\times_{\p^1} C& \dashrightarrow & \p^1\times\p^1\times C\\
\! [x_0:x_1:x_2:x_3 ; u_0:u_1:u_2] &\mapsto & ([x_0+u_2x_3:x_2],[x_0-u_2 x_3:x_2],[u_0:u_1:u_2])\\
& & =([x_1:x_0-u_2 x_3],[x_1: x_0+u_2 x_3],[u_0:u_1:u_2])
 \end{array}
 \]
 whose inverse is given by $ \varphi^{-1}\colon ([x_{0}:x_{1}],[y_{0}:y_{1}],[u_0:u_1:u_2])\mapsto [u_{2}(x_{0}y_{1}+x_{1}y_{0}):2u_2x_{0}y_{0}:2u_{2}x_1y_{1}:x_0y_{1}-x_{1}y_{0} ; u_0:u_1:u_2] $. We observe that $\varphi$ induces an isomorphism on the generic fibres of the natural projections $\QQ_g\times_{\p^1} C\to C$ and $\p^{1}\times\p^{1}\times C\to C$.
 
The group $\PGL_2(\k)$ acts on $\QQ_g\times_{\p^1} C$ via its action on the coordinates $x_0,x_1,x_2,x_3$ given by Lemma~\ref{Lem:ActiononQQg1}\ref{ActionPGL2} and trivially on the coordinates $u_0,u_1,u_2$. On the other hand, $\PGL_2(\k)$ acts on $\p^1\times\p^1\times C $ by acting diagonally on $\p^1\times\p^1$ and trivially on $C$.  We now check that $\varphi$ is $\PGL_2(\k)$-equivariant with respect to these two actions. It suffices to check that for $\epsilon\in \{\pm 1\}$ the rational map $\QQ_g\times_{\p^1} C\dasharrow \p^{1}$ that sends $[x_0:x_1:x_2:x_3 ; u_0:u_1:u_2]$ onto $[x_0+\epsilon u_2x_3:x_2]$ is $\PGL_2(\k)$-equivariant.  This is a straightforward calculation (left to the reader) which achieves the proof of~\ref{DiagramQglift}.

We now focus on the subgroup $H\subseteq \Bir(\QQ_g/\p^{1})$ of elements corresponding to automorphisms of the generic fibre, whose action lifts to a subgroup $\hat{H}\subseteq \Bir(\QQ_g\times_{\p^1} C)$ isomorphic to $H$ (by acting trivially on $C$). As $\varphi$ gives an isomorphism on the generic fibres, the group $\varphi \hat{H} \varphi^{-1}\subseteq \Bir(\p^1\times\p^{1}\times C)$ is  contained in the group of birational maps of $\p^{1}\times\p^{1}\times C$ inducing automorphisms on the generic fibre $\p^{1}_{L}\times\p^{1}_{L}$. This latter is naturally isomorphic to $\PGL_2(L)\times\PGL_2(L)\rtimes  \langle \sigma\rangle$, where $\sigma$ is the involution $(x,y,c)\mapsto (y,x,c)$. The description of $H'=H_0'\rtimes \langle \sigma\rangle$ in the statement then follows from Lemma~\ref{quadric1}\ref{MuNotsquare}.

It remains to see that each element  $h\in H_{0}'$ that is algebraic and not of order $2$ is conjugate, in $H_{0}'$, to an element of $\PGL_2(\k)$. We denote by $A\in \PGL_2(\k(C))=\PGL_2(L)$ the element such that $h=(x,y,c) \mapsto (A(x),\iota(A)(y),c)$, and by $\hat{A}\in \GL_2(L)$ a lift. We write the characteristic polynomial of $\hat{A}$ as $\chi_{\hat{A}}=(X-\alpha)(X-\beta)$, where $\alpha,\beta\in \hat{L}^{*}$, and $\hat{L}$ is a field extension of $L$, of degree $1$ or $2$, such that $\hat{L}=L[\alpha,\beta]$. Suppose first that $\alpha=\beta$. This implies that $\chi_{\hat{A}}=X^{2}-2\alpha X+\alpha^{2}\in L[X]$, so $\alpha\in L$ as $\car(L)=\car(\k)\not=2$. We can thus conjugate $\hat{A}$ to $ \begin{bmatrix}\alpha & \alpha \\ 0 & \alpha \end{bmatrix}=\alpha\cdot  \begin{bmatrix} 1 & 1 \\ 0 & 1 \end{bmatrix}$ or $ \begin{bmatrix}\alpha & 0 \\ 0 & \alpha \end{bmatrix}=\alpha\cdot  \begin{bmatrix} 1 & 0 \\ 0 & 1 \end{bmatrix}$. This achieves the proof in this case, so we may assume that $\alpha\not=\beta$. We then write $\beta=\mu \alpha$ for some $\mu\in \hat{L}\setminus \{0,1\}$. If $\mu\in \k$, then $\chi_{\hat{A}}=X^{2}-\alpha(\mu+1)X+\alpha^{2}\mu\in L[X]$, which implies that $\alpha\in L$ (we use here that $\mu\not=-1$ as $h$ is not an involution). We can then replace $\hat{A}$ with $\frac{1}{\alpha}\hat{A}$ and obtain $\chi_{\hat{A}}=(X-1)(X-\mu)$, which implies that $\hat{A}$ is conjugated to $\begin{bmatrix} 1 & 0 \\ 0 & \mu \end{bmatrix}$  in $\GL_2(L)$.

It remains to show that $\mu\in\k$. We take again a finite cover $\hat{C}\to C$ associated with the extension $\hat{L}/L$ and consider the lift of $g$ on $\hat{X}\times_C \hat{C}$. Take a birational map $\hat{X}\times_C \hat{C}\dasharrow \p^1\times\p^1\times\hat{C}$ that conjugates $g$ to $([u_0:u_1],[v_0:v_1],t)\dasharrow 
([u_0:\mu(t) u_1],[v_0:\mu'(t)  v_1],t)$ where $\mu'\in \hat{L}$. By Lemma~\ref{Lemm:AlgebraickS}, the element $\mu$ is in $\k$. This achieves the proof of~\ref{ActionGeneric}.
\end{proof}

\begin{example}\label{Example:Qgu0u1}
Let $a,b\ge 1$ be two odd numbers and let us consider the variety $\QQ_{g}$ of Definition~\ref{def:QQg} with $g=u_{0}^au_{1}^b$. It is equal to 
\[\QQ_{g}=\{[x_0:x_1:x_2:x_3;u_0:u_1]\in \P(\O_{\p^1}^{\oplus 3}\oplus \O_{\p^1}(n))\mid x_0^2-x_1x_2-u_{0}^au_{1}^bx_3^2=0\},\]
where $n=(a+b)/2$.
\begin{enumerate}
\item
We have an algebraic subgroup $\G_m\subseteq \Autz(\QQ_{g})$ given by 
\[\G_m\times\QQ_{g}\to \QQ_{g}, \quad (t,[x_0:x_1:x_2:x_3;u_0:u_1])\mapsto [x_0:x_1:x_2:t^{-a}x_3;t^{2}u_0:u_1].\] 
{This $\G_m$-action}, together with the subgroup $\PGL_2(\k) \subseteq \Autz(\QQ_{g})$ given in Lem\-ma~\ref{Lem:ActiononQQg1}\ref{ActionPGL2}, gives an inclusion 
\[ \PGL_2(\k)\times\G_m\subseteq \Autz(\QQ_{g}).\]
We will prove in Corollary~\ref{Cor:AutQQg} that it is in fact an equality. The action of $\PGL_2(\k)\times\G_m$ on $\p^{1}$ gives an exact sequence 
\[1 \longrightarrow \PGL_2(\k)\times  \langle \sigma\rangle\longrightarrow \PGL_2(\k)\times\G_m\longrightarrow \G_m\to 1,\]
where $\sigma\in \G_m\subseteq {\Autz(\QQ_{g})}$ is the involution given by \[\sigma\colon [x_0:x_1:x_2:x_3;u_0:u_1]\mapsto [x_0:x_1:x_2:-x_3;u_0:u_1].\]
\item
When $n=1$, the morphism $\QQ_g\to \p^{4}$ given by $[x_0:x_1:x_2:x_3;u_0:u_1]\mapsto [x_0:x_1:x_2:x_3u_{0}:x_{3}u_{1}]$ is the blow-up of the plane $P\subseteq \p^{4}$ given by $P=\{[x_0:x_1:x_2:x_3:x_{4}]\in \p^{4}\mid x_{3}=x_{4}=0\}$, so  $\QQ_{g}$  is the blow-up of the smooth quadric $Q=\{[x_0:x_1:x_2:x_3:x_{4}]\in \p^{4}\mid x_0^2-x_1x_2-x_{3}x_{4}=0\}$ along the smooth conic $\Gamma=P\cap Q$. This conjugates $\Autz(\QQ_{g})$ to the connected group $\Aut(Q,\Gamma)=\{g\in \Aut(Q)\mid g(\Gamma)=\Gamma\}$, strict subgroup of $\Autz(Q)$.
\end{enumerate}
\end{example}

The following corollary will be useful in \upshape\S~\ref{subsec Umemura quadric fib}  when studying the equivariant links between the Umemura quadric fibrations.

\begin{corollary}\label{Cor:AutQQg}
Assume that $\car(\k)\not=2$, let $n\ge 1$ and let $g\in \k[u_0,u_1]$ be a homogeneous polynomial of degree $2n$ that is not a square.

If $g$ has only two roots, we may change coordinates to get $g=u_0^au_1^b$ for some odd $a,b\ge 1$ and then $\Autz(\QQ_g)$ is equal to the group $\PGL_2(\k)\times\G_m$, given by Example~$\ref{Example:Qgu0u1}$. Otherwise, $\Autz(\QQ_g)$ is equal to the group $\PGL_2(\k)$ given in Lemma~$\ref{Lem:ActiononQQg1}\ref{ActionPGL2}$.
\end{corollary}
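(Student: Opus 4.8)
The plan is to compute $G:=\Autz(\QQ_g)$ through the exact sequence attached to the Mori fibration $\pi_g\colon \QQ_g\to \p^1$. By Proposition~\ref{blanchard} the group $G$ acts on $\p^1$, giving as in \eqref{exact sequence}
\[1 \to G_0 \to G \to H \to 1,\]
with $H\subseteq \Aut(\p^1)=\PGL_2$ and $G_0=\Autz(\QQ_g)_{\p^1}$ the (possibly disconnected) subgroup preserving each fibre. I would bound the fibre-preserving part $G_0$ first, then the base part $H$, and finally assemble the two by a dimension count using that $G$ is connected.

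For the fibre part: by Remark~\ref{rk: inclusion into aut group of generic fibre}, $G_0$ embeds into $\Aut((\QQ_g)_K)$ with $K=\k(\p^1)$; since $g$ is not a square, Lemma~\ref{quadric1}\ref{MuNotsquare} identifies this group with $\PGL_2(L)\rtimes \Z/2\Z$, and Lemma~\ref{Lem:ActiononQQg}\ref{DiagramQglift} realizes, after the double cover $C\to\p^1$ and the birational map $\varphi$, the connected part $G_0^{\circ}$ inside $H_0'\cong \PGL_2(L)$ (abstractly), where $L=\k(C)$. The lower bound $\PGL_2\subseteq G_0^{\circ}$ is the action of Lemma~\ref{Lem:ActiononQQg}\ref{ActionPGL2}. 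For the upper bound, $G_0^{\circ}$ is connected linear (Lemma~\ref{lemma:linear}), hence generated by its tori and one-parameter unipotent subgroups; every such one-parameter subgroup consists of algebraic elements that are generically not of order $2$, so by the last assertion of Lemma~\ref{Lem:ActiononQQg}\ref{DiagramQglift} each is conjugate in $H_0'$ to a subgroup of $\PGL_2(\k)$. Simultaneously diagonalising a maximal torus over $\overline{L}$ and using that its eigenvalue ratios lie in $\k^{*}$ forces the rank to be $\le 1$, and the same input rules out any nontrivial unipotent radical, Lemma~\ref{Lemm:AlgebraickS} being the mechanism that prevents an ``eigenvalue'' in $L\setminus \k$ from belonging to an algebraic subgroup. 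Hence $\dim G_0^{\circ}\le 3$ and $G_0^{\circ}=\PGL_2$.

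For the base part: $H$ is a connected subgroup of $\PGL_2$ preserving the branch locus of the canonical double cover $C\to\p^1$ (the odd-multiplicity roots of $g$, Lemma~\ref{Lem:ActiononQQg}\ref{UniqueDoubleCover}) as well as the images under $\pi_g$ of the singular points of $\QQ_g$ (the multiple roots of $g$, Lemma~\ref{Qgproperties}); together these exhaust the roots of $g$, and since $H$ is connected it fixes each of them. As $g$ is not a square it has at least two distinct roots. If $g$ has $\ge 3$ distinct roots, then $H$ fixes $\ge 3$ points of $\p^1$, so $H=1$; if $g$ has exactly two, we may normalise $g=u_0^au_1^b$ with $a,b\ge 1$ odd (same parity forced by $\deg g$ even, and not both even since $g$ is not a square), and $H$ is contained in the torus $\G_m\subseteq\PGL_2$ fixing $[0:1]$ and $[1:0]$. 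Assembling the two cases: when $g$ has $\ge 3$ roots, $H=1$ forces $G=G_0$, which is connected since $G=\Autz(\QQ_g)$, so $G=G_0^{\circ}=\PGL_2$ (and $\sigma\notin\PGL_2$, confirming $\sigma\notin\Autz(\QQ_g)$). When $g=u_0^au_1^b$, Example~\ref{Example:Qgu0u1} already exhibits $\PGL_2\times\G_m\subseteq G$, the $\G_m$ surjecting onto $H$ via $t\mapsto([u_0:u_1]\mapsto[t^2u_0:u_1])$, so $H=\G_m$; then $\dim G=\dim G_0^{\circ}+\dim H\le 3+1=4=\dim(\PGL_2\times\G_m)$, and since $G$ is connected and contains the connected $4$-dimensional group $\PGL_2\times\G_m$, equality $G=\PGL_2\times\G_m$ follows.

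The main obstacle is the upper bound $G_0^{\circ}=\PGL_2$: making rigorous that no connected algebraic $\k$-group larger than $\PGL_2$ can act fibrewise — in particular excluding both an extra central torus (rank $\ge 2$) and a nontrivial unipotent radical — is exactly where the conjugacy statement of Lemma~\ref{Lem:ActiononQQg}\ref{DiagramQglift} and the boundedness criterion of Lemma~\ref{Lemm:AlgebraickS} must be combined carefully; the remaining steps are then formal.
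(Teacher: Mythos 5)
Your strategy is structurally different from the paper's: the paper deduces the corollary in a few lines from Theorem~\ref{Thm:MainQuadric} (a forward reference to \S\ref{proof of thA}, where the hard analysis of fibrewise automorphisms is carried out), whereas you inline that analysis directly, specialised to $X=\QQ_g$. Your treatment of the image $H\subseteq\PGL_2$ on the base (every root of $g$ is either a branch point of $C\to\p^1$ or the image of a singular point, so the connected group $H$ fixes all of them) and the final assembly by dimension count are correct, and self-containedness within \S\ref{quadric fibrations} is a genuine benefit. But the one step you yourself flag as the ``main obstacle'' is where the proposal has a real gap, and the mechanism you propose for it is not the right one.

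Concretely: Lemma~\ref{Lem:ActiononQQg}\ref{DiagramQglift} is an \emph{element-wise} statement --- each algebraic non-involution of $H_0'$ is \emph{individually} conjugate into $\PGL_2(\k)$, with a conjugator depending on the element. Your sentence ``each [one-parameter subgroup] is conjugate in $H_0'$ to a subgroup of $\PGL_2(\k)$'' does not follow from this; passing from element-wise to subgroup-wise conjugacy is precisely the content of Lemma~\ref{Lemm:PGL2KL}, which you never invoke, and which only yields a dichotomy: $G_0^{\circ}$ is conjugate in $\PGL_2(L)$ either into $\PGL_2(\k)$ or into the triangular group $T\simeq L\rtimes\k^*$. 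Your proposed tools do not dispose of the second branch: ``simultaneous diagonalisation of a maximal torus'' says nothing about a unipotent radical, and Lemma~\ref{Lemm:AlgebraickS} controls multiplicative twists $[u:f(x)v]$, not the additive coefficient $P\in L$ of a unipotent element of $T$ (indeed, in the proof of Theorem~\ref{Thm:MainQuadric} the paper does \emph{not} show $P$ is constant in the triangular case; it instead finds an invariant rational section and passes to a $\p^2$-bundle). The clean repair, available to you precisely because you already know $\PGL_2\subseteq G_0^{\circ}$, is that $T$ is solvable while $G_0^{\circ}$ is not, so only the $\PGL_2(\k)$-branch survives; then ``$\PGL_2$ has no proper algebraic subgroup isomorphic to itself'' together with Lemma~\ref{Lemm:PGL2KLnormalised} identifies $G_0^{\circ}$ with the standard copy of Lemma~\ref{Lem:ActiononQQg}\ref{ActionPGL2}. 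You must also verify the hypothesis of Lemma~\ref{Lemm:PGL2KL} for involutions, which Lemma~\ref{Lem:ActiononQQg}\ref{DiagramQglift} excludes: as in the paper, an involution of a connected algebraic group in characteristic $\neq 2$ is semisimple, hence lies in a torus and is the square of an element of order $4$, hence is also conjugate into $\PGL_2(\k)$. With these repairs your argument closes and gives a valid alternative to the paper's citation of Theorem~\ref{Thm:MainQuadric}.
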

\begin{proof}
The group $\PGL_2(\k)$ embeds into $\Autz(\QQ_g)$, via the action given in Lem\-ma~$\ref{Lem:ActiononQQg1}\ref{ActionPGL2}$.
If moreover $g=u_0^au_1^b$, we obtain an embedding of $\PGL_2(\k)\times\G_m$ into $\Autz(\QQ_g)$, as in Example~$\ref{Example:Qgu0u1}$. It remains to see that this gives in both cases the whole group $\Autz(\QQ_g)$.

As $\PGL_2(\k)$ does not fix any section of $\QQ_g\to \p^1$, Theorem~\ref{Thm:MainQuadric} gives a square-free homogeneous polynomial $\tilde{g}\in \k[u_0,u_1]$ of degree $2n'$, for some $n'\ge 1$ and a birational map $\psi\colon \QQ_g\dasharrow \QQ_{\tilde{g}}$, inducing an  isomorphism between the generic fibres and such that the group $\psi\Autz(\QQ_g)\psi^{-1}\subseteq \Autz(\QQ_{\tilde g})$ is either equal to the group $\PGL_2(\k)$ given in Lemma~$\ref{Lem:ActiononQQg1}\ref{ActionPGL2}$ or to $\PGL_2(\k)\times\G_m$ given (after change of coordinates) in Example~$\ref{Example:Qgu0u1}$. Moreover, in this latter case we have $n'=1$.

In the first case, the group $\psi\Autz(\QQ_g)\psi^{-1}$ is equal to $\PGL_2(\k)$ and contains the image of $\PGL_2(\k)\subseteq \Autz(\QQ_g)$. As $\PGL_2(\k)$ does not contain any proper algebraic subgroup isomorphic to itself, the group $\Autz(\QQ_g)$ is equal to the $\PGL_2(\k)$ given in Lemma~$\ref{Lem:ActiononQQg1}\ref{ActionPGL2}$. In the second case, the action of $\psi\Autz(\QQ_g)\psi^{-1}$ on $\p^1$ is isomorphic to $\G_m$, fixing exactly two fibres. So $\Autz(\QQ_g)$ also acts on $\p^1$ fixing exactly two fibres, which implies that $g$ has only two roots. Changing coordinates, $g=u_0^au_1^b$ for some odd number $a,b\ge 1$, we obtain an inclusion of $\PGL_2(\k)\times\G_m$ into $\Autz(\QQ_g)$, as in Example~$\ref{Example:Qgu0u1}$.
The group $\Autz(\QQ_g)$ is then sent, via $\psi$, to a subgroup of $\PGL_2(\k)\times\G_m$. As the kernel of the action on $\p^1$ is in both cases $\PGL_2(\k)\times  \langle \sigma\rangle$ and the image are the same, both groups are equal. 
\end{proof}

Motivated by Lemma~\ref{Lem:ActiononQQg2}, we now study  the subgroups of $\PGL_2(L)$ having the property that each element is conjugated to an element of $\PGL_2(\k)$.
\begin{lemma}\label{Lemm:PGL2KL}
{Let $K\subseteq L$ be an arbitrary field extension, with $K$ an algebraically closed field.} Let $G\subseteq   \PGL_2(L)$ be a subgroup.
If every element of $G$ is conjugated to an element of $\PGL_2(\k)$ by an element of $\PGL_2(L)$, then $G$ is conjugate in $\PGL_2(L)$ to either a subgroup of $\PGL_2(K)$ or to a subgroup of 
\begin{equation}\label{group_H}
T=\left \{ \begin{bmatrix} a & P \\ 0 & 1 \end{bmatrix} \in \PGL_2(L) \middle|\ a\in K^*, P\in L \right \}\simeq L\rtimes K^*.
\end{equation}
\end{lemma}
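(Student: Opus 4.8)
We identify $\PGL_2(L)$ with $\Aut(\p^1_L)$ and, for $g\in\PGL_2(L)$ represented by $A\in\GL_2(L)$, we set $\tau(g)=\operatorname{tr}(A)^2/\det(A)\in L$. This is independent of the chosen representative $A$, is invariant under conjugation in $\PGL_2(L)$, and satisfies $\tau(h)\in\k$ for every $h\in\PGL_2(\k)$. Hence the hypothesis gives $\tau(g)\in\k$ for every $g\in G$; conversely, if $\tau(g)\in\k$ then the ratio $r$ of the two eigenvalues of $g$ satisfies $r+r^{-1}=\tau(g)-2\in\k$, so $r\in\k$ because $\k$ is algebraically closed. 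Thus $\tau$ already records the eigenvalue data up to $\k$, and the whole problem is to upgrade this pointwise information to the subgroup $G$. I will assume $G$ connected (the case used in the paper; the general case reduces to it by examining the action of $G$ on the fixed locus of $G^{0}$), and split according to whether $G$ is solvable.

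First I would treat the solvable case, showing directly that $G$ lands in $T$. A connected solvable $\k$-subgroup $G\subseteq\PGL_2(L)$ always fixes an $L$-rational point of $\p^1_L$: if the unipotent radical $R_u(G)$ is non-trivial, it is a non-trivial unipotent subgroup of $\PGL_2$, whose fixed locus on $\p^1_L$ is a single $L$-rational point, and this point is $G$-fixed since $R_u(G)\trianglelefteq G$; if $R_u(G)=1$, then $G$ is a torus, which is split over $\k$ (as $\k=\bar\k$) and hence over $L$, so its two weight points on $\p^1_L$ are $L$-rational and fixed. Conjugating such a fixed point to $\infty=[1:0]$ puts $G$ inside the upper triangular group, so each $g\in G$ has the form $g=\left[\begin{smallmatrix} a & P\\ 0 & 1\end{smallmatrix}\right]$ with $a\in L^{*}$, $P\in L$. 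Then $\tau(g)=(a+1)^2/a$, so $a+a^{-1}\in\k$ and therefore $a\in\k^{*}$; by definition of $T$ this means $g\in T$, whence $G\subseteq T$.

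Next, the non-solvable case. Here $G_{\bar L}$ is a connected non-solvable subgroup of $\PGL_{2,\bar L}$, hence equals $\PGL_{2,\bar L}$, so $\dim G=3$, $G_L=\PGL_{2,L}$, and $G$ is a $\k$-form of $\PGL_2$, i.e. $G\cong\PGL_{2,\k}$ since $\k$ is algebraically closed. I would fix a maximal $\k$-torus $S\cong\Gm\subseteq G$; as above it is split over $L$ with two $L$-rational fixed points, which I conjugate to $0$ and $\infty$, so that $S_L$ is the diagonal torus, and the hypothesis forces the diagonal entries of $S$ into $\k^{*}$, i.e. $S$ becomes the standard $\k$-torus. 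The two root groups $U_{\pm}$ of $G$ for $S$ base-change to the standard root groups, and $S$-equivariance forces the inclusions $U_{\pm}\hookrightarrow U_{\pm}^{\mathrm{std}}$ to be $\k$-linear, so
\[U_{+}=\Bigl\{\left[\begin{smallmatrix} 1 & t\\ 0 & 1\end{smallmatrix}\right]: t\in\k P_0\Bigr\},\qquad U_{-}=\Bigl\{\left[\begin{smallmatrix} 1 & 0\\ s & 1\end{smallmatrix}\right]: s\in\k Q_0\Bigr\}\]
for some $P_0,Q_0\in L^{*}$. Conjugating by $\operatorname{diag}(P_0^{-1},1)$ (which normalises $S$) I may assume $P_0=1$, and I set $c=P_0Q_0\in L^{*}$, so $U_-=\{\left[\begin{smallmatrix} 1 & 0\\ s & 1\end{smallmatrix}\right]: s\in\k c\}$. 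Everything then reduces to $c\in\k^{*}$: once this holds $U_-=U_-^{\mathrm{std}}(\k)$ too, the $\k$-subgroup generated by $S,U_+,U_-$ is the standard $\PGL_2(\k)$, and since $G$ is a $3$-dimensional connected group containing it, $G$ equals it. To force $c\in\k^{*}$ I apply the hypothesis to $g=\left[\begin{smallmatrix} 1 & 1\\ 0 & 1\end{smallmatrix}\right]\left[\begin{smallmatrix} 1 & 0\\ c & 1\end{smallmatrix}\right]=\left[\begin{smallmatrix} 1+c & 1\\ c & 1\end{smallmatrix}\right]\in G$, with $\det=1$ and $\operatorname{tr}=2+c$, so $\tau(g)=(2+c)^2\in\k$; using also the element with $c$ replaced by $2c$ (legitimate since $U_-$ is a group) and subtracting gives $2c\in\k$, hence $c\in\k$ in characteristic $\ne 2$, while in characteristic $2$ one reads off $c^2=\tau(g)\in\k$ directly, so $c\in\k$ because $\k$ is perfect.

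The hard part is precisely this non-solvable case: it amounts to showing that the given $\k$-form of $\PGL_2$ is the split/standard one, and its genuine content is that the residual parameter $c\in L^{*}/\k^{*}$, measuring the relative position of the two root groups, is killed exactly by the conjugacy hypothesis. This cannot be avoided, since an arbitrary $c\in L^{*}$ still produces a subgroup with $G_L=\PGL_{2,L}$; by contrast the solvable case is soft, resting only on the existence of an $L$-rational fixed point together with the eigenvalue-ratio computation through $\tau$.
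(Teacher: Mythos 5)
Your argument is essentially correct for a \emph{connected} algebraic subgroup, and it takes a genuinely different route from the paper. The paper's proof is purely element-wise: it uses the homomorphism $\overline{\det}\colon\PGL_2(L)\to L^*/(L^*)^2$ to lift each $g\in G$ to $\pm\hat g\in\SL_2(L)$ with $\mathrm{Trace}(\hat g)\in\k$, splits according to whether every $\hat g$ is conjugate to $\pm$ a unipotent matrix or some $\hat g$ is diagonalisable with eigenvalue ratio $\lambda\in\k\setminus\{0,\pm1\}$, and in each case pins $G$ down by computing traces of products such as $\hat g_0^{\,n}\hat h$; no connectedness and no structure theory enter. Your proof instead runs the solvable/non-solvable dichotomy for connected linear algebraic groups, settles the solvable case by a fixed point on $\p^1_L$ plus the invariant $\tau=\operatorname{tr}^2/\det$, and in the non-solvable case identifies $G$ as a form of $\PGL_2$ whose twisting parameter $c=P_0Q_0$ (the relative position of the two root groups) is killed by the hypothesis. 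This is conceptually illuminating, and the computation forcing $c\in\k$ is correct in substance, though the arithmetic ``subtracting gives $2c\in\k$'' is off: $(2+2c)^2-(2+c)^2=4c+3c^2$, so one should instead take $(2+2c)^2-4(2+c)^2=-8c\in\k$, which still suffices away from characteristic $2$.

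There are two places where your proof covers less than the statement. First, the lemma is stated for an arbitrary subgroup $G$, and the paper's proof never uses connectedness; your proposed reduction of the general case to the connected one by ``examining the action of $G$ on the fixed locus of $G^0$'' breaks down when $G^0$ is trivial (e.g.\ $G$ finite), so the disconnected case is a genuine gap as written. Second, your structural argument tacitly assumes that the inclusion $G\subseteq\PGL_2(L)$ is compatible with the algebraic group structure, so that the unipotent radical maps to unipotent elements of $\PGL_2(L)$, tori map to semisimple elements, and the root groups land $S$-equivariantly in the standard root groups; this holds in the application but is an extra hypothesis relative to the bare statement, and the trace/determinant argument of the paper needs nothing of the sort. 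Neither issue affects the use of the lemma in the proof of Theorem~\ref{Thm:MainQuadric}, where $G$ is connected and acts algebraically, but as a proof of the lemma as stated your argument is incomplete.
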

\begin{proof}
Observe that the determinant map $\det\colon \GL(2,L)\to L^*$ induces a group homomorphism 
\[\overline{\det}\colon \PGL_2(L)\to L^*/(L^*)^2,\]where $(L^*)^2=\{P^2\mid P\in L^*\}$. Since every element of $K^*$ is a square, we have $\overline{\det}(\PGL_2(K))=\{1\}$. 

For each element $g\in G$, we have then $\overline{\det}(g)=1$, because $g$ is conjugated to an element of $\PGL_2(K)$ by assumption. There exist then exactly two elements $\hat{g}, -\hat{g}\in \SL_2(L)$ that represent the element $g\in\PGL_2(L)$ (or one element $\hat{g}=-\hat{g}$ if $\mathrm{char}(K)=2$). These elements satisfy furthermore $\mathrm{Trace}(\hat{g})\in K$. As $K$ is algebraically closed, $\hat{g}$ is conjugate, in $\GL_2(L)$, to either a diagonal element of the form $\begin{bmatrix} \lambda & 0 \\ 0 & \lambda^{-1} \end{bmatrix}$, $\lambda\in K^*$, or to $\pm \tau$, where $\tau=\begin{bmatrix}1 & 1 \\ 0 & 1 \end{bmatrix} \in \SL_2(K)$.

$(a)$ Suppose first that $\hat{g}$ is conjugated to $\pm \tau$ for each $g\in G\setminus\{1\}$. If $G$ is trivial, we do not need to prove anything, so we can conjugate by an element of $\PGL_2(L)$ and assume that there is $g_0\in G$ such that $\hat{g_0}=\tau$. For each $h\in G$, {the element $\hat{h}=\begin{bmatrix} a & b \\ c & d \end{bmatrix} \in \SL_2(L)$ has then trace equal to $\pm 2$ (as it is $\pm \mathrm{id}$ or conjugate to $\pm \tau$); we may thus assume that $a+d=2$ by replacing $\hat{h}$ with $-\hat{h}$ if needed}. We then compute $\mathrm{Trace}((\hat{g_0})^n\hat{h})=2+nc\in \{\pm 2\}$, for each $n\in \Z$, which implies that $c=0$. As $ad=1$ and $a+d=2$, we get $a=d=1$, so $G$ is contained in $T$.

$(b)$ We can now assume, after conjugating by an element of $\PGL_2(L)$, that one element $g_0\in G$ satisfies $\hat{g}_0=\begin{bmatrix} \lambda & 0 \\ 0 & \lambda^{-1} \end{bmatrix}$, for some $\lambda\in K\setminus \{0,\pm 1\}$.

We take an element $h\in G$ and write $\hat{h}=\begin{bmatrix} a & P \\ Q & b \end{bmatrix}$, where $a,b,P,Q\in L$ satisfy $ab-PQ=1$ and $a+b\in K$. Since $\mathrm{Trace}(\hat{g_0}\hat{h})=\lambda a +\lambda^{-1}b\in K$, we find that $a=\frac{1}{1-\lambda^2}(a+b)-\frac{\lambda}{1-\lambda^2}(\lambda a +\lambda^{-1}b)\in K$. This implies that $a,b,PQ\in K$.

If $P=0$ for each $h\in G$ as above, then conjugating by an anti-diagonal matrix, we find that $G\subseteq   T$. {Otherwise we can} choose $h_1\in G$ with $\hat{h}_1=\begin{bmatrix} a_1 & P_1 \\ Q_1 & b_1 \end{bmatrix}$, where $a_1,b_1,P_1Q_1=c_1\in K$, $P_1\not=0$. Conjugating with $\begin{bmatrix} 1 & 0 \\ 0 & P_1 \end{bmatrix}$, we may assume that $P_1=1\in K^*$ and $Q_1\in K$. If $G$ is not contained in $T$, there exists $h_2\in G$ such that $\hat{h}_2=\begin{bmatrix} a_2 & P_2 \\ Q_2 & b_2 \end{bmatrix}$, with $a_2,b_2,P_2Q_2\in K$ and $Q_2\not=0$. The diagonal of $\widehat{h_1h_2}=\pm \hat{h}_1\cdot \hat{h}_2$ being $(a_1a_2+P_1Q_2,Q_1P_2+b_1b_2)\in K^2$, we find that $Q_2\in K^*$. 

We finish the proof by taking any element $g\in G$, by computing $\hat{g}\cdot \hat{h}_i$ for $i=1,2$ which shows that $g\in \PGL_2(K)$.
\end{proof}
We will also need the following associated result:
\begin{lemma}\label{Lemm:PGL2KLnormalised}
Let $K\subseteq L$ be a field extension, where $K$ contains at least three elements.
The group $\PGL_2(K)$ is its own normaliser in $\PGL_2(L)$.
\end{lemma}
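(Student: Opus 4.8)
The plan is to transfer the whole question to the Möbius action of $g$ on $\p^1(L)$ and to show that this action preserves the set of $K$-rational points $\p^1(K)\subseteq \p^1(L)$; once this is known, a sharp $3$-transitivity argument finishes immediately. So the real content is to prove that $g(p)\in\p^1(K)$ for every $p\in\p^1(K)$.

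For the reduction I would use two standard facts: $\PGL_2(K)$ acts sharply $3$-transitively on $\p^1(K)$, and any element of $\PGL_2(L)$ fixing three distinct points of $\p^1(L)$ is the identity. Granting $g(0),g(1),g(\infty)\in\p^1(K)$ — these are automatically distinct since $g$ is a bijection of $\p^1(L)$ — sharp $3$-transitivity gives a unique $h\in\PGL_2(K)$ with $h(0)=g(0)$, $h(1)=g(1)$, $h(\infty)=g(\infty)$. Then $h^{-1}g$ fixes $0,1,\infty$, hence equals the identity, so $g=h\in\PGL_2(K)$. Thus the statement follows from the claim $g(\p^1(K))\subseteq\p^1(K)$.

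To prove that claim, fix $p\in\p^1(K)$, set $\alpha:=g(p)\in\p^1(L)$, and exploit conjugation. Writing $G_p:=\mathrm{Stab}_{\PGL_2(K)}(p)$, the normaliser hypothesis gives, for each $h\in G_p$, that $ghg^{-1}\in\PGL_2(K)$ fixes $\alpha$, so conjugation by $g$ embeds $G_p$ into $G_\alpha:=\mathrm{Stab}_{\PGL_2(K)}(\alpha)$. Now $G_p$ is conjugate in $\PGL_2(K)$ to the stabiliser of $\infty$, hence isomorphic to $K\rtimes K^*$; here is exactly where $\lvert K\rvert\ge 3$ enters, for it guarantees that $G_p$ contains simultaneously a nontrivial \emph{parabolic} element $u$ (a translation, with a single eigenvalue) and a \emph{regular semisimple} element $s$ (a homothety $\mathrm{diag}(\lambda,1)$ with $\lambda\in K\setminus\{0,1\}$, so with distinct eigenvalues). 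Since the conditions $\mathrm{tr}^2=4\det$ (repeated eigenvalue) and $\mathrm{tr}^2\neq 4\det$ (distinct eigenvalues) are invariant under conjugation in $\PGL_2(L)$, the images $gug^{-1}$ and $gsg^{-1}$ are respectively a nontrivial parabolic and a regular semisimple element of $G_\alpha$.

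The finish is a direct analysis of $G_\alpha$: for $h=\begin{bmatrix} a & b\\ c& d\end{bmatrix}$ the condition $h(\alpha)=\alpha$ reads $c\alpha^2+(d-a)\alpha-b=0$. If $\alpha$ is transcendental over $K$ or of degree $\ge 3$, this forces $b=c=0$, $a=d$, so $G_\alpha$ is trivial, contradicting $gug^{-1}\neq 1$. If $\alpha$ is quadratic with minimal polynomial $X^2-eX-f$, a short computation shows every $h\in G_\alpha$ satisfies $\mathrm{tr}^2-4\det=c^2(e^2+4f)$. When $\alpha$ is separable ($e^2+4f\neq0$), every nontrivial element has $c\neq0$ and thus distinct eigenvalues, so $G_\alpha$ contains no nontrivial parabolic — contradicting $gug^{-1}$; when $\alpha$ is inseparable ($e^2+4f=0$, forcing $\car(\k)=2$), every element has a repeated eigenvalue, so $G_\alpha$ contains no regular semisimple element — contradicting $gsg^{-1}$. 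In every case $\alpha\notin\p^1(K)$ is impossible. I expect the main obstacle to be precisely this characteristic-$2$ inseparable case: a single witness in $G_p$ does not suffice there, and the resolution is to carry along both a parabolic \emph{and} a regular semisimple element, each of which rules out exactly one of the two degenerate shapes of $G_\alpha$.
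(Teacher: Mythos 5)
Your proof is correct, but it takes a genuinely different route from the paper's. The paper argues by direct computation: it writes the normalising element as a matrix $A$, conjugates the explicit elements $\mathrm{diag}(1,\mu)$ and $\begin{bmatrix}1&\mu\\0&1\end{bmatrix}$ for $\mu\in K$, and reads off from the entries of $A^{-1}(\cdot)A\in\PGL_2(K)$ that the coefficients of $A$ lie in $K$, first in the triangular case $c=0$ and then reducing the case $c\neq 0$ to it by right-multiplying by an explicit element of $\PGL_2(K)$. You instead transfer everything to the M\"obius action on $\p^1(L)$: you show the normalising element maps $\p^1(K)$ into itself by comparing stabilisers under conjugation, and conclude by sharp $3$-transitivity. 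The paper's computation is shorter and entirely mechanical; your argument is longer but more structural — it makes transparent why $\lvert K\rvert\ge 3$ is needed (the stabiliser of a $K$-point must contain a regular semisimple element, not just a unipotent one), and it cleanly isolates the only genuinely delicate degeneration, namely a purely inseparable quadratic $\alpha$ in characteristic $2$, where the unipotent witness alone would not give a contradiction and the semisimple witness is what saves the day. All the individual steps you use (the identity $\mathrm{tr}^2-4\det=c^2(e^2+4f)$ on the stabiliser of a quadratic $\alpha$, the conjugation- and scaling-invariance of the condition $\mathrm{tr}^2=4\det$, sharp $3$-transitivity, and the fact that an element of $\PGL_2(L)$ fixing $0,1,\infty$ is trivial) check out, so the proposal stands as a complete alternative proof.
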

\begin{proof}
We take $A=\begin{bmatrix} a & b \\ c & d \end{bmatrix}\in \PGL_2(L)$ that normalises $\PGL_2(K)$ and prove that $A\in \PGL_2(K)$.

$(i)$ We first study the case where $c=0$, in which case $ad\not=0$, so we can assume that $a=1$. For each $\mu \in K\setminus \{0,1\}$, we compute 
\[ A^{-1}\begin{bmatrix} 1 & 0 \\ 0 & \mu \end{bmatrix}A=\begin{bmatrix}1& b(1-\mu)\\ 0& 1\end{bmatrix},\ A^{-1}\begin{bmatrix} 1 & \mu \\ 0 & 1 \end{bmatrix} A=\begin{bmatrix}1& d\mu\\ 0& 1\end{bmatrix}\in \PGL_2(K),\]
which imply that $b,d\in K$, achieving the proof.

$(ii)$ We then study the case where $c\not=0$. We then assume that $c=1$ and compute, for each $\mu\in K^{*}$,
\[ A^{-1} \begin{bmatrix} 1 & \mu \\ 0 & 1 \end{bmatrix} A=\begin{bmatrix}ad-b+d\mu& d^{2}\mu\\ -\mu& ad-b-d\mu \end{bmatrix}\in \PGL_2(K).\]
{This yields $ad-b + d\mu\in K$ for each $\mu\in K^{*}$, so $d\in K$ as $K^{*}$ contains at least $2$ elements.} The matrix $S=\begin{bmatrix} d & 1 \\ -1 & 0 \end{bmatrix}\in \PGL_2(K)$ normalises $\PGL_2(K)$, so does also $A S=\begin{bmatrix} ad-b & a \\ 0 & 1 \end{bmatrix}$. By the previous argument, $A S$ belongs to $\PGL_2(K)$, so also $A$.
\end{proof}

The next result will be crucial in the proof of Proposition \ref{Thm:MainQuadric} (in \upshape\S~\ref{proof of thA}).

\begin{lemma} \label{Lemm:QuadricPGL2Lk}
Assume that $\car(\k)\not=2$. Let $\pi \colon X \to \p^1$ be a Mori del Pezzo fibration of degree $8$.
\begin{enumerate}
\item\label{QuadricPGL2LkMap}
There is a square-free homogeneous polynomial $g\in \k[u_0,u_1]$ of degree $2n$, for some $n\ge 1$, and a commutative diagram
\[\xymatrix@R=3mm@C=1cm{
    X \ar@{-->}[rr]^{\psi} \ar[rd]_{\pi}  && \QQ_g \ar[ld]^{\pi_g} \\
    & \p^1
  }\]
for some birational map $\psi\colon X\dasharrow \QQ_g$ inducing an isomorphism on the generic fibres.
\item\label{QuadricPGL2Lktorus}
The image of the natural homomorphism $\Autz(X) \to \Aut(\p^1)=\PGL_2$ is either trivial or $\G_m$. In this latter case, every polynomial $g$ as in \ref{QuadricPGL2LkMap} has to satisfy $\deg(g)=2$.
\end{enumerate}\end{lemma}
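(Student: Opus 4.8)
The plan is to treat the two assertions in turn, using throughout that a degree-$8$ Mori del Pezzo fibration is a Mori quadric fibration (Lemma~\ref{lem:MoriQuad8}), so that the generic fibre $X_K$ over $K=\k(\p^1)$ is a smooth quadric surface in $\p^3_K$. For \ref{QuadricPGL2LkMap} I would first invoke Lemma~\ref{Lem:bound_char_dP} to obtain a $K$-point on $X_K$, and then Lemma~\ref{quadric1} to put $X_K$ into the normal form $x_0^2-x_1x_2-\mu x_3^2=0$ for some $\mu\in K^*$. Since $\rho(X_K)=1$ (again Lemma~\ref{Lem:bound_char_dP}), the surface $X_K$ is not $K$-isomorphic to $\p^1_K\times\p^1_K$, so $\mu$ is not a square in $K$ by Lemma~\ref{quadric1}\ref{MuNotsquare}. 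The class of $\mu$ modulo $(K^*)^2$ is detected by the quadratic extension $L=K[\sqrt\mu]$, equivalently by the double cover $C\to\p^1$ it defines; its branch locus is the set of points of $\p^1$ at which $\mu$ has odd valuation, a set which is nonempty (as $\mu$ is a non-square) and of even cardinality $2n\ge 2$ (as $\div(\mu)$ has degree $0$). I would then let $g\in\k[u_0,u_1]$ be the product of the linear forms vanishing at these $2n$ points: then $g$ is square-free of degree $2n$ with $n\ge 1$, the cover $v^2=g$ coincides with $C$, so $K[\sqrt g]=L$, and hence the generic fibre of $\pi_g$ is $K$-isomorphic to $X_K$. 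Spreading this isomorphism of generic fibres out produces the birational map $\psi\colon X\dashrightarrow\QQ_g$ over $\p^1$ restricting to an isomorphism on generic fibres, which is the desired diagram.

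For \ref{QuadricPGL2Lktorus} the plan is to read off everything from the double cover $C\to\p^1$ of Lemma~\ref{Lem:ActiononQQg}\ref{UniqueDoubleCover}. This cover depends only on the generic fibre $X_K\cong(\QQ_g)_K$ (it is the normalisation of $\p^1$ in the quadratic extension $L=K[\sqrt\mu]$, the field of definition of the two rulings of $X_{\overline K}$), so it is canonically attached to the fibration $X\to\p^1$, and $\Autz(X)$ acts on it compatibly with its action on $\p^1$. Its branch locus $B\subseteq\p^1$ is exactly the set of $2n$ distinct roots of the square-free polynomial $g$, and is therefore preserved by the image $H$ of $\Autz(X)\to\Aut(\p^1)=\PGL_2$. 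Since $H$ is connected, the induced homomorphism $H\to\mathrm{Sym}(B)$ is trivial, so $H$ fixes each of the $2n\ge 2$ points of $B$. If $2n\ge 3$, then $H$ fixes three distinct points of $\p^1$ and must be trivial; if $2n=2$, then $H$ lies in the stabiliser of two points, a maximal torus isomorphic to $\G_m$, so $H$ is trivial or $\G_m$. In all cases $H$ is trivial or $\G_m$, and when $H=\G_m$ the first alternative is excluded, forcing $2n=2$, i.e.\ $\deg(g)=2$. Finally, any $g$ satisfying \ref{QuadricPGL2LkMap} induces an isomorphism on generic fibres, hence has root set equal to $B$, so the conclusion $\deg(g)=2$ holds for every such $g$.

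The main obstacle I anticipate is the mod-squares bookkeeping in \ref{QuadricPGL2LkMap}: identifying the class of $\mu$ in $K^*/(K^*)^2$ with a square-free homogeneous $g$ (in particular handling the point $[1:0]$ correctly under dehomogenisation) and verifying that the $K$-isomorphism of generic fibres extends to a birational map over $\p^1$. Recasting the comparison through the double cover $C\to\p^1$, so that ``same branch locus'' immediately yields ``same quadratic extension'' and hence ``$g/\mu$ a square'', should bypass the explicit coordinate computation. The other delicate point is the equivariance used in \ref{QuadricPGL2Lktorus}, namely that $H$ preserves $B$; this follows formally from the canonicity of $L$ and of $C$, but deserves to be spelled out since it is what drives the dichotomy between the trivial and $\G_m$ cases.
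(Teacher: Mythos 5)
Your proof is correct and follows essentially the same route as the paper's: reduce to the normal form $x_0^2-x_1x_2-\mu x_3^2$ via Lemmas~\ref{lem:MoriQuad8}, \ref{Lem:bound_char_dP} and \ref{quadric1}, adjust $\mu$ by a square to get a square-free $g$, and deduce part~\ref{QuadricPGL2Lktorus} from the fact that the image of $\Autz(X)$ in $\PGL_2$ must fix the $2n\ge 2$ branch points of the canonically attached double cover. Your construction of $g$ as the product of linear forms at the odd-valuation points of $\mu$ is just a valuation-theoretic rephrasing of the paper's ``make $\mu|_{\A^1}$ square-free and homogenise'', so there is no substantive difference.
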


\begin{proof}
Let us write $K=\k(\p^1)$. By Lemma~\ref{lem:MoriQuad8}, the morphism $\pi$ is a Mori quadric fibration, that is, a Mori del Pezzo fibration whose generic fibre is a smooth quadric. This quadric having a $K$-point by Tsen theorem ($K$ is a $C_1$ field, see \cite[Tag 03RD]{stacks-project}) or by 
Lemma~\ref{Lem:bound_char_dP}, it is isomorphic, over $K$, to a quadric surface of $\p^3_K$ given by $x_0^2-x_1x_2-\mu x^2_3=0,$ for some $\mu\in K$ (Lemma~\ref{quadric1}).  Moreover, $\rho(X_K)=1$, as $\pi$ is a Mori fibration, which implies that $\mu$ is not a square in $K$ (Lemma~\ref{quadric1}\ref{Qembedding}).

Replacing $x_3$ with $x_3\alpha$ for some $\alpha\in K^*$ does not change the isomorphism class, so we may assume that the restriction of $\mu$ to $\A^1=\p^1\setminus \{[1:0]\}=\{[t:1]\mid t\in \A^1\}$ is a square-free polynomial.  We then choose $g\in \k[u_0,u_1]$ homogeneous of even degree $d\in \{\deg(\mu),\deg(\mu)+1\}$ such that $g(t,1)=\mu(t)$ and obtain that $X_K$ is isomorphic to the generic fibre $(\QQ_g)_K$. As $\mu$ is not a square in $K^*$, we get $d\ge 1$. We then obtain a birational map $\psi\colon X\dasharrow \QQ_g$ such that $\pi_g\psi=\pi$. This yields~\ref{QuadricPGL2LkMap}.

Assertion~\ref{QuadricPGL2Lktorus} follows from the fact that the double covering $\tau\colon C\to \p^1$ is uniquely determined by the generic fibre of $X\to \p^1$, which is also the generic fibre of $\QQ_g\to \p^1$, and that this one is ramified over the zeros of the polynomial $g$ of \ref{QuadricPGL2LkMap}, which are $\deg(g) \geq 2$ distinct points of $\p^1$. The action of $\Autz(X)$ on $\p^1$ then fixes these points. If $\deg(g)=2$, then the image is a subgroup of $\G_m\subseteq \PGL_2(\k)$, but if $\deg(g) \geq 3$, then  it fixes three distinct points of $\p^1$, so one gets the trivial group in $\PGL_2(\k)$.
\end{proof}

\subsection{Proof of Theorems~\ref{Thm:MainQuadric} and~\ref{th:A}}  \label{proof of thA}
In this subsection we gather the results obtained so far and give the proofs of Theorems~\ref{Thm:MainQuadric} and~\ref{th:A}, both stated in the introduction.\\

\noindent \emph{Proof of Theorem~\ref{Thm:MainQuadric}:}
The fact that $d\in \{1,\ldots,9\}$ and $d\not=7$ is given by Lemma~\ref{lem:MoriQuad8}. Part~\ref{PropMainQuadric_le5} of Theorem~\ref{Thm:MainQuadric} is given by Proposition~\ref{cor Aut dP are tori}. It remains then to prove~\ref{PropMainQuadric_mainpoint}, so we now assume that $\Autz(X)$ is not a torus. By Lemma~\ref{lem:MoriQuad8}, either $d=8$ and $\pi_X$ is a Mori quadric fibration, or $d=9$ and $\pi_X$ is a $\P^2$-fibration (Lemma~\ref{Lemm:Tsen}). If $\pi_X$ is a $\P^2$-fibration, then the result follows from Proposition~\ref{prop: main result P2 fibrations}. It remains only to consider the case where $\pi_X\colon X \to \P^1$ is a Mori quadric fibration.

Using Lemma~\ref{Lemm:QuadricPGL2Lk}\ref{QuadricPGL2LkMap}, we have a square-free homogeneous polynomial $g\in \k[u_0,u_1]$ of degree $2n$, for some $n\ge 1$, and a birational map $\psi\colon X\dasharrow \QQ_g$ such that $\pi_g\psi=\pi_X$, inducing an isomorphism on the generic fibres. We then use the unique double cover $\tau\colon C \to \p^1$ such that the generic fibre of $\QQ_g\times_{\p^1} C\to C$ is isomorphic to $\p^1_{L}\times\p^1_{L}$, with $L=\k(C)$ (Lemma~\ref{Lem:ActiononQQg1}\ref{UniqueDoubleCover}), and the birational map $ \varphi\colon  \QQ_g\times_{\p^1} C \dasharrow \p^{1}\times\p^{1}\times C$ given by Lemma~\ref{Lem:ActiononQQg2}\ref{DiagramQglift}.

We denote by $G$ the connected component of $ \Autz(X)_{\p^{1}}$. Note that $G$ is equal to  $\Autz(X)$ if $n>1$ (Lemma~\ref{Lemm:QuadricPGL2Lk}\ref{QuadricPGL2Lktorus}). Since $\Autz(X)$ normalises $ \Autz(X)_{\p^{1}}$, it also normalises its neutral component and thus 
the group $G$ is normal in $\Autz(X)$. The whole group $G$ acts rationally on $\QQ_{g}$, via $\psi$, inducing automorphisms on the generic fibre of $\QQ_{g}\to \p^{1}$. It then also acts rationally 
 on $\QQ_g\times_{\p^1} C$ by acting trivially on $C$. The conjugation by $\varphi$ then yields a rational action of $G$ on $\p^1\times\p^{1}\times C$, and thus an inclusion $G\hookrightarrow \Bir_{C}(\p^1\times\p^{1}\times C)$. As $\varphi$ induces an isomorphism between the generic fibres and $G$ is connected, we get an inclusion $G\subseteq \PGL_2(L)\times\PGL_2(L)\subseteq \Bir_C(\p^1\times\p^1\times C)$.  More precisely, Lemma~\ref{Lem:ActiononQQg2}\ref{ActionGeneric} implies that $G\subseteq H_0'$, where the group $H_{0}'\subseteq \Bir_{C}(\p^1\times\p^{1}\times C)$ is given by
\[H_0'= \{(x,y,c) \mapsto (A(x),\iota(A)(y),c)\mid A\in \PGL_2(\k(C))\}\simeq \PGL_2(\k(C))=\PGL_2(L).\]
{From now on, we identify $H_{0}'$ with $\PGL_2(\k(C))=\PGL_2(L)$ by sending}
$(x,y,c) \mapsto (A(x),\iota(A)(y),c)$ onto $A$.
By Lemma~\ref{Lem:ActiononQQg2}\ref{ActionGeneric} every element of $G$ that is not an involution is conjugate in $H_{0}'$ to an element of $\PGL_2(\k)\subseteq H_{0}'$, acting diagonally on the two factors and thus corresponding to $\PGL_2(\k)\subseteq \PGL_2(L)$. As $G$ is a connected algebraic group and $\car(\k)\not=2$, every involution is a semisimple element of $G$ and thus contained in a torus of $G$ by \cite[\S19.3 and \S22.2]{Hum75}. This implies that every involution is the square of an element of order $4$, hence also conjugate  in $H_{0}'\simeq \PGL_2(L)$ to an element of $\PGL_2(\k)$.

We then obtain two cases (by Lemma~\ref{Lemm:PGL2KL}):
$G$  is conjugate in $H_{0}'\simeq\PGL_2(L)$ (we recall that the isomorphism between the two groups is fixed) to either a subgroup of $T\subseteq \PGL_2(L)$ or of $\PGL_2(\k)\subseteq \PGL_2(L)$, where \[T=\left \{ \begin{bmatrix} a & P \\ 0 & 1 \end{bmatrix} \in \PGL_2(L) \middle|\ a\in \k^*, P\in L \right \}\simeq L\rtimes \k^*.\]

We first assume that $G$ is conjugate in $H_{0}'$ to a subgroup of the triangular group $T$. {There is then a rational section of $\p^{1}\times\p^{1}\times C\to C$, which is fixed by $G$, and its image in $X$ is a rational section of $X\to \p^1$, also fixed by $G$. We now prove that if $G$ is non-trivial, then there are only finitely many rational sections  of $X\to \p^1$ that are fixed by $G$. The preimage of any such section gives either  two rational sections of $\p^{1}\times\p^{1}\times C\to C$, both fixed by $G$, or an irreducible curve $\Gamma\subset \p^{1}\times\p^{1}\times C$, invariant by $G$ and whose projection to $C$ gives a $2:1$-map. (This follows from the fact that there are only finitely many orbits of size $\le 2$ in a general fibre of $\p^{1}\times\p^{1}\times C\to C$.)}

 If $G$ fixes finitely many rational sections, $\Autz(X)$ acts on this finite set of sections, as $G$ is normal in $\Autz(X)$. As  $\Autz(X)$ is connected, each of these sections is $\Autz(X)$-invariant. The projection away from one section {(that we can do in family, using the trivialisation given by $\tau\colon C \to \P^1$, \'etale on a dense open subset of $\p^1$)} gives an $\Autz(X)$-equivariant birational map to a $\p^2$-fibration over $\p^1$. Applying Proposition~\ref{prop: main result P2 fibrations} we reduce to the case of a $\p^2$-bundle over $\p^1$. If $\Autz(X)$ is trivial, we can do the same with any rational section (which exists by  Lemma~\ref{Lem:bound_char_dP}). The remaining case is when $G$ is trivial, so $\Autz(X)_{\p^{1}}$ is finite, but $\Autz(X)$ is not trivial. In this case $\Autz(X)/ \Autz(X)_{\p^{1}}$ is isomorphic to $\G_m$ and $n=1$ (Lemma~\ref{Lemm:QuadricPGL2Lk}\ref{QuadricPGL2Lktorus}). As $\Autz(X)$ is an algebraic group of dimension $1$ having $\G_m$ as a quotient, the  group $\Autz(X)$ is isomorphic to $\G_m$, a case excluded by assumption.

According to Lemma~\ref{Lemm:PGL2KL} and the previous case, the remaining case is when $G\subseteq H_{0}'\simeq \PGL_2(L)$ is conjugated to a subgroup of $\PGL_2(\k)$, not conjugated to a subgroup of $T$. The group $G$ corresponds to a subgroup of $\PGL_2(\k)$ acting diagonally on $\p^1\times\p^1\times C$. If $G$ is a strict subgroup of $\PGL_2(k)$, then it fixes again a point of the generic fibre, and so $G$ is conjugated to a subgroup of the triangular group $T$. We may then assume that $G=\PGL_2(\k)$. By Lemma~\ref{Lem:ActiononQQg2}\ref{ActionGeneric}, the rational action of $G$ on $\QQ_{g}$ is then exactly the biregular action of $\PGL_2(\k)$ on $\QQ_{g}$ given in Lemma~\ref{Lem:ActiononQQg1}\ref{ActionPGL2}.  This proves that $\psi\colon X\dasharrow \QQ_{g}$ is $G$-equivariant, for the biregular action of $G\simeq \PGL_2(\k)$ on $\QQ_{g}$ given in Lemma~\ref{Lem:ActiononQQg1}\ref{ActionPGL2}.

If $G=\Autz(X)$, we obtain that $\psi$ is $\Autz(X)$-equivariant, and $\psi\Autz(X)\psi^{-1}\subseteq \Autz(\QQ_g)$ is the group $\PGL_2$ given in Lemma~$\ref{Lem:ActiononQQg1}\ref{ActionPGL2}$.

The remaining case to study is when $G\subsetneq \Autz(X)$, which implies that $n=1$, and that $\Autz(X)/\Autz(X)_{\p^{1}}\simeq \G_m$  (Lemma~\ref{Lemm:QuadricPGL2Lk}\ref{QuadricPGL2Lktorus}). As $n=1$ and $g$ is square-free, the polynomial $g$ has two distinct roots, so we may assume that $g=u_0 u_1$. We then use the group $\G_m\subseteq \Autz(X)$ of Example~\ref{Example:Qgu0u1}, which contains an involution $\sigma$, being the kernel of the action of $\G_m$ on $\p^{1}$ that fixes the two ramification points. It remains to see that $\psi\Autz(X)\psi^{-1}\subseteq \Autz(\QQ_g)$ is the group $\PGL_2\times\G_m$ given in Example~$\ref{Example:Qgu0u1}$ to conclude the proof. The rational action of $\Autz(X)_{\p^{1}}$ on $\QQ_{g}$, via $\psi$, gives a group of automorphisms of the generic fibre that normalises $\PGL_2(\k)$. Looking at the group $\Autz(X)_{\p^{1}}$ in $H'=H_0'\rtimes \langle \sigma\rangle\simeq \PGL_2(L)\rtimes \langle \sigma\rangle$ (Lemma~\ref{Lem:ActiononQQg2}\ref{ActionGeneric}), it should normalise $\PGL_2(\k)$ and is thus contained in $\PGL_2(\k)\times\langle \sigma\rangle$; this follows from the fact that $\sigma$ normalises $\PGL_2(\k)$ (commuting with it) and that $\PGL_2(\k)$ is its own normaliser in  $\PGL_2(L)$ (Lemma~\ref{Lemm:PGL2KLnormalised}). This gives $\psi\Autz(X)_{\p^{1}}\psi^{-1}\subseteq  \PGL_2\times\langle \sigma\rangle \subseteq \PGL_2\times\G_m\subseteq \Autz(\QQ_g)$

Every element $\alpha\in \psi\Autz(X)\psi^{-1}$ acts on $\p^{1}$ in the same way as an element $d\in \G_m$, so $\beta=\alpha d^{-1}\in \Bir(X)$ acts trivially on $\p^{1}$ and yields an automorphism of the generic fibre. Since $\alpha$ and $d$ normalise $\PGL_2(\k)\subseteq \Autz(\QQ_{g})$, the same holds for $\beta$, which again then belongs to $\PGL_2\times\langle \sigma\rangle\subseteq \Aut(\QQ_{g})$. This proves that $\psi\Autz(X)\psi^{-1}\subseteq \PGL_2\times\G_m$. As $\psi\Autz(X)\psi^{-1}$ contains $\psi G\psi^{-1}=\PGL_2$ and as its action on $\p^1$ is the same as $\G_m$, we get $\psi\Autz(X)\psi^{-1}= \PGL_2\times\G_m$ as desired.
\qedsymbol \\

\noindent \emph{Proof of Theorem~\ref{th:A}.}
As explained in the introduction, we {run} an MMP  to $\hat{X}$ {(see \cite[Corollary 1.3.2]{BCHM10} and \cite[Theorem 1.7]{BW})}; this gives a birational map $\hat{X}\dasharrow Y$, which is $\Autz(\hat{X})$-equivariant (see Remark~\ref{rk:MMP G eq}), and is such that $Y$ has a structure of Mori fibre space $Y\to S$. 

If $\dim(S)=0$, we obtain Case~\ref{thAFano} of Theorem~\ref{th:A}. 

If $\dim(S)=1$, then $Y\to S$ is a del Pezzo fibration. If $\Autz(Y)$ is not isomorphic to a torus, we can apply Theorem~\ref{Thm:MainQuadric}, and replace $Y\to S$ with either a $\p^2$-bundle or a smooth Umemura quadric fibration $\QQ_g\to \p^1$, so get Case~\ref{thAQuadricP2} of Theorem~\ref{th:A}.

If $\dim(S)=2$, then $Y\to S$ is a conic bundle. Proposition~\ref{Prop:RatRat}\ref{BaseMfsRatisRat} implies that $S$ is rational. We can apply Theorem~\ref{Thm:conic bundles} and either reduce to the case where $Y\to S$ is a $\p^1$-bundle and where $S$ a smooth projective rational surface with no $(-1)$-curve (Case~\ref{Thm:conic bundlesCaseP1b} of Theorem~\ref{Thm:conic bundles}), or obtain that $\Autz(Y)$ is a torus of dimension at most $2$ (Case~\ref{Thm:conic bundlesCaseTorus} of Theorem~\ref{Thm:conic bundles}). The first possibility gives rise to Case~\ref{thAP1} of Theorem~\ref{th:A}. It remains to consider the case where $\Autz(Y)$ is a torus. {As  all tori of $\Bir(\p^3)$ of the same dimension are conjugate (Corollary~\ref{tori conjugate}), there exists a birational map $Y\dasharrow \p^3$ which conjugates $\Autz(Y)$ to a diagonal torus of $\Aut(\P^3)$, and so we are in Case~\ref{thAFano} of Theorem~\ref{th:A}. {(In fact, we could as well have used Corollary~\ref{tori conjugate} to conjugate $\Autz(Y)$ to a subgroup of $\Autz(Z)$,  with $Z$ any rational Mori fibre space of dimension $3$ endowed with a faithful regular action of a two-dimensional torus, and end up in any of the three Cases~\ref{thAP1}-\ref{thAQuadricP2}-\ref{thAFano} of Theorem~\ref{th:A}.)}}\qed

\section{First refinement of Theorem~\ref{th:A}, first links, and non-maximality results} \label{subsec:first refinement}

\subsection{Some families of \texorpdfstring{$\P^1$}{P1}-bundles and first step towards Theorem~\ref{th:Ea}} \label{sec:first classification}
We first introduce certain families of Mori fibrations, then we put Theorems~\ref{th:A} and~\ref{thBFT} together to get Theorem~\ref{th:first classification maximality}; the latter, which is {a refiniment of Theorem~\ref{th:A} in the case where the base field $\k$ is of characteristic zero, is} the first step to prove Theorem~\ref{th:Ea}.

\smallskip

We now define seven families of Mori fibrations that will play an important role in the rest of this article: Families  (1)--(5) are actually families of $\P^1$-bundles over smooth rational surfaces (that will appear in the statement of Theorem~\ref{thBFT}), Family (6) is formed by the $\P^2$-bundles over $\P^1$, and Family (7) is formed by some $\P^1$-fibrations over a (singular) rational surface.  
 {The threefolds of Families (1)-(2)-(6)-(7) are toric while the threefolds of Families (3)-(4)-(5) are not.}
\begin{enumerate}
\item\label{first_family} Let $a,b,c\in \Z$. 
{ The \emph{$a$-th Hirzebruch surface} $\F_a$ can be defined as the quotient of $(\A^2\setminus \{0\})^2$ by the action of $(\G_m)^2$ given by
\[\begin{array}{ccc}
(\G_m)^2 \times (\A^2\setminus \{0\})^2 & \to & (\A^2\setminus \{0\})^2\\
((\mu,\rho), (y_0,y_1,z_0,z_1))&\mapsto& (\mu\rho^{-a} y_0,\mu y_1,\rho z_0,\rho z_1)\end{array}\]
The class of $(y_0,y_1,z_0,z_1)$ will be written $[y_0:y_1;z_0:z_1]$. The projection 
\[\tau_a\colon\F_a\to\p^1, \ \ [y_0:y_1;z_0:z_1]\mapsto [z_0:z_1]\]
identifies $\F_a$ with $\P(\OP(a) \oplus \OP)$ as a $\p^1$-bundle over $\p^1$. }

{The disjoint sections $\s{-a},\s{a}\subset \F_a$ given by $y_0=0$ and $y_1=0$ have self-intersection $-a$ and $a$ respectively. The fibres $f\subset \F_a$ given by $z_0=0$ and $z_1=0$ are linearly equivalent and of self-intersection $0$. We moreover get $\Pic(\F_a)=\Z f\bigoplus \Z \s{-a}= \Z f\bigoplus \Z \s{a}$, since $\s{a}\sim\s{-a}+af$.}

\smallskip

We now define $\FF_a^{b,c}$ to be the quotient of $(\A^2\setminus \{0\})^3$ by the action of $\G_m^3$ given by
\[\begin{array}{ccc}
\G_m^3\times (\A^2\setminus \{0\})^3 & \to & (\A^2\setminus \{0\})^3\\
((\lambda,\mu,\rho), (x_0,x_1,y_0,y_1,z_0,z_1))&\mapsto& 
(\lambda\mu^{-b} x_0, \lambda\rho^{-c} x_1,\mu\rho^{-a} y_0,\mu y_1,\rho z_0,\rho z_1)\end{array}\]
The class of $(x_0,x_1,y_0,y_1,z_0,z_1)$ will be written $[x_0:x_1;y_0:y_1;z_0:z_1]$. The projection 
\[\FF_a^{b,c}\to \F_{a}, \ \ [x_0:x_1;y_0:y_1;z_0:z_1]\mapsto [y_0:y_1;z_0:z_1]\]
identifies $\FF^{b,c}_a$ with 
\[\P(\OFa(b \s{a}) \oplus \OFa(c f))=\P(\OFa \oplus \OFa(-b\s{a}+cf))\]
as a $\p^1$-bundle over $\F_a$.

Moreover, every fibre of the composed morphism $\FF^{b,c}_a \to \F_a \to \p^1$ given by the $z$-projection is isomorphic to $\F_b$ and the restriction of $\FF^{b,c}_a$ on the curves $\s{-a}$ and $\s{a}$ is isomorphic to $\F_c$ and $\F_{c-ab}$. 

As for Hirzebruch surfaces, one can reduce to the case $a\ge 0$, without changing the isomorphism class, by exchanging $y_0$ and $y_1$. We then observe that the exchange of $x_0$ and $x_1$ yields an isomorphism $\FF^{b,c}_a\simeq \FF^{-b,-c}_a$. We will then assume most of the time $a,b\ge 0$ in the following. If $b=0$, we can moreover assume $c\le 0$.
\smallskip

\item\label{DefiPPb} Let $b\in \Z$. We define $\PP_b$ to be the quotient of $(\A^2\setminus \{0\})\times (\A^3\setminus \{0\})$ by the action of $\G_m^2$ given by
\[\begin{array}{ccc}
\G_m^2\times (\A^2\setminus \{0\})\times (\A^3\setminus \{0\}) & \to & (\A^2\setminus \{0\})\times (\A^3\setminus \{0\})\\
((\mu,\rho), (y_0,y_1;z_0,z_1,z_2))&\mapsto& (\mu\rho^{-b} y_0,\mu y_1;\rho z_0,\rho z_1,\rho z_2)\end{array}\]
The class of $(y_0,y_1,z_0,z_1,z_2)$ will be written $[y_0:y_1;z_0:z_1:z_2]$. The projection 
\[\PP_b\to \p^2, \ \ [y_0:y_1;z_0:z_1:z_2]\mapsto [z_0:z_1:z_2]\]
identifies $\PP_b$ with 
\[\P(\O_{\P^2}(b) \oplus \O_{\P^2})=\P(\O_{\P^2} \oplus \O_{\P^2}(-b))\]
as a $\p^1$-bundle over $\p^2$. As before, we get an isomorphism of $\p^1$-bundles $\PP_{b}\simeq \PP_{-b}$ by exchanging $y_0$ with $y_1$, and will then often assume $b\ge0$ in the following.
\smallskip

\item\label{DefiUme} Let $a,b\ge 1$ and $c\ge 2$ be such that $c=ak+2$ with $0\le k\le b$. We 
call \emph{Umemura $\p^1$-bundle} the $\p^1$-bundle $\U_{a}^{b,c} \to \F_a$ 
obtained by the gluing of two copies of $\F_b\times\A^1$  along $\F_b\times\A^1 \setminus \{0\}$ by the automorphism $\nu\in \Aut(\F_b\times\A^1 \setminus \{0\})$,
\[\begin{array}{ccl}\nu\colon([x_0:x_1;y_0:y_1],z) &\mapsto &\left([x_0:x_1z^{c}+x_0 y_0^ky_1^{b-k}z^{c-1};y_0z^a:y_1],\frac{1}{z}\right),\\
&=&\left([x_0:x_1z^{c-ab}+x_0 y_0^ky_1^{b-k}z^{c-ab-1};y_0:y_1z^{-a}],\frac{1}{z}\right)\end{array}.\] 
The structure morphism  $\U_{a}^{b,c} \to \F_a$ sends $([x_0:x_1;y_0:y_1],z)\in \F_b\times\A^1$ onto respectively 
$[y_0:y_1;1:z]\in \F_a$ and $[y_0:y_1;z:1]\in \F_a$ on the two charts. 
\smallskip

\item Let $b \geq 1$. The $\P^1$-bundle $\V_b \to \P^2$ is the $\P^1$-bundle obtained from $\U_{1}^{b,2} \to \F_1$ by contracting the $-1$-section $\F_1 \to \P^2$. The existence of the $\P^1$-bundle $\V_b \to \P^2$ follows from the descent Lemma obtained in \cite[Lemma~2.3.2]{BFT}  (see also Lemma~\ref{Lem:DescentF1P2} below).
\smallskip

\item\label{DefiSSb} Let $b\ge -1$ and let $\kappa\colon~\p^1\times\p^1 \to \p^2$ be the $(2:1)$-cover defined by
\[\begin{array}{rccc}
\kappa\colon~& \p^1\times\p^1 & \to &\p^2\\
& ([y_0:y_1],[z_0:z_1]) &\mapsto &[y_0 z_0:y_0 z_1+y_1 z_0:y_1z_1],\end{array}\]whose ramification locus is the diagonal $\Delta\subseteq\p^1\times\p^1$ and whose branch locus is the smooth conic $\Gamma=\{ [X:Y:Z] \mid Y^2=4XZ\}\subseteq\p^2.$
 The \emph{$b$-th Schwarzenberger $\p^1$-bundle} $\SS_b\to \p^2$ is the $\p^1$-bundle defined by \[\SS_b=\P(\kappa_* \O_{\p^1\times\p^1}(-b-1,0))\to \p^2.\]
Note that $\SS_b$ is the projectivisation of the classical Schwarzenberger vector bundle $\kappa_* \O_{\p^1\times\p^1}(-b-1,0)$ introduced in \cite{Sch61}. Moreover, the preimage of a tangent line to $\Gamma$ by $\SS_b \to \P^2$ is isomorphic to $\F_b$ for each $b \geq 0$ (see \cite[Lemma~4.2.5(1)]{BFT}). This explains the shift in the notation.
\smallskip

\item\label{DefiRmn} We recall that any vector bundle over $\p^1$ is split (see e.g.~\cite{HM82}), and so a $\p^2$-bundle over $\p^1$ is isomorphic to 
\[\RR_{m,n}= \P(\O_{\p^1}(-m) \oplus \O_{\p^1}(-n) \oplus \O_{\p^1}) \text{ for some } m,n \in \Z.\]
The $\p^2$-bundle $\RR_{m,n}$ identifies with the quotient of $(\A^3\setminus \{0\})\times (\A^2\setminus \{0\})$ by the action of $\G_m^2$ given by
\[\begin{array}{ccc}
\G_m^2\times (\A^3\setminus \{0\})\times (\A^2\setminus \{0\}) & \to & (\A^3\setminus \{0\})\times (\A^2\setminus \{0\}) \\
((\lambda,\mu), (x_0,x_1,x_2,y_0,y_1))&\mapsto& 
(\lambda \mu^{-m} x_0, \lambda \mu^{-n} x_1, \lambda x_2,\mu y_0,\mu y_1)\end{array}\]
The class of $(x_0,x_1,x_2,y_0,y_1)$ is written $[x_0:x_1:x_2; y_0:y_1]$. 
Then the structure morphism $\RR_{m,n} \to \p^1$ identifies with the projection $[x_0:x_1:x_2;y_0:y_1]\mapsto [y_0:y_1]$. Also, the permutations of $x_0,x_1$ and  $x_1,x_2$ give isomorphisms of $\p^2$-bundles $\RR_{m,n}\iso \RR_{n,m}$ and $\RR_{m,n}\iso \RR_{m-n,-n}$. Hence,  up to an isomorphism that permutes the coordinates $x_0,x_1,x_2$, we may assume that $m \geq n \geq 0$.
\smallskip

\item\label{DefiWb}
For each $b\ge 2$, and when the field $\k$ has characteristic $\not=2$, the variety $\W_b$ is the toric threefold defined as the quotient of $(\A^2\setminus \{0\})\times (\A^3\setminus \{0\})$ by the action of $\G_m^2$ given by
\[\begin{array}{ccc}
\G_m^2\times (\A^2\setminus \{0\})\times (\A^3\setminus \{0\}) & \to & (\A^2\setminus \{0\})\times (\A^3\setminus \{0\})\\
((\mu,\rho), (y_0,y_1;z_0,z_1,z_2))&\mapsto& (\mu\rho^{-(2b-1)} y_0,\mu y_1;\rho z_0,\rho z_1,\rho^2 z_2)\end{array}.\]
The class of $(y_0,y_1,z_0,z_1,z_2)$ will be written $[y_0:y_1;z_0:z_1:z_2]$. The projection 
\[\W_b\to \p(1,1,2), \ \ [y_0:y_1;z_0:z_1:z_2]\mapsto [z_0:z_1:z_2]\]
yields a $\p^1$-fibration over $\P(1,1,2)$ which is a $\P^1$-bundle over $\P(1,1,2) \setminus [0:0:1]$. Moreover, using tools from toric geometry (see e.g.~\cite[Chapter~14]{Mat02}), we verify that $\W_b\to \P(1,1,2)$ is a Mori fibration. Indeed, the conditions of Definition~\ref{Df:MoriFibration} are easily checked from the fans $\Sigma_1$ and $\Sigma_2$ of $\W_b$ and $\P(1,1,2)$ respectively; for instance, the variety $\W_b$ is $\Q$-factorial with  terminal singularities if and only if each cone of $\Sigma_1$ is simplicial and, for each cone $\sigma$ of $\Sigma_1$, the only lattice points contained in the convex hull of the vertices of $\sigma$ are the vertices of $\sigma$.
Moreover, we verify that $\W_b$ has exactly two singular points, namely $[1:0;0:0:1]$ and $[0:1;0:0:1]$, both located in the fibre over $[0:0:1]$ and both having a neighbourhood isomorphic to $\A^3/\{\pm\mathrm{id}\}$  locally isomorphic to the vertex of the cone over the Veronese surface in $\P^5$; in particular, $\W_b$ is $\Q$-Gorenstein of index $2$.

\end{enumerate}
\smallskip

\begin{remark}\label{Rem:OpenSubsetsUV}
The open subsets $U_1,U_2 \subseteq   \RR_{m,n}$ given respectively by $y_0 \not=0$ and $y_1 \not=0$ are canonically isomorphic to $\p^2\times\A^1$, via $([x_0:x_1:x_2],t)\mapsto ([x_0:x_1:x_2;1:t])$ and $([x_0:yx_1:x_2],t)\mapsto ([x_0:x_1:x_2;t:1]).$ 
On the intersection, the gluing function is the birational involution of $\p^2\times\A^1\setminus \{0\}$ given by $([x_0:x_1:x_2],t)\mapsto ([t^mx_0:t^nx_1:x_2],\frac{1}{t})$.
\end{remark}

The following result, proven in \cite{BFT}, is a first reduction result in characteristic zero. We will use it to refine Theorem~\ref{th:A} into Theorem~\ref{th:first classification maximality}.
\begin{theorem}   \label{thBFT} \emph{(weak version of \cite[Theorem~A]{BFT})} \\
Assume that $\car(k)=0$.
Let $\pi\colon~\hat{X}\to S$ be a $\p^1$-bundle over a smooth projective rational surface $S$. Then there is an $\Autz(\hat X)$-equivariant birational map $ \hat X \dashrightarrow X$, where $X$ is one of the following $\P^1$-bundles $($with the notation above$)$:  
\begin{center}
\begin{tabular}{llllllll}
$(a)$& a decomposable &$\p^1$-bundle & $\FF_a^{b,c}$&\hspace{-0.3cm}$\longrightarrow$& \hspace{-0.2cm}$\F_a$& with $a,b\ge 0$, $a\not=1$, $c\in \Z$, \\
&&&&&&$c\le 0$ if $b=0$,\\
&&&&&&and where $a=0$ or $b=c=0$\\
&&&&&&or $-a<c<ab$;\\
$(b)$& a decomposable &$\p^1$-bundle &$\PP_b$&\hspace{-0.3cm}$\longrightarrow$& \hspace{-0.2cm}$\p^2$& for some $b\ge 0$;\\
$(c)$& an Umemura &$\p^1$-bundle &$\U_a^{b,c}$&\hspace{-0.3cm}$\longrightarrow$& \hspace{-0.2cm}$\F_a$& for some $a,b\ge 1, c\ge 2$,\\
&&&&&& with $c-ab<2$ if $a \geq 2$,\\
&&&&&& and $c-ab<1$ if $a=1$;\\
$(d)$ &a Schwarzenberger\!\! &$\p^1$-bundle &$\SS_b$&\hspace{-0.3cm}$\longrightarrow$& \hspace{-0.2cm}$\p^2$& for some $b\ge 1$; or\\
$(e)$ &a &$\p^1$-bundle &$\V_{b}$&\hspace{-0.3cm}$\longrightarrow$& \hspace{-0.2cm}$\p^2$& for some $b\ge 2$.
\end{tabular}
\end{center}
\end{theorem}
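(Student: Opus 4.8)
The statement is the purely qualitative part of the classification of $\p^1$-bundles over rational surfaces carried out in \cite{BFT}, so the plan is to recover the list above from the equivariant geometry of such bundles while discarding the maximality and uniqueness assertions of the full \cite[Theorem~A]{BFT}. The first move is to reduce the base. By Proposition~\ref{blanchard} the connected group $\Autz(\hat X)$ acts on $S$ and the bundle projection $\pi$ is equivariant, so one may run an $\Autz(\hat X)$-equivariant MMP on the surface $S$. Each contraction of a $(-1)$-curve (or of an $\Autz(\hat X)$-invariant set of disjoint $(-1)$-curves) can be lifted to the total space using the descent lemma \cite[Lemma~2.3.2]{BFT}: whenever the restriction of the $\p^1$-bundle to the contracted curve is trivial in the appropriate sense, the bundle structure descends, yielding an $\Autz(\hat X)$-equivariant birational map to a $\p^1$-bundle over the image surface. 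Iterating, one reduces to the case where $S$ is a minimal rational surface, that is $\p^2$ or a Hirzebruch surface $\F_a$ with $a\ge 0$, the remaining case $\F_1$ being further contracted to $\p^2$.

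Second, I would classify the $\p^1$-bundles over each minimal base together with their automorphism groups. Over $\p^2$ the relevant families are the decomposable bundles $\PP_b$, the Schwarzenberger bundles $\SS_b$ (the projectivised tangent bundle being $\SS_1$), and the bundles $\V_b$; over $\F_a$ they are the decomposable bundles $\FF_a^{b,c}$ and the Umemura bundles $\U_a^{b,c}$. For each family one computes $\Autz$ via the structure morphism and the exact sequence~\eqref{exact sequence}, and then normalises the numerical invariants $(a,b,c)$ by applying $\Autz(\hat X)$-equivariant elementary transformations, namely blow-ups of invariant sections or fibres followed by a contraction in the other ruling, which move one bundle to another over the same base without shrinking the neutral component of its automorphism group. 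These links impose the inequalities recorded in $(a)$--$(e)$ (for instance $-a<c<ab$, and $c-ab<2$ for Umemura bundles) precisely as the conditions under which no further simplification is possible, i.e. the bundle is already a relatively minimal representative of its equivariant birational class.

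The main obstacle is exactly this last normalisation step: one must determine, family by family, the full group $\Autz$ of each $\p^1$-bundle and decide which equivariant elementary transformations exist and where the process terminates. This is delicate because the automorphism group jumps at special values of the invariants (when a decomposable bundle becomes homogeneous, or when an Umemura bundle degenerates), and because distinguishing the decomposable, Schwarzenberger, and $\V_b$-type bundles over $\p^2$ requires controlling the associated rank-$2$ vector bundles and their splitting behaviour. The bookkeeping of which triples $(a,b,c)$ survive, together with the identifications such as $\FF_a^{b,c}\simeq \FF_a^{-b,-c}$ and the reductions to $a\ge 0$ and to $c\le 0$ when $b=0$, is precisely the work carried out in \cite{BFT}; the weak version stated here then follows by retaining only the resulting list of normal forms and forgetting the finer maximality statement.
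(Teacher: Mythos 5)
Your outline is correct and coincides with the strategy of the cited work: the present paper offers no independent proof of Theorem~\ref{thBFT} beyond the reference to \cite[Theorem~A]{BFT}, and your reduction to a minimal base via Blanchard's lemma, the descent lemma, and equivariant elementary transformations, followed by the normalisation of the invariants $(a,b,c)$, is exactly the argument carried out there. The only point worth keeping in mind is that the descent along a $(-1)$-curve is not automatic — one must first perform elementary transformations to make the restriction of the bundle to the exceptional curve descendable — but you flag this condition, and the full bookkeeping is indeed delegated to \cite{BFT} both in your proposal and in the paper.
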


\begin{theorem} \label{th:first classification maximality} 
Assume that $\car(\k)=0$, and let $\hat{X}$ be a rational projective threefold. Then there is an $\Autz(\hat X)$-equivariant birational map $\hat{X} \dashrightarrow X$, where $X$ is a Mori fibre space that satisfies one of the following conditions:
\begin{enumerate}
\item \label{surface} $X \to S$ is one of the $\P^1$-bundles listed in Theorem~$\ref{thBFT}$;  
\item\label{line} $X \to \p^1$ is either a $\p^2$-bundle or a smooth Umemura quadric fibration $\QQ_g\to \p^1$ $($see Definition~$\ref{def:QQg})$ with $g \in \k[u_0,u_1]$ a square-free homogeneous polynomial of degree $2n \geq 2$; or 
\item\label{point} $X$ is a rational $\Q$-factorial Fano threefold of Picard rank $1$ with  terminal singularities.
\end{enumerate}
\end{theorem}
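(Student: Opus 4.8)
The plan is to obtain this refinement by feeding the output of Theorem~\ref{th:A} into the $\p^1$-bundle reduction of Theorem~\ref{thBFT}, after first passing to a smooth model. Since $\car(\k)=0$, I would begin by taking an $\Autz(\hat X)$-equivariant resolution of singularities $\rho\colon \hat X'\to \hat X$ \cite[Proposition~3.9.1]{Kol07}; here $\Autz(\hat X)$ lifts to a regular action on the smooth rational projective threefold $\hat X'$ by functoriality of the resolution, and $\rho$ is equivariant, so the inverse birational map $\hat X\dashrightarrow \hat X'$ is $\Autz(\hat X)$-equivariant.

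Next I would apply Theorem~\ref{th:A} to $\hat X'$, which is legitimate because $\car(\k)=0\notin\{2,3,5\}$. This yields an $\Autz(\hat X')$-equivariant---hence, after restriction to the subgroup $\Autz(\hat X)\subseteq \Autz(\hat X')$, an $\Autz(\hat X)$-equivariant---birational map $\hat X'\dashrightarrow X$ to a Mori fibre space $X$ lying in one of the three cases of that theorem. Composing with $\hat X\dashrightarrow \hat X'$ gives an $\Autz(\hat X)$-equivariant map $\hat X\dashrightarrow X$. If $X$ falls into case~\ref{thAQuadricP2} or~\ref{thAFano} of Theorem~\ref{th:A}, then $X$ already satisfies condition~\ref{line} (a $\p^2$-bundle or a smooth Umemura quadric fibration $\QQ_g\to\p^1$ over $\p^1$) or condition~\ref{point} (a $\Q$-factorial Fano threefold of Picard rank~$1$ with terminal singularities) of the present statement, and there is nothing further to do.

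It remains to handle case~\ref{thAP1}, where $X\to S$ is a $\p^1$-bundle with $S\in\{\p^2,\ \p^1\times\p^1,\ \F_a\ (a\ge 2)\}$. Each of these bases is a smooth projective rational surface, so I would apply Theorem~\ref{thBFT} to the $\p^1$-bundle $X\to S$, obtaining an $\Autz(X)$-equivariant birational map $X\dashrightarrow X'$ onto one of the bundles $\FF_a^{b,c}$, $\PP_b$, $\U_a^{b,c}$, $\SS_b$, $\V_b$ listed there, i.e.\ a variety satisfying condition~\ref{surface}. Since the previously constructed map $\hat X\dashrightarrow X$ conjugates $\Autz(\hat X)$ into $\Autz(X)$ and the new map is equivariant for all of $\Autz(X)$, the composition $\hat X\dashrightarrow X'$ is $\Autz(\hat X)$-equivariant, concluding the proof. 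The argument is thus a direct concatenation of two already-established reduction theorems, so I expect no serious geometric obstacle; the only point needing care---and the thing I would state explicitly---is that equivariance must be tracked through the successive compositions, which is harmless precisely because each invoked theorem produces a map equivariant for the full neutral component of the automorphism group of its source and therefore restricts to any subgroup.
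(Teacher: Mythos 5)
Your proposal is correct and follows essentially the same route as the paper: equivariant resolution of singularities (using $\car(\k)=0$), then Theorem~\ref{th:A}, then Theorem~\ref{thBFT} to refine case~\ref{thAP1} into the list of $\P^1$-bundles. The only difference is that you spell out the bookkeeping of equivariance through the compositions, which the paper leaves implicit; this is harmless and your justification for it is sound.
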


\begin{proof}
First, since $\car(\k)=0$, we may always assume that $\hat{X}$ is smooth by applying an equivariant resolution of singularities. Then we apply Theorem~\ref{th:A}, followed by Theorem~\ref{thBFT} to reduce the case  Theorem~\ref{th:A}\ref{thAP1} to the case of the $\P^1$-bundles over $\P^2$, $\P^1 \times \P^1$, or $\F_a$ (with $a \geq 2$) listed in Theorem~\ref{thBFT}. This gives the list \ref{surface}-\ref{line}-\ref{point} of Mori fibre spaces given in the statement of Theorem~\ref{th:first classification maximality}.
\end{proof}

\subsection{General results on \texorpdfstring{$\p^1$}{P1}-bundles over Hirzebruch surfaces} \label{subsecP1 bundles over Fa}
In this subsection we collect some results on equivariant links starting from $\P^1$-bundles over Hirzebruch surfaces (mostly proven in \cite{BFT}) that we will use to prove Theorems~\ref{th:Ea} and~\ref{th:Eb}.

\smallskip

For each $\p^1$-bundle $\pi\colon X\to S$ over a smooth surface $S$, there are Sarkisov links obtained by blowing-up a section $s\subseteq X$ over a smooth curve $\Gamma\subseteq S$, and then contracting the strict transform of $\pi^{-1}(\Gamma)$. In the next lemma we apply this observation to the $\p^1$-bundles over Hirzebruch surfaces listed in Theorem~\ref{thBFT}. Two types of such $\p^1$-bundles  arise, namely the decomposable $\p^1$-bundles $\FF_a^{b,c} \to \F_a$ and the Umemura $\p^1$-bundles $\U_a^{b,c} \to \F_a$.
\begin{lemma}\label{LinksIIbetweenFU}\item\begin{enumerate}
\item\label{LinksIIbetweenFabc}
For all $a,b,c\in \Z$, $a,b\ge 0$, the blow-up of the curve $l_{00}\subseteq \FF_a^{b,c}$, given by $x_0=y_0=0$, followed by the contraction of the strict transform of the surface $\pi^{-1}(\s{-a})$ onto $l_{10}\subseteq \FF_a^{b+1,c+a}$, given by $x_1=y_0=0$, yields a type \II Sarkisov link 
\[\begin{array}{rccc}
\varphi\colon &\FF_a^{b,c}&\dasharrow &\FF_a^{b+1,c+a}\\
&([x_0:x_1;y_0:y_1;z_0:z_1]) &\mapsto & ([x_0:x_1y_0;y_0:y_1;z_0:z_1]).\end{array}\]
 We have then $\varphi\Autz(\FF_a^{b,c})\varphi^{-1}\subseteq \Autz(\FF_a^{b+1,c+a})$ if and only if $ab>0$ or $ac<0$, and $\varphi^{-1}\Autz(\FF_a^{b+1,c+a})\varphi\subseteq \Autz(\FF_a^{b,c})$ if and only if $a(c+a)>0$.
 \item\label{LinksIIbetweenUabc} For each Umemura $\p^1$-bundle $\U_a^{b,c}\to \F_a$, the blow-up of the curve $l_{00}\subseteq \U_a^{b,c}$, given by $x_0=y_0$ on both charts, followed by the contraction of the strict transform of the surface $\pi^{-1}(\s{-a})$ onto the curve $l_{10}$, given by $x_1=y_0=0$ on both charts, yields a type \II equivariant link
\[\varphi\colon \U_a^{b,c}\dasharrow \U_a^{b+1,c+a}\] satisfying $\varphi \Autz(\U_a^{b,c})\varphi^{-1}=\Autz(\U_a^{b+1,c+a})$.
 \end{enumerate}
\end{lemma}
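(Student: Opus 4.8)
The plan is to handle both parts with one geometric construction and to separate out the automorphism-group computation, which is the only point where the two cases genuinely differ. For \ref{LinksIIbetweenFabc} I would first check in the $\Gm^3$-quotient coordinates that
\[[x_0:x_1;y_0:y_1;z_0:z_1]\longmapsto[x_0:x_1y_0;y_0:y_1;z_0:z_1]\]
descends to a well-defined birational map $\varphi\colon\FF_a^{b,c}\dasharrow\FF_a^{b+1,c+a}$: rescaling the source by $(\lambda,\mu,\rho)$ and the target by $(\lambda\mu,\mu,\rho)$ yields the same point, which is precisely the compatibility needed between the two gradings. The formula shows that $\varphi$ is an isomorphism away from the smooth curve $l_{00}=\{x_0=y_0=0\}\cong\s{-a}\cong\p^1$, that it sends $\pi^{-1}(\s{-a})=\{y_0=0\}$ (minus $l_{00}$) onto $l_{10}=\{x_1=y_0=0\}$, and that its inverse $[x_0:x_1;y_0:y_1;z_0:z_1]\mapsto[x_0y_0:x_1;y_0:y_1;z_0:z_1]$ is undefined exactly along $l_{10}$. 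Since $l_{00}$ is a smooth curve in the smooth threefold $\FF_a^{b,c}$, Lemma~\ref{lem:ReducedblowUpcurve} identifies the resolution of $\varphi$ with the blow-up $\beta\colon W\to\FF_a^{b,c}$ of $l_{00}$, and the same applied to $\varphi^{-1}$ shows the second projection $\beta'\colon W\to\FF_a^{b+1,c+a}$ is the blow-up of $l_{10}$, contracting the strict transform of $\pi^{-1}(\s{-a})$. As $\rho(W/\F_a)=2$, the maps $\beta,\beta'$ contract the two extremal rays of $W$ over $\F_a$; both being blow-ups of smooth curves in smooth threefolds, they are $K$-negative Mori divisorial contractions of relative Picard rank one with unchanged base, which is exactly the shape of a type~\II Sarkisov link (Definition~\ref{sarkisov-links}, with trivial intermediate small map; compare Remark~\ref{remark:UnicitySarkisov}).

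The decisive principle for the automorphism-group statements is that $\varphi g\varphi^{-1}$ is regular on $\FF_a^{b+1,c+a}$ for every $g\in\Autz(\FF_a^{b,c})$ if and only if the indeterminacy locus $l_{00}$ of $\varphi$ is $\Autz(\FF_a^{b,c})$-invariant: comparing indeterminacy loci in $\varphi\circ g=(\varphi g\varphi^{-1})\circ\varphi$ forces $g(l_{00})=l_{00}$, while conversely invariance of $l_{00}$ lets $\Autz(\FF_a^{b,c})$ lift to $W$ and descend along $\beta'$ (the contracted divisor being automatically invariant). This reduces the two inclusions to deciding when $l_{00}$, resp.\ $l_{10}$, is invariant. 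Writing $\Autz(\FF_a^{b,c})$ as an extension of its image in $\Aut(\F_a)$ by the fibrewise automorphisms — the $\Gm$-scaling together with the shears $x_0\mapsto x_0+s\,x_1$ and $x_1\mapsto x_1+s'\,x_0$ coming from global sections of the relevant line bundles — the invariance of $l_{00}$ can fail only if the base fails to fix $\s{-a}$ (which happens exactly when $a=0$) or, for $a\ge1$, if a shear $x_0\mapsto x_0+s\,x_1$ moves $l_{00}$; a monomial count in the grading shows such a shear exists iff $b=0$ and $c\ge0$. Hence $l_{00}$ is invariant iff $a\ge1$ and ($b\ge1$ or $c\le-1$), i.e.\ iff $ab>0$ or $ac<0$. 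The identical count for $l_{10}\subseteq\FF_a^{b+1,c+a}$ — this time noting that a shear moves $l_{10}$ over $\s{-a}$ only through a monomial not divisible by $y_0$ — yields invariance iff $a(c+a)>0$, which is the stated condition for $\varphi^{-1}$.

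For \ref{LinksIIbetweenUabc} the whole geometric part applies verbatim with the same centres $l_{00},l_{10}$, producing a type~\II link $\varphi\colon\U_a^{b,c}\dasharrow\U_a^{b+1,c+a}$. The new input is that for Umemura bundles both distinguished sections $l_{00}$ and $l_{10}$ are invariant under the full neutral automorphism group; here I would invoke the explicit description of $\Autz(\U_a^{b,c})$ established in \cite{BFT}, for which these are canonical curves. The conjugation principle above then gives both inclusions and hence the equality $\varphi\Autz(\U_a^{b,c})\varphi^{-1}=\Autz(\U_a^{b+1,c+a})$.

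The geometry (the explicit map, its resolution, and the type~\II verification) is routine bookkeeping. The real work is the invariance computation: pinning down $\Autz(\FF_a^{b,c})$ precisely enough and running the section count so that the two thresholds emerge exactly as $ab>0$ or $ac<0$ and as $a(c+a)>0$ — in particular handling the subtlety that a shear moving the section $\{x_1=0\}$ globally need not move its restriction $l_{10}$ over $\s{-a}$ — together with importing the correct structural statement about $\Autz(\U_a^{b,c})$ from \cite{BFT} in the Umemura case.
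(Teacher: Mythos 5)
Your proposal is correct, but it is worth noting that the paper itself does not prove this lemma at all: its entire proof is the two-line citation ``Part~\ref{LinksIIbetweenFabc} is given by \cite[Lemma 5.4.2]{BFT} and Part~\ref{LinksIIbetweenUabc} is given by \cite[Lemma 5.5.3]{BFT}.'' What you have written is essentially a reconstruction of those external lemmas. Your two main ingredients are sound: the verification that the formula descends through the $\G_m^3$-gradings (with $(\lambda',\mu',\rho')=(\lambda\mu,\mu,\rho)$) and resolves as the blow-up of $l_{00}$ followed by the contraction of the strict transform of $\pi^{-1}(\s{-a})$, and the base-locus criterion reducing each inclusion of conjugated automorphism groups to the invariance of the indeterminacy curve ($g^{-1}(\mathrm{Bs}(\varphi))=\mathrm{Bs}(\varphi)$ from $\varphi\circ g=(\varphi g\varphi^{-1})\circ\varphi$, and conversely lift-and-descend through the blow-up). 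Your monomial count is also right: a shear $x_0\mapsto x_0+q\,x_1$ requires a monomial of weight $\mu^{-b}\rho^{c}$, hence $b=0$ and $c\ge 0$, and a shear $x_1\mapsto x_1+p\,x_0$ moves $l_{10}$ only through a monomial with $y_0$-exponent zero, hence only when $c+a\le 0$; together with the $a=0$ case this recovers exactly the thresholds $ab>0$ or $ac<0$ and $a(c+a)>0$. Two caveats: your argument is not genuinely self-contained, since the monomial count presupposes the explicit generator description of $\Autz(\FF_a^{b,c})$ (base lifts, torus, and the two shear families) and, in the Umemura case, of $\Autz(\U_a^{b,c})$, both of which come from \cite[\S3.1 and \S3.6]{BFT} — you flag this, and the paper relies on the same source, so it is acceptable; and in Part~\ref{LinksIIbetweenUabc} the map is not given by a single global formula (the $\U_a^{b,c}$ are glued from two charts $\F_b\times\A^1$), so ``applies verbatim'' should be read as ``carries over chart by chart,'' with the invariance of both $l_{00}$ and $l_{10}$ (automatic here since $b\ge 1$ and $c+a\ge 3$) giving the stated equality of groups.
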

\begin{proof}
Part~\ref{LinksIIbetweenFabc} is given by \cite[Lemma 5.4.2]{BFT} and 
Part~\ref{LinksIIbetweenUabc} is given by \cite[Lemma 5.5.3]{BFT}.
\end{proof}
The next lemma applies to any $\p^1$-bundle over a surface $\hat{S}$ obtained by blowing-up a surface $S$. As we will only use it for $\hat{S}=\F_1$ and $S=\p^2$, we prefer to state it only in this particular situation.
\begin{lemma}\label{Lem:DescentF1P2}
Let $\pi\colon X\to \F_1$ be a $\p^1$-bundle, and let $\tau\colon \F_1\to \p^2$ be the contraction of $s_{-1}$ onto the point $p\in \p^2$. There is then a $\p^1$-bundle $\pi'\colon X'\to \p^2$ and an  $\Autz(X)$-equivariant birational map $\varphi\colon  X\dasharrow X'$, unique up to isomorphisms of $\p^1$-bundles,  such that $\tau\pi=\pi'\varphi$. 
\end{lemma}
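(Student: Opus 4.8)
The plan is to descend the $\p^1$-bundle directly across the point $p=\tau(s_{-1})$, exploiting that $\tau$ is an isomorphism away from its exceptional curve, together with the extension of vector bundles across a codimension-two locus. Write $G=\Autz(X)$ and $U=\p^2\setminus\{p\}$. By Blanchard's Proposition~\ref{blanchard}, $G$ acts regularly on $\F_1$, and since $s_{-1}$ is the unique $(-1)$-curve of $\F_1$ it is $G$-invariant; hence $G$ descends to a regular action on $\p^2$ fixing $p$, so that $U$ is $G$-invariant.

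First I would produce $X'$. The isomorphism $\tau\colon \F_1\setminus s_{-1}\iso U$ turns the restriction of $X$ over $\F_1\setminus s_{-1}$ into a Zariski-locally trivial $\p^1$-bundle over $U$. As $\Br(U)=\Br(\p^2)=0$ (Grothendieck purity, the complement $\{p\}$ being of codimension two), this bundle is the projectivisation $\P(\E_0)$ of a rank-two vector bundle $\E_0$ on $U$. Then I would set $\E:=j_*\E_0$, where $j\colon U\hookrightarrow \p^2$ is the inclusion; since $\{p\}$ has codimension two and $\E_0$ is locally free, $\E$ is a reflexive, hence locally free, rank-two sheaf extending $\E_0$. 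Defining $X':=\P(\E)$ with bundle projection $\pi'\colon X'\to \p^2$ gives a $\p^1$-bundle whose restriction over $U$ is canonically identified with $X$ over $\F_1\setminus s_{-1}$. This identification is the sought birational map $\varphi\colon X\dasharrow X'$: it is an isomorphism over $U$, contracting the divisor $\pi^{-1}(s_{-1})$ onto the fibre $\pi'^{-1}(p)$, and $\tau\pi=\pi'\varphi$ holds on the dense open set, hence everywhere.

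For uniqueness and equivariance I would rely on the canonicity of the reflexive extension. The $\p^1$-bundle $\P(\E_0)$ over $U$ determines $\E_0$ only up to twisting by a line bundle, and since $\Pic(U)=\Pic(\p^2)$ every such line bundle extends across $p$; therefore $X'=\P(j_*\E_0)$ is well defined up to isomorphism of $\p^1$-bundles. If $(X'',\pi'',\varphi'')$ is another solution with $\varphi''$ an isomorphism over $U$, then $\varphi''\circ\varphi^{-1}\colon X'\dasharrow X''$ is an isomorphism over $U$ covering $\mathrm{id}_{\p^2}$, and by the uniqueness of $j_*$ it extends to an isomorphism of $\p^1$-bundles. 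For equivariance, each $g\in G$ descends to $\bar g\in\Aut(\p^2)$ fixing $p$, so $\bar g$ restricts to an automorphism of $U$ commuting with $j_*$; the induced isomorphism $\E_0\iso \bar g^*\E_0$ (defined up to line-bundle twist) pushes forward to $\E\iso \bar g^*\E$, yielding an automorphism of $X'=\P(\E)$ over $\bar g$ that agrees with $\varphi g\varphi^{-1}$ over $U$. Hence $\varphi g\varphi^{-1}\in\Aut(X')$ for every $g$, i.e. $\varphi$ is $G$-equivariant.

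The main obstacle is precisely the verification that the a priori merely birational $G$-action transported through $\varphi$ is in fact regular on all of $X'$; I expect to settle this as just described, by identifying the transported birational automorphisms with the genuine automorphisms of $\P(\E)$ arising from the canonical $G$-linearisation of the reflexive extension $\E$. Alternatively, once the problem is reduced to extending $\E_0$ across $p$, the entire statement is an instance of the descent lemma \cite[Lemma~2.3.2]{BFT} applied to $\tau\colon \F_1\to\p^2$, which I would cite as a shortcut.
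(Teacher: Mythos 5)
Your argument is correct, but it takes a genuinely different route from the paper: the paper's entire proof of this lemma is the single sentence citing the descent lemma \cite[Lemma~2.3.2]{BFT}, which you only mention as a shortcut at the very end, whereas the body of your proposal is a self-contained sheaf-theoretic construction. You realise $X$ over $\F_1\setminus s_{-1}\iso U=\p^2\setminus\{p\}$ as $\P(\E_0)$ for a rank-two bundle $\E_0$ (using that a Zariski-locally trivial $\p^1$-bundle over a smooth rational surface is a projectivised vector bundle), extend across the codimension-two locus by $\E=j_*\E_0$, which is reflexive and hence locally free on the smooth surface $\p^2$, and set $X'=\P(\E)$; uniqueness and equivariance then come from the canonicity of $j_*$ together with $\Pic(U)=\Pic(\p^2)$. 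What your route buys is transparency: it makes visible that the statement is an essentially formal consequence of the extension of rank-two bundles across points of a smooth surface, with no hypothesis on the restriction $X|_{s_{-1}}$. What the citation buys is brevity and consistency with \cite{BFT}, where the analogous local analysis is carried out once and reused for all the blow-downs appearing in the classification.

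Two points deserve a word of care, neither of which is a genuine gap. First, the uniqueness as literally stated would fail if a competing $\varphi''$ were allowed to differ from $\varphi$ by an equivariant elementary transformation over a curve of $U$; your insertion of the normalisation ``$\varphi''$ an isomorphism over $U$'' is therefore not optional but the correct (and necessary) reading of ``unique up to isomorphisms of $\p^1$-bundles'', and it matches the formulation in \cite{BFT}. Second, for equivariance one must check not merely that each $\varphi g\varphi^{-1}$ is biregular but that $g\mapsto\varphi g\varphi^{-1}$ is a morphism of algebraic groups $\Autz(X)\to\Aut(X')$; you flag this as the main obstacle, and your resolution is sound provided the isomorphisms $\E\iso\bar g^*\E$ are produced uniformly in $g$, e.g. by applying $j_*$ to the regular action of $\Autz(X)$ on $X|_U=\P(\E_0)$ rather than element by element.
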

\begin{proof}
The existence and uniqueness of $\varphi$, together with the fact that $\varphi$ is $\Autz(X)$-equivariant, are given by the ``descent lemma'' \cite[Lemma 2.3.2]{BFT}. 
\end{proof}
Lemma~\ref{Lem:DescentF1P2} applied to $\U_1^{b,2}\to \F_1$ gives a type \III equivariant link $\U_1^{b,2}\to \V_b$, which is a divisorial contraction already described in \cite[Lemma 5.5.1]{BFT}.
\begin{lemma} \label{lem:sequence of links for Umemura bundles}
For each $b\ge 2$, there is a sequence of type \II equivariant links $\U_1^{b+n,2+n}\dasharrow \U_1^{b+n-1,2+n-1}$, for $n\ge 1$,  and a type \III equivariant link that is a birational morphism $\psi\colon \U_1^{b,2}\to \V_b$, that fit into a commutative diagram
\[\xymatrix@R=5mm@C=1cm{
    \cdots\ar@{-->}[r] &\U_1^{b+2,4}\ar@{-->}[r] \ar[drr] &\U_1^{b+1,3}\ar@{-->}[r]\ar[dr]  &\U_1^{b,2}  \ar[d]\ar[r]^-{\psi}& \V_b\ar[d]  \\
&&&\F_1 \ar[r]&   \p^2.}  \]
Moreover, for each $n\ge 0$, the induced birational map $\varphi_n\colon \U_1^{b+n,2+n}\dasharrow \V_b$ 
satisfies  $\varphi_n \Autz(\U_1^{b+n,2+n}) \varphi_{n}^{-1}=\Autz(\V_b)$.
\end{lemma}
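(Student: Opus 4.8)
The statement asserts the existence of a commutative diagram of $\p^1$-bundles and birational maps connecting the Umemura bundles $\U_1^{b+n,2+n}$ to the bundle $\V_b\to\p^2$, together with the equivariance property $\varphi_n\Autz(\U_1^{b+n,2+n})\varphi_n^{-1}=\Autz(\V_b)$. My plan is to assemble the diagram by composing the two basic links already established earlier in the excerpt, and then to chase the equivariance through these compositions. First I would observe that all of the horizontal maps and the single type $\III$ contraction are already constructed: the type $\II$ links $\U_1^{b+n,2+n}\dasharrow\U_1^{b+n-1,2+n-1}$ come from Lemma~\ref{LinksIIbetweenFU}\ref{LinksIIbetweenUabc} specialized to $a=1$ (there $c+a=(2+n-1)+1=2+n$, so the index shift $\U_a^{b,c}\dasharrow\U_a^{b+1,c+a}$ reads, after reindexing, exactly as $\U_1^{b+n-1,2+n-1}\dasharrow\U_1^{b+n,2+n}$, whose inverse is our arrow), and the morphism $\psi\colon\U_1^{b,2}\to\V_b$ is the type $\III$ divisorial contraction supplied by Lemma~\ref{Lem:DescentF1P2} applied to the $\p^1$-bundle $\U_1^{b,2}\to\F_1$ with the contraction $\tau\colon\F_1\to\p^2$ of $s_{-1}$.

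\textbf{Assembling the diagram.} The left and lower portions of the diagram are then immediate: the structure morphisms $\U_1^{b+n,2+n}\to\F_1$ are compatible with the type $\II$ links over $\F_1$ (these links are $\F_1$-birational, inducing the identity on the base $\F_1$, as follows from the explicit description in Lemma~\ref{LinksIIbetweenFU}\ref{LinksIIbetweenUabc}, where the contracted surface lies over $s_{-a}=s_{-1}$), and the square on the right commutes by the very construction of $\psi$ via the descent lemma, namely $\tau\pi=\pi'\psi$. Chaining these, the composite $\varphi_n\colon\U_1^{b+n,2+n}\dasharrow\V_b$ is obtained as $\psi$ precomposed with the $n$ successive type $\II$ links, and the outer diagram commutes because each constituent square does. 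This verifies the existence of the diagram and the fact that $\varphi_n$ is a composition of the stated equivariant Sarkisov links.

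\textbf{The equivariance.} The main point to prove is the equality $\varphi_n\Autz(\U_1^{b+n,2+n})\varphi_n^{-1}=\Autz(\V_b)$. My strategy is to establish it link by link and then compose. For each type $\II$ link $\U_1^{m+1,c+1}\dasharrow\U_1^{m,c}$ (here $a=1$), Lemma~\ref{LinksIIbetweenFU}\ref{LinksIIbetweenUabc} already asserts the full normalizing equality $\varphi\Autz(\U_1^{m,c})\varphi^{-1}=\Autz(\U_1^{m+1,c+1})$, so conjugation by each such link carries the neutral component of the automorphism group isomorphically onto the next. For the final type $\III$ morphism $\psi\colon\U_1^{b,2}\to\V_b$, I would invoke Lemma~\ref{Lem:DescentF1P2}, whose conclusion includes that $\psi$ (as the birational map produced by the descent lemma) is $\Autz(\U_1^{b,2})$-equivariant; since $\psi$ is in fact a birational morphism realizing $\V_b$ as the image of a contraction, this gives $\psi\Autz(\U_1^{b,2})\psi^{-1}\subseteq\Autz(\V_b)$, and the reverse inclusion follows because $\Autz(\V_b)$ must lift to act on the $\p^1$-bundle $\U_1^{b,2}\to\F_1$ obtained from $\V_b\to\p^2$ by the blow-up $\F_1\to\p^2$ (the center $s_{-1}\subseteq\F_1$, equivalently the point $p\in\p^2$, being canonically determined and hence $\Autz(\V_b)$-invariant). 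Composing all the equalities yields $\varphi_n\Autz(\U_1^{b+n,2+n})\varphi_n^{-1}=\Autz(\V_b)$.

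\textbf{The main obstacle.} The step I expect to require the most care is the surjectivity (reverse inclusion) in the equivariance of the type $\III$ contraction $\psi$: one must argue that every element of $\Autz(\V_b)$ lifts through the contraction $\U_1^{b,2}\to\V_b$, i.e.\ that the blow-up center is canonical and $\Autz(\V_b)$-stable, so that no automorphism of $\V_b$ is lost. The cleanest route is to note that $\V_b\to\p^2$ is itself a $\p^1$-bundle, that $\Autz(\V_b)$ acts on the base $\p^2$ (Proposition~\ref{blanchard}), and that $\U_1^{b,2}$ is recovered functorially from $(\V_b\to\p^2,\,p)$ by the descent/ascent construction of Lemma~\ref{Lem:DescentF1P2}, so the lift exists and the inclusion is an equality. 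Everything else is bookkeeping on indices and the observation that the type $\II$ links act trivially on the common base $\F_1$.
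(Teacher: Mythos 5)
Your assembly of the diagram and your treatment of the type \II links match the paper's proof exactly: the paper likewise obtains the links $\U_1^{b+n,2+n}\dasharrow\U_1^{b+n-1,2+n-1}$ from Lemma~\ref{LinksIIbetweenFU}\ref{LinksIIbetweenUabc} (which already gives the full equality of automorphism groups, so the direction of the arrow is immaterial) and the morphism $\psi\colon\U_1^{b,2}\to\V_b$ from Lemma~\ref{Lem:DescentF1P2}. The one place where you diverge is the reverse inclusion $\Autz(\V_b)\subseteq\psi\Autz(\U_1^{b,2})\psi^{-1}$: the paper simply cites \cite[Lemma~5.5.1(4)]{BFT} for the equality $\psi\Autz(\U_1^{b,2})\psi^{-1}=\Autz(\V_b)$, whereas you try to prove it directly.

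Your direct argument has a genuine gap at precisely its load-bearing step. You assert that the blow-up centre --- the point $p\in\p^2$, equivalently the fibre $f'=\pi^{-1}(p)\subseteq\V_b$ --- is ``canonically determined and hence $\Autz(\V_b)$-invariant'', but this is exactly what needs proof and it does not follow from the descent construction. A priori the image of $\Autz(\V_b)\to\Aut(\p^2)=\PGL_3$ given by Proposition~\ref{blanchard} could fail to fix any point of $\p^2$ (compare $\PP_0$ or $\SS_1$, where the action on the base is all of $\PGL_3$), in which case no lift through the blow-up of $f'$ would exist; the functoriality of recovering $\U_1^{b,2}$ from the pair $(\V_b\to\p^2,\,p)$ only helps once $p$ is known to be fixed. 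You also cannot invoke the orbit decomposition of Lemma~\ref{lem:orbits of Uabc et Vb}\ref{item:orbits Vb}, since in the paper that decomposition is itself deduced from the equality you are trying to prove, so the argument would be circular. The gap is fillable without the external citation, but it requires the intersection theory of $\V_b$ established in Lemma~\ref{lem:VbPicN}: for $b\ge 2$ one has $H'\cdot s'=1-b<0$, so the divisor $H'$ is swept by curves in the extremal class $s'$ and is therefore the unique such divisor, hence $\Autz(\V_b)$-invariant; then $f'$ is the unique $(-1)$-curve of $H'\simeq\F_1$, hence invariant, hence $p=\pi(f')$ is fixed and every automorphism lifts to the blow-up. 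Either supply this argument or fall back on the citation to \cite[Lemma~5.5.1(4)]{BFT} as the paper does.
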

\begin{proof}
The existence of the sequence of type \II equivariant links that conjugates $\Autz(\U_1^{b+n,2+n})$ to $\Autz(\U_1^{b+n-1,2+n-1})$
 follows from Lemma~\ref{LinksIIbetweenFU}\ref{LinksIIbetweenUabc}. The type \III link $\U_1^{b,2}\to \V_b$ is given by Lemma~\ref{Lem:DescentF1P2}, and the fact that $\psi \Autz(\U_1^{b,2}) \psi^{-1}=\Autz(\V_b)$ is given by \cite[Lemma~5.5.1(4)]{BFT}.
Thus, for each $n\ge 0$, the composition gives a birational map $\varphi_n\colon \U_1^{b+n,2+n}\dasharrow \V_b$ that conjugates $\Autz(\U_1^{b+n,2+n})$ to $\Autz(\V_b)$. 
\end{proof}

\subsection{Non-maximality results} \label{sec:non-maximality}
In this subsection we {give the existence of explicit equivariant birational maps between varieties listed in Theorems~\ref{th:A} and~\ref{th:first classification maximality}, proving the non-maximality of some of their automorphism groups in $\Bir(\p^3)$. This series} of lemmas that will be useful to prove Theorem~\ref{th:Ea} in \upshape\S~\ref{subsec:proof of th D}. We keep the same notation as in {\upshape\S~\ref{sec:first classification}}. 
\begin{lemma} \label{lem:aut P2 bundle}
{For each $i \in \Z$ we denote by $\k[y_0,y_1]_{i}\subseteq \k[y_0,y_1]$ the vector subspace of homogeneous  polynomials of degree $i$ $($which is $\{0\}$ if $i<0)$. For each $m,n \in \Z$, the group $\Autz(\RR_{m,n})$ consists of all elements of the form
\[ [x_0:x_1:x_2; y_0:y_1] \mapsto \left[  \begin{bmatrix}
 p_{1,0}& p_{2,n-m} & p_{3,-m}\\ p_{4,m-n} & p_{5,0} & p_{6,-n} \\ 
 p_{7,m} & p_{8,n} & p_{9,0}  \end{bmatrix} \begin{bmatrix} x_0 \\ x_1 \\ x_2 \end{bmatrix} ; ay_0+by_1:cy_0+dy_1\right].\]
 where $ p_{k,i}\in \k[y_0,y_1]_{i}$, for $k=1,\ldots,9$, $\left[\begin{smallmatrix}
 p_{1,0}& p_{2,n-m} & p_{3,-m}\\ p_{4,m-n} & p_{5,0} & p_{6,-n} \\ 
 p_{7,m} & p_{8,n} & p_{9,0}  \end{smallmatrix}\right] \in \GL_3(\k[y_0,y_1])$ and $\left[\begin{smallmatrix} a & b \\ c & d \end{smallmatrix}\right] \in \GL_2(\k)$. In particular, the variety $\RR_{m,n}$ is toric and the morphism $\RR_{m,n} \to \p^1$ yields a surjective group homomorphism 
\[\rho\colon~\Autz(\RR_{m,n}) \twoheadrightarrow \Aut(\p^1)=\PGL_2.\]}
\end{lemma}
\begin{proof}
{The existence of $\rho\colon\Autz(\RR_{m,n}) \to \Aut(\p^1)$ is given by Proposition~\ref{blanchard}. In addition, the group $\GL_2(\k)$ acts on $\RR_{m,n}$ by 
\[
\left(\left[\begin{smallmatrix} a & b \\ c & d \end{smallmatrix}\right],[x_0:x_1:x_2; y_0:y_1]\right)\mapsto [x_0:x_1:x_2; ay_0+by_1:cy_0+dy_1],
\] which proves the surjectivity of $\rho$. Then to determine $\Autz(\RR_{m,n})_{\P^1}$, the kernel of $\rho$ (case $a=d=1$, $b=c=0$ in the above description), we use the global description of $\RR_{m,n}$ given in \S~\ref{sec:first classification}\ref{DefiRmn} and the fact that the $\P^2$-bundle $\RR_{m,n} \to \P^1$ is trivial on any open subset of $\P^1$ isomorphic to $\A^1$.}
\end{proof}

\begin{lemma} \label{lem:Rmn cases non max}
Let $G=\Autz (\RR_{m,n})$ with $m \geq n \geq 0$.
\begin{enumerate}
\item \label{item R10}
Let $\varphi \colon~\RR_{1,0} \to \P^3$ be the blow-up of the line $[0:0:*:*]$. 
Then we have $\varphi \Autz(\RR_{1,0}) \varphi^{-1} \subsetneq \Aut(\P^3)$.
\item \label{item Rmn pas max}
If $2n\geq m>n \geq 1$, then there is a $\P^1$-bundle $X \to S$ over a smooth projective rational surface $S$ and an $\Autz(\RR_{m,n})$-equivariant birational map $\delta\colon \RR_{m,n}\dashrightarrow X $ such that $\delta \Autz(\RR_{m,n}) \delta^{-1} \subsetneq  \Autz(X)$.
\end{enumerate}
\end{lemma}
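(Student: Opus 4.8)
The plan is to prove each part by exhibiting an explicit birational modification of $\RR_{m,n}$ and showing that the automorphism group strictly grows, using the concrete matrix description of $\Autz(\RR_{m,n})$ from Remark~\ref{rk: aut decompo bundles over P1}.

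\textbf{Part \ref{item R10}.} First I would compute $\Autz(\RR_{1,0})$ explicitly. With $m=1$, $n=0$, the allowed matrices in Remark~\ref{rk: aut decompo bundles over P1} have entries $p_{k,i}\in\k[y_0,y_1]_i$ with the grading $\begin{bmatrix} 0 & -1 & -1 \\ 1 & 0 & 0 \\ 1 & 0 & 0\end{bmatrix}$, so $p_{2,-1}=p_{3,-1}=0$ while $p_{4,1},p_{7,1}$ are linear in $y_0,y_1$. This gives $\dim\Autz(\RR_{1,0})$ which I can then compare with $\dim\Aut(\P^3)=\dim\PGL_4=15$. The map $\varphi\colon\RR_{1,0}\dashrightarrow\P^3$ is the blow-up of a line, so $\varphi\Autz(\RR_{1,0})\varphi^{-1}$ is exactly the subgroup of $\PGL_4$ stabilising that line $[0:0:*:*]$; this is a proper parabolic subgroup (of dimension $11$), hence the inclusion is strict. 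I would phrase the argument via this stabiliser description rather than a dimension count to make it cleanest: since $\varphi$ is the blow-up of a line $\ell\subseteq\P^3$, every automorphism of $\RR_{1,0}$ descends to an automorphism of $\P^3$ preserving $\ell$, and conversely, so $\varphi\Autz(\RR_{1,0})\varphi^{-1}=\Aut(\P^3,\ell)\subsetneq\Aut(\P^3)$ because $\Aut(\P^3)$ acts transitively on lines.

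\textbf{Part \ref{item Rmn pas max}.} Here the hypothesis $2n\ge m>n\ge 1$ is precisely the range \emph{excluded} from family \hyperlink{th:D_RR}{$(g)$} (which requires $m=n$ or $m>2n$), so the claim is that such $\RR_{m,n}$ is non-maximal. The natural strategy is to use link \ref{SA7}, the blow-up of a section $\FF_{m-n}^{1,-n}\to\RR_{m,n}$ from Proposition~\ref{prop:Rmn list of links}, but that link is stated for $m=n$ or $m>2n$; instead, in the range $m>2n$ fails, I would produce a \emph{different} equivariant link exhibiting a $\P^1$-bundle target with larger automorphism group. The cleanest route is to blow up the section $x_0=x_1=0$ (a $\P^1$ in the fibre direction defined over $\P^1$), producing a $\P^1$-bundle over a Hirzebruch surface of the form $\FF_a^{b,c}$; I would then invoke Lemma~\ref{LinksIIbetweenFU} to run the resulting $\FF$-bundle through type \II links into the normalised range, where by the classification in \cite{BFT} the automorphism group is strictly larger. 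Concretely I expect the target to be an $\FF_{m-n}^{b,c}$ with a non-trivial automorphism contracting $\RR_{m,n}$, and the strictness of the inclusion follows because the extra vertical automorphisms (the unipotent part acting on the exceptional section, visible in the $\FF_a^{b,c}$-model but not preserving the $\P^2$-bundle structure) do not descend to $\RR_{m,n}$.

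\textbf{Main obstacle.} The hard part will be Part \ref{item Rmn pas max}: identifying the correct $\P^1$-bundle target $X$ and verifying the strict inclusion of automorphism groups. Unlike Part \ref{item R10}, where the ambient $\P^3$ makes the stabiliser computation transparent, here I must track how the grading constraints on the matrix entries in Remark~\ref{rk: aut decompo bundles over P1} change under the birational modification, and show that the target $\P^1$-bundle acquires automorphisms (typically an extra $\G_a$ or a larger torus) that are not realised as automorphisms of $\RR_{m,n}$. I would verify this by computing both $\dim\Autz(\RR_{m,n})$ from the matrix description and $\dim\Autz(X)$ from the $\FF_a^{b,c}$-model in \cite{BFT}, confirming the latter is strictly larger precisely when $2n\ge m>n$; the representation-theoretic bookkeeping of which homogeneous pieces $\k[y_0,y_1]_i$ survive is where the real care is needed.
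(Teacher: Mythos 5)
Your proposal is correct and follows essentially the same route as the paper: part (1) is the blow-up of the line composed with the observation that the conjugated group must preserve that line, and part (2) is the blow-up of the $G$-stable section $x_0=x_1=0$, identifying $G$ with $\Autz(\FF_{m-n}^{1,-n})$, followed by the type \II link of Lemma~\ref{LinksIIbetweenFU}\ref{LinksIIbetweenFabc}. The ``bookkeeping'' you worry about in part (2) is already packaged in that lemma: with $a=m-n$, $b=1$, $c=-n$ the forward inclusion holds since $ab=m-n>0$, while the reverse inclusion fails precisely because $a(c+a)=(m-n)(m-2n)\le 0$ when $2n\ge m>n$, so no further dimension count is needed.
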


\begin{proof}
\ref{item R10}: The blow-up of the line $\ell=[0:0:*:*]$ in $\P^3$ can be written
\[ \varphi\colon~\RR_{1,0} \to \P^3, \ \ [x_0:x_1:x_2;y_0:y_1] \mapsto [x_0 y_0 : x_0 y_1: x_1 : x_2].\]
By Proposition~\ref{blanchard}, $\varphi$ is $\Autz(\RR_{1,0})$-equivariant. Therefore $\varphi \Autz(\RR_{1,0}) \varphi^{-1} \subsetneq \Aut(\P^3)$ as $\varphi \Autz(\RR_{1,0}) \varphi^{-1}$ must preserve the line $\ell$.

\ref{item Rmn pas max}:
If $2n \geq m>n \geq 1$, then $[0:0:1;*:*]$ is a $G$-invariant curve of $\RR_{m,n}$ {(follows from Lemma~\ref{lem:aut P2 bundle})} whose blow-up can be written 
\begin{equation*} 
\begin{array}{cccc}
\psi\colon~&\FF_{a}^{1,c}& \to & \RR_{m,n}\\
&\ [x_0:x_1;y_0:y_1;z_0:z_1]&\mapsto& [x_0y_0:x_0y_1:x_1;z_0:z_1].\end{array}
\end{equation*}
where $a=m-n>0$, and $c=-n<0$. Thus $  \psi^{-1} G \psi = \Autz(\FF_{a}^{1,c}) $ by Proposition \ref{blanchard}.
{
On the other hand, according to Lemma~\ref{LinksIIbetweenFU}\ref{LinksIIbetweenFabc}, there exists a $G$-equivariant birational map $\varphi\colon \FF_a^{1,c} \dashedrightarrow \FF_a^{2,c+1}$ such that $\varphi \Autz(\FF_a^{1,c}) \varphi^{-1} \subsetneq \Autz(\FF_a^{2,c+a})$. Hence, composing $\psi^{-1}$ and $\varphi$ gives a $G$-equivariant birational map $\delta\colon \RR_{m,n} \dashedrightarrow \FF_a^{2,c+a}$ such that $\delta G \delta^{-1}  \subsetneq \Autz(\FF_{a}^{2,c+a})$.
}
\end{proof}

\begin{lemma}\label{Lemm:Fa1cRac}
Let $a\ge 0$ and let $c\in \Z$. There is a birational morphism 
\[ \varphi\colon \FF_a^{1, c} \to \RR_{a, c},\ [x_0:x_1; y_0: y_1; z_0: z_1] \mapsto [x_0 y_0: x_1: x_0 y_1; z_0: z_1],\] 
which contracts the divisor $H_{x_0}=(x_0=0)$ onto the section $\ell=[0:1:0;*:*]$ of $\RR_{a,c}\to \p^1$. We then have $\varphi\Autz(\FF_a^{1,c})\varphi^{-1}\subseteq \Autz(\RR_{a,c})$ with an equality if and only if $c<0$.
\end{lemma}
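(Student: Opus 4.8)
The plan is to recognise $\varphi$ as the blow-up of the section $\ell$ and then to compare the two automorphism groups through Blanchard's lemma (Proposition~\ref{blanchard}) together with the explicit matrix description of $\Aut(\RR_{a,c})$ (Remark~\ref{rk: aut decompo bundles over P1}).

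First I would check that $\varphi$ is a well-defined birational morphism contracting $H_{x_0}$ onto $\ell$. It is defined everywhere: if $x_0y_0=x_1=x_0y_1=0$, then $x_1=0$ forces $x_0\neq 0$, whence $y_0=y_1=0$, which is impossible. A direct verification of the $\G_m$-weights (the morphism is equivariant for $(\lambda,\mu,\rho)\mapsto(\lambda,\rho)$) shows that $\varphi$ lands in $\RR_{a,c}$. On $H_{x_0}=(x_0=0)$ one has $x_1\neq 0$, so $\varphi([0:x_1;y_0:y_1;z_0:z_1])=[0:1:0;z_0:z_1]$, i.e.\ $\varphi(H_{x_0})=\ell$ with one-dimensional fibres $\{[0:1;y_0:y_1;z_0:z_1]\}\cong\p^1$. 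Off $\ell$ one recovers $[y_0:y_1]=[x_0y_0:x_0y_1]$ from the image, so $\varphi$ restricts to an isomorphism $\FF_a^{1,c}\setminus H_{x_0}\iso\RR_{a,c}\setminus\ell$; in particular $\varphi$ is birational and the locus over which it fails to be an isomorphism is exactly $\ell$. Since $\rho(\FF_a^{1,c})=3=\rho(\RR_{a,c})+1$ and $\varphi$ contracts the single prime divisor $H_{x_0}$ onto the smooth curve $\ell$, we have $\rho(\FF_a^{1,c}/\RR_{a,c})=1$, and Lemma~\ref{lem:ReducedblowUpcurve} identifies $\varphi$ (everything being smooth) with the blow-up of $\ell$.

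Set $G:=\Autz(\FF_a^{1,c})$ and $\bar G:=\varphi G\varphi^{-1}$. As $\varphi$ is a proper birational morphism onto the normal variety $\RR_{a,c}$, we have $\varphi_*\O_{\FF_a^{1,c}}=\O_{\RR_{a,c}}$, so Blanchard's lemma (Proposition~\ref{blanchard}) yields a regular action of the connected group $G$ on $\RR_{a,c}$ making $\varphi$ equivariant; hence $\bar G\subseteq\Autz(\RR_{a,c})$, the image of a connected group being connected. Because $\varphi$ is $\bar G$-equivariant, the intrinsic locus over which $\varphi$ is not an isomorphism is $\bar G$-stable, that is, $\bar G$ preserves $\ell$. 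This already establishes the inclusion $\varphi\Autz(\FF_a^{1,c})\varphi^{-1}\subseteq\Autz(\RR_{a,c})$.

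It then remains to decide when $\ell$ is invariant under all of $\Autz(\RR_{a,c})$. By Lemma~\ref{lem:aut P2 bundle} and Remark~\ref{rk: aut decompo bundles over P1}, $\Autz(\RR_{a,c})$ is generated by the fibrewise automorphisms, acting on $(x_0,x_1,x_2)$ by a matrix whose second column has entries $p_{2,c-a}\in\k[z_0,z_1]_{c-a}$, $p_{5,0}$ and $p_{8,c}\in\k[z_0,z_1]_{c}$, together with the standard $\GL_2$-action on $(z_0,z_1)$ (trivial on the $x_i$), which preserves $\ell$. The image of $\ell=[0:1:0;\ast]$ under such a matrix is $[p_{2,c-a}:p_{5,0}:p_{8,c};\ast]$, so $\ell$ is invariant under the whole group if and only if $p_{2,c-a}$ and $p_{8,c}$ vanish identically, i.e.\ if and only if $c-a<0$ and $c<0$, i.e.\ (as $a\ge 0$) if and only if $c<0$. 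If $c<0$, every $g\in\Autz(\RR_{a,c})$ fixes $\ell$ and therefore lifts, by functoriality of the blow-up $\varphi$, to an element of $\Aut(\FF_a^{1,c})$ lying in $G$ by connectedness; thus $\Autz(\RR_{a,c})\subseteq\bar G$ and equality holds. If $c\ge 0$, the elementary unipotent matrix with $p_{8,c}\neq 0$ and all other off-diagonal entries zero moves $\ell$, so $\bar G\subsetneq\Autz(\RR_{a,c})$. The step needing the most care is the bookkeeping of the admissible degrees in the matrix of Remark~\ref{rk: aut decompo bundles over P1}, combined with the fact that the base $\GL_2$-part preserves $\ell$; together these reduce the whole dichotomy to the single condition $c<0$.
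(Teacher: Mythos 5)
Your proof is correct and follows essentially the same route as the paper's: identify $\varphi$ as the blow-up of the section $x_0=x_2=0$, deduce the inclusion from Blanchard's lemma (Proposition~\ref{blanchard}), and reduce the equality criterion to the invariance of $\ell$ under $\Autz(\RR_{a,c})$, read off from Remark~\ref{rk: aut decompo bundles over P1}. You simply spell out the degree bookkeeping ($p_{2,c-a}$ and $p_{8,c}$ vanish identically iff $c<0$, given $a\ge 0$) that the paper leaves implicit.
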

\begin{proof}
We check that {$\varphi$} is the blow-up of the section of $\RR_{a,c}$ given by $x_0=x_2=0$. The inclusion $\varphi\Autz(\FF_a^{1,c})\varphi^{-1}\subseteq \Autz(\RR_{a,c})$ is given by Proposition~\ref{blanchard}. The equality holds if and only if $\ell$ is fixed by $\Autz(\RR_{a,c})$, which is equivalent to the condition $c<0$ (Lemma~\ref{lem:aut P2 bundle}).
\end{proof}

\begin{lemma} \label{lem:Fabc non maximality}
Let $X=\FF_a^{b,c}$ with $a\geq 0$, $b \geq 0$, and $c \in \Z$, and let $G=\Autz(X)$.
\begin{enumerate}
\item  \label{Fabc-itemb0} If $b=0$ and $-a<c<0$, then there is a birational map $\varphi\colon~X \dasharrow \RR_{a,c}$ such that $\varphi G \varphi^{-1} \subsetneq \Autz(\RR_{a,c+a})$.
\item  \label{Fabc-itemb1} If $b=1$ and $c\geq 0$, then there is a birational morphism $\varphi\colon~X \to \RR_{a,c}$ such that $\varphi G \varphi^{-1} \subsetneq \Autz(\RR_{a,c})$.
\item  \label{Fabc-itemb2} If $b \geq 2$, and $ab-a \leq c<ab$, then there is a birational map $\varphi\colon~X \dasharrow \RR_{a,c-a(b-1)}$ such that $\varphi G \varphi^{-1} \subsetneq \Autz(\RR_{a,c-a(b-1)})$.

\item  \label{Fabc-itemc1} If $a=0$ and $c=1$, then there is a birational morphism $\varphi\colon~X \to \RR_{0,b}$ such that $\varphi G \varphi^{-1} \subsetneq \Autz(\RR_{0,b})$.
\end{enumerate}
\end{lemma}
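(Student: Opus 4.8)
The plan is to reduce every case to a single application of the contraction $\FF_a^{1,c'}\to\RR_{a,c'}$ of Lemma~\ref{Lemm:Fa1cRac}, whose output is a \emph{strict} inclusion $\varphi\Autz(\FF_a^{1,c'})\varphi^{-1}\subsetneq\Autz(\RR_{a,c'})$ precisely when $c'\ge 0$. The various hypotheses on $(a,b,c)$ are tailored so that, after transporting $\FF_a^{b,c}$ to a suitable $\FF_a^{1,c'}$ with $c'\ge 0$ via the type~\II links of Lemma~\ref{LinksIIbetweenFU}\ref{LinksIIbetweenFabc}, this final step produces the non-maximality. Along the way I only need the inclusions of conjugated groups to be \emph{preserved} by the intermediate links; the strictness will be generated once and for all at the very end by Lemma~\ref{Lemm:Fa1cRac}.

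Case~\ref{Fabc-itemb1} is the model: with $b=1$ and $c\ge 0$, Lemma~\ref{Lemm:Fa1cRac} applies verbatim, yielding the morphism $\FF_a^{1,c}\to\RR_{a,c}$ and the inclusion $\varphi G\varphi^{-1}\subsetneq\Autz(\RR_{a,c})$ because $c\not<0$. For Case~\ref{Fabc-itemb0} I would first observe that $-a<c<0$ forces $a\ge1$ and $ac<0$, so that the link $\psi\colon\FF_a^{0,c}\dasharrow\FF_a^{1,c+a}$ of Lemma~\ref{LinksIIbetweenFU}\ref{LinksIIbetweenFabc} satisfies $\psi G\psi^{-1}\subseteq\Autz(\FF_a^{1,c+a})$; since moreover $c+a>0$, composing with Lemma~\ref{Lemm:Fa1cRac} maps $G$ strictly into $\Autz(\RR_{a,c+a})$, matching the group displayed in the statement. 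For Case~\ref{Fabc-itemc1} I would exploit the extra symmetry of $\F_0=\p^1\times\p^1$: interchanging the two rulings (equivalently swapping $x_0\leftrightarrow x_1$ and $(y_0,y_1)\leftrightarrow(z_0,z_1)$ in the global description of \S\ref{sec:first classification}) gives an isomorphism $\FF_0^{b,1}\iso\FF_0^{1,b}$ conjugating $G$ onto $\Autz(\FF_0^{1,b})$, after which Lemma~\ref{Lemm:Fa1cRac} with $a=0$, $c=b\ge0$ finishes the argument.

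Case~\ref{Fabc-itemb2} is the one I expect to require genuine bookkeeping, and it is where the main obstacle lies. The idea is to descend from $\FF_a^{b,c}$ to $\FF_a^{1,c_0}$ with $c_0:=c-a(b-1)$ by applying the \emph{inverse} of the link of Lemma~\ref{LinksIIbetweenFU}\ref{LinksIIbetweenFabc} exactly $b-1$ times, along the chain $\FF_a^{\,j,\,c-a(b-j)}$ for $j=b,b-1,\dots,1$. First note $a\ge1$, since the range $a(b-1)\le c<ab$ is empty when $a=0$. The delicate point is that the inverse link out of a source $\FF_a^{\beta,\gamma}$ preserves the inclusion of conjugated groups only when $a\gamma>0$, i.e.~when the second parameter $\gamma$ of the source is positive. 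The second parameters encountered along the descent are $c-ak$ for $k=0,\dots,b-2$, and here the hypothesis $c\ge a(b-1)$ is exactly what guarantees positivity: $c-ak\ge c-a(b-2)\ge a>0$ for all such $k$. Hence the composite $\psi\colon\FF_a^{b,c}\dasharrow\FF_a^{1,c_0}$ satisfies $\psi G\psi^{-1}\subseteq\Autz(\FF_a^{1,c_0})$, and since $c_0\in[0,a)$ gives $c_0\ge0$, a final application of Lemma~\ref{Lemm:Fa1cRac} produces the strict inclusion into $\Autz(\RR_{a,c_0})$ with $c_0=c-a(b-1)$, as required. The only real work is the verification that every intermediate second parameter stays positive, which the lower bound on $c$ is designed to ensure.
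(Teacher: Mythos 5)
Your proposal is correct and follows essentially the same route as the paper: all four cases are reduced to the contraction $\FF_a^{1,c'}\to\RR_{a,c'}$ of Lemma~\ref{Lemm:Fa1cRac} (which yields the strict inclusion when $c'\ge 0$), using the type \II links of Lemma~\ref{LinksIIbetweenFU}\ref{LinksIIbetweenFabc} and the swap isomorphism $\FF_0^{b,c}\simeq\FF_0^{c,b}$ to get there. The only cosmetic difference is in case~\ref{Fabc-itemb2}, where you stop the descent at $\FF_a^{1,\,c-a(b-1)}$ uniformly (with the correct positivity check $c-ak\ge a>0$ for $k\le b-2$), whereas the paper splits into $c=a(b-1)$ and $a(b-1)<c<ab$, in the latter case descending one step further to $\FF_a^{0,\,c-ab}$ and invoking case~\ref{Fabc-itemb0}; both land in the same place.
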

\begin{proof}
Assertion \ref{Fabc-itemb1} follows directly from Lemma~\ref{Lemm:Fa1cRac}.

\ref{Fabc-itemb0}: As $ac<0$, \cite[Lemma 5.4.2]{BFT} gives a $G$-equivariant birational map $\varphi\colon\FF_a^{0,c} \dashrightarrow \FF_{a}^{1,c+a}$ such that $\varphi G \varphi^{-1}=\Autz(\FF_{a}^{1,c+a})$, and then \ref{Fabc-itemb0} follows from \ref{Fabc-itemb1}.

\ref{Fabc-itemb2}:   As $ab-a \leq c<ab$, we have $a>0$. For all integers $b',c'\ge 1$, there is an $\Autz(\FF_a^{b',c'})$-equivariant link  $\FF_a^{b',c'} \dashrightarrow \FF_{a}^{b'-1,c'-a}$,  by \cite[Lemma 5.4.2]{BFT}. Iterating these links yields a  $G$-equivariant birational map $\varphi\colon\FF_a^{b,c} \dashrightarrow \FF_{a}^{b-r,c-ra}$ with $r$ the smallest non-negative integer such that $b-r\le 0$ or $c-ra\le 0$.
If $c=a(b-1)$, then $r=b-1$ and $\varphi G \varphi^{-1}=\Autz(\FF_{a}^{1,c-a(b-1)})$, so the result follows from \ref{Fabc-itemb1}. 
If $a(b-1)<c<ab$, then $r=b$ and $\varphi G \varphi^{-1}=\Autz(\FF_{a}^{0,c-ab})$, so the result follows from~\ref{Fabc-itemb0}.

\ref{Fabc-itemc1} As $\FF_0^{b,c}$ and $\FF_0^{c,b}$ are isomorphic, via $[x_0:x_1; y_0: y_1; z_0: z_1] \mapsto[x_1:x_0;  z_0: z_1;y_0: y_1]$, Assertion \ref{Fabc-itemc1} follows from \ref{Fabc-itemb1}.
\end{proof}

\begin{lemma} \label{lem:non-max case b=1}
Let $\varphi \colon~\PP_1 \to \P^3$ be the blow-up of the point $[0:0:0:1]$. Then $\varphi \Autz(\PP_1) \varphi^{-1} \subsetneq  \Aut(\P^3)$. 
\end{lemma}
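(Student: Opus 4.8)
The plan is to mirror the argument of Lemma~\ref{lem:Rmn cases non max}\ref{item R10}, replacing the blow-up of a line by the blow-up of a point. First I would write the morphism explicitly as
\[\varphi\colon \PP_1\to\P^3,\qquad [y_0:y_1;z_0:z_1:z_2]\mapsto[y_0z_0:y_0z_1:y_0z_2:y_1],\]
and check that it is well defined: under the $\G_m^2$-action defining $\PP_1$ with $b=1$, namely $(y_0,y_1;z_0,z_1,z_2)\mapsto(\mu\rho^{-1}y_0,\mu y_1;\rho z_0,\rho z_1,\rho z_2)$, each of the four target coordinates is multiplied by $\mu$ and is independent of $\rho$. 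Working in the chart $y_1\neq 0$ one sees that $\varphi$ contracts the section $E=(y_0=0)\cong\P^2$ to the point $p=[0:0:0:1]$ and is an isomorphism elsewhere; hence $\varphi$ is the blow-up of $p$, a projective birational morphism with $\varphi_*\O_{\PP_1}=\O_{\P^3}$.

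Next, since $G:=\Autz(\PP_1)$ is connected and $\varphi$ is proper with $\varphi_*\O_{\PP_1}=\O_{\P^3}$, Proposition~\ref{blanchard} provides a unique regular action of $G$ on $\P^3$ for which $\varphi$ is $G$-equivariant; in particular $\varphi G\varphi^{-1}\subseteq\Aut(\P^3)=\PGL_4$. I emphasise that this step requires no explicit description of $\Autz(\PP_1)$.

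It remains to prove that the inclusion is strict. Writing $g\mapsto g'$ for the descended automorphism, so that $\varphi\circ g=g'\circ\varphi$, this relation shows that $g$ carries fibres of $\varphi$ onto fibres of $\varphi$; since $E=\varphi^{-1}(p)$ is the unique positive-dimensional fibre of $\varphi$, it must be $G$-invariant, and therefore $g'(p)=g'(\varphi(E))=\varphi(g(E))=\varphi(E)=p$ for every $g\in G$. Consequently $\varphi G\varphi^{-1}$ is contained in the stabiliser of $p$ in $\PGL_4$, which is a proper (parabolic) subgroup because $\PGL_4$ acts transitively on $\P^3$. This gives $\varphi\Autz(\PP_1)\varphi^{-1}\subsetneq\Aut(\P^3)$, as desired. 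The only genuinely delicate point is the $G$-invariance of the exceptional divisor $E$, which I expect to be the crux; the cleanest justification is the fibre argument just given (alternatively, $E$ spans the extremal ray of $\overline{\NE}(\PP_1)$ corresponding to the divisorial contraction $\varphi$, and the connected group $G$ must fix each extremal ray, hence makes $\varphi$ equivariant and $E$ invariant).
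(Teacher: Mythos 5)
Your proposal is correct and follows essentially the same route as the paper: the same explicit formula for the blow-up, Proposition~\ref{blanchard} for equivariance, and strictness because $\varphi\Autz(\PP_1)\varphi^{-1}$ must fix the point $p_0=[0:0:0:1]$. Your extra justification that the exceptional divisor is $G$-invariant (as the unique positive-dimensional fibre of $\varphi$) simply makes explicit what the paper leaves implicit.
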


\begin{proof}
The blow-up of the point $p_0=[0:0:0:1]$ in $\P^3$ is given by
\[\varphi\colon~\PP_1 \to \P^3,\ \ [y_0:y_1; z_0 : z_1: z_2] \mapsto [y_0 z_0 : y_0 z_1: y_0 z_2 : y_1].\]
By Proposition \ref{blanchard}, the morphism $\varphi$ is $\Autz(\PP_1) $-equivariant, hence $\varphi \Autz(\PP_1)  \varphi^{-1}\\ \subseteq  \Aut(\P^3)$. The inclusion is strict as $\varphi \Autz(\PP_1)  \varphi^{-1}$ fixes $p_0$.
\end{proof}

\begin{lemma} \label{lem:UmeNotMax}
Let $a,b \geq 1$ and $c  \geq 2$ be such that $c=ak+2$ with $0 \leq k \leq b$.
Let $\pi\colon~\U_{a}^{b,c} \to \F_a$ be an Umemura $\p^1$-bundle  and let $G=\Autz(\U_{a}^{b,c})$.
\begin{enumerate}
\item \label{UmeMax2} If $a=1$ and $b=c \geq 2$, then there is a birational map $\varphi\colon~\U_{a}^{b,c} \dasharrow Q_3 \subseteq \P^4$ to a smooth quadric $Q_3$, such that $\varphi G \varphi^{-1} \subsetneq \Autz(Q_3)=\PSO_5$. Moreover, there is an $\Autz(\U_1^{2,2})$-equivariant birational morphism $\U_1^{2,2}\to Q_3$ that contracts the two divisors $x_0=0$ and $y_0=0$ onto a line $\ell\subset Q_3$.
\item \label{UmeMax3} If $a \geq 2$, $b \geq 1$, and $c=2+a(b-1)$, then there is a birational map $\varphi\colon~\U_{a}^{b,c} \dasharrow \RR_{a-1,0}$ such that $\varphi G \varphi^{-1} \subsetneq \Autz(\RR_{a-1,0})$. In particular, $\U_a^{1,2}$ is isomorphic to the blow-up of $\RR_{a-1,0}$ along a section of the $\p^2$-bundle $\RR_{a-1,0}\to \p^1$.
\end{enumerate}
\end{lemma}

\begin{proof}
\ref{UmeMax2}: If $b=c \geq 2$, then \cite[Corollary 5.5.4]{BFT} yields the existence of a $G$-equivariant birational map $\delta\colon~\U_{1}^{b,c} \dashrightarrow \U_{1}^{2,2}$. Hence $\delta G \delta^{-1} \subseteq \Autz(\U_{1}^{2,2})$, and so it suffices to prove  \ref{UmeMax2} for $G=\Autz(\U_{1}^{2,2})$. 

We use the notation of {\upshape\S~\ref{sec:first classification}\ref{DefiUme}}. 
There is a morphism $\varphi$ from $\U_1^{2,2}$ to $\p^4$ given by
\[ \F_2\times\A^1\to \p^4, ([x_0:x_1;y_0:y_1],z)\to [x_0y_1^2+x_1z:x_1:x_0y_0^2z:x_0y_0^2:x_0y_0y_1]\] 
on the first chart, and thus by
\[ \F_2\times\A^1\to \p^4, ([x_0:x_1;y_0:y_1],z)\to [x_1:x_1z-x_0y_1^2:x_0y_0^2:x_0y_0^2z:x_0y_0y_1]\] on the second chart.
Outside of $x_0y_0=0$, it is an isomorphism with its image:
On each chart we obtain $\A^3\hookrightarrow \F_2\times\A^1$, $(x,y,z)\mapsto ([1:x;1:y],z)$, and the composition yields respectively
\[(x,y,z)\mapsto [y^2+xz:x:z:1:y]\]
\[(x,y,z)\mapsto [x:xz-y^2:1:z:y]\]
on the first and the second chart.
Hence, $\varphi$ is a birational morphism whose image is the quadric
\[Q_3=\{[x_0:\cdots:x_4]\in \p^4\mid x_0x_3-x_4^2-x_1x_2=0\} \subseteq \P^4.\]
More precisely, $\varphi$ is the blow-up  $\eta_1\colon \mathrm{Bl}_\ell(Q_3)\to Q_3$ of the line $\ell=(x_2=x_3=x_4=0)$ in $Q_3$ followed by the blow-up of a section $\ell_1\subset \mathrm{Bl}_\ell(Q_3)$ of $\eta_1^{-1}(\ell)\to \ell$. According to Proposition~\ref{blanchard}, it is $G$-equivariant, and so $\varphi G \varphi^{-1}$ is the seven-dimensional parabolic subgroup $P=\{ g \in \PSO_5 \ | \ g \cdot \ell=\ell\} \subsetneq \PSO_5$.

\ref{UmeMax3}: By \cite[Corollary 5.5.4]{BFT}, there is a $G$-equivariant birational map $\delta\colon\U_{a}^{b,c} \dashrightarrow \U_{a}^{1,2}$. Therefore $\delta G \delta^{-1} \subseteq \Autz(\U_{a}^{1,2})$ and so  it suffices to prove  \ref{UmeMax3} for $G=\Autz(\U_{1}^{1,2})$. 

Let $\ell$ be the closure in $X=\U_{a}^{1,2}$ of the line defined by $X$ $([0:1;*:*],0) \subseteq \F_1\times\A^1$ in the first chart of $X$ (see {\upshape\S~\ref{sec:first classification}\ref{DefiUme}} for the definition of $X$ with the two charts and the transition function). 
Then the class of $\ell$ generates an extremal ray of the cone of effective curves $\NE(X)$ and $K_X \cdot \ell=-1$ (see Lemma~\ref{lem:UmePicN}). 
The variety $X$ can be viewed as a $\F_1$-bundle over $\p^1$, and contracting the numerical class of $\ell$ corresponds to contract fibrewise the $(-1)$--section   $\F_1 \to \p^2, \, [x_0:x_1;y_0:y_1] \mapsto [x_0y_0:x_0y_1:x_1]$.
Using the transition function of $X$ given in {\upshape\S~\ref{sec:first classification}\ref{DefiUme}} we obtain a $\p^2$-bundle $X' \to \p^1$ whose transition function is
\[  \begin{array}{ccc}
\p^2\times\A^1 \setminus \{0\}& \dashrightarrow & \p^2\times\A^1 \setminus \{0\} \\
([u_0:u_1:u_2],t) & \mapsto & ([z^au_0:u_1:z^2u_2+zu_1],\frac{1}{t})
\end{array} \] 
Composing with automorphisms on both sides, we obtain
\[  \begin{array}{ccc}
\p^2\times\A^1 & \dashrightarrow & \p^2\times\A^1 \\
([u_0:u_1:u_2],t) & \mapsto & ([z^au_0:zu_1:zu_2],\frac{1}{t})
\end{array} \] 
and so according to Remark~\ref{Rem:OpenSubsetsUV}, we have $X' \simeq \RR_{a-1,0}$ . Therefore, there is a $G$-equivariant birational morphism $\varphi\colon~X \to \RR_{a-1,0}$, and so $\varphi G \varphi^{-1} \subseteq \Autz(\RR_{a-1,0})$. But according to Lemma~\ref{lem:aut P2 bundle}, the group $\Autz(\RR_{a-1,0})$ acts on $\RR_{a-1,0}$ with two orbits, which are a divisor and its complement, while $\varphi G \varphi^{-1}$ must preserve the section of $\RR_{a-1,0} \to \P^1$ obtained by contracting the numerical class of $\ell$. Hence, $\varphi G \varphi^{-1} \subsetneq \Autz(\RR_{a-1,0})$ and we get \ref{UmeMax3}.
\end{proof}

\begin{corollary} \label{cor:V2 not max}
There exists a birational morphism $\psi\colon \V_2 \to Q_3 \subseteq \P^4$ such that $\psi \Autz(\V_2) \psi^{-1} \subsetneq \Autz(Q_3)=\PSO_5$.
\end{corollary}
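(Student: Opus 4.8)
The plan is to obtain the morphism $\psi\colon \V_2\to Q_3$ by factoring the birational morphism $\U_1^{2,2}\to Q_3$ of Lemma~\ref{lem:UmeNotMax}\ref{UmeMax2} through the type~\III contraction $\U_1^{2,2}\to \V_2$ coming from Lemma~\ref{lem:sequence of links for Umemura bundles}.

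First I would assemble the two ingredients. Applying Lemma~\ref{lem:sequence of links for Umemura bundles} with $b=2$ and $n=0$ produces a birational morphism $\psi_0\colon \U_1^{2,2}\to \V_2$ (the type~\III link $\U_1^{b,2}\to \V_b$) with $\psi_0\,\Autz(\U_1^{2,2})\,\psi_0^{-1}=\Autz(\V_2)$; by its construction via the descent lemma along the contraction $\F_1\to \p^2$ of $\s{-1}$, the morphism $\psi_0$ contracts the single prime divisor $D=\pi^{-1}(\s{-1})$, where $\pi\colon \U_1^{2,2}\to \F_1$ is the bundle projection and $\s{-1}=(y_0=0)\subseteq \F_1$ in the notation of \S\ref{sec:first classification}. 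On the other hand, Lemma~\ref{lem:UmeNotMax}\ref{UmeMax2} with $a=1$ and $b=c=2$ gives the explicit birational morphism $\varphi\colon \U_1^{2,2}\to Q_3\subseteq \p^4$ with $\varphi\,\Autz(\U_1^{2,2})\,\varphi^{-1}\subsetneq \PSO_5$; inspecting the formula for $\varphi$ in the proof of that lemma, one sees that $\varphi$ contracts exactly two prime divisors, namely $(x_0=0)$ and $(y_0=0)=D$, each onto a curve.

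The main point, and the step I expect to be the most delicate, is to check that $\varphi$ factors through $\psi_0$. As $\psi_0$ is an isomorphism away from $D$, its only positive-dimensional fibres are those of $\psi_0|_D\colon D\to \psi_0(D)$; since $\psi_0$ descends the $\p^1$-bundle structure along $\F_1\to \p^2$, these fibres are precisely the rulings of $D$ over $\s{-1}$, that is, the curves along which the fibre coordinate $[x_0:x_1]$ is constant. Reading off the explicit expression of $\varphi$ on the chart $\F_2\times \A^1$, the restriction $\varphi|_D$ depends only on $[x_0:x_1]$, hence contracts exactly these same curves. Therefore $\varphi$ is constant on every fibre of $\psi_0$; as $\psi_0$ is proper and $(\psi_0)_*\O_{\U_1^{2,2}}=\O_{\V_2}$, the map $\varphi$ factors through the Stein factorisation of $\psi_0$, which is $\psi_0$ itself (\cite[Corollary~III.11.5]{Har77}). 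This yields a unique morphism $\psi\colon \V_2\to Q_3$ with $\varphi=\psi\circ\psi_0$.

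It then remains to conclude. The map $\psi$ is a birational morphism, being the composition $\varphi\circ\psi_0^{-1}$ of birational maps, and combining the two conjugacy statements above gives
\[\psi\,\Autz(\V_2)\,\psi^{-1}=\psi\,\psi_0\,\Autz(\U_1^{2,2})\,\psi_0^{-1}\psi^{-1}=\varphi\,\Autz(\U_1^{2,2})\,\varphi^{-1}\subsetneq \Autz(Q_3)=\PSO_5,\]
which is exactly the asserted statement.
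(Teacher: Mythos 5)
Your overall route is exactly the paper's: the proof of Corollary~\ref{cor:V2 not max} consists precisely in factoring the morphism $\varphi\colon \U_1^{2,2}\to Q_3$ of Lemma~\ref{lem:UmeNotMax}\ref{UmeMax2} through the contraction $\delta\colon \U_1^{2,2}\to\V_2$ and invoking $\delta\Autz(\U_1^{2,2})\delta^{-1}=\Autz(\V_2)$; the paper states the factorisation without verification, and your Stein-factorisation/rigidity framework for establishing it is the right one. However, the explicit computation you offer for that key step is incorrect on both sides. Restricting the displayed formula for $\varphi$ to $D=(y_0=0)$ on the chart $\F_2\times\A^1$ gives $([x_0:x_1;0:1],z)\mapsto[x_0+x_1z:x_1:0:0:0]$, which visibly depends on $z$, so it is false that $\varphi|_D$ ``depends only on $[x_0:x_1]$''. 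Symmetrically, the positive-dimensional fibres of $\psi_0|_D$ are not the curves $[x_0:x_1]=\mathrm{const}$: by the proof of Lemma~\ref{lem:VbPicN}, the contraction $\psi_0$ contracts the ruling of $H_y\simeq\p^1\times\p^1$ in the class $r=l_{00}-f$, and under the identification of $H_y$ with $\p^1\times\p^1$ via the automorphism $\alpha_1$ appearing in the proof of Lemma~\ref{lem:UmePicN}\ref{UmePicN3}, the $r$-curves are the level sets of $[x_0+x_1z:x_1]$, whereas your curves $[x_0:x_1]=\mathrm{const}$ lie in the class $l_{00}$ (they form a pencil of $(1,1)$-curves all passing through a common point of the fibre over $z=\infty$, so they could not be the fibres of a morphism $D\to\p^1$ in any case).

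The two slips happen to be simultaneously repairable: both the fibres of $\psi_0|_D$ and the level sets of $\varphi|_D$ are exactly the curves $[x_0+x_1z:x_1]=\mathrm{const}$, so $\varphi$ is indeed constant on every fibre of $\psi_0$ and your rigidity argument then produces $\psi$ as claimed. With that computation corrected, the remainder of your proof (birationality of $\psi$ and the conjugation identity giving the strict inclusion $\psi\Autz(\V_2)\psi^{-1}\subsetneq\PSO_5$) is correct and agrees with the paper's argument.
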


\begin{proof}
The morphism $\varphi\colon \U_{1}^{2,2} \to Q_3$ described in the proof of Lemma~\ref{lem:UmeNotMax}\ref{UmeMax2} factorises through the contraction morphism $\delta\colon \U_{1}^{2,2} \to \V_2$. Since $\delta \Autz(\U_{1}^{2,2}) \delta^{-1}=\Autz(\V_2)$, it follows that $\psi \Autz(\V_2) \psi^{-1} \subsetneq \Autz(Q_3)$.
\end{proof}

\smallskip

To study the Schwarzenberger $\p^1$-bundle $\SS_b\to \p^2$, we can consider the $\p^1$-bundle $\TT_b\to \p^1\times\p^1$ given by the pull-back $\SS_{b}\times_{\p^2} (\p^1\times\p^1)\to \p^1\times\p^1$ that comes from the double covering $\kappa\colon~\p^1\times\p^1\to \p^2$ defined in {\upshape\S~\ref{sec:first classification}\ref{DefiSSb}}. It is much easier to work with the $\p^1$-bundle $\TT_b$, which can be described by the following result:

\begin{lemma}\label{Lemm:AutTTb} $($\cite[Lemma 3.5.5]{BFT}$)$ Assume that $\car(\k)\not=2$. Let $b\ge 1$ be an integer. Let  $\pi$ and $\pi'$ denote the $\p^1$-bundles $\pi\colon~\TT_b\to \p^1\times\p^1$ and $\pi'\colon~\FF_0^{b+1,b+1}\to \p^1\times\p^1$ respectively. Then, the following hold.
\begin{enumerate}
\item\label{S1S2TTb}
For each $i=1,2$, denoting by $\mathrm{pr}_i\colon~\p^1\times\p^1\to \p^1$ the $i$-th projection, the morphism $\mathrm{pr}_i\circ \pi\colon~\TT_b\to \p^1$ is a $\F_b$-bundle.
Denoting by $S_i\subseteq \TT_b$ the union of the $(-b)$-curves of the $\F_b$'s, the intersection $C=S_1\cap S_2$ is a curve isomorphic to the diagonal $\Delta\subseteq \p^1\times\p^1$ via~$\pi$. The curve $C$ coincides with the intersection of $\pi^{-1}(\Delta)$ and the surface $x_0=0$ in both charts.
\item\label{CommDiag}
We have $\Autz(\TT_b)\simeq \PGL_2$, and a commutative diagram
\[\xymatrix@R=3mm@C=2cm{
& Z\ar[rd]^{\eta}\ar[ld]_{\epsilon}\\
     \TT_{b}\ar@{-->}[rr]^{\psi} \ar[rd]  && \FF_0^{b+1,b+1} \ar[ld] \\
    & \p^1\times\p^1
  }\]
  where all maps are $\PGL_2$-equivariant, the action of $\PGL_2$ on $\p^1\times\p^1$ is the diagonal one, the action of $\PGL_2$ on $\FF_0^{b+1,b+1}$ is given by 
\[ [x_0:x_1;y_0:y_1;z_0:z_1]\mapsto [x_0:x_1;\alpha y_0+\beta y_1:\gamma y_0+\delta y_1;\alpha z_0+\beta z_1:\gamma z_0+\delta z_1],\]
the morphism $\eta$ the blow-up of the curve $C'\subseteq \FF_0^{b+1,b+1}$ given by $\p^1\hookrightarrow \FF_0^{b+1,b+1}$, $[u:v]\mapsto [1:1:u:v:u:v]$, and the morphism $\epsilon$ is the blow-up of the curve $C$. 
  
  Moreover, {the group $\Autz(\TT_b)_{\P^1 \times \P^1}$} is trivial.
\item\label{HatS:OnlyInvariantCurve}
If $\car(\k)$ does not divide $b+1$, the curve $C$ is the unique curve invariant by $\Autz(\TT_b)$.
  \end{enumerate}
\end{lemma}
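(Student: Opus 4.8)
The plan is to leverage that $\TT_b$ is obtained from the Schwarzenberger bundle $\SS_b\to\p^2$ by the base change $\kappa\colon\p^1\times\p^1\to\p^2$, so that all the geometry is controlled by the conic $\Gamma$, its diagonal preimage $\Delta$, and the automorphisms of $\p^2$ preserving $\Gamma$.

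For part~\ref{S1S2TTb} I would first use that the two rulings $\{t\}\times\p^1$ and $\p^1\times\{s\}$ are mapped by $\kappa$ isomorphically onto the two families of lines tangent to $\Gamma$. Since the preimage of a tangent line under $\SS_b\to\p^2$ is an $\F_b$ (the property recalled just after the definition of $\SS_b$, from \cite[Lemma~4.2.5(1)]{BFT}), the restriction of $\pi$ over each ruling is an $\F_b$, so each $\mathrm{pr}_i\pi$ is an $\F_b$-bundle. The surface $S_i$, being the union of the $(-b)$-curves, is then a section of $\pi$ (it meets every fibre once), and $C=S_1\cap S_2$ is the locus where the two sections coincide. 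As the $(-b)$-section over a tangent line sits over the tangency point, which lies on $\Gamma$, and $\kappa^{-1}(\Gamma)=\Delta$, the curve $C$ maps isomorphically onto $\Delta$ via $\pi$; equivalently one checks in the two charts that $C=\pi^{-1}(\Delta)\cap\{x_0=0\}$.

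For part~\ref{CommDiag} the strategy is to sandwich $\Autz(\TT_b)$ between two copies of $\PGL_2$. By Proposition~\ref{blanchard} the group $\Autz(\TT_b)$ acts on $\p^1\times\p^1$, giving a homomorphism $\Autz(\TT_b)\to\Autz(\p^1\times\p^1)=\PGL_2\times\PGL_2$. I would show this map is injective by proving that every automorphism of the $\p^1$-bundle over the base is trivial: such an automorphism is a global section of the relative projective-linear group of the bundle, which vanishes because the underlying rank-two Schwarzenberger bundle is simple for $b\ge 1$ (this is the ``moreover'' assertion in the statement, and can also be checked directly in charts). Since $\Autz(\TT_b)$ is connected it cannot interchange the two $\F_b$-fibrations, hence it preserves $S_1$, $S_2$, and therefore $C$; its image thus preserves $\Delta$, and from $(g,h)\cdot\Delta=\Delta\iff g=h$ we get that the image lies in the diagonal $\PGL_2$. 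Conversely, $\Aut(\p^2,\Gamma)\cong\PGL_2$ (Proposition~\ref{Prop:ConicAutoFieldK}) acts on $\p^2$ preserving $\Gamma$, hence lifts to $\SS_b$ and, by base change through $\kappa$, to $\TT_b$, covering the diagonal action on $\p^1\times\p^1$. Combining the two inclusions gives $\Autz(\TT_b)\cong\PGL_2$. The commutative diagram is then produced by the elementary transformation along the invariant curve $C$: blowing up $C$ yields $X$, and contracting the strict transform of $\pi^{-1}(\Delta)$ yields a new $\p^1$-bundle over $\p^1\times\p^1$, which an explicit chart computation identifies with $\FF_0^{b+1,b+1}$ and the exceptional curve with the diagonal section $C'$; $\PGL_2$-equivariance of $\epsilon,\eta,\psi$ is automatic because $C$ and $C'$ are the respective invariant curves and the maps are canonical.

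For part~\ref{HatS:OnlyInvariantCurve} I would argue that under the diagonal $\PGL_2$-action on $\p^1\times\p^1$ the only closed orbit is $\Delta$ and its complement is a single two-dimensional orbit; hence any invariant curve $\gamma\subseteq\TT_b$ cannot project to a point (there is no fixed point downstairs) and must project onto a curve, namely $\Delta$, so $\gamma\subseteq\pi^{-1}(\Delta)$. This surface is a $\PGL_2$-equivariant Hirzebruch surface over $\Delta\cong\p^1$ with the standard base action, on which $C$ is the unique section of negative self-intersection and is therefore invariant; a second invariant curve would be a second invariant section, corresponding to an invariant line in the $\SL_2$-module defining the bundle, and the hypothesis $\car(\k)\nmid b+1$ is exactly what prevents the relevant symmetric-power module from splitting off such an extra invariant line. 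I expect the two genuine obstacles to be this last representation-theoretic uniqueness, where the positive-characteristic behaviour of $\mathrm{Sym}^{b+1}$ forces the divisibility hypothesis, together with the triviality of the relative automorphism group in part~\ref{CommDiag}; by contrast the elementary-transformation chart computation and the tangent-line identifications are routine but bookkeeping-heavy.
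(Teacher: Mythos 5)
This lemma is not proved in the present paper: it is imported verbatim from the companion work, cited as \cite[Lemma 3.5.5]{BFT}, so there is no in-paper argument to compare yours against. Your overall architecture (viewing $\TT_b$ as the base change of $\SS_b$ along $\kappa$, sandwiching $\Autz(\TT_b)$ between the kernel of the action on the base and the diagonal $\PGL_2$, and realising $\psi$ as the elementary transformation along $C$) is the natural one and is consistent with how these objects are set up in \S\ref{sec:first classification} and Lemma~\ref{lem:SbPicN}. However, as written your text is a programme rather than a proof, and the three points you defer are exactly the ones carrying the content.

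Concretely: (i) in part~\ref{S1S2TTb} the sentence ``the $(-b)$-section over a tangent line sits over the tangency point'' is not coherent --- that $(-b)$-curve is a section over the \emph{whole} tangent line, so it gives no reason why $S_1\cap S_2$ should lie over $\Delta$; an abstract argument only shows $\pi(S_1\cap S_2)$ is a symmetric divisor class, and identifying it with $\Delta$ genuinely requires the chart computation you postpone. (ii) In part~\ref{CommDiag} you derive injectivity of $\Autz(\TT_b)\to\Aut(\p^1\times\p^1)$ from simplicity of the Schwarzenberger bundle on $\p^2$, but the relevant object is its pullback $\kappa^*E$ to $\p^1\times\p^1$, and simplicity is \emph{not} in general preserved under pullback along a finite double cover (pullbacks can and often do decompose). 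Here $\kappa^*E$ sits in an extension of $\O(-b-1,0)$ by $\O(-1,-b-2)$, and what you must actually prove is that this extension is non-split (plus the vanishing of the two $\Hom$ groups); without that, the ``moreover'' statement, and hence the injectivity your whole identification of $\Autz(\TT_b)$ with $\PGL_2$ rests on, is unsupported. (iii) In part~\ref{HatS:OnlyInvariantCurve} the reduction to invariant sections of $\pi^{-1}(\Delta)\to\Delta$ is fine (an invariant irreducible curve dominating $\Delta$ meets each fibre in points fixed by the connected solvable stabiliser, hence is a section), but the decisive step --- computing the two-dimensional $B$-module defining this equivariant $\F$-bundle and showing that $\car(\k)\nmid b+1$ is precisely the condition excluding a second invariant line --- is the whole point of the divisibility hypothesis and is left as a guess about $\mathrm{Sym}^{b+1}$. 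Until (ii) and (iii) are carried out, the proposal cannot be accepted as a proof.
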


The above lemma gives a $\PGL_2$ action on $\TT_b$, which goes down to a $\PGL_2$-action on $\SS_b$, which is in fact the whole automorphism group $\Autz(\SS_b)$ for $b\ge 2$ (\cite[Lemma 4.2.5(2)]{BFT}).

We now determine the intersection form on $\SS_b$, the cone of effective curves $\NE(\SS_b)$, and the canonical divisor $K_{\SS_b}$.

\begin{lemma} \label{lem:SbPicN}
Assume that $\car(\k)\not=2$. 
Let $b \geq 2$, let $\pi\colon~X=\SS_b \to \p^2$ be the Schwarzenberger $\p^1$-bundle and let $\epsilon \colon~\TT_b\to \SS_b$ be the double covering  given by the projection $\TT_b=\SS_{b}\times_{\p^2} (\p^1\times\p^1)\to \SS_{b}$. Taking the notation of {\upshape\S~\ref{sec:first classification}\ref{DefiSSb}} and Lemma~$\ref{Lemm:AutTTb}$, the following hold:
\begin{enumerate}
\item\label{SbFourOrbits}
If $\car(\k)$ does not divide $b+1$, the action of $\Autz(\SS_b)\simeq \PGL_2$ on $\SS_b$ has four orbits: the curve $\gamma$,  the surfaces $D \setminus \gamma$ and $E \setminus \gamma$, and the open orbit $\SS_b \setminus (D \cup E)$, where $\gamma=\epsilon(C)\simeq \p^1$, $C\subseteq \TT_b$ is the curve as in Lemma~$\ref{Lemm:AutTTb}$, $E=\epsilon(S_1)=\epsilon(S_2)\simeq \p^1\times\p^1$ and $D=\pi^{-1}(\Gamma)$. If $\car(\k)$ divides $b+1$, the four subsets $\gamma$, $D \setminus \gamma$, $E \setminus \gamma$, and $\SS_b \setminus (D \cup E)$ are still $\Autz(\SS_b)$-invariant, but they are not the $\Autz(\SS_b)$-orbits anymore. 

\item\label{SbPicN1}
The group of numerical equivalence classes of $1$-cocycles $\NS_\Q(\SS_b)$ is generated by $H:=\pi^{-1}(\ell)$, with $\ell \subseteq \p^2$ a line, and by $E$.
\item\label{SbPicN2}
Let $f$ be  a fibre of $\pi$, and let $s_1$ and $s_2$ be the two curves of $E \subseteq \SS_b$ obtained respectively as the images of the two curves $s_1'=\{[1:0;0:1;u:v]\mid [u:v]\in \p^1\}$ and $s_2'=\{[0:1;0:1;u:v]\mid [u:v]\in \p^1\}$ of $\FF_{0}^{b+1,b+1}$ {through the birational map $\epsilon \circ \psi^{-1}$}. For $i=1,2$, the group of numerical equivalence classes of $1$-cycles $N_1^\Q(\SS_b)$ is generated by $f$ and $s_i$.  
\item\label{SbPicN3}
The intersection form on $X$ satisfies
\[  \begin{array}{|c|cc|}
\hline
& E & H\\
\hline
f &  2& 0\\
s_1&  1-b&1  \\
s_2& b+3 & 1 \\
\hline\end{array}\]
\item\label{SbPicN4}
The cone of effective curves $\NE(X)$ is generated by $s_1$ and $f$.
\item\label{SbPicN5}
The canonical divisor $K_X$ is $-E-2H$, and so $K_X\cdot f=-2$ and $K_X\cdot s_1=b-3$.
\end{enumerate}
\end{lemma}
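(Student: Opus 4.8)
The plan is to exploit the two auxiliary presentations of $\SS_b$ furnished by Lemma~\ref{Lemm:AutTTb}, namely the double cover $\epsilon\colon \TT_b\to\SS_b$ and the common resolution sitting between $\TT_b$ and the decomposable bundle $\FF_0^{b+1,b+1}$, whose intersection theory is transparent from the toric $\G_m^3$-presentation in \S\ref{sec:first classification}\ref{DefiSSb}. Throughout I will use that $\pi\colon\SS_b\to\p^2$ is a $\p^1$-bundle, so $\rho(\SS_b)=\rho(\p^2)+1=2$ and both $N^1_\Q(\SS_b)$ and $N_1^\Q(\SS_b)$ are $2$-dimensional and mutually dual. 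This observation reduces assertions~\ref{SbPicN1} and~\ref{SbPicN2} to the \emph{linear independence} of the proposed generators, which I will read off \emph{a posteriori} from the table in~\ref{SbPicN3}: the pairing of $\{E,H\}$ against $\{f,s_1\}$ has matrix of determinant $2\ne 0$, so $E,H$ span $N^1_\Q(\SS_b)$ and $f,s_1$ span $N_1^\Q(\SS_b)$ once the table is established.

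Next I would prove the orbit decomposition~\ref{SbFourOrbits}. Both $\pi$ and $\epsilon$ are $\PGL_2$-equivariant (Proposition~\ref{blanchard} and Lemma~\ref{Lemm:AutTTb}), and the $\PGL_2$-action on $\p^2$ through the conic $\Gamma=\kappa(\Delta)$ has exactly the two orbits $\Gamma$ and $\p^2\setminus\Gamma$. Hence every orbit of $\SS_b$ lies over one of these, so it is contained in $D=\pi^{-1}(\Gamma)$ or in $\SS_b\setminus D$. The four sets $\gamma$, $D\setminus\gamma$, $E\setminus\gamma$ and $\SS_b\setminus(D\cup E)$ are manifestly $\PGL_2$-stable, being images under $\epsilon$ of the stable sets $C$, $\epsilon^{-1}(D)$ and $S_1\cup S_2$ of $\TT_b$ (Lemma~\ref{Lemm:AutTTb}). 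When $\car(\k)\nmid b+1$, transitivity on each stratum follows by combining the transitivity of $\PGL_2$ on $\Gamma$ and on $\p^2\setminus\Gamma$ with the transitivity of a generic isotropy group along the relevant part of the $\p^1$-fibres, using that $C$ is the \emph{unique} invariant curve in $\TT_b$; when $\car(\k)\mid b+1$ this uniqueness fails and one only retains stability, exactly as stated.

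The computational heart is the intersection table~\ref{SbPicN3}. Here I would transport the three classes $E$, $H$ and the two curves $s_1,s_2$ to $\FF_0^{b+1,b+1}$ through the diagram of Lemma~\ref{Lemm:AutTTb}. On $\FF_0^{b+1,b+1}$ all intersection numbers are explicit from the $\G_m^3$-quotient, the curves $s_1',s_2'$ are given in coordinates in~\ref{SbPicN2}, and $H$ and $E$ pull back to known combinations of the tautological and fibre classes; applying the blow-up formulas for the two smooth-curve blow-ups of Lemma~\ref{Lemm:AutTTb} together with the projection formula then yields each entry. Equivalently one may compute upstairs on $\TT_b$, using that $\epsilon\colon\TT_b\to\SS_b$ is finite of degree $2$, branched along $D$ and ramified along $\pi_{\TT}^{-1}(\Delta)$, so that $\epsilon^*H$, $\epsilon^*E$ and the preimages of $f,s_i$ have controlled intersection numbers recovered via $\epsilon_*\epsilon^*=2$. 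The factor-of-two bookkeeping across the double cover, and the correct account of the exceptional and ramification divisors, is the step I expect to be the main obstacle; in particular $f\cdot E=2$ encodes that $E\to\p^2$ is a double section, and the asymmetry $s_1\cdot E=1-b$ versus $s_2\cdot E=b+3$ must be matched against the explicit $s_i'$.

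Finally, assertions~\ref{SbPicN4} and~\ref{SbPicN5} follow formally once the table is in hand. For the cone~\ref{SbPicN4}, $\NE(\SS_b)$ is a two-dimensional cone with two extremal rays; the fibre class $f$ spans one of them, being contracted by $\pi$. Since $s_1\cdot E=1-b<0$ for $b\ge 2$, every irreducible curve numerically proportional to $s_1$ must lie in the effective divisor $E\simeq\p^1\times\p^1$, and inspection there shows $s_1$ generates the second extremal ray; as $H$ is nef with $H\cdot f=0$ and $H\cdot s_1=1>0$, no effective class lies outside $\R_{\ge0}f+\R_{\ge0}s_1$. For the canonical class~\ref{SbPicN5}, I write $K_{\SS_b}=\alpha E+\beta H$ in the basis of~\ref{SbPicN1}; intersecting with $f$ and using $K_{\SS_b}\cdot f=-2$ (the restriction of $\omega^\vee$ to the $\p^1$-fibre has degree $2$) together with $f\cdot E=2$ and $f\cdot H=0$ gives $\alpha=-1$, and one further intersection — for instance $K_{\SS_b}\cdot s_1$ computed by adjunction on $E\simeq\p^1\times\p^1$, or by Hurwitz for $\epsilon$ from $K_{\TT_b}$ — pins down $\beta=-2$. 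The displayed values $K_{\SS_b}\cdot f=-2$ and $K_{\SS_b}\cdot s_1=b-3$ are then immediate from the table.
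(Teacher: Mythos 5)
Your proposal is correct and follows essentially the same route as the paper: descend the five-orbit decomposition of $\TT_b$ (relying on the uniqueness of the invariant curve $C$), compute the intersection table through the diagram relating $\TT_b$ and $\FF_0^{b+1,b+1}$, and deduce the cone and the canonical class formally from the table. The ``factor-of-two bookkeeping'' you flag as the main obstacle is handled in the paper by restricting $E$ to $\pi^{-1}(\ell)\simeq\F_b$ for $\ell$ a line tangent to $\Gamma$, where $E|_{\pi^{-1}(\ell)}=s_1+s_2$ and the key entries become the surface computations $s_i\cdot(s_1+s_2)$ with $s_1^2=-b$, $s_1\cdot s_2=1$, $s_2^2=b+2$ — equivalent to your ``upstairs'' computation in $F_1\simeq\F_b$, and no factor of two ever appears.
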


\begin{proof}
We first observe that the involution $\hat\sigma\in \Aut(\FF_0^{b+1,b+1})$ given by  $[x_0:x_1;y_0:y_1;z_0:z_1] \mapsto [x_1:x_0;z_0:z_1;y_0:y_1]$ preserves the curve $C'$ of Lemma~\ref{Lemm:AutTTb}, so $\hat\sigma$ {induces an involution $\sigma':= \psi^{-1} \circ \hat\sigma \circ \psi \in \Aut(\TT_b)$}. We then prove that $\sigma'$ is equal to the involution $\sigma\in \Aut(\TT_b)$ associated with the double cover $\epsilon\colon~\TT_b\to \SS_b$. Indeed, both involutions are lifts of the involution $\tau\in \Aut(\p^1\times\p^1)$ that is the exchange of the two factors. Since every automorphism of the $\p^1$-bundle $\TT_b\to \p^1\times\p^1$ is trivial (Lemma~\ref{Lemm:AutTTb}\ref{CommDiag}), $\sigma=\sigma'$ is the unique lift of $\tau$ in $\Aut(\TT_b)$. 

Let us denote by $S'_1,S'_2\subseteq  \FF_0^{b+1,b+1}$ the sections given by $x_1=0$ and $x_0=0$ respectively. Let $q=[0:1]\in \p^1$ be a point (the same would work with another point). For $i=1,2$, we consider the preimage $F'_i\subseteq \FF_0^{b+1,b+1}$ of the fibre of the $i$-th projection $\p^1\times\p^1$ over $q$. {With the explicit description of $\FF_0^{b+1,b+1}$ given at the beginning of \upshape\S~\ref{sec:first classification}, we check} that $F'_1\simeq F'_2\simeq \F_{b+1}$, with exceptional sections given by $s_1'=F'_1\cap S'_1$ and $F'_2\cap S'_2$ and sections of self-intersection $b+1$ given by $s_2'=F'_1\cap S'_2$ and $F'_2\cap S'_1$. 
 As $C'\cap F'_1=[1:1;0:1;0:1]$ is outside of the exceptional section, the images $F_1,F_2\subseteq  \TT_b$ of $F'_1$, $F'_2$ {through $\psi^{-1}$} are isomorphic to $\F_b$, with exceptional sections being the intersections with the strict transforms of $S'_1$ and $S'_2$ respectively, which are then equal to $S_1$ and $S_2$. Hence, $\hat{s}_1=F_1\cap S_1$ and $F_2\cap S_2$ are the sections of self-intersection $-b$ of $F_1$ and $F_2$, and  $\hat{s}_2=F_1\cap S_2$ and $F_2\cap S_1$ are sections of self-intersection $b+2$ of $F_1$ and $F_2$ respectively. 

We are now ready to prove the lemma:

\ref{SbFourOrbits}: Since $\hat\sigma$ exchanges $S'_1$ and $S'_2$, the involution $\sigma$ exchanges $S_1$ and $S_2$, which yields $E=\epsilon(S_1)=\epsilon(S_2)\simeq \p^1\times\p^1$.  As $\epsilon\colon~\TT_b\to \SS_b$ is an isomorphism above the branch locus $\Gamma$, it suffices to show that the action of $\PGL_2$ on $\TT_b$ has five orbits: the curve $C$, the surfaces $S_1\setminus C$, $S_2\setminus C$, $\hat{D}\setminus C$ and the open orbit $\TT_b\setminus (\hat{D}\cup S_1\cup S_2)$, where $\hat{D}$ is the pull-back of the diagonal $\Delta\subseteq \p^1\times\p^1$. The fact that the surfaces $S_1,S_2,\hat{D}$ are invariant directly follows from the description of the action of $\PGL_2$ on $\FF_0^{b+1,b+1}$ given in Lemma~\ref{Lemm:AutTTb}, as $S_1$ and $S_2$ are correspond respectively to the surfaces $S'_1$ and $S'_2$. Hence, $C=S_1\cap S_2$ is also invariant, and is one orbit as its image in $\p^1\times\p^1$ is $\Delta$ (Lemma~\ref{Lemm:AutTTb}\ref{S1S2TTb}). The fact that  $\hat{D}\setminus C$ is an orbit follows from the fact that $\hat{D}\to \Delta$ is a $\p^1$-bundle and that $C$ is the only curve invariant in $\TT_b$ (Lemma~\ref{Lemm:AutTTb}\ref{HatS:OnlyInvariantCurve}). It remains to see that $S_1\setminus C$, $S_2\setminus C$ and $\TT_b\setminus (\hat{D}\cup S_1\cup S_2)$ are orbits, which corresponds to ask that $S'_1\setminus C'$, $S'_2\cap C'$ and $\FF_0^{b+1,b+1}\setminus (S'_1\cup S'_2\cup D')$ are orbits, where $D'\subseteq \FF_0^{b+1,b+1}$ is the preimage of the diagonal. For $S'_i\cap C'$, $i=1,2$, this directly follows from the explicit action given in Lemma~\ref{Lemm:AutTTb}. For $\FF_0^{b+1,b+1}\setminus (S'_1\cup S'_2\cup D')$, we observe that $[1:1;0:1;1:0]$ is sent onto $[1:1;\beta :\delta;\alpha:\delta ]$, so its orbit contains the whole fibre over $[0:1;1:0]$, except the two points of $S_1'$ and $S_2'$.

\ref{SbPicN1}-\ref{SbPicN2}: As $\pi\colon~\SS_b\to \p^2$ is a $\p^1$-bundle, we only need the preimage of a non-trivial element of $\Pic(\p^2)$ (respectively of a point) and a divisor (respectively a curve) not contracted by $\pi$ to generate $\NS_\Q$ (respectively $N_1^\Q$).

\ref{SbPicN3}:
The restriction $\pi_{|E}:E \to \p^2$ is a $(2:1)$-cover ramified over the diagonal $\Delta \subseteq \p^1\times\p^1$, and so $E \cdot f=2$. Also, since all fibres of $\pi$ are linearly equivalent, we can assume that $f \cap H= \emptyset$, hence $H \cdot f=0$. Choosing for $\ell$ the tangent line to the conic $\Gamma \subseteq \p^2$, such that $\kappa^{-1}(\ell)\subseteq  \p^1\times\p^1$ is the union of the fibres over $q=[0:1]$, we get $\pi^{-1}(\ell)=\epsilon(F_1)=\epsilon(F_2)\simeq \F_b$. The restriction of $E$ to $\pi^{-1}(\ell)$ corresponds to the union of the two curves $s_1=\epsilon(F_1\cap S_1)=\epsilon(F_2\cap S_2)$ and $s_2=\epsilon(F_1\cap S_2)=\epsilon(F_2\cap S_1)$. Hence, the intersection $E\cdot s_i$ in $\SS_b$ can computed by the intersection $s_i\cdot (s_1+s_2)$ in the surface $\pi^{-1}(\ell)\simeq \F_b$. As $(s_1)^2=-b$, $s_1\cdot s_2=1$ and $s_2^2=b+2$, we get $s_1\cdot E= 1-b$ and $s_2\cdot E=b+3$.

\ref{SbPicN4}: We already know that the curve $f$ can be contracted, this corresponds to $\pi\colon~\SS_b \to \p^2$, thus the cone $\NE(X)$ is generated by $f$ and some effective curve $r=\alpha s_1 + \beta f$. If the curve $r$ is contained in $E$, then $r \in \Q_+ \left \langle s_1, s_2 \right \rangle \subseteq  \Q_+ \left \langle s_1, f \right \rangle$, as $s_2=s_1+(b+1)f$, and so we must have $r=s_1$. If $r$ is not contained in $E$, then $E \cdot r =\alpha(1-b)+2\beta \geq 0$. Also, we always have $H \cdot r =\alpha \geq 0$. It follows that, if $r \notin E$, then $\alpha, \beta \geq 0$, and thus again we must have $r=s_1$.

\ref{SbPicN5}: Let $K_X=\alpha E +\beta H$ be the canonical divisor of $X$. Recall that $E \simeq \p^1\times\p^1$ and $H \simeq \F_b$, therefore $K_E=-2s_1-2s_2$ and $K_H=-2s_1-(b+2)f$. Applying the adjunction formula yields $K_H=(K_X+H)\cdot H=2 \alpha s_1+ (\alpha (b+1)+\beta+1)f$ by \ref{SbPicN3}. Hence, $\alpha=-1$ and $\beta=-2$, and we obtain $K_X=-E-2H$.
\end{proof}

\begin{lemma} \label{lem:S2 not max}
Assume that $\car(\k)=0$.
Let $G=\Autz(\SS_2) \simeq \PGL_2$. Then there is a  $G$-equivariant birational morphism $\varphi\colon~\SS_2 \to \P^3$, that is the blow-up of a twisted cubic, such that $\varphi G \varphi^{-1} \subsetneq  \Aut(\P^3) \simeq \PGL_4$.
\end{lemma}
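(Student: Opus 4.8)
The plan is to produce an explicit $\PGL_2$-equivariant birational morphism from $\SS_2$ to $\P^3$ realizing it as the blow-up of a twisted cubic, and then to argue that the conjugated group cannot be all of $\PGL_4$ because it must preserve the twisted cubic. The starting point is Lemma~\ref{Lemm:AutTTb} and Lemma~\ref{lem:SbPicN}, which give us both the $\PGL_2$-action on $\SS_2$ (via the diagonal action on $\p^1 \times \p^1$, descended along $\kappa$) and the complete numerical data: the intersection form, the cone of curves $\NE(\SS_2)$ generated by $s_1$ and $f$, and the canonical divisor $K_{\SS_2} = -E - 2H$. The key point is that for $b=2$ the curve $s_1$ has $K_X \cdot s_1 = b-3 = -1$ by Lemma~\ref{lem:SbPicN}\ref{SbPicN5}, so the $K_X$-negative extremal ray generated by $s_1$ should give a contraction of a divisor down to a curve, producing a smooth threefold of Picard rank $1$.

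First I would analyze the extremal contraction associated to the ray $\R_{\geq 0}[s_1]$. Since $s_1 \subseteq E \simeq \p^1 \times \p^1$ and $E \cdot s_1 = 1 - b = -1$ (again Lemma~\ref{lem:SbPicN}\ref{SbPicN3} with $b=2$), the divisor $E$ is covered by such curves and gets contracted; the contraction $\varphi \colon \SS_2 \to Y$ contracts $E$ to a curve in $Y$, and by the numerical computation $Y$ has Picard rank $1$ with $-K_Y$ ample, so $Y$ is a Fano threefold. I would then identify $Y$ with $\P^3$: this can be done either by computing $-K_Y$ and the degree (checking $(-K_Y)^3$ gives the value for $\P^3$), or, more concretely and in the spirit of the preceding lemmas, by writing down the morphism explicitly. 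Following the pattern of Lemma~\ref{lem:UmeNotMax}, one lifts to $\TT_2 \to \FF_0^{3,3}$ and writes the linear system $|H + \tfrac{1}{2}E|$ or the appropriate complete linear system on $\SS_2$ whose image is $\P^3$; the Schwarzenberger geometry means the image of $E$ is a twisted cubic, since $E \to \p^2$ is the $2{:}1$ cover ramified over the conic $\Gamma$ and the whole configuration is $\PGL_2$-homogeneous, forcing the image curve to be the unique $\PGL_2$-invariant rational normal curve in $\P^3$, i.e. the twisted cubic.

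Once the morphism $\varphi \colon \SS_2 \to \P^3$ is established as the blow-up of a twisted cubic $\mathcal{C}$, the final claim is immediate: by Proposition~\ref{blanchard} the map $\varphi$ is $\Autz(\SS_2)$-equivariant (the $\PGL_2$-action descends to a regular action on $\P^3$ since $\varphi$ is a birational morphism with $\varphi_* \O = \O$), so $\varphi G \varphi^{-1} \subseteq \Aut(\P^3) = \PGL_4$, and this image must preserve the twisted cubic $\mathcal{C}$. Since the stabilizer of a twisted cubic in $\PGL_4$ is precisely the image of the standard $\PGL_2 \hookrightarrow \PGL_4$ given by the third symmetric power, which is a proper $3$-dimensional subgroup, we conclude $\varphi G \varphi^{-1} \subsetneq \PGL_4$.

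The main obstacle I anticipate is the clean identification of the target as $\P^3$ with the exceptional curve being exactly a twisted cubic, rather than merely establishing abstractly that the contraction yields a Fano threefold of Picard rank $1$. The numerical criterion via $(-K_Y)^3$ pins down $\P^3$ among Fano threefolds of Picard rank $1$, but getting the explicit embedding — and verifying that $\varphi$ is genuinely the blow-up of a \emph{smooth} twisted cubic rather than some degenerate curve — requires the explicit coordinate computation on the two charts of $\TT_2$ (or directly on $\SS_2$), analogous to the quadric computation in Lemma~\ref{lem:UmeNotMax}\ref{UmeMax2}. The equivariance and strictness of the inclusion are then essentially formal, relying on Proposition~\ref{blanchard} and the fact that $\PGL_2$ acting via its irreducible $4$-dimensional representation is a proper subgroup of $\PGL_4$ stabilizing the twisted cubic.
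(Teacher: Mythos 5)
Your strategy is sound but runs in the opposite direction from the paper's, and the comparison is instructive. You start from $\SS_2$, contract the $K$-negative extremal ray $\R_{\geq 0}[s_1]$ (using $K_X\cdot s_1=-1$ and $E\cdot s_1=-1$ from Lemma~\ref{lem:SbPicN}), and then must identify the resulting Picard-rank-one Fano threefold as $\P^3$ and the image of $E$ as a twisted cubic. The paper instead starts from the twisted cubic $\Gamma\subseteq\P^3$ with its $\PGL_2$-action, blows it up to get $X$, notes via Proposition~\ref{blanchard} that $\Autz(X)\simeq\PGL_2$ (no automorphism of $\P^3$ fixes $\Gamma$ pointwise), observes that the net of quadrics through $\Gamma$ gives a $\P^1$-bundle $X\to\P^2$, and then invokes the classification of $\P^1$-bundles over $\P^2$ from \cite[Proposition 4.3.4]{BFT} to conclude that a non-decomposable such bundle with $\Autz=\PGL_2$ must be some $\SS_n$; the case $n=1$ is excluded because $\Autz(\SS_1)\simeq\PGL_3$, and $n=2$ is forced by $K_X\cdot s_1=n-3=-1$. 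The reverse direction buys a clean identification: all the hard recognition work is absorbed by the already-established classification, and only one intersection number is needed at the end.

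The caveat is that the step you yourself flag as the ``main obstacle'' is really the entire content of the lemma, and your sketch of it is not yet a proof. Pinning down the target as $\P^3$ via $(-K_Y)^3$ requires the full intersection ring of $\SS_2$ (products such as $E^3$, $E^2\cdot H$, $H^3$), which Lemma~\ref{lem:SbPicN} does not record and which you would have to compute, e.g.\ on $\TT_2$ or $\FF_0^{3,3}$. Likewise, the claim that the blown-down curve is ``the unique $\PGL_2$-invariant rational normal curve'' presupposes that the induced $\PGL_2$-action on $\P^3$ is via the irreducible representation $\mathrm{Sym}^3$; a priori a $\PGL_2$-invariant smooth rational curve in $\P^3$ under a reducible action could be a line or a conic, so this needs an argument (for instance: the second extremal contraction of the blow-up is a $\P^1$-bundle over $\P^2$ given by the quadrics through the centre, which forces the centre to have degree $3$ and span $\P^3$). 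None of these steps would fail, but as written they are placeholders; if you want to keep the forward direction, you must either carry out the intersection-theoretic computation or, more economically, run the paper's reverse construction and match it against your contraction using the uniqueness of the extremal ray $\R_{\geq 0}[s_1]$.
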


\begin{proof}
Let $\rho\colon \p^1\to \p^3, [u:v]\mapsto [u^3:u^2v:uv^2:v^3]$ be the standard parametrisation of the twisted cubic $\Gamma=\rho(\p^1)\subset \p^3$. The natural action of $\SL_2$ on $\k[u,v]_3$ gives rise to an action of $\Aut(\p^1)=\PGL_2$ on $\p^3$ that makes $\rho$ equivariant. Let $\varphi\colon X\to \p^3$ be the blow-up of $\Gamma$. By Proposition~\ref{blanchard}, the group $\Autz(X)$ is conjugate via $\varphi$ to the group of automorphisms of $\p^3$ that preserve the twisted cubic. {The group $\Autz(X)$ is therefore} isomorphic to $\PGL_2$, as no non-trivial element of $\Aut(\p^3)$ can fix  $\Gamma$ pointwise (the fixed locus of an element of $\Aut(\p^3)$ is a union of linear subspaces). The linear system of quadrics through $\Gamma$ gives a $\p^1$-bundle $X\to \p^2$: this can be checked in coordinates and is also very classical. As $\Autz(X)=\PGL_2$, the $\p^1$-bundle is not decomposable and is in fact isomorphic to a Schwarzenberger bundle $\SS_n$ for some $n\ge 1$ \cite[Proposition 4.3.4]{BFT}. The case $n=1$ is impossible as $\Autz(\SS_1)\simeq \PGL_3$ (because $\SS_1\to \p^2$ is the projectivisation of the tangent bundle, see  \cite[Corollary 4.2.2]{BFT}). Then, Lemma~\ref{lem:SbPicN}\ref{SbPicN5} implies that $n=2$.
\end{proof}

\section{Description of the equivariant links} \label{sec:maximality}
In this section we describe the equivariant Sarkisov links between certain Mori fibrations: the $\P^2$-bundles over $\P^1$, the Umemura quadric fibrations $\QQ_g \to \P^1$, and the $\P^1$-bundles over $\P^2$, $\P^1 \times \P^1$ and $\F_n$ ($n \geq 2$) listed in Theorem~\ref{thBFT}. Then in \upshape\S~\ref{subsec:proof of th D} we give the proof of the main results of this article when the base field is assumed to be of characteristic zero (Theorems~\ref{th:Ea} and~\ref{th:Eb}).

\subsection{Homogeneous  spaces}\label{subsec:homog case}
In this subsection we consider the equivariant links starting from a Mori fibre space $X$ on which $\Autz(X)$ acts transitively.

\begin{lemma}  \label{lemma homogeneous  case}
Let $G$ be an algebraic group and let $\varphi\colon~X \dasharrow Y$ be a $G$-equivariant birational map between two projective varieties equipped with a regular $G$-action. 
If $X$ is $G$-homogeneous $($i.e.~if $G$ acts transitively on $X)$, then $\varphi$ is an isomorphism. In particular, if $\varphi$ is a Sarkisov link, then $\varphi$ is of type \IV.
\end{lemma}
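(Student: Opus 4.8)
The statement is a clean elementary fact: a $G$-equivariant birational map out of a $G$-homogeneous projective variety is automatically an isomorphism. Let me sketch how I would prove it.

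First I would analyze the indeterminacy locus of $\varphi$. The plan is to recall that the locus $\mathrm{Ind}(\varphi)\subseteq X$ where $\varphi$ is not defined is a closed subset, and since $\varphi$ is $G$-equivariant (meaning $\varphi\circ g = g\circ \varphi$ as rational maps for all $g\in G$), this locus is $G$-invariant. Indeed, if $x\notin \mathrm{Ind}(\varphi)$ then $g\cdot x \notin \mathrm{Ind}(\varphi)$ for all $g$, because the translate of a domain of definition by an automorphism is again a domain of definition. Now because $X$ is $G$-homogeneous, the only $G$-invariant subsets are $\emptyset$ and $X$ itself; and since $\varphi$ is a birational map it is defined on a dense open set, so $\mathrm{Ind}(\varphi)\neq X$. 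Therefore $\mathrm{Ind}(\varphi)=\emptyset$, i.e.\ $\varphi$ is a morphism defined on all of $X$. Running the same argument on $\varphi^{-1}\colon Y\dasharrow X$ — which is also $G$-equivariant — one must first note that $Y$ is $G$-homogeneous as well: since $\varphi$ is now a $G$-equivariant \emph{morphism} from a homogeneous space whose image is dense and $G$-invariant, the image is a single orbit and hence (being dense) all of $Y$, so $G$ acts transitively on $Y$. Thus $\varphi^{-1}$ is likewise everywhere defined, and $\varphi$ is an isomorphism.

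The one technical point I would be careful about is the claim that $\mathrm{Ind}(\varphi)$ is genuinely $G$-invariant for a \emph{rational} action in the equivariant-birational sense; this is where a reader might want detail. The cleanest way is to use the graph $\Gamma\subseteq X\times Y$ of $\varphi$ with its two projections $p_1,p_2$. The diagonal $G$-action on $X\times Y$ preserves $\Gamma$ (this is exactly what $G$-equivariance of $\varphi$ says), so $p_1\colon \Gamma\to X$ is a $G$-equivariant projective morphism. Its image is all of $X$ and it is an isomorphism over the open locus where $\varphi$ is defined; the complement of that locus is $p_1$ of the fibers of positive dimension, which is closed and $G$-invariant. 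Homogeneity of $X$ forces this complement to be empty, so $p_1$ is an isomorphism and $\varphi=p_2\circ p_1^{-1}$ is a morphism. I expect this graph-theoretic formulation to be the main (though still routine) obstacle, since it is the step that rigorously justifies the ``translate of the domain is a domain'' intuition.

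Finally, for the last sentence of the statement, I would observe that an isomorphism $\varphi\colon X\to Y$ cannot be a Sarkisov link of type \I, \II, or \III, since each of those types contains a genuine divisorial contraction or a small (flipping/flopping) map, hence is never an isomorphism of the underlying varieties; by Definition~\ref{sarkisov-links} the only remaining possibility is type \IV (where $\varphi$ is explicitly allowed to be an isomorphism). This gives the desired conclusion.
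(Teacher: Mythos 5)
Your proof is correct and takes essentially the same approach as the paper: the key step in both is that the locus where $\varphi$ fails to be nice is a proper closed $G$-invariant subset of the homogeneous variety $X$, hence empty. The only (harmless) difference is in how you finish — you observe that $Y$ is itself homogeneous and apply the base-point argument symmetrically to $\varphi^{-1}$, whereas the paper instead notes that $\varphi$ contracts no hypersurface and is therefore an isomorphism onto its (closed, dense) image; both completions are routine, and your treatment of the type-\IV{} claim matches the paper's.
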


\begin{proof}
Since $X$ is $G$-homogeneous, the rational map $\varphi$ has no base-point and does not contract any hypersurface. It is then an isomorphism between  $X$ and $\varphi(X)$, and since $X$ is projective we have that $\varphi(X)=Y$.
\end{proof}

\begin{proposition}\label{Prop:HomSpaces}
Let $X$ be one of the following variety: 
$$\P^3, \ Q_3\subseteq \P^4, \ \p^1\times\p^1\times\p^1, \ \p^2\times\p^1,\text{or } \SS_1 \simeq \P(T_{\P^2}).$$
Then $\Autz(X)$ acts transitively on $X$, so every $\Autz(X)$-equivariant link starting from $X$ is an isomorphism and a type \IV link. There is no such link in Cases~\ref{HomS1}-\ref{HomS2}, there are two links in Case~\ref{HomS3}, and one link in Cases~\ref{HomS4}-\ref{HomS5}.
\begin{enumerate}
\item\label{HomS1} $X=\P^3$ and $\Aut(X) \simeq \PGL_4$.
\item\label{HomS2} $X=Q_3 \subseteq   \P^4$ is a smooth quadric and $\Aut(X) \simeq \PSO_5$.    
\item\label{HomS3} $X=\p^1\times\p^1\times\p^1\simeq \FF_0^{0,0}$ and $\Autz(X)= \PGL_2\times\PGL_2\times\PGL_2$; the two links are then given by
\[\xymatrix@R=5mm@C=.3cm{
   \p^1\times\p^1\times\p^1  \ar[rr]^{\mathrm{id}}_{\simeq} \ar[d]_{\mathrm{pr}_1\times\mathrm{pr}_3}  & &  \p^1\times\p^1\times\p^1  \ar[rr]^{\mathrm{id}}_{\simeq} \ar[d]_{\mathrm{pr}_1\times\mathrm{pr}_2}  & &  \p^1\times\p^1\times\p^1 \ar[d]_{\mathrm{pr}_2\times\mathrm{pr}_3}  \\
     \p^1\times\p^1 \ar[dr]_{\mathrm{pr}_1}  & &\p^1\times\p^1 \ar[dl]_{\mathrm{pr}_1}  \ar[dr]^{\mathrm{pr}_2}   & &\p^1\times\p^1 \ar[ld]^{\mathrm{pr}_1}\\
   & \p^1&& \p^1.
    }  \]
\item\label{HomS4} $X=\p^2\times\p^1$ and $\Aut X \simeq \PGL_3\times\PGL_2$; the link is then 
\[\xymatrix@R=5mm@C=.3cm{
    \PP_0\simeq \p^1\times\p^2  \ar[rr]^{\varphi}_{\simeq} \ar[d]  & &\p^1\times\p^2\simeq \RR_{0,0} \ar[d] \\
     \p^2 \ar[dr]  & & \p^1 \ar[ld]\\
   & {\mathrm{pt}}.
    }  \]

\item\label{HomS5} $X=\{([x_0:x_1:x_2],[y_0:y_1:y_2])\in \p^2\times\p^2, \sum_{i=0}^3 x_i y_i=0\}\simeq \SS_1$, which coincides with the projectivisation of the tangent bundle of $\p^2$, and $\Autz(X)=\PGL_3$;
the link is then 
\[\xymatrix@R=5mm@C=.3cm{
    X  \ar[rr]^{\mathrm{id}}_{\simeq} \ar[d]_{\mathrm{pr}_1}  & &X \ar[d]_{\mathrm{pr}_2} \\
     \p^2 \ar[dr]  & & \p^2 \ar[ld]\\
   & {\mathrm{pt}}.
    }  \]
\end{enumerate}
\end{proposition}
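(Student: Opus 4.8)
The plan is to argue in three stages: first establish that $\Autz(X)$ acts transitively in each of the five cases (together with the stated identifications of the groups), then invoke Lemma~\ref{lemma homogeneous case} to reduce every equivariant link to an isomorphism of type \IV, and finally enumerate these links through the relative Picard rank constraint.

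First I would treat transitivity and the group identifications, which are all classical. In cases \ref{HomS1}--\ref{HomS2}, $\PGL_4=\Aut(\P^3)$ acts transitively on $\P^3$ and $\PSO_5=\Autz(Q_3)$ acts transitively on the smooth quadric threefold. In cases \ref{HomS3}--\ref{HomS4} the variety is a product of homogeneous spaces, so the diagonal action of $\PGL_2^3$ (resp.\ $\PGL_3\times\PGL_2$) is already transitive, and I would recall that these groups are the full neutral components. In case \ref{HomS5} I would identify $\P(T_{\P^2})$ with the incidence variety of points and lines in $\P^2$, i.e.\ the full flag variety, which is $\PGL_3$-homogeneous with $\Autz=\PGL_3$. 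Granting this, Lemma~\ref{lemma homogeneous case} shows at once that every $\Autz(X)$-equivariant birational map out of $X$ is an isomorphism, hence every equivariant Sarkisov link from $X$ is an isomorphism and, by the last bullet of Definition~\ref{sarkisov-links}, is of type \IV.

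It then remains to count the type \IV links. Here I would use that $\Autz(X)$, being connected, acts trivially on $N_1(X)$ and hence fixes every extremal ray of $\NE(X)$, so each extremal contraction is automatically equivariant; and since $X$ is (a product of) rational homogeneous space(s), every such contraction is a fibering contraction given by a projection, so the Mori fibration structures on $X$ are exactly these projections. By Remark~\ref{remark:UnicitySarkisov} a type \IV link from $X$ is determined by a morphism $X\to Z$ with $\rho(X/Z)=2$ whose two extremal rays both give fibering contractions, one being the source structure, and by Lemma~\ref{lem:NoTypeIVonSurfaces} we have $\dim Z\le 1$. Running the cases: for \ref{HomS1}--\ref{HomS2}, $\rho(X)=1<2$ forbids any such $Z$, so there is no link; for \ref{HomS3}, $\rho(X)=3$ forces $Z\simeq\p^1$ (a single factor), and from a fixed source projection exactly two choices of common factor remain, giving the two displayed links; for \ref{HomS4}--\ref{HomS5}, $\rho(X)=2$ forces $Z$ to be a point, and the two fibering rays give the single displayed link (the projections $X\to\p^2$ and $X\to\p^1$, resp.\ the two projections $X\to\p^2$). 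I would finish by recording the explicit projection formulas appearing in the diagrams.

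The main obstacle I anticipate is the bookkeeping in the third stage: verifying that all Mori fibration structures are accounted for (that every extremal contraction is one of the listed projections, and that none is divisorial or small), and matching the constraints $\rho(X/Z)=2$ and $\dim Z\le 1$ to the correct common base $Z$ in each case. By contrast, the transitivity and the automorphism group identifications of the first stage are standard facts that I would cite rather than reprove.
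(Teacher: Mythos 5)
Your proposal is correct and follows essentially the same route as the paper: transitivity plus Lemma~\ref{lemma homogeneous  case} to force every equivariant link to be an isomorphism of type \IV, then an enumeration of the extremal contractions (all projections, constrained by the Picard rank) to count the links in each case. The extra care you take in the counting stage (invoking Remark~\ref{remark:UnicitySarkisov} and Lemma~\ref{lem:NoTypeIVonSurfaces} to pin down $Z$) is slightly more explicit than the paper's version but amounts to the same argument.
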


\begin{proof}
In all the cases listed above, $\Autz(X)$ acts transitively on $X$. 
{This is clear in all cases except maybe when $X=\SS_1 \to \p^2$ is the projectivisation of the tangent bundle, but then $X \simeq \PGL_3/P$, where $P$ is a maximal parabolic subgroup, and so $\Autz(X) \simeq \PGL_3$ acts indeed transitively on $X$. 
}

The fact that every equivariant link starting from $X$ is an isomorphism follows from Lemma \ref{lemma homogeneous  case}. Every such link is then of type \IV, and corresponds to the different contractions of the extremal rays of $\NE(X)$ negative against $K_X$ that we can obtain, all being Mori fibre spaces. If $X= \p^3$ or $X=Q_3 \subseteq   \P^4$ is a smooth quadric, the Picard rank is equal to $1$, so there is no equivariant link. If $X=\p^1\times\p^1\times\p^1$, we have three extremal rays, one gives the Mori fibre space and the two others give two links as in \ref{HomS3}. If $X=\p^2\times\p^1$, the two contractions correspond to the $\PP_0\to \p^2$ and $\RR_{0,0}\to \p^1$, with one link as in \ref{HomS4}. 
If $X=\{([x_0:x_1:x_2],[y_0:y_1:y_2])\in \p^2\times\p^2, \sum_{i=0}^3 x_i y_i=0\}$, then $X$ it is isomorphic to $\SS_1$, or to the projectivisation of the tangent bundle of $\p^2$. The Picard rank being of rank $2$, there are exactly two contractions, and we get the link of \ref{HomS5}.
\end{proof}

\subsection{Decomposable \texorpdfstring{$\p^1$}{P1}-bundles and Schwarzenberger \texorpdfstring{$\p^1$}{P1}-bundles over \texorpdfstring{$\p^2$}{P2}}\label{Decompo P1 over P2}
In this subsection we consider the equivariant links starting from $\PP_b$, from  $\P(1,1,1,2)$, and from $\SS_b$ {(see \S~\ref{sec:first classification} for the definition of $\PP_b$ and $\SS_b$)}.

\begin{proposition}\label{LinkFromPPb}
Assume that $\car(\k) \neq 2$.
Let $b \geq 2$, and let $G=\Aut(\PP_b)$. 
There is an equivariant link from $\PP_b$ if and only if $b=2$. This link, which is unique, is the contraction of the unique $G$-invariant divisor $\varphi\colon~\PP_2 \to \P(1,1,1,2)$, and $\varphi G \varphi^{-1} =  \Autz(\P(1,1,1,2))$. Moreover,  $\varphi^{-1}$ is the unique equivariant link starting from $\P(1,1,1,2)$. 
 \end{proposition}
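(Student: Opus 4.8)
The strategy is to analyze the geometry of $\PP_b=\P(\O_{\P^2}(b)\oplus\O_{\P^2})$ as a decomposable $\P^1$-bundle over $\P^2$ and to use the description of its automorphism group to identify the $G$-stable divisors; these are the only candidate loci that an equivariant Sarkisov link can touch. First I would record, using the global quotient description in \S\ref{sec:first classification}\ref{DefiPPb}, that $\PP_b$ has exactly two disjoint sections of $\pi\colon\PP_b\to\P^2$ of self-interaction data coming from the splitting, namely the section $s_-=(y_0=0)$ with normal bundle $\O_{\P^2}(-b)$ and the section $s_+=(y_1=0)$ with normal bundle $\O_{\P^2}(b)$. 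Since $G=\Autz(\PP_b)$ acts on $\P^2$ through $\PGL_3$ (by Proposition~\ref{blanchard}) and this action is transitive, while $G$ must preserve the two sections (as they are the only sections whose normal bundle has negative, resp.\ most positive, degree, and $b\geq 2$ forbids the bundle from being homogeneous in the fibre direction), both $D_-:=s_-$ and $D_+:=s_+$ are $G$-stable divisors. I would verify that these are in fact the only $G$-stable divisors other than preimages of lines, which do not give rise to links for rank reasons.

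The key computation is to run a relative MMP over $\P^2$ (and then over a point) to see which extremal contractions of $\PP_b$ are $K$-negative and yield Mori fibre spaces. The divisor $D_-$ is the section with negative normal bundle, and its contraction should produce the weighted projective space $\P(1,1,1,2)$ precisely when $b=2$: the map $\varphi\colon\PP_2\to\P(1,1,1,2)$ is the one listed as link $(S6)$ in Theorem~\ref{th:E}, the reduced blow-up (read backwards) of the singular point of $\P(1,1,1,2)$. I would exhibit $\varphi$ explicitly in the quotient coordinates $[y_0:y_1;z_0:z_1:z_2]$, sending this to $[\,z_0y_1:z_1y_1:z_2y_1:y_0\,]$ in $\P(1,1,1,2)$, check that it contracts exactly $D_-=(y_0=0)$ onto the singular point $[0:0:0:1]$, and confirm via the intersection/discrepancy computation that the associated ray is $K_{\PP_2}$-negative, so the contraction is a genuine divisorial Mori contraction. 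For $b\geq 3$ the analogous contraction of $D_-$ would have the wrong discrepancy (it would be crepant or $K$-positive), so no equivariant link exists; this is where the numerical hypothesis $b=2$ becomes essential, and I would phrase it through the canonical class $K_{\PP_b}=-2D_++(\,b-3\,)H$ pulled back appropriately, the sign of the relevant intersection number changing exactly at $b=2$.

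Having produced $\varphi$, I would argue $\varphi G\varphi^{-1}=\Autz(\P(1,1,1,2))$ by the equivariance of $\varphi$ (Proposition~\ref{blanchard} applied to the contraction) together with the fact that $\varphi$ is the unique $G$-stable divisorial contraction, so the target group is forced to act on $\P(1,1,1,2)$ regularly and, being the full automorphism group of a weighted projective space preserving the induced structure, must coincide with $\Autz(\P(1,1,1,2))$; the reverse inclusion uses that any automorphism of $\P(1,1,1,2)$ lifts through the blow-up of its unique singular point. Finally, for the last sentence I would invoke that $\P(1,1,1,2)$ is a $\Q$-factorial Fano threefold of Picard rank $1$, so its only possible equivariant Sarkisov links are of type \I or type \II emanating from the blow-up of a $G$-stable point; the only $G$-fixed point is the singular point $[0:0:0:1]$ (as $\Autz(\P(1,1,1,2))$ acts transitively on the smooth locus complement of the invariant $\P^2$, and the fixed-point analysis isolates the singular vertex), whose reduced blow-up gives back $\PP_2$ by Lemma~\ref{lem:ReducedblowUpcurve}-type reasoning adapted to a point. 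Hence $\varphi^{-1}$ is the unique such link.

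\textbf{Main obstacle.} The delicate point is the discrepancy/intersection-number computation that pins the existence of the link to $b=2$ and not merely $b\geq 2$: I must show that contracting the negative section is a $K$-negative extremal divisorial contraction exactly when $b=2$, and identify the singular target as $\P(1,1,1,2)$ with its correct $cA$-type or quotient singularity. Equally subtle is ruling out \emph{all} other equivariant links for $b\geq 2$, which requires knowing that $D_-$ and $D_+$ are the only $G$-stable divisors and that the contraction of $D_+$ (the positive section) never yields a Mori fibre space; this forces a careful use of Remark~\ref{remark:UnicitySarkisov} to enumerate the rank-$2$ morphisms and apply relative MMP to each extremal ray.
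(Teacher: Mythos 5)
Your skeleton matches the paper's proof (identify the invariant divisor, contract it, use $K_{\PP_b}\cdot\ell=b-3$ to pin the existence of the contraction to $b=2$, identify the target as $\P(1,1,1,2)$, and conjugate the groups), but there are genuine gaps. First, your orbit analysis is wrong: the section $(y_1=0)$ is \emph{not} $G$-stable. The group $\Autz(\PP_b)$ contains a unipotent subgroup isomorphic to $H^0(\P^2,\O_{\P^2}(b))$ acting by $[y_0:y_1;z_0:z_1:z_2]\mapsto[y_0:y_1+p(z)y_0;z_0:z_1:z_2]$ for $p$ homogeneous of degree $b$; this fixes $(y_0=0)$ but moves $(y_1=0)$. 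The correct statement, which the paper imports from \cite[Lemma 4.1.2 and Remark 4.1.3]{BFT}, is that $\PP_b$ consists of exactly \emph{two} orbits: the divisor $D=(y_0=0)$ and its open complement. This is not a cosmetic point: it is precisely the absence of $G$-invariant curves and points that rules out equivariant links of type \I and \II (whose first step is a divisorial contraction $W\to\PP_b$ centred at an invariant curve or point, cf.\ Definition~\ref{sarkisov-links}) as well as any nontrivial small modification, and your phrase that the invariant divisors are ``the only candidate loci that an equivariant link can touch'' skips over exactly these cases. Relatedly, your explicit formula $[z_0y_1:z_1y_1:z_2y_1:y_0]$ is not well defined on the quotient (the weights do not match those of $\P(1,1,1,2)$), and read literally it contracts $(y_1=0)$ rather than $(y_0=0)$ --- i.e.\ the non-invariant section, which would destroy equivariance. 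The correct map is $[y_0:y_1;z_0:z_1:z_2]\mapsto[z_0:z_1:z_2:y_1/y_0]$.

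Second, the uniqueness of the equivariant link starting from $\P(1,1,1,2)$ cannot be obtained by ``Lemma~\ref{lem:ReducedblowUpcurve}-type reasoning adapted to a point'': that lemma is specific to one-dimensional centres (it uses a general surface section through the centre and the numerical equivalence of the contracted curves), and for a point centre there may a priori be several distinct divisorial extractions, e.g.\ weighted blow-ups, exactly as happens for $\P(1,1,2,3)$ in Lemma~\ref{LemP1123}\ref{item PP2}. The argument you need is different: given any other $G$-equivariant extraction $\varphi'\colon X'\to\P(1,1,1,2)$ of the singular point $q$, the induced $G$-equivariant birational map $\varphi'^{-1}\circ\varphi\colon\PP_2\dasharrow X'$ must be an isomorphism because $\PP_2$ has no $G$-orbit of codimension $\geq 2$. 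Your $K$-negativity computation ($K\cdot\ell=b-3$, crepant at $b=3$ and positive for $b\geq 4$) and your argument that $\varphi G\varphi^{-1}=\Autz(\P(1,1,1,2))$ via lifting automorphisms through the blow-up of the unique singular point are essentially correct and agree with the paper.
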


\begin{proof}
We use the same notation as in {\upshape\S~\ref{sec:first classification}\ref{DefiPPb}}.
For $b \geq 2$, it follows from \cite[Lemma 4.1.2 and Remark 4.1.3]{BFT} that $\PP_b$ is the union of two $G$-orbits: the divisor $D \simeq \p^2$, given by $y_0=0$, and its complement in $\PP_b$.
Therefore we can only contract $D$, which is possible if and only if $b=2$ as $K_{\PP_b} \cdot \ell=b-3$, where $K_{\PP_b}$ is the canonical divisor of $\PP_b$ and $\ell$ is a line contained in $D$. Indeed, the cone of effective curves $\NE(\PP_b)$ is generated by $\ell$ and by a fibre $f$ of the structure morphism $\PP_b \to \p^2$. 

Contracting the class of $\ell$ yields the $G$-equivariant morphism 
\[\varphi\colon~\PP_2 \to Z=\P(1,1,1,2),\  [y_0:y_1;z_0:z_1:z_2]\mapsto\left[z_0:z_1:z_2:\frac{y_1}{y_0}\right].\] Hence $\varphi G \varphi^{-1} \subseteq \Autz(Z)$.
Since $\Autz(Z)$ acts on $Z$ with two orbits, the singular point $q=[0:0:0:1]$ and its complement, and $\varphi$ is the blow-up of the singular point $q$, we have an equality $\varphi G \varphi^{-1} = \Autz(Z)$. Finally, any other blow-up $\varphi'\colon X' \to Z$ of the point $q$ in $Z$ cannot be $G$-equivariant. Indeed, we see that $\psi=\varphi'^{-1} \circ \varphi \colon X \dashrightarrow X'$ is a $G$-equivariant birational morphism (because $X$ has no $G$-orbits of codimension~$\geq 2$), thus it must a $G$-equivariant isomorphism.
\end{proof}

\begin{proposition} \label{prop:Schwarzenberg involution}
Assume that $\car(\k)\not=2$, and let $b\ge 3$ such that $\car(\k)$ does not divide $b+1$.
Let $G=\Autz(\SS_b) \simeq \PGL_2$.
There is a birational involution $\varphi\colon \SS_b\dasharrow \SS_b$, which is a type \II equivariant link such that $\varphi G\varphi^{-1}=G$. 
Moreover, $\varphi$ is the unique  equivariant link starting from $\SS_b$.
\end{proposition}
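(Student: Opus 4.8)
The plan is to classify all equivariant Sarkisov links starting from $\SS_b$ using the geometry of $\SS_b$ recorded in Lemma~\ref{lem:SbPicN}, together with the restrictions on links established in \S\ref{subsec:Sarkisov}. First I would recall that $\SS_b$ is a Mori fibre space of Picard rank $2$ with $\Autz(\SS_b)\simeq \PGL_2$ acting with four invariant strata (Lemma~\ref{lem:SbPicN}\ref{SbFourOrbits}), and that $\NE(\SS_b)$ is generated by $s_1$ and $f$, the fibre $f$ producing the given Mori fibration $\SS_b\to \p^2$ (Lemma~\ref{lem:SbPicN}\ref{SbPicN4}). By Remark~\ref{remark:UnicitySarkisov}, any equivariant Sarkisov link starting from $\SS_b$ is determined by a $G$-equivariant extremal contraction of relative Picard rank $2$, so the essential content is to analyse the \emph{other} extremal contraction, namely the one associated with the ray $\R_+[s_1]$. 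Since $K_{\SS_b}\cdot s_1=b-3>0$ for $b>3$, that ray is $K$-positive and can only initiate the link via an extraction over it (a type \I or \II link beginning with a divisorial extraction / flip on the target side), so the link must be of type \II or \III; a type \I link is excluded because the base of the fibration $\SS_b\to\p^2$ has Picard rank $1$, and a type \IV link is excluded by Lemma~\ref{lem:NoTypeIVonSurfaces} together with the fact that $\Autz(\SS_b)$ does not act transitively.

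Next I would produce the involution explicitly. The natural candidate is the $G$-equivariant blow-up of the unique invariant curve $\gamma=\epsilon(C)\simeq\p^1$ (unique by Lemma~\ref{Lemm:AutTTb}\ref{HatS:OnlyInvariantCurve}, using that $\car(\k)\nmid b+1$), giving a threefold $W\to \SS_b$ with exceptional divisor $E_\gamma$; by Lemma~\ref{lem:ReducedblowUpcurve} this is the reduced blow-up of $\gamma$ generically. The strategy is then to run the relative MMP on $W$ over $\p^2$: I expect the strict transform of the divisor $E=\pi^{-1}(\Gamma)$ (or of $D$) to become contractible after a flip or flop, and the resulting divisorial contraction to land on a second copy of $\SS_b$, with the whole composite $\varphi\colon\SS_b\dasharrow\SS_b$ being a birational involution that conjugates $G$ to itself. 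Concretely I would verify this upstairs on $\TT_b\to\p^1\times\p^1$, where the commutative diagram of Lemma~\ref{Lemm:AutTTb}\ref{CommDiag} already exhibits $\TT_b$ as a double cover of $\SS_b$ and relates it to $\FF_0^{b+1,b+1}$; the exchange involution $\hat\sigma$ on $\FF_0^{b+1,b+1}$ descends (cf.\ the proof of Lemma~\ref{lem:SbPicN}) and should give, after conjugation, precisely the desired self-map of $\SS_b$. To check that $\varphi G\varphi^{-1}=G$ it suffices that $\varphi$ preserve the unique invariant curve $\gamma$, which holds by construction and by uniqueness.

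Finally, for the uniqueness assertion I would argue that the contraction of $f$ gives the Mori fibration itself (not a link), while the only other $G$-equivariant contraction of relative Picard rank $2$ available is the extraction over $\R_+[s_1]$ we just used; since the center of any such extraction must be a $G$-invariant subvariety of codimension $\ge 2$, and $\gamma$ is the \emph{unique} invariant curve (Lemma~\ref{Lemm:AutTTb}\ref{HatS:OnlyInvariantCurve}) while the invariant surfaces $D,E$ cannot be centers of extractions giving a Mori fibre space at the end, the link is forced to be $\varphi$. I anticipate the main obstacle to be the explicit MMP computation on $W$: one must show that after extracting $\gamma$ the correct $K$-negative ray exists, that the intermediate small map is a flip/flop (and in particular that no equivariant anti-flip is needed, which is relevant in light of Lemma~\ref{Lemm:AntiFlip} since $G=\PGL_2$ acts), and that the terminal divisorial contraction reproduces $\SS_b$ rather than some other Mori fibre space. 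I would handle this by transporting the whole picture to $\TT_b$ and $\FF_0^{b+1,b+1}$, where the toric/homogeneous description makes the intersection theory and the identification of the contracted divisor transparent, and then descending via the degree-$2$ cover $\epsilon$.
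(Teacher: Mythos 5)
Your overall strategy coincides with the paper's: use the orbit decomposition and the cone of curves from Lemma~\ref{lem:SbPicN}, exclude links of type \III and \IV because the only $K$-negative extremal ray is the fibre class and small maps out of the smooth $\PGL_2$-threefold $\SS_b$ are isomorphisms (Lemma~\ref{Lemm:AntiFlip}), reduce any remaining link to the reduced blow-up of the unique invariant curve $\gamma$ (Lemmas~\ref{lem:ReducedblowUpcurve} and~\ref{Lemm:AutTTb}\ref{HatS:OnlyInvariantCurve}), invoke Remark~\ref{remark:UnicitySarkisov} for uniqueness, and identify the resulting type \II link as the birational involution obtained by blowing up $\gamma$ and contracting the strict transform of the invariant divisor swept by the curves equivalent to $s_1$. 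This is exactly the paper's proof (which quotes \cite[Lemma~5.6.2]{BFT} for the explicit involution).

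Three slips are worth correcting. First, your exclusion of type \I links ``because the base has Picard rank $1$'' is not a valid reason: a type \I link from a Mori conic bundle over $\p^2$ would end over the blow-up of a point of $\p^2$, and what rules this out is that $\Autz(\SS_b)$ acts on $\p^2$ with no fixed point (its orbits are $\Gamma$ and its complement), not the Picard rank of the base. (In fact the paper does not need to separate types \I and \II at all: both begin with a divisorial extraction, whose centre is forced to be $\gamma$, and Remark~\ref{remark:UnicitySarkisov} then pins down the link, which happens to be of type \II.) Second, your proposed verification ``upstairs'' is wrong as stated: the exchange involution $\hat\sigma$ of $\FF_0^{b+1,b+1}$ is identified in the proof of Lemma~\ref{lem:SbPicN} with the deck involution $\sigma$ of the double cover $\epsilon\colon\TT_b\to\SS_b$, so it descends to the \emph{identity} on $\SS_b$, not to the sought birational involution; the genuine involution is not biregular and really does require the blow-up/blow-down construction (also note that the divisor contracted is $E=\epsilon(S_1)=\epsilon(S_2)$, the one swept by curves equivalent to $s_1$, whereas $\pi^{-1}(\Gamma)$ is the other invariant divisor $D$, which is not contracted). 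Third, for $b=3$ one has $K_{\SS_b}\cdot s_1=0$ rather than $>0$; the argument still goes through since a $K$-trivial ray admits no Mori divisorial contraction or fibration, but the case should be covered explicitly.
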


\begin{proof}
By Lemma \ref{lem:SbPicN}\ref{SbFourOrbits} the variety $\SS_b$ is the union of four $G$-orbits: a curve $\gamma$, two surfaces whose closures are divisors $D$ and $E$, and the open orbit. 
Also, by Lemma \ref{lem:SbPicN}\ref{SbPicN4}, the cone of effective curves is generated by $f$ and $s_1$, where the curves numerically equivalent to $s_1$ span the divisor $E$, but $K_{\SS_b} \cdot s_1=b-3 \geq 0$. 

We first show that there is no equivariant link of type \III or  \IV starting from $\SS_b$. As the extremal rays cover {respectively $\SS_b$ and $E$}, the small map starting from $\SS_b$ cannot be a flop or a flip. It follows from Lemma~\ref{Lemm:AntiFlip} that it also cannot be an anti-flip, so the small map is an isomorphism.
Hence, an equivariant link of  type \III or  \IV needs to start with the contraction of a negative extremal ray, but $K_{\SS_b} \cdot s_1\ge 0$ and the contraction of $f$ is the morphism $\SS_b\to \p^2$, and so there is no such link.

The only possible equivariant links then start with blowing-up the curve $\gamma$ (with its reduced structure, by Lemma~$\ref{lem:ReducedblowUpcurve}$). One such equivariant link exists: it is of type \II, obtained by blowing-up the curve $\gamma$ and then contracting the strict transform of the divisor $E$, and is the birational involution considered in \cite[Lemma~5.6.2]{BFT}. By Remark~\ref{remark:UnicitySarkisov}, this is the only possible equivariant link starting from $\SS_b$.
\end{proof}

\subsection{\texorpdfstring{$\p^2$}{P2}-bundles over \texorpdfstring{$\p^1$}{P1}}\label{sec:P2bundles}
In this subsection we study the equivariant links starting from $\RR_{m,n}$, from certain $\FF_a^{b,c}$, and from $\P(1,1,2,3)$ {(see \S~\ref{sec:first classification} for the definition of $\RR_{m,n}$ and $\FF_a^{b,c}$)}.

\begin{lemma}\label{Lemm:XmnPic}
Let $X=\RR_{m,n}$ with $m\ge n\ge 0$.
\begin{enumerate}
\item\label{XmnPicN1}
The group of numerical equivalence classes of $1$-cocycles $\NS_\Q(X)$ is generated by $F:=(y_0=0) \simeq \p^2$ $($a fibre of the $\p^{2}$-bundle$)$ and $H:=(x_0=0) \simeq \F_n$.
\item\label{XmnPicN2}
The group of numerical equivalence classes of $1$-cycles $N_1^\Q(X)$ is generated by the curve $\ell:=(x_0=x_1=0)$, which is a section of the structure morphism $\RR_{m,n} \to \p^1$, and by the curve $f=H \cap F=(x_0=y_0=0)$.
\item\label{XmnPicN3}
The intersection form on $X$ is given by
\[\begin{array}{|c|cc|}
\hline
& H & F\\
\hline
H & \ell -(m-n)f & f \\
F& f & 0 \\
\hline\end{array}\ \ \begin{array}{|c|cc|}
\hline
& H & F\\
\hline
\ell & -m & 1\\
f& 1 & 0 \\
\hline\end{array}\]
\item\label{XmnPicN4}
The cone of effective curves $\NE(X)$ is generated by $\ell$ and $f$.
\item\label{XmnPicN5}
The canonical divisor $K_X$ is $-3H-(2m-n+2)F$, and so $K_X\cdot \ell=m+n-2$ and $K_X\cdot f=-3$.
\end{enumerate}
\end{lemma}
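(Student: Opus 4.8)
The plan is to exploit that $X=\RR_{m,n}$ is the projectivisation of the rank-$3$ bundle $\E=\OP(-m)\oplus\OP(-n)\oplus\OP$ over $\p^1$, so that $\rho(X)=2$ and $\Pic(X)\cong\Z^2$; everything then reduces to pinning down one basis of $N^1$, one basis of $N_1$, and the pairing between them. From the global $\G_m^2$-description in \S\ref{sec:first classification}\ref{DefiRmn} I would read off the divisor classes directly: writing $h$ for the class of $(x_2=0)$ and $F$ for the class of $(y_0=0)$ (the fibre), the weights show that $(y_0=0)$ and $(y_1=0)$ both have class $F$, that the fibre $F\simeq\p^2$, and that $H=(x_0=0)$ has class $h-mF$ with $H\simeq\F_n$ (the residual action on $[x_1:x_2;y_0:y_1]$ is exactly that of $\F_n$). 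This provides the two generators claimed in \ref{XmnPicN1}, once \ref{XmnPicN3} shows $H,F$ are independent.

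Next I would compute the four mixed numbers of the second table by restricting line bundles to the two test curves $\ell=(x_0=x_1=0)$ (a section of $\pi$, hence $\ell\cong\p^1$) and $f=(x_0=y_0=0)$ (a line in the fibre $\p^2$ over $[0:1]$). On $\ell$ the section $x_2$ trivialises $\O_X(h)$ while $\O_X(F)$ restricts to $\OP(1)$, giving $\ell\cdot h=0$ and $\ell\cdot F=1$, hence $\ell\cdot H=\ell\cdot(h-mF)=-m$; on $f$ one has $\O_X(h)|_F=\O_{\p^2}(1)$ and $F|_F=\O_{\p^2}$, giving $f\cdot h=1$, $f\cdot H=1$, $f\cdot F=0$. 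The resulting pairing matrix between $\{\ell,f\}$ and $\{H,F\}$ has determinant $-1$, which simultaneously proves that $\ell,f$ form a basis of $N_1^\Q(X)$ (statement \ref{XmnPicN2}) and that $H,F$ form a basis of $N^1_\Q(X)$ (statement \ref{XmnPicN1}).

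For the first table I note $F^2=0$ (distinct fibres are disjoint) and $H\cdot F=f$ (the scheme-theoretic intersection is the reduced line $f$). The only real computation is $H^2$: writing $H^2=a\ell+bf$ and pairing with the basis $\{H,F\}$ gives $a=H^2\cdot F=H\cdot f=1$ and $-ma+b=H^3$. The triple number $H^3$ I would obtain from $h^3=m+n$, most safely by a degeneration count (three general members of $|h|$ meet exactly over the $m+n$ zeros of the determinant of their $3\times3$ coefficient matrix, a form of degree $m+n$ on $\p^1$), together with $h^2F=1$ and $F^2=0$, so that $H^3=(h-mF)^3=h^3-3m\,h^2F=n-2m$; this yields $b=n-m$ and $H^2=\ell-(m-n)f$. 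I expect this sign-bookkeeping around $h^3$ (equivalently, deciding which section of $H\cong\F_n$ has self-intersection $+n$) to be the one genuinely error-prone point, so I would cross-check $H^3=n-2m$ by restricting $\O_X(H)=h-mF$ to $H\cong\F_n$ and computing $(h|_H-mF|_H)^2$ there.

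Finally, for the cone \ref{XmnPicN4} I would show that $F$ and $h=H+mF$ are both base-point-free (the sections $y_0,y_1$ respectively $x_0y_0^iy_1^{m-i}$, $x_1y_0^jy_1^{n-j}$, $x_2$ have no common zero since $m,n\ge0$), hence nef; then any effective class $C=a\ell+bf$ satisfies $C\cdot F=a\ge0$ and $C\cdot h=b\ge0$, and since $\ell,f$ are themselves effective we conclude $\NE(X)=\langle\ell,f\rangle$. The canonical class \ref{XmnPicN5} comes from the Cox-ring formula $K_X=-\sum_i D_i$: summing the classes of $(x_0),(x_1),(x_2),(y_0),(y_1)$ gives $(3,-(m+n)+2)$, i.e. $K_X=-3H-(2m-n+2)F$, after which the second table yields $K_X\cdot\ell=m+n-2$ and $K_X\cdot f=-3$.
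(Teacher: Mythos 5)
Your proof is correct, and while the overall skeleton (fix generators, compute the pairing, deduce the cone and $K_X$) necessarily matches the paper's, several of your sub-arguments genuinely differ. The paper works with the three surfaces $H=(x_0=0)$, $H'=(x_1=0)$, $H''=(x_2=0)$ and the relations $H''=H+mF=H'+nF$ coming from the rational functions $x_0y_0^m/x_2$ and $x_1y_0^n/x_2$; your reading of the same relations off the $\G_m^2$-weights is equivalent. Where you diverge: (i) for $H^2$ the paper has the one-line identity $H\cdot H=H\cdot(H'-(m-n)F)=\ell-(m-n)f$, using $H\cap H'=\ell$ and $H\cap F=f$, whereas you go through $H^3=n-2m$ via the determinant count $h^3=m+n$ — correct, but it costs you an extra input and is exactly where you rightly flag the sign risk; (ii) for $\NE(X)$ the paper argues that an irreducible $\gamma=a\ell+bf$ cannot lie on all three of $H,H',H''$, so one of $b$, $b-an$, $b-am$ is nonnegative and hence $b\ge 0$, while you establish nefness of $F$ and $h=H+mF$ by exhibiting base-point-free linear systems — your version is arguably the more standard and robust one, and it is here that the hypothesis $m\ge n\ge 0$ enters in both proofs; (iii) for $K_X$ the paper applies adjunction twice (to $H$ and to $F$), while you invoke the toric/Cox-ring formula $K_X=-\sum_i D_i$, which is shorter given that $\RR_{m,n}$ is toric. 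Both routes are complete; the paper's avoids any appeal to toric machinery and to the value of $h^3$, yours avoids the adjunction bookkeeping.
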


\begin{proof}
As $X\to \p^1$ is a $\p^2$-bundle, $\NS_\Q(X)$ is generated by a fibre $F$ and by a divisor whose restriction to each fibre is a line. We can choose this divisor to be one of the following surfaces $H:=(x_0=0) \simeq \F_n$, $H' :=(x_1=0) \simeq \F_m$, or  $H'':=(x_2=0)\simeq \F_{m-n}$. In particular, \ref{XmnPicN1} is shown.

As $\frac{x_0 y_0^m}{x_2}, \frac{x_1y_0^n}{x_2}$ are two rational functions on $X$, we get $H'' = H+mF = H'+nF$ in $\NS_\Q(X)$. 
Writing $\ell=H \cap H'$, we obtain $\ell\cdot H''=0$ and $\ell\cdot F=1$, thus $\ell \cdot H=-m$. 
Moreover, $f=H\cdot F=H'\cdot F=H''\cdot F$ and $f$ satisfies $f\cdot F=0$ and $f\cdot H=1$. 
Hence, $f$ and $\ell$ generate $N_1^\Q(X)$. As $H\cdot H=H\cdot (H'-(m-n)F)=\ell -(m-n)f$. This yields \ref{XmnPicN2}-\ref{XmnPicN3}.

To show \ref{XmnPicN4}, we take an irreducible curve $\gamma\subseteq X$, which is equivalent to $a\ell +bf\in \NE(X)$ for some $a,b\in \Q$, and show that $a,b\ge 0$.  We observe that $0\le \gamma\cdot F=a$, and that $\gamma\cdot H''=b$, $\gamma\cdot H'=b-an$, $\gamma\cdot H=b-am$. 
As $\gamma$ cannot be contained in the three surfaces {$H$, $H'$, and $H''$}, we have $b\ge 0$.

Writing $K_X=\alpha H + \beta F$, for some $\alpha,\beta \in \Z$, and applying twice the adjunction formula (once for $H$ and once for $F$) yields $K_X=-3H-(2m-n+2)F$,
and then \ref{XmnPicN5} follows from \ref{XmnPicN3}.
\end{proof}

Lemma~\ref{Lem:Rm0} and Proposition~\ref{prop:Rmn list of links} below describe equivariant links starting from $\RR_{m,n}$, where we may assume $m\ge n\ge  0$. By Lemma~\ref{lem:Rmn cases non max}, the cases where $(m,n)=(1,0)$ or when $2n\geq m>n \geq 1$ can be excluded. The case of $\RR_{0,0}\simeq \p^1\times\p^2$ is done in Proposition~\ref{Prop:HomSpaces}.
\begin{lemma}\label{Lem:Rm0}
If $m\geq 2$, then there are no equivariant links starting from $\RR_{m,0}$. 
\end{lemma}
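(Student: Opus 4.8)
The claim is that the $\p^2$-bundle $\RR_{m,0}$ with $m\ge 2$ admits no equivariant Sarkisov link. My plan is to exploit the explicit description of the automorphism group and the Mori-geometric data of $\RR_{m,0}$ established in Remark~\ref{rk: aut decompo bundles over P1} and Lemma~\ref{Lemm:XmnPic}, and then rule out each of the four types of Sarkisov link in turn.

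First I would record the relevant data. By Lemma~\ref{Lemm:XmnPic} the cone $\NE(\RR_{m,0})$ is generated by the section $\ell=(x_0=x_1=0)$ and the fibre class $f$, with $K_X\cdot f=-3$ and $K_X\cdot \ell=m-2\ge 0$. The contraction of the ray $\R_{\ge 0}f$ is exactly the $\p^2$-bundle structure $\RR_{m,0}\to \p^1$, so it produces the Mori fibration we started with and cannot begin a nontrivial link. The other extremal ray $\R_{\ge 0}\ell$ is non-negative against $K_X$, hence is not a $K_X$-negative extremal ray and cannot be contracted inside an MMP step. Next I would analyse the $G=\Autz(\RR_{m,0})$-action, using Remark~\ref{rk: aut decompo bundles over P1} with $n=0$: since $m\ge 2$, the matrix entries $p_{3,-m},p_{6,0}$ force the orbit structure, and one checks that $G$ acts on $\RR_{m,0}$ with exactly two orbits, namely the $G$-stable divisor $H=(x_0=0)\simeq\F_m$ (more precisely the section locus swept out by curves numerically equivalent to $\ell$) and its open complement. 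In particular there is no $G$-stable curve along which one could perform an equivariant blow-up of codimension two.

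With this in hand I would eliminate the links by type. A link of type \I or \II must begin with a divisorial contraction $W\to X$ (equivalently, after taking the inverse, a blow-up of $X$ along a $G$-stable centre which is a point or a curve by Lemma~\ref{lem:ReducedblowUpcurve}); but the only $G$-stable proper closed subset of positive codimension is the divisor $H$ together with its orbits, and $G$ has no orbit of codimension $\ge 2$, so there is no admissible centre, and hence no type \I or \II link. A link of type \III or \IV must begin with a $K_X$-negative extremal contraction of $X$ itself; the only $K_X$-negative extremal ray is $\R_{\ge 0}f$, whose contraction is the given fibration $\RR_{m,0}\to\p^1$, producing a type~\IV configuration only if the other side of the roof is again a genuine Mori fibre space over a base dominated by $\p^1$. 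Here I would invoke Lemma~\ref{lem:NoTypeIVonSurfaces}: in a type~\IV link between threefolds the variety $Z$ has dimension at most $1$, and since our base is already $\p^1$, the morphism $\p^1\to Z$ forces $\dim Z=0$ and the second fibration to be over a point, contradicting that $\ell$ is not contractible (it is $K_X$-non-negative). Thus no type~\IV link exists either.

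The main obstacle, and the step deserving the most care, is verifying that $G$ has exactly two orbits on $\RR_{m,0}$ when $m\ge 2$, so that there is genuinely no $G$-stable centre of codimension $\ge 2$ to blow up; this is what distinguishes the present case from the excluded cases $(m,n)=(1,0)$ and $2n\ge m>n\ge 1$ of Lemma~\ref{lem:Rmn cases non max}, where such stable curves or lines do appear. I would settle this by a direct computation with the matrix form of $G$ from Remark~\ref{rk: aut decompo bundles over P1}: the presence of the free homogeneous entry $p_{3,-m}$ (with $m\ge 1$) and $p_{7,m}$ acting on the coordinates $x_0,x_2$ shows that $G$ acts transitively on the complement of $H=(x_0=0)$ and transitively on $H$ minus nothing, giving precisely the two orbits. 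Once the orbit count is secured, the exclusion of all four link types is routine, and the lemma follows.
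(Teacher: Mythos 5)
Your proof is correct and follows essentially the same route as the paper: the two-orbit structure of $\RR_{m,0}$ (the invariant divisor $H=(x_0=0)$ and its open complement) kills types \I and \II and forces small maps to be isomorphisms, while $K_X\cdot\ell=m-2\ge 0$ kills types \III and \IV. Two small slips worth fixing: by Lemma~\ref{Lemm:XmnPic}\ref{XmnPicN1} the divisor $H=(x_0=0)$ is $\F_n=\F_0\simeq\p^1\times\p^1$ (not $\F_m$), and in Remark~\ref{rk: aut decompo bundles over P1} the entry $p_{3,-m}$ vanishes for $m\ge 1$ rather than being free --- transitivity on the open orbit comes from the degree-$m$ entries $p_{4,m},p_{7,m}$ translating $x_1,x_2$ by multiples of $x_0$, and transitivity on $H$ from the $\PGL_2\times\PGL_2$-action on $\p^1\times\p^1$.
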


\begin{proof}
We see from Lemma~\ref{lem:aut P2 bundle} that $\RR_{m,0}$ is the union of two $G$-orbits: the divisor $H=(x_0=0) \simeq \p^1\times\p^1$ and its open complement. Hence, small maps are isomorphisms and type \I and \II equivariant links are excluded. By Lemma~\ref{Lemm:XmnPic}\ref{XmnPicN5}, $K_X\cdot \ell=m-2\ge 0$, so type \III and \IV equivariant links are also excluded, which proves the lemma.
\end{proof}

\begin{proposition} \label{prop:Rmn list of links}
Assume that $\car(\k) \notin \{2,3\}$, and
assume that $m=n \geq 1$ or $m>2n \geq 2$.
The morphism 
\[\varphi\colon~\FF_{m-n}^{1,-n} \to \RR_{m,n}, \ [x_0:x_1;y_0:y_1;z_0:z_1]\mapsto [x_0y_0:x_0y_1:x_1;z_0:z_1]\] 
is the blow-up of the curve $\ell:=(x_0=x_1=0)$ and  satisfies $\varphi \Autz(\FF_{m-n}^{1,-n}) \varphi^{-1}=\Autz(\RR_{m,n})$. Hence, $\varphi$ is a type \III equivariant link starting from $\FF_{m-n}^{1,-n}$ and $\varphi^{-1}$ is a type \I equivariant link starting from $\RR_{m,n}$. Moreover, the following hold:
\begin{enumerate}
\item\label{Only1131} If  $(m,n)\notin \{(1,1),(3,1)\}$, then $\varphi^{-1}$ is the unique equivariant link starting from $\RR_{m,n}$.
\item  \label{prop:Rmn list of links2} If $(m,n)=(1,1)$, there are two equivariant links starting from $\RR_{1,1}$: the link $\varphi^{-1}$ and the flop of $\ell$ $($type \IV link$)$ that yields a map $\RR_{1,1} \ps \RR_{1,1}$. This flop can be further factorised into the composition of equivariant links as follows:
\[
\xymatrix@R=3mm@C=.3cm{
    & & \FF_{0}^{1,-1} \ar[d]  \ar_{\varphi'}@/_1.0pc/@{->}[dll] \ar^{\varphi}@/^1.0pc/@{->}[drr]\\
    \RR_{1,1}  \ar@/^1.5pc/@{<-->}[rrrr] \ar[d]  & & \p^1\times\p^1  \ar[dll] \ar[drr] & & \RR_{1,1} \ar[d]\\
     \p^1 \ar[drr]  & & & & \p^1 \ar[lld]\\
  & & \{pt\} & &
    } 
\]
where the two morphisms $\varphi,\varphi'$ are defined by
\begin{equation*}
\begin{array}{cccccc}
&\RR_{1,1} &\stackrel{\varphi'}{\longleftarrow} &\FF_{0}^{1,-1}&\stackrel{\varphi}{\longrightarrow} & \RR_{1,1} \\ 
&[ x_0 z_0:x_0 z_1:x_1;y_0:y_1] & \leftmapsto& [x_0:x_1;y_0:y_1;z_0:z_1] & \mapsto & [x_0y_0:x_0y_1:x_1;z_0:z_1].
 \end{array}
\end{equation*}
\item \label{prop:Rmn list of links0t} If $(m,n)=(3,1)$,  there are two equivariant links starting from $\RR_{3,1}$: the link $\varphi^{-1}$ above and a type \III link $\eta\colon\RR_{3,1} \dashrightarrow \P(1,1,2,3)$, starting with the anti-flip of $\ell$ followed by the contraction of the unique invariant divisor $($the image of the divisor $x_0=0$ in $\RR_{3,1})$.  
The link satisfies $\eta \Autz(\RR_{3,1}) \eta^{-1} = \Autz(\P(1,1,2,3))$.
\[
\xymatrix@R=5mm@C=1cm{
      \RR_{3,1} \ar@{-->}[rrrd]^-{\varphi} \ar@{..>}[rr]^{\antiflip} \ar[d]_-{\pi} & & W' \ar[dr]^{\div} \\
      \P^1 \ar[drrr]  & & &  \ar[d]^-{\pi'}  \P(1,1,2,3) \\
    &  & & \{pt\}
    }
\]
\end{enumerate}
\end{proposition}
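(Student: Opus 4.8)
The plan is to treat first the explicit morphism $\varphi$ and then classify all equivariant links by a two-ray game analysis organized around the section $\ell$. I would begin by checking that $\FF_{m-n}^{1,-n}$ is the blow-up of $\RR_{m,n}$ along $\ell=(x_0=x_1=0)$: this is a coordinate computation, a permuted instance of Lemma~\ref{Lemm:Fa1cRac} with $a=m-n$ and $c=-n$. The structural input is that $\ell$ is $\Autz(\RR_{m,n})$-invariant, which I would read off from the matrix description of $\Autz(\RR_{m,n})$ in Remark~\ref{rk: aut decompo bundles over P1}: under the hypotheses $m=n\ge 1$ or $m>2n\ge 2$ (so $m\ge n$, with the off-diagonal degrees $-m,-n$, and $n-m$ when $m>n$, all negative), the entries $p_{3,-m}$, $p_{6,-n}$ (and $p_{2,n-m}$) vanish, so the subspace $x_0=x_1=0$ is stable. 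Blanchard's lemma (Proposition~\ref{blanchard}) then gives $\varphi\Autz(\FF_{m-n}^{1,-n})\varphi^{-1}\subseteq\Autz(\RR_{m,n})$, and the universal property of the blow-up of a smooth curve invariant under the connected group $\Autz(\RR_{m,n})$ gives the reverse inclusion, hence the stated equality. Recording the fibrations $\RR_{m,n}\to\p^1$ and $\FF_{m-n}^{1,-n}\to\F_{m-n}\to\p^1$ identifies $\varphi^{-1}$ as a type~\I link and $\varphi$ as a type~\III link.

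Next I would reduce the classification to the behaviour of the ray $\ell$. By Lemma~\ref{Lemm:XmnPic} one has $\NE(\RR_{m,n})=\Q_+\ell+\Q_+f$ with $K\cdot f=-3$ and $K\cdot\ell=m+n-2$; the ray $f$ is the Mori fibration and starts no link. Adjunction together with $K\cdot\ell=m+n-2$ gives $\deg\det N_{\ell/\RR_{m,n}}=-m-n$, and a direct check in the charts shows $N_{\ell/\RR_{m,n}}=\O(-m)\oplus\O(-n)$; thus $\ell$ is rigid and its contraction is small. Since $m+n\ge 2$ throughout the range, $\ell$ never admits a flip: it is a flopping curve exactly when $m+n=2$ and an anti-flipping curve when $m+n>2$. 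Because $\Autz(\RR_{m,n})$ surjects onto $\PGL_2$ acting on the base (Lemma~\ref{lem:aut P2 bundle}) and $\ell$ is a section, the group acts transitively on $\ell$, so $\ell$ carries no invariant point; I would further verify from the orbit structure that $\ell$ is the unique invariant curve and that there is no invariant point off $\ell$. Using Lemma~\ref{lem:ReducedblowUpcurve} to rule out other divisorial extractions and Remark~\ref{remark:UnicitySarkisov} to pin down each link from its initial datum, every equivariant link must start either with the blow-up of $\ell$ — giving exactly $\varphi^{-1}$ — or with the flop or anti-flip of $\ell$.

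The case analysis then runs on $m+n$. For $(m,n)=(1,1)$ one has $K\cdot\ell=0$, so $\ell$ is a flopping curve; I would exhibit the flop as the toric involution $\RR_{1,1}\dasharrow\RR_{1,1}$ by checking in coordinates that the two contractions $\varphi,\varphi'$ of $\FF_0^{1,-1}$ in the displayed diagram are the two blow-downs of the section, so that $\varphi\circ\varphi'^{-1}$ is the flop, a type~\IV link factoring through $\p^1\times\p^1$. For $m+n>2$ the curve $\ell$ is anti-flipping; here $\Autz(\RR_{m,n})$ contains $\PGL_2$ precisely when $m,n$ are both even (equivalently when $-\mathrm{Id}\in\SL_2$ acts trivially), in which case Lemma~\ref{Lemm:AntiFlip} forbids the $\Autz(\RR_{m,n})$-equivariant anti-flip and $\varphi^{-1}$ is the only link. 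In the remaining cases I would run the two-ray game explicitly on the toric fan of $\RR_{m,n}$: anti-flipping $\ell$ and then contracting the transform of the invariant divisor $(x_0=0)$ yields a $\Q$-factorial terminal Fano of Picard rank~$1$, and a discrepancy computation shows this completes to a Mori fibre space exactly for $(m,n)=(3,1)$, with target $\P(1,1,2,3)$; for this last link and the equality $\eta\Autz(\RR_{3,1})\eta^{-1}=\Autz(\P(1,1,2,3))$ I would invoke Lemma~\ref{LemP1123}.

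The main obstacle is the sharp determination, for $m+n>2$, of which $(m,n)$ admit the anti-flip link: Lemma~\ref{Lemm:AntiFlip} only disposes of the both-even cases, while the (many) remaining odd cases must be excluded by a hands-on toric study of the two-ray game, tracking the fan through the anti-flip and the divisorial contraction and confirming that a terminal $\Q$-factorial Mori fibre space results only for $(3,1)$. Controlling this toric bookkeeping — and making sure no extra link is hidden in the second extremal ray of $W=\FF_{m-n}^{1,-n}$ after blowing up $\ell$ — is the technical heart; the rest is coordinate checks and citations.
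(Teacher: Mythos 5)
Your skeleton matches the paper's proof: identify $\ell$ as the unique $\Autz(\RR_{m,n})$-stable curve via Remark~\ref{rk: aut decompo bundles over P1}, get the equality of automorphism groups from Proposition~\ref{blanchard} plus the invariance of the blow-up centre, then use $\NE(\RR_{m,n})=\Q_+\ell+\Q_+f$ with $K\cdot f=-3$ and $K\cdot\ell=m+n-2\ge 0$ (together with Lemma~\ref{lem:ReducedblowUpcurve} and Remark~\ref{remark:UnicitySarkisov}) to force every other link to begin with the flop or anti-flip of $\ell$, and finish by a toric two-ray game. Three points where your route deviates and should be corrected. First, the detour through Lemma~\ref{Lemm:AntiFlip} for the ``both even'' cases is unnecessary and does not carry the load you want: the paper disposes of \emph{all} cases with $n\ge 2$ at once by checking on the fan that the anti-flip of $\ell$ lands in a non-terminal toric variety unless $n=1$ (leaving only $m=1$, $m=3$, $m\ge 4$ under the running hypotheses), so no group-theoretic argument is needed and you would otherwise still have to kill the mixed-parity cases with $n\ge2$ by hand. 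Second, for $n=1$ and $m\ge 4$ the exclusion is not a ``discrepancy computation'' after a divisorial contraction: after the anti-flip $X\dasharrow X_1$ the second extremal ray $\ell_5$ satisfies $K_{X_1}\cdot\ell_5=m-4\ge 0$ and the curves equivalent to $\ell_5$ sweep out the image of the divisor $x_0=0$, so this ray admits neither a divisorial contraction nor a further small modification usable in a Sarkisov link; the two-ray game simply cannot be completed. Your phrasing suggests the contraction always exists and only terminality fails afterwards, which would send you down the wrong computation. Third, invoking Lemma~\ref{LemP1123} for $\eta\Autz(\RR_{3,1})\eta^{-1}=\Autz(\P(1,1,2,3))$ is circular in the paper's logical order, since the proof of that lemma uses part~\ref{prop:Rmn list of links0t} of this very proposition for the orbit description of $\P(1,1,2,3)$; the paper instead gets the inclusion from Proposition~\ref{blanchard} and the equality from a dimension count, which is self-contained and is what you should do here.
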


\begin{proof}
As $m \geq n \geq 1$,  {the curve $\ell\subseteq \RR_{m,n}$ is invariant by $\Autz(\RR_{m,n})$ (Lemma~\ref{lem:aut P2 bundle})}.
This yields  $\varphi^{-1}\Autz(\RR_{m,n})\varphi\subseteq \Autz(\FF_{0}^{1,-n})$, and the inclusion $\varphi \Autz(\FF_{0}^{1,-n})  \varphi^{-1}\subseteq \Autz(\RR_{m,n})$ follows from Proposition \ref{blanchard}. Hence, $\varphi$ and $\varphi^{-1}$ are equivariant links of type \III and \I respectively.

We now study other equivariant links starting from $X=\RR_{m,n}$. For each equivariant link of type \I or \II, if the divisorial contraction $W\to X$ is centered at $\ell\subseteq X$, it is the blow-up of $\ell$ with the reduced structure (Lemma~\ref{lem:ReducedblowUpcurve}), so the equivariant link is equal to $\varphi^{-1}$ (see Remark~\ref{remark:UnicitySarkisov}). Hence, $\varphi^{-1}$ is the only equivariant link of type \I and there is no equivariant link of type \II.

Moreover, by Lemma~\ref{Lemm:XmnPic}\ref{XmnPicN4} the cone of effective curves $\NE(X)$ is generated by $\ell$ and by $f$. The contraction of $f$ is the $\p^2$-bundle $X\to \p^1$ and there is no divisorial contraction associated with $\ell$, since $K_X\cdot \ell=m+n-2\ge 0$ (Lemma~\ref{Lemm:XmnPic}\ref{XmnPicN5}). Hence, every equivariant link of type \III or \IV starting from $X$ starts by an anti-flip (or a flop when $m=n=1$) centred at $\ell$.

Since $\RR_{m,n}$ is a toric variety {(Lemma~\ref{lem:aut P2 bundle})}, we can use tools from toric geometry (see e.g.~\cite[Chapter~14]{Mat02}) to
verify from the fan of $\RR_{m,n}$ that there exist non-trivial toric small maps  from $\RR_{m,n}$ to a toric threefold with  terminal singularities if and only if $n=1$. In this case $m=1$, $m=3$ or $m\ge 4$.

If $m=n=1$, the flop of $\ell$ is the type $\IV$ link $\RR_{1,1} \dashrightarrow \RR_{1,1}$ given in the statement. This gives~\ref{prop:Rmn list of links2}. 

If $m>2$ and $n=1$, denote the anti-flip of the curve $\ell$ by $X\dashrightarrow X_1$.  We then verify, 
{using again tools from toric geometry (since $X_1$ is a toric variety),} that the cone of effective curves $\NE(X_1)$ is generated by two {torus invariant} curves $\ell_5$ and $\ell_6$ satisfying $K_{X_1}\cdot \ell_5=m-4$ and $K_{X_1} \cdot \ell_6=1-m$ (and flipping $\ell_6$ brings us back to $X$). 
{Moreover, the curves in $X$ defined by the equations $x_0=x_2-\mu x_1y_0^n=0$, with $\mu \in \k$, are numerically equivalent and cover the divisor defined by the equation $x_0=0$. As these curves all intersect $\ell$ transversally, their images in $X_1$ are numerically equivalent to $\ell_5$, it follows that the set of curves equivalent to $\ell_5$ covers a divisor, namely the image of $x_0=0$.} This implies that there is no equivariant link starting by the anti-flip of $\ell_5$ if $m\ge 4$. This finishes the proof of~\ref{Only1131}.

If $m=3$ and $n=1$, we get a divisorial contraction $X_1\to \p(1,1,2,3)$ and the birational map $\RR_{3,1}\dasharrow \p(1,1,2,3)$ is given by $\eta\colon[x_0 : x_1 : x_2 ; y_0 : y_1] \mapsto  [x_0 y_0 : x_0 y_1 : x_0 x_1 : x_0^2 x_2]$. Also, by Proposition~\ref{blanchard} we have $\eta \Autz(\RR_{3,1}) \eta^{-1} \subseteq \Autz(\P(1,1,2,3))$, and computing the dimension on both sides yields an equality. {(To compute $\dim(\Autz(\RR_{3,1}))$ we use Lemma~\ref{lem:aut P2 bundle}, and for the computation of $\dim(\Autz(\P(1,1,2,3)))$ see for instance \cite[\S~8]{AlAm}.)
}
\end{proof}

\begin{lemma}\label{LemP1123} Assume that $\car(\k)\notin \{2,3\}$.
The variety $X=\P(1,1,2,3)$ is the union of four $\Autz(X)$-orbits, namely the two singular points $p_1=[0:0:1:0]$ and $p_2=[0:0:0:1]$, a curve $C$ satisfying $\overline{C}=\{[0:0:*:*]\}=C\sqcup p_1 \sqcup p_2$, and the open complement $X \setminus \overline{C}$. Moreover, there are only two equivariant links starting from $X$, both of type \I: 
\begin{enumerate}
\item The $($reduced$)$ blow-up of $p_1$ followed by the flip of the strict transform of $C$, which yields the $\P^2$-bundle $\RR_{3,1} \to \P^1$ and verifies $\varphi \Autz(\RR_{3,1}) \varphi^{-1}=\Autz(X)$.
\[ \xymatrix@R=3mm@C=1cm{
     & W  \ar@{..>}[r]^-{\flip} \ar[dl]_-{\div} & \RR_{3,1} \ar[d]^-{\pi'}  \ar@{-->}[lld]_-{\varphi} \\
      X \ar[d]_-{\pi}  & & \P^1 \ar[lld] & \\
      \{pt\} &  &
    }\] 
       Explicitly, one has
   \[ \begin{array}{cccc}
  \varphi^{-1}\colon &\P(1,1,2,3)& \dasharrow & \RR_{3,1} \\
&  [x_0:x_1:x_2:x_3]& \mapsto & [1:x_2:x_3;x_0:x_1]\end{array}.\]
\item\label{item PP2} The weighted blow-up of $p_2$ $($with weights $(1,1,2))$, which yields the Mori $\P^1$-fibration $\W_2 \to \P(1,1,2)$ $($defined in {\upshape\S~\ref{sec:first classification}\ref{DefiWb})} and verifies $\varphi \Autz(\W_2) \varphi^{-1}=\Autz(X)$.
\[ \xymatrix@R=3mm@C=1cm{
     &     & \W_2 \ar[d]^-{\pi'}  \ar[lld]_-{\varphi}^-{\div} \\
      X \ar[d]_-{\pi}   & & \P(1,1,2) \ar[lld] & \\
      \{pt\} &  &
    }\] 
    Explicitly, one has
   \[ \begin{array}{cccc}
  \varphi^{-1}\colon &\P(1,1,2,3)& \dasharrow & \W_2 \\
&  [x_0:x_1:x_2:x_3]& \mapsto & [1:x_3;x_0:x_1:x_2]\end{array}.\]
   \end{enumerate}
\end{lemma}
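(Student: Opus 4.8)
Lemma \ref{LemP1123} analyzes the weighted projective space $X=\P(1,1,2,3)$, and the plan is to proceed as follows. First I would establish the orbit decomposition. Writing $G=\Autz(X)$, I would compute $G$ explicitly: automorphisms of $\P(1,1,2,3)$ come from graded automorphisms of $\k[x_0,x_1,x_2,x_3]$ with $\deg x_0=\deg x_1=1$, $\deg x_2=2$, $\deg x_3=3$. Such an automorphism sends $x_0,x_1$ to a linear combination of $x_0,x_1$; sends $x_2$ to $\lambda x_2 + q(x_0,x_1)$ with $q$ quadratic; and sends $x_3$ to $\mu x_3 + x_2 \ell(x_0,x_1) + c(x_0,x_1)$ with $\ell$ linear and $c$ cubic. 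From this description one reads off that $\{x_0=x_1=0\}=\{[0:0:*:*]\}$ is $G$-stable, that the two singular points $p_1=[0:0:1:0]$ and $p_2=[0:0:0:1]$ (the $cA$-type quotient singularities of index $2$ and $3$) are each fixed by $G$, and that $G$ acts transitively on the remaining curve $C=\{[0:0:*:*]\}\setminus\{p_1,p_2\}$ (the quadratic/cubic terms let one move any such point). Transitivity on the open complement $X\setminus\overline{C}$ follows since a general point can be normalized using the remaining freedom. This gives the four-orbit decomposition.

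The second main step is to enumerate the equivariant links. Since the only $G$-orbits of codimension $\ge 2$ are the points $p_1,p_2$ and the curve $C$, and $X$ has Picard rank $1$, every equivariant Sarkisov link must begin with a divisorial contraction $W\to X$ (type \I or \II) extracting a divisor over one of these loci. I would argue that blowing up the curve $C$ is not possible as the start of a link here (one should instead extract over the singular points, where the relevant extremal extractions live), so the two candidate starting points are $p_1$ and $p_2$. For $p_1$, I would perform the reduced blow-up (Kawamata-type weighted blow-up appropriate to the index-$2$ singularity) and then run the relative MMP on the resulting variety $W$ with $\rho(W/\{pt\})=2$; the strict transform of $C$ gives a flippable curve, and after the flip one reaches a Mori fibre space which I would identify as $\RR_{3,1}\to\P^1$. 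The explicit formula $\varphi^{-1}\colon[x_0:x_1:x_2:x_3]\mapsto[1:x_2:x_3;x_0:x_1]$ makes the birational map transparent and, combined with Proposition \ref{blanchard}, gives $\varphi\Autz(\RR_{3,1})\varphi^{-1}\subseteq\Autz(X)$; a dimension count (or the fact that this is exactly the inverse of the link in Proposition \ref{prop:Rmn list of links}\ref{prop:Rmn list of links0t}) upgrades this to equality. For $p_2$, I would perform the weighted blow-up with weights $(1,1,2)$, matching the index-$3$ singularity, and check directly from the fan that the resulting variety is the toric Mori fibration $\W_2\to\P(1,1,2)$ of \S\ref{sec:first classification}\ref{DefiWb}, again using the explicit formula $[x_0:x_1:x_2:x_3]\mapsto[1:x_3;x_0:x_1:x_2]$ and Proposition \ref{blanchard} plus a dimension count for the equality of groups.

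Throughout, the cleanest route is to exploit that $X=\P(1,1,2,3)$ is toric, so I would carry out both the orbit analysis and the link enumeration on the level of fans. On the fan, the extremal extractions over $p_1$ and $p_2$ correspond to adding the two primitive rays that subdivide the two singular cones, and the subsequent flip/flop/contraction steps become visible as combinatorial wall-crossings; this simultaneously proves that these are the \emph{only} equivariant links (by Remark \ref{remark:UnicitySarkisov}, each link is determined by the intermediate relative Picard rank $2$ variety, and there are exactly two such toric extractions with terminal output) and that no link starts by blowing up $C$.

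The main obstacle I anticipate is verifying that the intermediate steps produce \emph{terminal} $\Q$-factorial varieties and correctly identifying the flip in case (1): one must confirm that the strict transform of $C$ after blowing up $p_1$ is genuinely flippable (negative against the canonical class on the contracted side) and that the flipped variety is precisely $\RR_{3,1}$ rather than some other Mori fibre space. This is where the toric computation is indispensable, since it reduces the verification of discrepancies, the flip, and the final identification to an explicit and checkable manipulation of the fan; the group equalities then follow formally from Proposition \ref{blanchard} together with dimension counts.
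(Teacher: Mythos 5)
Your proposal follows essentially the same route as the paper: the orbit decomposition via the explicit description of graded automorphisms of the coordinate ring (the paper's stated alternative to deducing it from Proposition~\ref{prop:Rmn list of links}), and the enumeration of links by a toric computation on the fan, checking that no terminal equivariant extraction exists over $\overline{C}$ and that there is exactly one over each of $p_1$ and $p_2$, after which the identifications with $\RR_{3,1}$ and $\W_2$ and the group equalities follow from the explicit formulas and Proposition~\ref{blanchard}. One small correction: transitivity of $\Autz(X)$ on $C$ comes from the scaling torus $(x_2,x_3)\mapsto(\lambda x_2,\mu x_3)$, not from the quadratic/cubic terms $q,\ell,c$, which all vanish on $\{x_0=x_1=0\}$.
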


\begin{proof} 
The description of the $\Autz(X)$-orbits of $X$ follows from Lemma~\ref{lem:aut P2 bundle}, and Proposition~\ref{prop:Rmn list of links}\ref{prop:Rmn list of links0t}. It can also be obtained by writing explicitly the $\Autz(X)$-action on $X$ in global coordinates. 

Using tools from toric geometry (see \cite{Ful93} and \cite[Chapter~14]{Mat02}), we verify from the fan of $X$ that we cannot contract $\overline{C}$, that blowing-up $\overline{C}$ (torus-equivariantly, but not necessarily with its reduced structure)  always gives a variety with non-terminal singularities, and that there is only one way to blow-up (torus-equivariantly) $p_1$ and $p_2$ respectively to get a variety with an $\Autz(X)$-action and terminal singularities; for $p_1$ it is the reduced blow-up while for $p_2$ it is a weighted blow-up with weights $(1,1,2)$ corresponding to the projection morphism from the closure of the graph of the equivariant rational map
\[\P(1,1,2,3) \dashrightarrow \P(1,1,2),\ [x_0:x_1:x_2:x_3] \mapsto [x_0:x_1:x_2].\]
This implies that there are only two equivariant links starting from $X$ which are those described in the statement.
\end{proof}

\subsection{Umemura's \texorpdfstring{$\p^1$}{P1}-bundles over \texorpdfstring{$\F_a$}{Fa}} \label{UmemuraP1 bundles over Fa}
In this subsection we first study the geometry of the $\P^1$-bundles $\U_a^{b,c}\to \F_a$ and $\V_b \to \P^2$, and then we consider the equivariant links starting from $\U_a^{b,c}$ and $\V_b$.

\begin{lemma} \label{lem:UmePicN}
Let $a,b \geq 1$ and $c  \geq 2$ be such that $c=ak+2$ with $0 \leq k \leq b$.
Let $q\colon~X=\U_{a}^{b,c} \to \F_a$ be an Umemura's $\p^1$-bundle.
\begin{enumerate}
\item\label{UmePicN1}
The group of numerical equivalence classes of $1$-cocycles $\NS_\Q(X)$ is generated by $H_x=(x_0=0) \simeq \F_a$, $H_y=(y_0=0) (\simeq \F_c \text{ if } c>2 \text{ and } \simeq \P^1\times\P^1 \text{ if } c=2)$, and $H_z \simeq \F_b$ $($given by $z=0$ on the first chart$)$.
\item\label{UmePicN2}
 The group of numerical equivalence classes of $1$-cycles $N_1^\Q(X)$ is generated by $f=H_y \cap H_z$, $s=H_x \cap H_z$, and $l_{00}=H_x \cap H_y$ $($which is the curve on $H_y$ of self-intersection $c$ if $c>2$ and is a curve of bidegree $(1,1)$ on $H_y\simeq \P^1\times\P^1$ if $c=2)$.  
\item\label{UmePicN3}
The intersection form on $X$ satisfies
\[ \begin{array}{|c|ccc|}
\hline
& H_x &  H_y & H_z\\
\hline
f & 1 & 0& 0 \\
s&  -b& 1 & 0 \\
l_{00}& \lambda & -a&1 \\
\hline\end{array} \;\;\; \text{ where $\lambda=c$ if $c>2$ and $\lambda=2$ if $c=2$.}\]
\item\label{UmePicN4}
Let $l_{10}=(x_1=y_0=0)$ that satisfies $l_{10}=l_{00}-cf$ in $N_1^\Q(X) $ and is the curve of $H_y\simeq \F_c$ of self-intersection $-c$ if $c>2$. If $c=2$, then $H_y \simeq \p^1\times\p^1$, the fibres of the two rulings are equivalent to $f$, and $r=l_{00}-f\in N_1^\Q(X)$.  Then $\NE(X)=\Q_+ \left \langle f,s,l_{10} \right \rangle$ if $c>2$. and  $\NE(X)=\Q_+ \left \langle f,s,r \right \rangle$ if $c=2$.
\item\label{UmePicN5}
The canonical divisor of $X$ is $K_X= -2 H_x-(b+2)H_y-a(b+1-k)H_z$, thus
$ K_X\cdot f=-2, \; K_X \cdot s =b-2, \; K_X \cdot  \ell_{10}=a(k+1) \text{ if } c>2, \text{ and }  K_X\cdot r =a-2 \text{ if } c=2.$
\end{enumerate}
\end{lemma}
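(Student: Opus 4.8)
The plan is to exploit throughout that $q\colon X=\U_a^{b,c}\to\F_a$ is a $\p^1$-bundle over a surface of Picard rank $2$, so that $\rho(X)=3$ and both $N^1_\Q(X)$ and $N_1^\Q(X)$ are three-dimensional. For \ref{UmePicN1} I would first observe that $H_y=q^{-1}(\s{-a})$ and $H_z=q^{-1}(f_{\F_a})$ are pull-backs of the two generators $\s{-a}$ and a fibre $f_{\F_a}$ of $N^1_\Q(\F_a)$, while $H_x=(x_0=0)$ is a section of the bundle; since a $\p^1$-bundle satisfies $N^1_\Q(X)=q^*N^1_\Q(\F_a)\oplus\Q[H_x]$, these three classes form a basis. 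Dually $N_1^\Q(X)$ is three-dimensional, and to get \ref{UmePicN2} I would check that the boundary curves $f=H_y\cap H_z$, $s=H_x\cap H_z$ and $l_{00}=H_x\cap H_y$ are linearly independent, which is automatic once the intersection matrix of \ref{UmePicN3} is shown to be non-degenerate.

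The geometric heart of the computation is to identify the three coordinate divisors as surfaces and to describe the restrictions $H_i|_{H_j}$. Restricting the gluing automorphism $\nu$ of {\upshape\S\ref{sec:first classification}\ref{DefiUme}} to $z=0$ and to $y_0=0$ shows that $H_z\simeq\F_b$ (a fibre of the $\F_b$-bundle $X\to\p^1$, hence with trivial normal bundle) and that $H_y\simeq\F_c$ when $c>2$ and $H_y\simeq\p^1\times\p^1$ when $c=2$; inside these surfaces $s$ is the $(-b)$-section, $f$ the fibre, and $l_{00}$ the $(+c)$-section (resp.\ a $(1,1)$-curve). Then every entry of the matrix in \ref{UmePicN3} becomes a surface intersection number or a normal-bundle degree: for instance $s\cdot H_x=s^2_{\F_b}=-b$ and $f\cdot H_x=s\cdot_{\F_b}f=1$, while $f\cdot H_z=s\cdot H_z=0$ because $N_{H_z/X}$ is trivial; similarly $l_{00}\cdot H_x=l_{00}^2=\lambda$, and $l_{00}\cdot H_y=\s{-a}^2=-a$, $l_{00}\cdot H_z=\s{-a}\cdot f_{\F_a}=1$ since $H_y=q^{-1}(\s{-a})$. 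These are all routine once the surfaces are identified.

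For \ref{UmePicN5} I would write $K_X=\alpha H_x+\beta H_y+\gamma H_z$ and determine $(\alpha,\beta,\gamma)$ exactly as in Lemmas~\ref{Lemm:XmnPic}\ref{XmnPicN5} and \ref{lem:SbPicN}\ref{SbPicN5}: the bundle fibre $f\simeq\p^1$ has trivial normal bundle, so adjunction gives $\alpha=K_X\cdot f=-2$; adjunction along $H_z\simeq\F_b$ (using $H_z|_{H_z}=0$ and $K_{\F_b}=-2s-(b+2)f$) yields $\beta=-(b+2)$; and adjunction along $H_y$ (using $H_y|_{H_y}=-af$ and $l_{00}=l_{10}+cf$) yields $\gamma=c-2-a(b+1)=-a(b+1-k)$. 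The four intersection numbers $K_X\cdot f$, $K_X\cdot s$, $K_X\cdot\ell_{10}$ and $K_X\cdot r$ then follow by pairing this expression with the matrix of \ref{UmePicN3} and substituting $ak=c-2$.

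The main obstacle is \ref{UmePicN4}, the structure of $\NE(X)$, because $X$ is \emph{not} toric: the transition map $\nu$ contains the non-monomial shear term $x_0y_0^ky_1^{b-k}z^{c-1}$, so one cannot read the cone off a fan as in the toric cases. Instead I would argue with the relative structure. Since $\NE(\F_a)=\Q_+\langle\s{-a},f_{\F_a}\rangle$ and $q$ is a $\p^1$-bundle, any irreducible curve is either contracted by $q$ (hence a nonnegative multiple of the fibre $f$) or projects onto one of the two extremal curves of $\F_a$; restricting $X$ over these extremal curves produces exactly the Hirzebruch surfaces $H_z\simeq\F_b$ and $H_y\simeq\F_c$, whose own cones are generated by $f$ together with the negative sections $s$ and $l_{10}$. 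A positivity check of an arbitrary effective class against $H_x,H_y,H_z$ then forces it into $\Q_+\langle f,s,l_{10}\rangle$, which gives the cone for $c>2$; the reverse inclusion is clear as all three curves are effective. For $c=2$ the only change is that $H_y\simeq\p^1\times\p^1$ contributes its second ruling class $r=l_{00}-f$ in place of $l_{10}$, and the same argument yields $\NE(X)=\Q_+\langle f,s,r\rangle$.
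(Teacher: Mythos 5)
Your treatment of parts (1), (2), (3) and (5) is correct and follows the same route as the paper: identify $H_y=q^{-1}(\s{-a})$ and $H_z=q^{-1}(f_{\F_a})$, determine $H_y$ by restricting the gluing map $\nu$ to $y_0=0$ (getting $\F_c$ for $c>2$ and $\p^1\times\p^1$ with $\ell_{00}$ of bidegree $(1,1)$ for $c=2$), read the intersection matrix off the normal bundles of the three coordinate divisors, and compute $K_X$ by adjunction along $f$, $H_z$ and $H_y$. Nothing to add there.

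Part (4), however, has a genuine gap. The opening dichotomy is false: an irreducible curve of $X$ not contracted by $q$ projects onto an arbitrary irreducible curve of $\F_a$, not necessarily onto $\s{-a}$ or a fibre --- the section $H_x$ alone contains curves lying over every curve of $\F_a$. So your case analysis only accounts for curves contained in (a surface equivalent to) $H_y$ or $H_z$, and everything else is delegated to the ``positivity check against $H_x,H_y,H_z$''. That check is not valid for curves contained in $H_x$: the restriction $H_x|_{H_x}$ is not nef (already $s\cdot H_x=-b<0$), so an irreducible curve inside the section may meet $H_x$ negatively and you cannot conclude $\ell\cdot H_x\ge 0$ for it. The paper organizes the cases by containment in the two non-nef divisors instead: if $\ell\subseteq H_x\simeq\F_a$ one uses $\NE(H_x)=\Q_+\langle s,\ell_{00}\rangle$ together with $\ell_{00}=\ell_{10}+cf$; if $\ell\subseteq H_y$ one uses $\NE(H_y)$; and only when $\ell$ lies in neither does one invoke $\ell\cdot H_x\ge0$ and $\ell\cdot H_y\ge0$, while $\ell\cdot H_z\ge0$ holds unconditionally because $H_z$ moves in the pencil of fibres of $X\to\p^1$. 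Concretely, writing $\ell=\alpha f+\beta s+\gamma\ell_{10}$, these give $\gamma=\ell\cdot H_z\ge0$, then $\beta\ge a\gamma\ge0$ from $\ell\cdot H_y\ge0$, and $\alpha\ge b\beta\ge0$ from $\ell\cdot H_x\ge0$. Your sketch needs the extra case $\ell\subseteq H_x$ (and the incorrect dichotomy removed) to close; the same remark applies verbatim to the $c=2$ variant with $r$ in place of $\ell_{10}$.
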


\begin{proof}
The proof \ref{UmePicN1}-\ref{UmePicN2} is analogue to the proof of Lemma~\ref{lem:SbPicN}\ref{SbPicN1}-\ref{SbPicN2}.

\ref{UmePicN3}: Note that $H_x$ is a section of $q\colon \U_{a}^{b,c} \to \F_a$ and $H_y=q^{-1}(s_{a})$, $H_z=q^{-1}(f_a)$, where $f_a,s_a\in \F_a$ are respectively a fibre and the section of self-intersection $-a$.

As $f$ is a fibre of $\U_{a}^{b,c} \to \F_a$, we have $f\cdot H_x=1$, $f\cdot H_y=f\cdot H_z=0$.

The curves $l_{00}$ and $s$ have self-intersection $-a$ and $0$ in $H_x\simeq \F_a$ and are sections of $H_y\to s_a$ and $H_z\to f_a$ respectively. Hence, $l_{00}\cdot H_z=s\cdot H_y=1$ and $s\cdot H_z=0$. Moreover, $l_{00}\cdot  H_y=-a$, as it the self-intersection of $l_{00}$ in  $H_x$. Similarly, we get $s\cdot H_x=-b$, as it is the self-intersection of $s$ in $H_z\simeq \F_b$ respectively. 

It remains to compute the self-intersection of $l_{00}$ in $H_y$ to get $l_{00}\cdot H_x$. The surface $H_y$ is obtained by gluing two copies of $\p^1\times\A^1$ via 
\[\begin{array}{llllll}
\nu\colon ([x_0:x_1;0:1],z)&\mapsto &\left([x_0:x_1z^{c};0:1],\frac{1}{z}\right)& \text{ if }&c>2,\\
\nu\colon([x_0:x_1;0:1],z)&\mapsto &\left([x_0:x_1z^{2}+x_0 z;0:1],\frac{1}{z}\right)& \text{ if }&c=2.\end{array}\] 
Hence, if $c>2$ we get $H_y\simeq \F_c$ and $l_{00}\cdot H_x=c$. Moreover, $\ell_{10}$ is the curve of self-intersection $-c$. If $c=2$, we use the automorphisms at the source and target given respectively by
\[\begin{array}{llllll}
\alpha_1\colon&([x_0:x_1;0:1],z)&\mapsto &([x_0-x_1z:x_1;0:1],z)&\text{ and }\\
\alpha_2\colon&([x_0:x_1;0:1],z)&\mapsto& ([x_1:-x_0+x_1z;0:1],z)\end{array}.\]
We obtain $\alpha_2\circ \nu\circ \alpha_1\colon ([x_0:x_1;0:1],z)\mapsto ([x_0:x_1;0:1],\frac{1}{z})$, so $H_y\simeq \p^1\times\p^1$. Moreover, the curve $l_{00}$ is given by $x_0=0$ on the two charts and is sent by $\alpha_1^{-1}$ onto $x_0-x_1z=0$, corresponding thus to a curve of bidegree $(1,1)$ on $\p^1\times\p^1$. This gives $l_{00}\cdot H_x=2$ if $c=2$ and achieves the proof of \ref{UmePicN3}.
 
\ref{UmePicN4}: We first assume that $c>2$. Let $\ell=\alpha f + \beta s + \gamma \ell_{10}$. We want to prove that $\alpha,\beta,\gamma \in \Q_+$.
We have $\gamma=H_z \cdot \ell \geq 0$. Then we distinguish between three cases. 
\begin{itemize}
\item If $\ell \subseteq H_x$, then since $\NE(H_x)=\Q_+ \left \langle s,\ell_{00} \right \rangle$ and $\ell_{00}=\ell_{10}+cf \in \Q_+ \left \langle f, \ell_{10} \right \rangle$, we get the result.
\item If $\ell \subseteq H_y$, then since $\NE(H_y)=\Q_+ \left \langle f,\ell_{00} \right \rangle$, we get the result.
\item If $\ell \not \subseteq H_x,H_y$, then {$\alpha-b\beta=\ell \cdot H_x \geq 0$ and $\beta-a \gamma=\ell \cdot H_y \geq 0$. Therefore, we deduce that $\alpha,\beta \in \Q_+$ (since $\gamma \in \Q_+$), and so we get again the result. }

We now assume that $c=2$. Let $\ell=\alpha f + \beta s + \gamma r$. Arguing as in the case $c>2$, and using the fact that $\ell_{00}=\ell_{10}+2f$ and $\ell_{00}=r+f$ when $c=2$, we prove that $\alpha,\beta,\gamma \in \Q_+$. This yields \ref{UmePicN4}.

\ref{UmePicN5}: It remains to determine the canonical divisor $K_X$ of $X$. 
We write $K_X=\alpha H_x+\beta H_y+\gamma H_z$ with $\alpha,\beta,\gamma\in \Z$. The adjunction formula gives $K_{H_z}=(\alpha H_x+\beta H_y+(\gamma+1) H_z)|_{H_z}=\alpha s+\beta f$. As $H_z\simeq \F_b$ and $s,f$ are respectively the $(-b)$-section and a fibre, we find $\alpha=-2$ and $\beta=-b-2$. To find $\gamma$, we again use the adjunction with $H_y$ and obtain $K_{H_y}=(-2H_x-(b+1) H_y+\gamma H_z)|_{H_y}=-2l_{00}-(b+1) H_y\cdot H_y+\gamma f$. Note that the divisor $H_{y'}$ given by $y_1=0$ is linearly equivalent to $H_y+aH_z$, as the $(a)$-section $(y_1=0)$ of $\F_a$ is equivalent to $s_a+af$. Hence, $H_y\cdot H_y=H_y\cdot (H_{y'}-aH_z)=-aH_z\cdot H_y=-af$, which gives $K_{H_y}=-2 l_{00}+(\gamma+a(b+1))f$.

If $c>2$, then $H_y\simeq \F_c$, $l_{00}$ is a $(c)$-section, and $f$ is a fibre. Hence, $K_{H_y}=-2l_{00}-(2-c)f$, which yields $\gamma=c-2-a(b+1)=-a(b+1-k)$. If $c=2$, then $H_y\simeq \p^1\times\p^1$ and $l_{00}$ has bidegree $(1,1)$, so $K_{H_y}=-2l_{00}$. As this is also $-2l_{00}-(2-c)f$ (because $c=2$), we again obtain the same $\gamma$.

The intersection with the curves is then a straightforward calculation.\qedhere
\end{itemize}\end{proof}

\begin{lemma} \label{lem:VbPicN}
Let $b \geq 1$ and $\pi \colon~X=\V_b \to \p^2$ be the $\p^1$-bundle obtained by the contraction $\psi\colon~\U_{1}^{b,2} \to \V_b$ of the extremal ray of $r$ $($with the notation of Lemma~$\ref{lem:UmePicN})$.
\begin{enumerate}
\item\label{VbPicN1}
The group of numerical equivalence classes of $1$-cocycles $\NS_\Q(X)$ is generated by $H'=\psi(H_x)$ and $F'=\psi(H_z)$.
\item\label{VbPicN2}
 The group of numerical equivalence classes of $1$-cycles $N_1^\Q(X)$ is generated by $f'=\psi(f)$ and $s'=\psi(s)$.  
\item\label{VbPicN3}
The intersection form on $X$ satisfies
\[ \begin{array}{|c|cc|}
\hline
& H' & F'\\
\hline
f' & 1 & 0 \\
s'&  -(b-1) &  1\\
\hline\end{array} \]
\item\label{VbPicN4}
The cone of effective curves $\NE(X)$ is generated by $f'$ and $s'$.
\item\label{VbPicN5}
The canonical divisor of $X$ is $K_X=-2 H' -(b+1)F'$, thus
$ K_X\cdot f'=-2$ and $K_X \cdot s' =b-3.$
\end{enumerate}
\end{lemma}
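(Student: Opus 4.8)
The plan is to transport all the data of Lemma~\ref{lem:UmePicN} (applied with $a=1$, $c=2$, hence $k=0$) through the contraction $\psi\colon \U_1^{b,2}\to \V_b$, treated as a smooth blow-up. First I would identify $\psi$ geometrically: writing $\tau\colon \F_1\to \p^2$ for the contraction of $\s{-1}$ and $p=\tau(\s{-1})\in \p^2$, the divisor $H_y=(y_0=0)=q^{-1}(\s{-1})\simeq \p^1\times\p^1$ is exactly the preimage of the fibre $Z:=\pi^{-1}(p)\simeq \p^1$, and $\psi$ contracts $H_y$ along the ruling $r$ onto $Z$. Since both $\U_1^{b,2}$ and $\V_b$ are smooth and $\psi$ contracts a divisor onto a curve, $\psi$ is the blow-up of the smooth rational curve $Z$ with exceptional divisor $E=H_y$; in particular $K_{\U_1^{b,2}}=\psi^*K_{\V_b}+H_y$. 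As $\pi$ is a $\p^1$-bundle over $\p^2$ we also record $\rho(\V_b)=2$, so that $N^1_\Q(\V_b)$ and $N_1^\Q(\V_b)$ are two-dimensional.

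The key computational step is to express the pullbacks $\psi^*H'$ and $\psi^*F'$ in the basis $H_x,H_y,H_z$ of $N^1_\Q(\U_1^{b,2})$. Since $H_x$ and $H_z$ are the strict transforms of $H'=\psi(H_x)$ and $F'=\psi(H_z)$, I have $\psi^*H'=H_x+m_HH_y$ and $\psi^*F'=H_z+m_FH_y$, and the multiplicities are pinned down by the requirement that a pullback is numerically trivial on the contracted ray $r$. Using $r=l_{00}-f$ and the intersection table of Lemma~\ref{lem:UmePicN}\ref{UmePicN3} one gets $H_x\cdot r=1$, $H_z\cdot r=1$, $H_y\cdot r=-1$, whence $m_H=m_F=1$, i.e.\ $\psi^*H'=H_x+H_y$ and $\psi^*F'=H_z+H_y$. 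The projection formula $D'\cdot\psi_*C=\psi^*D'\cdot C$ applied to $f'=\psi_*f$ and $s'=\psi_*s$ then yields the whole intersection matrix of \ref{VbPicN3} directly from the entries of Lemma~\ref{lem:UmePicN}\ref{UmePicN3}; as this matrix has determinant $1$, the pairing between $\langle H',F'\rangle$ and $\langle f',s'\rangle$ is perfect, which simultaneously proves \ref{VbPicN1}, \ref{VbPicN2} and \ref{VbPicN3}. For \ref{VbPicN4} I would push forward the cone: $\NE(\U_1^{b,2})=\Q_+\langle f,s,r\rangle$ by Lemma~\ref{lem:UmePicN}\ref{UmePicN4} in the case $c=2$, and since $\psi$ contracts precisely the ray $\Q_+r$ we get $\NE(\V_b)=\psi_*\NE(\U_1^{b,2})=\Q_+\langle f',s'\rangle$. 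Finally, for \ref{VbPicN5} I substitute $K_{\U_1^{b,2}}=-2H_x-(b+2)H_y-(b+1)H_z$ (Lemma~\ref{lem:UmePicN}\ref{UmePicN5}) into $\psi^*K_{\V_b}=K_{\U_1^{b,2}}-H_y$ and solve $K_{\V_b}=\alpha H'+\beta F'$ by comparing coefficients of $H_x$, $H_z$ and $H_y$ in $\psi^*(\alpha H'+\beta F')=\alpha H_x+\beta H_z+(\alpha+\beta)H_y$; this forces $\alpha=-2$, $\beta=-(b+1)$ consistently, and the intersection numbers $K_{\V_b}\cdot f'=-2$ and $K_{\V_b}\cdot s'=2(b-1)-(b+1)=b-3$ follow from \ref{VbPicN3}.

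The only genuinely delicate point is the first step, namely justifying that $\psi$ is literally the blow-up of the smooth curve $Z$ (so that the discrepancy is $+1$ and the strict-transform identities for $H_x,H_z$ hold); everything afterwards is linear algebra with the projection formula. Once the blow-up description is secured, the identification $\psi^*H'=H_x+H_y$, $\psi^*F'=H_z+H_y$ is the crux, and I would double-check it by verifying independently that $\psi^*H'$ and $\psi^*F'$ are orthogonal to $r$, as computed above.
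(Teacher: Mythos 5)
Your proof is correct, and for the computational parts it takes a genuinely different route from the paper's. Where you work ``upstairs'' on $\U_1^{b,2}$ --- writing $\psi^*H'=H_x+H_y$, $\psi^*F'=H_z+H_y$ (correctly pinned down by orthogonality to $r$, using $H_x\cdot r=H_z\cdot r=1$, $H_y\cdot r=-1$) and then applying the projection formula to the table of Lemma~\ref{lem:UmePicN} --- the paper works ``downstairs'': it observes that $\psi|_{H_x}$ and $\psi|_{H_z}$ are isomorphisms onto $H'\simeq\F_1$ and $F'\simeq\F_b$, identifies $F'\cdot H'=s'+f'$, and reads off all intersection numbers as self-intersections inside these Hirzebruch surfaces, finally getting $K_X$ by adjunction on $F'$ rather than by the ramification formula. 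The practical difference is that the paper's route never needs the one fact you rightly flag as delicate, namely that $\psi$ is literally the blow-up of the smooth curve $Z=\pi^{-1}(p)$ with discrepancy $1$; your route does need it for part~(5). That fact is nevertheless available: $Z\simeq\p^1$ is smooth, $\V_b$ is smooth (it is a $\p^1$-bundle over $\p^2$ by construction), and Lemma~\ref{lem:ReducedblowUpcurve} (or Mori's classification of extremal divisorial contractions on smooth threefolds) then gives that $\psi$ is the reduced blow-up of $Z$. You could even sidestep the discrepancy entirely by noting $K_{\V_b}=\psi_*K_{\U_1^{b,2}}=-2H'-(b+1)F'$, since $\psi_*\psi^*=\mathrm{id}$ and $\psi_*H_y=0$ in $N^1_\Q$. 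Your treatment of (1), (2) and (4) --- perfectness of the $2\times 2$ pairing matrix together with $\rho(\V_b)=2$, and $\NE(\V_b)=\psi_*\NE(\U_1^{b,2})$ --- matches the paper's in substance.
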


\begin{proof}
The morphism $\psi\colon \U_{1}^{b,2} \to \V_b$ contracts the divisor $H_y\simeq \p^1\times\p^1$ onto the curve $f'=\psi(H_y)$, which is a fibre of the $\p^1$-bundle $\pi\colon X=\V_b \to \p^2$.
The fibres of $H_y\to f'$ are the curves equivalent to $r$. The curves equivalent to $f$ are fibres of the other projection and $l_{00}=r+f$ is of bidegree $(1,1)$. This gives $\psi_*(l_{00})=\psi_*(f)=f'$.  As the cone of effective curves of $\U_{1}^{b,2}$ is generated by $f,s,r$, the cone of effective curves of $\V_b$ is generated by $f'$ and $s'$. This gives~\ref{VbPicN1}-\ref{VbPicN2}-\ref{VbPicN4}.

The morphism $\psi|_{H_x}\colon H_x\to H'=\psi(H_x)$ is an isomorphism since $H_x\cap H_y$ is the curve $l_{00}$ that has bidegree $(1,1)$ in $H_y\simeq \p^1\times\p^1$ {and is thus a section of $H_y\to \psi(H_y)=f'$}. Hence, the curve $f'$ has self-intersection $-1$ in $H'=\psi(H_x)\simeq \F_1$ and $\pi\colon H'\to \p^2$ is the contraction of $f'$. Moreover, $s'$ is a fibre of $H'\to\p^1$, as $s$ was a fibre of $H_x\to \p^1$. 
Since $F'\cdot H'=s'+f'$, we obtain $F'\cdot s'=(s'\cdot (s'+f'))_{H'}=1$ and $F'\cdot f'=(f'\cdot (s'+f'))_{H'}=0$.

The morphism $H_z\to F'{=\psi(H_z)}$ is an isomorphism, so $F'\simeq \F_b$ and the curves $s'$ and $f'$ are the $(-b)$-curve and a fibre, as so happens for $s,f$ in $H_z$. We then compute $s'\cdot H'=(s'\cdot (s'+f'))_{F'}=-b+1$ and $f'\cdot H'=(f'\cdot (s'+f'))_{F'}=1$. This gives~\ref{VbPicN3}.

The canonical divisor of {$X=\V_b$} is $K_X=\alpha F'+\beta H'$ for some $\alpha,\beta\in \Z$. The adjunction formula gives $K_{F'}=((\alpha+1) F'+\beta H')|_{F'}=(\alpha+1)f'+\beta (f'+ s')=(\alpha+\beta+1)f'+\beta s'$. As $F'\simeq \F_b$ and $s',f'\subseteq F'$ are the $(-b)$-section and a fibre, we obtain $\beta=-2$ and $\alpha+\beta+1=-(b+2)$, so $K_X=-2H'-(b+1)F'$.

This yields $ K_X\cdot f'=-2$ and $K_X \cdot s' =b-3.$
\end{proof}

\begin{lemma} \label{lem:orbits of Uabc et Vb}  
Let $\psi\colon\U_{1}^{b,2}\to \V_b$ be the morphism induced by the contraction of the ($-1$)-section in $\F_1$. We keep the notation of Lemmas~$\ref{lem:UmePicN}$ and $\ref{lem:VbPicN}$.
\begin{enumerate}
\item \label{item:orbits Uabc}
The variety $\U_{1}^{b,2}$ is the union of four $\Autz(\U_{1}^{b,2})$-orbits:
\[\U_{1}^{b,2}=(\U_{1}^{b,2} \setminus (H_x \cup H_y)) \sqcup (H_x \setminus \ell_{00}) \sqcup (H_y \setminus \ell_{00}) \sqcup \ell_{00}.\]
\item \label{item:orbits Vb} Let $b \geq 2$. The variety $\V_b$ is the union of three $\Autz(\V_b)$-orbits: 
\[\V_b=(\V_b \setminus H') \sqcup (H' \setminus f') \sqcup f'.\]
\end{enumerate}
\end{lemma}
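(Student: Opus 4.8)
The plan is to analyze the orbit structure of the two $\P^1$-bundles $\U_1^{b,2}$ and $\V_b$ by exploiting the fact that $\Autz(\U_1^{b,2})$ acts on the base $\F_1$ (via Proposition~\ref{blanchard}), together with the known description of its action inherited from $\U_1^{b,c}$. First I would describe the $\Autz(\U_1^{b,2})$-action on the base $\F_1$. The group $\Autz(\F_1)$ preserves the $(-1)$-section $s_{-1}$ and acts on $\F_1$ with exactly two orbits: the $(-1)$-section $s_{-1}$ (which is a single orbit, a $\P^1$) and its open complement. The morphism $q\colon \U_1^{b,2}\to \F_1$ is $\Autz(\U_1^{b,2})$-equivariant, and the surfaces $H_y=q^{-1}(s_a)$ and $H_x$ (a section) are $\Autz(\U_1^{b,2})$-invariant, being the only invariant divisors. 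The curve $\ell_{00}=H_x\cap H_y$ is then also invariant.

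For part~\ref{item:orbits Uabc}, the key step is to check that each of the four strata listed is a single orbit. The curve $\ell_{00}$ maps isomorphically onto $s_a\subseteq \F_1$ under $q$, and since $\Autz(\F_1)$ acts transitively on $s_{-1}$, the curve $\ell_{00}$ is one orbit. Over the open orbit of $\F_1$ the bundle is trivial and the transitivity on the complements $H_x\setminus\ell_{00}$, $H_y\setminus\ell_{00}$, and $\U_1^{b,2}\setminus(H_x\cup H_y)$ should follow from the explicit transition functions in {\upshape\S\ref{sec:first classification}\ref{DefiUme}} combined with the action of the unipotent and torus parts of $\Autz(\F_1)$; concretely one verifies that the stabiliser of a point in the base still acts transitively on the relevant part of each fibre. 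This orbit count can also be read off from the computations already used in Lemma~\ref{lem:UmeNotMax} and the structure of $\U_1^{b,2}$ as an $\F_1$-bundle over $\P^1$ in which $\PGL_2$ acts, noting that no further invariant subvariety appears.

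For part~\ref{item:orbits Vb}, I would push the orbit structure forward along $\psi\colon \U_1^{b,2}\to\V_b$, using that $\psi\Autz(\U_1^{b,2})\psi^{-1}=\Autz(\V_b)$ (established in the construction of $\V_b$ via the descent lemma, see Lemma~\ref{lem:sequence of links for Umemura bundles} and \cite[Lemma~5.5.1]{BFT}). The morphism $\psi$ contracts $H_y\simeq\P^1\times\P^1$ onto the fibre $f'=\psi(H_y)$, and restricts to an isomorphism on $H_x$ with image $H'$. Thus $\psi$ collapses the two orbits $H_y\setminus\ell_{00}$ and $\ell_{00}$ together with part of the fibre structure: since $\ell_{00}\subseteq H_y$ is contracted to a point of $f'$ while $H_y\setminus\ell_{00}$ maps onto $f'$, and $H_x\setminus\ell_{00}$ maps onto $H'\setminus f'$, the images are exactly $\V_b\setminus H'$, $H'\setminus f'$, and $f'$. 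I would confirm these three images are single orbits by transporting the transitivity statements of part~\ref{item:orbits Uabc} through the equivariant isomorphism $\Autz(\U_1^{b,2})\simeq\Autz(\V_b)$, checking that the fibres of $\psi$ over each stratum lie within one orbit upstairs.

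The main obstacle I expect is the careful verification of fibrewise transitivity in part~\ref{item:orbits Uabc}: one must confirm that the stabiliser in $\Autz(\U_1^{b,2})$ of a point of the base $\F_1$ (restricted to the open orbit, and also over $s_{-1}$) acts transitively on the complement of the two invariant sections within each fibre $\simeq\P^1$, and that no additional invariant curve lurks inside $H_x$, $H_y$, or the open part. This is essentially a computation with the explicit transition function $\nu$ and the explicit form of the automorphisms, and requires knowing that $\Autz(\U_1^{b,2})$ is large enough---a fact that follows from the explicit automorphisms constructed in \cite{BFT} but must be invoked precisely. Once transitivity on fibres is secured, both parts follow formally, part~\ref{item:orbits Vb} being a purely formal consequence of part~\ref{item:orbits Uabc} and the equivariance of $\psi$.
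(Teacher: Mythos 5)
Your overall strategy matches the paper's: identify the invariant divisors $H_x$, $H_y$ and the invariant curve $\ell_{00}$, verify transitivity on each stratum using the explicit description of the action, and push the orbit decomposition forward along the equivariant contraction $\psi$. However, there is a genuine error in your treatment of part (2). You assert that $\ell_{00}$ is contracted by $\psi$ to a point of $f'$. This is false: $\psi$ contracts the ruling of $H_y\simeq\P^1\times\P^1$ in the class $r=\ell_{00}-f$, while $\ell_{00}$ is a curve of bidegree $(1,1)$, hence a section of $H_y\to f'$; it maps isomorphically onto $f'$ (indeed $\psi_*(\ell_{00})=f'$, see the proof of Lemma~\ref{lem:VbPicN}). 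This is not a harmless slip: if $\ell_{00}$ were contracted to a point, that point would be fixed by $\Autz(\V_b)$ (being the image of an invariant curve under an equivariant morphism with $\psi\Autz(\U_1^{b,2})\psi^{-1}=\Autz(\V_b)$), and $f'$ would then split into at least two orbits, contradicting the very statement you are proving. The correct argument is that $f'$ is the image of the single orbit $\ell_{00}$ (or of $H_y\setminus\ell_{00}$) under an equivariant surjection, hence a single orbit; equivalently, as in the paper, $\Autz(\V_b)$ acts transitively on $f'$ because $G$ acts transitively on the rulings of $H_y$.

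In part (1) your plan defers the essential computation. The delicate point is transitivity on $H_y\setminus\ell_{00}$, which lies entirely over the closed orbit $s_{-1}\subseteq\F_1$, so your remark about trivialising the bundle over the open orbit of the base does not reach it, and a stabiliser-of-a-base-point argument alone will not produce transitivity along $s_{-1}$. The paper handles this by observing that the $\GL_2$-action of \cite[Remark~3.6.5]{BFT} induces a non-trivial $\PGL_2$-action on $H_y\simeq\P^1\times\P^1$ preserving the $(1,1)$-curve $\ell_{00}$, so this $\PGL_2$ is conjugate to the diagonal embedding and has exactly the two orbits $\ell_{00}$ and its complement. Some version of this computation (together with the citation of \cite[\S3.6]{BFT} for the strata $\U_1^{b,2}\setminus(H_x\cup H_y)$ and $H_x\setminus\ell_{00}$) is needed to turn your sketch into a proof.
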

\begin{proof}
The variety $\U_{1}^{b,2}$ contains two $G$-invariant divisors, namely the invariant section $H_x \simeq \F_1$ of the $\P^1$-bundle $\U_1^{b,2} \to \F_1$ and the preimage $H_y \simeq \P^1\times\P^1$ of the ($-1$)-curve in $\F_1$, they intersect along the invariant curve $\ell_{00} \simeq \P^1$. It follows from the description of the automorphism group $G$ in \cite[\upshape\S 3.6]{BFT} that $\U_{1}^{b,2} \setminus (H_x \cup H_y)$ and $H_x \setminus \ell_{00}$ are two $G$-orbits. As observed in \cite[Remark~3.6.5]{BFT} the group $\GL_2$ acts on  $\U_1^{b,2}$, this induces a $\PGL_2$-action on $H_y$ given in the two charts of $\U_1^{b,2}$ by
\[\begin{array}{ccc}
\F_b\times\A^1& \hspace{-0.2cm}\to &\hspace{-0.3cm} \F_b\times\A^1\\
([x_0:x_1;0:1],z)&\hspace{-0.2cm}\mapsto &\hspace{-0.3cm}\left([x_0: x_1 (\gamma z+\delta)^2\!+\!x_0 \gamma(\gamma z+\delta) ; 0: 1],\frac{\delta z +\gamma}{\beta z +\alpha}\right); \text{ \ and}\\
([x_0:x_1;0:1],z)&\hspace{-0.2cm}\mapsto &\hspace{-0.3cm}\left([x_0: x_1 (\beta z+\alpha)^2 \!-\!x_0 \beta(\beta z+\alpha) ; 0: 1],\frac{\alpha z +\beta}{\gamma z +\delta}\right).\end{array}\]
Since this $\PGL_2$-action is non-trivial and stabilizes the curve $\ell_{00}$, which corresponds to a curve of bidgree ($1,1$) on $\P^1\times\P^1$, the image $H \simeq \PGL_2$ of the natural homomorphism $\PGL_2 \to \Autz(H_y)\simeq \PGL_2\times\PGL_2$ is conjugated to the diagonal embedding of $\PGL_2$. 
Hence $H_y=(H_y \setminus \ell_{00}) \sqcup \ell_{00}$ is the union of two $G$-orbits. This proves \ref{item:orbits Uabc}.

By \cite[Lemma~5.5.1(4)]{BFT} the morphism $\psi\colon \U_{1}^{b,2} \to \V_b$ satisfies $\psi G \psi^{-1}=\Autz(\V_b)$ since $b \geq 2$. Also, it is an isomorphism outside $H_y$. But $f'=\psi(H_y)$ is a line on which $\Autz(\V_b)$ acts transitively since $G$ acts transitively on the two rulings of $H_y \simeq \P^1\times\P^1$. This proves \ref{item:orbits Vb}. 
\end{proof}

To study the equivariant links starting from $\U_{a}^{b,c}$ in Proposition~\ref{prop:UmeMax}, we will need Proposition~\ref{prop:Sarki_dec and Ume cases} that gives restrictions on the possible links.
\begin{proposition} \label{prop:Sarki_dec and Ume cases}
Let $\pi \colon X\to \F_a$ be a decomposable or an Umemura $\p^1$-bundle with numerical invariants $(a, b, c)$ as in Theorem~$\ref{thBFT}$, let $G=\Autz(X)$, and let $\chi\colon X\dasharrow X'$ be an equivariant link. Then, one of the following three possibilities occurs:
\begin{enumerate}
\item\label{SarkiDecUF1}
The link $\chi$ is of type $\II$, and either $\chi$ or $\chi^{-1}$ is equal to the link $\varphi$ given by
\[\xymatrix@R=5mm@C=1cm{
    & W  \ar[dl]_-{\div} \ar[dr]^-{\div} &  && W  \ar[dl]_-{\div} \ar[dr]^-{\div}\\
     \FF_a^{b,c} \ar@{-->}[rr]^-{\varphi} \ar[dr]_-{\pi}  &  & \FF_a^{b+1,c+a} \ar[dl]^-{\pi'} &  \U_a^{b,c} \ar@{-->}[rr]^-{\varphi} \ar[dr]_-{\pi}  &  &  \U_a^{b+1,c+a} \ar[dl]^-{\pi'} \\
     &\F_a& &    &\F_a
  }\]
and  described in \cite[Lemma 5.4.2]{BFT} and \cite[Lemma 5.5.3]{BFT} respectively. 

    In particular, $\chi$ decomposes as the blow-up of a section $\Gamma\subseteq X$ over the $(-a)$-curve $s_{-a}$ of $\F_a$, followed by the contraction of the strict transform of $\pi^{-1}(s_{-a})$.
\item\label{SarkiDecUF2}
The link $\chi$ is of type \III $($resp.~$\IV)$ and the small map associated with it $($see Definition~$\ref{sarkisov-links})$ is an isomorphism. Hence, the map $X\dasharrow X'$ $($resp.~$X\dasharrow Y')$ is the contraction of a negative extremal ray distinct from the one given by the fibres of $\pi\colon X\to \F_a$.
\item\label{SarkiDecUF3}
The link $\chi$ is of type \III, the small map is not an isomorphism and $X=\FF_{2}^{b,c}$ with $c=\pm 1$; in particular, $a=2$.
\end{enumerate}
\end{proposition}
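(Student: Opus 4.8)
The plan is to classify all equivariant Sarkisov links $\chi\colon X\dasharrow X'$ starting from a decomposable or Umemura $\p^1$-bundle $\pi\colon X\to\F_a$ by running through the four possible types \I--\IV in Definition~\ref{sarkisov-links}, using the fact that $G=\Autz(X)$ acts on $X$ with very few orbits and on $\F_a$ compatibly with the $\p^1$-bundle structure. The key structural input is that, since $\chi$ is $G$-equivariant, the initial divisorial contraction or small map must have a $G$-invariant center; combined with the description of the $G$-orbits in the fibres (cf.\ Lemmas~\ref{lem:UmePicN} and~\ref{lem:orbits of Uabc et Vb}), this forces the center to be one of very few invariant curves, namely a section over $s_{-a}\subseteq\F_a$ or a curve lying in a $G$-invariant divisor.

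First I would dispose of links of type \I and \II whose divisorial contraction is centered on a curve. By Lemma~\ref{lem:ReducedblowUpcurve}, any such divisorial contraction $W\to X$ contracting a divisor onto a curve is, generically, the blow-up of that curve with its reduced structure, and the curve must be $G$-invariant. The only $G$-invariant sections are those lying over the $(-a)$-curve $s_{-a}\subseteq\F_a$; blowing one up and contracting the strict transform of $\pi^{-1}(s_{-a})$ produces exactly the link $\varphi\colon\FF_a^{b,c}\dasharrow\FF_a^{b+1,c+a}$ (resp.\ $\U_a^{b,c}\dasharrow\U_a^{b+1,c+a}$) of Lemma~\ref{LinksIIbetweenFU}, giving case~\ref{SarkiDecUF1}. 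Here I must check that no \emph{other} invariant curve admits an equivariant divisorial contraction to a terminal threefold: using the orbit descriptions, any competing invariant curve lies in a $G$-stable divisor and its blow-up either fails to be a Mori link or reproduces $\varphi$ up to isomorphism of Mori fibrations (Remark~\ref{remark:UnicitySarkisov}).

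Next I would treat type \III and \IV links. Such a link begins with the contraction of an extremal ray of $\NE(X)$ negative against $K_X$; one such ray is given by the fibres of $\pi$ (its contraction is $\pi$ itself), so a type \III or \IV link must contract a \emph{different} negative ray or begin with a small map (flip/flop/antiflip) centered at an invariant curve. By Lemma~\ref{Lemm:AntiFlip}, whenever $G$ contains $\PGL_2$ acting nontrivially—which happens for the Umemura bundles and the relevant decomposable ones—there is no equivariant antiflip, so the small map is an isomorphism, yielding case~\ref{SarkiDecUF2}. The remaining possibility is that the small map is a genuine flip/flop; I expect to show via the intersection tables of Lemma~\ref{lem:UmePicN} (computing $K_X\cdot\ell$ on the candidate invariant curves) that a nontrivial small map forces the numerical invariants into the narrow range $X=\FF_2^{b,c}$ with $c=\pm1$, giving case~\ref{SarkiDecUF3}; this is precisely the situation realized by the links \ref{SA14}--\ref{SA15} relating $\W_b$ to $\FF_2^{b-1,-1}$ and $\FF_2^{b,1}$.

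The main obstacle will be the bookkeeping in the type \III/\IV analysis: I must enumerate all extremal rays of $\NE(X)$ for the full parameter families, determine their signs against $K_X$, and verify for each candidate that the resulting small map either is an isomorphism or, when it is a genuine flip, lands in a terminal threefold only in the exceptional $\FF_2^{b,\pm1}$ cases. This requires carefully combining the cone computations of Lemma~\ref{lem:UmePicN}\ref{UmePicN4}--\ref{UmePicN5} with the toric description of the relevant singular fibrations, and confirming that the equivariance constraint (each invariant curve being pointwise fixed or globally preserved by a $\PGL_2$ or torus factor of $G$) rules out every non-listed configuration. Once the invariant curves and their discrepancies are pinned down, the trichotomy follows by matching each surviving case to the explicit links already constructed.
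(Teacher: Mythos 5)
Your overall architecture (reduction to invariant curves for the divisorial contractions, the $\PGL_2$ anti-flip obstruction, toric terminality in the boundary cases) is close to the paper's, and your treatment of types \I\ and \II\ is essentially the paper's argument: the centre of the divisorial contraction $W\to X$ is an invariant curve, hence by Lemma~\ref{lem:ReducedblowUpcurve} and the orbit/section analysis the reduced blow-up of a section over $s_{-a}$, and Remark~\ref{remark:UnicitySarkisov} then identifies the link with $\varphi$.

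The gap is in the type \III/\IV\ analysis, where the tools you name do not reach the conclusion. (1) Your main engine for forcing the small map to be an isomorphism is Lemma~\ref{Lemm:AntiFlip}, but that lemma needs a non-trivial $\PGL_2$-action, which is available exactly when it is not needed: on $\FF_2^{b,c}$ the paper can only guarantee a $\PGL_2$-action when $c$ is even (for $c$ odd only a covering of $\PGL_2$ acts — this is precisely why case~\ref{SarkiDecUF3} is non-empty), and similarly $\U_1^{b,c}$ with $c$ odd is not covered. The paper instead argues that the base of a type \IV\ link is $\p^1$ (Lemma~\ref{lem:NoTypeIVonSurfaces}), that the small map restricts to an isomorphism of the generic fibres over $\p^1$ (birational surfaces isomorphic in codimension one are isomorphic), so its indeterminacy locus sits in fibres, which cannot be invariant since $G$ surjects onto $\Autz(\F_a)$ with no fixed point; the same argument handles type \III\ links over $\p^1$. (2) The restriction $a=2$ in case~\ref{SarkiDecUF3} does not come from intersection tables: it comes from classifying the base contraction $\F_a\to Y'$ of a type \III\ link, which must be $\F_1\to\p^2$, $\F_2\to\p(1,1,2)$ or $\F_a\to\p^1$ because $Y'$ has canonical singularities; your proposal never constrains the base. (3) The exclusion of $a=1$, i.e.\ of $\U_1^{b,c}$, is inaccessible to your methods: these bundles are not toric, so the toric terminality criterion does not apply, and $K_X\cdot\ell_{10}>0$ is consistent with (indeed required for) an anti-flip, so the cone computations do not rule it out. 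The paper excludes it by a genuinely different argument: the putative link (anti-flip of $\ell_{10}$, then contraction of the strict transform of $H_y$) would produce a $\p^1$-fibration over $\p^2$ carrying a $G$-fixed point yet isomorphic to $\V_b$, contradicting the orbit description of Lemma~\ref{lem:orbits of Uabc et Vb}. Without these three ingredients your trichotomy does not close.
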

\begin{proof}
We keep the notation of Definition~\ref{sarkisov-links}.
The link $\chi$ starts from the Mori fibration $X\to Y$ over the surface $Y=\F_a$.

We first exclude the case of an equivariant  link of type $\I$. As $3=\dim(X')>\dim(Y')\ge \dim(Y)=2$, we find that $Y'$ and $Y$ are surfaces, so $Y'\to Y$ is a divisorial contraction, hence the blow-up of a point of $\F_a$. This is impossible, as the action of $G$ on $\F_a$ gives the whole group $\Autz(\F_a)$, which has no fixed point.

Suppose now that the link is of type $\II$. It starts with a divisorial contraction $W\to X$, which contracts the exceptional divisor on a invariant curve $\Gamma\subseteq X$, as there is no point fixed by $G$ on $X$. By Lemma~\ref{lem:ReducedblowUpcurve}, the morphism $\eta$ is the blow-up of $\Gamma$ with its reduced structure. Moreover, $\Gamma$ is a section of $X\to \F_a \to \P^1$ by \cite[Lemmas~5.4.1-5.5.2]{BFT}. The equivariant link starting from this blow-up is then unique up to automorphisms of the target (see Remark~\ref{remark:UnicitySarkisov}).  It remains to apply \cite[Lemmas 5.4.2 and 5.5.3]{BFT}, to obtain that $\chi$ is a link of type $\II$ between two $\p^1$-bundles, that the small map $W\ps W'$ is an isomorphism, and the morphism $W'\to X'$ is the contraction of the strict transform of $\pi^{-1}(\pi(\Gamma))$; we are thus in Case~\ref{SarkiDecUF1}.

If the link is of type \IV, then $Z$ has dimension $\le 1$ (\cite[Proposition~3.5]{C95}). The morphism $\F_a\to Z$ being of relative Picard rank $1$, we obtain $Z=\p^1$. {The small map $X\ps X'$ are obtained by running an MMP over $Z$, so only curves of $\mathrm{NE}(X/Z)$ are considered. Hence, the curves on which the small map is not defined are contained in fibres. But in our situation there are no such curves since $G$ maps onto $\Autz(\F_a)$, and the latter acts on $\F_a$ with no fixed point.} Hence, the small map of the type \IV link is an isomorphism.

We now finish the proof by considering type \III links. The morphism $\F_a\to Y'$ is of relative Picard rank $1$. Hence, it is either $\F_1\to \p^2$, $\F_2\to \p(1,1,2)$ or $\F_a\to \p^1$, since singularities for the base of a three-dimensional Mori fibration over a surface are canonical (see \cite[Theorem~1.2.7]{MP_2008}). If it is $\F_a\to \p^1$, the same argument as above implies that the {curves over which the small map is not defined} are contained in fibres, which is impossible as the action of $G$ on $\F_a$ gives $\Autz(\F_a)$. Hence, if the small map is not an isomorphism, the morphism $\F_a\to Y'$ is either $\F_1\to \p^2$ or $\F_2\to \p(1,1,2)$, so $a=1$ or $a=2$.
We distinguish between the two cases.

$\bullet \ \textbf{a=1}$: We have $X=\U_1^{b,c}$ with $2 \leq c \leq b$ (as the $\P^1$-bundles $\FF_1^{b,c}$ do not appear in Theorem~\ref{thBFT}).
If $c=2$, Lemma~\ref{lem:orbits of Uabc et Vb}  \ref{item:orbits Uabc} gives that the only invariant curve of $\U_1^{b,2}$ is $\ell_{00}$ (defined by $x_0=y_0=0$), and by Lemma~\ref{lem:UmePicN}\ref{UmePicN4} the latter is not extremal. Hence, if $X\dasharrow W'$ is not an isomorphism, we must have $c>2$. 
Now for $c>2$, {\cite[Lemma 5.5.2]{BFT} gives that $\U_1^{b,c}$ has exactly two invariant curves, namely $\ell_{00}$ and $\ell_{10}$ (given by $x_1=y_0=0$), but according to Lemma~\ref{lem:UmePicN}\ref{UmePicN4}, only $\ell_{10}$ is extremal.}
Thus, $X\dasharrow W'$ is the anti-flip of the curve $\ell_{10}$, and $W'\to X'$ is a divisorial contraction, which necessarily contracts the  strict transform in $W'$ of the divisor $H_y \simeq \F_c$ (as $Y\to Y'$ is the contraction $\F_1\to \p^2$). The curve $l_{10}$ on $H_y$ is the $-c$-curve of $\F_c$ (Lemma~\ref{lem:UmePicN}\ref{UmePicN4}) so the strict transform of $H_y$ in $W'$ is isomorphic to the weighted projective space $\P(1,1,c)$. The divisorial contraction $W'\to X'$ contracts then this divisor $\P(1,1,c)$ onto a point, fixed by $G$, and we obtain a $\P^1$-fibration $\varphi\colon X' \to Y'=\P^2$ on which $G$ acts with a fixed point. On the other hand, denoting by $q \in \P^2$ the fixed point for the induced $G$-action on the basis, we observe that $X' \setminus \varphi^{-1}(q) \simeq \V_b \setminus \psi^{-1}(q)$ as $\P^1$-bundles over $\P^2 \setminus \{q\}$, where $\psi\colon \V_b \to \P^2$ is the structure morphism. It follows from \cite[Proposition~3.5]{C95} that $X' \simeq \V_b$ as a $\P^1$-bundle over $\P^2$, which contradicts the fact that $G$ acts on $\V_b$ with no fixed point (Lemma~\ref{lem:orbits of Uabc et Vb}\ref{item:orbits Vb}). Hence, the case $a=1$ cannot occur if $X\dasharrow W'$ is not an isomorphism.

$\bullet \ \textbf{a=2}$: If $X=\U_{2}^{b,c}$, then $c=2k+2$, for some $k \geq 0$. As $c$ is even, the group $ \SL_2/\{\pm1\}\simeq\PGL_2$ acts non-trivially on $X$ (see \cite[Remark~3.6.5]{BFT}).
Therefore Lemma~\ref{Lemm:AntiFlip} implies that the small map $X \ps W'$ is an isomorphism. We now consider the case {of the toric variety} $X=\FF_2^{b,c}$. Recall that ($a,b,c$) satisfy the numerical conditions of Theorem~\ref{thBFT}, and so either $-2<c<2b$ or $b=c=0$. 
If $c$ is even, then by \cite[\S3.1]{BFT} the group $\PGL_2$ acts non-trivially on $X$, and we can again apply Lemma~\ref{Lemm:AntiFlip} to conclude that $X \dashrightarrow W'$ is an isomorphism. Also, applying the terminal singularity criterion for toric varieties (see e.g. \cite[Proposition~14.3.1]{Mat02}) to the variety $W'$, obtained from $X$ by antiflipping an extremal invariant curve, yields that $W'$ has terminal singularities if and only if $c \leq 1$ (see Lemma~\ref{XabPic} for a description of the extremal curves of $X$). Hence, if $X \dashrightarrow W'$ is not an isomorphism, we must have $c=\pm 1$.
\end{proof}

\begin{proposition} \label{prop:UmeMax}
Let $a,b \geq 1$ and $c  \geq 2$ be such that $c=ak+2$ with $0 \leq k \leq b$.
Let $\pi\colon~X=\U_{a}^{b,c} \to \F_a$ be an Umemura $\p^1$-bundle  and let $G=\Autz(X)$.
\begin{enumerate}
\item \label{Ume max item 1}
If $a=1$ and $c<b$, then the equivariant links starting from $X$ are the birational maps $\varphi\colon X \dashrightarrow X'=\U_{1}^{b+p,c+p}$, with $p=\pm 1$, and if $c=2$ the contraction morphism $\psi\colon X \to X'=\V_b$; all these equivariant links were  described in Proposition $\ref{prop:Sarki_dec and Ume cases}\ref{SarkiDecUF1}$ and \cite[Lemma~5.5.3]{BFT} and satisfy $\varphi \Autz(X) \varphi^{-1}=\Autz(X')$ and $\psi \Autz(X) \psi^{-1}=\Autz(X')$. 

\item \label{Vb bundles max}
If $b \geq 3$, then the only equivariant link from the $\P^1$-bundle $\V_b \to \P^2$ is the blow-up morphism $\psi\colon\U_{1}^{b,2} \to \V_b$ that satisfies $\psi \Autz(\U_{1}^{b,2}) \psi^{-1}=\Autz(\V_b)$.

\item \label{Ume max item 2}
If $a \geq 2$ and $c-ab<2$ $($with $c-ab \neq 2-a)$, then  the equivariant links starting from $X$ are the
birational maps $\varphi\colon X \dashrightarrow X'=\U_{a}^{b+p,c+pa}$, with $p=\pm 1$, described in Proposition~$\ref{prop:Sarki_dec and Ume cases}\ref{SarkiDecUF1}$ and \cite[Lemma 5.5.3]{BFT}; they all satisfy $\varphi \Autz(X) \varphi^{-1}=\Autz(X')$.
\end{enumerate}
\end{proposition}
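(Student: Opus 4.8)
The plan is to handle each of the three cases of Proposition~\ref{prop:UmeMax} by first producing the claimed links via the machinery already built, and then invoking Proposition~\ref{prop:Sarki_dec and Ume cases} to show these are the \emph{only} equivariant links. The skeleton is uniform: the type~\II links $\varphi\colon\U_a^{b,c}\dasharrow\U_a^{b\pm1,c\pm a}$ come directly from Lemma~\ref{LinksIIbetweenFU}\ref{LinksIIbetweenUabc}, which already records $\varphi\Autz(\U_a^{b,c})\varphi^{-1}=\Autz(\U_a^{b+1,c+a})$; and the type~\III contraction $\psi\colon\U_1^{b,2}\to\V_b$ (when $c=2$) together with $\psi\Autz(\U_1^{b,2})\psi^{-1}=\Autz(\V_b)$ is supplied by Lemma~\ref{lem:sequence of links for Umemura bundles} (or \cite[Lemma~5.5.1(4)]{BFT}). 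So the existence half of each statement is essentially a citation. The real content is the classification of \emph{all} links, for which Proposition~\ref{prop:Sarki_dec and Ume cases} leaves exactly three possibilities to rule in or out: a type~\II link (case~\ref{SarkiDecUF1}), a type~\III/\IV link whose small map is an isomorphism (case~\ref{SarkiDecUF2}), and a type~\III link with non-trivial small map, which forces $a=2$ and $c=\pm1$ and hence cannot occur for an Umemura bundle (case~\ref{SarkiDecUF3} concerns $\FF_2^{b,c}$, not $\U$'s).

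For \ref{Ume max item 1} and \ref{Ume max item 2}, since $X=\U_a^{b,c}$ is never of the form $\FF_2^{b,c}$, possibility~\ref{SarkiDecUF3} of Proposition~\ref{prop:Sarki_dec and Ume cases} is automatically excluded, so every equivariant link is either the type~\II link $\varphi$ of case~\ref{SarkiDecUF1} or a type~\III/\IV link of case~\ref{SarkiDecUF2}. In case~\ref{SarkiDecUF2} the link must contract a negative extremal ray of $\NE(X)$ other than the fibre class $f$. Here I would read off $\NE(X)=\Q_+\langle f,s,\ell_{10}\rangle$ (or $\Q_+\langle f,s,r\rangle$ when $c=2$) and the intersection numbers from Lemma~\ref{lem:UmePicN}\ref{UmePicN4}-\ref{UmePicN5}. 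The two non-fibre generators are $s$, with $K_X\cdot s=b-2$, and $\ell_{10}$ with $K_X\cdot\ell_{10}=a(k+1)>0$ (resp.\ $r$ with $K_X\cdot r=a-2$). Under the hypotheses ($a\geq2$ for \ref{Ume max item 2}, so $a(k+1)\geq2>0$ and $a-2\geq0$; and $b\geq c+1\geq 3$ in \ref{Ume max item 1} giving $b-2\geq1>0$), all of these intersection numbers are $\geq0$, so no negative extremal ray other than $f$ exists, and case~\ref{SarkiDecUF2} is empty. This leaves only the type~\II links $\varphi$, confirming the list. The numerical bookkeeping is where care is needed: I must verify the constraints on $(a,b,c)$ imposed in each statement indeed force $K_X\cdot s\geq 0$ and $K_X\cdot\ell_{10}\geq 0$ (resp.\ $K_X\cdot r\geq 0$), handling the $c=2$ and $c>2$ subcases separately, and I should also confirm the links $\U_a^{b\pm1,c\pm a}$ stay within the allowed parameter range so that the target is again on the list.

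For \ref{Vb bundles max} I would use the orbit description of Lemma~\ref{lem:orbits of Uabc et Vb}\ref{item:orbits Vb}: for $b\geq2$, $\V_b$ has orbits $\V_b\setminus H'$, $H'\setminus f'$, and $f'$, so the only $G$-stable subvarieties of codimension $\geq2$ is the curve $f'$, and any equivariant link must either contract a negative extremal ray or blow up $f'$. From Lemma~\ref{lem:VbPicN}\ref{VbPicN4}-\ref{VbPicN5}, $\NE(\V_b)=\Q_+\langle f',s'\rangle$ with $K_X\cdot f'=-2$ (the $\P^1$-bundle contraction to $\p^2$) and $K_X\cdot s'=b-3\geq0$ for $b\geq3$; so there is no negative extremal contraction other than the structure morphism, and no type~\III or~\IV link. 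The remaining candidate is a type~\II link starting by blowing up the curve $f'$ (with reduced structure, by Lemma~\ref{lem:ReducedblowUpcurve}), and its inverse is precisely the contraction $\psi\colon\U_1^{b,2}\to\V_b$; by Remark~\ref{remark:UnicitySarkisov} this link is unique. The equality $\psi\Autz(\U_1^{b,2})\psi^{-1}=\Autz(\V_b)$ is again from \cite[Lemma~5.5.1(4)]{BFT}.

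The step I expect to be the main obstacle is not any single computation but rather the careful case analysis needed to see that Proposition~\ref{prop:Sarki_dec and Ume cases}\ref{SarkiDecUF2} produces nothing: I need to be sure I have the complete list of extremal rays and the correct signs of $K_X\cdot(\text{ray})$ in every sub-regime of the parameters, and in particular to treat the boundary values of $c$ (e.g.\ $c=2$ versus $c>2$, and the excluded value $c-ab=2-a$ in \ref{Ume max item 2}) so that no extremal ray sneaks in with negative canonical degree. Once the cone and canonical degrees are pinned down via Lemma~\ref{lem:UmePicN}, the exclusion is mechanical, so the danger is purely one of completeness of the case split rather than of genuine difficulty.
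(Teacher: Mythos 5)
Your overall strategy coincides with the paper's: combine the trichotomy of Proposition~\ref{prop:Sarki_dec and Ume cases} with the description of $\NE(X)$ and the canonical degrees from Lemma~\ref{lem:UmePicN} (resp.\ Lemma~\ref{lem:VbPicN} and the orbit structure of Lemma~\ref{lem:orbits of Uabc et Vb} for $\V_b$). Parts \ref{Vb bundles max} and \ref{Ume max item 2} are handled correctly, up to a cosmetic slip in \ref{Vb bundles max}: the link $\V_b\dasharrow\U_1^{b,2}$ obtained by blowing up $f'$ is of type \I, not \II, since its inverse is the type \III divisorial contraction $\psi$.

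There is, however, a genuine error in your treatment of part \ref{Ume max item 1}. You assert that under its hypotheses ``all of these intersection numbers are $\geq 0$, so no negative extremal ray other than $f$ exists, and case~\ref{SarkiDecUF2} is empty; this leaves only the type \II links.'' But for $a=1$ and $c=2$, Lemma~\ref{lem:UmePicN}\ref{UmePicN5} gives $K_X\cdot r=a-2=-1<0$, so $r$ \emph{is} a $K_X$-negative extremal ray; its contraction is precisely the type \III link $\psi\colon\U_1^{b,2}\to\V_b$ (see Lemma~\ref{lem:VbPicN}), which belongs to case~\ref{SarkiDecUF2} of Proposition~\ref{prop:Sarki_dec and Ume cases} and which the statement of \ref{Ume max item 1} --- and your own opening paragraph --- includes in the list of links. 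Your numerical check for \ref{Ume max item 1} only verifies $K_X\cdot s=b-2>0$ and silently treats the remaining generator as $K_X$-nonnegative, which holds only for $c>2$ (where that generator is $\ell_{10}$ with $K_X\cdot\ell_{10}=k+1>0$). As written, your conclusion that case~\ref{SarkiDecUF2} produces nothing contradicts the very statement being proved. The repair is exactly what the paper does: for $a=1$ the extra negative extremal ray exists if and only if $c=2$, and in that case its contraction is $\psi$ (unique by Remark~\ref{remark:UnicitySarkisov}), which accounts for the additional link; for $2<c<b$ there is indeed no extra negative ray and only the type \II links survive.
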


\begin{proof}

\ref{Ume max item 1}: Case~\ref{SarkiDecUF1} in Proposition~\ref{prop:Sarki_dec and Ume cases} gives the family of type $\II$ equivariant links of the form $X=\U_{a}^{b,c} \dasharrow \U_{a}^{b+p,c+pa}$. Case~\ref{SarkiDecUF2} in Proposition~\ref{prop:Sarki_dec and Ume cases} corresponds to  links of type  \III or \IV with a small map being an isomorphism, and are thus given by the contraction of a negative extremal ray distinct from $\Q_{\ge 0}\cdot f$, corresponding to the structure morphism $X\to \F_a$. Since $b>c \geq 2$, Lemma~\ref{lem:UmePicN} shows that there is another negative ray on $X$ if only if $(a,c)=(1,2)$, namely $\Q_{\ge 0}\cdot r$. The contraction of this ray yields the type \III equivariant link $\psi\colon~\U_{1}^{b,2} \to \V_b$ considered in Lemma~\ref{lem:sequence of links for Umemura bundles} and satisfying $\psi \Autz(\U_{1}^{b,2}) \psi^{-1}=\Autz(\V_b)$.

\ref{Vb bundles max}: According to Lemma~\ref{lem:orbits of Uabc et Vb}\ref{item:orbits Vb}, the variety $\V_b$ is the union of three $\Autz(\V_b)$-orbits which are a line $\ell$, a surface $S$ whose closure is $\overline{S}=S \sqcup \ell \simeq \F_1$, and an open orbit $\V_b \setminus \overline{S}$. By Lemma~\ref{lem:VbPicN}\ref{VbPicN5}, there is only one negative extremal ray (as $b \geq 3$), namely $\Q_{\geq 0}f$, whose contraction yields the structure morphism $\V_b \to \P^2$. Hence we can only blow-up $\ell$ that yields the $\P^1$-bundle $\U_{1}^{b,2} \to \F_1$.

\ref{Ume max item 2}: According to Lemma \ref{lem:UmePicN}, there is only one negative extremal ray (as our numerical conditions on $a,b,c$ imply $b \geq 2$), namely $\Q_{\geq 0} f$, whose contraction yields the structure morphism $X \to \F_a$. Hence Case \ref{SarkiDecUF2} in Proposition~\ref{prop:Sarki_dec and Ume cases} cannot occur. Thus the only equivariant links starting from $X$ are the type \II links  described in Proposition~\ref{prop:Sarki_dec and Ume cases}\ref{SarkiDecUF1}.
\end{proof}

\subsection{Decomposable \texorpdfstring{$\P^1$}{P1}-bundles over \texorpdfstring{$\F_a$}{Fa}}\label{decompo P1 over Fa}
In this subsection we first study the geometry of the $\P^1$-bundles $\FF_a^{b,c}\to \F_a$, and then we consider the equivariant links starting from $\FF_a^{b,c}$ and from certain $\P^1$-fibrations over $\P(1,1,2)$.

\begin{lemma} \label{XabPic}
Let $X=\FF_{a}^{b,c}$ with $a \geq 0$ and $b,c \in \Z$.
\begin{enumerate}
\item\label{XabPicN1}
The group $\NS_\Q(X)$ is generated by $H_{x_0}:=(x_0=0) \simeq \F_a$,  $H_{y_0}:=(y_0=0) \simeq \F_c$, and $H_{z_0}:=(z_0=0) \simeq \F_b$. 
\item\label{XabPicN2}
The group $N_1^\Q(X) $ is generated by the following curves 
\begin{itemize}
\item $\ell_1:=H_{y_0} \cap H_{x_0}=\{ [0:1;0:1;z_0:z_1]\} \simeq \p^1$ 
\item $\ell_2:=H_{z_0}\cap H_{x_0}=\{ [0:1;y_0:y_1;0:1]\} \simeq \p^1$ 
\item $\ell_3:=H_{z_0}\cap H_{y_0}=\{ [x_0:x_1;0:1;0:1]\} \simeq \p^1$ 
\end{itemize}
\item\label{XabPicN3}
The intersection form on $X$ is given by
\[\begin{array}{|c|ccc|}
\hline
& H_{z_0} & H_{y_0} & H_{x_0} \\
\hline
H_{z_0} & 0 & \ell_3 & \ell_2 \\
H_{y_0} & \ell_3 & -a\ell_3& \ell_1\\
H_{x_0} & \ell_2 & \ell_1& -b\ell_1+(c-ab)\ell_2\\
\hline\end{array}\ \ \begin{array}{|c|ccc|}
\hline
& H_{z_0} & H_{y_0} & H_{x_0}\\
\hline
\ell_1 & 1 & -a& c\\
\ell_2 & 0 & 1 & -b \\
\ell_3 & 0 & 0 & 1\\
\hline\end{array}\]
\item\label{XabPicN4}
Let $\ell_4=(x_1=y_0=0) \simeq \p^1$ that satisfies $\ell_4=\ell_1-c\ell_3$ in $N_1^\Q(X) $. 
The cone of effective curves $\NE(X)$ is generated by $\ell_1$, $\ell_2$, and $\ell_3$ if $c \leq 0$ and by $\ell_4$, $\ell_2$, and $\ell_3$ if $c>0$.
\item\label{XabPicN5}
The canonical divisor of $X$ is $K_X=-(a(b+1)+2-c)H_{z_0}-(b+2)H_{y_0}-2 H_{x_0}$, thus $K_{X} \cdot \ell_1=a-c-2$,  $K_X \cdot \ell_2=b-2$, $K_X\cdot\ell_3=-2$, and $K_X \cdot \ell_4=a+c-2$.
\end{enumerate}
\end{lemma}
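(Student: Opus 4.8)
The variety $X=\FF_a^{b,c}$ is a tower of $\P^1$-bundles: the projection $\pi\colon X\to \F_a$ is a $\P^1$-bundle and $\F_a\to\P^1$ is itself one, while globally $X$ is toric, being the quotient of $(\A^2\setminus\{0\})^3$ by $\G_m^3$. I will use both descriptions, writing $H_{x_1}=(x_1=0)$, $H_{y_1}=(y_1=0)$, $H_{z_1}=(z_1=0)$ for the three remaining torus-invariant divisors. For \ref{XabPicN1}--\ref{XabPicN2}, since $\rho(\F_a)=2$ and $\pi$ is a $\P^1$-bundle we have $\rho(X)=3$, so $N^1_\Q(X)$ and $N_1^\Q(X)$ are both three-dimensional. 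First I would identify the surfaces: setting $x_0=0$ forces the fibre coordinate to be $[0:1]$, so $H_{x_0}$ is a section of $\pi$ and $H_{x_0}\simeq\F_a$; the divisors $H_{y_0}=\pi^{-1}(\s{-a})$ and $H_{z_0}=\pi^{-1}(f)$ are the restrictions of $X$ over the curves $(y_0=0)=\s{-a}$ and $(z_0=0)$, hence isomorphic to $\F_c$ and $\F_b$ by the description in \S\ref{sec:first classification}. As $H_{y_0}=\pi^*\s{-a}$ and $H_{z_0}=\pi^*f$ span $\pi^*N^1_\Q(\F_a)$ and $H_{x_0}$ meets the fibre $\ell_3$ transversally, the three classes are independent; dually so are $\ell_1,\ell_2,\ell_3$, which is also confirmed by the nondegeneracy of the pairing in \ref{XabPicN3}.

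For \ref{XabPicN3}, the two pullback divisors give most entries at once: $H_{z_0}^2=\pi^*(f^2)=0$, $H_{y_0}^2=\pi^*(\s{-a}^2)=-a\ell_3$, $H_{z_0}H_{y_0}=\pi^*(f\cdot\s{-a})=\ell_3$, while $H_{z_0}H_{x_0}=\ell_2$ and $H_{y_0}H_{x_0}=\ell_1$ hold by the very definition of $\ell_1,\ell_2$. The only genuine computation is $H_{x_0}^2$, and the key step is to exhibit the linear equivalence $H_{x_0}-H_{x_1}=-bH_{y_1}+cH_{z_1}$ coming from the $\G_m^3$-invariant rational function $x_0y_1^b/(x_1z_1^c)$, together with the disjointness $H_{x_0}\cap H_{x_1}=\emptyset$ (so $H_{x_0}H_{x_1}=0$) and the relations $H_{y_1}=H_{y_0}+aH_{z_0}$ (from $\s{a}=\s{-a}+af$) and $H_{z_1}=H_{z_0}$. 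Then $H_{x_0}^2=H_{x_0}(H_{x_1}-bH_{y_1}+cH_{z_1})=-b(\ell_1+a\ell_2)+c\ell_2=-b\ell_1+(c-ab)\ell_2$. The second table of curve–divisor numbers follows by dotting $\ell_1,\ell_2,\ell_3$ against the same three relations; for instance $\ell_1\cdot H_{x_0}=\ell_1\cdot(H_{x_1}-bH_{y_1}+cH_{z_1})=c$, since $\ell_1\subseteq(x_1\neq0)$ gives $\ell_1\cdot H_{x_1}=0$ while $\ell_1\cdot H_{y_1}=0$ and $\ell_1\cdot H_{z_1}=1$.

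For \ref{XabPicN5}, which I would treat before the cone, I would use the toric formula $-K_X=H_{x_0}+H_{x_1}+H_{y_0}+H_{y_1}+H_{z_0}+H_{z_1}$ and substitute $H_{x_1}=H_{x_0}+bH_{y_0}+(ab-c)H_{z_0}$, $H_{y_1}=H_{y_0}+aH_{z_0}$, $H_{z_1}=H_{z_0}$ obtained above, giving $K_X=-2H_{x_0}-(b+2)H_{y_0}-(a(b+1)+2-c)H_{z_0}$. The four intersection numbers then drop out of the table in \ref{XabPicN3}, using $\ell_4=\ell_1-c\ell_3$; e.g. $K_X\cdot\ell_1=-2c+a(b+2)-(a(b+1)+2-c)=a-c-2$.

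The main obstacle will be \ref{XabPicN4}, which needs a case distinction and a positivity argument in the style of Lemma~\ref{Lemm:XmnPic}\ref{XmnPicN4} and Lemma~\ref{lem:SbPicN}\ref{SbPicN4}. The curve $\ell_3$ is a fibre of $\pi$, hence always extremal, its contraction being $\pi\colon X\to\F_a$. The surface $H_{y_0}\simeq\F_c$ carries $\ell_1$, whose self-intersection inside $H_{y_0}$ is $H_{x_0}^2\cdot H_{y_0}=c$, together with the fibre $\ell_3=H_{z_0}\cap H_{y_0}$; thus when $c>0$ the negative section of $H_{y_0}$ is $\ell_4=\ell_1-c\ell_3$, while when $c\le0$ it is $\ell_1$ itself. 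Writing an irreducible curve as $\alpha\ell_1+\beta\ell_2+\gamma\ell_3$ (respectively in the basis $\ell_4,\ell_2,\ell_3$), I would bound its coefficients by intersecting with $H_{z_0}$, $H_{y_0}$ and $H_{x_0}$, treating separately the curves contained in the boundary surfaces $H_{x_0},H_{y_0},H_{z_0}$ — where one reduces to the known cones of $\F_a,\F_c,\F_b$ — and those not contained in them, where the relevant intersection numbers are nonnegative directly. This yields $\NE(X)=\Q_+\left\langle\ell_1,\ell_2,\ell_3\right\rangle$ for $c\le0$ and $\Q_+\left\langle\ell_4,\ell_2,\ell_3\right\rangle$ for $c>0$. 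As a cross-check, the same conclusion can be read off the wall structure of the fan of the toric variety $X$.
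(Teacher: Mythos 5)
Your proof is correct and follows essentially the route the paper intends: the paper's own proof is just the one-line remark that the argument is analogous to Lemma~\ref{Lemm:XmnPic}, i.e.\ derive the linear equivalences among the torus-invariant divisors from invariant rational functions, compute the intersection table from them, and bound the coefficients of an irreducible curve against $H_{x_0}$, $H_{y_0}$, $H_{z_0}$ (splitting off the curves lying in the boundary surfaces) to get $\NE(X)$. The only cosmetic difference is that you obtain $K_X$ from the toric formula $-K_X=\sum D_i$ rather than by applying adjunction twice as in Lemma~\ref{Lemm:XmnPic}; both give the same answer, and all your intersection numbers check out.
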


\begin{proof}
The proof is analogue to the proof of Lemma \ref{Lemm:XmnPic}.
\end{proof}

We now consider the equivariant links starting from $\FF_a^{b,c}$. By Theorem~\ref{th:first classification maximality}, we can consider $a,b\ge 0$, $a\not=1$, $c\in \Z$ and may assume that $c\le 0$ if $b=0$ and that $a=0$ or $b=c=0$ or  $-a<c<ab$. 

We first consider the case where $a=0$. As the Mori fibre spaces $\FF_0^{b,c}\to \F_0$, $\FF_0^{-b,-c}\to \F_0$, $\FF_0^{-c,-b}\to \F_0$  and $\FF_0^{c,b}\to \F_0$ are isomorphic, we may assume $b \geq \lvert c \rvert $. Moreover, the case of $(b,c)=(0,0)$ has been treated in Proposition~$\ref{Prop:HomSpaces}\ref{HomS3}$, the case where $b\ge c=1$ has been removed in Lemma~\ref{lem:Fabc non maximality}\ref{Fabc-itemc1} and the case $(b,c)=(1,0)$ in Lemma~\ref{lem:Fabc non maximality}\ref{Fabc-itemb1}. The remaining cases are done below:

\begin{lemma}\label{Lemma:F0bcListLinks}
Let $X=\FF_0^{b,c}$ with $b \geq \lvert c \rvert $, $c\not=1$ and either $b\ge 2$ or $(b,c)=(1,-1)$, and let $G=\Autz(X)$. Every $G$-equivariant link starting from $X$ is a link of type \III or \IV and the complete list is given as follows:
\begin{enumerate}
\item\label{LinkF1m1} If $(b,c)=(1,-1)$, there are exactly two equivariant links starting from $X$, namely the links $\varphi,\varphi'\colon X\to \RR_{1,1}$ of type $\III$ given in Proposition~$\ref{prop:Rmn list of links}\ref{prop:Rmn list of links2}$.
\item\label{LinkFaP1}
If $c=0$ and $b\ge 2$, the unique equivariant link is the type \IV link given by
\[\xymatrix@R=5mm@C=.3cm{
    \FF_0^{b,0}\simeq \F_b\times\p^1  \ar[rr]^{\varphi}_{\simeq} \ar[d]  & & \FF_b^{0,0}\simeq \F_b\times\p^1 \ar[d] \\
     \F_0 \ar[dr]  & & \F_b \ar[ld]\\
   & \p^1. 
    }  \]
\item\label{LinkFbcm1}
If $c=-1$ and $b\ge 2$, the unique equivariant link starting from $X$ is the type \III link $\varphi\colon \FF_0^{b,-1}\simeq \FF_0^{1,-b}\to \RR_{b,b}$ given in Proposition~$\ref{prop:Rmn list of links}$, which satisfies  $\varphi \Autz(\FF_{0}^{b,-1}) \varphi^{-1}=\Autz(\RR_{b,b})$.
\end{enumerate}
\end{lemma}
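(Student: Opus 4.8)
The plan is to work through the three cases using the numerical data assembled in Lemma~\ref{XabPic} together with the general restrictions on equivariant links coming from Proposition~\ref{prop:Sarki_dec and Ume cases}, which applies since $\FF_0^{b,c}$ is a decomposable $\p^1$-bundle over $\F_0$. The first thing I would record is that, for $a=0$, the surface $\F_0=\p^1\times\p^1$ carries the full group $\Autz(\F_0)=(\PGL_2)^2$ acting with a dense orbit, so there is no $G$-invariant curve that is a section over the ``$s_{-a}$-curve''; this immediately rules out the type~\II links of Proposition~\ref{prop:Sarki_dec and Ume cases}\ref{SarkiDecUF1} (there is no $(-a)$-curve to blow up when $a=0$) and the exceptional type~\III case \ref{SarkiDecUF3} (which forces $a=2$). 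Hence every equivariant link starting from $X$ falls under case~\ref{SarkiDecUF2}: it is of type~\III or~\IV and the associated small map is an isomorphism, so the link begins by contracting a $K_X$-negative extremal ray other than the fibre class $\ell_3$ of $\pi\colon X\to \F_0$.

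With that reduction in hand the argument becomes bookkeeping on the cone $\NE(X)$ and the sign of $K_X$ against its generators, both supplied by Lemma~\ref{XabPic}\ref{XabPicN4}-\ref{XabPicN5}. For \ref{LinkF1m1}, $(b,c)=(1,-1)$: here $c=-1\le 0$ so $\NE(X)=\Q_+\langle\ell_1,\ell_2,\ell_3\rangle$, and one checks via Lemma~\ref{XabPic}\ref{XabPicN5} that $K_X\cdot\ell_1=a-c-2=-1$ and $K_X\cdot\ell_2=b-2=-1$ are both negative, while $\ell_3$ gives the Mori fibration. Using the isomorphism $\FF_0^{1,-1}\cong\FF_0^{1,-1}$ and the description in Proposition~\ref{prop:Rmn list of links}\ref{prop:Rmn list of links2}, the contraction of each of these two rays realises the two morphisms $\varphi,\varphi'\colon \FF_0^{1,-1}\to\RR_{1,1}$ appearing in the flop factorisation there, so there are exactly two links, both of type~\III. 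For \ref{LinkFaP1}, $c=0$, $b\ge2$: then $\NE(X)=\Q_+\langle\ell_1,\ell_2,\ell_3\rangle$ with $K_X\cdot\ell_1=-2<0$, $K_X\cdot\ell_2=b-2\ge0$, so the only $K_X$-negative ray besides the fibre class is $\ell_1$; since $X=\FF_0^{b,0}\cong\F_b\times\p^1$ is a product, contracting $\ell_1$ just exchanges the two $\F_b$-fibration structures, yielding the type~\IV isomorphism $\FF_0^{b,0}\iso\FF_b^{0,0}$ over the point, which I would verify directly in coordinates. For \ref{LinkFbcm1}, $c=-1$, $b\ge2$: using $\FF_0^{b,-1}\cong\FF_0^{1,-b}$ one is in the range $m=n=b$ of Proposition~\ref{prop:Rmn list of links} (since $m>2n$ fails but $m=n$ holds only at $b=1$, so more care is needed)—I would instead read off from Lemma~\ref{XabPic} that the only additional negative ray is the one producing the type~\III link $\varphi\colon\FF_0^{b,-1}\to\RR_{b,b}$ and invoke Proposition~\ref{prop:Rmn list of links} to get $\varphi\Autz(\FF_0^{b,-1})\varphi^{-1}=\Autz(\RR_{b,b})$.

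The equalities of automorphism groups under each link will follow in every case from Blanchard's lemma (Proposition~\ref{blanchard}), which gives the inclusion $\varphi\Autz(X)\varphi^{-1}\subseteq\Autz(X')$ for the birational morphism direction, combined with an orbit or dimension count showing the inclusion is an equality; in the type~\IV cases equality is automatic since $\varphi$ is an isomorphism. Uniqueness of the list in each case comes from Remark~\ref{remark:UnicitySarkisov}: a link is determined by the relative-Picard-rank-$2$ morphism beginning it, and the only such $G$-equivariant contractions are those enumerated by the negative rays above. The main obstacle I anticipate is handling case~\ref{LinkFbcm1} cleanly: one must match the identification $\FF_0^{b,-1}\cong\FF_0^{1,-b}$ with the precise hypotheses of Proposition~\ref{prop:Rmn list of links} (which is stated for $m=n$ or $m>2n$), confirm that no further small-map-nontrivial type~\III link sneaks in, and verify that the strict transform of the contracted divisor is indeed isomorphic to the expected surface so that the divisorial contraction lands on $\RR_{b,b}$ rather than on a singular model; this requires the toric computation on the fan of $\FF_0^{b,-1}$ rather than a purely numerical check.
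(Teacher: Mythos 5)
Your overall route coincides with the paper's: exclude links of type \I and \II because $G$ surjects onto $\Autz(\F_0)$, which acts transitively on $\F_0$, so $X$ carries no $G$-invariant point or curve (the paper reads this off the orbit decomposition of \cite[\S 3.1]{BFT} directly rather than routing through Proposition~\ref{prop:Sarki_dec and Ume cases}, but the content is identical); then enumerate the $K_X$-negative extremal rays via Lemma~\ref{XabPic} and identify the resulting contractions. Your handling of the three enumerated cases is correct. One small correction to your exclusion step: it is not that the curve $s_{-a}=s_0$ fails to exist in $\F_0$ — it does — but that it is not $\Autz(\F_0)$-invariant, so the curve $l_{00}$ of Lemma~\ref{LinksIIbetweenFU} is not $G$-invariant and case~\ref{SarkiDecUF1} of the proposition cannot produce an equivariant link.

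There is, however, one genuine omission. The hypotheses of the lemma allow $2\le\lvert c\rvert\le b$, and the assertion that the list is \emph{complete} means that for those values there is no equivariant link whatsoever; your write-up never treats this range. It follows immediately from the setup you already have — for $c\le -2$ one gets $K_X\cdot\ell_1=-c-2\ge 0$ and $K_X\cdot\ell_2=b-2\ge 0$, while for $c\ge 2$ the cone $\NE(X)$ is generated by $\ell_2,\ell_3,\ell_4$ (not $\ell_1$) and $K_X\cdot\ell_4=c-2\ge 0$ — but it must be said, and you never introduce the curve $\ell_4$ at all, which is the relevant generator when $c>0$. Separately, the ``main obstacle'' you anticipate in case~\ref{LinkFbcm1} is not there: $\RR_{0,-b}\simeq\RR_{b,b}$ has $m=n=b$ for every $b\ge 2$, squarely within the hypothesis ``$m=n\ge 1$'' of Proposition~\ref{prop:Rmn list of links}, so beyond identifying the contraction of $\ell_1$ with the morphism of Lemma~\ref{Lemm:Fa1cRac} no further toric verification is required.
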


\begin{proof}
By \cite[\upshape\S 3.1]{BFT}, if $c \leq 0$, then the variety $X$ is the union of two $G$-orbits: the divisor $H_{x_0}=(x_0=0) \simeq \p^1\times\p^1$ and its open complement. And if $c>0$, then $X$ is the union of three $G$-orbits: the two divisors $H_{x_0}=(x_0=0) \simeq \p^1\times\p^1$ and $H_{x_1}=(x_1=0) \simeq \p^1\times\p^1$, and the open complement of $H_{x_0} \cup H_{x_1}$ in $X$. In both cases, there is no invariant subspace of dimension $\le 1$, so the only possible equivariant links are of type \III or \IV and start with the contraction of a negative extremal ray of $N_1(X)$, which gives respectively a divisorial contraction or a Mori fibration.  By Lemma~$\ref{XabPic}\ref{XabPicN4}$, the cone of effective curves $\NE(X)$ is generated by $\ell_1$, $\ell_2$, and $\ell_3$ if $c \leq 0$ and by $\ell_2$, $\ell_4$, and $\ell_3$ if $c>0$. Also, the contraction of $\ell_3$ gives the Mori fibration $X\to \F_0$. Moreover, $K_{X} \cdot \ell_1=-c-2$,  $K_X \cdot \ell_2=b-2$ and $K_X \cdot \ell_4=c-2$ (Lemma~$\ref{XabPic}\ref{XabPicN5}$).  As $b\ge 1$ and $c\not=1$, this shows that the only possible contractions are those of $\ell_1$ when $c\in \{0,-1\}$ and $\ell_2$ when $b=1$.

\ref{LinkF1m1}: If $(b,c)=(1,-1)$, there are exactly two contractions, (of $\ell_1$ and $\ell_2$). These are the two birational morphisms $\varphi,\varphi'\colon \FF_0^{1,-1}\to \RR_{1,1}$ given in Proposition~$\ref{prop:Rmn list of links}\ref{prop:Rmn list of links2}$.

\ref{LinkFaP1}: If $c=0$ and $b\ge 2$, the only contraction is given by contracting $\ell_1$; this gives the type \IV link of \ref{LinkFaP1}.

\ref{LinkFbcm1}: When $c=-1$ and $b\ge 2$, the unique link is the contraction of $\ell_1$, given by $\varphi\colon \FF_0^{b,-1}\simeq \FF_0^{1,-b}\to \RR_{0,-b}\simeq \RR_{b,b}$ and that satisfies  $\varphi \Autz(\FF_{0}^{b,-1}) \varphi^{-1}=\Autz(\RR_{b,b})$ (see Proposition~\ref{prop:Rmn list of links}).
\end{proof}

It remains to consider the equivariant links starting from $\FF_a^{b,c}$ when $a \geq 2$.
\begin{example}\label{WbFF2bm1}
For each $b\ge 2$ we have a  birational map 
\[\begin{array}{ccc}
\W_b & \dasharrow & \FF_2^{b-1,-1}\\
\ [y_0:y_1;z_0:z_1:z_2]& \mapsto & [y_0:y_1;1:z_2;z_0:z_1]\end{array}\]
One checks, using toric coordinates, that it is the blow-up of the point $p=[1:0;0:0:1]$ (with its reduced structure) followed by the flip of the strict transform of the curve $f\subseteq \W_b$ given by $z_0=z_1=0$. As $\Autz(\W_b)$ acts on $\p(1,1,2)$ by Proposition~\ref{blanchard}, it fixes the singular point $[0:0:1]$ and the fibre over it. The point $p$ is also fixed as it is a singular point of $\W_b$  (see {\upshape\S~\ref{sec:first classification}\ref{DefiWb}}). Hence, the birational map is $\Autz(\W_b)$-equivariant.
\end{example}
\begin{example}\label{WbFF2b}
For each $b\ge 2$ we have a  birational map 
\[\begin{array}{ccc}
\W_b & \dasharrow & \FF_2^{b,1}\\
\ [y_0:y_1;z_0:z_1:z_2]& \mapsto & [y_0:y_1;1:z_2;z_0:z_1]\end{array}\]
One checks, using toric coordinates, that it is the blow-up of the point $q=[0:1;0:0:1]$ (with its reduced structure) followed by the flip of the strict transform of the curve $f\subseteq \W_b$ given by $z_0=z_1=0$. As $\Autz(\W_b)$ acts on $\p(1,1,2)$ by Proposition~\ref{blanchard}, it fixes the singular point $[0:0:1]$ and the fibre over it. The point $q$ is also fixed as it is a singular point of $\W_b$ (see {\upshape\S~\ref{sec:first classification}\ref{DefiWb}}). Hence, the birational map is $\Autz(\W_b)$-equivariant.
\end{example}

\begin{proposition} \label{prop:Fabc maximality} 
 Let $X=\FF_a^{b,c}$ with $a\geq 2$, $b \geq 0$, and $c \in \Z$, and let $G=\Autz(X)$. 
\begin{enumerate}
\item  \label{Fabc-itema1}If $b=c=0$, then there is a unique equivariant link starting from $X$, which is the inverse of the one of Lemma~$\ref{Lemma:F0bcListLinks}\ref{LinkFaP1}$, namely $X=\FF_a^{0,0}\iso \F_a\times\p^1\iso \FF_{0}^{a,0}$.

\item  \label{Fabc-item2} If $b=1$ and $-a<c < 0$, then the equivariant links from $X$ are the following:
\begin{itemize}
\item  the type \II link $\FF_a^{1,c} \dashrightarrow \FF_{a}^{2,c+a}$, described  in \cite[Lemma 5.4.2]{BFT};
\item  the type \III link $\varphi\colon~\FF_{a}^{1,c} \to \RR_{a-c,-c}$ defined in Proposition~$\ref{prop:Rmn list of links}$;
\item if $(a,c)=(2,-1)$, then there is an extra type \III link $\eta\colon~X\dashrightarrow \W_2$ defined as the antiflip of the curve $\ell_1$ followed by the contraction of the image of the invariant divisor $y_0=0$ and such that $\eta \Autz(X) \eta^{-1}=\Autz(\W_2)$, where $\W_2 \to \P(1,1,2)$ is the Mori $\P^1$-fibration defined in {\upshape\S~\ref{sec:first classification}\ref{DefiWb}}.
 \end{itemize}
\item  \label{Fabc-item3} If $b \geq 2$ and $-a<c<a(b-1)$, then the equivariant links from $X$ are the following:
\begin{itemize}
\item the type \II  links $X=\FF_a^{b,c} \dashrightarrow \FF_{a}^{b+k,c+ka}$, with $k=\pm 1$ and $c+ka>-a$, described in \cite[Lemma 5.4.2]{BFT};
\item if $(a,c)=(2, \pm 1)$, then there is a type \III link $\eta\colon~X\dashrightarrow \W_b$ defined as the antiflip of the unique invariant extremal curve of $X$ followed by the contraction of the image of the invariant divisor $y_0=0$ and such that $\eta \Autz(X) \eta^{-1}=\Autz(\W_b)$, where $\W_b \to \P(1,1,2)$ is the Mori $\P^1$-fibration defined in {\upshape\S~\ref{sec:first classification}\ref{DefiWb}}. The link is the inverse of the one given in Example~$\ref{WbFF2bm1}$ if $c=-1$ and of Example~$\ref{WbFF2b}$ if $c=1$. Moreover, we have the following commutative diagram, for each $b\ge 2$:
 \[ 
\scalebox{0.8}{ 
 \xymatrix{ 
\FF_2^{b-1,-1} \ar@{.>}[rr]^-{\antiflip}  \ar[d]  & & Y_{b-1,-1}  \ar[dr]^{\div}& & Y_{b,1} \ar[dl]_{\div}  && \FF_2^{b,1} \ar[d]  \ar@{.>}[ll]_-{\antiflip} \\
  \F_2 \ar[rrrd]  &&& \W_b\ar[d] &&&  \F_2  \ar[llld]   \\
    &&  & \P(1,1,2) &&&&
  }
  } \]
 \end{itemize}
\end{enumerate}
\end{proposition}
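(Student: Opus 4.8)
The plan is to analyze the orbit structure of $X=\FF_a^{b,c}$ for $a\ge 2$ and then apply the general restriction result Proposition~\ref{prop:Sarki_dec and Ume cases}, exactly as was done for the $a=0$ case in Lemma~\ref{Lemma:F0bcListLinks}. First I would recall from \cite[\S 3.1]{BFT} the orbit decomposition: when $c\le 0$ the variety $X$ is the union of the divisor $H_{x_0}\simeq \F_c$ (or $\p^1\times\p^1$ when $c=0$) and its open complement, together with the invariant curve structure given by the $(-a)$-section $s_{-a}\subseteq \F_a$; when $c>0$ both $H_{x_0}$ and $H_{x_1}$ are invariant divisors. The key point is to identify the invariant curves of $X$ and to compute, via Lemma~\ref{XabPic}, which extremal rays of $\NE(X)$ are $K_X$-negative. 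Using Lemma~\ref{XabPic}\ref{XabPicN4}-\ref{XabPicN5}, the relevant intersection numbers are $K_X\cdot\ell_2=b-2$, $K_X\cdot\ell_3=-2$ (which gives the structure morphism $X\to\F_a$), $K_X\cdot\ell_1=a-c-2$, and $K_X\cdot\ell_4=a+c-2$, so under the numerical hypotheses $-a<c<a(b-1)$ (resp.\ the boundary cases) only finitely many contractions or flips survive.

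Next I would run through the three items by feeding this data into Proposition~\ref{prop:Sarki_dec and Ume cases}. Since $a\ge 2$, possibility~\ref{SarkiDecUF1} of that proposition yields precisely the type~\II links $\FF_a^{b,c}\dasharrow\FF_a^{b\pm 1,c\pm a}$ from \cite[Lemma~5.4.2]{BFT}, valid whenever the target still satisfies $c\pm a>-a$; possibility~\ref{SarkiDecUF2} gives type~\III/\IV links whose small map is an isomorphism, i.e.\ contractions of a $K_X$-negative extremal ray other than $\R_{\ge 0}\ell_3$; and possibility~\ref{SarkiDecUF3} restricts the genuinely small (anti-flip) case to $X=\FF_2^{b,c}$ with $c=\pm 1$. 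For item~\ref{Fabc-itema1} ($b=c=0$), the only $K_X$-negative ray besides the fibre is $\ell_1$, and contracting it over the base realizes the type~\IV isomorphism $\FF_a^{0,0}\simeq\F_a\times\p^1\simeq\FF_0^{a,0}$, the inverse of Lemma~\ref{Lemma:F0bcListLinks}\ref{LinkFaP1}. For item~\ref{Fabc-item2} ($b=1$, $-a<c<0$) the type~\II link to $\FF_a^{2,c+a}$ always exists, and the divisorial contraction of the $K_X$-negative ray $\ell_1$ (here $a-c-2\ge 0$ forces $c\le a-2$, but the relevant invariant ray comes from $H_{x_0}$) produces the type~\III link $\varphi\colon\FF_a^{1,c}\to\RR_{a-c,-c}$ of Proposition~\ref{prop:Rmn list of links}. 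For item~\ref{Fabc-item3} the two type~\II links $\FF_a^{b,c}\dasharrow\FF_a^{b\pm 1,c\pm a}$ (subject to $c+ka>-a$) exhaust possibility~\ref{SarkiDecUF1}.

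The heart of the proof, and the step I expect to be the main obstacle, is the special case $a=2$, $c=\pm 1$, where possibility~\ref{SarkiDecUF3} forces a genuine anti-flip and the resulting variety is no longer a $\p^1$-bundle. Here I would verify, using the explicit toric descriptions in Examples~\ref{WbFF2bm1} and~\ref{WbFF2b}, that the anti-flip of the unique invariant extremal curve followed by the contraction of the image of the invariant divisor $y_0=0$ produces the Mori $\p^1$-fibration $\W_b\to\P(1,1,2)$ of {\upshape\S\ref{sec:first classification}\ref{DefiWb}}. Concretely, Example~\ref{WbFF2bm1} realizes $\W_b\dasharrow\FF_2^{b-1,-1}$ and Example~\ref{WbFF2b} realizes $\W_b\dasharrow\FF_2^{b,1}$ as reduced blow-ups of fixed singular points followed by flips; taking inverses gives the two type~\III links $\eta\colon\FF_2^{b-1,-1}\dasharrow\W_b$ and $\FF_2^{b,1}\dasharrow\W_b$, and the equivariance of each follows from Proposition~\ref{blanchard} together with the fact that the blown-up points are singular points of $\W_b$ and hence fixed. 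The equality $\eta\Autz(X)\eta^{-1}=\Autz(\W_b)$ is then obtained by a dimension count on both sides, as in the proofs of Propositions~\ref{LinkFromPPb} and~\ref{prop:Rmn list of links}\ref{prop:Rmn list of links0t}. Finally, assembling the two realizations of $\W_b$ into the displayed commutative diagram is a matter of checking that the two anti-flip/contraction compositions fit over the common base $\P(1,1,2)$, which is immediate from the toric coordinates. The delicate bookkeeping lies in confirming the terminal-singularity criterion (invoked in Proposition~\ref{prop:Sarki_dec and Ume cases}) indeed singles out $c=\pm 1$ and in matching the invariant extremal curve of $X$ with the flipping curve $f\subseteq\W_b$.
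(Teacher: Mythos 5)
Your proposal follows essentially the same route as the paper's proof: restrict the possible links via Proposition~\ref{prop:Sarki_dec and Ume cases}, read off the $K_X$-negative extremal rays from Lemma~\ref{XabPic}, realise the divisorial contractions via Proposition~\ref{prop:Rmn list of links}, and handle the anti-flip case $a=2$, $c=\pm 1$ through the toric Examples~\ref{WbFF2bm1} and~\ref{WbFF2b}, with the equality $\eta\Autz(X)\eta^{-1}=\Autz(\W_b)$ coming from the fact that the centre of the blow-up is a singular (hence fixed) point. One bookkeeping slip: in items~\ref{Fabc-itema1} and~\ref{Fabc-item2} the $K_X$-negative ray other than $\R_{\ge 0}\ell_3$ is $\ell_2$ (since $K_X\cdot\ell_2=b-2<0$ when $b\le 1$), not $\ell_1$ (which satisfies $K_X\cdot\ell_1=a-c-2\ge 0$ in those ranges); your own quoted intersection numbers supply the correction, and the links you identify are the right ones.
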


\begin{proof}
By Proposition~\ref{prop:Sarki_dec and Ume cases} there are no type \I equivariant links starting from $X$ and if the small map $X \dashrightarrow X'$ is not an isomorphism, then the link is of type \III and $X=\FF_2^{b,c}$ with $c=\pm 1$. 
 Moreover, recall from Lemma~\ref{XabPic} that $\NE(X)$ is generated by $\ell_1$, $\ell_2$, and $\ell_3$ if $c \leq 0$ resp.~by $\ell_2$, $\ell_4$, and $\ell_3$ if $c>0$, and that $K_X \cdot \ell_1=a-c-2$, $K_X\cdot \ell_2=b-2$, $K_X \cdot \ell_3=-2$ and $K_X \cdot \ell_4=a+c-2$. 
 The contraction of $\ell_3$ corresponds to the Mori fibration $\FF_a^{b,c} \to \F_a$.
If $b=c=0$, then $X \simeq \p^1\times\F_a\simeq \FF_{0}^{a,0}$ and \ref{Fabc-itema1} follows from the study of contractions made in Lemma~\ref{Lemma:F0bcListLinks}\ref{LinkFaP1}. This yields \ref{Fabc-itema1}.

Assume that $b\geq 1$ and $-a<c<a(b-1)$. Type \II equivariant links are those given in Proposition~\ref{prop:Sarki_dec and Ume cases}\ref{SarkiDecUF1} and described in \cite[Lemma 5.4.2]{BFT}. Also, the only negative extremal rays are $\Q_{\geq 0}\ell_3$, corresponding to the structure morphism $X \to \F_a$, and $\Q_{\geq 0} \ell_2$ when $b=1$. The contraction of the class of $\ell_2$ (when $b=1$) is the divisorial contraction $\varphi\colon~\FF_{a}^{1,c} \to \RR_{a-c,-c}$ defined in Proposition~\ref{prop:Rmn list of links}. By Proposition~\ref{prop:Sarki_dec and Ume cases}, it remains only to consider {type \III links} where the small map is not an isomorphism to have the complete list of equivariant links from $X$.

If the small map is not an isomorphism, then $a=2$, $c=\pm 1$ and it must be the antiflip of the unique extremal invariant curve of $X$ {(see Proposition~\ref{prop:Sarki_dec and Ume cases}\ref{SarkiDecUF3})}, namely $\ell_1$ if $c \leq 0$ or $\ell_4$ if $c>0$, this gives a variety that we denote by $Y$. This antiflip can easily be described using tools from toric geometry since $X$ is a toric variety (see e.g.~\cite[\S~14.2]{Mat02}); {in particular, the image of the invariant divisor $H_{y_0} \simeq \F_1$ in $X$ is an invariant divisor $D \simeq \P^2$ in $Y$.} Then we can only contract $D$, this yields the threefold $X'=\W_b$ if $c=1$ and $X'=\W_{b+1}$ if $c=-1$, which is equipped with a $\P^1$-fibration structure $X' \to \P(1,1,2)$. {Denote by $\eta\colon X \dashedrightarrow X'$ the birational map obtained by composing the small map $\chi\colon X \dashedrightarrow Y$ with the divisorial contraction $\delta\colon Y \to X'$.} 
By Proposition~\ref{blanchard} we have $\eta \Autz(X) \eta^{-1} \subseteq \Autz(X')$. 
{On the other hand, the morphism $Y \to X'$ is the blow-up of a singular point (with its reduced structure) in $X'$, hence the $\Autz(X')$-action on $X'$ lifts to $Y$, and so we have $\delta \Autz(Y) \delta^{-1} =\Autz(X')$. Furthermore, $\chi \Autz(X) \chi^{-1}=\Autz(Y)$, and so it follows that $\eta \Autz(X) \eta^{-1} = \Autz(X')$.} 
This gives \ref{Fabc-item2} and \ref{Fabc-item3}.
\end{proof}

\begin{remark}
Putting together Propositions~\ref{prop:Fabc maximality} and \ref{prop:Rmn list of links}\ref{prop:Rmn list of links0t} yields the following commutative diagram of nested equivariant links, between five Mori fibrations, given below by thick arrows :
\[
\begin{tikzpicture}[scale=1.15,font=\small]
\node (Pt) at (0,0) {$\{\mathrm{pt}\}$};
\coordinate (Yy3) at (150:1.8*\Rb);
\coordinate (Yz3) at (210:1.8*\Rb);
\node (X1) at (-2,-0.5)  {$\p^1$};
\node (X2) at (2,0) {$\p(1,1,2)$};
\node (X3) at (-2,0.5) {$\p(1,1,2,3)$};
\node (Y1) at (2,-1.5) {$\F_2$};
\node (Y2) at (2,1.5) {$\W_2$};
\node (Yy3) at (-4,1) {$W'$};
\node (Yz3) at (-4,-1) {$\RR_{3,1}$};
\node (Z3) at (-6,-1.5) {$\FF_2^{1,-1}$};
\node (Z2) at (-6,1.5) {$Y_{1,-1}$};
\node (Z1) at (4,-1.5) {$\FF_2^{2,1} $};
\node (Z4) at (4,1.5) {$Y_{2,1}$};
\node (W) at (2,-2) {$W$};
\draw[->] (X1) to  (Pt);
\draw[->] (X2) to  (Pt);
\draw[thick,->] (X3) -- node [above=0cm] {\scriptsize{Mfs}} (Pt);
\draw[thick,->] (Yz3) -- node [above=0cm] {\scriptsize{Mfs}} (X1);
\draw[->] (Y1) to (X1);
\draw[->] (Y1) to  (X2);
\draw[dotted,->] (Yy3) -- node [left=-0.05cm] {\scriptsize{flip}} (Yz3);
\draw[dotted,->] (Z2) -- node [left=-0.05cm] {\scriptsize{flip}} (Z3);
\draw[->] (Yy3) -- node [above=-0.05cm] {\scriptsize{div}} (X3);
\draw[->] (Y2) -- node [above=-0.05cm] {\scriptsize{div}} (X3);
\draw[thick,->] (Y2) -- node [right=0cm] {\scriptsize{Mfs}} (X2);
\draw[->] (Z3) -- node [above=0cm] {\scriptsize{div}} (Yz3);
\draw[thick,->] (Z3) -- node [above=-0.05cm] {\scriptsize{Mfs}} (Y1);
\draw[->] (Z2) -- node [above=0cm] {\scriptsize{div}} (Y2);
\draw[->] (Z2) to (Yy3);
\draw[thick,->] (Z1) -- node [above=0cm] {\scriptsize{Mfs}} (Y1);
\draw[dotted,->] (Z4) -- node [right=-0.05cm] {\scriptsize{flip}} (Z1);
\draw[->] (Z4) -- node [above=0cm] {\scriptsize{div}} (Y2);
\draw[->] (W) -- node [below=-0.05cm] {\scriptsize{div}} (Z1);
\draw[->] (W) -- node [below=-0.05cm] {\scriptsize{div}} (Z3);
\end{tikzpicture}
\]
\end{remark}

\begin{lemma} \label{lem:links from Wb}
Let $b \geq 2$ and let $\W_b\to \p(1,1,2)$ be the Mori $\P^1$-fibration  introduced in {\upshape\S~\ref{sec:first classification}\ref{DefiWb}}. Then $\W_b$ is the union of five $\Autz(\W_b)$-orbits. 
There is one invariant section $D \simeq \P(1,1,2)$ and one invariant fibre $\ell$, which is the union of three orbits: two points $q_1$ and $q_2(=\ell \cap D)$ and their complement $\ell \setminus (\{q_1\} \sqcup \{q_2\})$.

Moreover, the only equivariant links starting from $\W_b \to \P(1,1,2)$ are the two type \I links described in Examples~$\ref{WbFF2bm1}$ and~$\ref{WbFF2b}$ and, if $b=2$, the type \III link corresponding to the divisorial contraction $\W_2 \to \P(1,1,2,3)$ described in Lemma~$\ref{LemP1123}\ref{item PP2}$.
\end{lemma}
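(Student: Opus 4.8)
The plan is to analyze the toric variety $\W_b$ using the explicit $\G_m^2$-action and homogeneous coordinates given in {\upshape\S\ref{sec:first classification}\ref{DefiWb}}, combining this with the structural facts already established: that $\W_b \to \P(1,1,2)$ is a Mori $\P^1$-fibration, that $\Autz(\W_b)$ acts on the base $\P(1,1,2)$ by Proposition~\ref{blanchard}, and that $\W_b$ has exactly two singular points $[1:0;0:0:1]$ and $[0:1;0:0:1]$ lying in the fibre over the singular point $[0:0:1]\in\P(1,1,2)$. First I would determine the $\Autz(\W_b)$-orbit decomposition. Since $\Autz(\W_b)$ acts on $\P(1,1,2)$ fixing the singular point $[0:0:1]$ and acting as the full automorphism group of $\P(1,1,2)$ on the rest, the fibre $\ell$ over $[0:0:1]$ is invariant while the open complement is a single orbit in the base; I expect the invariant section $D \simeq \P(1,1,2)$ (given by $y_0=0$) and its complement to give two of the orbits, and the invariant fibre $\ell$ to split into the two singular points $q_1=[1:0;0:0:1]$, $q_2=[0:1;0:0:1]$ (which must be fixed, being the singular locus) together with their complement $\ell \setminus \{q_1,q_2\}$. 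The key computation is to write out the $\Autz(\W_b)$-action explicitly in the chart over $[0:0:1]$ and verify that it acts transitively on $\ell \setminus \{q_1,q_2\}\simeq \G_m$ and on $D \setminus \ell$; this should follow from the same kind of explicit description used for $\U_a^{b,c}$ and the decomposable bundles, noting that $q_2 = \ell \cap D$.

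Next I would enumerate the possible equivariant links. Because the only invariant subvarieties of dimension $\leq 1$ are the points $q_1,q_2$ and the invariant fibre $\ell$, the candidate Sarkisov links must either begin with a (weighted) blow-up of $q_1$ or $q_2$, a blow-up of $\ell$, or a divisorial/fibration contraction of an extremal ray. I would first argue that $\ell$ cannot be the starting center of an equivariant link by a terminal-singularity or extremality obstruction analogous to those invoked for $\W_b$'s appearance in Proposition~\ref{prop:Fabc maximality} and Lemma~\ref{LemP1123}. Then, appealing to the already-constructed links, the blow-up of $q_1$ followed by the flip of the curve $f=(z_0=z_1=0)$ is exactly the inverse of the link in Example~\ref{WbFF2bm1} (yielding $\FF_2^{b-1,-1}$), and the blow-up of $q_2$ is the inverse of Example~\ref{WbFF2b} (yielding $\FF_2^{b,1}$); both are genuine equivariant links of type \I, since the points are $\Autz(\W_b)$-fixed. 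For the contraction side, I would use the toric description to check which extremal rays can be contracted: the fibration contraction $\W_b \to \P(1,1,2)$ is the Mori structure itself (not a Sarkisov link), and a further divisorial contraction of $D$ is possible precisely when the relevant intersection number with the canonical class vanishes, which I expect to single out $b=2$, recovering the type \III link $\W_2 \to \P(1,1,2,3)$ of Lemma~\ref{LemP1123}\ref{item PP2}.

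The main obstacle will be the uniqueness/completeness argument: verifying that the two point blow-ups and (for $b=2$) the single divisorial contraction exhaust all equivariant links, with no further hidden possibilities such as a weighted blow-up of $q_1$ or $q_2$ with different weights giving a terminal variety, or a small map starting from an anti-flip. I would handle the weighted-blow-up ambiguity exactly as in Lemma~\ref{LemP1123}, using the terminal singularity criterion for toric varieties (e.g.~\cite[\S11.4]{CLS11}) to show that only the reduced blow-up (resp.~the specific weighted blow-up) yields a terminal $\Autz(\W_b)$-variety, and I would invoke Remark~\ref{remark:UnicitySarkisov} to conclude that each admissible starting contraction determines the link up to isomorphism of Mori fibrations. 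The extremality check that forbids contracting or blowing up $\ell$, and the computation pinning down $b=2$ for the divisorial contraction of $D$, are the delicate points; I would carry these out on the toric fan of $\W_b$, where the cones corresponding to $q_1$, $q_2$, $\ell$, and $D$ can be read off directly and the induced modifications compared against the terminality condition.
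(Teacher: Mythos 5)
Your proposal is correct and follows essentially the same route as the paper: identify the invariant centres ($q_1$, $q_2$, $\ell$, $D$), then use toric computations and the terminality criterion to show that only the reduced blow-ups of $q_1,q_2$ and (for $b=2$) the contraction of $D$ survive, with Remark~\ref{remark:UnicitySarkisov} pinning down each link; the paper merely shortcuts the orbit description by importing it from $\FF_2^{b,\pm1}$ via Proposition~\ref{prop:Fabc maximality}\ref{Fabc-item3} rather than recomputing it in charts. One small correction: the contraction of $D$ requires the contracted extremal ray to be $K$-\emph{negative}, not that the intersection number with $K$ vanishes, though your deferral to the explicit toric check makes this harmless.
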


\begin{proof}
The orbits description follows from the description of the $\Autz(\FF_a^{b,c})$-action on $\FF_a^{b,c}$ in \cite[\S3.1]{BFT} and Proposition~\ref{prop:Fabc maximality}\ref{Fabc-item3}.
Using tools from toric geometry (see \cite{Ful93} and \cite[Chapter~14]{Mat02}), we verify that 
\begin{itemize}
\item it is not possible to contract nor antiflip $\ell$;
\item blowing-up $\ell$ (torus-equivariantly, but not necessarily with its reduced structure) always gives a variety with non-terminal singularities; 
\item we can contract $D$ if and only if $b=2$ in which case we get $\P(1,1,2,3)$; and that
\item only the reduced blow-ups of $q_1$ and $q_2$ give varieties with  terminal singularities (the varieties $Y_{b,1}$ and $Y_{b-1,-1}$ of Proposition~$\ref{prop:Fabc maximality}\ref{Fabc-item3}$).
\end{itemize}
The description of the equivariant links starting from $\W_b$ follows from these observations.
\end{proof}

\begin{remark}\label{Rem:EuclideanModels}
Umemura classifies in \cite{Ume88} all \emph{smooth minimal} rational threefolds realising maximal connected algebraic subgroups of $\Bir(\P^3)$ and in class $[J9]$ {(defined in the table of \S~\ref{sec:Umemura classification})} certain \emph{Euclidean models} appear.
These are obtained from the $\P^1$-bundles $\FF_a^{b,c}$ via a sequence of blow-ups of $G$-invariant curves followed by a divisorial contraction. These models are \emph{not} Mori fibre spaces, but they can be recovered from our classification (Theorem~\ref{th:Ea}). For instance, one can recover the smooth Euclidean model $\mathcal{E}$ obtained from $\FF_3^{1,-1}$ via toric elementary birational maps (divisorial contractions and flips). In the following diagram the varieties $X_i$'s are smooth, while the $Y_i$'s are singular.
\[ \ \ \ \ \ \ \ \ \ \ \ \ \ \ \ \ \ \ \ \ \ \ \ \ \ \ \ \ \ \ \ \ \ \xymatrix{
  &  & &X_3\ar[rd]^-{\text{div}} \ar[ld]_{\text{div}} &   \\
   &&X_2 \ar[ld]_{\text{div}}  & &  \mathcal{E} \ar[ld]^-{\text{div}} \ar@{<.>}_{\flop}[ll]  \\
    &X_1\ar[ld]_-{\text{div}}  && Y_1 \ar@{.>}_{\flip}[ll]  \ar[ld]^-{\text{div}}   &  \\
        \FF_3^{1,-1} \ar[d]  \ar[rd] && Y_0 \ar@{.>}_-{\flip}[ll]  \ar[rd]^-{\text{div}} &&  \\ 
        \F_3 \ar[dr] & \RR_{4,1} \ar[d] &&  Y_{-1} \ar@{.>}_{\flip}[ll]    &   \\
         & \P^1&& &  & && & 
   }\]
\end{remark}

\subsection{Umemura's quadric fibrations over \texorpdfstring{$\p^1$}{P1}} \label{subsec Umemura quadric fib}
In this subsection we study the equivariant links between Umemura quadric fibrations. There are many equivariant links between them, we will however prove that all of them are given by Lemma~\ref{Lem:LinksQQgh}.

\begin{lemma}\label{QQgOrbits}
Assume that $\car(\k)\neq 2$. Let $g\in \k[u_0,u_1]$ be a homogeneous polynomial of degree $2n \geq 4$ with at least three roots and let $\pi \colon \QQ_g\to \p^1$ be the associated Umemura quadric fibration $($Definition~$\ref{def:QQg})$. The orbits of $\QQ_g$ for the action of $\Autz(\QQ_g)\simeq \PGL_2$ are the following:
\begin{enumerate}
\item\label{OrbitQuadricSmoothFibre}
Two orbits for each point $p\in \p^1$ that is not a root of $g$: the orbit $\Gamma_p=\pi^{-1}(p)\cap H_{x_3}\simeq \p^1$, corresponding to the diagonal of $\pi^{-1}(p)\simeq \p^1\times\p^1$, and the orbit $\pi^{-1}(p)\setminus \Gamma_p$.
\item\label{OrbitQuadricSingFibre}
Three orbits for each point $p\in \p^1$ that is a root of $g$: the fixed point $q\in\pi^{-1}(p)$, given by $x_0=x_1=x_2=0$ (the singular point of the quadric cone $\pi^{-1}(p)$), the curve $\Gamma_p=\pi^{-1}(p)\cap H_{x_3}\simeq \p^1$, and the orbit $\pi^{-1}(p)\setminus (\Gamma_p \sqcup \{q\})$.
\end{enumerate}
\end{lemma}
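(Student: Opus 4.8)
The plan is to determine the orbit structure of $\QQ_g$ under the action of $\PGL_2 \simeq \Autz(\QQ_g)$ (by Corollary~\ref{Cor:AutQQg}, since $g$ has at least three roots). The key observation is that the $\PGL_2$-action, given explicitly in Lemma~\ref{Lem:ActiononQQg}\ref{ActionPGL2}, acts trivially on the base $\p^1$ (it only touches the coordinates $x_0,x_1,x_2,x_3$ and leaves $u_0,u_1$ fixed). Therefore every orbit is contained in a single fibre $\pi^{-1}(p)$, and it suffices to analyze the action fibrewise, distinguishing whether $p$ is a root of $g$ or not.

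First I would treat the generic case \ref{OrbitQuadricSmoothFibre}, where $p$ is not a root of $g$, so $g(p)\neq 0$ and the fibre $\pi^{-1}(p)$ is the smooth quadric surface $x_0^2-x_1x_2-g(p)x_3^2=0$. After rescaling (the base field is algebraically closed, so $g(p)$ is a square), this is isomorphic to $\p^1\times\p^1$, and via the isomorphism $\kappa_r$ of Lemma~\ref{quadric1}\ref{Qembedding} the $\PGL_2$-action corresponds to the \emph{diagonal} action on $\p^1\times\p^1$ (this matches the computation leading to the map $[u:v]\mapsto [uv:u^2:v^2]$ in Proposition~\ref{Prop:ConicAutoFieldK}, and the diagonal structure appearing in Lemma~\ref{Lem:ActiononQQg}). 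The diagonal $\PGL_2$-action on $\p^1\times\p^1$ has exactly two orbits: the diagonal $\Delta\simeq\p^1$ itself, and its open complement. I would check that the diagonal corresponds precisely to $H_{x_3}\cap\pi^{-1}(p)$, i.e.~the locus $x_3=0$; this follows from the explicit form of $\kappa_r$, where $x_3=u_0v_1-u_1v_0$ vanishes exactly on the diagonal. This gives the two orbits $\Gamma_p$ and $\pi^{-1}(p)\setminus\Gamma_p$.

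Next I would treat the special fibres \ref{OrbitQuadricSingFibre}, where $p$ is a root of $g$, so $g(p)=0$ and $\pi^{-1}(p)$ degenerates to the quadric cone $x_0^2-x_1x_2=0$ with vertex $q=[0:0:0:1;p]$. The point $q$ is the unique singular point of this cone, hence is fixed by every automorphism preserving the fibre, giving a fixed point. Away from $q$, projection from the vertex realizes the cone minus its vertex as a $\G_m$-bundle (or rather an $\A^1$-bundle structure) over the conic $x_0^2-x_1x_2=0$, on which $\PGL_2$ acts transitively; the remaining analysis again splits the smooth locus into the curve $\Gamma_p=H_{x_3}\cap\pi^{-1}(p)$ (given by $x_3=0$, which is the conic $x_0^2-x_1x_2=0$ in the plane $x_3=0$, isomorphic to $\p^1$ and a single $\PGL_2$-orbit by the same computation as in the smooth case) and its complement $\pi^{-1}(p)\setminus(\Gamma_p\sqcup\{q\})$. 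I would verify this last set is a single orbit by using the explicit $\PGL_2$-action to move any such point to any other, the key point being that the stabilizer of a point in $\Gamma_p$ still acts with an open orbit on the lines of the cone through it.

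The main obstacle will be verifying cleanly that $\pi^{-1}(p)\setminus(\Gamma_p\sqcup\{q\})$ is a \emph{single} orbit in the singular-fibre case, rather than a one-parameter family of orbits; this requires checking that the $\PGL_2$-action is transitive on this two-dimensional locus, which I would do either by a direct coordinate computation with the matrices of Lemma~\ref{Lem:ActiononQQg}\ref{ActionPGL2}, or more conceptually by noting that the complement of the diagonal and vertex is $\PGL_2$-equivariantly fibred over the open orbit in the base conic with one-dimensional transitive fibres. The smooth-fibre assertion is essentially the classical fact that the diagonal $\PGL_2$ on $\p^1\times\p^1$ has two orbits, so the bulk of the genuine work is concentrated in the singular fibres and in confirming that $\Gamma_p$ is intrinsically characterized as $H_{x_3}\cap\pi^{-1}(p)$ in both cases.
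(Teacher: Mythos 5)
Your proposal is correct and follows essentially the same route as the paper: reduce to a fibrewise analysis (the $\PGL_2$-action of Lemma~\ref{Lem:ActiononQQg}\ref{ActionPGL2} is trivial on the base), identify $\Gamma_p=\pi^{-1}(p)\cap H_{x_3}$ as the diagonal of $\p^1\times\p^1$ when $g(p)\neq 0$ so that the action is diagonal with two orbits, and in the singular fibres observe that the vertex is fixed, $\Gamma_p$ carries the standard $\PGL_2$-action on $\p^1$, and transitivity on the remaining locus is checked with the explicit formula. The paper's proof is exactly this, including leaving the last transitivity claim to a direct computation as you do.
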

\begin{proof}
By Corollary~\ref{Cor:AutQQg}, the group $G=\Autz(X)$, is equal to $\PGL_2$, via the action given in Lemma~$\ref{Lem:ActiononQQg1}\ref{ActionPGL2}$. The action on $\p^1$ being trivial, every fibre $\pi^{-1}(p)$ is invariant, for each $p=[u_0:u_1]\in \p^1$. The equation of the fibre is $ x_0^2-x_1x_2=g(u_0,u_1)x_3^2$ in $\p^3$ and is thus isomorphic to $\p^1\times\p^1$ if $g(p)\not=0$ and to a quadric cone if $g(p)=0$. In both cases, the action of $\PGL_2$ preserves the conic $\Gamma_p=\pi^{-1}(p)\cap H_{x_3}$, isomorphic to $\p^1$, and acts on it via the standard action of $\PGL_2$ on $\p^1$, so $\Gamma_p$ is an orbit. As $\Gamma_p$ is a conic, it corresponds to the diagonal in $\p^1\times\p^1$ when $g(p)\not=0$, so the action is diagonal and the orbits in the fibre are $\Gamma_p$ and its complement. This achieves \ref{OrbitQuadricSmoothFibre}. If $g(p)=0$, the fibre is a quadric cone, so the vertex $q$ of the cone is fixed. It remains to observe that the action on $\pi^{-1}(p)\setminus (\Gamma_p \sqcup \{q\})$ is transitive (can be checked explicitly with the formula of Lemma~$\ref{Lem:ActiononQQg1})$. This proves \ref{OrbitQuadricSingFibre}.
\end{proof}

\begin{lemma}\label{Lem:LinksQQgh}
Assume that $\car(\k)\neq 2$. Let $g,h\in \k[u_0,u_1]$ be homogeneous polynomials of degree $2n \geq 4$ and $1$ respectively and such that $g$ is not a square. Let $p\in \p^1$ be the zero of $h$. The birational map 
\[\begin{array}{cccc}
\psi\colon & \QQ_{g}&\dasharrow & \QQ_{gh^2}\\
& [x_0:x_1:x_2:x_3;u_0:u_1]& \mapsto &  [hx_0:hx_1:hx_2:x_3;u_0:u_1]\end{array}\]
is a Sarkisov link of type \II, which decomposes as
\[\xymatrix@R=5mm@C=1cm{
    & W  \ar@{<->}[r]^{\simeq} \ar[dl]_-{\div} & W' \ar[dr]^-{\div} & \\
     \QQ_{g} \ar@{-->}[rrr]^-{\psi} \ar[d]_-{\pi}  & & & \QQ_{gh^2} \ar[d]^-{\pi'} & \\
     \p^1\ar@{<->}[rrr]^-{\mathrm{id}} & & & \p^1
  }\]
  where $W\to \QQ_g$ is the blow-up of the curve $\Gamma\subseteq \pi^{-1}(p)$ given by $x_3=0$ and $W'\to \QQ_{gh^2}$ is the blow-up of the point $q\in \pi^{-1}(p)$ given by $x_0=x_1=x_2=0$, singular point of $\QQ_{gh^2}$. 
Moreover, $\psi^{-1}\Autz(\QQ_{gh^2}) \psi \subseteq \Autz(\QQ_g)$, which is an equality if and only if either $g$ has more than two roots or $gh^2$ has two roots.
\end{lemma}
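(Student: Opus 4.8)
The plan is to verify the three assertions in turn: that $\psi$ is birational and induces an isomorphism on generic fibres, that it decomposes as the stated type \II link, and finally the statement about automorphism groups. First I would record the elementary facts. Since $h$ vanishes only along the fibre $F_p=\pi^{-1}(p)$, the map $\psi$ restricts to an isomorphism over $\p^1\setminus\{p\}$, hence induces an isomorphism between the generic fibres of $\pi$ and $\pi'$; its inverse is $[y_0:y_1:y_2:y_3;u_0:u_1]\mapsto[y_0:y_1:y_2:hy_3;u_0:u_1]$, which shows $\psi$ is birational. Reading the formula on $F_p$ shows that $\psi$ contracts the divisor $F_p$ onto the point $q=[0:0:0:1;p]$ and has indeterminacy locus exactly $\Gamma=F_p\cap\{x_3=0\}$, while $\psi^{-1}$ contracts the fibre $F'_p\subseteq\QQ_{gh^2}$ onto $\Gamma$ with indeterminacy $q$. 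Because $h(p)=0$, the point $p$ is a root of $gh^2$ of multiplicity $\ge 2$, so $q$ is one of the $cA_1$ singular points described in Lemma~\ref{Qgproperties}\ref{QQgterminalrat}; moreover $\Gamma$, being the $\PGL_2$-orbit $\Gamma_p$ of Lemma~\ref{QQgOrbits}, is a smooth conic contained in the smooth locus of $\QQ_g$ (the singular points of $\QQ_g$ lie in $\{x_3\neq 0\}$).

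Next I would establish the factorisation by an explicit local computation. After a coordinate change on $\p^1$ I may assume $p=[0:1]$ and $h=u_0$. In the affine chart $(x,y,z)\mapsto[x:x^2-\bar g(z)y^2:1:y;z:1]$ of Lemma~\ref{Qgproperties}, where $\bar g$ denotes the dehomogenisation of $g$, one computes that $\psi$ is the rational map $(x,y,z)\mapsto(x,y/z,z)$ into the analogous chart of $\QQ_{gh^2}$, with indeterminacy $\Gamma=\{y=z=0\}$. Let $\eta\colon W\to\QQ_g$ be the blow-up of $\Gamma$. In the two charts $y=y_1,\ z=y_1z_1$ and $z=z_1,\ y=z_1y_1$ of $W$ I would check directly that $\psi\circ\eta$ extends to a morphism: in the second chart it lands isomorphically in the smooth locus, while in the first chart it is the morphism $(x,y_1,z_1)\mapsto(z_1x,\,z_1(x^2-\bar g y_1^2),\,z_1,\,y_1z_1)$ into the chart $\{x_3\neq 0\}$ around $q$, which contracts the strict transform $\tilde F_p=\{z_1=0\}$ of $F_p$ onto $q$. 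Thus $\eta':=\psi\circ\eta$ is a divisorial contraction of $\tilde F_p$ to $q$. Since $\Gamma$ is a smooth curve in the smooth locus, $\eta$ is a Mori divisorial contraction and $W$ is terminal and $\Q$-factorial with $\rho(W/\p^1)=2$; the two extremal contractions over $\p^1$ are then $\eta$ and $\eta'$, so by Remark~\ref{remark:UnicitySarkisov} this is the unique Sarkisov link determined by $W\to\p^1$, and it is of type \II with trivial small map $W=W'$.

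For the last assertion I would first note that $\psi$ is $\PGL_2$-equivariant for the actions of Lemma~\ref{Lem:ActiononQQg}\ref{ActionPGL2} on $\QQ_g$ and $\QQ_{gh^2}$: these are linear in $(x_0,x_1,x_2)$ and scale $x_3$, while $\psi$ multiplies $x_0,x_1,x_2$ by the $\PGL_2$-invariant function $h$ and fixes $x_3$, so the two actions commute with $\psi$, giving $\psi^{-1}\PGL_2\psi=\PGL_2\subseteq\Autz(\QQ_g)$. By Corollary~\ref{Cor:AutQQg} the groups $\Autz(\QQ_g)$ and $\Autz(\QQ_{gh^2})$ equal $\PGL_2$ unless the relevant polynomial has exactly two roots, in which case there is an extra factor $\G_m$. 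As $g$ is not a square it has at least two distinct roots, and the roots of $gh^2$ are those of $g$ together with $p$. If $gh^2$ has more than two roots then $\Autz(\QQ_{gh^2})=\PGL_2$ and $\psi^{-1}\Autz(\QQ_{gh^2})\psi=\PGL_2\subseteq\Autz(\QQ_g)$, with equality exactly when $\Autz(\QQ_g)=\PGL_2$, i.e.\ when $g$ has more than two roots. If $gh^2$ has exactly two roots, then $g$ has exactly two roots and $p$ is one of them, so after a coordinate change $g=u_0^au_1^b$ and $gh^2=u_0^{a+2}u_1^b$ with $a,b$ odd; a direct conjugation using the explicit $\G_m$-actions of Example~\ref{Example:Qgu0u1} shows $\psi^{-1}\G_m\psi=\G_m$, whence $\psi^{-1}\Autz(\QQ_{gh^2})\psi=\PGL_2\times\G_m=\Autz(\QQ_g)$. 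Collecting the cases yields the inclusion in general and equality precisely when $g$ has more than two roots or $gh^2$ has two roots.

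The main obstacle I anticipate is the second paragraph: carrying out the chart computations cleanly while keeping track of the $\G_m^2$-weights defining $\QQ_g$ and $\QQ_{gh^2}$, and verifying rigorously that the divisorial contraction $\eta'$ meets the Mori requirements—that $W'=W$ is $\Q$-factorial and terminal and that the contracted ray is $K_W$-negative and extremal—given that $\QQ_g$ itself may be singular away from $\Gamma$. I expect the extremality and $K$-negativity to follow formally once both contractions of the relative Picard rank two morphism $W\to\p^1$ are identified, so that the verification reduces to the explicit local model above together with the fact that $\Gamma$ lies in the smooth locus of $\QQ_g$.
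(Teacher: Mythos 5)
Your proposal is correct and follows essentially the same route as the paper's proof: the paper likewise reads off that $\psi$ contracts $\pi^{-1}(p)$ onto $q$ and $\psi^{-1}$ contracts $\pi'^{-1}(p)$ onto $\Gamma$, leaves the decomposition to a local check (which you carry out explicitly in charts), and proves the automorphism statement via $\PGL_2$-equivariance of the explicit formulas together with Corollary~\ref{Cor:AutQQg} and the two-root case of Example~\ref{Example:Qgu0u1}. Your extra details (the blow-up charts identifying $W\to\QQ_{gh^2}$ with the reduced blow-up of the $cA_1$ point $q$, and the appeal to Remark~\ref{remark:UnicitySarkisov} via $\rho(W/\p^1)=2$) simply make explicit what the paper compresses into ``one then locally checks''.
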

\begin{proof}
The birational map $\psi$ contracts $\pi^{-1}(p)$ onto the point $q$. Its inverse is given by $[x_0:x_1:x_2:x_3;u_0:u_1] \mapsto   [x_0:x_1:x_2:hx_3;u_0:u_1]$ and contracts $\pi'^{-1}(p)$ onto the curve $\Gamma$. One then locally checks that the map simply decomposes in the above equivariant link. By Corollary~\ref{Cor:AutQQg}, both $\Autz(\QQ_{g})$ and $\Autz(\QQ_{gh^2})$ contain the group $\PGL_2$ given in Lemma~$\ref{Lem:ActiononQQg1}\ref{ActionPGL2}$, and are equal to it if and only if $g$ and $gh^2$ have respectively more than two roots.

The explicit description of $\psi$ and of the $\PGL_2$-action given in Lemma~$\ref{Lem:ActiononQQg1}\ref{ActionPGL2}$ imply that $\psi$ is $\PGL_2$-equivariant. If $gh^2$ has more than two roots, we have $\Autz(\QQ_{gh^2})=\PGL_2$, so $\psi^{-1}\Autz(\QQ_{gh^2}) \psi^{1} \subseteq \Autz(\QQ_g)$, with an equality if and only if $g$ has more than two roots. If $gh^2$ has less than three roots, it has exactly two roots, as it is not a square, and the same holds for $g$. We may thus assume, up to a change of coordinates of $\p^1$, that $g=u_0^au_1^b$ for some odd $a,b\ge 1$, that $h=u_0$, and thus that $gh^2=u_0^{a+2}u_1^b$.  Corollary~\ref{Cor:AutQQg} implies that  both $\Autz(\QQ_{g})$ and $\Autz(\QQ_{gh^2})$ are then equal to the group $\PGL_2\times\G_m$, and so $\psi^{-1}\Autz(\QQ_{gh^2}) \psi=\Autz(\QQ_g)$; this can be checked with the explicit description provided by Example~$\ref{Example:Qgu0u1}$ or since both groups leaves invariant the centres of the blow-ups $W\to \QQ_g$ and $W'\to \QQ_{gh^2}$.
\end{proof}

We will apply to the quadric fibrations $\QQ_g$ the next lemma, which applies to \emph{compound du Val singularities of type $A_1$} (or $cA_1$); these are by definition locally analytically  isomorphic to $\{(x,y,z,t)\in \A_k^4 \mid x^2-yz-t^m=0\}$ for some $m\ge 2$. This makes sense over $\C$, but also over any algebraically closed field of characteristic zero, as the variety is defined by finitely many equations and we can then work over a subfield that embeds into $\C$. In the case of the quadric bundles $\QQ_g$, we moreover have an equation of the form $x^2-yz-t^ms(t)=0$ in \emph{Zariski} local coordinates, where $s(t)$ is a polynomial verifying $s(0)\neq 0$ (Lemma~\ref{Qgproperties}\ref{QQgterminalrat}). As explained in \cite[$\S$1.42]{K13}, a $cA_1$ singularity is terminal.

\begin{lemma} \label{Lem:Kawakita} 
Assume that $\car(\k)=0$.
Let $X$ be a $\Q$-factorial terminal variety, let $G=\Autz(X)$, and let $\eta\colon W \to X$ be a $G$-equivariant extremal divisorial contraction that contracts the exceptional divisor $E$ to a point $q$, which is either smooth or $cA_1$.
Then there exists a finite sequence of $G$-equivariant blow-ups of reduced centres $h:= h_1\circ \cdots\circ h_N \colon X_N \to X_{N-1} \to \cdots \to X_1 \to X_0=X$ and a commutative diagram
\[\xymatrix@R=4mm@C=2cm{
    X_N \ar[dr]_-{h} \ar@{-->}[r]^-g & W \ar[d]^-{\eta} \\
     & X
  }\]  
such that the strict transform $E_N:=(g^{-1})_*(E)$ is exceptional for $h_N$ and such that for any $1\le i\le N$ the centre of the blow-up $h_i$ is contained in the divisor $E_{i-1}$ contracted by $h_{i-1}$ but not contained in the strict transform of another divisor exceptional for $h_{1}\circ \cdots \circ h_{i-1}$.
\end{lemma}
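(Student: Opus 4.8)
The plan is to reduce the statement to a toric computation by invoking Kawakita's classification of three-dimensional divisorial contractions, and then to realise the weighted blow-up $\eta$ as the last step of a tower of blow-ups of smooth reduced centres obtained from the Euclidean algorithm applied to the weights, performing every step $G$-equivariantly.

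First I would record the equivariant geometry near $q$. As $G=\Autz(X)$ is connected and $\eta$ is $G$-equivariant, the exceptional divisor $E$ (the unique divisor contracted by the extremal contraction $\eta$) is $G$-invariant and its image $q=\eta(E)$ is a $G$-fixed point. Consequently the divisorial valuation $v:=\mathrm{ord}_E$ on $\k(X)=\k(W)$ is $G$-invariant and has centre $q$ on $X$. By Kawakita's classification of divisorial contractions that contract a divisor to a smooth point, respectively to a $cA_1$ point, there are formal (equivalently \'etale-local) coordinates at $q$ --- a coordinate system $(x,y,z)$ when $q$ is smooth, or ambient coordinates $(x,y,z,t)$ with $X=\{x^2-yz-t^m=0\}$ (after absorbing the unit $p(t)$ of Lemma~\ref{Qgproperties}\ref{QQgterminalrat} into $t$, which is possible formally since $\car(\k)=0$) when $q$ is $cA_1$ --- in which $\eta$ is a weighted blow-up with an explicit primitive weight vector $w$. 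In particular $v$ is the monomial valuation attached to $w$.

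Next I would realise this weighted blow-up combinatorially. In the smooth case it is the star subdivision of the regular cone $\sigma=\langle e_1,e_2,e_3\rangle$ at the ray $\R_{\geq 0}w$; in the $cA_1$ case one works with the ambient toric $\A^4$ and takes the strict transforms of $X$ throughout. The key input is the classical fact that a smooth cone can be refined, by a finite sequence of star subdivisions at smooth subcones, into a smooth fan containing a prescribed interior ray, and that each such subdivision is the blow-up of a smooth torus-invariant centre (a point or a torus-invariant curve). Running the Euclidean algorithm on $w$ produces such a sequence $h_1,\dots,h_N$ in which the ray $\R_{\geq 0}w$ --- equivalently the divisor $E$ --- is introduced exactly at the last step $h_N$; since every subdivision after the first takes place inside a cone created by the preceding one, the centre of $h_i$ is contained in the exceptional divisor $E_{i-1}$ of $h_{i-1}$ and in no earlier exceptional divisor, which is precisely the nested chain condition in the statement.

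Finally I would make the construction equivariant and assemble the diagram. Because $q$ is $G$-fixed and $v$ is $G$-invariant, at each stage the torus-invariant strata used as centres are intersections of $G$-stable divisors (the running exceptional divisors together with the strict transforms of the coordinate loci), hence $G$-invariant; blowing up such a reduced centre preserves the $G$-action and makes every $h_i$ equivariant. Writing $h=h_1\circ\cdots\circ h_N$, the induced birational map $g\colon X_N\dashrightarrow W$ contracts $E_1,\dots,E_{N-1}$ and restricts to an isomorphism over the generic point of $E$, since $\eta$ and $h_N$ both extract exactly the valuation $v$; thus $g$ is $G$-equivariant, $h=\eta\circ g$, and $E_N=(g^{-1})_*(E)$ is the divisor contracted by $h_N$, as required. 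The main obstacle is the combinatorial heart of the third step: verifying, through the continued-fraction expansion of $w$, that $E$ can be arranged to be the final divisor extracted and that the intermediate centres form exactly the prescribed nested chain; the $cA_1$ case adds the bookkeeping of checking that the ambient toric centres cut out reduced centres on the strict transforms of $X$ and that $g$ is compatible with the quadric equation.
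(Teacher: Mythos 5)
Your overall strategy is genuinely different from the paper's, and it is also heavier: the paper does not use Kawakita's classification theorems (that the contraction is a weighted blow-up in suitable coordinates) at all. It invokes only Kawakita's \emph{Construction}~3.1 of \cite{kaw01} (resp.~4.1 of \cite{kaw02}), i.e.~the intrinsic tower in which $Z_0=\{q\}$ and, inductively, $Z_i$ is the centre of the valuation $\mathrm{ord}_E$ on $X_i$ taken with its reduced structure; equivariance is then immediate because $\mathrm{ord}_E$ is a $G$-invariant valuation, and termination plus the nesting property are quoted from Kawakita. Your route (classification $\Rightarrow$ weighted blow-up $\Rightarrow$ toric factorisation by star subdivisions) could in principle work, but as written it has a genuine gap precisely at the point that carries the content of the lemma. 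Your argument that ``every subdivision after the first takes place inside a cone created by the preceding one'' only shows that the centre of $h_i$ lies in $E_{i-1}$: after the star subdivision of $\tau_{i-1}$ at $u_{i-1}$, the minimal cone containing the weight vector $w$ has the form $\langle u_{i-1},F\rangle$ with $F$ a proper face of $\tau_{i-1}$, and nothing prevents $F$ from containing an earlier exceptional ray $u_j$; in that case the centre of $h_i$ would lie in $E_{i-1}\cap E_j^{(i-1)}$ and the required condition would fail. Ruling this out is exactly the non-trivial step: the paper's proof does it by showing that such a configuration would force $\O_X(-2E)=\mathfrak{m}_q$, which is excluded by \cite[Section~5.1]{kaw01} and \cite[Corollary~4.11(1)]{kaw02}. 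You correctly identify this as ``the main obstacle'' but leave it unproved, so the proposal does not yet establish the statement.

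Two further points. First, your justification of equivariance is incorrect as stated: the torus-invariant strata in your formal coordinate chart are intersections of exceptional divisors with strict transforms of \emph{coordinate hyperplanes}, and the latter are not $G$-stable (the $G$-action has no reason to be toric in those coordinates). The correct argument --- which you essentially have in hand from your first paragraph --- is that each centre $Z_i$ is the centre of the $G$-invariant valuation $\mathrm{ord}_E$ on $X_i$, hence is an intrinsically defined, $G$-stable closed subvariety; this also repairs the passage from the formal-local toric model back to honest global centres on $X_i$. Second, if you define the $Z_i$ intrinsically in this way, the toric computation is demoted to a device for verifying termination and the nesting condition, and your construction collapses onto Kawakita's Construction~3.1/4.1; at that point it is both simpler and logically cleaner to cite those constructions directly, as the paper does, rather than to rely on the much deeper classification results.
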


\begin{proof}
The sequence of  birational morphisms $h_1,\ldots,h_N$  is described  in \cite[Construction~3.1]{kaw01} for the smooth case and in \cite[Construction~4.1]{kaw02} for the $cA_1$-singularity. We will  show that each $h_i\colon X_i  \to X_{i-1}$ is $G$-equivariant. The $(X_i,h_i)$ are defined inductively as follows: 
\begin{itemize}
\item $X_0=X$ and $Z_0=\{q\}$;
\item the morphism $h_i\colon X_i \to X_{i-1}$ is the blow up of $X_{i-1}$ along $Z_{i-1}$ followed by a $G$-equivariant resolution;
\item the centre $Z_i$ is defined as the centre of $E$ in $X_{i}$ (the image of $E$ in $X_i$ under the rational map $(h_1\circ \cdots \circ h_i)^{-1}\circ \eta$) with reduced structure, and $E_i$ is the only $h_i$-exceptional prime divisor on $X_i$ containing $Z_i$ (the unicity is explained below);
\item the process terminates when $Z_N=E_N$.
\end{itemize}
Moreover, the process always terminates as explained in \cite[Remark~3.2]{kaw01}. To show the last part of the lemma and the unicity of $E_i$ in the construction, assume that $Z_{i+1} \subseteq E_i^{(i+1)} \cap E_{i+1}$ for some $i$, where $E_i^{(i+1)}$ is the strict transform of $E_i$ in $X_{i+1}$. This means that $\O_X(-2E)=\mathfrak{m}_q$, which is impossible, as explained in \cite[Section~5.1]{kaw01} and \cite[Corollary~4.11(1)]{kaw02}.
\end{proof}

\begin{proposition}\label{Prop:QuadFibSmoothPointextremal}
Assume that $\car(\k)=0$.
Let $g\in \k[u_0,u_1]$ be a homogeneous polynomial of degree $2n \geq 4$, let $X=\QQ_g$ be the associated Umemura quadric fibration $($Definition~$\ref{def:QQg})$, and let $\eta\colon W\to X$ be an $\Autz(X)$-equivariant divisorial contraction from a terminal $\Q$-factorial variety $W$ that contracts a divisor $E\subseteq W$ onto a smooth or $cA_1$ point $q\in X$, and let us write $K_W=\eta^*(K_X)+aE$ where $a\in \Q$ is the discrepancy of $\eta$. Then, the following are equivalent:
\begin{enumerate}
\item
\label{Eapl2}
$a< 2$;
\item
\label{Ea1}
$a=1$;
\item
\label{Easing}
$q$ is a singular point of $X$ and $\eta$ is the simple blow-up of $q$ $($blow-up with the reduced structure$)$.
\end{enumerate}
\end{proposition}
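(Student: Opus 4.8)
The plan is to analyze the discrepancy of the divisorial contraction $\eta$ using the sequence of reduced blow-ups provided by Lemma~\ref{Lem:Kawakita}, applied to the point $q\in X=\QQ_g$, which is either smooth or a $cA_1$-singularity by Lemma~\ref{Qgproperties}\ref{QQgterminalrat}. The implications $\ref{Easing}\Rightarrow\ref{Ea1}\Rightarrow\ref{Eapl2}$ are the easy directions: if $\eta$ is the simple blow-up of a singular point $q$ of $X$, then working in the local $cA_1$-model $\{x^2-yz-t^ms(t)=0\}$ with $s(0)\neq 0$ (Lemma~\ref{Qgproperties}\ref{QQgterminalrat}), one computes directly that the discrepancy of the blow-up with reduced structure equals $1$; and $a=1$ trivially gives $a<2$. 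So the substance is in proving $\ref{Eapl2}\Rightarrow\ref{Easing}$.

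First I would run Lemma~\ref{Lem:Kawakita} to obtain the $\Autz(X)$-equivariant factorisation $h=h_1\circ\cdots\circ h_N\colon X_N\to X$ together with the birational map $g\colon X_N\dasharrow W$, where each $h_i$ is a reduced blow-up with centre $Z_{i-1}\subseteq E_{i-1}$. The key numerical idea is that the discrepancy $a$ of $\eta$ can be read off along this tower: since $E_N=(g^{-1})_*(E)$ is $h_N$-exceptional, the discrepancy $a$ of $E$ over $X$ equals the discrepancy of $E_N$ computed through the composite $h$. Because each $h_i$ is a blow-up of a \emph{reduced} centre inside the previous exceptional divisor (and not inside other strict transforms of exceptional divisors, by the last clause of Lemma~\ref{Lem:Kawakita}), the discrepancies add up: each blow-up over a codimension-$\geq 2$ smooth centre contributes a positive amount to the discrepancy, and one gets a lower bound of the shape $a\geq 1+(\text{contributions from }h_2,\dots,h_N)$ in the singular case, with a strictly larger contribution when the point is smooth. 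The heart of the argument is then a careful bookkeeping, following the discrepancy computations in \cite[\S5.1]{kaw01} and \cite[Corollary~4.11]{kaw02}, showing that $a<2$ forces $N=1$ (no further blow-up is needed) and forces $q$ to be singular.

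The cleaner route I would take to make this rigorous is to separate the two cases for $q$. If $q$ is a \emph{smooth} point of the threefold $X$, then $\eta$ is a divisorial contraction to a smooth point, so by the classification of such contractions (Kawakita \cite{kaw01}) the discrepancy satisfies $a\geq 2$: in the terminal smooth case every divisorial contraction to a point is a weighted blow-up with weights $(1,b_1,b_2)$ with $b_1,b_2\geq 1$ coprime, giving discrepancy $b_1+b_2\geq 2$. This already contradicts $a<2$, so $q$ must be singular, i.e.\ a $cA_1$-point. If $q$ is a $cA_1$-point, I invoke Kawakita's classification of divisorial contractions to $cA_1$-points \cite{kaw02}: the simple blow-up has discrepancy $1$, and every other equivariant divisorial contraction to such a point (the weighted blow-ups with higher weights) has discrepancy $a\geq 2$. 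Hence $a<2$ forces $a=1$ and $\eta$ to be exactly the reduced blow-up, which is \ref{Easing}.

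The main obstacle will be verifying the discrepancy threshold in the $cA_1$-case precisely enough to exclude all contractions with $1<a<2$, since a priori the discrepancy need not be an integer for a general extremal contraction from a $\Q$-factorial terminal variety. I expect to handle this by combining Lemma~\ref{Lem:Kawakita} with the explicit local model $x^2-yz-t^ms(t)=0$ and the coprimality/weight constraints coming from the requirement that $W$ be terminal and $\Q$-factorial: the tower of reduced blow-ups shows that any contraction other than the simple blow-up must pass through at least one additional blow-up of a reduced curve inside $E_1$, each such step increasing the discrepancy by a positive integer, so that $a\geq 2$. Establishing that this increment is at least $1$ at each stage—equivalently, that $N\geq 2$ implies $a\geq 2$—is the delicate point, and it is exactly where the structural results of \cite{kaw01,kaw02} on the centres $Z_i$ and the relation $\O_X(-2E)=\mathfrak{m}_q$ must be used to rule out fractional or small discrepancies. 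Once that increment estimate is in place, the equivalence of \ref{Eapl2}, \ref{Ea1}, and \ref{Easing} follows immediately.
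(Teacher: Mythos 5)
Your proposal is correct, but the route you actually complete (your ``cleaner route'') is genuinely different from the paper's. You invoke the full classification theorems of Kawakita: for a smooth point every divisorial contraction is a $(1,b_1,b_2)$-weighted blow-up with discrepancy $b_1+b_2\ge 2$ \cite{kaw01}, and for a $cA_1$ point the only contraction with discrepancy $<2$ is the reduced blow-up \cite{kaw02}; this works and needs no equivariance at all. The paper deliberately avoids the classification theorems and uses only the \emph{constructions} from those papers, packaged as Lemma~\ref{Lem:Kawakita}: writing $K_{X_N}=h^*(K_X)+\sum_i\alpha_iE_i^{(N)}$ one has $a=\alpha_N$ and $\alpha_1<\alpha_2<\cdots<\alpha_N$ because each centre $Z_{i}$ lies in $E_{i}$; then $\alpha_1=2$ if $q$ is smooth, $\alpha_1=1$ if $q$ is $cA_1$, and the whole point is to show $\alpha_2=2$ when $N\ge 2$. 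This is exactly the ``increment estimate'' you flag as delicate in your first route and do not carry out; the paper's mechanism for it is the $\Autz(X)$-equivariance, which forces the centre of $h_2$ to be one of the only two invariant subvarieties of $E_1$ (the curve $E_1\cap F^{(1)}$, or the singular point of $X_1$ when the local model is $x^2-yz-t^{m-2}=0$ with $m>2$), each a reduced blow-up of discrepancy $1$, whence $\alpha_2=\alpha_1+1=2$. Two further remarks. First, your worry about fractional discrepancies is unfounded here: $q$ is a smooth or hypersurface ($cA_1$) point, hence Gorenstein, so $K_X$ is Cartier near $q$ and $a\in\Z$; terminality gives $a\ge 1$, so \ref{Eapl2}$\Leftrightarrow$\ref{Ea1} is immediate and the only real content is \ref{Ea1}$\Rightarrow$\ref{Easing}. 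Second, the trade-off between the two routes: yours is shorter but imports the full strength of the classification of threefold divisorial contractions, whereas the paper's is self-contained modulo the blow-up constructions and exploits the $\PGL_2$-action that is available anyway in this setting.
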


\begin{proof}
Let us write $G=\Autz(X)$, which is {isomorphic} to $\PGL_2$, via the action given in Lemma~$\ref{Lem:ActiononQQg1}\ref{ActionPGL2}$ (Theorem~\ref{Thm:MainQuadric}).
We apply Lemma~\ref{Lem:Kawakita} and obtain a finite sequence of $G$-equivariant blow-ups of reduced centres $h:= h_1\circ \cdots\circ h_N \colon X_N \to X_{N-1} \to \cdots \to X_1 \to X_0=X$ and a commutative diagram
\[\xymatrix@R=4mm@C=2cm{
    X_N \ar[dr]_-{h} \ar@{-->}[r]^-g & W \ar[d]^-{{\eta}} \\
     & X
  }\]  
such that the strict transform {$\widetilde{E}:=E_N\subseteq X_N$ of $E$} is exceptional for $h_N$. Moreover, by construction (see Lemma~\ref{Lem:Kawakita}), for any $1\le i\le N$ the centre of the blow-up $h_i$ is not supported in the intersection of two divisors that are exceptional for $h_{1}\circ \cdots \circ h_{i-1}$. As the fibre $F=\pi^{-1}(p)$ is invariant by $G$, its strict transform $F_i\subseteq X_i$ is also invariant by $G$, for each $i$. Also, the exceptional divisor $E_i\subseteq X_i$ is invariant, as well as its strict transform $E_i^{(j)}\subseteq X_j$ for each $j\ge i$.

Writing $K_{X_N}=h^*(K_X)+\sum_{i=1}^N \alpha_i E_i^{(N)}$ where $\alpha_1,\ldots,\alpha_N\in \Q$ and $K_W=\eta^*(K_X)+aE$ where $a\in \Q$, we then obtain $a=\alpha_N$. This can for instance be seen by taking a resolution of $g$ and comparing the coefficients of $\widetilde{E}=E_N$ and $E$ in the ramification formula on both sides. 

{We can write $K_{X_1}=(h_1)^*(K_X)+\alpha_1 E_1$.} Moreover, as the centre of the blow-up $h_i$ is contained in the divisor $E_{i-1}$ for each $i\ge 1$, we get $\alpha_1<\alpha_2<\ldots <\alpha_N$. If $q$ is smooth, then $\alpha_1=2$, so $\alpha_N\ge 2$ in which case none of the three conditions \ref{Eapl2}-\ref{Ea1}-\ref{Easing} is satisfied. If $q$ is singular, the singularity is Zariski-locally given by $x^2-yz-t^mp(t)$ for some $m\ge 2$ and some polynomial $p(t)$ with $p(0)\neq 0$ (Lemma~\ref{Qgproperties}\ref{QQgterminalrat}). Hence, computing the simple blow-up in charts, we obtain $\alpha_1=1$. If $N=1$, then all three conditions \ref{Eapl2}-\ref{Ea1}-\ref{Easing} are satisfied. It remains to assume that $N\ge 2$ and to prove that $\alpha_2=2$, which will imply that $\alpha_3\ge 2$ and thus that none of the three conditions \ref{Eapl2}-\ref{Ea1}-\ref{Easing} is satisfied.

The exceptional divisor $E_1$ is isomorphic to the cone $\p(1,1,2)$ if $m>2$ and its singular point corresponds to a singular point of the variety $X_1$ locally analytically defined by $x_0^2-x_1x_2-t^{m-2}=0$. If $m=2$, the exceptional divisor is isomorphic to $\p^1\times\p^1$ and $X_1$ is smooth. Moreover, $E_1$ intersects the strict transform of the fibre (isomorphic to $\F_2$) in a smooth curve. The second blow-up $h_2$ being $G$-equivariant, there are two possibilities:
\begin{enumerate}[$a)$]
\item\label{singquad1} $h_2$ is the blow-up of the curve $C_{1}:=E_{1}\cap F^{(1)}$ in $X_1$; or
\item\label{singquad2} $h_2$ is the blow up if the singular point of $X_1$.
\end{enumerate}
In both cases, the discrepancy of $h_2$ is equal to $1$ so $\alpha_2=2$.
\end{proof}

\begin{proposition} \label{Prop:QuadFibLinks}
Assume that $\car(\k)=0$.
Let $g\in \k[u_0,u_1]$ be a homogeneous polynomial of degree $2n\ge 2$ that is not a square, let $X=\QQ_g$ be the associated Umemura quadric bundle $($Definition~$\ref{def:QQg})$, and let $G=\Autz(X)$.
\begin{enumerate}
\item  \label{QF-item1} If $g$ has only two roots, then $G= \PGL_2\times \G_m$ is conjugated to a strict subgroup of $\Autz(Q)$, where $Q\subseteq \p^4$ is a smooth quadric hypersurface.
\item  \label{QF-item4} If $g$ has at least three roots, then $G=\PGL_2$ and the only $G$-equivariant  links starting from $\QQ_g$ are those described in Lemma~$\ref{Lem:LinksQQgh}$ and their inverses.
\end{enumerate}
\end{proposition}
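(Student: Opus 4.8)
The plan is to split the proof along the two cases and, in each, combine the structural results already established in this subsection (the orbit decomposition of Lemma~\ref{QQgOrbits}, the classification of automorphism groups in Corollary~\ref{Cor:AutQQg}, the discrepancy computation of Proposition~\ref{Prop:QuadFibSmoothPointextremal}, and the explicit links of Lemma~\ref{Lem:LinksQQgh}) with general facts about the equivariant Sarkisov program from \S\ref{subsec:Sarkisov}.

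For Case~\ref{QF-item1}, when $g$ has exactly two roots, Corollary~\ref{Cor:AutQQg} immediately gives $G=\PGL_2\times\G_m$. Up to a change of coordinates we may write $g=u_0^au_1^b$ with $a,b\ge1$ odd, and if moreover $n=1$ we are precisely in the situation of Example~\ref{Example:Qgu0u1}(2): the morphism $[x_0:x_1:x_2:x_3;u_0:u_1]\mapsto[x_0:x_1:x_2:x_3u_0:x_3u_1]$ exhibits $\QQ_g$ as the blow-up of a smooth quadric $Q\subseteq\p^4$ along a smooth conic, conjugating $G$ to the strict subgroup $\Aut(Q,\Gamma)\subsetneq\Autz(Q)=\PSO_5$. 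For general $n$ with $g$ having two roots, I would first use a link of type~\ref{Lem:LinksQQgh} (applied in reverse, peeling off square factors $h^2$) to reduce $n$; the equality statement in Lemma~\ref{Lem:LinksQQgh} guarantees that $G$ is preserved under these links when both polynomials have two roots, so the reduction to the case $g=u_0u_1$ is $G$-equivariant and conjugates $G$ to a strict subgroup of $\Autz(Q)$.

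For Case~\ref{QF-item4}, when $g$ has at least three roots, Corollary~\ref{Cor:AutQQg} gives $G=\PGL_2$. I would enumerate the possible Sarkisov links following Definition~\ref{sarkisov-links}, ruling out types by geometry. Type~\IV is excluded since $\QQ_g\to\p^1$ is a Mori fibration over a curve and a second fibration structure would force (by Lemma~\ref{lem:NoTypeIVonSurfaces}) a nontrivial $Z$ with $\dim Z\le 1$; but $G=\PGL_2$ acts trivially on the base $\p^1$ (Lemma~\ref{Lem:ActiononQQg}), so no competing $G$-equivariant fibration to a point or curve can arise without contradicting the orbit structure. Since $\PGL_2$ acts nontrivially on $X$, Lemma~\ref{Lemm:AntiFlip} forbids $G$-equivariant anti-flips, so type~\III links (which begin with a divisorial contraction after a small map) can only start from a genuine divisorial contraction of an extremal ray; the fibre class has $K_X$-degree negative giving back the Mori fibration, while the orbit analysis of Lemma~\ref{QQgOrbits} shows the only $G$-invariant loci of codimension~$\ge2$ are the curves $\Gamma_p$ over non-roots, the curves $\Gamma_p$ over roots, and the fixed points $q$ at the singular vertices. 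The crux, and what I expect to be the main obstacle, is to show that the only admissible type~\II links are exactly the blow-ups at a singular vertex $q$ (equivalently, blow-ups of $\Gamma$ in the target) appearing in Lemma~\ref{Lem:LinksQQgh}. Here I would invoke Proposition~\ref{Prop:QuadFibSmoothPointextremal}: a $G$-equivariant divisorial contraction $W\to X$ to a smooth or $cA_1$ point has discrepancy $a<2$ if and only if it is the simple blow-up of a singular point.

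The discrepancy criterion is the key technical input: any Sarkisov link of type~\I or~\II must begin with a divisorial contraction of discrepancy strictly less than that of a ``bad'' resolution, and Proposition~\ref{Prop:QuadFibSmoothPointextremal} pins this down to the simple blow-up at a singular vertex $q$, forcing the centre to be one of the finitely many $G$-fixed singular points. By Remark~\ref{remark:UnicitySarkisov}, the link is then determined up to isomorphism of Mori fibrations by this divisorial contraction, so it must coincide (up to inverse and isomorphism) with the map $\psi\colon\QQ_g\dasharrow\QQ_{gh^2}$ of Lemma~\ref{Lem:LinksQQgh} for the appropriate linear form $h$ vanishing at $q$. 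Finally, blowing up $\Gamma_p$ over a non-root or over a root (rather than the vertex) must be excluded by checking it violates terminality or the discrepancy bound; the first I would verify on the charts, the second follows again from Proposition~\ref{Prop:QuadFibSmoothPointextremal} since such centres are not the simple blow-up of a singular point. This completes the list and establishes that the links of Lemma~\ref{Lem:LinksQQgh} and their inverses are exhaustive.
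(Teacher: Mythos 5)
Your overall architecture (orbit analysis, Corollary~\ref{Cor:AutQQg}, Proposition~\ref{Prop:QuadFibSmoothPointextremal}, Remark~\ref{remark:UnicitySarkisov}) matches the paper's, but there is one genuine error and two smaller gaps in case~\ref{QF-item4}. The error: you propose to \emph{exclude} the divisorial contractions centred at the curves $\Gamma_p$ ``by checking it violates terminality or the discrepancy bound''. These contractions cannot be excluded — they are precisely the first step of the forward links $\psi\colon \QQ_g\dasharrow \QQ_{gh^2}$ of Lemma~\ref{Lem:LinksQQgh} (there $W\to \QQ_g$ is by definition the blow-up of $\Gamma\subseteq \pi^{-1}(p)$, and $\Gamma_p$ lies in the smooth locus since the vertex $q$ has $x_3\neq 0$, so the blow-up is smooth and certainly terminal). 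If you discarded them, your list of links would omit all the forward maps $\QQ_g\dasharrow\QQ_{gh^2}$ and the conclusion would be false. The correct case division, as in the paper, is: centre $=\Gamma_p$ (over any $p$, smooth fibre or not) gives the link of Lemma~\ref{Lem:LinksQQgh}; centre $=$ a singular vertex $q$ gives its inverse. For the latter you correctly identify Proposition~\ref{Prop:QuadFibSmoothPointextremal} as the key input, but you never say \emph{why} the discrepancy must satisfy $a<2$: the paper derives this from the requirement that the strict transform $\ell_{F^{(W)}}$ of the ruling of the fibre be contractible to continue the link, which forces $0>K_W\cdot \ell_{F^{(W)}}=K_X\cdot\ell_F+aE\cdot\ell_{F^{(W)}}\ge a-2$.

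Two further points. First, your exclusion of nontrivial small maps via Lemma~\ref{Lemm:AntiFlip} is not available: that lemma requires $X$ smooth, whereas $\QQ_g$ is singular whenever $g$ has a multiple root, which is allowed here (``at least three roots'' does not mean square-free). The paper instead uses that the second extremal ray $h$ satisfies $K_{\QQ_g}\cdot h=n-2\ge 0$ and that the curves equivalent to $h$ sweep out the divisor $H$, so $h$ can be neither contracted divisorially nor anti-flipped; this also disposes of types \III and \IV without any appeal to the triviality of the action on the base. Second, in case~\ref{QF-item1} your plan to peel off square factors by iterating Lemma~\ref{Lem:LinksQQgh} runs into its hypothesis $\deg(g)=2n\ge 4$ at the final step (degree $4$ down to degree $2$); the paper sidesteps this by writing the single explicit $\PGL_2\times\G_m$-equivariant map $[x_0:x_1:x_2:x_3;u_0:u_1]\mapsto[x_0:x_1:x_2:x_3u_0^ru_1^s;u_0:u_1]$ from $\QQ_{u_0^{2r+1}u_1^{2s+1}}$ to $\QQ_{u_0u_1}$ and checking equivariance directly.
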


\begin{proof}
If $g$ has only two roots, we may assume that $g=u_0^au_1^b$ for some odd integers $a,b\ge 1$. By Corollary~\ref{Cor:AutQQg}, $G$  is equal to the group $\PGL_2\times\G_m$, given by Example~$\ref{Example:Qgu0u1}$. Writing $a=2r+1$ and $b=2s+1$, the birational map 
\[\begin{array}{cccc}
\psi\colon & \QQ_{g}&\dasharrow & \QQ_{u_0u_1}\\
& [x_0:x_1:x_2:x_3;u_0:u_1]& \mapsto &  [x_0:x_1:x_2:x_3u_0^{r}u_1^{s};u_0:u_1]\end{array}\]
is $\PGL_2\times\G_m$-equivariant (follows from the explicit description of the $\PGL_2\times\G_m$-action given in  Example~$\ref{Example:Qgu0u1}$). We then use the $\Autz(\QQ_{u_0u_1})$-equivariant link  $\QQ_{u_0u_1}\to Q$ of type \III given in Example~\ref{Example:Qgu0u1} to obtain \ref{QF-item1}.

We now assume that $g$ has at least three distinct roots. By Corollary~\ref{Cor:AutQQg}, $G$  is equal to to the group $\PGL_2$ given in Lemma~$\ref{Lem:ActiononQQg1}\ref{ActionPGL2}$, which preserves every fibre of $\QQ_g\to \p^1$. We now consider a $G$-equivariant link  $\chi\colon \QQ_g\dasharrow X'$. 

We first show that $\chi$ is not of type \III or \IV. Assuming the converse by contradiction, we obtain respectively that $Y'$ or $Z$ is a point {(because $Y=\p^1$ has Picard rank $1$). Hence,}  the link is given by applying an MMP on $\QQ_g$ and starts by contracting the two distinct extremal rays of $\Pic(\QQ_g)$ (see Remark~\ref{remark:UnicitySarkisov}). These two extremal rays are the fibre $f$ of $\QQ_g\to \p^1$  and the curve $h$ given by $x_0=x_1=x_3=0$ (Lemma~\ref{Qgproperties}\ref{PicQQg}). The curve $h$ satisfies $K_{\QQ_g}\cdot h=n-2\ge 0$ (Lemma~\ref{Qgproperties}\ref{canQQg}). We then cannot do a link of type \III or \IV with a pseudo-isomorphism being an isomorphism. The divisor $H\subseteq\QQ_g$, isomorphic to $\p^1\times\p^1$ and given by $x_3=0$, is covered by curves equivalent to $h$, so it is also not possible for the link to start with a non-trivial pseudo-isomorphism. 

We  then only have to consider the case where the $G$-equivariant link $\chi$ is of type \I or \II. The divisorial contraction $\eta\colon W\to X$ is then $G$-equivariant. The centre of $\eta$ is contained in a fibre $F$ of $\QQ_g\to \p^1$. {We recall that the fibres of $\QQ_g\to \p^1$ are quadrics in $\p^3$ (Definition~\ref{def:QQg}). We now distinguish two cases, depending on whether $F$ is smooth or not.}

If the fibre $F$ is smooth, then $F\simeq \p^1\times\p^1$, with $G$ acting with two orbits $($the diagonal and its complement, see Lemma~\ref{QQgOrbits}\ref{OrbitQuadricSmoothFibre}$)$. Hence, $\eta$ is the blow-up of the only invariant curve (with reduced structure, see Lemma~\ref{lem:ReducedblowUpcurve}). We can then contract the strict transform of a fibre and obtain a $G$-equivariant link $\QQ_g\dasharrow \QQ_{gh^2}$ as in Lemma~\ref{Lem:LinksQQgh}. There is no other possibilities of equivariant link starting from $\eta$ (Remark~\ref{remark:UnicitySarkisov}).

If the fibre $F$ is singular, then $F$ is isomorphic to a quadric cone, with an action whose orbits are the singular point $q\in F$ of $F$, the curve $\Gamma=F\cap H_{x_3}\simeq \p^1$ and $F\setminus (\{q\}\cup \Gamma)$ $($Lemma~\ref{QQgOrbits}\ref{OrbitQuadricSingFibre}$)$. Hence, the centre of $\eta$ is either $\Gamma$ or $q$. In the case where the centre is $\Gamma$, we similarly obtain that $\chi$ is a $G$-equivariant link $\QQ_g\dasharrow \QQ_{gh^2}$ as in Lemma~\ref{Lem:LinksQQgh}. If the centre is $q$, we write  the ramification formula
\[K_W = \eta^*K_X + a E,\]
where $E$ is the exceptional divisor of $\eta$. We then denote by $F^{(W)}$ and $\ell_{F^{(W)}}$ the strict transforms on $W$ of the fibre $F$ and of the ruling $\ell_F$ of $F$. As $E$ intersects $\ell_{F^{(W)}}$, we have $E\cdot \ell_{F^{(W)}}>0$. We then prove that $E\cdot \ell_{F^{(W)}}\ge 1$. This is because the union of curves equivalent to $\ell_{F^{(W)}}$ covers the divisor $F^{(W)}$, and as $F^{(W)}$ and $E$ are $\Q$-Cartier divisors, they intersect into a curve (by the Krull's Hauptidealsatz). Hence $E$ and $\ell_{F^{(W)}}$ intersect into a smooth point of $Y$, which gives $E\cdot \ell_{F^{(W)}}\ge 1$. Moreover, the class of $\ell_{F^{(W)}}$ in $\NE(W/\P^1)$ is extremal, since all curves contracted by $\pi$ are numerically equivalent to a positive combination of $\ell_{F^{(W)}}$ and $\ell_E$, where $\ell_E$ is a curve on $E$. Since $\ell_{F^{(W)}}$ spans a divisor, it can be contracted if and only if $K_W\cdot \ell_{F^{(W)}}<0$. So:
\[0>K_W\cdot \ell_{F^{(W)}} = (f^*K_X + a E) \cdot \ell_{F^{(W)}} = K_X \cdot \ell_F + a \ge a-2,\]
which is negative if and only if $a\le 1$. Proposition~\ref{Prop:QuadFibSmoothPointextremal} implies that $\eta$ is the simple blow up of a $cA_1$ point. Thus $\chi$ is a $G$-equivariant link $\QQ_{gh^2}\dasharrow \QQ_{g}$ as in Lemma~\ref{Lem:LinksQQgh}.
\end{proof}

\subsection{Proofs of Theorem~\ref{th:Ea}, Theorem~\ref{th:Eb}, and Corollary~\ref{corF}} \label{subsec:proof of th D}
In this subsection we combine the results obtained in \upshape\S~\S~\ref{subsec:first refinement}-\ref{sec:maximality} to prove Theorem~\ref{th:Ea}, Theorem~\ref{th:Eb}, and Corollary~\ref{corF}.\\

\noindent \emph{Proof of Theorem~\ref{th:Ea}:} We first take a rational projective threefold $\hat X$ and prove the existence of an $\Autz(\hat X)$-equivariant birational map $ \hat X \dashrightarrow X$, where $X$ is one of the Mori fibre spaces listed in Theorem~\ref{th:Ea}. Applying Theorem~\ref{th:first classification maximality}, we may assume that $\hat{X}$ belongs to one of the three cases \ref{surface}-\ref{line}-\ref{point} of Theorem~\ref{th:first classification maximality}.

Case~\ref{point} of Theorem~\ref{th:first classification maximality}: it corresponds to {Cases \hyperlink{th:D_bP3}{$(i)$}-\hyperlink{th:D_bQ3}{$(j)$}-\hyperlink{th:D_bP1112}{$(k)$}-\hyperlink{th:D_bP1123}{$(l)$}-\hyperlink{th:D_Fano}{$(m)$}} of Theorem~\ref{th:Ea}.

Case~\ref{line} of Theorem~\ref{th:first classification maximality}: {we prove that we can reduce to Cases \hyperlink{th:D_RR}{$(g)$}-\hyperlink{th:D_QQg}{$(h)$} of Theorem~\ref{th:Ea}. Indeed,} every $\p^2$-bundle over $\p^1$ is isomorphic to $X=\RR_{m,n}$ with $m \geq n \geq 0$. We can then exclude the cases $(m,n)=(1,0)$ and $2n \geq m>n \geq 1$ by applying Lemma~\ref{lem:Rmn cases non max}. In the case of a smooth quadric fibration $\QQ_g\to \p^1$, the polynomial $g$ is a square-free homogeneous polynomial of degree $2n \geq 2$. We apply Proposition~\ref{Prop:QuadFibLinks}  to remove the case where $g$ has exactly two roots (i.e.~when $n=1$), and may thus assume that $n\ge 2$, so  $g$ has at least four roots, all with multiplicity $1$. In particular, $\QQ_g$ belongs to the Family \hyperlink{th:D_QQg}{$(h)$} of Theorem~\ref{th:Ea}, as it has at least four roots of odd multiplicity.

Case~\ref{surface} of Theorem~\ref{th:first classification maximality}: {we prove that we can reduce} to Cases~\hyperlink{th:D_a}{$(a)$}--\hyperlink{th:D_e}{$(e)$} of Theorem~\ref{th:Ea}. In each case listed in Theorem~$\ref{thBFT}$, we can exclude certain numerical values using the series of lemmas proven in \upshape\S~\ref{sec:non-maximality}.

 In Case~\hyperlink{th:D_a}{$(a)$}, Theorem~$\ref{thBFT}$  gives the $\P^1$-bundle $\FF_a^{b,c} \to \F_a$ with  $a,b\ge 0$, $a\not=1$, $c\in \Z$, $c\le 0$ if $b=0$, and either $a=0$ or $b=c=0$, or $-a<c<ab$. We then apply Lemma~\ref{lem:Fabc non maximality} to remove $\FF_a^{b,c}$ in the following cases: 
\[\ref{Fabc-itemb0}\ b=0 \text{ and } -a<c<0;\quad  \ref{Fabc-itemb1}\ b=0 \text{ and } c\ge 0; \quad \text{\ and \ } \ref{Fabc-itemb2}\ b\ge 2 \text{ and } ab-a \leq c<ab.\]    

In Case~\hyperlink{th:D_b}{$(b)$}, we apply Lemma~\ref{lem:non-max case b=1}  to remove $\PP_1$ and notice that $\PP_0=\P^2 \times \P^1=\RR_{0,0}$ is contained in Case~\hyperlink{th:D_RR}{$(g)$}. 

In Case~\hyperlink{th:D_c}{$(c)$}, Theorem~$\ref{thBFT}$  gives $\U_a^{b,c}$ with $a,b\ge 1, c\ge 2$ and where $c-ab<2$ if $a \geq 2$ and $c-ab<1$ if $a=1$. We apply Lemma~\ref{lem:UmeNotMax}  to remove $\U_{a}^{b,c}$ in the following cases:
\[\ref{UmeMax2}\ a=1 \text{ and } b=c\ge 2\quad  \text{and \ } \ref{UmeMax3}\ a\ge 2, b \ge 1 \text{ and } c=2+a(b-1).\]
 
In Case~\hyperlink{th:D_d}{$(d)$},  Theorem~$\ref{thBFT}$  gives $\SS_b$ with $b\ge 1$, and $\SS_2$ is removed by Lem\-ma~\ref{lem:S2 not max}.

In Case~\hyperlink{th:D_e}{$(e)$},   Theorem~$\ref{thBFT}$  gives $\V_b$ with $b\ge 2$, and $\V_3$ is removed by Corollary~\ref{cor:V2 not max}. \qedsymbol \\

\medskip

\noindent \emph{Proof of Theorem~\ref{th:Eb}:} 
Let {$X_1$ be  a Mori fibre space belonging to one} of the families \hyperlink{th:D_a}{$(a)$}--\hyperlink{th:D_bP1123}{$(l)$} of Theorem~\ref{th:Ea}. If $\varphi \colon X_1 \dashrightarrow X_2$ is an $\Autz(X_1)$-equivariant birational map, where $X_2$ is some other Mori fibre space, then according to Theorem~\ref{th:Sarkisov decompo} the map $\varphi$ factorises into a product of equivariant Sarkisov links. It remains then to prove that each possible equivariant Sarkisov link $\chi \colon X_1 \dashrightarrow X_1'$, where $X_1$ belongs to one of the Families \hyperlink{th:D_a}{$(a)$}--\hyperlink{th:D_bP1123}{$(l)$} is, up to isomorphisms of Mori fibrations, one of the sixteen Cases~\ref{SA1}--\ref{SA16} of Theorem~\ref{th:Eb} (or its inverse) and that $\chi \Autz(X_1)\chi^{-1}=\Autz(X_1')$, which corresponds to say that $\chi^{-1}$ is also equivariant. This has been done in \upshape\S~\ref{sec:maximality} and implies in particular that $X_1'$ belongs also to the list. Note that the fact that $\chi^{-1}$ is equivariant follows from Proposition~\ref{blanchard} if $\chi^{-1}$ is a morphism (and is even trivial if $\chi$ is an isomorphism), so we only prove it when $\chi^{-1}$ is not a morphism. Let us now go into details for the ten cases \hyperlink{th:D_a}{$(a)$}--\hyperlink{th:D_bP1123}{$(l)$}: 

\hyperlink{th:D_a}{$(a)$}: here $X_1=\FF_a^{b,c}$ with $a,b\ge 0$, $a\not=1$, $c\in \Z$, and \[(a,b,c)=(0,1,-1)\text{ or }a=0, c \neq 1, b \geq 2, b\ge \lvert c\rvert \text{ or }-a<c<a(b-1)\text{ or }b=c=0.\]

If $(a,b,c)=(0,0,0)$ then $X_1= \p^1\times \p^1\times \p^1$ and by Proposition~\ref{Prop:HomSpaces} the only equivariant links are the exchanges of factors given by \ref{SA1}.

If $(a,b,c)=(0,1,-1)$, Lemma~\ref{Lemma:F0bcListLinks} implies that there are exactly two equivariant links starting from $X_1=\FF_0^{1,-1}$, namely the equivariant links $\varphi,\varphi'\colon X_1\to \RR_{1,1}$ of Proposition~$\ref{prop:Rmn list of links}\ref{prop:Rmn list of links2}$. Both are given by \ref{SA7} (up to isomorphisms of Mori fibrations), and their inverses are equivariant  by Proposition~\ref{prop:Rmn list of links}.

If $a=0$ and  $c \neq 1, b \geq 2, b\ge \lvert c\rvert $, the equivariant links starting from $X_1$ are also given by Lemma~\ref{Lemma:F0bcListLinks}. If $c=0$ and $b\ge 2$, there is a unique equivariant link starting from $X_1$ which is $\FF_0^{b,0}\iso \F_b\times\p^1  \iso \FF_b^{0,0}$, given by  \ref{SA4} (Lemma~\ref{Lemma:F0bcListLinks}\ref{LinkFaP1}). If $c=-1$ and $b\ge 2$, the unique equivariant link starting from $X_1$ is $\FF_0^{b,-1}\simeq \FF_0^{1,-b}\to \RR_{b,b}$, given by \ref{SA7}  (Lemma~\ref{Lemma:F0bcListLinks}\ref{LinkFbcm1}). Its inverse is equivariant  by Proposition~\ref{prop:Rmn list of links}.

If $-a<c<a(b-1)$, the equivariant links starting from $X_1$ are given by Proposition~\ref{prop:Fabc maximality}. If $b=1$ and $-a<c<0$, then by Proposition~\ref{prop:Fabc maximality}\ref{Fabc-item2} the equivariant links starting from $X_1$ are the link  $\FF_a^{1,c} \dashrightarrow \FF_{a}^{2,c+a}$ given by \ref{SA11}, whose inverse is equivariant by Lemma~\ref{LinksIIbetweenFU}\ref{LinksIIbetweenFabc}, the link $\FF_{a}^{1,c} \to \RR_{a-c,-c}$ given in \ref{SA7} (we have here $n=-c\ge 1$ and $m=a-c>2n\ge 2$),   whose inverse is equivariant by Proposition~\ref{prop:Rmn list of links}, and the link $\FF_2^{1,-1}\dashrightarrow \W_2$ which is the inverse of the link given by \ref{SA14}, and whose inverse is equivariant by Proposition~\ref{prop:Fabc maximality}\ref{Fabc-item2}.
If $b\ge 2$ and $-a<c<a(b-1)$, then by Proposition~\ref{prop:Fabc maximality}\ref{Fabc-item3} the equivariant links starting from $X_1$ are the link given by \ref{SA11} and its inverse,which are both equivariant by Lemma~\ref{LinksIIbetweenFU}\ref{LinksIIbetweenFabc}, and the links $\FF_2^{b,1}\dasharrow   \W_b$ and $\FF_2^{b,-1}\dasharrow \W_{b+1}$, whose inverses are given by \ref{SA15} and \ref{SA14} respectively. In this last case, the fact that the links and their inverses are equivariant follows from Proposition~\ref{prop:Fabc maximality}\ref{Fabc-item3}.

If $(b,c)=(0,0)$ and $a\ge 2$, then Proposition~\ref{prop:Fabc maximality}\ref{Fabc-itema1} implies that the only equivariant link  starting from $X_1$ is $X_1=\FF_a^{0,0}\iso \F_a\times\p^1\iso \FF_{0}^{a,0}$, which is the inverse of the link given by \ref{SA4}.

\hyperlink{th:D_b}{(b)}: here $X_1=\PP_b$, with $b \geq 2$, is a decomposable $\P^1$-bundle over $\P^2$. Proposition~\ref{LinkFromPPb} there is an equivariant link starting from $X_1$ if and only if $b=2$ in which case it is the link $\varphi\colon \PP_2 \to \P(1,1,1,2)$ given by \ref{SA6}. Moreover, Proposition~\ref{LinkFromPPb} also gives $\varphi \Autz(\PP_2) \varphi^{-1} =  \Autz(\P(1,1,1,2))$.

\hyperlink{th:D_c}{(c)}: here $X_1=\U_a^{b,c}$, for some $a,b\ge 1, c\ge 2$ with either $c<b$ if $a=1$ or $c-2<ab$ and $c-2 \neq a(b-1)$ if $a \geq 2$, is an Umemura $\P^1$-bundle over $\F_a$. The equivariant links starting from $X_1$ are given in Proposition~\ref{prop:UmeMax}; these are the links given by  \ref{SA12} and their inverses, together with the link $\U_1^{b,2}\to \V_b$ given by \ref{SA13}. Moreover, by Proposition~\ref{prop:UmeMax} the inverses of all these links are equivariant.

\hyperlink{th:D_d}{(d)}: here $X_1=\SS_b$, with $b=1$ or $b \geq 3$, is a Schwarzenberger $\P^1$-bundle over $\P^2$. If $b=1$, then $X_1$ is the projectivisation of the tangent bundle of $\P^2$ and so  by Proposition~\ref{Prop:HomSpaces}\ref{HomS5} the only equivariant link starting from $X_1$ is $\SS_1/\p^2 \iso\SS_1/\p^2 $ given by \ref{SA3} and its inverse.
If $b \geq 3$, then Proposition~\ref{prop:Schwarzenberg involution} implies that the only equivariant link starting from $X_1$ is the birational involution $\SS_b \dashrightarrow \SS_b$ given by \ref{SA5}, which conjugates $\Autz(\SS_b)$ to itself.

\hyperlink{th:D_e}{(e)}: here $\V_b$, with $b \geq 3$, is a $\P^1$-bundle over $\P^2$. By Proposition~\ref{prop:UmeMax}\ref{Vb bundles max} the only equivariant link starting from $X_1$ is the birational map $\V_b\dasharrow\U_{1}^{b,2}$ whose inverse is the blow-up morphism $\U_{1}^{b,2} \to \V_b$ given by \ref{SA13}.

\hyperlink{th:D_W}{(f)}: here $X_1=\W_b$, with $b \geq 2$, is a singular $\P^1$-fibration over $\P(1,1,2)$. By Lemma~\ref{lem:links from Wb} the only equivariant links starting from $X_1$ are the links $\W_b \dashrightarrow \FF_2^{b,1}$ given by \ref{SA15} and $\W_b \dashrightarrow \FF_2^{b-1,-1}$ given by \ref{SA14}, whose inverses are equivariant by Proposition~\ref{prop:Fabc maximality}\ref{Fabc-item3}, together with the link $\W_2 \to \P(1,1,2,3)$ given by \ref{SA10}, whose inverse is equivariant by Lemma~\ref{LemP1123}.

\hyperlink{th:D_RR}{(g)}: here $X_1=\RR_{m,n}$, where $m=n \geq 0$ or $m>2n \geq 0$ and $(m,n) \neq (1,0)$, is a decomposable $\p^2$-bundle over $\P^1$. If $m \geq 2$ and $n=0$, then by Lemma~\ref{Lem:Rm0} there are no equivariant links starting from $X_1$. If $m=n=1$, then by Proposition~\ref{prop:Rmn list of links}\ref{prop:Rmn list of links2} there are two equivariant links starting from $X_1$ which are $\RR_{1,1} \dashrightarrow \FF_0^{1-1}$ given by \ref{SA7} and $\RR_{1,1} \dashrightarrow \RR_{1,1}$ given by \ref{SA8}. The inverses are equivariant: the inverse of the first one is a morphism and the second one is an involution. If $(m,n)=(3,1)$, then by Proposition~\ref{prop:Rmn list of links}\ref{prop:Rmn list of links0t} there are two equivariant links starting from $X_1$ which are $\RR_{3,1} \dashrightarrow \FF_2^{1,-1}$ given by \ref{SA7} and $\RR_{3,1} \dashrightarrow \P(1,1,2,3)$ given by \ref{SA9}; their inverses are equivariant by Proposition~\ref{prop:Rmn list of links}. If $m=n \geq 2$ or $m>2n \geq 2$ (but $(m,n) \neq (3,1)$), then by Proposition~\ref{prop:Rmn list of links}\ref{Only1131} the only equivariant link starting from $X_1$ is $\RR_{m,n} \dashrightarrow \FF_{m-n}^{1,-n}$ given by \ref{SA7}. 
Again, the inverses of these links are equivariant by Proposition~\ref{prop:Rmn list of links}.
Finally, if $m=n=0$, then $\RR_{m,n} \simeq \P^2 \times \P^1$ and by Proposition~\ref{Prop:HomSpaces}\ref{HomS4} the only equivariant link starting from $X_1$ is $\p^1\times\p^2/\p^1\iso \p^1\times\p^2 /\p^2$ given by \ref{SA2}.

\hyperlink{th:D_QQg}{(h)}: here $X_1$ is an Umemura quadric fibration $\QQ_g$, where $g \in \k[u_0,u_1]$ is homogeneous of even degree with at least four roots of odd multiplicity.
By Proposition~\ref{Prop:QuadFibLinks}\ref{QF-item4}  the only equivariant links starting from $\QQ_g$ are the links $\QQ_g\dasharrow \QQ_{gh^2}$ and $\QQ_g\dasharrow \QQ_{g/h^2}$, which are either given  by \ref{SA16} or whose inverse is given by \ref{SA16}. As $g$ is homogeneous of even degree with at least four roots of odd multiplicity, the same holds for $gh^2$ or $g/h^2$. The equivariance of these links is given by Lemma~\ref{Lem:LinksQQgh}.

\hyperlink{th:D_bP3}{(i)}-\hyperlink{th:D_bQ3}{(j)}: {These cases is studied in Proposition~\ref{Prop:HomSpaces}.}

\hyperlink{th:D_bP1112}{(k)}: here $X_1=\P(1,1,1,2)$ and by Proposition~\ref{LinkFromPPb} the only equivariant link starting from $X_1$ is the blow-up $\PP_2 \to \P(1,1,1,2)$ given by \ref{SA6}.

\hyperlink{th:D_bP1123}{(l)}: here $X_1=\P(1,1,2,3)$ and by Lemma~\ref{LemP1123} there are only two equivariant links starting from $X_1$, namely $\P(1,1,2,3) \dashrightarrow  \RR_{3,1}$ given by \ref{SA9} and $\P(1,1,2,3) \dashrightarrow  \W_2$ given by \ref{SA10}.  Moreover, the inverse of these two links are equivariant by Lemma~\ref{LemP1123}. \qedsymbol \\

\noindent \emph{Proof of Corollary~\ref{corF}:} Theorem~\ref{th alg subg of Cr3 are aut of Mori fib} and Theorem~\ref{th:Ea} imply the following assertion:
\begin{equation}
		\label{corFstar}
		\tag{$\ast$}
\text{\it \begin{tabular}{c}
For each connected algebraic subgroup $G$ of $\Bir(\P^3)$, there exists\\
a birational map $\varphi\colon X \dashedrightarrow \P^3$ such that $\varphi^{-1} G\varphi \subseteq  \Autz(X)$ and such \\
that $X$ belongs  to one of the Families  \hyperlink{th:D_a}{$(a)$}-\hyperlink{th:D_Fano}{$(m)$} of Theorem~\ref{th:Ea}.\end{tabular}}\end{equation}

It remains to prove the following two assertions:
\begin{enumerate}[$(i)$]
\item\label{CorFa}
In \eqref{corFstar}, we can always assume $\varphi \Autz(X) \varphi^{-1}$ to be a maximal connected algebraic subgroup of $\Bir(\P^3)$; and
\item\label{CorFb}
$\psi \Autz(Y) \psi^{-1}$ is a maximal connected algebraic subgroup of $\Bir(\P^3)$ for each $Y$  that belongs to one of the Families  \hyperlink{th:D_a}{$(a)$}-\hyperlink{th:D_bP1123}{$(l)$} and each birational map $\psi\colon Y\dasharrow \p^3$. 
\end{enumerate}
We first prove \ref{CorFb} and then prove \ref{CorFa}. To simplify the notation, we only say ``is maximal'' for ``is a maximal connected algebraic subgroup of $\Bir(\P^3)$''.

\ref{CorFb}: If the connected algebraic subgroup $\psi \Autz(Y) \psi^{-1}\subseteq \Bir(\p^3)$ were not maximal, it would be strictly contained in a connected algebraic subgroup $G\subseteq \Bir(\p^3)$. Applying $\eqref{corFstar}$, we would obtain a birational map $\kappa\colon Y\dasharrow Z$, where $Z$ belongs to one of the Families  \hyperlink{th:D_a}{$(a)$}-\hyperlink{th:D_Fano}{$(m)$}, and such that $\kappa \Autz(Y)\kappa^{-1}\subsetneq \Autz(Z)$, contradicting Theorem~\ref{th:Eb}.

\ref{CorFa}: If $X$ belongs to one of the Families  \hyperlink{th:D_a}{$(a)$}-\hyperlink{th:D_bP1123}{$(l)$}, then $\varphi \Autz(X) \varphi^{-1}$ is maximal by \ref{CorFb}. We may thus assume that $X$ belongs to family \hyperlink{th:D_Fano}{$(m)$} and is thus a  rational $\Q$-factorial Fano threefold of Picard rank $1$ with terminal singularities. If $\varphi \Autz(X) \varphi^{-1}$ is not maximal, it is strictly contained in a connected algebraic subgroup $G_1\subset \Bir(\p^3)$. Applying \eqref{corFstar}, we find a birational map $\varphi_1\colon X_1 \dashedrightarrow \P^3$ such that $\varphi^{-1} G_1\varphi_1 \subseteq  \Autz(X_1)$ and such
that $X_1$ belongs  to one of the Families  \hyperlink{th:D_a}{$(a)$}-\hyperlink{th:D_Fano}{$(m)$} of Theorem~\ref{th:Ea}. Again, if $X_1$ belongs to the one of the Families  \hyperlink{th:D_a}{$(a)$}-\hyperlink{th:D_bP1123}{$(l)$}, then $\varphi_1 \Autz(X_1) \varphi_1^{-1}$ is maximal, so we may assume that $X_1$ again belongs to family \hyperlink{th:D_Fano}{$(m)$}, and continue this process. We obtain either the result or an infinite sequence $G\subsetneq G_1\subsetneq G_2 \subsetneq G_3 \subsetneq\cdots$ where each $G_i$ is equal to $\varphi_i \Autz(X_i) \varphi_i^{-1}$, and $X_i$ a  rational $\Q$-factorial Fano threefold of Picard rank $1$ with  terminal singularities and $\varphi_i\colon X_i\dasharrow \p^3$ is birational. We now explain why this is impossible, by using that the set $\mathcal{S}$ of $\Q$-factorial Fano threefolds with  terminal singularities is parametrised by a bounded family. {This is a simple case of the BAB conjecture, proven already by \cite[Theorem 1.2]{KMMT-2000} (see also \cite{BirkarS} for the more general case). As a consequence, there is an integer $r\ge 1$ such that $-rK_X$ is Cartier for  each $X\in \mathcal{S}$ (\cite[Theorem 1.2(1)]{KMMT-2000} gives the explicit bound $r=24!$). Then \cite[Theorem 1.1]{Kol93} gives the existence of an integer $m$ such that $\lvert -mK_X\rvert $ is base-point free for each $X\in \mathcal{S}$. Applying \cite[Lemma~1.2]{Kol93}, we can choose $m$ bigger if needed and assume that  $-mK_X$ is very ample for each $X\in \mathcal{S}$. The elements of $\mathcal{S}$ being isomorphic to the fibres of a morphism, there exists $d\ge 1$ such that for each $X\in \mathcal{S}$, the linear system $\lvert -mK_Z\rvert$ provides a closed embedding $\varphi_Z\colon Z\hookrightarrow \p^{h^0(-mK_Z)-1}$ with $h^0(-mK_Z)\le d$. This gives an embedding of $\Autz(X)$ into $\PGL(h^0(-mK_Z))$ and implies that there is an integer $D\ge 1$, not depending on $Z$, such that $\dim(\Autz(X))\le D$. Therefore we cannot have the above infinite sequence.}
\qedsymbol

\bigskip

\noindent \textbf{Funding.}
The first-named author acknowledges support by the Swiss National Science Foundation Grant ``Birational transformations of threefolds'' [200020\_178807]. The third-named author is supported by the Project FIBALGA [contract ANR-18-CE40-0003-01].
This work received partial support from the French "Investissements d\textquoteright Avenir" program and from project ISITE-BFC [contract ANR-lS-IDEX-OOOB]. The IMB receives support from  the EIPHI Graduate School [contract ANR-17-EURE-0002].

\bigskip

\noindent \textbf{Acknowledgments.}
We would like to thank Michel Brion, Paolo Cascini, Alessio Corti, Enrica Floris, Anne-Sophie Kaloghiros, Lucy Moser-Jauslin, and Boris Pas\-quier for interesting discussions related to this work. 
We are also very grateful to the anonymous referees for their valuable comments and suggestions which greatly helped to improve this article.

\newpage

\end{document}